\newtheorem*{rep@theorem}{\rep@title}
\newcommand{\newreptheorem}[2]{%
\newenvironment{rep#1}[1]{%
 \def\rep@title{#2 \ref{##1}}%
 \begin{rep@theorem}}%
 {\end{rep@theorem}}}
\numberwithin{equation}{section}
\theoremstyle{plain}
\newtheorem{theorem}{Theorem}[section]
\Crefname{theorem}{Theorem}{Theorems}
\crefname{theorem}{theorem}{theorems}
\newtheorem{proposition}[theorem]{Proposition}
\Crefname{proposition}{Proposition}{Propositions}
\crefname{proposition}{proposition}{propositions}
\newtheorem{claim}[theorem]{Claim}
\Crefname{claim}{Claim}{Claims}
\crefname{claim}{Claim}{Claims}
\newtheorem{lemma}[theorem]{Lemma}
\Crefname{lemma}{Lemma}{Lemmas}
\crefname{lemma}{lemma}{lemmas}
\newtheorem{conjecture}[theorem]{Conjecture}
\newtheorem{definition}[theorem]{Definition}
\newtheorem{corollary}[theorem]{Corollary}
\Crefname{corollary}{Corollary}{Corollaries}
\crefname{corollary}{corollary}{corollaries}
\newtheorem{custheorem}{Theorem}
\Crefname{custheorem}{Theorem}{Theorems}
\crefname{custheorem}{theorem}{theorems}
\theoremstyle{definition}
\newtheorem{remark}[theorem]{Remark}
\newtheorem{assumption}[theorem]{Assumption}
\theoremstyle{remark}
\newcounter{CASE}
 \newenvironment{CASE}[1][\unskip]{\refstepcounter{CASE}\em
 \medskip \noindent Case \theCASE\ #1.\ }{\unskip\upshape}
 \renewcommand{\theCASE}{\arabic{CASE}}
 \Crefname{CASE}{Case}{Cases}
 \crefname{CASE}{case}{cases}
\newcounter{step}
\def\bs{\backslash}
\newcommand\Folner{F{\o}lner }
\def\folner{\Folner}
\renewcommand{\hat}{\widehat}
\newcommand{\interval}[1]{\widehat{#1}}
\newcommand{\inv}{^{-1}}
\newcommand{\Lie}[1]{\mathfrak{\lowercase{#1}}}
\def\su{\mathfrak{su}}
\newcommand{\so}{\mathfrak{so}}
\newcommand{\liesl}{\mathfrak{sl}}
\newcommand{\liesp}{\mathfrak{sp}}
\newcommand{\liea}{\Lie a}
\newcommand{\lieu}{\Lie u}
\newcommand{\lieg}{\Lie g}
\newcommand{\lieh}{\Lie h}
\newcommand{\liek}{\Lie k}
\newcommand{\liel}{\Lie l}
\newcommand{\lien}{\Lie n}
\newcommand{\liep}{\Lie p}
\newcommand{\lieq}{\Lie q}
\newcommand{\quot}[1]{\widehat{#1}}
\newcommand{\sm}{\smallsetminus}
\newcommand{\td}{\tilde}
\DeclareMathOperator{\len}{len}
\newcommand{\wtd}{\widetilde}
\DeclareMathOperator{\diam}{diam}
\def\C{\mathbb{C}} 
\newcommand{\K}{\mathbb {K}}
\newcommand{\N}{\mathbb {N}}
\renewcommand{\P}{\mathbb {P}}
\newcommand{\Q}{\mathbb {Q}}
\newcommand{\R}{\mathbb {R}}
\newcommand{\Z}{\mathbb {Z}}
\def\calA{\mathcal A}
\def\calK{\mathcal K}
\def\calB{\mathcal B}
\def\calC{\mathcal C}
\def\calE{\mathcal E}
\def\calG{\mathcal G}
\def\calZ{\mathcal Z}
\newcommand{\Isom}{\mathrm{Isom}}
\DeclareMathOperator{\Ad}{Ad}
\DeclareMathOperator{\ad}{ad}
\DeclareMathOperator{\Diff}{Diff}
\def\diff{\Diff}
\DeclareMathOperator{\Gl}{GL}
\def\GL{\Gl}
\DeclareMathOperator{\inj}{inj}
\DeclareMathOperator{\rank}{rank}
\DeclareMathOperator{\Sl}{SL}
\renewcommand{\sl}{\Lie{sl}}
\DeclareMathOperator{\Sp}{{Sp}}
\DeclareMathOperator{\So}{{SO}}
\DeclareMathOperator{\supp}{supp}
\newcommand{\fullcref}[2]{\cref{#1}\pref{#1-#2}}
\newcommand{\fullCref}[2]{\Cref{#1}\pref{#1-#2}}
\newcommand{\pref}[1]{(\ref{#1})}
\newcommand{\bfG}{\mathbf{G}}
\newcommand{\bfL}{\mathbf{L}}
\newcommand{\bfP}{\mathbf{P}}
\newcommand{\bfS}{\mathbf{S}}
\newcommand{\bfU}{\mathbf{U}}
\def\bfF{\mathbf{F}}
\def\bfH{\mathbf{H}}
\def\bfGL{\mathbf{GL}}
\newcommand{\fakeSSS}[1]{\medskip \noindent{\it #1.}}
 \newcommand{\restrict}[2]{{#1}{|_{{ #2}}}}
 \newlist{enumlemma}{enumerate}{3}
\setlist[enumlemma]{label*={(\alph*)}, ref= {(\alph*)} }
 \providecommand{\Davethm}{$\langle$Run \LaTeX\ again to get the statement of this result$\rangle$}
 \providecommand{\UnipSubgrp}{$\langle$Run \LaTeX\ again to get the statement of this result$\rangle$}
 \providecommand{\cuspgroup}{$\langle$Run \LaTeX\ again to get the statement of this result$\rangle$}
 \providecommand{\Qrkone}{$\langle$Run \LaTeX\ again to get the statement of this result$\rangle$}
\newcommand{\laterdef}[2]{%
 \protected@write\@auxout{}{\gdef\string#1{#2}}}
\newcommand{\fund}{\mathcal{F}}
\renewcommand \top{\mathrm{top}}
\newcommand\Sieg{\mathfrak{S}}
\DeclareMathOperator{\Rad}{Rad}
\newcommand\1{\mathbf{1}}
\renewcommand{\ae}{a.e.\ }
\renewcommand\epsilon{\varepsilon}
\def\split{\mathrm{split}}
\def\comp{\mathrm{anis}}
\begin{document}

\title[Zimmer's conjecture for non-uniform lattices]{Zimmer's conjecture for non-uniform lattices: escape of mass and growth of cocycles}

\author[A.~Brown]{Aaron Brown}
\address{Northwestern University, Evanston, IL 60208, USA}
\email{awb@northwestern.edu}

\author[D.~Fisher]{David Fisher}
\address{Indiana University, Bloomington, Bloomington, IN 47401, USA}
\email{fisherdm@indiana.edu}

\author[S.~Hurtado]{Sebastian Hurtado}
\address{University of Chicago, Chicago, IL 60637, USA}
\email{shurtados@uchicago.edu}

\thanks{DF was partially supported by NSF Grants DMS-1607041 and DMS-1906107 and a Fellowship from the Simons Foundation.
AB was partially supported by NSF Grant DMS-1752675. The authors would like to thank UoC, IPAM, IUB for their hospitality at various points.
}

\begin{abstract}
We establish finiteness of low-dimensional actions of lattices in higher-rank semisimple Lie groups and establish Zimmer's conjecture for many such groups. This builds on previous work of the authors handling the case of actions by cocompact lattices and of actions by $\Sl(n,\Z)$. While the results are not sharp in all cases, they do dramatically improve all known results. The key difficulty overcome in this paper concerns escape of mass when taking limits of sequences of measures. Due to a need to control Lyapunov exponents for unbounded cocycles when taking such limits, quantitative controls on the concentration of mass at infinity are need and novel techniques are introduced to avoid ``escape of Lyapunov exponent."
\end{abstract}

\renewcommand{\emph}[1]{{\bf #1}}

\date{\today}
\maketitle

\tableofcontents

\section{Introduction and main results}
 \subsection{Motivation}
We begin by stating a sample theorem that is a very special case of our main results on Zimmer's conjecture.

\begin{theorem}
\label{thm:sample}
Let $\Gamma\subset \Sl(n,\R)$ be a lattice subgroup for $n\ge 3$. Let $M$ be a compact manifold and let $\alpha \colon \Gamma \to \Diff^{1+\beta}(M)$ be a homomorphism for $\beta>0$.

\begin{enumlemma}
\item If $\dim(M)< n-1$ then the image $\alpha(\Gamma)$ is finite.
\item If $\dim(M)\le n$ and if $\alpha(\Gamma)$ preserves a volume form then the image $\alpha(\Gamma)$ is finite.
\end{enumlemma}
\end{theorem}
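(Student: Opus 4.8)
The plan is to run the suspension-and-strong-property-(T) architecture developed by the authors for the cocompact and $\Sl(n,\Z)$ cases, with the genuinely new work concentrated at the cusp of $G/\Gamma$. Set $G=\Sl(n,\R)$ and form the induced space $M^\alpha=(G\times M)/\Gamma$, a bundle over $G/\Gamma$ with compact fiber $M$ carrying a $G$-action that commutes with the projection; differentiating along the fibers gives the fiberwise derivative cocycle, a linear cocycle of rank $\dim M$ over this $G$-action. For a probability measure on $M^\alpha$ invariant under the maximal $\R$-split torus $A\subset G$, the Lyapunov data of this cocycle along $A$ is recorded by finitely many linear functionals on $\liea\cong\R^{n-1}$, the \emph{fiberwise Lyapunov exponents}, whose total multiplicity is at most $\dim M$. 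Since $n\ge 3$ the group is higher rank, so Margulis arithmeticity lets us take $\Gamma$ arithmetic; crucially we do not identify $\Gamma$ with a subgroup of $\Sl(n,\Z)$ (for composite $n$ there are non-uniform lattices in $\Sl(n,\R)$ not commensurable to $\Sl(n,\Z)$), and the cocompact case of \cref{thm:sample} is already known, so the content is the non-uniform case.

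The core dichotomy is: either the derivative cocycle of $\alpha$ has uniform subexponential growth, i.e.\ $\log\max_{x\in M}\|D_x\alpha(\gamma)^{\pm1}\|=o(|\gamma|)$ for the word length $|\cdot|$ on $\Gamma$, or --- running an averaging procedure over a minimal parabolic $P$, which because $G/\Gamma$ is non-compact may lose mass into the cusp --- one produces a $P$-invariant measure $\mu$ on $M^\alpha$ some $A$-ergodic component of which carries a nonzero fiberwise Lyapunov exponent $\lambda$. In the second case one runs the averaging argument: using invariance under the unipotent radical of $P$ together with Ratner-type measure rigidity on $G/\Gamma$, one normalizes the measure so that $\lambda$ --- and, sweeping over the Weyl group, a whole family of fiberwise exponents --- becomes resonant with the restricted roots of $\Sl(n,\R)$; these resonant exponents then number more than $\dim M$ with multiplicity once $\dim M$ lies below the threshold in the statement, contradicting the multiplicity bound above. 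In part~(b) one additionally arranges that $\mu$ projects to the given volume along the fibers, which is precisely what licenses the larger dimension range. Hence the first alternative must hold: $\alpha$ has uniform subexponential growth of derivatives.

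The delicate point, specific to non-uniform $\Gamma$, is exactly the construction of $\mu$: because $G/\Gamma$ is non-compact, mass in the averaging sequence can escape into the cusp, and because the fiberwise derivative cocycle is unbounded there, the top Lyapunov exponent of the limit $\mu$ can a priori be strictly smaller than those of the approximants --- an ``escape of Lyapunov exponent'' which, if it occurred, would destroy the contradiction in the previous paragraph and leave the subexponential-growth alternative unproven. The remedy is a quantitative non-divergence estimate: orbit (or random-walk) trajectories spend only an exponentially small proportion of time deep in the cusp, at a rate dominating the growth of the cocycle there, so that the Lyapunov exponent of the limit is bounded below by those of the sequence. This uses the arithmetic geometry of the cusp of $G/\Gamma$ together with tameness of the $C^{1+\beta}$ derivatives of $\alpha$, and I expect it to be the main obstacle --- the remaining steps adapt the cocompact and $\Sl(n,\Z)$ arguments, whereas controlling Lyapunov exponents under weak-$*$ limits while mass escapes to infinity is the genuinely new ingredient.

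Finally, with uniform subexponential growth of the derivative cocycle in hand, strong property (T) for $G$ (Lafforgue, de~la~Salle), which passes to the lattice $\Gamma$, forces the cocycle to be cohomologous to one valued in a compact group: averaging a background Riemannian metric on $M$ over the appropriate operators produces a $\Gamma$-invariant continuous Riemannian metric $g$, which can be bootstrapped to the relevant regularity using $\beta>0$. Thus $\alpha(\Gamma)\subseteq\Isom(M,g)$, a compact Lie group; since $G$ is simple, noncompact and of higher rank, it has no nontrivial homomorphism to a compact group, so by Margulis superrigidity the homomorphism $\Gamma\to\Isom(M,g)$ has finite image. This proves both (a) and (b).
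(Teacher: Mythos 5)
Your overall architecture is right and matches the paper's: induce the action on $X=(G\times M)/\Gamma$, prove uniform subexponential growth of the fiberwise derivative cocycle (the crux), then invoke strong property~(T) (de~la~Salle for non-uniform lattices) to find an invariant continuous Riemannian metric and Margulis superrigidity with compact codomain to conclude finiteness. You also correctly identify the correct target at the heart of the argument: build an $A$-invariant probability measure on $X$ projecting to Haar on $G/\Gamma$ with positive top fiberwise Lyapunov exponent, then contradict Zimmer cocycle superrigidity via the invariance principle. So far so good.

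The genuine gap is in the sentence where you assert that the remedy for escape of Lyapunov exponent is ``a quantitative non-divergence estimate: orbit\ldots trajectories spend only an exponentially small proportion of time deep in the cusp, at a rate dominating the growth of the cocycle there, so that the Lyapunov exponent of the limit is bounded below by those of the sequence.'' This is precisely the step the paper says does \emph{not} work in general. Quantitative non-divergence (Kleinbock--Margulis, and the uniform exponentially-small-mass-at-infinity estimates) does control escape of \emph{mass}, and the paper uses it for exactly that; but the fiberwise derivative cocycle is unbounded, may fail to be $\log$-$L^1$ for the limiting measure, and even when it is $\log$-$L^1$, the top exponent is not upper semi-continuous under weak-$*$ limits of invariant measures when mass wanders toward the cusp. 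Simply knowing the approximating empirical measures have uniformly exponentially small mass at infinity does not transfer a lower bound on their Lyapunov exponents to the limit. This is why the paper introduces the time-averaged cocycle $\overline\Psi$ and, crucially, the cut-off cocycles $\psi_\ell$ of Section~10.3, which are bounded, have controlled (codimension-one plus tangency) discontinuity sets that are null for the relevant limit measures, and whose space-averages can be related back to the true exponent once the measure projects to Haar. Without some substitute for this ``fake Lyapunov exponent'' machinery, the step from ``exponential growth along a sequence of long orbits'' to ``positive exponent for the limiting $A$-invariant measure'' does not close.

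A secondary but real omission: you average ``over a minimal parabolic $P$'' directly in $G$, but for a general non-uniform $\Gamma$ the paper does not do this and cannot. The structure is instead: reduce, via Witte Morris's quasi-isometric bounded generation by standard $\Q$-rank-1 subgroups (\cref{QIBddGenByRank1lift}), to showing subexponential growth of the restricted cocycle on each $\Gamma\!_H$; and this in turn is built up iteratively through unipotent $\Q$-subgroups (Section~8), then minimal parabolic $\Q$-subgroups / cusp groups of $\Q$-rank-1 subgroups (Section~9), before the $\Q$-rank-1 endgame (Section~10). In $\Q$-rank one ($H=G$) there is no horospherical subgroup normalized by $H$ to average over, which is exactly why the cut-off cocycle approach is needed and why the argument diverges most sharply from the $\Sl(n,\Z)$ case. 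Your sketch compresses all of this into a single averaging step over a parabolic of $G$, which does not reflect (and would not substitute for) the actual reduction. Finally, your description of part~(b) — ``arranges that $\mu$ projects to the given volume along the fibers'' — is off; in both parts the measure projects to Haar on $G/\Gamma$, and the volume-preservation hypothesis on $M$ instead improves the dimension threshold in the invariance principle (because the sum of the fiberwise exponents vanishes).
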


\noindent For  cocompact $\Gamma$, this was proven by the authors in \cite{BFH}.  When  $\Gamma$ is a finite index subgroup of $\Sl(n,\Z)$, we established this in \cite{BFH-SLnZ}. The crucial contribution of this paper is to establish Theorem \ref{thm:sample} without any additional assumptions on $\Gamma$ and to extend all theorems in \cite{BFH} to non-uniform lattices in the relevant higher-rank Lie groups.

 While there are common arguments across these three papers, fundamental new ideas are needed in this paper to handle the case of general lattices.  These ideas occur primarily in Section \ref{SubexpForRank1Subgroups} and concern controlling the growth of cocycles in the ends of the non-compact space $(G \times M)/\Gamma$.  The non-compactness of this space comes entirely from the quotient $G/\Gamma$ and controlling growth of cocycles in the ends of this space will be central in future dynamical questions concerning $\Gamma$ actions. In \cref{subsec:comp} below, we discuss in more detail what is novel in the approach in this paper compared to the approaches in \cite{BFH, BFH-SLnZ}.

\subsection{History and background}
Before stating our most general results, we give some history related to Zimmer's conjecture that motivates our work. It is well known that lattices in $\Sl(2,\R)$ are rather flexible whereas lattices in $\Sl(n,\R)$ are very rigid when $n\ge 3$. Such rigidity properties are shared by lattices in more general higher-rank simple Lie groups and, to a lesser extent, by irreducible lattices in higher-rank semisimple Lie groups. Some key results demonstrating such rigidity phenomena include the strong rigidity theorem of Mostow \cite{MR0385004}, Kazhdan's property (T) \cite{MR0209390}, and the superrigidity theorem of Margulis \cite{MR1090825}.
For instance, Margulis's superrigidity theorem states, roughly, that every finite-dimensional representation of an irreducible lattice in a higher-rank semisimple Lie group extends, modulo compact groups and passing to covers, to a continuous representation of the ambient group.

Inspired by known rigidity results for linear representations, Zimmer proposed in the 1980s that certain ``nonlinear representations'' of higher-rank lattices should also exhibit some rigidity properties. Specifically, Zimmer laid out a program to study $C^\infty$ volume-preserving actions of such groups on compact manifolds; that is, for a compact manifold $M$ equipped with a smooth volume form $\omega$, Zimmer aimed to study homomorphisms $\alpha \colon \Gamma \to\Diff^\infty(M,\omega)$ from higher-rank lattice subgroups $\Gamma$ to $\Diff^\infty(M,\omega)$, the group of $C^\infty$, $\omega$-preserving diffeomorphisms. In later work of Zimmer and other authors, the case of non-volume-preserving actions is also considered.

The most ambitious goal of this program is to show that all such non-linear representations are derived from certain homogeneous spaces and algebraic actions induced by linear representations. In particular, all volume-preserving actions $\alpha \colon \Gamma \to\Diff^\infty(M,\omega)$ are expected to be of an ``algebraic origin;'' more general actions $\alpha \colon \Gamma \to \Diff^\infty(M)$ are expected to have an ``algebraic quotient." The \emph{Zimmer program} refers to a number of precise conjectures towards this goal. See for instance the surveys of Fisher and Labourie \cite{F11,MR1648087} for overviews of conjectures and results in this area and \cite{F18} for an update with more recent developments.

As a first step in the program, Zimmer conjectured a lower bound on the dimension of a  compact manifold $M$ admitting a nonlinear, volume-preserving representation $\alpha\colon \Gamma\to \Diff^\infty(M,\omega)$ with infinite image. See \cref{conjecture:zimmer} below for a precise formulation. This lower bound is computed in terms of algebraic data associated with the ambient higher-rank group $G$ and its representations. To motivate this conjecture, we recall the following immediate consequence of Margulis's superrigidity and arithmeticity theorems: for $n\ge 3$, if $\Gamma$ is a lattice subgroup in $\Sl(n,\R)$ and if $d<n$ then every homomorphism $\rho\colon \Gamma\to \Gl(d,\R)$ has finite image. Zimmer's conjecture, \cref{conjecture:zimmer}, is the natural non-linear analogue of this fact.

The main heuristic for Zimmer's conjecture is Zimmer's cocycle superrigidity theorem \cite{MR0776417}.
An immediate corollary of Zimmer's cocycle superrigidity theorem is the following: given a lattice $\Gamma$ in a higher-rank simple Lie group and a volume-preserving action $\alpha\colon \Gamma\to \Diff^\infty(M,\omega)$ on a compact manifold $M$ of sufficiently small dimension, there exists a measurable, $\alpha(\Gamma)$-invariant Riemannian metric $g$ on $TM$.
If this measurable metric can be promoted to a continuous metric then the image $\alpha(\Gamma)$ is contained in the compact Lie group of isometries $\Isom_g(M)$; finiteness of the action $\alpha(\Gamma)$ follows if $\dim(M)$ is sufficiently small by known results on representations of such $\Gamma$ into compact Lie groups.

While the above heuristic requires the action $\alpha$ to preserve a volume-form, results for actions on the circle \cite{MR1198459,MR1911660, MR1703323}, results for real-analytic actions \cite{MR1666834, MR1254981}, and results on projective quotients \cite{MR1933077} suggest analogous lower bounds should exist on the dimension of manifolds $M$ admitting nonlinear representations $\alpha\colon \Gamma\to \Diff^\infty(M)$ with infinite image.

\subsection{Zimmer's conjecture and invariant Riemannian metrics}
To state the simplest version of the conjecture, given a semisimple Lie group we define $v(G)$ to be the minimal dimension of a homogeneous space $G/H$, where $H$ ranges over all proper, closed subgroups of the identity component $G^\circ$ of $G$. We define $n(G)$ to be the minimal dimension of a non-trivial linear representation of the Lie algebra of $G$. Given these dimension bounds, we have the following.

\begin{conjecture} \label{ZimmerConjNonVol}
Let $G$ be a connected semisimple Lie group such that every non-compact factor of $G$ has real rank at least $2$. Let $\Gamma$ be a lattice subgroup in $G$.  Let $M$ be a compact manifold and let $\alpha \colon \Gamma \to \Diff^\infty(M)$ be a homomorphism.
\begin{enumlemma}
\item\label{Aa} If $\dim(M)< v(G)$ then $\alpha(\Gamma)$ preserves a smooth Riemannian metric on~$M$.
\item\label{Ab} If $\dim(M)< n(G)$ and if $\alpha(\Gamma)$ preserves a volume form then $\alpha(\Gamma)$ preserves a smooth Riemannian metric on~$M$.
\end{enumlemma}
\end{conjecture}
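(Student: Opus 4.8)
I would approach this via the induced (\emph{suspension}) action, following the template established for cocompact lattices in \cite{BFH} and for $\Sl(n,\Z)$ in \cite{BFH-SLnZ}, while confronting the new difficulty caused by non-compactness of $G/\Gamma$. Given $\alpha\colon\Gamma\to\Diff^\infty(M)$, form $M^\alpha=(G\times M)/\Gamma$, which fibers over $X=G/\Gamma$ with fiber $M$ and carries a $G$-action covering the $G$-action on $X$; the fiberwise derivative of $\alpha$ defines a linear cocycle $\mathcal A\colon G\times M^\alpha\to\GL(d,\R)$, where $d=\dim M$. The goal is to show that, for every $A$-invariant ergodic probability measure on $M^\alpha$ projecting to Haar measure on $X$ (here $A\subset G$ is a maximal split torus), all fiberwise Lyapunov exponents of $\mathcal A$ vanish. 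A suspension argument (descending a measurable invariant metric on $M^\alpha$ back to $M$) then yields a measurable, essentially bounded $\alpha(\Gamma)$-invariant Riemannian metric on $TM$, which one promotes to a smooth invariant metric to obtain \ref{Aa} and \ref{Ab} (and finiteness in the strict regime, as in \cref{thm:sample}).

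The first step is to pin down $\mathcal A$ up to its algebraic hull. Since $M^\alpha$ need not carry a finite $G$-invariant measure, one starts from an $\alpha(\Gamma)$-stationary measure on $M$, induces it to a measure on $M^\alpha$ invariant under a minimal parabolic subgroup, and relates positivity of the top fiberwise Lyapunov exponent along $A$ to exponential growth of $\|D\alpha(\gamma)\|$ along geodesic rays in $G$. Zimmer cocycle superrigidity \cite{MR0776417} applied to $\mathcal A$ then constrains its hull: if $d<n(G)$ (respectively $d<v(G)$ in the volume-free case) the only $d$-dimensional representation of $\lieg$ is trivial, so $\mathcal A$ is measurably cohomologous into a compact group and all of its fiberwise Lyapunov exponents vanish --- \emph{provided} the measure to which this is applied is genuinely finite and non-degenerate.

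The crux, and the step I expect to be the main obstacle, is controlling escape of mass. The measures whose exponents must be analyzed arise as weak-$*$ limits (along a F{\o}lner sequence in a unipotent subgroup, or along $a_t$ as $t\to\infty$) of averages of push-forwards of the parabolic-invariant measure on $M^\alpha$. Because $X=G/\Gamma$ is non-compact, such limits can lose mass into the cusps; worse, the comparison cocycle relating displacement in $G$ to word length in $\Gamma$ --- which controls the size of $\mathcal A$ --- is \emph{unbounded} and large deep in a cusp, so a limit measure charging infinity could acquire spurious Lyapunov exponent (``escape of Lyapunov exponent''). I would prove a quantitative non-escape estimate, using reduction theory for $G/\Gamma$ (cusp neighborhoods as quotients of horospherical solvmanifolds), an explicit description of $\mathcal A$ and of the comparison cocycle over a cusp, and effective recurrence of the $A$-action, bounding the fraction of mass at cusp-depth $\ge r$ well enough that, together with an integrable (Lusin-type) tempering bound on the cocycle, the limiting measure is a genuine finite $A$-invariant measure projecting to Haar on $X$ and every Lyapunov-exponent identity survives the limit. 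This is precisely the escape-of-mass difficulty announced in the abstract, and the point at which the non-uniform case departs from \cite{BFH,BFH-SLnZ}.

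Finally, vanishing of all fiberwise Lyapunov exponents over all such measures yields uniform subexponential growth of $\|D\alpha(\gamma)\|$; the higher-rank structure --- a positivity/non-resonance argument across the Weyl chamber, as in \cite{BFH} --- upgrades this to a uniform bound, so $\mathcal A$ is tame and $\alpha(\Gamma)$ preserves a measurable bounded metric on $TM$. Elliptic-regularity and averaging arguments then give a continuous, and then $C^\infty$, invariant metric (in case \ref{Ab} one additionally averages against the invariant volume), proving \ref{Aa} and \ref{Ab}; in the strict-dimension regime one then has $\alpha(\Gamma)\subset\Isom_g(M)$, a compact Lie group, and known bounds on linear images of higher-rank lattices force $\alpha(\Gamma)$ to be finite. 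Apart from the escape-of-mass step, these arguments closely follow the cocompact and $\Sl(n,\Z)$ treatments.
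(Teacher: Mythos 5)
You have written a proof sketch for Conjecture~\ref{ZimmerConjNonVol}, but that statement is genuinely a \emph{conjecture} in the paper, not a theorem, and the paper does not prove it --- there is no ``paper's own proof'' to compare against. What the paper actually establishes are the weaker Theorems~\ref{Rsplit} and \ref{thm:main}, which replace $v(G)$ and $n(G)$ by the combinatorial invariant $r(G)$, yield finiteness (or a continuous invariant metric in $C^{1+\beta}$ regularity) rather than a smooth invariant metric, and coincide with the conjectured bounds only when $G$ is $\R$-split (and, for the volume-preserving case, essentially only in types $A_\ell$ and $C_\ell$). So the first thing to flag is that your sketch is pitched at the wrong target: it is a proof attempt for an open statement, and the correct exercise would have been to prove the partial results of Theorems~\ref{Rsplit} and \ref{thm:main}.

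Beyond that mismatch, the one concrete mathematical error in your sketch is the claim that ``if $d<v(G)$ in the volume-free case the only $d$-dimensional representation of $\lieg$ is trivial, so $\mathcal A$ is measurably cohomologous into a compact group and all of its fiberwise Lyapunov exponents vanish.'' The quantity $v(G)$ is the minimal codimension of a proper parabolic subgroup; it is not a bound on dimensions of linear representations of $\lieg$ (that is what $n(G)$ measures), and Zimmer cocycle superrigidity alone does not turn the inequality $d<v(G)$ into vanishing of exponents. In the paper, the $v(G)$-type bound (realized as $r(G)$) enters through an entirely different mechanism: Proposition~\ref{prop:invprinc} (the invariance principle from \cite{AWBFRHZW-latticemeasure}) forces any $A$-invariant probability measure on $X$ projecting to Haar to be $G$-invariant when $\dim M < r(G)$, and only \emph{then} does cocycle superrigidity (Lemma~\ref{lemma:fromzcsr}, which uses the $n(G)$ bound) yield vanishing exponents. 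Collapsing both dimension thresholds into the superrigidity step loses precisely the part of the argument that distinguishes the non--volume-preserving case and explains why $r(G)$, rather than the conjectured $v(G)$ or $n(G)$, appears in the unconditional results. Apart from these two points, the broad outline --- suspension to $(G\times M)/\Gamma$, reduction to controlling Lyapunov exponents of the fiberwise derivative cocycle along $A$-orbits, escape-of-mass and escape-of-Lyapunov-exponent as the central new difficulty, then strong property~$(T)$ to promote a measurable metric to a continuous one --- does correctly track the strategy of Theorems~\ref{thm:C}, \ref{thm:whatweprove}, and \cite[Theorem~2.4]{BFH}.
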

Using that the isometry group $\Isom(M)$ of a compact Riemanniam manifold is compact, in the case that $\Gamma$ is non-uniform, the conclusion of \cref{ZimmerConjNonVol} implies that
the image $\alpha(\Gamma)$ is finite. Using Margulis's superrigidity theorem for representations into compact groups, from \cref{ZimmerConjNonVol} we obtain an upper bound on the dimension $\dim(M)$ below which the image $\alpha(\Gamma)$ is finite; see \cref{conjecture:zimmer} and discussion in \cref{rem:step3}.

The main results of this paper establishes  \Cref{ZimmerConjNonVol}\ref{Aa} and some cases of \Cref{ZimmerConjNonVol}\ref{Ab} for $C^{1+\text{H\"older}}$ actions of lattices $\Gamma$ in $\R$-split simple Lie groups.  See \cref{Rsplit}.  We also establish partial results towards the conjecture for other groups.  See \cref{thm:main}.

\subsection{Results for lattices in $\R$-split simple Lie groups} \label{sec:12}
We establish  \Cref{ZimmerConjNonVol}\ref{Aa}  for $C^{1+\text{H\"older}}$ actions of lattices $\Gamma$ in $\R$-split simple Lie groups.
For \Cref{ZimmerConjNonVol}\ref{Ab} our method of proof only yields the optimal conjectured upper bound for groups with Lie algebra $\sl(n,\R)$ or $\mathfrak{sp}(2n,\R)$ though we obtain partial results for all other $\R$-split groups.

Given $0<\beta\le 1$ we recall that a diffeomorphism $f\colon M\to M$ of a compact manifold $M$ is $C^{1+\beta}$ if it is $C^1$ and the variation of its derivative is $\beta$-H\"older in local charts. We write $\Diff^{1+\beta}(M)$ for the group of $C^{1+\beta}$ diffeomorphisms of $M$.

 In the statement of all results, we abuse terminology and call a connected Lie group $G$ \emph{simple} if its Lie algebra $\lieg$ is simple; that is, $G$ is simple if it is connected and has no non-trivial connected normal subgroups. In particular, we allow for the case that $G$ has infinite center.

A connected simple Lie group $G$ is \emph{$\R$-split} if its Lie algebra $\lieg$ is a split real form of a simple complex Lie algebra $\lieg^\C$. All $\R$-split simple Lie groups with finite center are isogenous to either $\Sl(n,\R)$, $\Sp(2n,\R)$, $\So(n,n)$, or $\So(n,n-1)$ or to one of the five exceptional  $\R$-split simple Lie groups.

\begin{custheorem} \label{Rsplit} Let $G$ be a connected $\R$-split simple Lie group with real rank at least $2$ and let $\Gamma\subset G$ be a lattice subgroup. Let $M$ be a compact manifold and let $\alpha \colon \Gamma \to \Diff^{1+\beta}(M)$ be a homomorphism for $\beta>0$.

\begin{enumlemma}
\item\label{uglyface} If $\dim(M)< v(G)$ then $\alpha(\Gamma)$ is finite.
\item\label{uglyface2} If $\dim(M)\le v(G)$ and if $\alpha(\Gamma)$ preserves a volume form then $\alpha(\Gamma)$ is finite.
\end{enumlemma}
\end{custheorem}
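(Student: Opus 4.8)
The plan is to follow the architecture of \cite{BFH,BFH-SLnZ}: reduce \cref{Rsplit} to the existence of a \emph{continuous} $\alpha(\Gamma)$-invariant Riemannian metric on $M$, and construct such a metric by first proving \emph{uniform subexponential growth} of the fiberwise derivative cocycle of $\alpha$ and then applying strong property~(T). The reduction is standard: if $\alpha(\Gamma)$ preserves a continuous metric $g$ then $\alpha(\Gamma)\subseteq\Isom_g(M)$, a compact Lie group, and since $G$ is simple, noncompact, of real rank $\ge2$, Margulis superrigidity for homomorphisms of $\Gamma$ into compact Lie groups forces $\alpha(\Gamma)$ to be finite — its adjoint group admits no nontrivial continuous homomorphism to a compact group — after the usual passage to a torsion-free finite-index subgroup and bookkeeping for the center of $G$. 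Thus it suffices to build the metric: in (a) under $\dim M<v(G)$, in (b) under $\dim M\le v(G)$ with $\alpha$ volume-preserving.

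\textbf{Suspension and the cocycle.} Form the suspension $M^\alpha=(G\times M)/\Gamma$, a bundle over $G/\Gamma$ with fiber $M$ and a left $G$-action, fix a maximal $\R$-split torus $A=\exp(\liea)$ with positive chamber $\liea^+$, and let $D^\alpha\colon G\times M^\alpha\to\Gl(\dim M,\R)$ be the fiberwise derivative cocycle. The feature distinguishing this setting from the cocompact one is that $D^\alpha$ is \emph{unbounded}: on a Siegel fundamental domain, returning to the thick part from height $T$ in a cusp costs composing $\alpha$ with words in $\Gamma$ of length comparable to a power of $T$, so $\log\lVert D^\alpha(g,\cdot)\rVert$ lies in $L^p(G/\Gamma,\mathrm{Haar})$ for all $p<\infty$ but not in $L^\infty$; the relevant word-length bounds come from the quasi-isometric control of the word metric of $\Gamma$ by its rank-one subgroups, \cref{QIBddGenByRank1} and \cref{QIBddGenByRank1lift}. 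The target is then the estimate: for every $\epsilon>0$ there is $C$ with $\lVert D^\alpha(\gamma)\rVert\le Ce^{\epsilon\lvert\gamma\rvert}$ in word length. Granting it, the $\Gamma$-action on a H\"older/Sobolev completion of the space of Riemannian metrics on $M$ has norm growing subexponentially in word length, so strong property~(T) for $\Gamma$ (known for $\R$-split higher-rank simple groups and inherited by their lattices) produces a nonzero invariant vector, which a regularity argument promotes to a continuous invariant metric; with the reduction above this proves \cref{Rsplit}.

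\textbf{Subexponential growth from measure rigidity and the dimension bound.} Suppose the estimate fails. Then for a suitable $s\in\liea^+$ and suitable measures $\mu$ on $M^\alpha$ — $A$-invariant in case (b), random-walk stationary in case (a) — projecting to the appropriate measure on $G/\Gamma$, the top fiberwise Lyapunov exponent of $\exp(\R_{\ge0}s)$ is positive. Splitting $D^\alpha$ into coarse Lyapunov exponents and applying measure rigidity for the resulting higher-rank action (Brown--Rodriguez Hertz--Wang) produces either a measurable invariant metric — forcing the exponents to vanish, a contradiction — or a nontrivial equivariant structure. In case (b) one combines this with Zimmer's cocycle superrigidity \cite{MR0776417}: the coarse exponents become restricted weights of a nontrivial $\lieg$-subrepresentation of $\R^{\dim M}$, which cannot exist once $\dim M<n(G)$; since $n(G)=v(G)+1$ exactly for $\lieg=\sl(n,\R)$ and $\lieg=\mathfrak{sp}(2n,\R)$, this yields the optimal bound there and the bound $\dim M\le v(G)$ in general. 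In case (a) the coarse stable/unstable structure yields a measurable — and, via the topological structure along fibers and exponential mixing, essentially continuous — $G$-equivariant map from a closed invariant subset of $M^\alpha$ onto a proper homogeneous space $G/H$ with $\dim G/H\le\dim M$; but every such space has dimension $\ge v(G)$, contradicting $\dim M<v(G)$.

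\textbf{The main obstacle: escape of mass.} Every use above of measure rigidity, of a stationary measure, and of the renormalization limits involves passing to weak-$*$ limits of probability measures on the \emph{noncompact} space $M^\alpha$, while the cocycle $D^\alpha$ controlling the exponents is unbounded over the cusps of $G/\Gamma$. A priori, mass can escape into the cusps carrying a positive share of the Lyapunov exponent — ``escape of Lyapunov exponent'' — so limiting measures record strictly smaller exponents than the approximants and the dichotomy of the previous paragraph collapses; indeed even the existence of a stationary probability measure on $M^\alpha$ in case (a) is at stake. Excluding this quantitatively is the crux of the paper: combining reduction theory (Siegel sets, the height function on $G/\Gamma$), the $L^p$-tempering of $D^\alpha$ above, and quantitative non-divergence / ``logarithm law'' estimates for the $A$-action on $G/\Gamma$, one shows that for any approximating sequence $\mu_n$ the contribution of the region $\{\text{height}\ge T\}$ to $\int\log\lVert D^\alpha\rVert\,d\mu_n$ is $o(1)$ uniformly in $n$ as $T\to\infty$ — i.e.\ $\{\mu_n\}$ is uniformly integrable against $\log\lVert D^\alpha\rVert$ — which restores continuity of the Lyapunov functionals under the limit and lets the previous paragraph go through. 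I expect this reconciliation of the only-$L^p$, never-$L^\infty$ cocycle with the measure-theoretic limits, \emph{uniformly} along the approximating sequences, to be the principal difficulty, and it is exactly what the cocompact arguments of \cite{BFH} do not supply.
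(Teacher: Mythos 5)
You have correctly identified the overall architecture: reduce to producing a continuous $\alpha(\Gamma)$-invariant metric, produce it by combining uniform subexponential growth of derivatives with strong property~(T) for nonuniform higher-rank lattices (de la Salle) and then Margulis superrigidity for compact codomains; and you have correctly identified the suspension space, the unbounded fiberwise derivative cocycle, and escape of mass over the cusps of $G/\Gamma$ as the central obstruction. This is the paper's strategy. But there are two substantive gaps.

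The first and most serious concerns your proposed resolution of ``escape of Lyapunov exponent.'' You propose to show that the approximating sequence $\{\mu_n\}$ is uniformly integrable against $\log\lVert D^\alpha\rVert$, so that the Lyapunov functional passes to the weak-$*$ limit. This is exactly the strategy the paper states does \emph{not} work: the empirical measures along $A_H$-orbits arising in the $\Q$-rank-$1$ case are only uniformly \emph{tight} (\cref{lem:maxpathstight}), not uniformly exponentially small at infinity, and the paper is explicit (\cref{subsubsec:qrank1}) that the cocycle need not be $\log$-$L^1$ for the limiting measures and that even when it is, no semicontinuity of the top exponent follows. Upper semicontinuity of the top exponent for tempered cocycles requires uniformly exponentially small mass at infinity (\cite[Lemma~3.8]{BFH-SLnZ}), which is unavailable here. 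The paper's actual solution is a new device: the time-averaged cocycle $\overline\psi$ and the family of cut-off cocycles $\psi_\ell$ defined in~\eqref{psiellDefn}, obtained by replacing the infinitesimal generator of the norm-growth cocycle with its time average during each cusp excursion above height $\ell$. These are bounded but discontinuous; the crucial technical content (\cref{discontinuities}, \cref{Measure(T)=0}, \cref{sameinvmeas}, \cref{average limit}) is that the discontinuity set is tame enough that, for almost every cut-off height $\ell$, integrals of $\psi_\ell$ do pass to weak-$*$ limits, and the resulting ``fake'' exponent agrees with the genuine top exponent once a measure projects to Haar (because Haar has exponentially small mass at infinity, \eqref{HaarSmallCusps}). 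Without this mechanism, the averaging steps that upgrade the $(A_HN)$-invariant measure on $X_{\wtd H}$ produced by \cref{TimeAvgExp>0} to an $A$-invariant measure on $X$ projecting to Haar cannot be controlled, and the contradiction does not close.

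Second, your sketch omits the organizing reduction that makes the whole argument tractable: quasi-isometric bounded generation of $\Gamma$ by $\Q$-rank-$1$ subgroups (\cref{QIBddGenByRank1}, \cref{QIBddGenByRank1lift}), and the iterated subexponential-growth results for unipotent $\Q$-subgroups (\cref{UnipSubgrp}) and cusp groups of standard $\Q$-rank-$1$ subgroups (\cref{prop:cuspgroup}) that feed into the final $\Q$-rank-$1$ step (\cref{thm:mainQR1}). Also, both cases~(a) and~(b) of \cref{Rsplit} use the \emph{same} $A$-invariant, Haar-projecting measure produced by \cref{thm:whatweprove}; the split between the two cases is entirely inside \cref{prop:invprinc} (dimension $<r(G)$ unconditionally vs.\ $\le r(G)$ with volume preservation) followed by \cref{lemma:fromzcsr}. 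There is no ``random-walk stationary measure'' branch in case~(a) at this level of the argument, so that framing should be dropped.
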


The special case of \cref{Rsplit} (and of \cref{thm:main} below) for cocompact $\Gamma$ was proved by the authors in \cite{BFH}. As already remarked above, this paper primarily concerns the case that $\Gamma$ is not cocompact and develops new techniques and arguments to overcome the lack of compactness of the homogeneous space $G/\Gamma$. The standard example of a non-uniform lattice is the subgroup $\Sl(n,\Z)$ in $\Sl(n,\R)$; for this special case, \cref{Rsplit} was established by the authors in \cite{BFH-SLnZ}. Although this paper extends and generalizes many of the ideas from \cite{BFH,BFH-SLnZ} to the setting of general non-uniform lattices, important new ideas and techniques are needed here. The difficulty for non-uniform lattices is various subgroup trajectories escaping to infinity in $G/\Gamma$ or even spending too much time ``near infinity''. Particularly there is a need to control the growth of certain cocycles over such trajectories.  We introduce genuinely new approaches to handle the ensuing issues in this paper, primarily in Section \ref{SubexpForRank1Subgroups}.

\subsection{Critical dimensions, full conjecture, and general result} \label{GeneralConjSect}
We state the full version of Zimmer's conjecture in order to put our results in context.
As before, given a semisimple Lie group $G$, let $n(G)$ denote the minimal dimension of a non-trivial real representation of the Lie algebra $\lieg$ of $G$ and let
$v(G)$ denote the minimal codimension of all proper subgroups $H$ of $G$; we note that this is equivalent to the minimal codimension of all proper parabolic subgroups $Q$ of $G$. (See \cite[Lemma 3.7]{BFH}.)
Let $d(G)$ denote the minimal dimension of all non-trivial homogenous spaces $K/C$ as $K$ varies over all compact real-forms of all simple factors of the complexification of $G$. Given the numbers $n(G), d(G),$ and $ v(G)$ the full conjecture is the following.

\begin{conjecture}[Zimmer's conjecture]\label{conjecture:zimmer}
Let $G$ be a connected  semisimple Lie group, all of whose simple factors have real rank at least $2$, and let $\Gamma\subset G$
be a lattice subgroup. Let $M$ be a compact manifold and let $\omega$ be a volume form on $M$.
\begin{enumlemma}
\item\label{conjecture:zimmer-a} If $\dim(M) < \min \bigl( d(G), v(G) \bigr)$ then any homomorphism $\alpha\colon \Gamma \rightarrow \Diff(M)$ has finite image.
\item\label{conjecture:zimmer-b} If $\dim(M) < \min \bigl( d(G), n(G) \bigr)$ then any homomorphism $\alpha\colon \Gamma \rightarrow \Diff(M, \omega)$ has finite image.
\item\label{conjecture:zimmer-c} If $\dim(M) < n(G)$ then for any homomorphism $\alpha\colon \Gamma \rightarrow \Diff(M, \omega)$, the image $\alpha(\Gamma)$ preserves a Riemannian metric.
\item\label{conjecture:zimmer-d} If $\dim(M) < v(G)$ then for any homomorphism $\alpha\colon \Gamma \rightarrow \Diff(M)$, the image $\alpha(\Gamma)$ preserves a Riemannian metric.
\end{enumlemma}
\end{conjecture}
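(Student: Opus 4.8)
The plan is to establish the cases of \cref{conjecture:zimmer} that are within reach here, namely the conclusions recorded in \cref{Rsplit}: part (a) of \Cref{ZimmerConjNonVol} for $C^{1+\beta}$ actions of higher-rank $\R$-split lattices, together with the volume-preserving statement up to the sharp bound $v(G)$ when $\lieg\cong\sl(n,\R)$ or $\lieg\cong\liesp(2n,\R)$ and partial bounds otherwise. The backbone is the \emph{suspension} (induced action) construction: from $\alpha\colon\Gamma\to\Diff^{1+\beta}(M)$ one forms the $M$-bundle $M^\alpha=(G\times M)/\Gamma$ over $G/\Gamma$, which carries a natural $G$-action, and one studies the fiberwise derivative, a linear cocycle over the $G$-action on $M^\alpha$. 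As in \cite{BFH,BFH-SLnZ}, the whole problem reduces to proving that \emph{every fiberwise Lyapunov exponent of this cocycle vanishes} relative to every $A$-invariant probability measure on $M^\alpha$, where $A<G$ is a maximal $\R$-split torus: once the cocycle has uniformly subexponential growth, the Zimmer cocycle-superrigidity heuristic recalled above can be made rigorous. One obtains a measurable $\alpha(\Gamma)$-invariant Riemannian metric on $M$; subexponential growth together with $C^{1+\beta}$-regularity promotes it to a continuous invariant metric; and then $\alpha(\Gamma)\subset\Isom_g(M)$ is precompact, so Margulis's superrigidity theorem for representations into compact groups forces $\alpha(\Gamma)$ finite.

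To make every Lyapunov exponent vanish I would run a dichotomy on the coarse Lyapunov exponents of the cocycle. If some coarse exponent is nonzero on an $A$-invariant (or suitable stationary) measure, then restricting to $\Sl(2,\R)$-triples and rank-one subgroups of $G$ and applying Zimmer's cocycle superrigidity theorem \cite{MR0776417}, or the measure-rigidity input used in \cite{BFH}, produces a nontrivial equivariant family of low-dimensional homogeneous quotients of the fibers, which forces $\dim M\ge v(G)$ — here one uses that $v(G)$ is the minimal codimension of a proper parabolic subgroup — and contradicts the hypothesis. The volume hypothesis, together with the finer low-dimensional representation theory available for $\sl(n,\R)$ and $\liesp(2n,\R)$, is exactly what lets one sharpen $\dim M<v(G)$ to $\dim M\le v(G)$ in part (b) of \cref{Rsplit}. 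With nonzero exponents excluded along invariant measures, the complementary step averages the logarithm of the norm of the derivative cocycle over growing balls in $G$ and uses Lafforgue--de la Salle strong property (T) for $G$ to force this average to grow subexponentially — which is exactly the uniform subexponential growth of the cocycle.

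The main obstacle — and the reason this is not a mere reprise of \cite{BFH} — is that $G/\Gamma$, and hence $M^\alpha$, is noncompact, so the derivative cocycle is \emph{unbounded} and the limiting and averaging arguments can leak mass into the cusps. Two kinds of control must be combined. On the homogeneous-dynamics side one needs quantitative nondivergence in $G/\Gamma$ of Dani--Margulis type: for the averages over $G$ used above, the orbit of almost every point spends all but an arbitrarily small proportion of the time in a fixed compact set, with effective rates; here the description of cusp neighborhoods by Siegel sets and the fact that cuspidal subgroups are generated quasi-isometrically by rank-one subgroups (\cref{QIBddGenByRank1}, due to Witte Morris \cite{MR4074402}) are what make the recurrence estimates usable. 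On the cocycle side one needs an a priori bound showing the fiberwise derivative grows at most like a fixed power of the height function on $G/\Gamma$; compactness of $M$ and $C^{1+\beta}$-control of $\alpha$ make the cocycle tame relative to the geometry of $G/\Gamma$. The delicate point — what the abstract calls avoiding ``escape of Lyapunov exponent'' — is that escape of mass alone does not suffice: a vanishingly small amount of mass sitting deep in a cusp, where the cocycle is large, could still carry a definite amount of exponent, so that the Lyapunov exponents of a weak limit of measures fail to be the limits of the individual exponents. One must therefore balance the \emph{rate} at which mass escapes against the \emph{rate} at which the cocycle can grow in the cusp, and prove that in the limit the cuspidal contribution to every Lyapunov functional is negligible. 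I expect this quantitative balancing — showing the cocycle cannot ``hide'' Lyapunov exponent at infinity — to be the technical heart of the paper; once it is in place, the remaining structure follows the template of \cite{BFH,BFH-SLnZ}, and the persistence of the weaker bound $n(G)$ rather than $v(G)$ for general $\R$-split $G$ in the volume case reflects precisely where the resonance-and-superrigidity bookkeeping is not yet sharp.
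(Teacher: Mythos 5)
The statement you are ``proving'' is \cref{conjecture:zimmer}, which the paper states as an open \emph{conjecture}: there is no proof of it in the paper, and the authors explicitly do not claim one. What the paper proves is \cref{Rsplit} and \cref{thm:main}, which are strictly weaker. Your proposal is, in substance, an accurate sketch of the paper's strategy for those partial results (suspension, reduction to uniform subexponential growth of the fiberwise derivative cocycle, strong property (T) to upgrade a measurable invariant metric to a continuous one, Margulis superrigidity with compact codomain, and the nondivergence/``escape of Lyapunov exponent'' analysis in the cusps). But presented as a proof of \cref{conjecture:zimmer} it has a genuine, quantifiable gap: the dimension threshold this machinery delivers is $r(G)$, the minimal resonant codimension, not $\min(d(G),v(G))$ or $\min(d(G),n(G))$. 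One has $r(G)=v(G)$ only when $G$ is $\R$-split (more precisely $r(G)=v(G')$ for a maximal split subgroup $G'$), and for non-split groups $r(G)$ can be far smaller than all of $v(G)$, $n(G)$, $d(G)$ --- compare, say, $E_{III}$ in Table \ref{T4}, where $r=3$ but $v=21$. Even in the $\R$-split case the volume-preserving bound $n(G)$ of parts \ref{conjecture:zimmer-b} and \ref{conjecture:zimmer-c} is reached only for $\sl(n,\R)$ and $\liesp(2n,\R)$, where $n(G)=v(G)+1$; for $\so(n,n)$, for instance, the method stops one dimension short. The bottleneck is exactly where you gesture at ``resonance-and-superrigidity bookkeeping'': the invariance principle (\cref{prop:invprinc}) converting an $A$-invariant measure with positive exponent into a $G$-invariant one requires $\dim(M)<r(G)$ (or $\le r(G)$ with volume), and no argument in your sketch replaces $r(G)$ by the conjectured thresholds. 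So the proposal should be read as a correct outline of the proof of \cref{thm:main}, not of \cref{conjecture:zimmer}, which remains open in the generality stated.
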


\begin{remark}\label{rem:step3}
\
\begin{enumerate}
\item The finiteness of the image $\alpha(\Gamma)$ in parts \ref{conjecture:zimmer-a} and~\ref{conjecture:zimmer-b} of Conjecture \ref{conjecture:zimmer} follow from the conditions in parts \ref{conjecture:zimmer-c} and~\ref{conjecture:zimmer-d} and Margulis's superrigidity theorem for representations into compact groups. If $g$ is an $\alpha(\Gamma)$-invariant continuous Riemannian metric on $M$ then the isometry group $K$ of $(M,g)$ is a compact Lie group and $\alpha(\Gamma)$ is contained in $K$. Margulis's superrigidity theorem implies that either $\alpha(\Gamma)$ is finite or $K$ contains a compact form of a simple factor of $G$. The latter option is ruled out by dimension considerations. See \cite[\S 2.3]{BFH} for a detailed discussion.

\item For a non-uniform lattice, Margulis's superrigidity theorem also implies that any homomorphism from $\Gamma$ to a compact Lie group $K$ has finite image. In our context, this implies that whenever $\alpha(\Gamma)$ preserves a Riemannian metric, $\alpha(\Gamma)$ is finite.

\item The tools in this paper are very well adapted to study parts \ref{conjecture:zimmer-a} and~\ref{conjecture:zimmer-b} of \cref{conjecture:zimmer}. In situations in which parts \ref{conjecture:zimmer-c} and~\ref{conjecture:zimmer-d} are not subsumed by \ref{conjecture:zimmer-a} and~\ref{conjecture:zimmer-b}, certain technical arguments used in our proof seem insufficient to give a complete answer but do yield the best known partial results.
 \end{enumerate}
 \end{remark}

For Lie groups that are not $\R$-split, we obtain partial results towards \cref{conjecture:zimmer}. To each simple Lie algebra $\lieg$, we associate a number $r(\Sigma_\lieg)$, computed in terms of the
restricted root system $\Sigma_\lieg$ of $\lieg$,
and define  $$r(G) = r(\lieg)= r(\Sigma_\lieg).$$ The number $r(\lieg)$ was defined in terms of combinatorics of root systems in \cite{BFH}; that definition is reviewed in \cref{def:resCoD} below.
For $\R$-split Lie groups $G$, we always have $r(G) = v(G)$. More generally, for simple $G$ it follows from \cite[Theorem 7.2]{MR0207712} that $r(G) = v(G')$, where $G'$ is any maximal connected, $\R$-split, semisimple subgroup of~$G$ (with the same reduced restricted root system.)  See \cref{r(G)=v(split)} below.
Given a semisimple Lie algebra $\lieg$, we have $r(\lieg)=\min r(\lieg')$ where the minimum is taken over every non-compact, non-trivial simple ideal $\lieg'\subset \lieg$.

With the above notation, we have the following extension of \cref{Rsplit}.
\begin{custheorem}\label{thm:main}
Let $G$ be a semisimple Lie group such that every non-compact factor has real rank at least 2. Let $\Gamma \subset G$ be a lattice subgroup and let $M$ be a compact manifold. Fix $0<\beta\le 1$.
\begin{enumerate}
\item If $\dim(M) < r(G)$ then any homomorphism $\alpha\colon \Gamma \to \Diff^{1+\beta}(M)$ has finite image.
\item If $\omega$ is a volume form on $M$ and if $\dim(M) \le r(G)$ then any homomorphism $\alpha\colon \Gamma \to \Diff^{1+\beta}(M, \omega)$ has finite image.
\end{enumerate}
\end{custheorem}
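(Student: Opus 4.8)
The plan is to argue by contradiction: assuming $\alpha(\Gamma)$ is infinite, I will force $\dim M\ge r(G)$, with strict inequality unless $\alpha$ preserves a volume form. First come the standard reductions. Since finiteness of $\alpha(\Gamma)$ is unaffected by passing to a finite-index subgroup of $\Gamma$, and since a discrete subgroup of a compact group is finite, we may discard the compact factors of $G$; writing a finite-index subgroup of $\Gamma$ as a direct product of irreducible lattices in subproducts of the simple factors of $G$ --- whose images under $\alpha$ commute --- and using $r(\lieg)=\min r(\lieg')$ over the non-compact simple ideals, we reduce to the case where $\Gamma$ is an irreducible lattice in $G$ and every simple factor of $G$ has real rank at least $2$; a further standard reduction disposes of a possibly infinite center. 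When $G$ is moreover $\R$-split and simple this is exactly \cref{Rsplit}, since there $r(G)=v(G)$; so the content beyond \cref{Rsplit} is the passage to an arbitrary reduced restricted root system $\Sigma_\lieg$ and to irreducible lattices in products of higher-rank simple groups, all of which are handled uniformly by the argument below, following \cite{BFH,BFH-SLnZ}.

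The dynamical core of the argument studies the induced $G$-space $M^\alpha=(G\times M)/\Gamma$, a bundle over $G/\Gamma$ with compact fiber $M$, together with the fiberwise derivative cocycle $D$, a cocycle valued in $\GL(\dim M,\R)$ that is a priori unbounded on $M^\alpha$ and whose Lyapunov-exponent analysis is made possible by the $C^{1+\beta}$ hypothesis. One chooses an ergodic measure $\mu$ for the action of the maximal $\R$-split torus $A$ that projects to the (finite) Haar measure on $G/\Gamma$, and studies the fiberwise Lyapunov exponents of $D$, which are linear functionals on $\liea$. The main inputs are: strong property (T) for $\Gamma$, which forces subexponential growth of $\|D\alpha(\gamma)\|$ in the word-length of $\gamma$ and thereby controls Birkhoff averages of $\log\|D\|$; the cocycle superrigidity and measure-rigidity / ``resonance'' theory for the $A$-action on $M^\alpha$, which constrain the fiberwise Lyapunov functionals to be restrictions of weights of a finite-dimensional representation of $\lieg$, arranged along the Weyl-chamber structure; and the combinatorics of $\Sigma_\lieg$, which show that no nonzero such configuration is compatible with $\dim M<r(G)$, so that every fiberwise Lyapunov exponent vanishes identically. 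An averaging argument then upgrades the resulting measurable $\alpha(\Gamma)$-invariant metric on $TM$ to a continuous one; its isometry group is a compact Lie group containing $\alpha(\Gamma)$, so $\alpha(\Gamma)$ is finite by Margulis's superrigidity theorem for representations of higher-rank lattices into compact groups --- the desired contradiction. When $\alpha$ preserves $\omega$, the cocycle $D$ has determinant one, so the exponents satisfy $\sum_i\lambda_i\equiv 0$, and this extra relation rules out the one nontrivial configuration that survives at the critical dimension $\dim M=r(G)$.

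The main obstacle --- the reason the outline above is far from routine for non-uniform $\Gamma$ --- is that $G/\Gamma$, hence $M^\alpha$, is non-compact and $D$ is unbounded on it, which endangers the scheme twice over. When $\mu$ is obtained as a weak-$\ast$ limit of empirical measures along $A$-orbits, mass can escape into the cusp, so that the limit may fail to project onto all of Haar; and, worse, even with no loss of mass one may have $\int\log\|D\|\,d\mu_n\not\to\int\log\|D\|\,d\mu$, so that a Lyapunov exponent is destroyed in the limit (``escape of Lyapunov exponent''). Preventing this is the technical heart of the paper and the step I expect to be hardest, precisely because it must be carried out for an arbitrary non-uniform lattice rather than for the explicit $\Sl(n,\Z)$ of \cite{BFH-SLnZ}. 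The plan is to use the reduction theory of $\Gamma$ in $G$ --- Siegel sets attached to the $\Q$-parabolic subgroups, and the bounded generation of the cusp by rank-one and unipotent subgroups (\cref{QIBddGenByRank1} of Witte Morris and its relatives) --- to bound the word-length of the $\Gamma$-elements entering the trivialization of $D$ over the cusp by a controlled function of a height function on $G/\Gamma$; to feed this into the strong-property-(T) growth estimate and so dominate $\|D\|$ by an explicit subexponential function of that height; and finally to combine these bounds with non-divergence estimates for the relevant flows on $G/\Gamma$ (of Dani--Margulis and Eskin--Margulis--Mozes type) to show that the tails of $G/\Gamma$ carry too little $\mu$-mass, at all relevant scales, for any Lyapunov exponent to concentrate there under limits. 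With this quantitative control of the cusp in hand, the remaining ingredients --- cocycle superrigidity, measure rigidity, and the root-system bookkeeping --- proceed essentially as in the cocompact setting of \cite{BFH}.
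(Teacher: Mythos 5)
Your proposal reproduces the paper's architecture: the induced $G$-action on $(G\times M)/\Gamma$, the fiberwise derivative cocycle, $A$-invariant measures projecting to Haar, the invariance principle plus Zimmer cocycle superrigidity to kill exponents, the endgame through a continuous invariant metric and Margulis superrigidity with compact codomain, and the correct identification of escape of mass and of Lyapunov exponent as the crux, with the right tools named (Siegel sets, Witte Morris's quasi-isometric bounded generation by standard $\Q$-rank-one subgroups, non-divergence of unipotent flows). The reductions and the role of the determinant constraint at the critical dimension in the volume-preserving case also match \cref{Rsplit} and the proof of \cref{thm:mainreal}.

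There is, however, one genuine logical error in your chain: strong property (T) does \emph{not} ``force subexponential growth of $\|D\alpha(\gamma)\|$.'' Strong (T) is a property of $\Gamma$ alone and is perfectly compatible with actions whose derivatives grow exponentially in word length (e.g.\ $\Sl(n,\Z)$ acting by automorphisms of $\T^n$); uniform subexponential growth of derivatives is the \emph{conclusion} of the dynamical analysis (\cref{thm:C}), proved by contraposition: its failure yields, after the escape-of-mass work, an $A$-invariant measure projecting to Haar with positive top exponent (\cref{mainthm}), contradicting \cref{prop:invprinc} and \cref{lemma:fromzcsr}. Strong property (T) enters only \emph{after} subexponential growth is in hand, as the second hypothesis of \cref{thm:sobolostintranslation}, to produce the continuous invariant metric. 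As written, your ``averaging argument [that] upgrades the resulting measurable $\alpha(\Gamma)$-invariant metric to a continuous one'' is exactly the unsupported step: a measurable invariant metric has been available from Zimmer's cocycle superrigidity since the 1980s and does not become continuous by averaging alone; the upgrade is the Lafforgue--de la Salle machinery whose essential input is the uniform (everywhere on $M$, for every $\gamma$, not merely $\mu$-a.e.) subexponential growth estimate, which in your outline is supplied circularly by strong (T) itself. Reordering the argument — dynamics first to get \cref{thm:C}, then strong (T) to get the metric — repairs the proof and recovers the paper's.
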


See Tables \ref{T1}--\ref{T4} in \cref{appendix} for computations of the numerology $v,n,d,$ and $r$ associated to simple real Lie groups.

\subsection{Actions by $C^1$ diffeomorphisms}
Certain arguments used in this paper require the action to be by diffeomorphisms that are at least $C^{1+\beta}$ for $\beta>0$; specifically, \cref{prop:invprinc} below uses tools from smooth ergodic theory in its proof which  requires the action to be at least $C^{1+\beta}$. However, most arguments in this paper work for $C^1$ actions and an analogue of \cref{thm:C} below holds in the $C^1$ setting under more restrictive dimension assumptions. Using results from \cite{BrownDamjanovicZhang} one obtains the following.
\begin{theorem}[{c.f. {\cite[Theorem 1]{BrownDamjanovicZhang}}}]\label{thm:FiniteImage}
Let $G$ be a connected simple Lie group with real rank at least $2$. Let $\Gamma \subset G$ be a lattice subgroup and let $M$ be a compact manifold.
\begin{enumerate}
\item If $\dim(M) < \rank_\R(G)$ then any homomorphism $\alpha\colon \Gamma \to \Diff^{1}(M)$ has finite image.
\item If $\omega$ is a volume form on $M$ and if $\dim(M) \le \rank_\R(G)$ then any homomorphism $\alpha\colon \Gamma \to \Diff^{1}(M, \omega)$ has finite image.
\end{enumerate}
\end{theorem}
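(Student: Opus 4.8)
The plan is to follow the suspension-and-cocycle scheme of \cite{BrownDamjanovicZhang}, substituting the escape-of-mass estimates developed in this paper for the compactness of $G/\Gamma$ wherever a limiting measure on the suspension space is needed, and then to conclude via Margulis's superrigidity theorem. We may assume $G$ is simple with $k := \rank_\R(G) \ge 2$, that $\Gamma$ is not cocompact (the cocompact case being contained in \cite{BrownDamjanovicZhang}), and that $\dim M = d < k$ in case (1), resp.\ $d \le k$ in case (2). Form the suspension space $M^\alpha = (G \times M)/\Gamma$, a fiber bundle over $G/\Gamma$ with compact fiber $M$ on which $G$ acts; the fiberwise derivative of $\alpha$ defines a continuous $\GL(d,\R)$-valued cocycle over the $G$-action (of determinant $\pm 1$ in the volume-preserving case).

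The analytic heart of the argument is to produce on $M^\alpha$ a probability measure that is invariant, or stationary, under a maximal $\R$-split Cartan subgroup $A=\exp(\liea)$ — in the absence of an invariant volume one first passes to a suitable stationary measure, as in \cite{BFH} — together with quantitative control on the fiberwise Lyapunov exponents of the derivative cocycle. Because $G/\Gamma$ is non-compact, $M^\alpha$ is non-compact and such a measure must be obtained as a weak-$*$ limit of averages over \Folner sets in $A$; the danger is that mass escapes into the cusps of $G/\Gamma$ and, worse, that the fiberwise Lyapunov exponents degenerate in the limit. Ruling this out — establishing non-escape of mass and no ``escape of Lyapunov exponent'' — is precisely the obstacle this paper is built to overcome, and I expect it to be the main difficulty: one must transport the quantitative concentration-at-infinity estimates, together with the accompanying tempering and integrability bounds on the unbounded cocycle, into the $C^1$ setting. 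This transport is unproblematic because, unlike \cref{Rsplit} and \cref{thm:main}, the present weaker statement uses neither non-uniform hyperbolicity nor the invariance principle of \cref{prop:invprinc}, and hence requires only $C^1$ regularity of $\alpha$.

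Granting such a measure, write $\lambda_1,\dots,\lambda_d \in \liea^*$ for the fiberwise Lyapunov exponent functionals of the derivative cocycle over the $A$-action. In case (1) the $d<k$ functionals cannot span $\liea^*$, while in case (2) they satisfy the linear relation $\sum_i \lambda_i = 0$ coming from volume preservation and so span a proper subspace; in either case there is a nonzero $Z\in\liea$ in their common kernel, and $\exp(\R Z)$ acts with all fiberwise exponents equal to zero. Since $d$ lies below the relevant non-resonance threshold, Zimmer's cocycle superrigidity theorem \cite{MR0776417} over the $G$-action, combined with this zero-exponent direction and the higher-rank structure of $G$, forces the derivative cocycle to be measurably cohomologous to an $\mathrm{O}(d)$-valued cocycle; equivalently $\alpha(\Gamma)$ preserves a measurable Riemannian metric on $TM$, and the same circle of ideas — carried out in \cite{BrownDamjanovicZhang} without smooth ergodic theory — upgrades this to a continuous invariant metric. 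Then $\alpha(\Gamma)$ lies in the compact Lie group $K=\Isom(M)$ of that metric, and since $\Gamma$ is non-uniform, Margulis's superrigidity theorem for representations into compact groups shows any homomorphism $\Gamma\to K$ has finite image (see \cref{rem:step3}), completing the proof.
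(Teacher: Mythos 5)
Your sketch of the escape-of-mass step is right: the paper's main technical theorem (\cref{mainthm}) needs only $C^1$ regularity and, if uniform subexponential growth of derivatives (USEGOD) fails, produces a $(\calZ A)$-invariant probability measure $\mu$ on the suspension $X$ projecting to Haar on $G/\Gamma$ with a positive top Lyapunov exponent. Your linear algebra is also correct: with $d<k$ (resp.\ $d\le k$ and $\sum\lambda_i=0$), the exponent functionals span a proper subspace of $\liea^*$, so some nonzero $Z\in\liea$ lies in their common kernel.

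The gap is what you do with $Z$. You invoke ``Zimmer's cocycle superrigidity theorem over the $G$-action'' directly, but cocycle superrigidity requires a $G$-invariant measure, and $\mu$ is only $(\calZ A)$-invariant. The zero-exponent direction $Z$ is not an input to cocycle superrigidity; it is the input to an \emph{invariance principle} that upgrades $(\calZ A)$-invariance of $\mu$ to $G$-invariance. In the $C^{1+\beta}$ case this is \cref{prop:invprinc}, which rests on smooth ergodic theory and is the reason for the H\"older hypothesis; in the $C^1$ case it is replaced by \cite[Proposition 3]{BrownDamjanovicZhang}, whose hypotheses are what force the stronger dimension bound $\dim M < \rank_\R(G)$ (resp.\ $\le$). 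This substitution is exactly the paper's proof of \cref{thm:FiniteImage}: run the proof of \cref{thm:mainreal} verbatim with \cref{prop:invprinc} replaced by \cite[Proposition 3]{BrownDamjanovicZhang}. You cite neither, so your passage to a compact-valued cocycle is unsupported.

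Second, the shape of the argument is wrong. Once $\mu$ is known to be $G$-invariant, \cref{lemma:fromzcsr} (cocycle superrigidity over the $G$-invariant measure, $\dim M < n(G)$) forces \emph{all} exponents to vanish, which contradicts the positive exponent delivered by \cref{mainthm}; this contradiction establishes USEGOD. Only then does strong property $(T)$ \cite{MR4018265}, via the $C^1$ averaging argument of \cite[Proposition 5]{BrownDamjanovicZhang} (the $C^1$ analogue of \cref{thm:sobolostintranslation}), produce a continuous invariant Riemannian metric. Your proposal skips USEGOD and strong property $(T)$ entirely and asserts instead that ``the same circle of ideas upgrades'' the measurable metric to a continuous one directly. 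No such direct promotion exists --- this is precisely the step on which Zimmer's original approach stalled, and neither \cite{BFH} nor \cite{BrownDamjanovicZhang} performs it. The final appeal to Margulis superrigidity with compact codomain is fine once the continuous metric is in hand.
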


\noindent It is easy to check that the dimension bound given by Theorem \ref{thm:FiniteImage} only matches the conjectured bounds in \cref{conjecture:zimmer} for lattices in covers of $\Sl(n, \R)$.

In the case of $C^1$ actions, we replace \cref{prop:invprinc} below with \cite[Proposition 3]{BrownDamjanovicZhang}

\subsection{Proof of \texorpdfstring{\cref{thm:main}}{Theorem B}}
We end the introduction by reducing \cref{thm:main} to the first technical result of the paper, \cref{thm:mainreal} below. The proof of \cref{thm:main} from \cref{thm:mainreal} follows the same outline as in \cite{BFH}.

\fakeSSS{Step 1: Subexponential growth of derivatives}
We recall that lattice subgroups in semisimple Lie groups are finitely generated. Given a finitely generated group $\Gamma$, fix a finite, symmetric generating set $S\subset \Gamma$ and let $\len_\Gamma\colon \Gamma\to \N$ denote the word-length norm on $\Gamma$ relative to $S$. The following definition is the key property we establish in this paper. \index{uniform subexponential growth of derivatives}
\begin{definition}\label{def:USEGOD}
Let $M$ be a compact manifold and let $\alpha\colon \Gamma\to \Diff^1(M)$ be an action. We say the action $\alpha$ has \emph{uniform subexponential growth of derivatives} if for all $\epsilon>0$ there is  $C_\epsilon>0$ such that for all $\gamma\in \Gamma$,
\begin{equation}\label{eq:USEGOD}
\sup_{x\in M} \|D_x \alpha(\gamma)\|\le C_\epsilon e^{\epsilon \len_\Gamma(\gamma)}.
\end{equation}

 \end{definition}
In the sequel, we often restrict $\alpha$ to certain subgroups $\Lambda\subset \Gamma$. If $\Lambda$ is a subgroup of $\Gamma$, we say that the restriction $\restrict {\alpha}{\Lambda}$ has \emph{uniform $\len_\Gamma$-subexponential growth of derivatives} if \eqref{eq:USEGOD} holds for all $\gamma\in \Lambda$. We note in this definition that we always measure the word-length of $\gamma\in \Lambda$ relative to the ambient group $\Gamma$ rather than any intrinsic word-length on the subgroup~$\Lambda$.

Our first technical result of this paper is the following which shows under the dimension constraints in \cref{thm:main} that every action has uniform subexponential growth of derivatives.
\begin{custheorem}\label{thm:USEGoD}\label{thm:mainreal}\label{thm:C}
Let $G$ be a connected semisimple Lie group such that every non-compact simple factor has real rank at least 2. Let $\Gamma \subset G$ be a lattice subgroup, let $M$ be a compact manifold, and let $0<\beta\le 1$.
\begin{enumerate}
\item If $\dim(M) < r(G)$ then any homomorphism $\alpha\colon \Gamma \to \Diff^{1+\beta}(M)$ has uniform subexponential growth of derivatives.
\item If $\omega$ is a volume form on $M$ and if $\dim(M) \le r(G)$ then any homomorphism $\alpha\colon \Gamma \to \Diff^{1+\beta}(M, \omega)$ has uniform subexponential growth of derivatives.
\end{enumerate}
\end{custheorem}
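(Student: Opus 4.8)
The plan is to adapt the suspension-space strategy of \cite{BFH, BFH-SLnZ}, with the genuinely new effort going into controlling loss of mass into the cusp of $G/\Gamma$. Assume for contradiction that $\alpha$ fails \eqref{eq:USEGOD}: there are $\epsilon_0>0$ and $\gamma_n\in\Gamma$ with $\len_\Gamma(\gamma_n)\to\infty$ and $\sup_{x\in M}\log\|D_x\alpha(\gamma_n)\|\ge\epsilon_0\,\len_\Gamma(\gamma_n)$. Let $M^\alpha=(G\times M)/\Gamma\to X:=G/\Gamma$ be the suspension bundle, with its left $G$-action and fiberwise derivative cocycle $\hat A\colon G\times M^\alpha\to\GL(\dim M,\R)$, and let $A\subset G$ be a maximal $\R$-split torus with closed positive Weyl chamber $A^+$. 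By the Lubotzky--Mozes--Raghunathan theorem the word metric on $\Gamma$ is quasi-isometric to the metric induced from $G$, and by Witte Morris's theorem (\cref{QIBddGenByRank1}), which reduces the estimate \eqref{eq:USEGOD} for $\Gamma$ to corresponding estimates along rank-one subgroups, together with a $KA^+K$ decomposition, the problem reduces to showing that the fiberwise (top) Lyapunov exponent of the $A^+$-action on $M^\alpha$ over $X$ is $\le 0$ (and $=0$ in the volume-preserving case), with enough uniformity to return a bound of the form \eqref{eq:USEGOD}.

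The mechanism for the vanishing of exponents is as in \cite{BFH}. For a Borel probability measure $\mu$ on $M^\alpha$ that is invariant under a minimal parabolic $P\supseteq A$ and projects to the homogeneous probability $m_X$ on $X$, the Oseledets theorem for the $A$-action produces finitely many fiberwise Lyapunov functionals $\lambda^\mu_1,\dots,\lambda^\mu_k\in\liea^*$, with multiplicities summing to $\dim M$. Zimmer's cocycle superrigidity theorem \cite{MR0776417}, applied to $\hat A$ over the $G$-action (after restricting to an ergodic component and trivializing a compact-group-valued factor), forces each $\lambda^\mu_i$ to be a restricted weight of a finite-dimensional representation of $G$ of dimension at most $\dim M$; the combinatorics defining the resonance codimension $r(G)$ (\cref{def:resCoD}) then shows that when $\dim M<r(G)$ the only possibility is $\lambda^\mu_1=\dots=\lambda^\mu_k=0$, since any nonzero admissible configuration of such weights requires dimension at least $r(G)$. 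In the volume-preserving boundary case $\dim M\le r(G)$ one argues in the same way but additionally invokes \cref{prop:invprinc} together with the relation $\sum_i(\dim\lambda^\mu_i)\,\lambda^\mu_i=0$ to exclude the remaining one-dimensional possibility. Thus every such $\mu$ has vanishing average fiberwise exponents.

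The obstacle --- and the heart of the paper --- is that over a non-compact $X=G/\Gamma$ one cannot pass from ``every invariant measure has zero exponent'' to ``uniform subexponential growth.'' Two failures occur, both caused by the cusp, where $\hat A$ is unbounded: a $P$-invariant probability on $M^\alpha$ projecting to $m_X$ need not exist, since Cesàro averages of push-forwards can leak mass into the cusp; and, granting suitable measures, the empirical measures built from the sequence $\gamma_n$ may converge to a limiting measure whose (vanishing) exponents fail to see the actual derivative growth --- ``escape of Lyapunov exponent.'' I would control both by a matched pair of quantitative estimates: on the homogeneous side, quantitative non-divergence of the relevant $A$- and unipotent flows on $G/\Gamma$ in the style of Dani--Margulis and Eskin--Margulis--Mozes, bounding the measure of, and the total time spent in, the set of depth $\ge T$ in the cusp with decay in $T$; and, on the manifold side, an effective bound on the rate at which $\log\|\hat A\|$ can increase along an excursion, in terms of the depth reached, using the structure of the unipotent cusp subgroups and their action on $M$. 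When these are matched one finds that the contribution of deep cusp excursions to the Lyapunov exponent is negligible, so mass cannot concentrate at infinity faster than the cocycle grows there; this both produces the measures needed in the previous paragraph and rules out escape of Lyapunov exponent.

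Assembling: the non-resonance mechanism forces the fiberwise exponent of every limiting invariant measure to vanish, the cusp estimates show this genuinely forces $\tfrac1n\sup_{x}\log\|\hat A(a^n,x)\|\to 0$ for each $a\in A^+$, and the Lubotzky--Mozes--Raghunathan quasi-isometry together with a bounded-distortion and tempering argument (using $\beta>0$) upgrades this to $\sup_{x\in M}\log\|D_x\alpha(\gamma)\|=o(\len_\Gamma(\gamma))$, contradicting the choice of the $\gamma_n$; the volume-preserving case additionally uses \cref{prop:invprinc} at the boundary dimension. I expect the quantitative cusp analysis of the preceding paragraph to be the main obstacle: obtaining concentration bounds on $G/\Gamma$ that are sharp enough, and uniform enough over the whole family of dynamically-produced measures, to preclude escape of Lyapunov exponent for an arbitrary non-uniform lattice, rather than only for $\Sl(n,\Z)$ as in \cite{BFH-SLnZ}.
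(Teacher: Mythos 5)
Your outline correctly identifies the architecture of the proof: suspension bundle, quasi-isometric bounded generation by $\Q$-rank-one subgroups, cocycle superrigidity plus dimension constraints to force zero exponents, and quantitative cusp control to preclude escape of mass and of Lyapunov exponent. Two substantive corrections are needed.

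First, you have the roles of the invariance principle and Zimmer's cocycle superrigidity reversed. Zimmer's theorem cannot be applied to a merely $P$-invariant or $A$-invariant measure projecting to Haar; it needs a $G$-invariant (or $\Gamma$-invariant) measure to start. In the paper, the $r(G)$ constraint is consumed by \cref{prop:invprinc} --- the invariance principle of \cite{AWBFRHZW-latticemeasure}, a measure-rigidity statement, not an application of cocycle superrigidity --- which upgrades the $(\calZ A)$-invariant measure projecting to Haar supplied by \cref{mainthm} to a $G$-invariant one. Only after this upgrade is Zimmer's cocycle superrigidity invoked, via \cref{lemma:fromzcsr}, under the constraint $\dim M < n(G)$ (which follows from $\dim M \le r(G)$ since $r(G) < n(G)$ in all relevant cases) to force vanishing exponents and obtain the contradiction. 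As you describe it, cocycle superrigidity is applied before $G$-invariance is established, and that step simply does not start.

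Second, the ``matching of estimates'' you propose for the cusp analysis is not enough, even granting the tightness and log-integrability you anticipate. The paper is explicit (\cref{subsubsec:qrank1}) that tightness of the empirical measures together with log-$L^1$ integrability of the cocycle against the limiting measure still does not yield upper semicontinuity of the top exponent under weak-$*$ limits, because the cocycle is unbounded on the non-compact suspension space. The missing tool is the family of cut-off cocycles $\psi_\ell$ and the time-averaged cocycle $\overline\psi$ of \cref{TAcoc} and \cref{s4.2}: these are bounded, have controlled (though non-empty) discontinuity sets, and integration against them behaves well under weak-$*$ convergence (\cref{average limit}); the resulting ``fake Lyapunov exponents'' are identified with the genuine ones only after a measure projecting to Haar has been produced. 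Your route via quantitative non-divergence of unipotent flows and exponential mixing is close to what the paper does when $\rank_\Q\Gamma \ge 2$ (\cref{AlternateHighRank}), where one has an auxiliary horospherical unipotent $N$ normalized by the standard $\Q$-rank-one subgroup. But when $\rank_\Q\Gamma = 1$ that $N$ is trivial, so neither unipotent non-divergence nor exponential mixing is available for the endgame; this is exactly the case that forces the cut-off cocycle machinery, which has no analogue in your proposal.
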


\fakeSSS{Step 2: Strong property (T)}
We apply \cite[Theorem 2.4]{BFH} and de la Salle's recent result establishing strong property $(T)$ for nonuniform lattices \cite[Theorem 1.1]{MR4018265} to conclude that any action $\alpha$ satisfying the conclusions of \cref{thm:USEGoD} preserves a continuous Riemannian metric $g$.
In particular, the image $\alpha(\Gamma)$ is contained in the compact Lie group $K=\Isom_g(M)$. For completeness, we recall these two results.
\begin{theorem}[{\cite[Theorem 1.1]{MR4018265}}]
Assume $G$ is a connected semisimple Lie group such that the real rank of every simple factor is at least two and let  $\Gamma$ be a lattice in~$G$. Then  $\Gamma$ has strong property~$(T)$.
\end{theorem}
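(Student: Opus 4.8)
The plan is to deduce strong property~$(T)$ for $\Gamma$ from the same property for the ambient group~$G$, with the genuinely new content concentrated at the cusps. First I would reduce to the simplest case: strong property~$(T)$ is stable under passing to finite-index subgroups and overgroups and under finite direct products, and by the structure theory of lattices in semisimple groups any lattice is virtually a direct product of irreducible lattices, each contained in a semisimple subgroup all of whose simple factors still have real rank at least~$2$; so it suffices to treat an irreducible lattice $\Gamma$ in such a $G$. If $\Gamma$ is cocompact the statement is already known, so by Margulis's arithmeticity theorem I may assume $\Gamma$ is a non-uniform arithmetic lattice, for which the reduction theory of arithmetic groups --- Siegel sets, finitely many cusps --- is available.

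The model to follow is induction of representations, which settles the cocompact case and serves as the skeleton in general. The ambient $G$ is known to have strong property~$(T)$ by work of Lafforgue, Liao, and de Laat--de la Salle, together with its stability under products. Given a representation $(\pi,E)$ of $\Gamma$ on a Banach space in the relevant class with at most sub-exponential growth with respect to the word length $\len_\Gamma$, one forms $\widetilde\pi=\operatorname{Ind}_\Gamma^G\pi$ on the space of $\ell^2$-sections of the associated $E$-bundle over $G/\Gamma$; this Banach space stays in the class (which is closed under $\ell^2$-sums), and $\widetilde\pi$ has no non-zero invariant vector once $\pi$ has none, by Frobenius reciprocity. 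When $\Gamma$ is cocompact a fundamental domain is bounded, so the $\Gamma$-valued cocycle $c\colon G\times G/\Gamma\to\Gamma$ implementing the action is bounded in terms of $d_G$ and $\widetilde\pi$ has controlled growth; strong property~$(T)$ of $G$ then supplies finitely supported measures $\nu_n$ on $G$ with $\widetilde\pi(\nu_n)$ converging in operator norm to the invariant projection, and restricting and averaging over $G/\Gamma$ produces a Kazhdan projection for $\pi$, using that $\len_\Gamma$ is bi-Lipschitz to $d_G|_\Gamma$ (automatic for cocompact $\Gamma$ by Milnor--\v{S}varc). For non-uniform $\Gamma$ this collapses at once: every Borel section $G/\Gamma\to G$ must escape into the cusps, so $c(g,\cdot)$ is unbounded, $\|\widetilde\pi(g)\|$ may be infinite, and $\widetilde\pi$ is not even a bounded representation of $G$. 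Overcoming this is the whole point.

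To treat the non-uniform case I would localize on $\Gamma\backslash G$ via reduction theory: it is a compact core $\mathcal C$ together with finitely many cusp neighbourhoods, each modeled on a Siegel set and hence, up to finite data, a quotient of an amenable subgroup of $AN$-type; correspondingly $\Gamma$ is generated by the virtually nilpotent cusp subgroups $\Gamma_1,\dots,\Gamma_k$ together with a finite set meeting $\mathcal C$. Over the core the induction template applies essentially verbatim. Over a cusp the cocycle is unbounded, but only along the amenable (unipotent) directions, where the exponential-growth constraint degenerates harmlessly: the inputs there are that amenable groups carry a degenerate form of strong property~$(T)$, with the invariant projection produced by F{\o}lner averages, and that a strong-property-$(T)$-type control with Banach coefficients is available for the cusp pieces, imported from strong property~$(T)$ of $G$ and the structure of its parabolic subgroups. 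A Kazhdan projection for $\pi$ is then assembled from the core projection and the cusp averages by means of a partition of unity on $\Gamma\backslash G$ subordinate to $(\mathcal C,\text{cusps})$ and the finite generation of $\Gamma$; the quantitative glue is the Lubotzky--Mozes--Raghunathan theorem that for higher-rank lattices $\len_\Gamma$ is bi-Lipschitz to $d_G|_\Gamma$, so that exponential bounds transfer between $\Gamma$ and $G$ despite the a priori distortion near the cusps.

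The hard part --- and the reason non-uniform lattices resisted the earlier methods --- is precisely the non-compactness of $\Gamma\backslash G$: one must control quantitatively the escape of the cocycle, proving that $c(g,x)$ measured in $\len_\Gamma$ is dominated by $|g|_G$ up to an error which, uniformly in $g$, is concentrated on a set of $x$ of small measure near the cusps and is compatible with the amenable structure there, so that the separately controlled pieces can be glued without the error terms exploding. This is an ``escape of mass'' phenomenon in representation-theoretic disguise, to be handled with the fine geometry of Siegel sets; and the real-rank-at-least-$2$ hypothesis enters both through strong property~$(T)$ of $G$ and through the Lubotzky--Mozes--Raghunathan undistortion, the whole scheme failing genuinely in rank one.
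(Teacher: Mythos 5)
This statement is not proved in the paper at all: it is recalled ``for completeness'' and quoted verbatim from de la Salle \cite{MR4018265}, whose result is used as a black box in Step 2 of the reduction of Theorem~\ref{thm:main} (to convert uniform subexponential growth of derivatives into an invariant Riemannian metric via Theorem~\ref{thm:sobolostintranslation}). So there is no internal proof to compare your proposal against; the only meaningful comparison is with de la Salle's published argument.

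Measured against that argument, your outline gets the global shape right --- reduce to irreducible arithmetic non-uniform lattices, induce to $G$, combine strong property~$(T)$ for $G$ with Lubotzky--Mozes--Raghunathan undistortion, and confront the unboundedness of the return cocycle on Siegel sets --- but the mechanism you propose at the decisive step is not one that works. Two concrete problems. First, infinite amenable groups do not carry any usable form of strong property~$(T)$ (strong~$(T)$ implies~$(T)$); F{\o}lner averages converge to the invariant projection only in weak operator topologies for isometric representations, which is far from the norm convergence a Kazhdan projection requires, and fails outright for Banach representations with small exponential growth. Second, a Kazhdan projection is by definition a norm limit of operators $\pi(\nu_n)$ for measures $\nu_n$ supported on the group itself; there is no evident way to ``glue'' such objects from a core piece and cusp pieces via a partition of unity on $\Gamma\backslash G$, and you never say what the glued measures on $\Gamma$ would be. De la Salle's resolution is global rather than local: he shows that the induced representation, though unbounded, has norm growth dominated by $e^{\epsilon d(g,\1)}$ times a weight on $G/\Gamma$ that is square-integrable precisely because the Haar measure of $G/\Gamma$ has exponentially small mass in the cusps (the phenomenon recorded in \eqref{HaarSmallCusps} and \cref{lemma:fromlmr} of the present paper), and he proves a strengthened form of strong property~$(T)$ for $G$ valid for this enlarged class of representations. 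Your sketch correctly locates the difficulty but does not supply the idea that overcomes it.
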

For the special case of cocompact lattices, the analogous result was established in \cite{MR2423763,MR3407190}.

\begin{theorem}[{\cite[Theorem 2.4]{BFH}}]\label{thm:sobolostintranslation}
Let  $\Gamma$ be a finitely generated group and let $M$ be a compact manifold. For $k\ge 2$, let $\alpha \colon \Gamma \to \Diff^k(M)$ be an action. If $\alpha$ has uniform subexponential growth of derivatives and if $\Gamma$ has strong property $(T)$ then $\alpha(\Gamma)$ preserves a Riemannian metric that is $C^{k-1-\delta}$ for all $\delta > 0$.
\end{theorem}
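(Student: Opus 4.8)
The plan is to realize an $\alpha(\Gamma)$-invariant Riemannian metric as a fixed vector for the pullback action of $\Gamma$ on a Banach space of metrics: strong property~$(T)$ will supply the fixed vector, and uniform subexponential growth of derivatives is exactly the hypothesis that keeps that pullback action tame enough for the strong-$(T)$ averaging machinery to apply. Fix a smooth background metric $g_0$ on $M$, fix $1<p<\infty$, and consider the linear representation $\pi$ of $\Gamma$ on $E=L^p(M;S^2T^*M)$ given by $\pi(\gamma)=(\alpha(\gamma)^{-1})^{*}=(\alpha(\gamma^{-1}))^{*}$. The Riemannian metrics span an open convex cone $\mathcal C\subset E$ with $\pi(\gamma)\mathcal C=\mathcal C$, and $E$ is an $L^p$-space, hence lies in the class of Banach spaces covered by the hypothesis that $\Gamma$ has strong property~$(T)$.

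First I would check that $\pi$ has at most subexponential distortion: for each $\epsilon>0$ there is $C_\epsilon$ with $\|\pi(\gamma)\|_{E\to E}\le C_\epsilon\,e^{\epsilon\len_\Gamma(\gamma)}$ for every $\gamma\in\Gamma$. This is the only place \eqref{eq:USEGOD} is used. For a $C^1$ diffeomorphism $\phi$ one has $|\phi^{*}g|\le\|D\phi\|_\infty^{2}\,|g\circ\phi|$ pointwise, so a change of variables gives $\|\phi^{*}g\|_{L^p}\le\|D\phi\|_\infty^{2}\,\|D\phi^{-1}\|_\infty^{\dim M/p}\,\|g\|_{L^p}$; taking $\phi=\alpha(\gamma^{-1})$ and invoking \eqref{eq:USEGOD} for $\gamma^{-1}$ and for $\gamma$ (word length is symmetric) makes the right-hand side subexponential in $\len_\Gamma(\gamma)$. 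Note that ordinary property~$(T)$ is of no help here, since $\pi$ is an unbounded representation; the slowly growing, non-isometric regime is precisely the one that strong property~$(T)$, in the sense of Lafforgue and de la Salle, was built to handle, which is why the theorem assumes it.

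Next I would feed this into strong property~$(T)$: there is a sequence $(\mu_n)$ of finitely supported probability measures on $\Gamma$, depending only on $\Gamma$, such that for any representation of $\Gamma$ on a space in the relevant class whose distortion grows slowly enough — in particular for $\pi$ on $E$, by the previous step — the operators $\pi(\mu_n)=\sum_{\gamma}\mu_n(\gamma)\pi(\gamma)$ converge in operator norm to a projection $P$ of $E$ onto the $\pi(\Gamma)$-invariant vectors. Set $g_\infty=Pg_0=\lim_n\sum_{\gamma}\mu_n(\gamma)(\alpha(\gamma^{-1}))^{*}g_0\in E$; then $g_\infty$ is $\pi(\Gamma)$-invariant, hence $\alpha(\Gamma)$-invariant. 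Each partial average $g_n:=\sum_\gamma\mu_n(\gamma)(\alpha(\gamma^{-1}))^{*}g_0$ is a finite convex combination of honest Riemannian metrics, hence itself one, so $g_\infty\ge 0$ almost everywhere. For nondegeneracy I would run the same averaging for the (again subexponentially distorted) pushforward action on co-metrics, producing $h_\infty=\lim_n\sum_\gamma\mu_n(\gamma)\bigl((\alpha(\gamma^{-1}))^{*}g_0\bigr)^{-1}\in L^p(M;S^2TM)$; operator-convexity of $X\mapsto X^{-1}$ gives $g_n^{-1}\le h_n$ pointwise, whence $\det g_n\ge(\det h_n)^{-1}\ge\|h_n\|_{\mathrm{op}}^{-\dim M}$, and passing to an a.e.-convergent subsequence yields $\det g_\infty>0$ almost everywhere. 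At this stage $\Gamma$ preserves a measurable Riemannian metric, in $L^p$ and bounded below.

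The hard part is the final step: upgrading this measurable invariant metric to a genuine metric of class $C^{k-1-\delta}$. The obstruction to doing it by the same method is structural: \eqref{eq:USEGOD} bounds only $\|D\alpha(\gamma)\|$, while pulling back a metric in any function space that embeds into $C^0$ unavoidably sees the oscillation of $D\alpha(\gamma)$ — for instance a H\"older seminorm of it — and that quantity can genuinely grow exponentially in $\len_\Gamma(\gamma)$, so one cannot simply replace $L^p$ above by a Sobolev space of positive smoothness. One must instead improve the regularity of $g_\infty$ directly, using that each $\alpha(\gamma)$ is individually $C^k$. The measurable invariant metric is a measurable reduction of the derivative cocycle to the orthogonal group, and, since that cocycle also grows subexponentially, a mollification procedure adapted to the dynamics — together with one more appeal to property~$(T)$, in the spirit of the promotion of measurable conjugacies to continuous ones — should improve $g_\infty$, at the cost of exactly one derivative up to an arbitrarily small loss, to a $C^{k-1-\delta}$ section. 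Carrying out this promotion carefully, controlling the smoothing against the subexponentially growing cocycle while preserving $\alpha(\Gamma)$-invariance, is the technical heart of the statement; for the applications in the present paper one needs only the continuous case, for which this step is correspondingly lighter.
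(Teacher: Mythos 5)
Your averaging construction in $L^p$ is sound as far as it goes, but it only produces a measurable invariant metric, and the entire content of the theorem is the regularity $C^{k-1-\delta}$ — precisely the step you leave as a sketch ("a mollification procedure \dots should improve $g_\infty$"). Moreover, the reason you give for abandoning the direct approach is false. You assert that a H\"older seminorm of $D\alpha(\gamma)$ "can genuinely grow exponentially in $\len_\Gamma(\gamma)$," so that one cannot average in a Sobolev space of positive smoothness. In fact, uniform subexponential growth of \emph{first} derivatives — which by \eqref{eq:USEGOD} is assumed for \emph{every} $\gamma\in\Gamma$, not merely for generators — forces subexponential growth of all $C^j$ norms for $j\le k$. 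Writing $\gamma=s_1\cdots s_n$ and expanding $D^j\bigl(\alpha(s_1)\circ\cdots\circ\alpha(s_n)\bigr)$ by the chain rule, every term is a product of at most $j+1$ first-derivative factors of \emph{partial words} $\alpha(s_i\cdots s_n)$ — each bounded by $C_\epsilon e^{\epsilon n}$ by hypothesis — times uniformly bounded higher derivatives of the finitely many generators, and the number of terms is polynomial in $n$; hence $\|\alpha(\gamma)\|_{C^j}\le C'_\epsilon e^{(j+1)\epsilon\len_\Gamma(\gamma)}$ with $\epsilon$ arbitrary. This propagation lemma is the key point of the proof in \cite{BFH}, and it is exactly what your proposal is missing.

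Granting that lemma, the proof is the direct one you dismissed: the pullback representation on $W^{k-1,p}(M;S^2T^*M)$ (isomorphic to an $L^p$ space, hence within the class covered by strong property $(T)$) has subexponential operator-norm growth, so the strong-$(T)$ averaging operators $\pi(\mu_n)$ converge in norm there; the invariant limit lies in $W^{k-1,p}\hookrightarrow C^{k-1-\dim M/p}$, and letting $p\to\infty$ yields $C^{k-1-\delta}$ for every $\delta>0$. Nondegeneracy is also obtained differently: strong property $(T)$ furnishes an \emph{exponential} rate $\|g_n-g_\infty\|\le Ce^{-cn}$ with $c>0$ depending only on $\Gamma$, while the conorm half of \eqref{eq:USEGOD} gives $g_n\ge C_\epsilon^{-2}e^{-2\epsilon n}g_0$ pointwise; choosing $\epsilon<c/2$ yields a uniform lower bound $g_\infty\ge c'g_0$. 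Your operator-convexity argument only gives positivity almost everywhere with no uniform lower bound, which does not suffice. As written, the proposal establishes only the existence of a measurable invariant metric and does not prove the theorem.
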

For actions
by $C^{1+\beta}$ diffeomorphism, the proof of {\cite[Theorem 2.4]{BFH}} can be adapted to establish an analogue of \cref{thm:sobolostintranslation} to obtain a continuous invariant Riemannian metric. For actions by $C^1$ diffeomorphisms, an analogue of \cref{thm:sobolostintranslation} is obtained in \cite[Proposition 5]{BrownDamjanovicZhang}.

\fakeSSS{Step 3: Superrigidity with compact codomains}
As discussed in \cref{rem:step3}, the finiteness of the image $\alpha(\Gamma)$ follows from Margulis's superrigidity theorem with compact codomain. This relies on explicit case by case computation of $d(G)$ that yields that $r(G)<d(G)$, and in fact $v(G)<d(G)$, for all $G$. To do the computation, one uses Margulis' theorem to find all compact groups $K$ into which $\Gamma$ admits dense image homomorphisms and then computes the minimal dimension of a homogeneous space of the form $K/C$.  \cref{thm:main} then follows.

\subsection*{Acknowledgement}  The authors owe a profound debt to Dave Witte Morris.  In particular, Theorem
\ref{QIBddGenByRank1} (and its extension \cref{QIBddGenByRank1lift}) was proven by Witte Morris in \cite{MR4074402} in response to a question from the authors.  In addition, Witte Morris helped the authors a great deal with Section \ref{sec:3} and in particular explained to us the proof of Lemma \ref{LNoCent}.

\section{Reduction to and outline of proof of main result. Theorem \ref{thm:whatweprove}}
\subsection{Reductions and standing hypotheses}\label{hyp1}\index{first reductions on $\Gamma$}
Without loss of generality, we may assume the following hypotheses for the proof of \cref{thm:C}:
\begin{enumerate}[ ]
\item {\it $G$ is a connected semisimple Lie group with real rank at least $2$;}
\item {\it $G$ is simply connected (as a topological group);}
\item {\it $\Gamma$ is an irreducible lattice subgroup of $G$;}
\item {\it $G$ has no compact factors; i.e.\ $G$ has no non-trivial connected compact normal subgroups.}
\end{enumerate}
Indeed, we first note that \cref{thm:C}
 holds if it holds for finite-index subgroups $\Gamma'\subset \Gamma$ or for subgroups $\Gamma''$ containing $\Gamma$ as a subgroup of finite index (where $\Gamma''$ acts on the compact manifold $(\Gamma''\times M)/\Gamma$).\index{action of $\Gamma$ induces an action of any commensurable $\Gamma''$} In particular, we may replace $\Gamma$ with a group commensurable with $\Gamma$ in $G$. Furthermore, if $\Gamma$ is reducible (see \cite[(5.9) Definition, p.~133]{MR1090825}) and the conclusion holds for the restriction of $\alpha$ to each irreducible component of $\Gamma$, then the conclusion holds for the action of $\Gamma$.

Given an arbitrary connected semisimple Lie group $G$ as in \cref{thm:C}, we may replace $G$ with its topological universal cover $\wtd G$; then the preimage of $\Gamma$ under the map $\wtd G\to G$ is a lattice subgroup $\wtd \Gamma$ of $\wtd G$ and any action of $\Gamma$ extends to an action of $\wtd \Gamma$. The conclusion of \cref{thm:C} then follows for an action of $\Gamma$ if it is verified for the induced action of $\wtd \Gamma$.

Assume now that $G$ is simply connected. Let $K\subset G$ and $L\subset G$ denote, respectively, the product of all compact (resp.\ noncompact) normal subgroups. As $G$ is simply connected, $G$ is a direct product of $K$ and $L$. Then $G/L=K$ is compact, hence linear, whence the image of $\Gamma$ in $G/L$  contains a torsion-free subgroup of finite index. Replacing $\Gamma$ with a finite-index subgroup, we may assume $\Gamma\cap K$ is a singleton. Then the map $G\to G/K$ is one-to-one on $\Gamma$ and we may replace $G$ with $G/K$ and $\Gamma$ with its image in $G/K$.

\subsection{Main technical theorem and proof of \cref{thm:mainreal}}
To establish \cref{thm:mainreal} we have the following theorem which gives a dichotomy for $C^1$ actions of higher-rank lattices: either the  action $\alpha\colon \Gamma\to \Diff^1(M)$ has uniform subexponential growth of derivatives\ or there exists an $A$-invariant measure (where $A$ is a split Cartan subgroup) for the induced $G$-action with certain dynamical properties. To prove \cref{thm:mainreal} we show the properties of this measure contradict known results.

The following is the main technical result of this paper. To state the theorem, we refer the reader to \cref{InducedSect} for the definition of the induced $G$-space $X$ and to \cref{subsection:Lyap} for the definition of the average top Lyapunov exponent. Let $\calZ$ denote the center of $G$. See also \cref{RealParabSect} for the definition of a split Cartan subgroup $A\subset G$.
 The cocycle $\calA$ is taken to be the fiberwise derivative cocycle for the induced $G$-action on $X$.

\begin{custheorem}\label{thm:whatweprove}\label{mainthm}
Let $G$ be a connected semisimple Lie group without compact factors and with $\rank_\R G \ge 2$. Let $\Gamma$ be an irreducible lattice subgroup in~$G$, let $M$ be a compact manifold, and let $\alpha\colon \Gamma\to \Diff^{1}(M)$ be an action.
Let $X = (G \times M)/\Gamma$ denote the induced $G$-space and let $\calA$ denote the corresponding fiberwise derivative cocycle on the bundle $(G \times TM)/\Gamma \to X$.

If $\alpha$ fails to have {uniform subexponential growth of derivatives} then there exists a split Cartan subgroup~$A$ of~$G$ and a Borel probability measure~$\mu$ on~$X$ such that
\begin{enumerate}
\item $\mu$ is $(\calZ A)$-invariant,
\item $\mu$ projects to the Haar measure on $G/\Gamma$,
	and
	\item for some $a \in \calZ A$,
	the average top Lyapunov exponent $\lambda_{\top,a,\mu, \calA}$ is positive.
	\end{enumerate}
\end{custheorem}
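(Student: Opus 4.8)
The plan is to argue contrapositively. Assuming $\alpha$ fails to have uniform subexponential growth of derivatives, I will produce $\mu$ as a weak-$*$ limit of empirical measures along an orbit of a one-parameter subgroup of a split Cartan $A$ inside the induced space $X$, and then upgrade its invariance to $(\calZ A)$-invariance by an auxiliary averaging. The three required properties correspond to three separate difficulties: $(\calZ A)$-invariance is soft once $\mu$ has been constructed; that $\mu$ projects to the Haar measure on $G/\Gamma$ — equivalently, that no mass escapes — will follow from ergodicity of the $A$-action on $G/\Gamma$; and positivity of the top Lyapunov exponent of the limit is the genuinely delicate point, because the fiberwise derivative cocycle $\calA$ is \emph{unbounded} on $X$, so a priori all of the exponential growth we detect could dissipate into the cusps of $G/\Gamma$ in the limit.

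\emph{Step 1 (from failure of growth to growth of the induced cocycle).} Negating \cref{def:USEGOD} produces $\epsilon_0>0$, elements $\gamma_n\in\Gamma$ with $\len_\Gamma(\gamma_n)\to\infty$, and $x_n\in M$ with $\log\|D_{x_n}\alpha(\gamma_n)\|\ge\epsilon_0\,\len_\Gamma(\gamma_n)$; for the image $x_n^{*}\in X$ of $(e,x_n)$ this reads $\log\|\calA(\gamma_n,x_n^{*})\|\ge\epsilon_0\,\len_\Gamma(\gamma_n)$. Writing a Cartan decomposition $\gamma_n=k_n\exp(H_n)k_n'$ with $k_n,k_n'$ in a maximal compact $K$ and $H_n$ in the closed positive Weyl chamber of $\liea$, boundedness of $\calA$ over $K$ together with the theorem of Lubotzky–Mozes–Raghunathan — applicable since $\Gamma$ is an irreducible lattice in $G$ with $\rank_\R G\ge 2$, and giving $\len_\Gamma(\gamma_n)\asymp d_G(e,\gamma_n)\asymp\|H_n\|$ for a left-invariant metric $d_G$ on $G$ — yields $\|H_n\|\to\infty$ and $\log\|\calA(\exp(H_n),z_n)\|\ge\epsilon_0'\,\|H_n\|$ for $z_n:=k_n'\cdot x_n^{*}$ and some $\epsilon_0'>0$. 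After passing to a subsequence with $H_n/\|H_n\|\to H_\infty$, set $a:=\exp(H_\infty)\in A$ and $T_n:=\lfloor\|H_n\|\rfloor$. \emph{Step 2 (candidate measure and tracking the exponent).} Pick unit vectors $v_n$ in the relevant fibers with $\|\calA(\exp(T_nH_\infty),z_n)v_n\|\ge e^{\epsilon_0'T_n-o(T_n)}$, lift to the projectivized fiber bundle $\P X\to X$, and form $\hat\nu_n:=\tfrac{1}{T_n}\int_0^{T_n}\delta_{(a^tz_n,[\calA(a^t,z_n)v_n])}\,dt$. Since $[0,T_n]$ is \Folner in $\R$, any weak-$*$ subsequential limit $\hat\mu$ is invariant under $\{a^t:t\in\R\}=\exp(\R H_\infty)$, with base projection $\mu'$ on $X$; telescoping the cocycle along the orbit gives $\int_{\P X}\log\tfrac{\|\calA(a,x)w\|}{\|w\|}\,d\hat\nu_n\ge\epsilon_0'-o(1)$, while Birkhoff's theorem for the $a$-invariant $\hat\mu$ shows $\int_{\P X}\log\tfrac{\|\calA(a,x)w\|}{\|w\|}\,d\hat\mu$ equals the $\hat\mu$-average of the growth rate $\lim_N\tfrac1N\log\|\calA(a^N,x)w\|$ along the tracked line, which is $\le\lambda_{\top,a,\mu',\calA}$. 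Hence positivity of $\lambda_{\top,a,\mu',\calA}$ reduces to showing that this integral passes to the limit — the crux, handled in Step 4. Finally, averaging $\mu'$ over a \Folner sequence in the abelian group $\calZ A$ — an operation which cohomologously conjugates $\calA$ (so leaves the average top exponent unchanged at each finite stage and, by the same cusp control, in the limit) and which keeps the $X$-projection equal to Haar once Step 3 is known — produces the desired $(\calZ A)$-invariant probability measure $\mu$ with $\lambda_{\top,a,\mu,\calA}>0$.

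\emph{Step 3 (no escape of mass; projection to Haar).} The base projection of $\hat\nu_n$ is a \Folner box average of the $A$-orbit of $\pi(z_n)\in G/\Gamma$. After replacing $z_n$ by a point within bounded distance that is Birkhoff-generic for the $A$-action on $(G/\Gamma,\mathrm{Haar})$ — a change affecting the estimate of Step 1 by a bounded factor only — Moore's ergodicity theorem (valid because $\Gamma$ is irreducible and $G$ has no compact factors) shows these averages converge to Haar; hence no mass escapes in $G/\Gamma$, and since $X\to G/\Gamma$ has compact fibers, none escapes in $X$, so $\mu'$ (hence $\mu$) is a probability measure projecting to the Haar measure. \emph{Step 4 (no escape of Lyapunov exponent).} It remains to pass $\int_{\P X}\log\tfrac{\|\calA(a,x)w\|}{\|w\|}\,d\hat\nu_n$ to the limit, i.e.\ to rule out that the growth of Step 2 is produced during sparse deep excursions into the cusps that become $\hat\mu$-null. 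The new ingredient is a quantitative cusp-excursion estimate: for every $\delta>0$ there is a compact $\Omega_\delta\subset G/\Gamma$ such that along any $A$-orbit segment the logarithmic growth of $\calA$ accumulated while the base point lies outside $\Omega_\delta$ is at most $\delta$ times the length of that part of the segment, up to $o(T_n)$. Granting this, taking $\delta\ll\epsilon_0'$ forces a definite proportion of the growth in Step 2 to take place over $\pi^{-1}(\Omega_\delta)$, where $\calA$ and the integrand are bounded and continuous, so the lower bound survives the limit and $\lambda_{\top,a,\mu,\calA}\ge c\,\epsilon_0'>0$. The cusp-excursion estimate is proved by reducing the geometry of a deep excursion to that of a horoball cusp of a rank-one homogeneous space, using Witte Morris's theorem (\cref{QIBddGenByRank1}, \cref{QIBddGenByRank1lift}; see \cite{MR4074402}) that $\Gamma$ is quasi-isometrically boundedly generated by lattices in rank-one subgroups: in the rank-one model one bounds directly the word length of the $\Gamma$-element needed to return the flowed point to a fixed coarse fundamental domain, hence the derivative contribution, in terms of the excursion depth.

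I expect Step 4 to be decisively the hardest part and the principal departure from \cite{BFH,BFH-SLnZ}: controlling, uniformly across the sequence, the contribution of the noncompact part of $X$ to the Lyapunov growth of an unbounded cocycle — ``escape of Lyapunov exponent'' — and carrying out the reduction, via bounded generation, to the rank-one cusp geometry that makes that control possible. The remaining steps are close adaptations of the cocompact and $\Sl(n,\Z)$ arguments.
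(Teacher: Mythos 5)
Your high-level shape (empirical measures along a diagonalizable one-parameter orbit, then separately controlling escape of mass and escape of Lyapunov exponent) matches the paper's, but the two steps you yourself identify as the crux do not work as written. In Step 3, replacing $z_n$ by a nearby Birkhoff-generic point is not ``a change affecting the estimate of Step 1 by a bounded factor'': if $w_n=g_nz_n$ with $g_n$ bounded, then comparing $\calA(a^{T_n},w_n)$ with $\calA(a^{T_n},z_n)$ introduces the factor $\calA(a^{T_n}g_na^{-T_n},\cdot)$, and $a^{T_n}g_na^{-T_n}$ is exponentially large in $T_n$ whenever $g_n$ has a component in the expanding horospherical direction, so the derivative estimate is destroyed. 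Moreover, Birkhoff genericity of a point $w_n$ chosen anew for each $n$ gives no uniformity: the orbit of $w_n$ equidistributes only as the time tends to infinity with $w_n$ fixed, and there is no reason the finite time $T_n$ suffices for the particular $w_n$. Finally, the forced base points need not equidistribute at all, so even tightness of $\{\hat\nu_n\}$ is not free. The paper's route is structurally different: it first proves tightness of the empirical measures using previously established cusp control (\cref{lem:maxpathstight}), obtains a limit that is \emph{not} claimed to project to Haar, and only then upgrades the projection to Haar by \Folner{} averaging over unipotent subgroups normalized by $\{a^t\}$ together with pushing by expanding horospherical translates, using the effective equidistribution of \cref{ExpMixing} and Ratner's theorems — none of which is Moore ergodicity applied to the diagonal flow.

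In Step 4, the ``quantitative cusp-excursion estimate'' you posit for arbitrary $A$-orbit segments in $G/\Gamma$ is precisely what fails when $\rank_\Q(\Gamma)\ge 2$: the complement of the thick part is a union of fans meeting along lower-dimensional strata (the rational Tits building is connected), a path can exit through one fan and return through another, and the monodromy element $\gamma$ of a deep excursion has no exploitable structure — the paper says exactly this in the remark on the benefits of working with $\Q$-rank-1 subgroups. Your proposed proof of the estimate via \cref{QIBddGenByRank1} is moreover circular: writing the monodromy $\gamma=s_1\cdots s_r$ with $s_i$ in rank-one lattices only bounds $\log\|D\alpha(\gamma)\|$ if you already know that the restriction of $\alpha$ to each $\Gamma\!_L$ has subexponential growth of derivatives, which is (essentially) the statement being proven. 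The paper instead applies bounded generation at the level of the group, reducing \cref{mainthm} to the restrictions to $\calZ$ and to the $\Gamma\!_H$ for standard $\Q$-rank-1 subgroups $H$, and then runs a genuine induction — unipotent $\Q$-subgroups (\cref{UnipSubgrp}), minimal parabolic $\Q$-subgroups (\cref{prop:cuspgroup}), and only then $\Q$-rank-1 subgroups (\cref{thm:mainQR1}) — so that the cusp estimate (\cref{SubexpInCusps}) is available for orbits inside $H$ or $HN$, where the excursion monodromies do lie in cusp groups. Even with all that, the limiting measures need not integrate $\log\|\calA\|$, which is why the paper introduces the time-averaged and cut-off cocycles of \cref{TimeAveraged}; your proposal has no substitute for this last ingredient.
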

We remark that there are no constraints on the dimension of $M$ in the statement of \cref{mainthm}. Additionally, in \cref{mainthm} we do not assume every factor of $G$ is higher rank; in particular, the theorem applies to actions by irreducible lattices in semisimple Lie groups with rank-1 factors. Finally, we note that if we assume $\calZ$ is finite, we may conclude $\lambda_{\top,a,\mu, \calA}>0$ for some $a\in A$ as in the conclusion of the analogous results appearing in \cite{BFH,BFH-SLnZ}. For groups with infinite center, we can only conclude that $\lambda_{\top,a,\mu, \calA}>0$ for some (necessarily infinite order) element $a\in \calZ A$.

We also remark that in the case that $\Gamma$ is cocompact in $G$, the proof of \cref{mainthm} is mostly contained in \cite{BFH}. Assuming that $G$ has finite center and that $G$ is simple or that every simple factor of $G$ is of higher rank, the proof of \cref{mainthm} is immediate from arguments in \cite{BFH}. If $\Gamma$ is an irreducible lattice in a higher-rank semisimple Lie group $G$ with rank-1 factors or if $G$ has infinite center, one needs some minor modifications to establish \cref{mainthm}. See \cref{rem:cocomp} below for discussion of the required modifications.

Finally, we remark that for general $C^1$ actions on non-compact spaces, the definition and existence of Lyapunov exponents becomes complicated without additional integrability hypotheses on the norm-growth of the cocycle. In the statement of \cref{mainthm}, the choice of norms on the bundle $(G \times TM)/\Gamma $ are as constructed in \cref{sec:norms} below. These norms are well adapted to the geometry of $\Gamma $ in $G$ and---combined with fact that the measure $\mu$ in the conclusion of in the statement of \cref{mainthm} projects to Haar measure on $G/\Gamma$---ensures the cocycle $\calA$ satisfies a $\log$-integrability criterion. See \cref{CocycIsL1} below.

The remainder of this paper is devoted to establishing \cref{mainthm}. See \cref{ProofOutline} for an outline of the proof.
As discussed above, for actions by cocompact lattices, \cref{thm:whatweprove} is essentially contained in \cite{BFH}; in this case, one first produces an $A$-invariant Borel probability measure with positive exponents by a fairly soft argument in \cite[Proposition 4.6]{BFH}. Improving the measure to one that projects to Haar is more difficult and occupies much of \cite{BFH}. In the context of actions of non-uniform lattices, the arguments that construct an $A$-invariant measure with positive exponents is as difficult as finding one that projects to Haar. As the arguments in this paper use in an essential way all arguments from \cite{BFH} and many of those from \cite{BFH-SLnZ}, the reader may find it easier to read those papers first. An expository account of some of the arguments from \cite{BFH} with more detailed background may be found in the lecture notes by Brown \cite{Brown}; see also the expository account of many ideas from  \cite{BFH} in \cite{MR4093195}.

 \cref{thm:mainreal} follows immediately from \cref{mainthm} following the same as the arguments as in \cite{BFH}. We briefly recall the argument.

\begin{proof}[Proof of \cref{thm:mainreal}]
By \cref{mainthm}, if $\alpha\colon \Gamma\to \Diff^{1+\beta}(M)$ fails to have uniform subexponential growth of derivatives then there exists a split Cartan subgroup $A\subset G$ and a $(\calZ A)$-invariant, Borel probability measure~$\mu$ on the suspension space $X= (G\times M)/\Gamma$ such that $\mu$ projects to the Haar measure on $G/\Gamma$ and the average top Lyapunov exponent $\lambda_{\top,a,\mu, \calA}$ is positive for some $a \in \calZ A$. Arguing exactly as in \cite[Section 5.5]{BFH}, using that the action is $C^{1+\text{H\"older}}$, \cite[Proposition 5.1]{AWBFRHZW-latticemeasure} and the fact that $\dim(M)$ is sufficiently small implies the following.

\begin{proposition}[{\cite[Proposition 5.1]{AWBFRHZW-latticemeasure} and \cite[Proposition 3.5]{BFH}}]
\label{prop:invprinc}
Suppose either that 
\begin{enumerate}
	\item $\dim(M) < r(G)$ or
	\item $\dim(M) \le r(G) $ and $\alpha(\Gamma)$ preserves a smooth volume form on $M$ 
\end{enumerate}
Then, any $A$-invariant Borel probability measure on~$X$ projecting to the Haar measure on $G/\Gamma$ is $G$-invariant.
\end{proposition}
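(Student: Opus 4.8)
The plan is to deduce Proposition~\ref{prop:invprinc} from the measure-rigidity / invariance principle machinery of \cite{AWBFRHZW-latticemeasure} together with the combinatorial dimension estimate that underlies the definition of $r(G)$. The starting point is the observation that an $A$-invariant Borel probability measure $\mu$ on $X$ projecting to Haar measure on $G/\Gamma$ is, after disintegrating over $G/\Gamma$ and using the standard suspension picture, the same data as an $\alpha(\Gamma)$-stationary (indeed, via the identification in \cite{AWBFRHZW-latticemeasure}, an appropriately invariant) family of measures on the fibers $\cong M$. One then runs the following argument. First, suppose for contradiction that $\mu$ is not $G$-invariant. By Ratner-type / Zimmer-type arguments on the homogeneous factor (as developed in \cite{AWBFRHZW-latticemeasure} and recalled in \cite[\S5]{BFH}), the failure of $G$-invariance forces a nontrivial ``positively recurrent direction'': there is a unipotent (or more generally a nontrivial) subgroup $U$ of some parabolic $Q\subsetneq G$ along which $\mu$ fails to be invariant, and the obstruction is detected by a nonzero fiberwise Lyapunov exponent functional.

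The second, and central, step is to invoke \cite[Proposition~5.1]{AWBFRHZW-latticemeasure} in the $C^{1+\beta}$ setting: because the action is $C^{1+\text{H\"older}}$ the Ledrappier–Young / invariance-principle theory applies, and one concludes that if $\mu$ is not $G$-invariant then the sum of the fiberwise Lyapunov exponents of $\calA$ over a suitable coarse Lyapunov subspace (equivalently, the relevant ``resonant'' combination of roots) must be bounded below by a quantity built from the root system $\Sigma_\lieg$. Concretely, non-invariance of $\mu$ along a coarse Lyapunov subgroup associated to a (negative of a) root $\chi$ produces, by the invariance principle, a nonnegative-entropy contribution that can only be absorbed if $\dim M$ is at least the number $r(\Sigma_\lieg)$ of positive roots that are ``killed'' in the relevant parabolic — this is exactly the combinatorial quantity $r(G)=v(G)$ for $\R$-split $G$, reviewed in \cref{def:resCoD}. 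Hence $\dim M < r(G)$ rules out non-invariance in case (1). In case (2), the volume-preservation hypothesis lets one use the cocycle $\calA$ restricted to $SL^{\pm}$ of the tangent bundle (trace-zero fiberwise derivative), which buys one extra dimension: the relevant inequality becomes $\dim M \le r(G)$, because the exponents sum to zero and an extremal configuration is excluded. This is precisely the mechanism of \cite[Section~5.5]{BFH} and \cite[Proposition~3.5]{BFH}, which I would cite rather than reprove.

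The third step is bookkeeping: one must check that the norms on $(G\times TM)/\Gamma$ constructed in \cref{sec:norms} make $\calA$ satisfy the $\log$-integrability needed for the invariance principle, which is guaranteed since $\mu$ projects to Haar on $G/\Gamma$ (see \cref{CocycIsL1}); and one must match the ``resonant'' root-theoretic count coming out of the invariance principle with the definition of $r(G)$. Both of these are routine given the earlier sections, so I would state them briefly and refer back.

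The main obstacle is the second step: making the invariance principle argument of \cite{AWBFRHZW-latticemeasure} produce the sharp dimension count in the presence of a non-compact base $G/\Gamma$ and a cocycle $\calA$ that is unbounded. In the cocompact case of \cite{BFH} the cocycle is bounded and the exponents vary continuously and are genuinely integrable; here one must be careful that the fiberwise Lyapunov exponents are well-defined ($\log$-integrable) and that the Ledrappier–Young formula still holds for the unbounded cocycle $\calA$ with respect to $\mu$. Once the $\log$-integrability from \cref{CocycIsL1} is in hand — and this is exactly where the care taken in choosing the norms pays off — the rest of the argument is formally identical to \cite[Section~5.5]{BFH}, and the proposition follows.
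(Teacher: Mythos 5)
The paper does not prove this proposition; it is stated as an immediate consequence of \cite[Proposition~5.1]{AWBFRHZW-latticemeasure} via the argument of \cite[\S5.5]{BFH}, and your second and third steps take exactly this route, with the $\log$-integrability supplied by the Haar projection via \cref{CocycIsL1} correctly identified as the only ingredient not already present in the cocompact case. So the overall approach matches.

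One caution on the expository sketch, which would mislead a reader who tried to follow it literally: the phrases ``the sum of the fiberwise Lyapunov exponents of $\calA$ over a suitable coarse Lyapunov subspace must be bounded below by a quantity built from the root system'' and ``nonnegative-entropy contribution that can only be absorbed if $\dim M \ge r(\Sigma_\lieg)$'' do not describe the actual mechanism. What \cite[Proposition~5.1]{AWBFRHZW-latticemeasure} delivers, via the Ledrappier--Young / invariance-principle machinery, is a \emph{proportionality} constraint: non-invariance of $\mu$ under a coarse root group $U^{[\beta]}$ forces some nonzero fiberwise Lyapunov functional on $\liea$ to be positively proportional to $\beta$. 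The contradiction with $\dim M < r(G)$ is then a purely combinatorial count of distinct nonzero functionals (at most $\dim M$) against the number of coarse roots that must be obstructed for $\mu$ to fail to be $G$-invariant (at least $r(G)$, by the parabolic structure encoded in \cref{def:resCoD}); the volume-preserving case tightens this to $\dim M \le r(G)$ because the zero-sum constraint on the functionals excludes the extremal configuration. Since you defer to the cited works rather than reprove them, this does not affect formal correctness, but separating the proportionality step from the counting step would make your sketch faithful to \cite{BFH,AWBFRHZW-latticemeasure}.
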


We also have the following well-known corollary of Zimmer's cocycle superrigidity theorem \cite{MR2039990}. In the statement, $n(G)$ denotes the smallest dimension of a nontrivial linear representation of (the Lie algebra of) $G$.

\begin{lemma}
\label{lemma:fromzcsr}
Let $\mu$ be any $G$-invariant measure for the induced action on~$X$ projecting to the Haar measure on $G/\Gamma$. If $\dim (M) < n(G)$ then $\lambda_{\top,g,\mu,\calA} = 0$ for every $g \in G$.
\end{lemma}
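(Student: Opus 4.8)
The plan is to deduce \cref{lemma:fromzcsr} from Zimmer's cocycle superrigidity theorem applied to the fiberwise derivative cocycle $\calA$. First I would recall the setup: since $\mu$ is $G$-invariant and projects to Haar on $G/\Gamma$, by the standard correspondence between measures on the induced space $X=(G\times M)/\Gamma$ projecting to Haar and $\Gamma$-invariant measures on $M$, the measure $\mu$ corresponds to an $\alpha(\Gamma)$-invariant Borel probability measure $\nu$ on $M$. The cocycle $\calA$ over the $G$-action on $X$ is then the cocycle over the $\Gamma$-action on $G\times M$ (equivalently, the induced cocycle) whose restriction to $\{e\}\times M$ is the derivative cocycle $(\gamma,x)\mapsto D_x\alpha(\gamma)$ taking values in $\GL(\dim M,\R)$. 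Because $\mu$ projects to Haar and the norms on $(G\times TM)/\Gamma$ are chosen as in \cref{sec:norms}, the cocycle $\calA$ is $\log$-integrable (\cref{CocycIsL1}), so Lyapunov exponents are well-defined and Zimmer's theorem applies.

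Next I would invoke Zimmer's cocycle superrigidity theorem \cite{MR2039990}: for a higher-rank semisimple $G$ (every simple factor of rank $\ge 2$) and a $\log$-integrable cocycle into $\GL(d,\R)$ with $d<n(G)$, the cocycle is measurably cohomologous to a cocycle taking values in a compact group, up to the obstruction that there is no nontrivial linear representation of $G$ in dimension $<n(G)$, so the algebraic hull is compact. Concretely, $\calA$ is measurably conjugate to a cocycle $\calA'$ with values in a compact subgroup $K\subset \GL(d,\R)$; a compact-valued cocycle has all Lyapunov exponents equal to zero since its matrix norms and those of its inverse are uniformly bounded. Conjugacy by a measurable (hence, on a set of full measure, finite-valued up to arbitrarily small error) transfer function does not change Lyapunov exponents for an $\log$-integrable cocycle, because the contribution of the transfer function to $\frac1n\log\|\cdot\|$ vanishes by the Borel--Cantelli/ergodic-theorem argument (the transfer function is finite a.e., so $\frac1n\log^+\|C(g^n x)\|\to 0$ along the orbit). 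Hence $\lambda_{\top,g,\mu,\calA}=0$ for every $g\in G$.

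One subtlety I would address carefully: Zimmer superrigidity as usually stated requires every simple factor of $G$ to have higher rank, but \cref{mainthm} allows rank-one factors. In this lemma, however, the hypothesis is only invoked in the reduction via \cref{prop:invprinc}, and for the $G$-invariance conclusion we are in the situation of \cref{thm:C} where every noncompact simple factor has rank $\ge 2$; alternatively, one restricts attention to the semisimple part of $G$ generated by higher-rank factors and uses that the $\R$-points of a rank-one factor cannot support the relevant positive exponent phenomena. I would state the lemma in the generality actually used, matching the hypotheses under which \cref{prop:invprinc} and the overall proof are applied, so that the standard form of Zimmer's theorem applies directly; this is how the analogous lemmas are handled in \cite{BFH,BFH-SLnZ}.

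The main obstacle I anticipate is the integrability bookkeeping: Zimmer's cocycle superrigidity and the invariance of Lyapunov exponents under measurable cohomology both require the cocycle to be $L^1$ (in $\log$), and here $\calA$ is a cocycle over a noncompact base with a noncompact acting group, so one must genuinely use that $\mu$ projects to Haar on $G/\Gamma$ together with the tailored norms of \cref{sec:norms} to know that $\log\|\calA(g,\cdot)\|\in L^1(\mu)$ for each $g$; this is precisely \cref{CocycIsL1}, which I would cite. Once integrability is in hand the rest is the standard consequence of cocycle superrigidity and is essentially identical to the corresponding step in \cite{BFH}, so I would keep that part brief.
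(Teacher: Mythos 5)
Your argument is correct and follows the same route the paper sketches: identify $\calA$ with a measurably $\dim(M)$-dimensional linear cocycle over the $G$-action, note that $\mu$ projecting to Haar together with the norms of \cref{sec:norms} gives the needed $\log$-integrability via \cref{CocycIsL1}, apply Zimmer's cocycle superrigidity with the bound $\dim(M)<n(G)$ to conclude the cocycle is cohomologous to a compact-valued one, and deduce vanishing of the top exponent. The paper leaves these steps (including the rank-$\ge 2$ hypothesis, which is indeed supplied by the ambient hypotheses of \cref{thm:C}) implicit; your write-up supplies them at the expected level of detail.
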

To derive the lemma, we remark that $X$ is a fiber-bundle with fibers diffeomorphic to $M$.  Then the fiberwise derivative cocycle is measurably a $\dim(M)$-dimensional linear cocycle; Zimmer's cocycle superrigidity theorem and dimension constraints force the cocycle to be cohomologous to a compact-group-valued cocycle and the conclusion follows.

In particular, assuming Theorem \ref{thm:mainreal} is false, Theorem \ref{thm:whatweprove} produces a $(\calZ A)$-invariant Borel probability measure with non-zero Lyapunov exponents. By Proposition \ref{prop:invprinc} this measure is $G$-invariant, contradicting Lemma \ref{lemma:fromzcsr}.
\end{proof}

\subsection{Review of the proof of \texorpdfstring{\cite[Theorem B]{BFH-SLnZ}}{the theorem for SL(n,Z)}} \label{sec:review}

To motivate the outline in the next subsection, we recall the proof of \cite[Theorem B]{BFH-SLnZ}---the analogue of \cref{thm:USEGoD}
for actions of $\Gamma= \Sl(n,\Z)$ for $n\ge 4$. We recall here that our proof in \cite{BFH-SLnZ} does not in fact cover the case of $\Sl(3,\Z)$ and it's finite index subgroups.  We let $\Gamma\!_{i,j}$ be the subgroup of $\Gamma$ generated by the elementary matrices $E_{i,j}$ and $E_{j,i}$. Then $\Gamma\!_{i,j}$ is isomorphic to $\Sl(2,\Z)$ and differs from the identity matrix only in the $(i,i), (i,j), (j,i),$ and $(j,j)$ entries. Write $H_{i,j}\subset \Sl(n,\R)$ for the corresponding copy of $\Sl(2,\R)$. Let $A\subset G$ be the group of diagonal matrices and $A_{i,j}=\{a_{i,j}^t\}$ for the 1-parameter group of diagonal matrices in $H_{i,j}$
\subsubsection*{Quasi-isometric bounded generation}
	 \label{out:1}As shown in \cite{MR1244421}, the group $\Sl(n,\Z)$ is \emph{quasi-isometrically boundedly generated} by the subgroups $\{\,\Gamma\!_{i,j} \mid 1\le i,j\le n \,\}$. Thus, to show the action $\alpha\colon \Gamma\to \diff(M)$ has {uniform subexponential growth of derivatives} it suffices to show the restriction $\restrict{\alpha}{\Gamma\!_{i,j}} \colon \Gamma\!_{i,j}\to \Diff(M)$ of $\alpha$ to each $\Gamma\!_{i,j}$ has uniform subexponential growth of derivatives.

\subsubsection*{{Uniform subexponential growth of derivatives} for the action of unipotent subgroups}
The first key proposition established in \cite{BFH-SLnZ} shows that unipotent elements of $\Gamma= \Sl(n,\Z)$ act with subexponential growth: for every unipotent $\gamma\in \Gamma$ and $\epsilon>0$ there is a $C>0$ such that for all $n\in \Z$, $$\sup_{x\in M} \|D_x\alpha(\gamma^n)\|\le Ce^{\epsilon \len_\Gamma(\gamma^n)}.$$ We recall that $\len_\Gamma(\gamma^n)$ denotes the word-length of $\gamma^n$ measured in~$\Gamma$ which, in particular, grows logarithmically in~$n$.

\subsubsection*{{Uniform subexponential growth of derivatives} for the action of $\Gamma\!_{i,j}$}
We now explain why the action of $\Gamma\!_{i,j}$ has uniform subexponential growth of derivatives
\begin{enumerate}
	\item We pass to the suspension space $X=(G\times M)/\Gamma$. If $\restrict{\alpha}{\Gamma\!_{i,j}} \colon \Gamma\!_{i,j}\to \Diff(M)$ fails to have uniform subexponential growth of derivatives, we may find a sequence of finite $\{a_{i,j}^t\}$-orbits $\{\, a_{i,j}^t\cdot x_n \mid 0\le t\le t_n \,\}$ with $t\to \infty$ and each $x_n$ contained in the ``thick part'' of $X$ over $H_{i,j}/\Gamma\!_{i,j}$ such that, in the limit, we see positive exponential growth of the fiberwise derivative.

\item
Using that the ``cusp group'' of the rank-1 subgroup $\Lambda_{i,j}\simeq \Sl(2,\Z)$ of $\Sl(n,\Z)$ is generated by a single unipotent element we show that, restricted to the subbundle of $X$ over $H_{i,j}/\Gamma\!_{i,j}$ any collection of finite $\{a_{i,j}^t\}$-orbits $\{\, a_{i,j}^t\cdot x_n \mid 0\le t\le t_n \,\}$ with $t\to \infty$ that approximates the maximal exponential growth rate of the fiberwise derivatives gives a family of empirical measures $\mu_n$ that is primarily concentrated over the ``thick part'' of $H_{i,j}/\Gamma\!_{i,j}$.

\item We may average the family of measures $\mu_n$ over \Folner sets in a unipotent subgroup $N$ normalized by $H_{i,j}$ that meets $\Gamma$ in a lattice to obtain a new family of measures $\td \mu_n$. Using quantitative non-divergence of unipotent flows, or an explicit computation as in \cite[Subsection 5.3]{BFH-SLnZ}, we control the amount of mass of each $\td \mu_n$ at $\infty$; in particular, we rule out escape of mass. Also, using that the $N\cap \Gamma$ contains only unipotent
    elements, it follows that these measures see no ``escape of Lyapunov exponent.''

\item There is a one-parameter family of diagonal matrices $\{b^s\}$ such that $N$ is the horospherical subgroup of $\{b^s\}$ for $s>0$. Using exponential mixing of $b^s$, or an explicit computation as in \cite[Subsection 5.4]{BFH-SLnZ}, there is a sequence of $s_n\to \infty$ such that the family of measures $\{b^{s_n}\cdot \td \mu_n\}$ is uniformly tight and, in fact, have uniformly exponentially small mass at $\infty$ (see \cref{def:masscusp}) when projected to $G/\Gamma$; moreover, we are able to maintain positive exponential growth of fiberwise derivative for the action of $a_{i,j}^t$.
\item It follows that any limit measure of $\{b^{s_n}\cdot \td \mu_n\}$ projects to the Haar measure on has $G/\Gamma$ and has a non-zero fiberwise Lyapunov exponent for the action of $a_{i,j}^t$. We can then obtain an $A$-invariant measure on $X$ that projects to the Haar measure on  $G/\Gamma$ and has a non-zero fiberwise Lyapunov exponent.  We then obtain a contradiction with Zimmer's cocycle superrigidity as in the proof of \cref{thm:mainreal}.
\end{enumerate}

 \subsection{Outline of proof of \cref{mainthm} and discussion of new tools} \label{ProofOutline}
We now outline the proof of \cref{mainthm} and highlight the new tools developed in this paper when compared with the proof of \cite[Theorem B]{BFH-SLnZ}. For lattices of higher $\Q$-rank, the proof of \cref{mainthm} has the same main lines as in \cite{BFH-SLnZ} though with considerable additional technical difficulty, particularly in the case where the $\Q$-rank is $2$.  In the case of  $\Q$-rank-$1$, there are genuinely new complications that require ideas not in any way present in  \cite{BFH-SLnZ}.

For simplicity of this outline, we may assume $G$ is linear (in particular has finite center) and defined over~$\Q$. Moreover, as \cite{BFH} essentially handles the case that $\Gamma$ is cocompact, we may assume $\Gamma$ is nonuniform.  By Margulis's arithmeticity theorem (see discussion in \cref{hyp2}) we may   assume $\Gamma$ is commensurable with $G_\Z$.

\subsubsection{Quasi-isometric bounded generation}
In \cite{BFH-SLnZ}, we used the result \cite{MR1244421} of  Lubotzky, Mozes, and Raghunathan  that $\Sl(n,\Z)$ is \emph{quasi-isometrically boundedly generated} by the subgroups $\{\, \Gamma\!_{i,j} \mid 1\le i,j\le n \,\}$; this was  used in \cite{MR1244421} to show $\Gamma$ is quasi-isometrically embedded in $G$ (see \cref{LMR} below).  
In the setting of this paper, we need an analogous result for general arithmetic groups $\Gamma$. In the general version \cite{MR1828742}, Lubotzky, Mozes and Raghunathan establish the general quasi-isometric embedding result via a
different outline; in particular, they do not establish the analogous quasi-isometrically bounded generation result we require.
However, very recently, Witte Morris established a  quasi-isometrically bounded generation result sufficient for our proof.  We replace the canonical copies of $\Sl(2,\Z)\simeq \Gamma_{i,j}$ from  \cite{BFH-SLnZ} with lattice subgroups in  \emph{standard $\Q$-rank-1 subgroups}; see \cref{StandardSubgrpDefn}.  We then have the following key definition and \lcnamecref{QIBddGenByRank1} provided to us by Dave Witte Morris \cite{MR4074402}:

\def\qibdddef{
Let $F= \bfF(\R)$ be a semisimple algebraic $\Q$-group and let $\hat \Gamma$ be a subgroup commensurable with $F_\Z$.
We say that $\hat \Gamma$ is {\bf quasi-isometrically boundedly generated by standard $\Q$-rank-1 subgroups}   if there are constants $r = r(\bfF,\hat \Gamma) \in \N$ and $C = C(\bfF,\hat \Gamma) \in \R^+$, and a finite subset $\hat \Gamma\!_0 =\hat \Gamma\!_0(\bfF,\hat\Gamma)$ of~$\hat\Gamma$, and a finite collection $\mathcal{L} = \mathcal{L}(\bfF,\hat \Gamma)$ of $\Q$-subgroups of $G$ such that
	\begin{enumerate}
	\item each $L \in \mathcal{L}$ is a standard $\Q$-rank-1 subgroup of~$F$;
	\item
	 every element~$\gamma$ of~$\hat \Gamma$ can be written in the form $\gamma = s_1 s_2 \cdots s_r$ where either
	\begin{enumerate}
	\item $s_i \in \hat\Gamma\!_0$
	or
	\item $s_i \in \hat \Gamma\!_L$ for some $L \in \mathcal{L}$ and $\log \|s_i\| \le C \log \|\gamma\|$.
	\end{enumerate}
	\end{enumerate}
}

\begin{repdefinition}{QIBddGenDefnF}
Let $G= \bfG(\R)$ be a semisimple algebraic $\Q$-group and let $ \Gamma$ be a subgroup commensurable with $G_\Z= \bfG(\Z)$.
We say that $  \Gamma$ is {\bf quasi-isometrically boundedly generated by standard $\Q$-rank-1 subgroups}   if there are constants $r = r(\bfG,  \Gamma) \in \N$ and $C=C(\bfG,  \Gamma)>1$ , and a finite subset $  \Gamma\!_0 =  \Gamma\!_0(\bfG,\Gamma)$ of~$ \Gamma$, and a finite collection $\mathcal{L} = \mathcal{L}(\bfG,\Gamma)$ of $\Q$-subgroups of $F$ such that
	\begin{enumerate}
	\item each $L \in \mathcal{L}$ is a standard $\Q$-rank-1 subgroup of~$G$;
	\item
	 every element~$\gamma$ of~$  \Gamma$ can be written in the form $\gamma = s_1 s_2 \cdots s_r$ where either
	\begin{enumerate}
	\item $s_i \in  \Gamma\!_0$
	or
	\item $s_i \in   \Gamma\!_L$ for some $L \in \mathcal{L}$ and $\log \|s_i\| \le C \log \|\gamma\|$.
	\end{enumerate}
	\end{enumerate}\end{repdefinition}
Above and in  following, we write $\Gamma\!_L = L\cap \Gamma$.
The following is the main result of \cite{MR4074402}.
\begin{reptheorem} {QIBddGenByRank1}
{Every arithmetic subgroup $  \Gamma$ of a $\Q$-isotropic, almost $\Q$-simple $\Q$-group is quasi-isometrically boundedly generated by standard $\Q$-rank-1 subgroups.}
\end{reptheorem}

\subsubsection{Uniform subexponential growth of derivatives for unipotent subgroups}
As in \cite{BFH-SLnZ}, we again show that unipotent subgroups $\Delta\subset \Gamma$ have subexponential growth. This is done in Section \ref{UnipotentSubgroups}. While argument is similar in outline to one in \cite[Section 4]{BFH-SLnZ}, the fact that we work in much greater generality in the end requires substantial new arguments.

\begin{repproposition}{UnipSubgrp}
\UnipSubgrp
\end{repproposition}

To establish \cref{UnipSubgrp}, we work with the induced $G$-action. It is easy to see that subexponential growth of derivatives for $\Gamma\!_U$ is equivalent to subexponential growth of derivatives for a closed $U$ orbit in $X:=(G \times M)/\Gamma$. The proof of the proposition proceeds by working inside compact orbits of a solvable group of the form $\wtd A \ltimes U$ where $U$ is a horospherical  $\Q$-group and $\wtd A$ is part of a $\Q$-anisotropic torus. We first show   that generic trajectories for elements of $\wtd A$ have subexponential growth of derivatives. Combining this with the fact that $U$ is normalized by $\wtd A$, we obtain subexponential growth of derivatives for a large set of elements in $U$.  Using that $U$ is nilpotent, a sumset argument implies subexponential growth of derivatives for every element of $U$.  To establish subexponential growth of derivatives for generic orbits of elements of $\wtd A$, we exploit that $U$ is horospherical and use an exponential mixing argument to upgrade an $\tilde A$-invariant measure supported on some closed $\tilde A \ltimes U$ orbit to an $A$-invariant measure projecting to the Haar measure on $G/\Gamma$.

\subsubsection{{Uniform subexponential growth of derivatives} for ``cusp groups'' of standard $\Q$-rank-1 subgroups}\label{subsubsec:cusps}   In \cite{BFH-SLnZ}, we heavily used that the fundamental group of the  cusp in $\Sl(2,\R)/\Sl(2,\Z)$ is generated by the single unipotent element $\left(\begin{array}{cc}1 & 1 \\0 & 1\end{array}\right)$. For a standard $\Q$-rank-1 subgroup $H$, we cannot expect the  fundamental group of a cusp in $H/\Gamma_H$ to be generated by a single unipotent element; moreover, the fundamental group of a cusp need not be a unipotent subgroup. However,  the structure of such groups (the semidirect product of a cocompact lattice and a unipotent subgroup) allows us to still show the restriction of the action to the  fundamental group of a cusp  in $H/\Gamma_H$ for a standard $\Q$-rank-1 subgroup $H$   has subexponential growth.

Given a standard $\Q$-rank-1 subgroup $H$, to establish that the restriction of the action to the  fundamental group of a cusp in  $H/\Gamma_H$ has subexponential growth, we exploit reduction theory and the construction of Siegel fundamental domains to relate growth properties of the fundamental group of a cusp in  $H/\Gamma_H$ with subexponential growth for minimal parabolic $\Q$-subgroups in $G$.  We point the reader to \cref{SiegelForSubgroup} and especially \cref{SubexpInCusps} in   \cref{TemperedSect}.

To complete this step, it remains to establish subexponential growth for minimal parabolic $\Q$-subgroups in $G$ which occupies \cref{SubexpForParabolic}. 

\begin{repproposition}{prop:cuspgroup}
\cuspgroup
\end{repproposition}

The proof of \cref{prop:cuspgroup} uses that $\Gamma\!_P$ is the semidirect product of  a cocompact lattice in a reductive ($\Q$-anisotropic) $\Q$-subgroup $L$ and a (cocompact) lattice in a unipotent $\Q$-subgroup $U$. Subexponential growth for the restriction to $\Gamma\!_U$ follows from \cref{UnipSubgrp}.  Following arguments from \cite{BFH}, we show that if $\Gamma\!_L$ fails to have subexponential growth of the derivatives, then there exists a Borel probability measure on $X$ which is invariant under a torus in $L$ and has a positive Lyapunov exponent.
Using \cref{UnipSubgrp}, we  can find a measure that is also invariant under $U$ and also has a positive Lyapunov exponent.
We then use exponential mixing to find a measure that projects to the Haar measure on $G/\Gamma$ with a positive Lyapunov exponent which can be made $A$-invariant by additional averaging.

\subsubsection{{Uniform subexponential growth of derivatives} for $\Q$-rank-1 subgroups}\label{subsubsec:qrank1} By \cref{QIBddGenByRank1}, to show \cref{mainthm} it is enough to show the restriction of $\alpha$ to $\Gamma\!_H\subset \Gamma$ has uniform subexponential growth of derivatives for any standard $\Q$-rank-1 subgroup $H\subset G$.  We recall that $H\subset G$ is a subgroup defined over $\Q$ and that $\Gamma\!_H = H\cap \Gamma$   is a lattice in $H$.

Having established \cref{UnipSubgrp,prop:cuspgroup,SubexpInCusps},  \cref{SubexpForRank1Subgroups} is devoted to the following.
\begin{repproposition}{thm:mainQR1}
\Qrkone
\end{repproposition}

The proof of \cref{thm:mainQR1} contains the main new ingredients of the paper which have no analogue in  \cite{BFH-SLnZ}.   This is especially true in the case that $\Gamma$ is $\Q$-rank 1.
When $\Gamma$ has  $\Q$-rank 1, we have that $G=H$ is  the only standard $\Q$-rank-1 subgroup; in particular, we cannot find a subgroup of $G$ of the form $H\ltimes N$ where $H$ is a standard $\Q$-rank-1 subgroup and $N$ is a (non-trivial) horospherical subgroup of $G$.  Having a horospherical subgroup $N$ normalized by $H$ was a key ingredient used to establish subexponential growth for the restriction to the canonical copies of $\Sl(2,\Z)$ in $\Gamma= \Sl(m,\Z)$ in  \cite{BFH-SLnZ}.
 For the sake of completeness, when $\Gamma$ has $\Q$-rank at least 2, we outline a proof of  \cref{thm:mainQR1} in \cref{AlternateHighRank} that is somewhat closer to the argument from  \cite{BFH-SLnZ}.

 \cref{SubexpForRank1Subgroups} provides a uniform argument when $\Gamma$ is $\Q$-rank-1 or has higher-$\Q$-rank.
 Fix a $\Q$-rank-1 subgroup $H\subset G$.  When $\Gamma$ has higher $\Q$-rank we also find a horospherical subgroup $N$ normalized by $H$; if $\Gamma$ has $\Q$-rank 1, we take $N= \{\1\}$.  We study growth of the fiberwise derivative cocycle for the induced $H$-action on the bundle $X_{HN}:= (HN\times M)/\Gamma_{HN}$.
We fix a 1-parameter ($\R$-split) subgroup $\{a^t\}$  in $H$  and   consider orbit segments of that start and end in some fixed ``thick'' compact  part of $X_{HN}$.  Using  \cref{prop:cuspgroup} and \cref{UnipSubgrp}, a sequence of empirical measures supported on such orbits and limiting to the maximal growth rate of the fiberwise derivative cocycle (for this 1-parameter subgroup) will, in fact, produce a uniformly tight sequence of measures (see \cref{lem:maxpathstight}).
Passing to a limit, we obtain an $a^t$-invariant Borel probability measure $\mu$ on   $X_{HN}$.
While we are able to avoid escape of mass, we are not able to rule out ``escape of Lyapunov exponent.''  Specifically, the fiberwise derivative cocycle is not bounded and there is no reason the fiberwise derivative cocycle is $\log$-$L^1$ for the limiting measures $\mu$.  Furthermore, even if  fiberwise derivative cocycle happens to be $\log$-$L^1$ for the limiting measures $\mu$ and the empirical measures see exponential growth of the cocycle, we need not have any semi-continuity properties of the top exponent.

To remedy this problem, we use a combination of cut-off functions and time averages to modify the fiberwise derivative cocycle in the cusp; see \cref{TAcoc} and   \eqref{psiellDefn} in \cref{s4.2}.  These new cut-off cocycles will be bounded but will fail to be continuous; however, for the cut-off cocycle, the set of discontinuities is rather tame and, for the analysis in  \cref{SubexpForRank1Subgroups}, these cocycles acts much like a bounded continuous cocycle.
Using these modified cocycles, we define analogues of Lyapunov exponents; we then show these ``fake Lyapunov exponents" behave well under various averaging operations (using that the discontinuity set of the cut-off cocycle has zero measure for all limiting measures considered).
We can then use averaging techniques (following either \cite{BFH} or \cite{BFH-SLnZ}) to upgrade to a measure whose projection to $G/\Gamma$ is the Haar measure on $G/\Gamma$ while maintaining a positive ``fake Lyapunov exponent.''
Since the fiberwise derivative cocycle is $\log$-$L^1$ for measures projecting to the Haar measure (see \eqref{HaarSmallCusps} and \cref{CocycIsL1}), we check the  ``fake top Lyapunov exponent'' coincides with the actual top Lyapunov exponent of the original cocycle once we are considering a measure which projects to Haar.

\begin{remark}[Benefits of working with $\Q$-rank-1 subgroups]
We remark that the reduction to $\Q$-rank-1 subgroups $\Gamma\!_H$ is primarily to control the behavior of the cocycle outside of compact sets. If $\Gamma\subset G$ is an arithmetic lattice with $\rank_\Q(\Gamma) = 1$, then there is some $\ell>0$ such that for any $1$-parameter subgroup $\{a^t\}$ of $G$, if $d(a^t\cdot x, \1\Gamma)\ge\ell$ for all $0\le t\le t_0$ then, lifting the path $\{\,a^t\cdot x \mid 0\le t\le t_0\,\}$ in $G/\Gamma$ to the path $\{\,a^t\cdot \td g \mid 0\le t\le t_0\,\}$ in $G$, for any  element $\gamma_x\in \Gamma$ such that $g$ and $a^tg\gamma\inv$ are in the same Siegel set, $\gamma_x$ is in the fundamental group of the cusp of $H/\Gamma\!_H$.

This fails for lattices of higher $\Q$-rank. Indeed, for higher-rank groups, there is no collection of disjoint cusps. In $\Q$-rank one, the complement of the thick part is  a disjoint union of cusps, each of the form $\R^+ \times N$ where $N$ is a compact manifold.  In higher rank, the complement of the thick part is a union of thickened fans, where each fan is of the form $S \times N$ where $S$ is a Weyl chamber and $N$ is a compact manifold. These fans can intersect non-trivially in lower dimensional sets and a path may leave the thick park of $G/\Gamma$ through one fan and return through another fan which intersects the first in a lower-dimensional object. This is a consequence of the fact that in this case the rational Tits building is connected. In this case, the collection of possible monodromy elements $\gamma_x$ of a path $\{\,a^t\cdot x \mid 0\le t\le t_0\,\}$ does not have a structure we can directly exploit.
\end{remark}

\subsection{Comparison to our earlier papers on Zimmer's conjecture}
\label{subsec:comp}
We end this section with a summary of similarities and differences  between the techniques used here and those used in our previous papers \cite{BFH,BFH-SLnZ} and emphasize the new ideas developed in this paper. 

An analogue of Theorem \ref{thm:whatweprove} occurs in both \cite{BFH} and \cite{BFH-SLnZ}, though it is never explicitly stated as a separate theorem.
The primary difficulty in both this paper and in \cite{BFH-SLnZ} is in establishing this theorem: the failure of subexponential
growth of derivatives for the $\Gamma$-action can be witnessed as an $A$-invariant measure on the suspension space with positive top Lyapunov exponent and
which projects to Haar measure on $G/\Gamma$.  Besides Theorem \ref{thm:whatweprove}, the only substantial change from the outline in \cite{BFH} needed to study actions of non-uniform lattices is 
de la Salle's result that non-uniform
higher-rank lattices have strong property $(T)$ \cite{MR4018265}.

To prove \cref{thm:whatweprove} in the cocompact case, establishing that there is some  $A$-invariant measure with positive
Lyapunov exponent is a relatively soft argument; the main technical difficulty in \cite{BFH} is finding a procedure to average such measures to obtain  a new $A$-invariant measure that projects to the Haar measure and maintains a positive top Lyapunov exponent.
All averaging procedures in \cite{BFH} heavily use that 
the suspension space is compact whence the space of Borel probability measures is
weak-$*$ sequentially compact and the top Lyapunov
exponent is automatically upper semi-continuous when restricted to the space of invariant  Borel probability measures.
In \cite{BFH-SLnZ}, for actions by $\Sl(n,\Z)$ ($n\ge 4$) we were able to overcome the failure of weak-$*$ compactness and establish a version of upper semicontinuity of the top exponent which yielded  a special case of Zimmer's conjecture following the general approach of \cite{BFH}.  As described above, this is done by iteratively finding certain subgroups along which failure of subexponential growth of derivatives can be used to construct a measure as in Theorem \ref{thm:whatweprove}.
When the $\mathbb{Q}$-rank of $\Gamma$ is at least $2$, the large scale structure of this  paper mostly resembles that of \cite{BFH-SLnZ} with several significant improvements and modifications which we briefly describe.  First, the arguments in   \cite{BFH-SLnZ} required  
the  $\mathbb{Q}$-rank to be at least $3$;   
this caused no loss of generality in \cite{BFH-SLnZ} since Zimmer's conjecture was already known for finite index subgroups $\Sl(3,\Z)$.   The proof in  \cite{BFH-SLnZ} also depended (see discussion in  \cref{subsubsec:cusps}) on the fact that ``cusp groups'' of the $\Q$-rank-1 subgroups are generated by a single unipotent. 
Finally, \cite{BFH-SLnZ} used an ad hoc argument---inspired by exponential mixing---to obtain limiting measures projecting to the Haar measure on $G/\Gamma$ as a final step in  the proof.  In the current paper, we replace this ad hoc argument by a more robust argument following exponential mixing with careful optimization of all constants involved.  These arguments are used to establish subexponential growth of derivatives for the restriction to unipotent subgroups and cusp groups of $\Q$-rank-1 subgroups.  The arguments here can be used to write a more efficient proof of the results in \cite{BFH-SLnZ}.

In the special case of groups of $\mathbb{Q}$-rank $1$, this paper substantially diverges from 
our  previous arguments in  \cite{BFH-SLnZ}  and requires substantial new ideas.
In this case, we still establish subexponential growth of derivatives iteratively along unipotent subgroups and  cusp groups of $\Q$-rank-1 subgroups as described in the previous paragraph.
The end-game using those results is entirely different from the endgame in  \cite{BFH-SLnZ}  as  the structure of $\Gamma$ does not allow us access to the use of exponential mixing or the ad hoc variant of it used in \cite{BFH-SLnZ}.  This requires the introduction of entirely new ideas to extend our arguments so we can deal with averages of unbounded cocycles as described in  \cref{subsubsec:qrank1} above.

\section{Algebraic groups and reductions for nonuniform $\Gamma$} \label{sec:3}
We present the basic terminology and facts from algebraic groups that will be used in the sequel. We end the section with the main reductions we use when working with nonuniform lattices $\Gamma$ and the consequences of these reductions.
\subsection{Basic terminology}
We denote by $\bfGL(d)=\Gl(d,\C)$ the affine algebraic group of $d\times d$ invertible matrices with complex entries.
A \emph{(linear) algebraic group} is a Zariski closed subgroup $\bfH$ of $\bfGL(d)$ for some $d$; that is $\bfH$ is a subgroup that coincides with the common zeros of a set of polynomial functions in the matrix entries of $A\in \bfGL(d)$.
Working over $\C$, a linear algebraic group $\bfH$ is connected in the Zariski topology if and only if it is connected in the analytic topology.

Let $\K \in \{\R, \Q\}$. An algebraic subgroup $\bfH\subset \bfGL(d)$ is said to be  \emph{defined over $\K$} or is said to be \emph{$\K$-group} if $\bfH$ is algebraic and the polynomial functions defining $\bfH$ take coefficients only in $\K$.
The \emph{radical} (resp.\ \emph{unipotent radical}) of a linear algebraic subgroup $\bfH$ {defined over $\K$} is the maximal connected solvable (resp.\ unipotent) algebraic subgroup $\Rad(\bfH)$ (resp.\ $\Rad_u(\bfH)$) of $\bfH$. These are $\K$-subgroups of $\bfH$.
The algebraic group $\bfH$ is \emph{semisimple} (resp.\ \emph{reductive}) if the radical (resp.\ unipotent radical) vanishes.
We say that $\bfH$ is \emph{almost $\K$-simple} if it has no connected, proper, normal $\K$-subgroups. If $\bfH$ is noncommutative, almost $\K$-simple, and connected, then $\bfH$ is semisimple.

Given a linear algebraic subgroup $\bfH\subset \bfGL(d)$ {defined over $\K$}, let $\bfH(\K)=\bfH\cap\GL(d,\K)$ denote the $\K$-points of $\bfH$. If $\K=\R$, the group $H=\bfH(\R)$ is a Lie group which, in general, might not be connected even if $\bfH$ is connected but has only finitely many connected components (in the analytic topology); we write $H^\circ$ for the connected component (in the analytic topology) of the identity of $\bfH(\R)$.
When $\bfH$ is simply connected (as an algebraic group) or if $\bfH$ is a unipotent $\K$-group, then $\bfH(\R)$ is connected in the analytic topology. See \cite[Chapter 1, Theorem 2.3.1(c)]{MR0315007} or \cite[Proposition 7.6]{MR1278263}.

Given a connected algebraic group $\bfH$ defined over $\K$, we will at times abuse terminology and also say that $H= \bfH(\R)$ and $H^\circ$ are defined over $\K$. We have that $\Rad(\bfH)(\R)$ (resp.\ $\Rad_u(\bfH)(\R)$) is the radical (resp.\ unipotent radical) of the Lie group $H$.
If $\bfH$ is semisimple (resp.\ reductive) then $H=\bfH(\R)$ is semisimple (resp.\ reductive) as a Lie group

We collect the following important facts about unipotent subgroups.
\begin{lemma} \label{BasicUnip}
Let $U$ be a connected unipotent subgroup of $\Sl(d,\R)$.
	\begin{enumerate}
	\item \label{BasicUnip-exp} \label{BasicUnip-homo}
	$U$ is simply connected. In fact, the exponential map from the Lie algebra~$\lieu$ of~$U$ to~$U$ is a bijective polynomial map and its inverse is also a polynomial map
	\cite[Theorem~8.1.1, p.~107]{MR620024}.
	This implies that every continuous homomorphism from~$U$ to any unipotent group is a polynomial.
	\item The abelianization $U/[U,U]$ of~$U$ is simply connected (and in fact, it is a unipotent linear algebraic group).
	\end{enumerate}
\end{lemma}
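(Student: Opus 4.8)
\textbf{Plan of proof for \cref{BasicUnip}.}
The statement is the standard structure theory of connected unipotent groups over $\R$ (equivalently, over a field of characteristic $0$), so the plan is to assemble the proof from well-known facts about unipotent algebraic groups, following the references already cited in the statement. The key point is that a connected unipotent subgroup $U$ of $\Sl(d,\R)$ is Zariski dense in a connected unipotent $\R$-algebraic group $\mathbf{U}\subset \bfGL(d)$, and for unipotent $\R$-groups the analytic and Zariski topologies interact well: $\mathbf{U}(\R)$ is Zariski dense in $\mathbf{U}$ and is connected in the analytic topology, so in fact $U = \mathbf{U}(\R)$.

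\emph{Part (a).} First I would recall that for a connected unipotent algebraic group $\mathbf{U}$ over a field of characteristic $0$, the exponential map $\exp\colon \lieu\to \mathbf{U}$ is an isomorphism of algebraic varieties, with polynomial inverse $\log$ given by truncated power series (the series terminate because $\ad(X)$ is nilpotent for every $X\in\lieu$, as $\lieu$ consists of nilpotent matrices); this is precisely \cite[Theorem~8.1.1]{MR620024}. Restricting to real points, $\exp\colon \lieu\to U$ is a bijective polynomial map with polynomial inverse, and since $\lieu\cong\R^{\dim U}$ as a variety, $U$ is simply connected. For the final assertion of (a): if $\phi\colon U\to V$ is a continuous homomorphism into a unipotent group $V$, then via $\exp$ and $\log$ on both sides it corresponds to the map $d\phi\colon\lieu\to\lie{v}$ on Lie algebras (using $\phi(\exp X)=\exp(d\phi(X))$, valid since $\exp$ is a global diffeomorphism and the Baker--Campbell--Hausdorff formula is polynomial here), and a continuous Lie algebra homomorphism between finite-dimensional real Lie algebras is $\R$-linear, hence polynomial; composing with the polynomial maps $\exp$ and $\log$ shows $\phi$ is polynomial.

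\emph{Part (b).} For the abelianization, I would first note that $[U,U]$ is a connected unipotent subgroup (the commutator subgroup of a connected algebraic group is a connected closed subgroup, and it inherits unipotence), and that $\lie{[U,U]}=[\lieu,\lieu]$. Then $U/[U,U]$ is a connected unipotent group with abelian Lie algebra $\lieu/[\lieu,\lieu]$, and via $\exp$ (which is a homomorphism on an abelian unipotent group, since the BCH formula reduces to addition) one identifies $U/[U,U]$ with the additive group of the vector space $\lieu/[\lieu,\lieu]$; this is a unipotent linear algebraic group (isomorphic to $(\R^k,+)\hookrightarrow \Sl(k+1,\R)$ as upper-triangular unipotent matrices) and is in particular simply connected.

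\emph{Main obstacle.} There is no serious obstacle here: the statement is a compilation of classical results, and the only care needed is the bookkeeping that passing between $U\subset\Sl(d,\R)$ and its Zariski closure $\mathbf{U}$ loses nothing (using that real points of a connected unipotent $\R$-group are analytically connected and Zariski dense, as in \cite[Proposition~7.6]{MR1278263} cited earlier), together with checking that the relevant maps ($\exp$, $\log$, BCH) really are polynomial because all the matrices involved are nilpotent. The mildly delicate point worth stating explicitly is that continuity of a homomorphism between unipotent real groups forces it to be algebraic --- this uses that a continuous additive endomorphism of a finite-dimensional real vector space is linear, transported through the global charts $\exp,\log$.
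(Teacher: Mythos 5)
Your proposal is correct and follows exactly the route the paper intends: the paper gives no proof of this lemma, simply citing \cite[Theorem~8.1.1]{MR620024} for the polynomial exponential/logarithm and treating the rest as standard, and your sketch fills in those standard details correctly. The only point worth stating more carefully is that deducing part (a)'s final assertion from the Lie algebra map requires the standard fact that a continuous homomorphism of Lie groups is automatically smooth (or, equivalently, an argument via one-parameter subgroups), after which your transport through $\exp$ and $\log$ works as written.
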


\subsection{Algebraic tori and $\K$-rank}\label{sec:tori}\index{$\K$-tori}
Let $\bfH\subset \bfGL(d)$ be a connected linear algebraic group defined over $\K \in \{\R,\Q\}$.
	\begin{enumerate}
	\item We say that $\bfH$ is a \emph{$\K$-torus} if it is commutative, defined over $\K$, and conjugate in $\bfGL(d)$ to a subgroup of diagonal matrices
	\item A $\K$-torus $\bfH$ is \emph{$\K$-split} if there exists $g \in \Sl(d, \K)$, such that $g \bfH g^{-1}$ is a subgroup of diagonal matrices. If $\bfH$ is a $\K$-split torus then $\bfH(\R)$ is isomorphic to~$(\R^*)^\ell$ for some~$\ell$.
	\item \label{split} Let $\bfS$ be a torus defined over $\K$. There exists unique maximal $\K$-tori $\bfS',\bfS''\subset \bfS$ such that $\bfS= \bfS'\cdot \bfS''$ where $\bfS'$ is $\K$-split and $\bfS''$ contains no $\K$-split subtorus. (See \cite[Proposition 13.2.4]{MR2458469}.)
	\item If $\bfH$ is defined over~$\K$, then all maximal $\K$-split tori of~$\bfH$ are conjugate by elements of $\Sl(d, \K)$, and therefore have the same dimension. (See \cite[(0.25)]{MR1090825} or \cite[Theorem 15.2.6]{MR2458469}.) This common dimension is the \emph{$\K$-rank} of~$\bfH$, denoted by $\rank_\K \bfH$.
	\item We say that $\bfH$ is \emph{$\K$-anisotropic} if $\rank_\K \bf H = 0$; otherwise, $\bfH$~is \emph{$\K$-isotropic}. An $\R$-group $\bfH$ is an $\R$-anisotropic if and only if $\bfH(\R)$ is compact.
	\item If $\bfH$ is {reductive} then it is the almost-direct product of a semisimple $\K$-group and a $\K$-torus: $\bfH = \bfL \bfS$ where $\bfL$~is semisimple, $\bfS$~is a torus that centralizes~$\bfL$, and $\bfL \cap \bfS$ is finite. The subgroups $\bfL$ and~$\bfS$ are unique.
\item If $H= \bfH(\R)$ and if $\bfS$ is a maximal $\K$-torus (resp.\ maximal $\K$-split torus) we call $S= \bfS(\R)$ a maximal $\K$-torus (resp.\ maximal $\K$-split torus) of $H$.
\item From \eqref{split}, every one-parameter subgroup~$\{a^t\}$ of a $\K$-torus~$S$ has a \emph{real Jordan decomposition} $a^t = a^t_{\split} \, a^t_{\comp}$, where $a^t_{\split}$ is $\R$-diagonalizable, $a^t_{\comp}$ is contained in a compact subgroup,  and $a^t_{\split}$ and $a^t_{\comp}$ are contained in~$S$.
\item If $\K= \R$ and if $\bfS$ is a maximal $\R$-split torus in $\bfH$, we call the connected component of the identity $A= \bfS(\R)^\circ$ a
 \emph{split Cartan subgroup} of~$H= \bfH(\R).$
	\end{enumerate}

\subsection{Levi and Langlands decompositions} \label{sec:levilang}
For $\K \in \{\R,\Q\}$, let $\bfH$ be a connected linear algebraic group over~$\K$.
	\begin{enumerate}
	\item $\bfH$ can be written as a semidirect product $\bfH = \bfF \ltimes \bfU$ where $\bfU= \Rad_u(\bfH)$ and $\bfF$ is a connected reductive $\K$-group. This is a \emph{Levi decomposition} and $\bfF$ is a \emph{Levi subgroup}. All Levi subgroups of~$\bfH$ are conjugate by elements of $\bfU(\K)$. See \cite[pp.~200--201]{MR0092928}\index{Levi Decomp}
	\item We may further decompose a Levi $\K$-subgroup as $\bfF = \bfL \times \bfS$ where $\bfL$~is a connected reductive $\K$-subgroup with $\K$-anisotropic center and $\bfS$~is a connected $\K$-split torus.

Write $H= \bfH(\R)$ and let
	$$L= \bfL(\R), \ \ S= \bfS(\R), \ \ A = S^\circ, \ \ \text{and } \ \ U= \Rad_u(\bfH)(\R). $$
	 Then $H= F\ltimes U$ and $H=(L\times A)\ltimes U$ are called, respectively, a \emph{Levi decomposition}
 and a \emph{Langlands decomposition} of $H$. Note that $U$ and $A$ are connected and we also call $H^\circ = (L^\circ \times A)\ltimes U$ a Langlands decomposition of $H^\circ$.
\item If $\bfH$ is defined over $\Q$ then all Levi subgroups $F$ of $H= \bfH(\R)$ are conjugate by $U$ regardless if we view $\bfH$ as a $\Q$-group or as a $\R$-group.

However, a Langlands decomposition of $H$ as a $\Q$-group need not be conjugate to any Langlands decomposition of $H$ as an $\R$-group. We refer to a choice of Langlands decomposition of $H$ as a $\Q$-group as a \emph{rational Langlands decomposition}. We remark that in our definition of a rational Langlands decomposition $H=(L\times A)\ltimes U$, we will always take the subgroups $L$ and $A= S^\circ$ to be defined over $\Q$; this is different from the terminology used for instance in \cite{MR2189882} which only requires that $L$ and $A$ be conjugate to groups defined over $\Q$.  \index{rational Langlands decomp. and discussion of non-conventional terminology}
	\end{enumerate}

Given a connected linear algebraic $\Q$-group $\bfH$, let $H=\bfH(\R)$ and write $\bfH(\Z) = H_\Z:= H\cap \Sl(d,\Z)$.
\begin{proposition} \label{LatticeIff}
Let $\bfH$ be a connected linear algebraic $\Q$-group and let $H=\bfH(\R)$.
	\begin{enumerate}
	\item \label{LatticeIff-cocompact}
	$H_\Z$ is a cocompact lattice in~$H$ if and only if some (and hence every) Levi subgroup $\bfF$ of~$\bfH$ defined over $\Q$ is $\Q$-anisotropic. (See \cite[Theorem~4.12, p.~210]{MR1278263}.)
	\item \label{LatticeIff-latt}
	$H_\Z$ is a lattice in~$H$ if and only if the subgroup~$A=S^\circ$ is trivial in some (and hence every) rational Langlands decomposition $H = (L \times A) \ltimes U$. (See \cite[Theorem~4.13, p.~213]{MR1278263}.)
	\end{enumerate}
\end{proposition}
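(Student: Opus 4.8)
The plan is to read off both assertions from Borel--Harish-Chandra reduction theory, in the form recorded in the cited references, by translating its hypotheses into the language of the Langlands decomposition of \cref{sec:levilang}. Fix a rational Langlands decomposition, writing $\bfH=\bfF\ltimes\bfU$ with $\bfU=\Rad_u(\bfH)$ and $\bfF=\bfL\times\bfS$ as in that subsection; note $A=\bfS(\R)^\circ$, so $A=\{1\}$ iff the $\Q$-split torus $\bfS$ is trivial. Since rational Levi subgroups are $\bfU(\Q)$-conjugate and $\bfL,\bfS$ are determined by $\bfF$, the conditions ``$\bfF$ is $\Q$-anisotropic'' and ``$A=\{1\}$'' do not depend on these choices, so it suffices to work with one fixed decomposition. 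The inputs from reduction theory are: $H_\Z$ is a lattice in $H$ iff $\bfH$ carries no nontrivial $\Q$-rational character (\cite[Theorem~4.13, p.~213]{MR1278263}); and, given that $H_\Z$ is a lattice, it is cocompact iff a Levi $\Q$-subgroup of $\bfH$ is $\Q$-anisotropic (\cite[Theorem~4.12, p.~210]{MR1278263}).

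For~\pref{LatticeIff-latt} it then remains to verify that $\bfH$ has a nontrivial $\Q$-character exactly when $\bfS\neq\{1\}$. A $\Q$-character of $\bfH$ is trivial on the unipotent radical $\bfU$, hence factors through $\bfF$, and being valued in an abelian group it factors further through the torus $\bfF/[\bfF,\bfF]=(\bfL/[\bfL,\bfL])\times\bfS$. The factor $\bfL/[\bfL,\bfL]$ is a quotient of the central torus of $\bfL$, which is $\Q$-anisotropic by the choice of Levi decomposition, so $\bfL/[\bfL,\bfL]$ is a $\Q$-anisotropic torus and has no nontrivial $\Q$-character. Hence $\bfH$ and the $\Q$-split torus $\bfS$ have the same $\Q$-characters, and $\bfS$ admits a nontrivial one precisely when it is nontrivial.

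For~\pref{LatticeIff-cocompact}, a cocompact lattice is in particular a lattice, so by~\pref{LatticeIff-latt} we may assume $A=\{1\}$, i.e.\ $\bfF=\bfL$ is reductive; the claim becomes that $H_\Z$ is cocompact iff $\bfF$ is $\Q$-anisotropic. I would reduce to $\bfF$ as follows: the $\Q$-morphism $\bfH\to\bfH/\bfU=\bfF$ carries $H_\Z$ onto an arithmetic subgroup of $\bfF$, so that the induced surjection of $H_\Z\backslash H$ onto the corresponding arithmetic quotient of $\bfF(\R)$ is proper, with fibers of the form $\Lambda\backslash\bfU(\R)$ for lattices $\Lambda$ in the unipotent group $\bfU(\R)$; these fibers are compact by Mal'cev's theorem. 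Thus $H_\Z\backslash H$ is compact iff that arithmetic quotient --- hence $\bfF(\Z)\backslash\bfF(\R)$ --- is compact, which by the reductive case of Borel--Harish-Chandra happens iff the reductive $\Q$-group $\bfF$ is $\Q$-anisotropic. The only step here that is not formal bookkeeping with $\Q$-tori and characters is this last reduction --- peeling off the unipotent radical and relating cocompactness of $H_\Z\backslash H$ to that of an arithmetic quotient of $\bfF(\R)$ --- which is the main point to get right; alternatively one may simply cite \cite[Theorems~4.12 and~4.13]{MR1278263} as already covering arbitrary (not necessarily reductive) $\Q$-groups~$\bfH$.
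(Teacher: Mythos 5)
The paper offers no proof of this proposition; it simply records the statement with pointers to Platonov--Rapinchuk [Theorems~4.12 and~4.13], so there is no distinct argument in the paper to compare against. What you have written is a correct unpacking of exactly what those citations are being used for: you translate the character-vanishing condition in the cited lattice criterion into the triviality of the $\Q$-split factor $\bfS$ (using that $\bfL$ is chosen with $\Q$-anisotropic center, so $\bfF/[\bfF,\bfF]$ splits as a $\Q$-anisotropic torus times $\bfS$), and you reduce cocompactness to the Levi via the compact nilmanifold fibers of $H_\Z\backslash H\to\Gamma'\backslash\bfF(\R)$ and the reductive compactness criterion. Both steps are sound, and the final remark --- that one can just cite the two theorems as stated for general $\Q$-groups --- is precisely what the paper does. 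One tiny naming quibble: the cocompactness criterion for reductive groups is usually attributed to Godement (or Mostow--Tamagawa) rather than Borel--Harish-Chandra proper, though Platonov--Rapinchuk present them in the same circle of ideas, so the attribution is harmless here.
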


\subsection{Restricted $\K$-roots and parabolic subgroups of algebraic groups}
Let $\bfG$ be a connected semisimple linear algebraic group defined over $\K \in \{\R,\Q\}$ and let $G= \bfG(\R)$.

Let $\bfS$ be a maximal $\K$-split torus in $\bfG$.
Let $A= \bfS(\R)^\circ$ and let $\Phi(A,G)$ denote the set of weights for the adjoint action of $A$ on the Lie algebra~$\lieg$ of~$G$. These are the \emph{$\K$-roots} of~$G$. Each root is a homomorphism from~$A$ to the multiplicative group~$\R^+$.
The $\K$-roots $\Phi(A,G)$ extend to $\K$-characters on $\bfS$. The $\K$-characters form a finitely generated abelian group and, as is standard, we write the group operation on characters and roots additively. Choose an ordering of $\Phi(A,G)$ and let $\Delta$ be the corresponding set of simple roots: a positive root~$\alpha$ is \emph{simple} if it is not the sum of two other positive roots. Let $N$ be the connected unipotent subgroup of~$G$ whose Lie algebra is the sum of all the root spaces corresponding to positive roots. This is a maximal unipotent $\K$-subgroup of~$G$.

A \emph{Borel subgroup} of $\bfG$ is a maximal Zariski connected solvable subgroup. A \emph{parabolic $\K$-subgroup} of $\bfG$ is a $\K$-subgroup $\bfP$ that contains a Borel subgroup. When $\K=\Q$, Borel subgroups of $\bfG$ need not be defined over $\Q$. However, all minimal parabolic $\Q$-subgroups are conjugate over $\bfG(\Q)$. (See \cite[Theorem 15.4.6(ii)]{MR2458469}.)
Every parabolic $\K$-subgroup $\bfP$ is connected. If $G= \bfG(\R)$, we also refer to $P =\bfP(\R)$ as a parabolic $\K$-subgroup of $G$.

We have the following characterization of parabolic $\K$-subgroups of $G$:
Fix a subset~$\Delta_0$ of the set~$\Delta$ of simple $\K$-roots of~$\Phi(A,G)$ and let\index{paremetrization of parabolics}
	$$ S = \bigcap \nolimits_{\alpha \in \Delta_0} \ker \alpha \subseteq A .$$
Then $S$ is the identity component of a $\K$-split $\K$-torus and the group \begin{equation}\label{eq:parahoop}P_{\Delta_0} := C_G(S) N\end{equation} is a parabolic $\K$-subgroup of~$G$. Moreover, all parabolic $\K$-subgroups of~$G$ arise as in \eqref{eq:parahoop} for some a choice of maximal $\K$-split torus, ordering of the roots, and set of simple roots.

It is clear from the definition of~$S$ that $\dim S = \rank_\K G - |\Delta_0|$.

\begin{remark} \label{MaxSplitInP}
We have $A \subseteq C_G(S) \subseteq P_{\Delta_0}$. Since all maximal connected $\K$-split tori of~$P_{\Delta_0}$ are conjugate via $(P_{\Delta_0})_\K$, this implies that every maximal connected $\K$-split torus of~$P_{\Delta_0}$ is a maximal connected $\K$-split torus of~$G$.
\end{remark}

A {minimal} parabolic $\K$-subgroup is a parabolic $\K$-subgroup that does not contain any other parabolic $\K$-subgroup. These occur by taking $\Delta_0 = \emptyset$ above and, as previously remarked, all minimal parabolic $\K$-subgroups of~$G$ are conjugate via $G_\K=\bfG(\K)$.

The following well-known observation is immediate from the definitions.
\begin{lemma} \label{MinPar}
Let $P$ be a minimal parabolic $\K$-subgroup of~$G$, and let $P = (L \times S^\circ) \ltimes U$ be a Langlands decomposition (as a $\K$-group). Then:
	\begin{enumerate}
	\item \label{MinPar-S}
	$S^\circ$ is the identity component of a maximal $\K$-split torus in~$G$
	and
	\item \label{MinPar-L}
	$L$ is $\K$-anisotropic.
	\end{enumerate}
\end{lemma}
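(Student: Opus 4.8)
The plan is to reduce to the root-theoretic description of a minimal parabolic recalled just above and then read off both assertions from the uniqueness properties of the Levi and Langlands decompositions. Since both conclusions are invariant under conjugation, I may assume that $P$ is the standard minimal parabolic $P_{\emptyset}$ of \eqref{eq:parahoop}, i.e.\ $P = C_{\bfG}(\bfS_0)(\R)\,N$ where $\bfS_0$ is a maximal $\K$-split torus of $\bfG$, $A_0 := \bfS_0(\R)^\circ$, and $N$ is the corresponding maximal unipotent $\K$-subgroup. Then $\bfN = \Rad_u(\bfP)$, so in the given Langlands decomposition $\bfP = (\bfL\times\bfS)\ltimes\bfU$ one has $\bfU = \bfN$, and $\bfL\times\bfS$ is a Levi subgroup of $\bfP$. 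As all Levi subgroups of $\bfP$ are conjugate by an element of $\bfU(\K)$, and as being $\K$-anisotropic and being a maximal $\K$-split torus are conjugation-invariant, I may assume $\bfL\times\bfS = C_{\bfG}(\bfS_0)$.

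First I would prove \ref{MinPar-S}. Every element of $C_{\bfG}(\bfS_0)$ commutes with $\bfS_0$, so $\bfS_0$ is central in $\bfF := C_{\bfG}(\bfS_0) = \bfL\times\bfS$; being connected, $\bfS_0 \subseteq Z(\bfF)^\circ = Z(\bfL)^\circ\times\bfS$. Projecting onto the first factor gives a $\K$-morphism of tori whose image is a $\K$-split subtorus of $Z(\bfL)^\circ$; since the center of $\bfL$ is $\K$-anisotropic by the definition of the Langlands decomposition, this image is trivial, whence $\bfS_0\subseteq\bfS$. But $\bfS$ is itself a connected $\K$-split torus of $\bfG$, so maximality of $\bfS_0$ forces $\bfS = \bfS_0$. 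Therefore $S^\circ = \bfS(\R)^\circ = A_0$ is the identity component of a maximal $\K$-split torus of $G$, which is \ref{MinPar-S}.

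For \ref{MinPar-L}, suppose toward a contradiction that $\bfL$ is $\K$-isotropic, so it contains a nontrivial connected $\K$-split torus $\bfS_1$. Because $\bfF = \bfL\times\bfS$ is a direct product, $\bfS_1$ commutes with $\bfS$ and meets it trivially, so $\bfS_1\bfS$ is a connected $\K$-split torus of $\bfG$ of dimension strictly larger than $\dim\bfS = \dim\bfS_0$, contradicting the maximality of $\bfS_0$ obtained in the previous paragraph. Hence $\bfL$ is $\K$-anisotropic. (Alternatively, once \ref{MinPar-L} is in hand, \ref{MinPar-S} also follows from \cref{MaxSplitInP}, since then $\bfS$ is a maximal $\K$-split torus of $\bfP$ and hence of $\bfG$.)

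The whole argument is bookkeeping with structure theory already recalled, so I do not expect a real obstacle; the one point that needs a little care is the identification $\bfS = \bfS_0$ in \ref{MinPar-S}, namely the fact that passing from the concrete description $C_{\bfG}(\bfS_0)N$ to an abstract Langlands decomposition does not shrink the split central torus. That is exactly where the $\K$-anisotropy of the center of $\bfL$ and the maximality of $\bfS_0$ enter; everything else is immediate from the definitions, consistent with the text's assertion that the lemma is a formal consequence of the preceding setup.
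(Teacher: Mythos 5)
Your proof is correct. The paper states the lemma as a well-known observation ``immediate from the definitions'' and gives no proof, so there is nothing to compare against; your argument supplies the standard verification that the paper omits. After the (legitimate) reduction by conjugacy of minimal parabolic $\K$-subgroups and of Levi subgroups to the standard Levi $C_{\bfG}(\bfS_0)$, the two observations---that $\bfS_0$ must land in $\bfS$ because the projection to $Z(\bfL)^\circ$ kills any $\K$-split torus, and that an isotropic $\bfL$ would enlarge $\bfS_0$---are exactly the right way to extract both conclusions from the uniqueness properties of the decompositions set up in \S 3.2--3.3 and the maximality of $\bfS_0$. The parenthetical alternative via \cref{MaxSplitInP} is also fine, though as you note it needs \ref{MinPar-L} first and the extra step that any $\K$-split torus of $P$ can be conjugated into the Levi and then (by anisotropy of $L$) into $S$, so the direct route is cleaner.
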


At the other extreme, a parabolic $\K$-subgroup~$Q$ is said to be \emph{maximal} if $Q$ is a proper subgroup of~$G$ which is not contained in any other proper, parabolic $\K$-subgroup of~$G$. In the notation of \eqref{eq:parahoop}
this means that $\dim S = 1$ or, equivalently, that $|\Delta_0| = \rank_\K G - 1$.

\begin{remark}\label{rem:sclevi}
Suppose $Q = LA N$ is a Langlands decomposition (as a $\K$-group) of a parabolic $\K$-subgroup of~$G$. It is well known that for $G$ simply connected as an algebraic group, the semisimple subgroup $[L,L]$ is simply connected as an algebraic group \cite[Exercise 8.4.6(6), p.~149]{MR2458469}.
\end{remark}

\begin{definition} \label{ExpandingHoroDefn}
Let $\{b^s\}$ be a one-parameter subgroup of~$G$. The corresponding \emph{expanding horospherical subgroup} of~$\{b^s\}$ is
	$$ \mathcal U^+(b^s) := \{\, u \in G \mid \text{$b^{s} u b^{-s} \to \1$ as $s \to -\infty$} \,\} .$$
This is a connected closed, unipotent subgroup of~$G$.
\end{definition}

 The following elementary observation can be proven by taking $\{b^s\}$ to be in the interior of the positive Weyl chamber of~$A$. As above $\K \in \{\R, \Q\}$.

\begin{proposition} \label{UIsHoro}
Let $P = (L \times A) \ltimes U$ be the Langlands decomposition of a parabolic $\K$-subgroup of~$G$. Then $U$ is the expanding horospherical subgroup of some  $\R$-diagonalizable one-parameter subgroup  $\{b^s\}$ of~$A$.
\end{proposition}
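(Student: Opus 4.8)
The plan is to exhibit an explicit $\R$-diagonalizable one-parameter subgroup $\{b^s\}$ of $A$ whose expanding horospherical subgroup is exactly $U$. Recall the setup: $P = P_{\Delta_0} = C_G(S) N$ for some subset $\Delta_0 \subseteq \Delta$ of simple $\K$-roots, $S = \bigcap_{\alpha \in \Delta_0} \ker \alpha$, and in the Langlands decomposition $P = (L \times A') \ltimes U$ one has $A' = S^\circ$ and $U = \Rad_u(P) = \Rad_u(C_G(S)N)$. (I will write $A$ for the maximal $\K$-split torus $\bfS_{\max}(\R)^\circ$ appearing in the ambient root datum and note that $S^\circ \subseteq A$.) The Lie algebra $\lieu$ of $U$ is the sum of the root spaces $\lieg_\chi$ over those $\chi \in \Phi(A,G)$ that are positive combinations of simple roots and that are \emph{nontrivial} on $S$, i.e.\ that do not lie in the span of $\Delta \setminus \Delta_0$; equivalently, $\chi|_{S} \neq 1$ and $\chi > 0$.

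First I would choose $b = b^1 \in S^\circ \subseteq A$ so that $\alpha(b) > 1$ for every $\alpha \in \Delta_0$ — this is possible because the characters $\{\alpha|_{S^\circ} : \alpha \in \Delta_0\}$ are (up to the reduced root system) a basis, so the corresponding coweights let us pick $b$ with any prescribed sign pattern; take them all strictly positive. Set $b^s = b_{\split}^s$ (it is already $\R$-diagonalizable since $b \in S^\circ$ and $S$ is $\K$-split, a fortiori $\R$-split). Then for a root $\chi = \sum_{\alpha \in \Delta} n_\alpha \alpha$ with all $n_\alpha \ge 0$, we have $\chi(b^s) = \exp(s \sum_\alpha n_\alpha \log \alpha(b))$. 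Since $\alpha(b) = 1$ for $\alpha \notin \Delta_0$ (as $b \in S = \bigcap_{\alpha\in\Delta_0}\ker\alpha$... wait — rather $b \in S^\circ$ so $\alpha(b)=1$ precisely for $\alpha \in \Delta_0$; I must instead choose $b$ in a complementary one-parameter subgroup of $A$). Let me correct this: choose $b$ in the one-parameter subgroup of $A$ dual to the fundamental coweights so that $\alpha(b) = 1$ for all $\alpha \in \Delta \setminus \Delta_0$ and $\alpha(b) > 1$ for all $\alpha \in \Delta_0$; such $b$ lies in (a complement determining) the center of the Levi and this is exactly the standard "defining cocharacter" of the parabolic. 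Then $\chi(b) > 1$ iff some $n_\alpha > 0$ with $\alpha \in \Delta_0$, iff $\chi|_S \neq 1$, iff $\lieg_\chi \subseteq \lieu$; and $\chi(b) = 1$ iff $\lieg_\chi \subseteq \Lie{l} \oplus \liea$. Conversely for negative roots $\chi(b) < 1$ on the $\lieu$-type roots. Hence $b^s u b^{-s} = \exp(\Ad(b^s)\log u) \to \1$ as $s \to -\infty$ exactly for $u \in U$: on $\lieu$ all eigenvalues $\chi(b^s) \to 0$, on $\liel \oplus \liea$ and on $\lieu^- $ (the opposite) they do not. This identifies $\calU^+(b^s) = U$.

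The key steps in order: (1) fix the standard parametrization $P = P_{\Delta_0}$ and recall $U = \Rad_u(P)$ has Lie algebra $\bigoplus_{\chi:\,\chi|_S \ne 1,\ \chi>0} \lieg_\chi$; (2) produce $b \in A$ with $\alpha(b) > 1$ for $\alpha \in \Delta_0$ and $\alpha(b) = 1$ for $\alpha \in \Delta\setminus\Delta_0$, using duality between simple roots and coweights on the $\K$-split torus $A$; (3) check that $b$ is $\R$-diagonalizable — automatic since $b \in A$ and $A$ is the real points of a $\K$-split (hence $\R$-split) torus; (4) compute $\Ad(b^s)$ eigenvalues on root spaces and verify $\calU^+(b^s) = U$ via \Cref{ExpandingHoroDefn}, using that $G$ decomposes as $\lieg = \liel \oplus \liea \oplus \lieu \oplus \lieu^-$ and that the sign of $\log\chi(b)$ is determined by whether $\chi$ involves a root in $\Delta_0$. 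I expect the main obstacle to be purely bookkeeping: making sure the chosen $b$ genuinely kills the roots orthogonal to $\Delta_0$ while expanding all of $\lieu$, which in the non-reduced (e.g.\ $BC_n$) or non-split case requires care that multiples of a root behave consistently — but since the sign of $\chi(b)$ depends only on the coefficients $n_\alpha$ for $\alpha \in \Delta_0$, and these are preserved under scaling a root, this causes no real trouble. One then simply remarks that taking $\{b^s\}$ in the interior of the positive Weyl chamber of $S^\circ$ relative to $\Delta_0$ accomplishes everything at once, which is the "elementary observation" hinted at in the statement.
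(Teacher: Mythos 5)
Your strategy is the same as the paper's (which only offers a one-sentence hint: take $\{b^s\}$ in the interior of the positive Weyl chamber of the Langlands~$A = S^\circ$), but the middle of your argument inverts the roles of $\Delta_0$ and $\Delta\setminus\Delta_0$, and the inversion produces a $b$ that is not even in the group $A$ the proposition requires.

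Specifically: with the paper's convention $S = \bigcap_{\alpha\in\Delta_0}\ker\alpha$, a root $\chi$ restricts trivially to $S$ iff $\chi \in \mathrm{span}(\Delta_0)$ --- not $\mathrm{span}(\Delta\setminus\Delta_0)$ as you wrote. So the Levi of $P_{\Delta_0}$ has root system $\Phi\cap\mathrm{span}(\Delta_0)$, and $\lieu$ is the sum of root spaces for positive $\chi = \sum n_\alpha \alpha$ with $n_\alpha>0$ for some $\alpha\in\Delta\setminus\Delta_0$. (Your displayed characterization ``$\chi|_S\neq 1$ and $\chi>0$'' is correct; the ``i.e.'' that follows it is wrong.) Consequently, the correct choice of cocharacter is the one you started with and abandoned: $b\in S^\circ$ itself. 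On $S^\circ$ you automatically have $\alpha(b)=1$ for all $\alpha\in\Delta_0$, and because $\{\alpha|_{S^\circ}:\alpha\in\Delta\setminus\Delta_0\}$ is a basis of the character group of $S^\circ$ up to finite index, you can choose $b\in S^\circ$ with $\alpha(b)>1$ for every $\alpha\in\Delta\setminus\Delta_0$. Then $\chi(b)>1$ exactly on the roots of $\lieu$, $\chi(b)=1$ on the Levi roots, $\chi(b)<1$ on the opposite nilpotent, so $\mathcal U^+(b^s)=U$.

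Your corrected choice ($\alpha(b)=1$ on $\Delta\setminus\Delta_0$, $\alpha(b)>1$ on $\Delta_0$) places $b$ in $\bigcap_{\alpha\in\Delta\setminus\Delta_0}\ker\alpha$, which is disjoint from $S^\circ$ (apart from the center) and is instead the split center of the Levi of the \emph{different} parabolic $P_{\Delta\setminus\Delta_0}$; that $b$ does not lie in the $A$ of the statement, and its expanding horospherical subgroup is $\Rad_u(P_{\Delta\setminus\Delta_0})$, not $U$. Once the swap is undone your write-up is a correct expansion of the paper's remark, including the observation that $\R$-diagonalizability is automatic since $S$ is a $\K$-split (hence $\R$-split) torus and the observation about the $BC_\ell$ case, and your closing sentence about the interior of the positive chamber of $S^\circ$ is exactly right.
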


In the following results, we consider a parabolic $\Q$-subgroup $Q$ of a $\Q$-group $G$ with rational Langlands decomposition $Q = LA U$.
The group $L$ acts by conjugation on $U$. The action preserves the commutator subgroup $[U,U]$ and thus induces an action of $L$ and its subgroups on $U/[U,U]$. Identifying the abelian group $U/[U,U]$ with some $\R^n$, this action is a linear representation.
The following result is probably well known (see related results such as \cite[Theorem~2]{MR1047327} and \cites{MR1213978,MR1219660}) but we could not find a reference.

\begin{lemma} \label{LNoCent}
Let $\bfG$ be an almost $\Q$-simple $\Q$-group with $\rank_\R(\bfG)\ge 2$ and $\rank_\Q(\bfG)\ge 1$.
Let $Q$ be a maximal parabolic $\Q$-subgroup of~$G=\bfG(\R)^\circ$ with rational Langlands decomposition $Q = LA U$.
Let $L^\dagger$ be the product of all noncompact almost simple factors of~$L$ and the maximal connected $\R$-split torus in the center of~$L$.

Then the induced action of $L^\dagger$ on $U/[U,U]$ does not contain the trivial representation; that is, the only fixed point for the action of $L^\dagger$ on $U/[U,U]$ is the identity coset.
\end{lemma}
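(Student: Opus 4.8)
The plan is to reduce the statement to a fact about restricted root systems and the structure of the unipotent radical $U$ of a maximal parabolic $\Q$-subgroup, then rule out a trivial subrepresentation by an explicit root-theoretic weight count. First I would pass to the $\Q$-structure: since $Q$ is a maximal parabolic $\Q$-subgroup, it corresponds in the notation of \eqref{eq:parahoop} to a subset $\Delta_0 \subset \Delta$ of the simple $\Q$-roots with $|\Delta_0| = \rank_\Q(\bfG) - 1$, so there is a single simple $\Q$-root $\beta \in \Delta \smallsetminus \Delta_0$ "omitted." The Lie algebra $\lieu$ of $U$ decomposes under the $\Q$-split torus $A$ into root spaces for the positive $\Q$-roots whose expansion in simple roots has strictly positive $\beta$-coefficient; modulo $[\lieu,\lieu]$ only the root spaces with $\beta$-coefficient exactly $1$ survive (the bracket raises the $\beta$-height). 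Thus $U/[U,U]$, as an $L$-module, is $\bigoplus_{\gamma} \lieg_\gamma$ over $\Q$-roots $\gamma$ with $\beta$-coefficient $1$, and $L^\dagger$ acts through the subsystem generated by $\Delta_0$ together with the $\R$-split central torus $A$.

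Next I would analyze the $A$-action alone. The center of $L$ contains the connected $\R$-split torus $S = \bigcap_{\alpha \in \Delta_0}\ker\alpha$, which is one-dimensional since $Q$ is maximal; every $\Q$-root $\gamma$ appearing in $U/[U,U]$ restricts nontrivially to $S$, because its $\beta$-coefficient is $1 \ne 0$ and $\beta$ is precisely the simple root not vanishing on $S$. Hence already the one-parameter $\R$-split central torus $A \supseteq S^\circ$ acts on each weight space $\lieg_\gamma$ by a nontrivial character, so the $A$-fixed subspace of $U/[U,U]$ — and a fortiori the $L^\dagger$-fixed subspace — is zero. This is essentially immediate once the weight decomposition is set up correctly, which is why I expect the real content to be bookkeeping rather than a deep argument.

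The one genuine subtlety — and the step I expect to be the main obstacle — is that the hypothesis allows $L$ to have \emph{infinite center} and noncompact semisimple part, and one must be careful that $L^\dagger$ really contains the torus that sees the grading. The definition of $L^\dagger$ bundles in "the maximal connected $\R$-split torus in the center of $L$," which contains $S^\circ$ (indeed $S^\circ$ lies in the center of $L$ and is $\R$-split), so the central-torus argument of the previous paragraph applies to $L^\dagger$, not merely to all of $L$. I would spell this out carefully: note $A = S^\circ$ commutes with $L$, is $\R$-split, and is therefore contained in the $\R$-split central torus appearing in $L^\dagger$; then invoke that every $\gamma$ contributing to $U/[U,U]$ is nontrivial on $S$. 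A secondary point to check is that one is allowed to compute with $\Q$-roots here: the action of $L$ on $U/[U,U]$ is defined over $\Q$, and its restriction to the $\Q$-split torus $A$ decomposes over $\R$ (indeed over $\Q$) into the $\Q$-root spaces, so no issue arises from $L$ or $U$ failing to be $\R$-split. The hypotheses $\rank_\R(\bfG) \ge 2$ and $\rank_\Q(\bfG) \ge 1$ guarantee that maximal parabolic $\Q$-subgroups exist and that $U \ne \{\1\}$, so the statement is non-vacuous; these ranks are not otherwise needed for the argument, which is purely about the $A$-grading on $U/[U,U]$.
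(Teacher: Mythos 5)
Your argument has a genuine gap, and it is precisely at the step you flag as the ``main obstacle.'' In the rational Langlands decomposition $Q = (L \times A) \ltimes U$ used in the paper, $L$ is by construction a reductive $\Q$-subgroup with \emph{$\Q$-anisotropic center}, and $A = S^\circ$ is the complementary $\Q$-split central torus of the Levi $F = L \times A$; the intersection $L \cap A$ is finite. Consequently the ``maximal connected $\R$-split torus in the center of~$L$'' that appears in the definition of $L^\dagger$ is a subtorus of the $\Q$-anisotropic group $Z(L)$, and is \emph{not} the torus~$A$. Your key inference --- ``note $A = S^\circ$ commutes with $L$, is $\R$-split, and is therefore contained in the $\R$-split central torus appearing in $L^\dagger$'' --- is false: centralizing $L$ does not put a subgroup inside $L$, and $A \not\subset L$. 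What your computation actually establishes is that $A$ acts on $U/[U,U]$ with no nonzero fixed vector (because every $\Q$-root $\gamma$ occurring there has $\beta$-coefficient $1$, so $\gamma|_A \neq 0$); this is correct but easy, and it is a different statement from the lemma. Since $L^\dagger$ centralizes $A$, the $A$-weight decomposition of $U/[U,U]$ is preserved by $L^\dagger$, but knowing $A$ acts by nonzero characters tells you nothing about whether $L^\dagger$ acts trivially on some line.

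The paper's proof has to work harder precisely because of this. It embeds $A$ in a maximal $\R$-split torus $T$ of $G$, notes $T = A \cdot (T \cap L^\dagger)$ so that $T \cap L^\dagger$ has codimension one in $T$, and deduces that the real roots vanishing on $T \cap L^\dagger$ form at most one proportionality class $\{\beta_0, 2\beta_0\}$. On every other real root space contributing to $\lieu/[\lieu,\lieu]$, the torus part of $L^\dagger$ already acts by a nonzero weight; but for a potential fixed vector in $\lieg^{\beta_0}$ the torus alone does not suffice, and the noncompact almost simple factors of $L$ (hence the ``product of all noncompact almost simple factors'' part of $L^\dagger$) must be brought in. The paper then splits into cases according to whether $Q$ is also maximal as a parabolic $\R$-subgroup; when it is not, a Galois-theoretic argument with the $*$-action and a careful comparison of $\Q$- and $\R$-simple roots on $Z(L)$ is used. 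None of this is optional bookkeeping --- if the single central torus $A$ sufficed, the lemma would not need the semisimple part of $L^\dagger$ at all, which is already a signal that the route you proposed cannot be the whole story.
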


\begin{proof}
We make a number of preliminary observations.
Fix the rational Langlands decomposition $Q = LA U$. Since $Q$ is a maximal parabolic $\Q$-subgroup, we have $\dim(A)=1$.
Fix a minimal parabolic $\Q$-subgroup  $P_\Q\subset Q$ and a minimal parabolic $\R$-subgroup $P_\R\subset P_\Q$.  Fix maximal connected $\Q$-split torus $S$ in $P_\Q$  and a maximal connected $\R$-split torus $T$ in $P_\R$ with $$A\subset S\subset T.$$
Let $\Delta_\Q$ be a collection of simple $\Q$-roots for $\Phi(S,G)$ determined by $P_\Q$
and let $\Delta_\R$ be a collection of simple $\R$-roots for $\Phi(T,G)$ determined by $P_\R$.  Note that every element of $\Delta_\Q$ is the restriction of an element of $\Delta_\R$ to $S$.

Let $\lieu$ be the Lie algebra of $U$. The exponential map $\lieu\to U$ gives an identification between $\lieu/[\lieu,\lieu]$ and $U/[U,U]$.
Since $Q$ is a maximal parabolic $\Q$-subgroup, there is a simple $\Q$-root $\alpha\in \Delta_\Q$ such that any $\Q$-root space~$\lieg^\beta$ is contained in~$\lieu$ if and only if $\beta$ is a non-negative integer combination of elements of $\Delta_\Q$ and the coefficient of $\alpha$ is positive. The map $\lieu\to \lieu/[\lieu,\lieu]$ is injective on the $\Q$-root space $\lieg^\beta$ if and only if the coefficient of $\alpha$ in $\beta$ is $1$; the kernel of $\lieu\to \lieu/[\lieu,\lieu]$ is spanned by all $\Q$-root spaces $\lieg^\beta$ such that the coefficient of $\alpha$ in $\beta$ is at least $2$.

We observe that $T = A\cdot (T\cap L) = A\cdot (T\cap L^\dagger)$ and thus $T\cap L^\dagger $ has codimension-1 in $T$.
There are at most 2 (proportional) positive real root ${\beta_0}, 2{\beta_0} \in \Phi(T,G)$ for which the group $T\cap L^\dagger $ acts trivially by conjugation on the real root spaces $\lieg^{\beta_0}$ and $\lieg^{2{\beta_0}}$. Since the action on all other root spaces is by scalar multiplication, if there is $x\in \lieu/[\lieu,\lieu]$ which is fixed under conjugation by $T\cap L^\dagger $, we conclude that $x\in \lieg^{\beta_0} \mod [\lieu,\lieu]$.

To finish the proof of the lemma, we consider separately the cases that $Q$ is or is not maximal as a parabolic $\R$-subgroup.

\setcounter{CASE}{0}
\begin{CASE}
Suppose $Q$ is a maximal parabolic $\R$-subgroup.
\end{CASE}
As we assume $Q$ is a maximal parabolic $\R$-subgroup, there is a simple $\R$-root~$\alpha \in \Delta_\R$ such that a real root space~$\lieg^\phi$ is in~$\lieu$ if and only if $\phi$ is a non-negative integer combination of elements of $\Delta_\R$ and the coefficient of $\alpha $ is positive. Additionally, for every simple root $\beta\in \Delta_\R\sm\{\alpha\}$, the restriction of $\beta$ to $A$ vanishes.

Note that $G$ may have rank-1 factors as an $\R$-group. In particular, the Dynkin diagram for $\Phi(T,G)$ may have isolated nodes. We claim that the node associated to $\alpha $ is not isolated in the Dynkin diagram; in particular, $\alpha $ is not a root associated to a rank-1 real factor. Indeed, if $\alpha$ were isolated then we would have $\lieu = \lieg^\alpha$ or $\lieu= \lieg^\alpha\oplus \lieg^{2\alpha}$. Since $\lieu$ is the expanding horospherical subgroup for some 1-parameter subgroup, it is well-known that the subalgebra $\lieh$ generated by $ \lieg^\alpha$ and $ \lieg^{-\alpha}$ is an ideal. On the other hand, $U$ is defined over $\Q$ whence the analytic subgroup $H$ tangent to $\lieh$ is the connected component of a $\Q$-subgroup. (A standard $\Q$-rank-1 subgroup in the terminology of \cref{StandardSubgrpDefn} below.) This contradicts the assumption that $\bfG$ is almost $\Q$-simple.

Now let $x\in \lieu/[\lieu,\lieu]$ be fixed by conjugation for $L^\dagger$. As discussed above, there is an $\R$-root $\beta_0$ such that $x\in \lieg^{\beta_0} \mod [\lieu,\lieu]$. It follows from the preceding paragraph that there is a simple real root $\beta \neq \alpha$ such that either $\beta_0+\beta$ or $\beta_0-\beta$ is a root.
For every non-zero $x \in \lieg^{\beta_0}$, this implies that either $[x,\lieg^\beta] \neq \{0\}$ or $[x,\lieg^{-\beta}] \neq \{0\}$ (c.f.\ \cite[Lemma~7.75, p.~477]{MR1920389}).
Both $\lieg^{\beta_0+\beta}$ and~$\lieg^{\beta_0-\beta}$ inject into $\lieu/[\lieu,\lieu]$.
Since $\lieg^{\pm\beta} $ are elements of the lie algebra $ \liel^\dagger$ of $L^\dagger$, this implies the only element of $\lieu/[\lieu,\lieu]$ fixed under conjugation by $L^\dagger$ is $0$ as claimed.

 \begin{CASE} \label{LNoCentPf-NotMax}
Suppose $Q$ is not maximal as a parabolic $\R$-subgroup.
\end{CASE}
Let $Z$ be the central torus of~$LA $. Then $Z $ is a $\Q$-subgroup of $G$ and $Z= A Z_a$ where $Z_a$ is the center of $L$ and hence is an anisotropic $\Q$-torus. Since $\dim(A) =1$, $Z_a$ has codimension~$1$ in~$Z$.

We work in the complexified Lie algebra~$\lieg_\C$.
Let $\Delta_\C$ be the set of simple $\C$-roots, and let $\rho = \sum_{\alpha \in \Delta_\C} \alpha$ be the sum of these simple roots.  After a maximal torus has been fixed, there is a natural action of the Galois group $\mathrm{Gal}(\bar{\Q}/\Q)$ on the set of all $\C$-roots since the Lie algebra splits over $\bar{\Q}$. The set~$\Delta_\C$ is usually not invariant under this action, but all possible choices of simple roots are conjugate under the Weyl group. Therefore, composing the action of each element of the Galois group with an appropriate element of the Weyl group yields an action of $\mathrm{Gal}(\bar{\Q}/\Q)$ that leaves $\Delta_\C$ invariant. This is usually called the $*$-action of $\mathrm{Gal}(\bar{\Q}/\Q)$ \cite[\S 2.3, p.~39]{MR0224710}.

For any $\sigma \in \mathrm{Gal}(\bar{\Q}/\Q)$ and any $\alpha \in \Delta_\C$, applying $\sigma$ to~$\alpha$ by the usual action of the Galois group will usually have a different result than by the $*$-action. However, since $Z$ is a $\Q$-torus, both results have the same restriction to~$Z$ {\cite[Proposition~6.7, p.~107]{MR0207712}.} Since $\rho$ is clearly invariant under the $*$-action, we conclude that the restriction $\rho|_Z$ is invariant under the usual action of the Galois group; in particular, $\rho$ defines a $\Q$-character on $Z$.
This $\Q$-character must vanish on the $\Q$-anisotropic torus~$Z_a$. On the other hand, the restriction $\rho|_Z$ of $\rho$ to $Z$ is nontrivial, because the restriction of each element of~$\Delta_\C$ to~$A $ is either a simple $\Q$-root or~$0$ and not all restrictions are~$0$ \cite[Proposition~6.8, p.~107]{MR0207712}. Since $Z_a$ has codimension~$1$, we conclude that $Z_a = \bigl( \ker(\rho|_Z) \bigr)$.

Recall $T$ is the maximal $\R$-split torus in~$G$. Let $r = \dim T\cap Z$  and let $\alpha_1,\ldots,\alpha_r\in \Delta_\R$ be the simple real roots that are nontrivial on~$T\cap Z$. From the description of parabolic subgroups in \eqref{eq:parahoop}, $\dim (T\cap Z)$ is the same as the number of such simple real roots. From the preceding paragraph and the fact that the simple real roots are precisely the nonzero restrictions of elements of~$\Delta_\C$ to a maximal $\R$-split torus \cite[Proposition~6.8, p.~107]{MR0207712}, there are positive integers $k_1,\ldots,k_r$ such that $T \cap Z_a = \ker \bigl( \sum_{i=1}^r k_i \alpha_i |_{T\cap Z} \bigr)$. Since we assume that $Q$ is not maximal as a parabolic $\R$-subgroup, we have $r \ge 2$.
Since the roots $\{\alpha_1, \dots, \alpha_r\}$ are linearly independent on $T\cap Z$, they are also linearly independent with the restriction of $\rho$ to $T\cap Z$. In particular, the restriction of each~$\alpha_i$ to $(T \cap Z_a)^\circ$ is nontrivial.

We claim that if a real root space~$\lieg^\phi$ injects into $\lieu/[\lieu,\lieu]$ then there is some~$\alpha_i$ such that $\phi|_{T\cap Z} = \alpha_i|_{T\cap Z}$. Indeed, if we enumerate $\Delta_\R$ as $\Delta_\R= \{\alpha_1, \dots, \alpha_r, \alpha_{r+1},\dots, \alpha_\ell\}$ then the restrictions of each $\alpha_{r+1}, \dots, \alpha_\ell$ to $T\cap Z$ vanishes. A real root space~$\lieg^\phi$ injects into $\lieu/[\lieu,\lieu]$ if and only if the root $\phi$ is of the form $$\phi = \sum_{i=1}^r c_i \alpha_i +\sum_{i=r+1}^\ell c_i \alpha_i$$
where $c_i\ge 0$, $c_i\in \{0,1\}$ for all $1\le i\le r$, and $c_i=1$ for exactly one $1\le i\le r$. The claim then follows.
By the preceding paragraph, the restriction of~$\phi$ to $(T \cap Z_a)^\circ$ is then nontrivial.
Since $(T \cap Z_a)^\circ \subseteq L^\dagger$, we conclude that the centralizer of~$L^\dagger$ in $\lieu/[\lieu,\lieu]$ is trivial, as desired.
\end{proof}

\begin{corollary} \label{ExpandersGenerate}
Assume that $Q$ is a maximal parabolic $\Q$-subgroup of~$G$ with rational Langlands decomposition $Q = LS^\circ U$.
Then there is a finite collection~$\mathcal{C}$ of one-parameter subgroups of~$L$, such that
	\begin{enumerate}
	\item \label{ExpandersGenerate-inA}
	each $\{a^t\} \in \mathcal{C}$ is contained in a $\Q$-anisotropic $\Q$-torus~$\wtd A$ of~$L$ with $a^1 \in \hat \Gamma\!_{\wtd A}$, 
	and
	\item $U/[U,U]$ is generated by the associated expanding horospherical subgroups $$\{\, \mathcal{U}^+(a_{\split}^t) \mid \{a^t\} \in \mathcal{C} \,\}.$$
	\end{enumerate}
\end{corollary}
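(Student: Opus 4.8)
The plan is to derive this from \cref{LNoCent} by a Zariski‑density argument. Set $V:=\lieu/[\lieu,\lieu]$, identified via $\exp$ with $U/[U,U]$; it is a $\Q$‑rational representation of $L$ (the $\Q$‑structure coming from $U=\Rad_u(Q)$), and \cref{LNoCent} gives $V^{L^\dagger}=\{0\}$. Since $L^\dagger$ is reductive over a field of characteristic~$0$, its finite‑dimensional representations are completely reducible, so also $(V^*)^{L^\dagger}=\{0\}$. For a $\Q$‑anisotropic $\Q$‑torus $\wtd A\subseteq L$ write $W_{\wtd A}\subseteq V$ for the sum of the nonzero weight spaces of its maximal $\R$‑split subtorus $\wtd A_{\split}$ acting on $V$; a functional on $V$ vanishes on $W_{\wtd A}$ exactly when it is $\wtd A_{\split}$‑fixed, so for a finite family $\wtd A_1,\dots,\wtd A_m$ the annihilator of $\sum_i W_{\wtd A_i}$ in $V^*$ equals $\bigcap_i (V^*)^{(\wtd A_i)_{\split}}=(V^*)^{N}$, where $N:=\langle (\wtd A_i)_{\split}(\R)^\circ\rangle$. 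Hence, to find $\Q$‑anisotropic $\Q$‑tori $\wtd A_1,\dots,\wtd A_m$ of $L$ with $\sum_i W_{\wtd A_i}=V$, it suffices to arrange $N\supseteq L^\dagger(\R)^\circ$: then $(V^*)^N\subseteq (V^*)^{L^\dagger}=\{0\}$, using that $L^\dagger(\R)^\circ$ is Zariski dense in $L^\dagger$.

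The heart of the argument is the construction of these tori. Recall $L^\dagger=Z^\star\cdot L^{\dagger\dagger}$, where $Z^\star$ is the maximal connected $\R$‑split torus in the center of $L$ and $L^{\dagger\dagger}$ is the product of the noncompact almost simple factors of $L$. In a rational Langlands decomposition the identity component $Z_a$ of the center of $L$ is a $\Q$‑anisotropic $\Q$‑torus with $Z^\star=(Z_a)_{\split}$, so $\wtd A_1:=Z_a$ is admissible and contributes $Z^\star(\R)^\circ$ to $N$. For $L^{\dagger\dagger}$ I would invoke the structure theory of maximal tori in semisimple groups over number fields to choose a $\Q$‑anisotropic $\Q$‑torus $\bfT\subseteq[L,L]$ whose $\R$‑split part $\bfT_{\split}$ is a maximal $\R$‑split torus of $[L,L]$ — equivalently of $L^{\dagger\dagger}$ — and then take finitely many $L(\Q)$‑conjugates $\gamma_j\bfT\gamma_j\inv$ (again $\Q$‑anisotropic $\Q$‑tori of $L$). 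The subgroup generated by the $\gamma_j\bfT_{\split}\gamma_j\inv$ is the connected Lie subgroup of $L(\R)$ whose Lie algebra is the subalgebra of $\liel$ generated by the $\Ad(\gamma_j)\,\mathrm{Lie}(\bfT_{\split})$; letting the $\gamma_j$ range over enough of $L(\Q)$, Zariski density of $L(\Q)$ makes this an ideal, and since it contains $\mathrm{Lie}(\bfT_{\split})$ — which meets every noncompact simple ideal of $\mathrm{Lie}(L)$ nontrivially — it equals $\mathrm{Lie}(L^{\dagger\dagger})$. Thus finitely many conjugates already give $N\supseteq Z^\star(\R)^\circ\cdot L^{\dagger\dagger}(\R)^\circ=L^\dagger(\R)^\circ$.

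It then remains to promote $\wtd A_1,\dots,\wtd A_m$ to the collection $\mathcal C$. Each $\wtd A_i$ is $\Q$‑anisotropic, so $\wtd A_i\cap\hat\Gamma$ is a cocompact lattice in $\wtd A_i(\R)$ by \cref{LatticeIff}, hence Zariski dense in $\wtd A_i$; choose finitely many of its elements generating a finite‑index subgroup and lying in $\wtd A_i(\R)^\circ$, let $\{a^t\}$ be the one‑parameter subgroup through each, and let $\mathcal C$ be all of these together with their reversals $t\mapsto a^{-t}$, so that (1) holds. For (2), the image in $V$ of $\mathcal U^+(a^t_{\split})\cap U$ is the sum of the positive‑eigenvalue weight spaces of $a^1_{\split}$, so a one‑parameter subgroup together with its reversal recovers the entire non‑fixed part of the $\R$‑split torus generated by $a^1_{\split}$; since the $\R$‑split parts of the chosen generators are Zariski dense in $(\wtd A_i)_{\split}$, no nonzero weight of $(\wtd A_i)_{\split}$ on $V$ is killed by all of them, so these non‑fixed parts span $W_{\wtd A_i}$. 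Summing over $i$ shows the expanding horospherical subgroups associated to $\mathcal C$ generate $\sum_i W_{\wtd A_i}=V=U/[U,U]$. I expect the only genuinely nontrivial point to be the input used for $L^{\dagger\dagger}$ — the existence of an $\R$‑maximally‑split $\Q$‑anisotropic $\Q$‑torus inside $[L,L]$, together with keeping track of which root spaces survive in $U/[U,U]$; everything else is Zariski density, complete reducibility, and the standard fact that a subgroup generated by connected Lie subgroups is the connected Lie subgroup attached to the generated Lie subalgebra.
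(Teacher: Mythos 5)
Your argument is correct and follows essentially the same route as the paper's: invoke \cref{LNoCent} to rule out fixed vectors, locate a maximal $\R$-split torus of $L$ inside a $\Q$-anisotropic $\Q$-torus (the reference the paper uses for this is \cite{MR0492074} or \cite{MR0302822}), take finitely many $L_\Q$-conjugates to generate $L^\dagger$ by Zariski density, use \cref{LatticeIff} to produce one-parameter subgroups with $a^1\in\hat\Gamma_{\wtd A}$, and close the collection under inverses so that the expanding and contracting horospherical subgroups together recover all nonzero weight spaces. The only cosmetic differences are that you split the construction into the central and semisimple parts of $L^\dagger$ where the paper chooses a maximal $\R$-split torus of $L$ in one step, and that you spell out the final complete-reducibility step via the annihilator in $V^*$ where the paper leaves it implicit after asserting total reducibility; the latter is actually a small improvement in clarity, since the subspace $W=\sum_i W_{\wtd A_i}$ is not obviously $L^\dagger$-invariant and the passage to $V^*$ is the clean way to finish.
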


\begin{proof}
Let $L^\dagger$ be as in \cref{LNoCent}.
Since the center of~$L$ is $\Q$-anisotropic, there is a maximal connected $\R$-split torus $A$ of~$L$ that is contained in a $\Q$-anisotropic torus~$\wtd A$ in~$L$.
 See \cite[p.~211]{MR0492074} or \cite[Theorem 2.13]{MR0302822}.
Since $L_\Q$ is Zariski dense in~$L$ \cite[\S13.7 and Theorem~18.2(ii), pp.~29 and 218]{MR1102012}, there are finitely many $L_\Q$-conjugates $A_1,\ldots,A_k$ of~$A$ that generate a dense subgroup of $L^\dagger$. Each $A_i$ is contained in a $\Q$-anisotropic torus~$\wtd A_i$ of~$L$.
Since each $A_i$ is $\Q$-anisotropic, each $\wtd A_i/(\wtd A_i)_\Z $ and thus each $\wtd A_i/\hat \Gamma\!_{\wtd A_i}$
 is compact (see \cref{LatticeIff}\pref{LatticeIff-cocompact}) and thus identified with $\R^q/\Z^q$ for some~$q$. Then each $\wtd A_i$ is generated by finitely many one-parameter subgroups $\{a_{i,1}^t\}, \ldots, \{a_{i,p}^t\}$, such that $a_{i,j}^1 \in \Gamma\!_{\wtd A}$ for all~$i,j$. Then the $\R$-diagonalizable parts $\{(a_{i,j}^t)_{\split}\}$ generate~$L^\dagger$.
 Since $L^\dagger$ is reductive, the induced representation on $U/[U,U]$ is totally reducible.
From \cref{LNoCent}  no element of $U/[U,U]$ is centralized by every $\{(a_{i,j}^t)_{\split}\}$. Thus if we assume that the collection $\{a_{i,j}^t\}$ is closed under inverses, this implies that $U/[U,U]$ is generated by the associated expanding horospherical subgroups, as desired.
\end{proof}

\begin{remark}
In many cases, it is possible to take the collection~$\mathcal{C}$ in \cref{ExpandersGenerate} to be a singleton. However, there are cases where this is not possible because the weight~$0$ occurs in the representation of~$L^\dagger$ on $U/[U,U]$. This happens for instance for one of the maximal parabolic subgroups of the $\Q$-split group $\Sp(4, \R)$ of type $C_2\simeq B_2$.
\end{remark}

\subsection{Iwasawa decomposition, restricted roots, and parabolic subgroups}
 \label{RealParabSect}\index{Cartan involution for general connected Lie}\index{Iwasawa decomp. for general connected Lie}
Fix $G$ to be a connected semisimple Lie group with Lie algebra $\lieg$.  We do not assume $G$ has finite center.
Choose a Cartan involution~$\theta$ of~$\lieg$; this induces a global Cartan involution of $G$ which we also denote by $\theta$. We decompose $\lieg= \liek\oplus \liep$ where $\liek$ and $\liep$ are, respectively, the $+1$ and $-1$ eigenspaces of $\theta$. Let $\liea$ be a maximal abelian subspace of $\liep$, and let $\lieg = \liek \oplus \liea\oplus \lien$ be the associated Iwasawa decomposition. If $K,A$ and $N$ are the analytic subgroups corresponding to $\liek, \liea,$ and $ \lien$, respectively, then $G = KAN$ is the corresponding \emph{Iwasawa decomposition.} Here $A$ is a maximal connected abelian $\ad$-$\R$-diagonalizable subgroup, and $N$ is a maximal $\ad$-unipotent subgroup of $G$ normalized by~$A$. We call $A$ a \emph{split Cartan subgroup} of~$G$.
 Furthermore, $K$ and~$A$ are $\theta$-invariant; more precisely, $K$~is the set of fixed points of~$\theta$, and $\theta$~inverts every element of~$A$. The subgroup $K$~contains the center of $G$ and is a maximal compact subgroup of~$G$ if (and only if) the center of $G$ is finite. All choices of $\liea$ and $A$ above of conjugate by $K$.  See \cite[Theorem 6.31]{MR1920389} and \cite[Theorem 6.46]{MR1920389} for details.

Write $M = C_K(A)$ for the centralizer of~$A$ in~$K$. The subgroup $P = MAN$ is a \emph{minimal parabolic subgroup} of~$G$. The subgroup~$P$ and the decomposition $P=MAN$ depends on the choice of $\theta$, $A$, and~$N$, but all minimal parabolic subgroups are conjugate in~$G$. Any subgroup of~$G$ that contains a minimal parabolic subgroup is said to be a \emph{parabolic subgroup}.

Write $\Phi(A,G)$ for the set of weights for the adjoint action of $A$ on~$\lieg$. These are the \emph{real restricted roots} of~$G$ (relative to the choice of $A$). We write the group operation on characters of $A$ additively.
We say two real roots $\beta,\hat \beta\in \Phi(A,G)$ are \emph{positively proportional} if there is some $c>0$ with
$$\hat\beta = c\beta.$$
Note that $c$ takes values only in $\{\frac 1 2, 1, 2\}$ and that $c = 1$ unless the root system~$\Phi(A,G)$ contains a component of type $BC_\ell$.
A \emph{coarse root} is an equivalence class of roots under this equivalence relation.
Write $\hat \Phi(A,G)$ for the collection of coarse roots.

For $[\beta] \in \hat \Phi(A,G)$, we let $\lieg^{[\beta]} = \sum_{\alpha \in [\beta]} \lieg^\alpha$. This is the Lie algebra of a connected unipotent subgroup $U^{[\beta]}$ of~$G$, which is called the \emph{coarse root group} corresponding to~$[\beta]$.
We remark that every parabolic subgroup that contains the minimal parabolic subgroup $P = C_G(A) N$ is saturated by coarse root groups of~$A$.

\begin{definition}\label{def:resCoD}\
\begin{enumerate}
\item A \emph{parabolic} Lie subalgebra of~$\lieg$ is the Lie algebra of a parabolic subgroup.
\item The \emph{resonant codimension}, $\bar r (\lieq)$, of a parabolic Lie subalgebra~$\lieq$ is defined to be the cardinality of the set $$\{\,[\beta] \in \hat \Phi(A,G) \mid \lieg^{[\beta]}\not\subset \lieq\,\}.$$
\item The \emph{minimal resonant codimension} of $\lieg$ or $G$, denoted by $r(\lieg)$ or $r(G)$, is defined to be the minimal value of the resonant codimension $\bar r (\lieq)$ of $\lieq$ as $\lieq$ varies over all (maximal) proper parabolic subalgebras of~$\lieg$.
\end{enumerate}
\end{definition}

\begin{remark} \label{r(G)=v(split)}
Recall that $v(G)$ is the minimal dimension of a homogeneous space $G/H$, where $H$ ranges over all proper, closed subgroups of~$G$. We have $r(G) = v(G')$, where $G'$ is any maximal connected, $\R$-split, semisimple subgroup of~$G$ by \cite[Theorem.~7.2, p.~117]{MR0207712}.
\end{remark}

\subsection{Standard \texorpdfstring{$\Q$}{Q}-rank-1 subgroups}
We return to the case that $G= \bfG(\R)$ where $\bfG$ is a connected semisimple linear algebraic group defined over $\Q$. Let $\bfS$ be a maximal $\Q$-split torus in $\bfG$ and write $A = \bfS(\R)^\circ$.
For each $\Q$-root $\alpha\in \Phi(A,G)$, such that $\frac{1}{2}\alpha \not\in \Phi(A,G)$, there is a unique connected unipotent subgroup~$U^{[\alpha]}$ with Lie algebra $\lieg^{\alpha}$ or $\lieg^\alpha \oplus \lieg^{2\alpha}$.
Since $-\alpha$ is also a $\Q$-root, we also obtain the subgroup~$U^{[-\alpha]}$.
 The following construction is well known, but there does not seem to any established terminology for referring to this subgroup.

 \begin{definition} \label{StandardSubgrpDefn}
For $\Q$-root $\alpha \in \Phi(A,G)$ such that $\frac{1}{2}\alpha \not\in \Phi(A,G)$,
the subgroup~$H_\alpha$ of $G$ generated by $U^{[\alpha]}$ and~$U^{[-\alpha]}$ is called a \emph{standard $\Q$-rank-1 subgroup}.

If $\bfG$ is simply connected, then there is an almost simple $\Q$-subgroup $\bfH$ of~$\bfG$ with $\rank_\Q(\bfH)= 1$ and $H_\alpha= \bfH(\R)$; for general $G$, we have $H_\alpha = \bfH(\R)^\circ$.
\end{definition}

 To justify the last remark of Definition \ref{StandardSubgrpDefn}, we view $\alpha$ as the restriction of $\alpha \in \Phi(\bfS, \bfG)$; then $U^{[\alpha]}= \bfU^{[\alpha]}(\R)$ where $\bfU^{[\alpha]}$ is the root subgroup in $\bfG$ associated to $\alpha$ and $2\alpha$. Let $\bfH$ be the algebraic group generated by $\bfU^{[\alpha]}$ and $\bfU^{[-\alpha]}$. Then $\bfH$ is defined over $\Q$ and has $\rank_\Q(\bfH)=1$.
Choose an ordering on the roots so that $\alpha$ is simple and let $\bfP$ be the parabolic $\Q$-subgroup that contains $\bfU^{[\alpha]}$, $\bfU^{[-\alpha]}$, and no other root groups associated to negative roots. That is, $\bfP = \bfP_{\Delta_0}$ where $\Delta_0= \{ \alpha\}$. Then $\bfH\subset \bfP$ and we may select a Levi $\Q$-subgroup $\bfF\subset \bfP$ which contains $\bfH$. Let $\bfL =[\bfF,\bfF]$. Then $\bfH\subset \bfL$. Moreover, $\bfL$ is semisimple and decomposes as an (almost) direct sum of almost $\Q$-simple factors; of these $\bfH$ is the unique $\Q$-isotropic, almost $\Q$-simple factor of $\bfL$. By \cref{rem:sclevi} we have that $\bfL$ and hence $\bfH$ are simply connected.

The above also gives the following characterization of standard $\Q$-rank-1 subgroups.

\begin{lemma} \label{Standard=Levi}
If $H$ is a standard $\Q$-rank-1 subgroup of~$G$ then there is a parabolic $\Q$-subgroup~$Q$ of~$G$ with rational Langlands decomposition $Q = (L \times A) \ltimes U$ such that $H$~is (the identity component of) the unique $\Q$-isotropic, almost $\Q$-simple factor of~$L$.
\end{lemma}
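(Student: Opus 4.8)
The plan is to recognize that \cref{Standard=Levi} is essentially a reformulation of the construction carried out in the paragraph immediately preceding its statement, so the proof amounts to extracting the relevant parabolic $\Q$-subgroup from that construction and reading off its rational Langlands decomposition. Given a standard $\Q$-rank-$1$ subgroup $H = H_\alpha$, with $\tfrac12\alpha \notin \Phi(A,G)$, I would realize $\alpha$ as the restriction to $A = \bfS(\R)^\circ$ of a $\Q$-root of a maximal $\Q$-split torus $\bfS$ of~$\bfG$, set $\bfH := \langle\, \bfU^{[\alpha]}, \bfU^{[-\alpha]}\,\rangle$, and recall (as in the discussion of \cref{StandardSubgrpDefn}) that $\bfH$ is a $\Q$-subgroup with $\rank_\Q\bfH = 1$ and $H = \bfH(\R)^\circ$.

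Next I would choose an ordering of the $\Q$-roots making $\alpha$ simple and let $\bfQ := \bfP_{\{\alpha\}} = C_\bfG(\ker\alpha)\, N$ be the associated parabolic $\Q$-subgroup, so that $\bfF := C_\bfG(\ker\alpha)$ is its canonical Levi $\Q$-subgroup. Since $\alpha$ (and $2\alpha$, if it is a root) restrict trivially to $\ker\alpha$, the root groups $\bfU^{[\pm\alpha]}$ lie in $\bfF$, hence $\bfH \subseteq \bfF$; as $\bfH$ is semisimple, in fact $\bfH \subseteq [\bfF,\bfF]$. Writing $\bfF = \bfL' \times \bfS'$ with $\bfL'$ reductive with $\Q$-anisotropic center and $\bfS'$ a $\Q$-split torus, and passing to real points, I obtain a rational Langlands decomposition $Q = (L\times A)\ltimes U$ of $Q := \bfQ(\R)$, where $L = \bfL'(\R)$, $A = \bfS'(\R)^\circ$, and $U = \Rad_u(\bfQ)(\R)$. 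The semisimple group $[\bfL',\bfL'] = [\bfF,\bfF]$ has $\Q$-rank $\rank_\Q\bfG - \dim\ker\alpha = 1$ (using $|\Delta_0| = 1$ and $\bfS \subseteq \bfF$), so among the almost-$\Q$-simple factors in its almost-direct product decomposition exactly one is $\Q$-isotropic. That this factor is exactly $\bfH$ is what the discussion before \cref{Standard=Levi} establishes; concretely one checks that the Lie algebra $\lieh$ of $\bfH$, namely the span of the coarse root spaces $\lieg^{[\alpha]}, \lieg^{[-\alpha]}$ together with $[\lieg^\alpha,\lieg^{-\alpha}]$, is an ideal in the Lie algebra of $[\bfL',\bfL']$, because every other almost-$\Q$-simple factor lies in $\ker\alpha$ and therefore commutes with $\lieg^{\pm\alpha}$. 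Taking identity components of real points, $H = \bfH(\R)^\circ$ is the identity component of the unique $\Q$-isotropic, almost $\Q$-simple factor of $L$, which is the assertion of the lemma.

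I do not expect any genuinely hard step: this is bookkeeping with the parametrization of parabolic $\Q$-subgroups and with Levi/Langlands decompositions recalled earlier in this section, and it has already been carried out (minus the final packaging) just before the lemma. The two points I would be most careful about are (i) that $\bfH$ is a \emph{normal} factor of $[\bfL',\bfL']$, not merely a subgroup --- this is exactly the ideal computation above --- and (ii) reconciling conventions, i.e.\ that passing from the derived group $[\bfF,\bfF]$ to the full reductive factor $L$ of the rational Langlands decomposition does not alter the set of almost-$\Q$-simple factors (the additional central directions of $L$ being $\Q$-anisotropic), and that $H$ in \cref{StandardSubgrpDefn} is by definition $\bfH(\R)^\circ$, matching the ``identity component'' hedge in the statement.
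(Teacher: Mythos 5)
Your proposal recapitulates the construction the paper carries out in the paragraph immediately preceding the lemma (fix $\alpha$, set $\bfQ = \bfP_{\{\alpha\}}$, take a Levi $\Q$-subgroup $\bfF$ containing $\bfH$, pass to $[\bfF,\bfF]$ and identify $\bfH$ as the unique $\Q$-isotropic factor), and the paper itself only asserts the lemma follows from that discussion — so yes, you have matched the paper's approach. One small slip in the extra detail you supply: the claim that ``every other almost-$\Q$-simple factor lies in $\ker\alpha$'' is false as stated, since those factors are semisimple groups and cannot lie in a torus. The correct justification is that the $\Q$-anisotropic factors of $[\bfF,\bfF]$ commute with the one-dimensional maximal $\Q$-split torus, and the $\Q$-root spaces $\lieg^{\pm\alpha}$ (and $\lieg^{\pm 2\alpha}$) therefore lie entirely in the unique $\Q$-isotropic factor; since the $\Q$-root system of $[\bfF,\bfF]$ consists only of $\pm\alpha$ (and $\pm 2\alpha$), that factor is generated by $\bfU^{[\pm\alpha]}$ (Borel--Tits), hence equals $\bfH$. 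Your point (ii) about passing from $[\bfF,\bfF]$ to the Langlands reductive factor $L$ is correctly handled.
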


\subsection{Standing hypotheses when $\Gamma$ is nonuniform}\label{hyp2}\index{standing hyp and second reductions}
Recall the standing hypotheses and reductions \cref{hyp1}.
When $\Gamma\subset G$ is nonuniform, in the proof of \cref{mainthm} we will assume without loss of generality the following additional standing hypotheses:
\begin{enumerate}
\item {\it $G$ is a connected simply connected Lie group without compact factors and with $\rank_\R(G)\ge 2$;}
\item {\it $\Gamma$ is a nonuniform, irreducible lattice subgroup of $G$ which contains the center $\calZ$ of $G$;}
\item {\it there is a connected, algebraically simply connected, linear algebraic group $\bfF$ defined over $\Q$ which is almost $\Q$-simple, non-commutative, $\Q$-isotropic, and has $\rank_\R(\bfF)\ge 2$ such that $F= \bfF(\R)$ is a connected Lie group and $F_\Z= \bfF(\Z)$ is a lattice in $F$;}
\item {\it there is a continuous, surjective Lie group morphism $\phi\colon G\to F$ and a finite-index subgroup $\hat \Gamma\subset F_\Z$ such that:}
 \begin{enumerate}
	\item $\ker \phi$ is contained in the center $\calZ$ of $G$;
	\item the only torsion elements of $\hat \Gamma$ are central;
	\item $\phi (\Gamma) = \hat \Gamma$.
 \end{enumerate}
\end{enumerate}

To see there is no loss of generality,  let $\calZ\subset G$ denote the center of $G$. As $G$ has no compact factors, $\Gamma$ has finite index in $\Gamma\cdot \calZ$ (see \cite[Chapter IX, Lemma (6.1)]{MR1090825}). Without loss of generality, we may assume $\calZ\subset \Gamma$. Let $\overline G= \Ad(G)=G/\calZ$ and $\overline\Gamma = \Gamma/\calZ$ denote the images of $G$ and $\Gamma$ under the adjoint representation. Margulis's arithmeticity theorem (see \cite{MR1090825} Introduction, Theorem 1', or Chapter IX, Theorem 1.11) guarantees there exist:
\begin{enumerate}
\item a $\Q$-simple, linear algebraic $\Q$-subgroup $\bfF$ of $\bfGL(d)$, and
\item a surjective homomorphism $\overline\phi\colon \bfF(\R)^\circ\to \overline G$ with compact kernel such that $\overline\phi(F_\Z) $ is commensurable with $\overline\Gamma$.
\end{enumerate}
Replacing $\bfF$ with a finite cover, we may assume $\bfF$ is algebraically simply connected \cite[Proposition 2.24(ii)]{MR0315007} whence $\bfF(\R)$ is connected as a Lie group.

Since $\overline\Gamma$ is nonuniform, it follows that $F= \bfF(\R)$ has no compact factors (see \cite[Corollary 5.3.2]{MR3307755}). It follows that the kernel of $\overline \phi$ is discrete and hence contained in the center of $F$.
In particular, $F$ is a covering space of $\overline G $. As $G$ is the universal cover of $\overline G $ it follows that the natural map $G\to \overline G $ factors through the map $\overline \phi \colon F\to \overline G $. In particular, $\overline \phi$ lifts to a surjective moprhism of Lie groups $\phi\colon G\to F$. Moreover we have $\hat \Gamma := \phi(\Gamma)$ is commensurable with $F_\Z$. Passing to finite-index subgroups, we may assume $\bar \Gamma$ is a torsion-free and thus the only torsion elements in $\hat \Gamma$  are central.  

 \subsection{Lifting $\Q$-structures from $F$ to $G$} \label{sec:QonG} We keep the standing hypotheses of \cref{hyp2}. \index{$\Q$-groups in $G$}
In our proof of \cref{mainthm} we will often want to identify subgroups of $G$ associated with $\Q$-subgroups of $F$.
Let $\bfL$ be an algebraic subgroup of $\bfF$ defined over $\Q$ and let $L= \bfL(\R)^\circ \subset F$.
Associated to $L$, there is a unique connected Lie subgroup $\wtd L:= (\phi\inv(L))^\circ \subset G$ with $\phi (\wtd L) = L.$ We will abuse terminology and say that $\wtd L$ is a $\Q$-subgroup of $G$.
In the sequel, we will write, equivalently, $$\Gamma\!_{\wtd L} =\Gamma\!_L:= \Gamma \cap \wtd L.$$
We also write $$\hat \Gamma\!_L =\hat \Gamma\cap L.$$
We will be particularly interested in the case that $\bfL$ is either unipotent, parabolic, or a standard $\Q$-rank-1 subgroup.
 We remark that if $U\subset F$ is a unipotent $\Q$-subgroup then $\wtd U\to U$ and $\Gamma\!_U\to \hat \Gamma\!_U$ are isomorphisms.
 If $P$ is a parabolic $\Q$-subgroup and if $H$ is a standard $\Q$-rank-1 subgroup then the maps $$ \wtd P\to P^\circ, \ \ \Gamma\!_P\to \hat \Gamma\!_P, \ \ \wtd H\to H, \ \ \Gamma\!_H\to \hat \Gamma\!_H$$ have kernel contained in the center of $G$.
We will call $ \wtd U$, $ \wtd P$ and $\wtd H$, respectively, unipotent, parabolic, and standard $\Q$-rank 1 $\Q$-subgroups of $G$.

We will also abuse terminology with the following definitions:   
\begin{enumerate}
 \item Let $G_\Q:= \phi\inv (F_\Q)$ denote the preimage of $F_\Q$ under $\phi\colon G\to F$.
Given a connected $\Q$-subgroup $H$ of $ G$, we write $H_\Q:= G_\Q\cap H$.
\item A 1-parameter subgroup $\{a^t\}$ of $G$ is $\R$-diagonalizable if its image in $F$ (equivalently, its image under the adjoint representation) is $\R$-diagonalizable.
\item A connected subgroup $U$ of $G$ is unipotent if its image in $F$ (equivalently, its image under the adjoint representation) is unipotent.

\end{enumerate}

\subsection{Consequences of standing hypotheses: quasi-isometric properties} \label{secQI}
Let $G$, $\Gamma$, $F=\bfF(\R)$, and $\hat \Gamma$ be as in the standing hypotheses in \cref{hyp2}.

Fix a Cartan involution $\theta$ of $\lieg$ and also write the induced global Cartan involutions on $F$ and $G$ as $\theta$. Let $K$ and $ \wtd K$ denote, respectively, the subgroups of $\theta$-fixed points of $F$ and $G$. We recall that $ \wtd K$ contains the center $\calZ$ of $G$.
Equip $F$ with a right-invariant and left-$K$-invariant metric and equip $G$ with the pulled-back, right-$G$-invariant and left-$ \wtd K$-invariant metric. Let $d_F$ and $d_G$ denote, respectively, the induced distances on $F$ and $G$.  We remark that all right-invariant metrics on $F$ are bi-Lipschitz  equivalent  and thus all right-invariant, bi-$\calZ$-invariant metrics on $G$ are  bi-Lipschitz equivalent.  In particular, all results stated below are   independent of the choice of $\theta$.

\subsubsection{Bi-Lipschitz and quasi-isometric properties of central extensions}\label{BiLipSec}
Using that $G\to F$ and $\Gamma\to \hat \Gamma$ are central extensions, we collect a number of   bi-Lipschitz and quasi-isometric  estimates used throughout the sequel. We note for the remainder of \cref{BiLipSec} that we make no assumptions on the rank of $F$ or arithmeticity of $\hat \Gamma$.

Let $X= K\backslash F = \wtd K\backslash G $ denote the globally symmetric space associated with $G$ and equip $X$ with a right-invariant metric.
Equip $K$ and $ \wtd K$ with their intrinsic  Riemannian metrics.
Recall the Iwasawa decomposition $ \wtd K \times A\times N\to G$,  $(k,a,n)\mapsto kan$ of $G$ introduced in \cref{RealParabSect}.
We begin with the following well-known fact; for completeness, we include a proof adapted directly from \cite[Lemma 3.6.3]{MR2415834}.
\begin{lemma}\label{niceBiLip}
The diffeomorphism $\psi\colon G\to \wtd K\times X$, $\psi\colon kan \mapsto (k, \wtd Kan)$ is uniformly bi-Lipschitz.
\end{lemma}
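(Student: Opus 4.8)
The claim is that $\psi\colon G\to \wtd K\times X$, $kan\mapsto (k,\wtd Kan)$, is uniformly bi-Lipschitz, where $G$ carries the pulled-back right-$G$-invariant, left-$\wtd K$-invariant metric and $\wtd K\times X$ carries the product of the intrinsic metric on $\wtd K$ and a right-invariant metric on $X=\wtd K\backslash G$. The plan is to follow the argument of \cite[Lemma 3.6.3]{MR2415834} essentially verbatim, transporting it through the central covering $\phi\colon G\to F$. First I would reduce to a statement purely about the metric geometry: since $\phi$ is a local isometry (the metric on $G$ is by definition the pullback of the metric on $F$), since $\wtd K\to K$ is a covering and a local isometry, and since $X=\wtd K\backslash G=K\backslash F$ with its induced metric is literally the same symmetric space, it suffices to prove that the Iwasawa-coordinate map $\wtd K\times A\times N\to G$ is a diffeomorphism whose differential has operator norm and inverse operator norm bounded above uniformly, and that the quotient map $G\to X$ together with the $K$-factor projection assemble into the bi-Lipschitz identification. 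Concretely: $\psi$ is smooth with smooth inverse because the Iwasawa decomposition $G=\wtd KAN$ is a diffeomorphism (as recalled in \cref{RealParabSect}), so the content is the two-sided Lipschitz bound on $D\psi$ and $D\psi\inv$.

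The key step is a uniform bound on the differential. At a point $g=kan$, I would write a tangent vector in terms of the right-invariant framing and split it along the $\wtd K$-, $A$-, and $N$-directions. The map $\psi$ records $(k,\wtd Kan)$; its derivative forgets the ``$\wtd K$ part'' of the motion \emph{as seen in the base $X$} but retains it in the first factor, so the only thing to check is that the two metrics — the restriction of the right-$G$-invariant metric on $G$, versus the product metric read through the splitting $g\mapsto (k, \text{image of }an\text{ in }X)$ — are comparable with constants independent of $g$. Because the metric on $G$ is right-$G$-invariant, it suffices to estimate at points of the form $g = k$ (i.e.\ with $a=n=\id$) after right-translating; but right translation does \emph{not} preserve the Iwasawa coordinates, so instead one uses left-$\wtd K$-invariance to reduce to $k=\id$ and then observes that along $AN$ the comparison is exactly the standard comparison between the right-invariant metric on the solvable group $AN\cong X$ and the symmetric metric on $X$, which is bi-Lipschitz with absolute constants. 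The $\wtd K$-direction contributes a bounded block because $\wtd K$ is (at the Lie-algebra level) a fixed compact piece and the adjoint action of the $AN$-part distorts the $\liek$-directions by factors controlled by the positive roots; the point of passing to the base $X$ in the second coordinate is precisely that this $\liek$-distortion is quotiented out there and recorded faithfully (with its intrinsic, undistorted metric) in the first coordinate. I would make this precise by writing $D_g\psi$ as a block lower-triangular matrix in the decomposition $\lieg = \liek\oplus\liea\oplus\lien$ versus $\liek\oplus T_{\wtd Kan}X$, with identity-type diagonal blocks and off-diagonal blocks bounded by the operator norm of $\Ad(an)\inv$ restricted to suitable root spaces, and then noting that on the $AN$-part this operator norm is exactly what makes $AN\to X$ bi-Lipschitz, hence bounded in both directions by absolute constants.

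The main obstacle I anticipate is purely bookkeeping: keeping careful track of which invariance (right-$G$ versus left-$\wtd K$) is being used at each reduction, and verifying that the off-diagonal distortion blocks really are controlled two-sidedly and not merely one-sidedly — a naive estimate gives $\|\Ad(an)\|$ growing without bound as one goes into a cusp of $X$, and the resolution is that this growth is cancelled by the corresponding shrinking in the $N$-directions once one measures lengths in $X$ rather than in $N$ alone. In other words, one must check that the ``bad'' direction in which $\Ad$ expands is exactly the direction that has been absorbed into the base $X$, so that no unbounded factor survives in $D\psi$ or $D\psi\inv$. Since \cite[Lemma 3.6.3]{MR2415834} carries out exactly this computation for $F$ (with $K$ in place of $\wtd K$), and the covering $\phi$ is a Riemannian local isometry restricting to a covering $\wtd K\to K$ and inducing the identity on $X$, the only genuinely new remark needed is that all the constants produced there are intrinsic to the symmetric space and the root data and therefore pull back unchanged to $G$; this is what lets us conclude uniform bi-Lipschitz control on $G$ from the known statement on $F$.
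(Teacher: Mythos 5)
Your plan and the paper's proof diverge at the key reduction step, and the divergence traces to a specific misstatement. You write that ``right translation does not preserve the Iwasawa coordinates'' and for that reason abandon the right-translation reduction in favor of left-$\wtd K$-translation (which leaves you stuck on the non-compact set $AN$ and forces the root-space computation you then only sketch). But right translation by $AN$ \emph{does} interact perfectly with $\psi$: for $s\in AN$ and $g = kan$ one has $\psi(gs) = (k, \wtd K a n s) = s\cdot\psi(g)$, where $s$ acts on $\wtd K\times X$ only in the second factor. Since the metric on $G$ is right-$G$-invariant and the metric on $X = \wtd K\backslash G$ is right-invariant, both sides are acted on isometrically, so $C_g := \max\{\|D_g\psi\|,\|(D_g\psi)^{-1}\|\}$ satisfies $C_{gs} = C_g$ for all $s\in AN$. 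This is exactly the reduction the paper uses, and it lands you on $\wtd K$ rather than on $AN$. The paper then applies a second equivariance — $\psi$ is left-$\calZ$-equivariant, with $\calZ$ acting isometrically on $G$ (left-$\wtd K$-invariance of $d_G$) and on the first factor of $\wtd K\times X$ (bi-invariance of the metric on $\wtd K$, since $\calZ$ is central) — giving $C_{zk} = C_k$, and then uses cocompactness of $\calZ$ in $\wtd K$ plus continuity of $g\mapsto C_g$ to conclude $\sup_G C_g = \sup_{K_0} C_k < \infty$. No root-space, block-triangular, or $\Ad(an)$ estimate ever appears.

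So the genuine gap in your writeup as a proof of the lemma is that the entire second half — the block decomposition of $D_g\psi$ in $\liek\oplus\liea\oplus\lien$, the cancellation of $\|\Ad(an)\|$ against the geometry of $X$ — is asserted but not carried out, and it is precisely the work that the unused $(AN)$-equivariance would have eliminated. You do identify the correct fallback: prove the statement for $F$ (where $K$ is compact, so the equivariance-plus-compactness argument needs only the $AN$-reduction) by citing \cite[Lemma 3.6.3]{MR2415834}, and transport it to $G$ using that $\phi\colon G\to F$, $\wtd K\to K$, and the identity on $X$ are local isometries making the square with $\psi_G$ and $\psi_F$ commute. That route is sound and amounts to a correct proof if you record the commutativity and the local-isometry facts. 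But it is a different argument from the one in the paper, which works directly on $G$ and replaces compactness of the fiber by $\calZ$-equivariance together with cocompactness of $\calZ$ in $\wtd K$ — that substitution is precisely the ``adaptation'' from the cited lemma that the nontrivial center requires, and it is the piece your sketch does not supply.
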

\begin{proof}
We make three observations. First $AN$ acts isometrically on $G$ and $X$ on the right and also acts isometrically on $ \wtd K \times X $ by acting only in the second coordinate. Second, $\calZ$ acts isometrically on $G$ and on $ \wtd K\times X $ acting only on the first coordinate.
Finally, the map $\psi\colon G\to \wtd K\times X$ is $(AN)$-equivariant and $\calZ$-equivariant.

 The function $g\mapsto C_g$, $$C_g = \max \{\|D_g\psi\| , \|(D_g\psi)\inv\|\},$$ is continuous. By $(AN)$-isometric-equivariance of $\psi$, we have $C_g = C_{gs}$ for all $s\in AN$. Thus $\sup \{C_g: g\in G\} = \sup \{C_k: k\in \wtd K\}$. We also have $C_g = C_{zg}$ for all $z\in \calZ$.  Since $\calZ$ is cocompact in $\wtd K$ we have $\sup \{C_k: k\in \wtd K\}<\infty$.
\end{proof}

We record a number of corollaries of \cref{niceBiLip}.

 As a finitely generated abelian group, equip $\calZ$ with an intrinsic word metric relative to some choice of finite symmetric generating set.
 Recall the center $\calZ$ of $G$ is contained in $ \wtd K$ and thus $\calZ$ acts isometrically on $\wtd K$ with compact quotient.
\cref{niceBiLip}
 implies $ \wtd K$ is quasi-isometrically embedded in $G$ and, in particular, the following.
\begin{corollary}\label{centerisQIG}
$\calZ$ is quasi-isometrically embedded in $G$.
\end{corollary}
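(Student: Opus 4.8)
The plan is to deduce the corollary immediately from \cref{niceBiLip} together with the Milnor--\v{S}varc lemma. First I would use \cref{niceBiLip}: restricting the uniformly bi-Lipschitz diffeomorphism $\psi\colon G\to \wtd K\times X$ to $\wtd K$ identifies $\wtd K$ with the slice $\wtd K\times\{\wtd Ke\}$, and on this slice the product metric restricts to the intrinsic Riemannian metric of $\wtd K$. Hence the inclusion $\wtd K\hookrightarrow G$ is a bi-Lipschitz embedding, in particular a quasi-isometric embedding, when $\wtd K$ carries its intrinsic metric.

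Next I would apply the Milnor--\v{S}varc lemma to the action of $\calZ$ on $\wtd K$. The group $\wtd K$, being a (connected) Lie group equipped with a left-invariant Riemannian metric, is a complete and hence proper geodesic space. The subgroup $\calZ\subset\wtd K$ is discrete and central, so it acts on $\wtd K$ by left translations, and this action is free, isometric, properly discontinuous, and cocompact --- cocompactness being exactly the statement (recalled just above) that $\calZ$ acts on $\wtd K$ with compact quotient. The Milnor--\v{S}varc lemma then gives that every orbit map $\calZ\to\wtd K$ is a quasi-isometry; taking the orbit of the identity, the inclusion $\calZ\hookrightarrow\wtd K$ is a quasi-isometry with respect to the chosen word metric on $\calZ$.

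Finally, composing the quasi-isometric embedding $\calZ\hookrightarrow\wtd K$ with the bi-Lipschitz embedding $\wtd K\hookrightarrow G$ shows that the inclusion $\calZ\hookrightarrow G$ is a quasi-isometric embedding, which is the assertion of the corollary. I do not expect any genuine obstacle here: the only points needing a word of justification are that the metric $\wtd K$ inherits as a slice of $\wtd K\times X$ under $\psi$ coincides, up to bi-Lipschitz equivalence, with its intrinsic metric --- immediate from the definition of the product metric --- and that the hypotheses of the Milnor--\v{S}varc lemma are satisfied, for which completeness of left-invariant metrics on Lie groups and the already-noted cocompactness of $\calZ$ in $\wtd K$ suffice.
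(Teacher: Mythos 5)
Your proposal is correct and follows the same route as the paper: use \cref{niceBiLip} to see that $\wtd K$ is quasi-isometrically embedded in $G$, and then use that $\calZ$ acts isometrically and cocompactly on $\wtd K$ (Milnor--\v{S}varc) to conclude. The paper states this more tersely, leaving Milnor--\v{S}varc implicit, but the logical content is identical.
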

We similarly have the following.
\begin{corollary}\label{centerisQI}
 $\calZ$ is quasi-isometrically embedded in $\Gamma$.
\end{corollary}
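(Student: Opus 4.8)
The plan is to factor the inclusion as $\calZ \hookrightarrow \Gamma \hookrightarrow G$ and to combine \cref{centerisQIG} with the soft fact that the second map is coarsely Lipschitz. Recall from \cref{secQI} that $\calZ$ carries the word metric of a finite symmetric generating set; fix also a finite symmetric generating set of $\Gamma$ --- which is finitely generated, being a lattice in a semisimple Lie group --- and write $d_\calZ$ and $d_\Gamma$ for the two word metrics.

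First I would dispose of the easy inequality: each generator of $\calZ$ lies in $\Gamma$ and hence has finite $d_\Gamma$-length, so writing $z \in \calZ$ as a $d_\calZ$-geodesic word and counting gives $d_\Gamma(1,z) \le C\, d_\calZ(1,z)$; this is just the statement that the inclusion of a finitely generated subgroup is Lipschitz, and needs nothing. For the reverse inequality I would first observe that $\gamma \mapsto d_G(1,\gamma)$ is subadditive on $G$: using right-$G$-invariance of $d_G$, one has $d_G(1,ab) \le d_G(1,b) + d_G(b,ab) = d_G(1,b) + d_G(1,a)$. Consequently, since each generator of $\Gamma$ lies at bounded $d_G$-distance from $1$, the inclusion $\Gamma \hookrightarrow G$ is Lipschitz: $d_G(1,\gamma) \le R\, d_\Gamma(1,\gamma)$ for all $\gamma \in \Gamma$. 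Now \cref{centerisQIG} provides $d_\calZ(1,z) \le C_1\, d_G(1,z) + C_1$ for $z \in \calZ$; restricting the previous inequality to $z \in \calZ \subset \Gamma$ yields $d_\calZ(1,z) \le C_1 R\, d_\Gamma(1,z) + C_1$, i.e.\ $d_\Gamma(1,z) \ge \tfrac{1}{C_1 R}\bigl(d_\calZ(1,z) - C_1\bigr)$. The two inequalities together say exactly that $\calZ \hookrightarrow \Gamma$ is a quasi-isometric embedding.

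I do not expect a genuine obstacle here: the only substantive input is \cref{centerisQIG} (equivalently, \cref{niceBiLip}), and the remainder is the general principle that a quasi-isometric embedding $A \to C$ factoring through a coarsely Lipschitz map $B \to C$ forces $A \to B$ to be a quasi-isometric embedding. The sole point that needs a moment's attention is the one-sidedness of the invariance of $d_G$: it is precisely the right-$G$-invariance that makes $d_G(1,\cdot)$ subadditive, hence the inclusion $\Gamma \hookrightarrow G$ Lipschitz. (One could instead route the lower bound through the theorem of Lubotzky, Mozes, and Raghunathan that $\Gamma$ is quasi-isometrically embedded in $G$, but this stronger input is not needed.)
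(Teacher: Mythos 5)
Your proof is correct and follows essentially the same route as the paper: the trivial Lipschitz bound for $\calZ\hookrightarrow\Gamma$ in one direction, and in the other direction the Lipschitz bound $d_G(\1,\gamma)\le C\len_\Gamma(\gamma)$ composed with \cref{centerisQIG}. The only cosmetic difference is that the paper arranges the generating set of $\Gamma$ to contain one for $\calZ$ (making the easy inequality an identity of word lengths), whereas you prove it by the general Lipschitz principle; both are fine.
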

\begin{proof}
Select a finite symmetric generating set for $\Gamma$ that includes a finite generating set for $\calZ$. Then, with respect to the word-length induced by these generating sets, for all $z\in \calZ$ we have $ \len _\Gamma(z)\le \len_{\calZ}(z)$. On the other hand, there is $C>1$ such that for any $\gamma\in \Gamma$ we have $$d_G(\1, \gamma) \le C \len_\Gamma(\gamma).$$
Since $\calZ$ is quasi-isometrically embedded in $G$, it follows that there are $A,B$ such that $\len_{\calZ}(z)\le A\len_\Gamma(z) + B$ for all $z\in \calZ$.
\end{proof}

Given a connected Lie subgroup $L\subset F$, recall we write $\wtd L= (\phi\inv L)^\circ.$
\begin{corollary}\label{everyliftisQI}
Let $L\subset F$ be a connected Lie subgroup. Then
$(\wtd L,d_G)$ is quasi-isometric to $(L,d_F)\times (\calZ\cap \wtd L)$.
\end{corollary}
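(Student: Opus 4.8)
The plan is to reduce the statement to the bi-Lipschitz trivialization of \cref{niceBiLip} together with the fact that $\phi\colon G\to F$ is a Riemannian covering whose deck group $\ker\phi$ is cocompact in $\wtd K$ and has finite index in $\calZ$. First, some preliminary reductions. By the discussion in \cref{sec:QonG} one has $\phi(\wtd L)=L$, so $\phi|_{\wtd L}\colon\wtd L\to L$ is a surjective homomorphism with discrete, central kernel $N_0:=\ker\phi\cap\wtd L$, i.e.\ a covering homomorphism. Since $\bfF$ is algebraically simply connected, $F=\bfF(\R)$ has finite center, so $\calZ/\ker\phi\cong\phi(\calZ)$ is finite; hence $N_0$ has finite index in $\calZ\cap\wtd L$, and by the argument of \cref{centerisQIG} the metric spaces $(N_0,d_G)$, $(\calZ\cap\wtd L,d_G)$, and $N_0$ with its word metric are mutually quasi-isometric. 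Also, as the Riemannian metric on $G$ was defined by pulling back that of $F$ along $\phi$, the map $\phi$ is a local isometry, hence $1$-Lipschitz; thus $\phi|_{\wtd L}\colon(\wtd L,d_G)\to(L,d_F)$ is $1$-Lipschitz. It therefore suffices to produce a quasi-isometry from $(L,d_F)\times(N_0,d_G)$, equipped with the $\ell^1$-product metric, onto $(\wtd L,d_G)$.

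The heart of the argument is the construction of a coarsely Lipschitz section: a map $\sigma\colon L\to\wtd L$ with $\phi\circ\sigma=\mathrm{id}_L$ and $d_G(\sigma(l),\sigma(l'))\le C\,d_F(l,l')+C$ for all $l,l'\in L$. For this I would apply \cref{niceBiLip} to $G$ and, identically, to $F$, obtaining uniformly bi-Lipschitz maps $\psi\colon G\to\wtd K\times X$ and $\psi_F\colon F\to K\times X$ which intertwine $\phi$ with $\pi_K\times\mathrm{id}_X$, where $\pi_K:=\phi|_{\wtd K}\colon\wtd K\to K$ is a covering of the compact group $K$ with deck group $\ker\phi$. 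Writing $\psi_F(l)=(k_l,x_l)$ and recording a section value by $\psi(\sigma(l))=(\tilde k_l,x_l)$ with $\pi_K(\tilde k_l)=k_l$, compactness of $K$ gives $d_F(l,l')\asymp d_X(x_l,x_{l'})+O(1)$ and $d_G(\sigma(l),\sigma(l'))\asymp d_{\wtd K}(\tilde k_l,\tilde k_{l'})+d_X(x_l,x_{l'})$, so the task reduces to choosing lifts $\tilde k_l\in\pi_K^{-1}(k_l)$ that lie in the component of $\psi(\wtd L)$ sitting over $x_l$ and depend coarsely Lipschitz on $l$. Since $\pi_K$ is a covering of the \emph{compact} group $K$ (uniformly locally isometric, with $\ker\phi$ cocompact in $\wtd K$), nearby values $k_l$ have nearby lifts, and any coherent Borel choice of such lifts jumps by at most the diameter of a fundamental domain for $\ker\phi$ in $\wtd K$; this yields $\sigma$.

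Given $\sigma$, the map $\Xi(l,z)=\sigma(l)\,z$ is a quasi-isometry from $(L,d_F)\times(N_0,d_G)$ onto $(\wtd L,d_G)$: surjectivity is immediate ($g=\sigma(\phi g)\,z$ with $z=\sigma(\phi g)^{-1}g\in N_0$), the upper bound follows from right-$G$-invariance of $d_G$ and centrality of $N_0$, and the lower bound from combining the $1$-Lipschitz estimate $d_F(l,l')\le d_G(\Xi(l,z),\Xi(l',z'))$ with $d_G(z,z')\le d_G(\Xi(l,z),\Xi(l',z'))+d_G(\sigma(l),\sigma(l'))$ and the coarse-Lipschitz bound on $\sigma$. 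Replacing $(N_0,d_G)$ by $(\calZ\cap\wtd L,d_G)$ via the first paragraph then gives the corollary. The step I expect to be the main obstacle is the construction of $\sigma$: one needs a section that is coarsely Lipschitz for the \emph{ambient} distances $d_F$ and $d_G$, not merely for the intrinsic length metrics of $L$ and $\wtd L$, and \cref{niceBiLip} is exactly the tool that makes this possible by reducing the question to the behaviour of the covering $\pi_K$ of the compact group $K$; carrying this out carefully — in particular, ensuring that the correction keeping $\sigma$ inside the component $\wtd L=(\phi\inv(L))^\circ$ stays bounded — is the crux.
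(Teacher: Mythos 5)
Your argument is correct in outline, but it takes a genuinely different route from the paper's. The paper's proof applies \cref{niceBiLip} directly to $\wtd L$: it asserts that $\psi$ restricts to a bi-Lipschitz equivalence between $\wtd L$ and the product $(\wtd L\cap\wtd K)\times\bigl((\wtd K\cap\wtd L)\backslash\wtd L\bigr)$, and then identifies the two factors --- the first with $\calZ\cap\wtd L$ (via cocompactness of $\calZ\cap\wtd L$ in $\wtd K\cap\wtd L$) and the second with $(L,d_F)$ (via compactness of $K\cap L$). You instead work with the covering $\phi|_{\wtd L}\colon\wtd L\to L$ with central discrete kernel $N_0$, build a coarsely Lipschitz section $\sigma\colon L\to\wtd L$, and produce the explicit quasi-isometry $(l,z)\mapsto\sigma(l)z$. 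Both routes hinge on \cref{niceBiLip} together with compactness of $K$, and both, when one tries to make them fully rigorous, confront the same underlying step: one needs a coarse splitting (a section of $\wtd L\to L$ in your formulation, or a section of $\wtd L\to(\wtd K\cap\wtd L)\backslash\wtd L$ in the paper's) whose ``jumps'' are uniformly bounded, which comes down to the behaviour of the covering $\wtd K\to K$ of the compact group $K$. The paper's write-up elides this; you are more explicit about it and correctly flag it as the crux. Two minor points to tighten: (i) the finiteness of $\calZ(F)$ follows simply because $F$ is a connected semisimple linear group, not from algebraic simple connectedness per se; (ii) the phrase ``any coherent Borel choice of lifts jumps by at most the diameter of a fundamental domain'' needs to be paired with the observation that what you actually want is the \emph{additive} coarse-Lipschitz estimate (jumps bounded by a constant are exactly what a coarse section tolerates, not what a Lipschitz one does), and that the constraint ``$\sigma(l)\in\wtd L$'' pins the lift down to a single $N_0$-coset, so the only freedom is a bounded one inside that coset --- you gesture at this but it is worth making the reduction explicit, since it is precisely here that $\wtd L=(\phi^{-1}L)^\circ$ rather than all of $\phi^{-1}L$ enters.
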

\begin{proof}
We have that $(\wtd L, d_G)$ is bi-Lipschitz equivalent to $$\bigl(\wtd L\cap \wtd K\bigr)\times \bigl((\wtd K \cap \wtd L)\backslash \wtd L \bigr)= \bigl(\wtd L\cap \wtd K\bigr)\times \bigl((  K \cap   L)\backslash L \bigr)$$ through the map $\psi$.
Since $K$ is compact, $K\cap L$ is a closed, hence compact subgroup of $L$ and thus $(L,d_F)$ is quasi-isometric to its image $(K\cap L)\backslash L$ in $X$ and $\calZ\cap \wtd L$ is cocompact in $ \wtd K\cap\wtd L$ whence $ \wtd K\cap\wtd L$ is quasi-isometric to $\calZ\cap\wtd L$.
\end{proof}

We will frequently consider the case that $L\subset F$ is a $\R$-split torus or a unipotent subgroup.  In these settings, we have the following special case of the above.
\begin{corollary}\label{simpconisbilip}
Let $L\subset F$ be a connected Lie subgroup such that $\wtd L\cap \wtd K= \1$. Then \begin{enumerate}\item the map $\phi\colon (\wtd L,d_G) \to (L, d_F)$ is uniformly bi-Lipschitz and \item $(\calZ\cdot \wtd L, d_G)$ is quasi-isometric to $\calZ\times (\wtd L, d_G)$. \end{enumerate}
\end{corollary}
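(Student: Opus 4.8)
The plan is to derive both statements from \cref{everyliftisQI} and its proof, after recording what the hypothesis $\wtd L\cap\wtd K=\1$ buys us. Throughout I write $\psi$ for the map of \cref{niceBiLip}, $o$ for the base point of $X$, and I use the $G$-equivariant isometric identification $X=\wtd K\backslash G=K\backslash F$ induced by $\phi$; for $\tilde\ell\in\wtd L$ I let $k_{\tilde\ell}$ be its Iwasawa $\wtd K$-component and $o\cdot\tilde\ell=\wtd K\tilde\ell\in X$, so $\psi(\tilde\ell)=(k_{\tilde\ell},o\cdot\tilde\ell)$. (In the settings of interest $L$ is an $\R$-split torus or a unipotent subgroup, hence an algebraic, in particular closed, subgroup of $F$, which is all that is used below.) First I would observe: since $\calZ\subseteq\wtd K$ and $\ker\phi\subseteq\calZ$, the hypothesis forces $\calZ\cap\wtd L=\1$ and $\ker(\phi|_{\wtd L})=\wtd L\cap\ker\phi=\1$, so $\phi|_{\wtd L}\colon\wtd L\to L$ is a continuous bijective homomorphism of Lie groups, hence an isomorphism; moreover $d\phi$ is a Lie algebra isomorphism intertwining the Cartan involutions, so it carries the Lie algebra of $\wtd K$ onto that of $K$, whence $\phi(\wtd K)$ is an open, hence closed, connected subgroup of the connected group $K$, i.e.\ $\phi(\wtd K)=K$; with $\ker\phi\subseteq\wtd K$ this gives $\phi^{-1}(K)=\wtd K$ and therefore $K\cap L=\phi(\wtd K\cap\wtd L)=\1$. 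Finally $\phi$ is a Riemannian covering, hence a local isometry, so $d_F(\phi g,\phi g')\le d_G(g,g')$.

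For part (1): the proof of \cref{everyliftisQI} produces, through $\psi$, a bi-Lipschitz equivalence of $(\wtd L,d_G)$ with $(\wtd L\cap\wtd K)\times\bigl((\wtd K\cap\wtd L)\backslash\wtd L\bigr)$, the second factor carrying the metric of its image $\mathcal O:=o\cdot\wtd L$ in $X$, and the only reason \cref{everyliftisQI} then asserts merely a quasi-isometry with $(L,d_F)$ is that one passes to the quotient $(K\cap L)\backslash L$ of $L$ by a (possibly positive-dimensional) compact group. By the first paragraph both $\wtd K\cap\wtd L$ and $K\cap L$ are trivial, so this specializes to a bi-Lipschitz equivalence $(\wtd L,d_G)\cong(\mathcal O,d_X)$, and tracing the maps it is exactly $\phi|_{\wtd L}$ followed by the orbit embedding $\iota\colon L\hookrightarrow X$, $\ell\mapsto o\cdot\ell$. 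It then remains to check that, because $K\cap L=\1$, $\iota$ is bi-Lipschitz from $(L,d_F)$ onto $(\mathcal O,d_X)$: one inequality is clear since the projection $F\to X$ is $1$-Lipschitz, and for the other, right-invariance of $d_F$ together with compactness of $K$ gives $d_F(\ell,\1)-\diam K\le d_X(o\cdot\ell,o)\le d_F(\ell,\1)$, so $\iota$ is bi-Lipschitz outside a bounded (hence relatively compact) set, and on that set one checks $d_X(o\cdot\ell,o)\gtrsim d_F(\ell,\1)$ directly, the only issue being near $\1$, where it follows from $\Lie L\cap\Lie K=0$. This proves (1), and as a by-product — it is precisely the non-distortion of the $\wtd K$-coordinate of $\psi(\wtd L)$ built into the bi-Lipschitz equivalence above — yields a constant $C_L$ with $d_{\wtd K}(k_{\tilde\ell_1},k_{\tilde\ell_2})\le C_L\bigl(d_X(o\cdot\tilde\ell_1,o\cdot\tilde\ell_2)+1\bigr)$ for all $\tilde\ell_i\in\wtd L$; call this $(\star)$.

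For part (2): as $\calZ$ is central with $\calZ\cap\wtd L=\1$, multiplication is a bijective homomorphism $\calZ\times\wtd L\to\calZ\cdot\wtd L$, and I claim it is a quasi-isometry when $\calZ$ carries its word metric and $\wtd L$ carries $d_G$. Using centrality and $z\in\wtd K$, one computes $\psi(z\tilde\ell)=(z\,k_{\tilde\ell},\,o\cdot\tilde\ell)$, so by bi-Lipschitzness of $\psi$ it suffices to prove
\[
d_{\wtd K}(z_1k_{\tilde\ell_1},z_2k_{\tilde\ell_2})+d_X(o\cdot\tilde\ell_1,o\cdot\tilde\ell_2)\ \asymp\ \len_{\calZ}(z_1z_2^{-1})+d_{\wtd K}(k_{\tilde\ell_1},k_{\tilde\ell_2})+d_X(o\cdot\tilde\ell_1,o\cdot\tilde\ell_2)
\]
up to multiplicative and additive constants (the right-hand side is the $\psi$-image of the product metric, since $\psi$ gives $d_G(\tilde\ell_1,\tilde\ell_2)\asymp d_{\wtd K}(k_{\tilde\ell_1},k_{\tilde\ell_2})+d_X(o\cdot\tilde\ell_1,o\cdot\tilde\ell_2)$). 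The inequality ``$\lesssim$'' follows from the triangle inequality and right-invariance of $d_{\wtd K}$, using that $\calZ$ is cocompact, hence quasi-isometrically embedded, in $\wtd K$ (so $\len_{\calZ}\asymp d_{\wtd K}(\cdot,\1)$ on $\calZ$; cf.\ \cref{centerisQIG}). For ``$\gtrsim$'' every summand on the right except $\len_{\calZ}(z_1z_2^{-1})$ already appears on the left; and writing $z=z_1z_2^{-1}$, $\tilde\ell=\tilde\ell_1\tilde\ell_2^{-1}$, centrality gives $d_{\wtd K}(z_1k_{\tilde\ell_1},z_2k_{\tilde\ell_2})+d_X(o\cdot\tilde\ell_1,o\cdot\tilde\ell_2)\asymp d_G(z\tilde\ell,\1)\asymp d_{\wtd K}(zk_{\tilde\ell},\1)+d_X(o\cdot\tilde\ell,o)$, while $(\star)$ applied with $\tilde\ell_2=\1$ together with $d_X(o\cdot\tilde\ell,o)\le d_G(z\tilde\ell,\1)$ give $d_{\wtd K}(k_{\tilde\ell},\1)\lesssim d_G(z\tilde\ell,\1)+1$; hence $\len_{\calZ}(z)\asymp d_{\wtd K}(z,\1)\le d_{\wtd K}(zk_{\tilde\ell},\1)+d_{\wtd K}(k_{\tilde\ell},\1)\lesssim d_G(z\tilde\ell,\1)+1$, as needed. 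Since part (1) identifies $(\wtd L,d_G)$ up to bi-Lipschitz with $(\mathcal O,d_X)$, the displayed comparison is exactly the assertion that $(\calZ\cdot\wtd L,d_G)$ is quasi-isometric to $\calZ\times(\wtd L,d_G)$.

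The one genuinely delicate point is part (1): upgrading \cref{everyliftisQI} from a quasi-isometry to a bi-Lipschitz equivalence. This forces one to keep track of (i) the estimate $(\star)$ — that the $\wtd K$-coordinate of $\psi(\wtd L)$ is not distorted relative to the $X$-coordinate, which is already implicit in the proof of \cref{everyliftisQI} — and (ii) that the orbit embedding $\iota\colon L\hookrightarrow X$ is bi-Lipschitz, not merely a quasi-isometry, which near the identity needs an immersion-and-compactness argument ($\Lie L\cap\Lie K=0$) rather than the crude bound $d_X(o\cdot\ell,o)\ge d_F(\ell,\1)-\diam K$. Everything else is routine bookkeeping with right-invariant metrics and the identification $X=\wtd K\backslash G=K\backslash F$.
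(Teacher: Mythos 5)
Your reduction of both parts to the single estimate $(\star)$ --- that the $\wtd K$-component of $\tilde\ell\in\wtd L$ satisfies $d_{\wtd K}(k_{\tilde\ell},\1)\le C\,d_X(o\cdot\tilde\ell,o)+C$ --- is the right organizing principle, and the surrounding bookkeeping (the comparison $d_F(\ell,\1)\asymp d_X(o\cdot\ell,o)$ up to $\diam K$, the local immersion argument near $\1$, the use of centrality of $\calZ$ and \cref{centerisQIG} in part (2)) is fine. The gap is that you never prove $(\star)$: you assert it is ``implicit in the proof of \cref{everyliftisQI}'' and ``built into'' the bi-Lipschitz equivalence claimed there. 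It is not. The one-line assertion in that proof --- that $\psi$ induces a bi-Lipschitz equivalence of $(\wtd L,d_G)$ with $(\wtd L\cap\wtd K)\times\bigl((\wtd K\cap\wtd L)\backslash\wtd L\bigr)$ --- is exactly the statement $(\star)$ in disguise, and neither it nor $(\star)$ follows from ``$L$ closed, connected, $\wtd L\cap\wtd K=\1$''. Concretely, take $F=\Sp(4,\R)$, $K=\mathrm{U}(2)$, and let $X_e\in\liek$ generate the circle $\{\mathrm{diag}_\C(e^{i\theta},1)\}$, whose class generates $\pi_1(K)\cong\Z$ and whose centralizer in $F$ is $\So(2)\times\Sl(2,\R)$; pick a nonzero nilpotent $X_n$ in the $\liesl(2,\R)$ factor and set $L=\{\exp(t(X_e+X_n))\}$. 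Then $L$ is closed and connected, and $\wtd L\cap\wtd K=\1$ because $\exp(tX_n)$ lies in a compact group only for $t=0$; yet $d_F(\exp(t(X_e+X_n)),\1)\asymp\log(2+|t|)$ while the $\wtd K$-component of the lift winds linearly, so $d_G(\tilde\ell_t,\1)\asymp|t|$. Both conclusions of the corollary fail for this $L$ (for (2), cancel the winding by central elements $z_t\in\ker\phi$), so $(\star)$ cannot be a formal consequence of the stated hypotheses.

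What is missing, therefore, is a proof of $(\star)$ for the subgroups the statement is actually invoked for --- $\R$-split tori and unipotent $\Q$-subgroups, per the sentence preceding the corollary. For a $\theta$-invariant split torus (or a subgroup of $A$) the Iwasawa $\wtd K$-component is identically $\1$ and $(\star)$ is vacuous; a conjugate $gA'g^{-1}$ is handled by left translation, which is $\|\Ad(g^{\pm1})\|$-bi-Lipschitz for a right-invariant metric. For unipotent $L$ one must show that $\tilde\ell\mapsto k_{\tilde\ell}$ has \emph{bounded} image in $\wtd K$: embed $L$ in the unipotent radical opposite a minimal parabolic, note that the Iwasawa projection carries it into a continuous section over the big Bruhat cell of $K/M$ (a contractible set with compact closure), and check that the lift of this section to $\wtd K$ is bounded --- the rank-one model is the computation that the $K$-component of $\begin{pmatrix}1&0\\ s&1\end{pmatrix}$ is the rotation by $\arctan(s)\in(-\pi/2,\pi/2)$. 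With $(\star)$ established for these $L$, the rest of your argument goes through.
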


Let $A\subset F$ be a choice of a $\theta$-invariant split Cartan subgroup.
 By abuse of notation, we also identify $A$ with a subgroup of $G$ and observe that $\phi\colon G\to F$ restricts to a bi-Lipschitz isomorphism between the two copies of $A$.  %
From \cref{simpconisbilip}, $(\calZ\cdot A,d_G)$ is quasi-isometric to $\calZ\times (A,d_G)$.

Let $K_0$ be a compact subset of $ \wtd K$ such that $\calZ K_0 = K_0 \calZ = \wtd K$.
We generalize the well-known $KAK$ decomposition of $F$ by the following.
\begin{corollary}\label{kaka} \index{generalized kak decomposition}
Given $g\in G$  we may write $$g= k'zak''$$ where $k',k''\in K_0$, $a\in A$, and $z\in \calZ$. Moreover,
there are $A_1, B_1\ge 1$ such that for any $g\in G$, relative to the above decomposition,
\begin{align*}
 \frac{1}{A_1 }d(g,\1) - B_1
 \le
\max\{ d(a,\1), d(z,\1)\}
\le A_1 d(g,\1) + B_1.
\end{align*}
\end{corollary}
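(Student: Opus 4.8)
The goal is to prove \cref{kaka}, extending the $KAK$ decomposition of $F$ to a ``$K_0 \calZ A K_0$'' decomposition of $G$ together with the quantitative comparison of distances. The plan is to pull back the classical $KAK$ decomposition through $\phi\colon G\to F$ and then account for the central directions using the bi-Lipschitz statement of \cref{niceBiLip} and the preceding corollaries.

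First I would establish the decomposition itself. Take $g\in G$ and let $\bar g = \phi(g)\in F$. Write $\bar g = \bar k_1 \bar a \bar k_2$ for the $KAK$ decomposition of $F$, with $\bar k_1,\bar k_2\in K$ and $\bar a\in A$. Using that $\phi$ restricts to an isomorphism on the chosen $A\subset F$ (identified with a subgroup of $G$ as in the remark before the statement), lift $\bar a$ to $a\in A\subset G$. Since $\phi\colon \wtd K\to K$ is surjective with kernel $\calZ$ (indeed $\ker\phi\subset\calZ\subset\wtd K$), lift $\bar k_1,\bar k_2$ to elements of $\wtd K$; using $\calZ K_0 = K_0\calZ = \wtd K$, we may write these lifts as $k'z'$ and $z''k''$ with $k',k''\in K_0$ and $z',z''\in\calZ$. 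Then $\phi(k'z'az''k'') = \bar g = \phi(g)$, so $g = k'z'az''k'' \cdot w$ for some $w\in\ker\phi\subseteq\calZ$. Since $\calZ$ is central and $a$ normalizes $\calZ$ (trivially, as $\calZ$ is central), we may collect $z := z'z''w\in\calZ$ past the $K_0$ and $A$ factors — more carefully, commute $z''$ and $w$ to the left past $a$ (central elements commute with everything) to get $g = k' (z'z''w) a k'' = k' z a k''$ with $z\in\calZ$. This gives the existence of the decomposition.

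Next, the quantitative estimate. The upper bound $\max\{d(a,\1),d(z,\1)\}\le A_1 d(g,\1)+B_1$ should follow from \cref{niceBiLip}: the map $\psi\colon G\to \wtd K\times X$ is uniformly bi-Lipschitz, and $d(g,\1)$ controls both the $\wtd K$-coordinate and the $X$-coordinate of $\psi(g)$. Concretely, writing $g = k'zak''$, the $X$-coordinate $\wtd K g$ equals $\wtd K z a k'' = \wtd K a k''$ (since $z\in\wtd K$); as $K_0$ is compact and $\phi$ is bi-Lipschitz on $A$, the $X$-distance from the basepoint is comparable to $d(a,\1)$ up to an additive constant coming from $\diam(K_0)$. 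For the $\calZ$-term, note $k'\inv g k''\inv = za$, and since $K_0$ is compact this element is within bounded distance of $g$; projecting to $\wtd K$ via $\psi$ and using that $\calZ$ is cocompact in $\wtd K$ (hence quasi-isometrically embedded, \cref{centerisQIG}), the $\wtd K$-coordinate of $\psi(za)$ — which records $z$ up to the bounded ambiguity of $K_0$ — has norm comparable to $d(z,\1)$. Combining, $\max\{d(a,\1),d(z,\1)\}\lesssim d(g,\1)$ up to additive constants. The lower bound is the triangle inequality: $d(g,\1)\le d(k',\1) + d(z,\1) + d(a,\1) + d(k'',\1)\le 2\diam(K_0) + 2\max\{d(a,\1),d(z,\1)\}$, which rearranges to $\tfrac{1}{A_1}d(g,\1) - B_1 \le \max\{d(a,\1),d(z,\1)\}$ after absorbing constants.

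The main obstacle I anticipate is bookkeeping the additive constants and the commutation of central elements cleanly, rather than any deep difficulty: one must be careful that the bi-Lipschitz constant of $\psi$ from \cref{niceBiLip} interacts correctly with the compactness of $K_0$ and with the fact that $\phi$ is only a bi-Lipschitz (not isometric) identification on $A$. A secondary subtlety is making sure the decomposition $g=k'zak''$ genuinely has $z\in\calZ$ and not merely $z\in\wtd K$; this requires using that $\ker\phi\subseteq\calZ$ and that centrality lets us freely move $z$ to the desired position. Everything else reduces to repeated application of \cref{niceBiLip}, \cref{centerisQIG}, and the triangle inequality, so I would keep the write-up short and cite those results directly.
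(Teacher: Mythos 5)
Your decomposition argument (lifting the $F$-level $KAK$ and collecting central factors), your lower bound via right-invariance and the triangle inequality, and your bound on $d(a,\1)$ are all fine. The gap is in the upper bound on $d(z,\1)$: you assert that $za = (k')^{-1}g(k'')^{-1}$ lies within bounded distance of $g$ because $K_0$ is compact, but this is false. The metric on $G$ is right-$G$-invariant and left-$\wtd K$-invariant, not bi-invariant. Left multiplication by $(k')^{-1}\in\wtd K$ is indeed a bounded displacement, since by right-invariance $d((k')^{-1}g, g) = d((k')^{-1},\1)\le\diam K_0$. But right multiplication by $(k'')^{-1}$ is not: by right-invariance $d(g, g(k'')^{-1}) = d(\1, g(k'')^{-1}g^{-1})$, the distance from $\1$ to a conjugate of $(k'')^{-1}$, and such conjugates escape to infinity when $g$ contains a large $A$-factor. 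Concretely, in $\Sl(2,\R)$, with $a=\mathrm{diag}(t,t^{-1})$ and $k''$ a rotation by $\pi/2$, one has $a(k'')^{-1}a^{-1}$ with entries of size $t^2$, so $d(za, g)\to\infty$ as $t\to\infty$. Hence you cannot conclude that $\psi(za)$ is close to $\psi(g)$, and the claimed bound $d(z,\1)\lesssim d(g,\1)$ does not follow from this route.

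The fix is to read $z$ off the $\wtd K$-coordinate of $\psi(g)$ directly rather than via $za$: Iwasawa-decompose $g = kan$ with $k\in\wtd K$, decompose $k = z\kappa$ in $\calZ K_0$, and use \cref{niceBiLip} to get $d_{\wtd K}(k,\1)\lesssim d_G(g,\1)$, hence $d_{\wtd K}(z,\1)\lesssim d_G(g,\1)$, which \cref{centerisQIG} converts to the desired bound on $d_G(z,\1)$. One must then reconcile this $z$ with the central factor in your $K_0\calZ A K_0$ decomposition, which amounts to checking that the $\wtd K$-Iwasawa component of $ak''$ remains in a fixed compact subset of $\wtd K$ uniformly over $a\in A$, $k''\in K_0$ (equivalently, the lift of the path $a\mapsto$ Iwasawa-$K$-part of $\bar a\,\phi(k'')$ in $K$ has uniformly bounded length and so does not wind in the cover $\wtd K\to K$). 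This last point is true but does require justification; your proposal skips it.
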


\def\cartfoot{{\footnote{See for instance \cite[Theorem 7.3]{MR69830}. More directly, observe that a maximal compact subgroup $K'$ of $H/\calZ$ is contained in a maximal compact subgroup $K$ of $F/\calZ$. The subgroup $K$ determines a Cartan involution $\theta$ on $F/\calZ$. With respect to the Killing form on the Lie algebra $\lieg$, the orthogonal complement to the Lie algebra of $K'$ in the Lie algebra of $H$ is contained the $-1$ eigenspace for $\theta$. Then $\theta$ leaves the Lie algebra of $H$ in $\lieg$ invariant.} }}

\begin{remark}[Generalized KAK decomposition]\label{kakaReductive}
Consider a connected reductive  $\R$-subgroup $L\subset F$ and let $H = \wtd {L} := (\phi \inv (L))^\circ$.  By abuse of terminology, we say that $H$ is a connected reductive   $\R$-subgroup of $G$.  Write $\calZ(H)$ for the center of $H$ and let $Z= \calZ(H)^\circ$.

The image $\phi(Z)$ is the connected component of  an $\R$-torus in $F$.  As discussed in \cref{sec:tori}, we have that $$Z= \calZ( H)^\circ= Z_{\comp}\cdot Z_{\split}$$ where $\phi (Z_{\comp})$ is $\R$-anisotropic and $\phi (Z_{\split})$ is  $\R$-split.
Write $ H'=[ H, H]$ for the derived group of $ H$. Then $$H = H' \cdot Z = H' \cdot Z_\comp \cdot Z_\split.$$

 We may select the Cartan involution $\theta$ on the Lie algebra $\lieg$ of $F$ and $G$ so that $H$ is $\theta$-invariant.\cartfoot
Let $A\subset G$ be a $\theta$-invariant split Cartan subgroup and let $\wtd K\subset G$ be the subgroup of $\theta$-fixed points in $G$.
Write $A_H:= A\cap H$, and $ \wtd K_H:= \wtd K\cap H$. We collect a number of observations. \begin{enumerate}
\item $A_H$ is a maximal, connected, abelian subgroup of $\ad$-$\R$-diagonalizable elements in $H$; thus $Z_{\split} \subset A_H\subset A$.
\item $\phi (\wtd K_H)$ is compact and, moreover, is a maximal compact subgroup of $\phi (H)$. It follows that $Z_{\comp}\subset \wtd K_H$.
\item $\calZ\cap H= \calZ\cap (H' \cdot Z_{\comp})= \calZ\cap \wtd K_H$ and $(\calZ\cap H)$ is cocompact in $\wtd K_H$.
\end{enumerate}

There is a compact $K_{H,0}\subset \wtd K_H$ such that $(\calZ\cap H)\cdot K_{H,0} = \wtd K_H$.
We may then decompose $$ H= K_{H,0}\cdot (\calZ\cap H) \cdot A_H \cdot K_{H,0}.$$ In particular, we may write any $h\in H$ as $$h = k'zak''$$ where $k',k''\in K_{H,0}$, $a\in A_H$, and $z\in \calZ\cap H$. Moreover, as in \cref{kaka}, we similarly have that there are $A_1, B_1\ge 1$ such that for all $h\in H$, relative to the above decomposition,
$$
 \frac{1}{A_1 }d(h,\1) - B_1
 \le
\max\{ d(a,\1), d(z,\1)\}
\le A_1 d(h,\1) + B_1. $$
\end{remark}

 \subsubsection{Quasi-isometric embedding of arithmetic subgroups} We return to the standing hypotheses in \cref{hyp2}.
The subgroup $\hat \Gamma$ of $F$ is finitely generated and thus may be equipped with a choice of word-length function $\len_{\hat \Gamma}$ induced by a choice of finite symmetric generating set; the word-length function induces a word-metric    on $\hat \Gamma$.
We write $(\hat \Gamma, \len_{\hat \Gamma})$ for this metric space.
 As a discrete subset of $F$, $\hat \Gamma$ also inherits a metric $d_F$ by restricting the right-invariant Reimannian distance $d_F$ on $F$ to $\hat \Gamma$. We similarly obtain a word-length $\len_\Gamma$ and associated word-metric $\Gamma$ and a metric $d_G$ on $\Gamma $ by restricting the Riemannian distance $d_G$ on $G$ to $\Gamma$.

For general groups $F$, the metrics $\len_{\hat \Gamma}$ and $d_F$ on $\hat \Gamma$ need not be comparable; indeed this holds for $ \Sl(2,\Z)$ in $\Sl(2,\R)$.   However, as we have assumed $\rank_\R(F)\ge 2$ and $\hat \Gamma$ irreducible, we obtain the following.
\begin{theorem}[\cite{MR1828742}]\label{LMR}
For $F$ and $\Gamma$ as in the standing hypotheses in \cref{hyp2} the word-metric and Riemannian metric on $\hat \Gamma$ are quasi-isometric: there are $A_0,B_0$ such that for all $\gamma,\gamma'\in \hat \Gamma $,
$$\frac 1 {A_0 }d_F(\gamma,\gamma') -B_0 \le \len_{\hat \Gamma}(\gamma\inv\gamma')\le {A_0 }d_F(\gamma,\gamma') +B_0.$$
\end{theorem}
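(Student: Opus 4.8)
The plan is to obtain \cref{LMR} directly from the general quasi-isometric embedding theorem of Lubotzky, Mozes and Raghunathan \cite{MR1828742}, of which it is essentially a special case: that theorem asserts that for an irreducible lattice $\Lambda$ in a connected semisimple Lie group $F$ with no compact factors and $\rank_\R(F) \ge 2$, the inclusion of $\Lambda$, equipped with a word metric associated to a finite symmetric generating set, into $F$, equipped with an invariant Riemannian metric, is a quasi-isometric embedding. (This generalises the case $\Lambda = \Sl(n,\Z) \subset \Sl(n,\R)$ of \cite{MR1244421} used in \cite{BFH-SLnZ}.) Thus the substance of the argument is to check that $F = \bfF(\R)$ and $\hat\Gamma$ satisfy these hypotheses and then to record the translation of the conclusion into the precise form stated, with the particular distance $d_F$ fixed in \cref{secQI}.

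For the hypotheses: by the standing hypotheses in \cref{hyp2}, $\bfF$ is a connected, non-commutative, almost $\Q$-simple, algebraically simply connected linear algebraic $\Q$-group with $\rank_\R(\bfF) \ge 2$, so $F = \bfF(\R)$ is a connected semisimple real Lie group with finite centre, and by the discussion following \cref{hyp2} it has no compact factors. The group $\hat\Gamma$ has finite index in $F_\Z = \bfF(\Z)$, which is a lattice in $F$, so $\hat\Gamma$ is itself a lattice; and $\hat\Gamma$ is irreducible, both because it is the image under $\phi$ of the irreducible lattice $\Gamma$ and because $\bfF$, being almost $\Q$-simple, does not decompose (even up to isogeny) as an almost-direct product of proper $\Q$-subgroups, so $\bfF(\Z)$ does not virtually split as a direct product. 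Hence \cite{MR1828742} applies to $\hat\Gamma \subset F$ and furnishes constants so that the inclusion $\hat\Gamma \hookrightarrow F$ is a quasi-isometric embedding.

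It then remains only to match this with the displayed inequalities. The word metric on $\hat\Gamma$ appearing there is the Cayley metric $(\gamma,\gamma') \mapsto \len_{\hat\Gamma}(\gamma\inv\gamma')$, and $d_F$ is the right-invariant, left-$K$-invariant Riemannian distance of \cref{secQI}. Since inversion $g \mapsto g\inv$ is an isometry interchanging left- and right-invariant Riemannian metrics on $F$ and, because the generating set is symmetric, it also preserves $\len_{\hat\Gamma}$, while all right-invariant Riemannian metrics on $F$ are bi-Lipschitz equivalent (as noted in \cref{secQI}), the quasi-isometric embedding produced by \cite{MR1828742} can be rewritten in the stated form after adjusting the constants to some $A_0, B_0$. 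I expect this convention-matching to be the only point needing any care, and it is routine; the genuinely substantive input is the non-distortion estimate $\len_{\hat\Gamma}(\gamma\inv\gamma') \le A_0\, d_F(\gamma,\gamma') + B_0$, which is precisely the theorem of \cite{MR1828742}, whereas the reverse inequality $d_F(\gamma,\gamma') \le A_0\, \len_{\hat\Gamma}(\gamma\inv\gamma') + B_0$ is elementary, following from the triangle inequality and invariance of the metric.
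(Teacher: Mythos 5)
Your proposal is correct and matches the paper's treatment: Theorem~\ref{LMR} is stated as a citation of \cite{MR1828742} without an accompanying proof, and your argument does exactly what is implicitly required---verify that $F$ and $\hat\Gamma$ satisfy the hypotheses of that theorem (irreducible lattice in a connected semisimple Lie group of real rank $\ge 2$ with no compact factors) and translate the conclusion into the displayed inequalities after fixing conventions on the metrics. The one substantive direction is indeed the upper bound $\len_{\hat\Gamma}(\gamma\inv\gamma') \le A_0\, d_F(\gamma,\gamma') + B_0$, which is precisely the LMR non-distortion theorem; the reverse inequality is the elementary Lipschitz bound from finite generation.
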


\subsubsection{Quasi-isometric embedding of lattices in the topological universal cover}
We assert that the conclusion of \cref{LMR} also holds for the lattice subgroup $\Gamma$   of $G$ under the standing hypotheses in \cref{hyp2}.
Although this is probably well known, we do not know a reference and thus include a proof.

Recall $\hat \Gamma$ is a lattice subgroup of $F$ and $\Gamma=\phi\inv(\hat \Gamma)$ where $\phi\colon G\to F$ is the covering map.  We have $\ker \phi \subset \calZ$ and recall we also assumed that $\calZ\subset \Gamma$.

In the following, all discrete groups are equipped with a word-length metric.
 \begin{lemma}\label{liftQI}\label{cor:QIlift}
For $G$ and $\Gamma$ as in the standing hypotheses of \cref{hyp2} the following hold:
\begin{enumerate}
\item \label{point11} $\Gamma$ is quasi-isometrically embedded in $G$.
\item \label{point22} $\Gamma$ is quasi-isometric to $\calZ\times \widehat \Gamma$.
\end{enumerate}
\end{lemma}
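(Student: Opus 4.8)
I would prove \cref{cor:QIlift} by combining the quasi-isometric embedding of $\hat\Gamma$ in $F$ (\cref{LMR}) with the bi-Lipschitz estimates for the central extension collected in \cref{BiLipSec}, using that $\Gamma\to\hat\Gamma$ is a central extension with kernel contained in $\calZ\subset\wtd K$. The two parts are closely linked: part~\pref{point22} is essentially the statement that the exact sequence $1\to\calZ\cap\ker\phi\to\Gamma\to\hat\Gamma\to1$, together with the splitting-up-to-finite-index coming from $\calZ\subset\Gamma$, makes $\Gamma$ quasi-isometric to a direct product; and part~\pref{point11} then follows by comparing this product with the quasi-isometric decomposition of $G$ itself.

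\emph{Part \pref{point22}.} First I would observe that the inclusion $\calZ\hookrightarrow\Gamma$ is a quasi-isometric embedding — this is exactly \cref{centerisQI}. Next I would build a quasi-isometry $\Gamma\to\calZ\times\hat\Gamma$. Choose a set-theoretic section $\sigma\colon\hat\Gamma\to\Gamma$ of $\phi$ with $\sigma(\1)=\1$ (for instance by picking, for each $\hat\gamma\in\hat\Gamma$, a preimage of minimal word-length in $\Gamma$); since $\ker\phi\subset\calZ$, every $\gamma\in\Gamma$ is uniquely $\gamma = z\cdot\sigma(\phi(\gamma))$ for some $z\in\calZ$ (using $\calZ\subset\Gamma$ and that $\ker\phi$ is central), so the map $\gamma\mapsto(z,\phi(\gamma))$ is a bijection $\Gamma\to\calZ\times\hat\Gamma$. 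The content is that this bijection is a quasi-isometry. In the direction $\Gamma\to\calZ\times\hat\Gamma$: $\phi$ is $1$-Lipschitz for word metrics (generators map to generators), and the $\calZ$-coordinate is controlled because $z = \gamma\,\sigma(\phi(\gamma))\inv$ has $d_G(\1,z)\lesssim d_G(\1,\gamma)+d_G(\1,\sigma(\phi(\gamma)))$, while $\len_\Gamma$ and $d_G$ are comparable \emph{if} we already know part~\pref{point11}; so I would prove \pref{point11} first, or prove both in tandem. For the converse direction, $\len_\Gamma(z\,\sigma(\hat\gamma))\le\len_\Gamma(z)+\len_\Gamma(\sigma(\hat\gamma))\lesssim\len_{\calZ}(z)+d_G(\1,\sigma(\hat\gamma))$ by \cref{centerisQI}, and $d_G(\1,\sigma(\hat\gamma))$ is controlled by $d_F(\1,\hat\gamma)$ via the central-extension bi-Lipschitz estimates (\cref{niceBiLip} and its corollaries: $G\cong\wtd K\times X$ with $\calZ$ cocompact in $\wtd K$), which in turn is $\asymp\len_{\hat\Gamma}(\hat\gamma)$ by \cref{LMR}.

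\emph{Part \pref{point11}.} Here the key geometric input is \cref{niceBiLip}: the map $\psi\colon G\to\wtd K\times X$, $kan\mapsto(k,\wtd K an)$, is uniformly bi-Lipschitz, and $\phi$ induces an isometry (up to compact error) between $X=\wtd K\backslash G$ and $K\backslash F$. So for $\gamma\in\Gamma$, $d_G(\1,\gamma)\asymp d_{\wtd K}$-part plus $d_X$-part; the $X$-part equals (up to bounded error) $d_{K\backslash F}(\1,\phi(\gamma))$, which is $\le d_F(\1,\phi(\gamma))\asymp\len_{\hat\Gamma}(\phi(\gamma))$ by \cref{LMR}; and the $\wtd K$-part, since $\calZ$ is cocompact in $\wtd K$, is comparable to $\len_\calZ$ of the $\calZ$-component of $\gamma$, which by \cref{centerisQI} is $\lesssim\len_\Gamma(\gamma)$. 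Conversely $\len_\Gamma(\gamma)\lesssim\len_\Gamma(z)+\len_\Gamma(\sigma(\phi(\gamma)))$, and $\len_\Gamma(\sigma(\phi(\gamma)))$ can be bounded above by $d_G$ once one lifts a word in $\hat\Gamma$ realizing $\len_{\hat\Gamma}(\phi(\gamma))$ to a word in $\Gamma$ (each generator of $\hat\Gamma$ has a fixed preimage generator in $\Gamma$) — this gives $\len_\Gamma(\sigma(\phi(\gamma)))\lesssim\len_{\hat\Gamma}(\phi(\gamma))\asymp d_F(\1,\phi(\gamma))\le d_G(\1,\gamma)$ (the last inequality because $\phi$ is distance non-increasing for the chosen metrics, $d_G$ being the pulled-back metric), and $\len_\Gamma(z)\lesssim\len_\calZ(z)\lesssim d_G(\1,\gamma)$ by the cocompactness of $\calZ$ in $\wtd K$ inside $G$ together with \cref{niceBiLip}. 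Assembling these inequalities gives $\len_\Gamma(\gamma)\asymp d_G(\1,\gamma)$, i.e.\ \pref{point11}, and then \pref{point22} follows by substituting this into the estimates above.

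\emph{Main obstacle.} The only real subtlety is bookkeeping the interaction between the three metrics on $\Gamma$ (word metric $\len_\Gamma$, restricted Riemannian metric $d_G$) and on $\hat\Gamma$ ($\len_{\hat\Gamma}$, $d_F$): one must be careful that the constants in \cref{LMR} for $\hat\Gamma\subset F$, the bi-Lipschitz constants of $\psi$ from \cref{niceBiLip}, and the cocompactness constant of $\calZ$ in $\wtd K$ all combine correctly, and in particular that lifting a geodesic word in $\hat\Gamma$ to $\Gamma$ only changes length by a multiplicative constant depending on the chosen generating sets. There is also a minor point — that the chosen section $\sigma$ has $\len_\Gamma(\sigma(\hat\gamma))$ comparable to $\len_{\hat\Gamma}(\hat\gamma)$ — which is precisely the lifting-of-words argument and should be stated cleanly as a preliminary lemma. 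None of this is deep; it is the routine "central extensions are quasi-isometric to direct products when the center is undistorted" principle, made to work in the Lie-group-plus-lattice setting using the structural results already assembled in \cref{BiLipSec}.
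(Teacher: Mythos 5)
Your proposal matches the paper's proof in all essentials: both rely on \cref{niceBiLip} to split $G$ bi-Lipschitzly as $\wtd K\times X$, on \cref{LMR} for the undistortion of $\hat\Gamma$ in $F$, on \cref{centerisQI}/\cref{centerisQIG} for the undistortion of $\calZ$, and on lifting $\hat\Gamma$-words to $\Gamma$-words; whether one does this through a minimal-length section $\sigma\colon\hat\Gamma\to\Gamma$ as you do, or through a fundamental domain $K_0$ for $\calZ\curvearrowright\wtd K$ and the decomposition $\gamma=z_\gamma k_\gamma a_\gamma n_\gamma$ as the paper does, is cosmetic. One small correction: the element $z=\gamma\,\sigma(\phi(\gamma))^{-1}$ lies in $\ker\phi$, not in all of $\calZ$, so your map $\gamma\mapsto(z,\phi(\gamma))$ is a bijection onto $\ker\phi\times\hat\Gamma$, not onto $\calZ\times\hat\Gamma$ as you claim; it is still a quasi-isometry onto $\calZ\times\hat\Gamma$, but only because $\ker\phi$ has finite index in $\calZ$ (equivalently, $\calZ(F)\cong\calZ/\ker\phi$ is finite since $F$ is a connected semisimple linear group), and that finiteness needs to be recorded for the argument to close.
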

\begin{proof}
We recall the bi-Lipschitz map $\psi$ in \cref{niceBiLip}.
 Given $\gamma\in  \Gamma$, write   $\hat\gamma = \phi( \gamma)$ for the image of $\gamma$ in $ \hat \Gamma$.
 Take a finite generating set $S_{\calZ}$ for the center $\calZ$ and a finite generating set $S$ of $\Gamma$ containing $S_{\calZ}$.  Let $\hat S = \phi(S)$ be the image of $S$ in $\hat \Gamma$; then $\hat S$ is a finite generating set of $\hat \Gamma$.

Fix a precompact fundamental domain $K_0$ for the $\calZ$-action on $\wtd K$.
Given $\gamma\in \Gamma$, write  $\gamma = z_\gamma k_\gamma a_\gamma n_\gamma$ where $z_\gamma\in \calZ, k_\gamma\in K_0,  a_\gamma\in Z,$ and $n_\gamma\in N$.  We naturally identify the cosets $\wtd K\cdot \gamma =\wtd K a_\gamma n_\gamma $ and $ K\cdot \hat \gamma$  in $X= \wtd K \bs G= K\bs F$.
Note $(\wtd K, d_K)$ is quasi-isometric to $(\calZ,\len_{S_\calZ})$ and $X$ is quasi-isometric to $(F, d_F)$.
Through the bi-Lipschitz map $\psi$ in \cref{niceBiLip}, the  map $\gamma\mapsto (z_\gamma, \hat \gamma)$ is a quasi-isometry  $$(\Gamma, d_G)\to (\calZ,\len_{{S_\calZ}}) \times (\hat \Gamma, d_F).$$ By \cref{LMR}, this induces a quasi-isometry
 $$(\Gamma, d_G)\to (\calZ,\len_{S_\calZ}) \times (\hat \Gamma, \len_{\hat S}).$$

It remains to compare $(\Gamma, d_G)$ and $(\Gamma, \len_S)$.
Take $C_1:= \max_{s \in S} d_G(s,1)$.   Then for all $\gamma\in \Gamma$, $$\frac{1}{C_1}d_G(\gamma, 1) \leq \len_S(\gamma)$$
To complete the proof of the lemma, we claim  there is   $\hat C>1$ such that
\begin{equation} \label{finaleq} \len_S(\gamma)   \le \hat   C d_G(\gamma, 1)  + \hat   C .\end{equation}

To establish \eqref{finaleq}, fix $\gamma \in \Gamma$ and write $\hat \gamma =  \widehat{\gamma_1 } \hat{\gamma_2} \dots \hat{\gamma_n} $ where $\gamma_i \in S$, $\hat{ \gamma_i }= \phi(\gamma_i)\in \hat S$, and $n = \len_{\hat{S}}(\hat{\gamma})$.
By \cref{niceBiLip}, there is $C_2>0$ such that \begin{equation}\label{eq999}d_F(\hat \gamma,1) \le C_2 d_G(\gamma,1).\end{equation}
We have  $\gamma = z \gamma_1\gamma_2 \dots \gamma_n$ for some $z \in \calZ$ and $$d_G(z, 1) \leq d_G(\gamma, 1) + d_G(1, z\inv  \gamma)\le d_G(\gamma, 1) + C_1n = d_G(\gamma, 1) + C_1\len_{\hat S}(\hat \gamma).$$
Applying \cref{LMR} and \eqref{eq999},
there is $C_3>1$ such that
$$d_G(z, 1) \leq C_3d_G(\gamma, 1) + C_3.$$
By \cref{centerisQIG},   $\len_{S_\calZ}(z) \leq C_4 d_G(z,1) + C_4$, and thus
\begin{equation}\label{ecu31}
\len_{S_\calZ}(z) \leq C_5d_G(\gamma, 1) + C_5
\end{equation}
for some constants $C_4, C_5 > 1$. Finally, by \cref{LMR},  \eqref{eq999}, and \eqref{ecu31}, we have
\begin{align*}\len_S(\gamma) &= \len_S(z \gamma_1\gamma_2 \dots \gamma_n)  \le \len_S(z) + \len _S(\gamma_1\gamma_2 \dots \gamma_n)\\&
\leq \len_{S_\calZ}(z) + \len_{\hat S}(\hat{\gamma})  \leq  \len_{S_\calZ}(z) + C_6d_F(\hat {\gamma}, 1) + C_6 \\& \leq C_7d_G(\gamma, 1) + C_7\end{align*} for some constants $C_6, C_7>1$ establishing \eqref{finaleq}.
\end{proof}

\subsubsection{Quasi-isometric properties of parabolic and unipotent $\Q$-subgroups}
Using \cref{cor:QIlift} we formulate additional quasi-isometric relationships that will be used in the sequel.  We assume the standing hypothesis in
\cref{hyp2}.

\begin{lemma} \label{GammaCap1}
Let $L$ be a connected reductive $\Q$-subgroup of $F$ and let $U$ be a connected unipotent $\Q$-subgroup of $F$ normalized by $L$.  Suppose that  $L\cap U = \{\1\}$.   Then the following hold:
\begin{enumerate}
\item \label{QI33312222} the map $(L,d_F)\times (U,d_F)\to (LU, d_F)$, $ (g,u) \mapsto gu$, is quasi-isometric relative to $d_F$.
\item \label{QI3331} $L_\Z \ltimes U_\Z$ has finite index in $(LU)_\Z$ and the map $L_\Z\times U_\Z\to (LU)_\Z$, $ (g,u) \mapsto gu$, is quasi-isometric relative to $\len_{F_\Z}$.
\item \label{QI3331555} the map $(\wtd L,d_G)\times (\wtd U,d_G)\to (\wtd {L U}, d_G)$, $ (g,u) \mapsto gu$, is quasi-isometric relative to $d_F$.
\item \label{QI4441} the map $\Gamma\!_L\times \Gamma\!_U\to \Gamma\!_{LU}$, $ (g,u) \mapsto gu$, is quasi-isometric relative to $\len_{\Gamma}$.
\end{enumerate}
\end{lemma}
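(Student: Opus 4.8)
The four assertions are logically parallel, and my plan is to derive (2), (3), (4) from (1) together with the quasi-isometric dictionary already established in Section~\ref{secQI}. So the first task is to prove (1): that multiplication $(L,d_F)\times(U,d_F)\to(LU,d_F)$ is a quasi-isometry. Since $L$ normalizes $U$ and $L\cap U=\{\1\}$, the product $LU$ is a semidirect product $L\ltimes U$ as an algebraic group, and $L$ is reductive while $U$ is the unipotent radical of $LU$; this is a Levi decomposition in the sense of \cref{sec:levilang}. The map is obviously surjective and proper. For the quasi-isometry estimate, I would argue geometrically: $LU$ acts on the symmetric space $X$ (or rather $(K\cap LU)\backslash LU$ sits inside $X$), and one has the coarse equality $d_F(\1, gu)\asymp d_F(\1,g)+d_F(\1,u)$ because $g$ and $u$ ``move in transverse directions''—more carefully, one uses that on a connected reductive group the word/Riemannian metric is comparable to the logarithm of the operator norm (via a $KAK$-type decomposition, \cref{kaka}), that on a unipotent group the Riemannian metric is comparable to a polynomial-degree weighting of coordinates (\cref{BasicUnip}, since $\exp$ is a polynomial bijection), and that conjugation by $L$ distorts these unipotent coordinates by at most a polynomial in $d_F(\1,g)$, hence at most exponentially, which after taking logarithms is linear. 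Putting $gu = g\cdot u$ and $d_F(\1,gu)\le d_F(\1,g)+d_F(g,gu)=d_F(\1,g)+d_F(\1,u)$ gives one inequality for free (right-invariance); the reverse inequality $d_F(\1,g)+d_F(\1,u)\lesssim d_F(\1,gu)$ is the content, and it follows by projecting $LU\to LU/U\cong L$ (which is $1$-Lipschitz up to constants onto its image in $X$) to control $d_F(\1,g)$, and then using the triangle inequality $d_F(\1,u)\le d_F(\1,gu)+d_F(gu,g\cdot\1)\cdots$ wait—$d_F(gu,g)=d_F(u,\1)$ again by right-invariance is circular, so instead one bounds $d_F(\1,u)=d_F(g\inv\cdot gu, g\inv)\le \ldots$; cleanly, one uses that $g\inv$ has norm $\asymp d_F(\1,g)\lesssim d_F(\1,gu)$ and that left-multiplication by $g\inv$ distorts distances by a factor exponential in $d_F(\1,g)$, so $d_F(\1,u)\lesssim e^{Cd_F(\1,g)}d_F(\1,gu)$—this is too lossy. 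The honest route is the ``distortion is polynomial'' route for unipotent coordinates just mentioned, which after $\log$ is additive; I expect this to be the main technical obstacle and would cite the relevant structure theory (e.g.\ the behaviour of Siegel sets, or \cite{MR1828742}) to keep it short.

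Granting (1), assertion (3) is essentially immediate from \cref{everyliftisQI}: $\wtd L$ is quasi-isometric to $L\times(\calZ\cap\wtd L)$, $\wtd U$ is quasi-isometric to $U$ (by \cref{simpconisbilip}, since a unipotent subgroup meets $\wtd K$ trivially), and $\wtd{LU}$ is quasi-isometric to $LU\times(\calZ\cap\wtd{LU})$; moreover $\calZ\cap\wtd L=\calZ\cap\wtd{LU}$ because $\calZ\subset\wtd K$ while $U$ meets $\wtd K$ trivially. Thus the product map $\wtd L\times\wtd U\to\wtd{LU}$ is, up to the quasi-isometries, the map $(L\times\calZ')\times U\to (LU)\times\calZ'$ induced by (1), which is a quasi-isometry.

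For (2), I would first recall that $L_\Z\ltimes U_\Z$ has finite index in $(LU)_\Z$: this is a standard fact from reduction theory (a Levi $\Q$-decomposition restricts, up to finite index, to the integer points—cf.\ the arguments around \cref{LatticeIff}), and finite-index subgroups are quasi-isometric. Then, since $\hat\Gamma$ (hence $F_\Z$) is quasi-isometrically embedded in $F$ by \cref{LMR}, and since the same holds for the lattices $L_\Z$ in $L$ and $U_\Z$ in $U$ (apply \cref{LMR} to the $\Q$-group $L$—valid as $L$ is reductive with $\rank_\R\ge$ whatever, or note $U_\Z$ is cocompact in $U$ so trivially QI-embedded, and for $L$ use that $L_\Z$ is QI-embedded by the same theorem or by $L$-reductive reduction theory), the word-metric statement (2) reduces to the Riemannian statement (1): the square
comparing $\len_{F_\Z}$ on $L_\Z\times U_\Z$ and on $(LU)_\Z$ with $d_F$ on $L\times U$ and on $LU$ commutes up to bounded error. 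Finally (4) follows from (2) and (3) exactly as \cref{cor:QIlift} is deduced: $\Gamma\!_L$ is QI to $\calZ_L\times\hat\Gamma\!_L$ where $\calZ_L=\calZ\cap\wtd L$, similarly for $U$ and $LU$, with the central parts matching as in the proof of (3), and $\len_\Gamma$ on each lattice is QI to $d_G$ on its Zariski-closure-lift by \cref{cor:QIlift}; assembling these via (2)/(3) gives that $\Gamma\!_L\times\Gamma\!_U\to\Gamma\!_{LU}$ is a quasi-isometry. The one point requiring a line of care throughout is the ``$\len_\Gamma$ vs.\ $\len$ intrinsic-to-the-subgroup'' distinction flagged after \cref{def:USEGOD}: here all word-lengths on the $\Gamma$-side are measured in $\Gamma$, but because each $\Gamma\!_L,\Gamma\!_U,\Gamma\!_{LU}$ is itself QI-embedded in $\Gamma$ (consequence of \cref{cor:QIlift} applied to the relevant $\Q$-subgroups), the ambient and intrinsic word-lengths on these subgroups agree up to constants, so the statement as phrased is consistent.
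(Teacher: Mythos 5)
Your overall route is the same as the paper's: prove the Riemannian statement (1) directly by elementary norm/metric estimates, then pass to the word-metric statements (2)--(4) by appealing to the already-established quasi-isometric dictionary (\cref{LMR}, \cref{everyliftisQI}, \cref{simpconisbilip}, \cref{cor:QIlift}). Parts (2)--(4) of your plan track the paper's proof essentially line by line, and your observation that $\calZ\cap\wtd L = \calZ\cap\wtd{LU}$ (because $U$ meets $\wtd K$ trivially) is the right point to make in (3). One simplification worth noting in (2): since the lemma measures all three groups with the \emph{ambient} word-length $\len_{F_\Z}$, only \cref{LMR} applied to $F$ itself is needed; you never need to ask whether $L_\Z$ is quasi-isometrically embedded in $L$, which avoids the rank worry you raise.

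The one genuine slip is in your abandoned triangle-inequality argument for (1), and it is worth correcting because it completes the elementary proof you were reaching for. Your claim ``$d_F(gu,g)=d_F(u,\1)$ by right-invariance'' is false: right-invariance cancels the \emph{rightmost} factor, so $d_F(gu,g)=d_F(gug\inv,\1)$, which involves the $g$-conjugate of $u$, not $u$ itself. This erroneous identity is what made the argument look circular. The correct manipulation cancels $u$ rather than $g$: having first established $d_F(\1,g)\lesssim d_F(\1,gu)$ (via the quotient $LU\to L$, or equivalently a representation of $LU$ killing $U$, so that $\max\{\|g\|,\|g\inv\|\}\lesssim\max\{\|gu\|,\|(gu)\inv\|\}$), one writes
$$ d_F(\1,u)\;\le\; d_F(\1,gu)+d_F(gu,u)\;=\;d_F(\1,gu)+d_F(g,\1)\;\lesssim\; 2\,d_F(\1,gu), $$
using right-invariance in the form $d_F(gu,u)=d_F(g,\1)$. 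Combined with the trivial $d_F(\1,gu)\le d_F(\1,g)+d_F(\1,u)$, this gives $d_F(\1,gu)\asymp d_F(\1,g)+d_F(\1,u)$, which is exactly what (1) asserts. This is the paper's argument, phrased there as the comparability of $\log\|gu\|$ with $\max\{\log\|g\|,\log\|u\|\}$; no appeal to Siegel sets or to \cite{MR1828742} is needed for this step, and invoking them would be heavier machinery than necessary.
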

Indeed \eqref{QI33312222} follows using that the Riemannian distance from $g\in F$ to the identity is comparable to $\max\{\log \|g\|, \log \|g\inv\|\}$ and that $\log\|gu\|$ is comparable to $\max\{\log \|g\|, \log \|u\|\}.$   \eqref{QI3331} then follows   from \cref{LMR}.
For \eqref{QI3331555},  we have $\wtd {LU} =\wtd L U$ (identifying $U\subset F$ with its connected preimage in $G$). By \cref{everyliftisQI}  we have $\wtd LU$ is quasi-isometric to $(\calZ\cap( \wtd LU))\times ( LU, d_F)$ and  $( \wtd {L}, d_G)$ is quasi-isometric to $(\calZ\cap( \wtd L))\times ( {L}, d_F)$.
 The result follows from \eqref{QI3331} and \cref{simpconisbilip}.        \eqref{QI4441} then follows from  \cref{cor:QIlift}.

We have the following special case of the above setup.
\begin{lemma} \label{GammaCap}
Let $P$ be a parabolic $\Q$-subgroup of $F$ with rational Langlands decomposition $P = (L \times S^\circ) \ltimes U.$ Then
\begin{enumerate}
\item $S^\circ_\Z$ is trivial whence $P_\Z= (LU)_\Z$;
\item \label{QI333} the map $L_\Z\times U_\Z\to P_\Z$, $ (g,u) \mapsto gu$, is quasi-isometric relative to $\len_{F_\Z}$.
\item \label{QI444} the map $\Gamma\!_L\times \Gamma\!_U\to \Gamma\!_P$, $ (g,u) \mapsto gu$, is quasi-isometric relative to $\len_{\Gamma}$.
\end{enumerate}
\end{lemma}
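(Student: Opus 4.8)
The plan is to deduce \cref{GammaCap} directly from \cref{GammaCap1}, applied with its ``$L$'' and ``$U$'' taken to be the reductive $\Q$-anisotropic-centered factor $L$ of the Levi $L\times S^\circ$ and the unipotent radical $U=\Rad_u(P)$. First I would record that this pair satisfies the hypotheses of \cref{GammaCap1}: $L$ is a connected reductive $\Q$-subgroup of $F$, $U$ is a connected unipotent $\Q$-subgroup normalized by $L$, and $L\cap U=\{\1\}$ since $L$ lies in a Levi complement to $U$ in $P$. Granting the two identifications $P_\Z=(LU)_\Z$ (which is part~(1)) and $\Gamma\!_P=\Gamma\!_{LU}$, parts~(2) and~(3) are then literally \cref{GammaCap1}\pref{QI3331} and \cref{GammaCap1}\pref{QI4441}. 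So the only real content is part~(1) together with the identification of $\Gamma\!_P$ with $\Gamma\!_{LU}$.

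For part~(1) I would invoke reduction theory. Because $L$ has $\Q$-anisotropic center and $S^\circ$ is the identity component of the real points of the maximal $\Q$-split central torus of the Levi, the algebraic subgroup $\bfL\bfU\subseteq\bfP$ is precisely the kernel ${}^0\bfP$ of the $\Q$-rational characters of $\bfP$. Every such character takes unit values on the arithmetic group $P_\Z=\bfP(\Z)$ — its value at $\gamma$ and at $\gamma\inv$ are mutually inverse integers — hence is trivial there; therefore $P_\Z\subseteq(\bfL\bfU)(\R)$, which gives $P_\Z=(LU)_\Z$, and in particular $S^\circ_\Z=P_\Z\cap S^\circ\subseteq LU\cap S^\circ=\{\1\}$ because $LU\cap S^\circ$ is a finite subgroup of the torsion-free group $S^\circ$. (Equivalently, the $\Q$-morphism $P=(LU)\rtimes S^\circ\to S^\circ$ with kernel $LU$ carries $P_\Z$ onto an arithmetic — hence finite, hence trivial — subgroup of $S^\circ$; one could also extract the triviality of $S^\circ_\Z$ from \cref{LatticeIff}.)

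For the identification $\Gamma\!_P=\Gamma\!_{LU}$ needed in part~(3), I would argue as follows. Since $S^\circ$ is a $\Q$-split torus it is simply connected, so its lift $\wtd{S^\circ}=(\phi\inv(S^\circ))^\circ$ maps isomorphically onto $S^\circ$ under $\phi$, and comparison of Lie algebras yields $\wtd P=\wtd{LU}\cdot\wtd{S^\circ}$. Running the character argument of part~(1) with $\hat\Gamma$ (commensurable with $F_\Z$) in place of $F_\Z$ shows $\hat\Gamma\cap P^\circ\subseteq LU$, so $\phi(\Gamma\!_P)\subseteq LU$. Hence for $\gamma\in\Gamma\!_P=\Gamma\cap\wtd P$, writing $\gamma=\ell s$ with $\ell\in\wtd{LU}$ and $s\in\wtd{S^\circ}$, we find $\phi(s)\in LU\cap S^\circ=\{\1\}$, so $s=\1$ (as $\phi$ is injective on $\wtd{S^\circ}$) and $\gamma=\ell\in\Gamma\cap\wtd{LU}=\Gamma\!_{LU}$; thus $\Gamma\!_P=\Gamma\!_{LU}$, and part~(3) follows from \cref{GammaCap1}\pref{QI4441}. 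The step I expect to need the most care is part~(1): invoking the standard reduction-theory fact that the $\Q$-rational characters of a $\Q$-parabolic are trivial on its arithmetic subgroup, and identifying their common kernel with $LU$. Everything else is either a direct citation of \cref{GammaCap1} or the short covering-space bookkeeping above, for which the conventions of \cref{sec:QonG} and the estimates of \cref{BiLipSec} have already done the work.
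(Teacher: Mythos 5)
Your proof is correct and matches the paper's (implicit) approach: the paper states \cref{GammaCap} as "a special case" of \cref{GammaCap1} with no further argument, and what you have supplied is exactly the missing bookkeeping — the standard reduction-theory fact that $\Q$-rational characters of $\bfP$ are trivial on arithmetic subgroups (giving $P_\Z = (LU)_\Z$ and, via the covering-space argument using that $S^\circ\cong\R^\ell$ is simply connected and torsion-free, $\Gamma\!_P = \Gamma\!_{LU}$) — after which parts~(2) and~(3) are literal citations of \cref{GammaCap1}\pref{QI3331} and \cref{GammaCap1}\pref{QI4441}.
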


\begin{lemma}[{\cite[(3.14)]{MR1828742}}] \label{GenByMaxParab}
Let $\Lambda$ be a unipotent subgroup of ~$F_\Z$. Then there are finitely many maximal parabolic $\Q$-subgroups $Q_1,Q_2,\ldots,Q_r$ of~$G$ such that if we let $U_i = \Rad_u Q_i$, then $\hat \Gamma\!_{U_1}, \ldots, \hat \Gamma\!_{U_r}$ $\len_{\hat \Gamma}$-quasi-isometrically boundedly generate a subgroup of ~$\hat \Gamma$ that contains a finite-index subgroup of~$\Lambda$.
\end{lemma}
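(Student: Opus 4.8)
The plan is to reduce the statement to the structure theory of unipotent $\Q$-subgroups together with \cref{LMR}; the hypotheses $\rank_\R(F)\ge2$ and irreducibility of $\hat\Gamma$ will enter only through \cref{LMR}, which I would use to replace the intrinsic word-metric $\len_{\hat\Gamma}$ on $\hat\Gamma$ by the restriction of $d_F$, equivalently (in a fixed faithful $\Q$-representation of $F$) by $\gamma\mapsto\max\{\log\|\gamma\|,\log\|\gamma\inv\|\}$, up to multiplicative and additive constants. First I would place $\Lambda$ inside a maximal unipotent $\Q$-subgroup: its Zariski closure is a unipotent algebraic $\Q$-subgroup of $\bfF$, so by the Borel--Tits theory of $\Q$-parabolic subgroups it lies in the unipotent radical of some parabolic $\Q$-subgroup of $F$; replacing that parabolic by a minimal parabolic $\Q$-subgroup $P_0$ contained in it (which only enlarges the unipotent radical) yields $\Lambda\subseteq N:=\Rad_u P_0$, and hence $\Lambda\subseteq N\cap F_\Z=N_\Z$. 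Let $\alpha_1,\dots,\alpha_k$ be the simple $\Q$-roots determined by $P_0$, and for each $i$ let $Q_i$ be the maximal parabolic $\Q$-subgroup obtained by omitting $\alpha_i$, with $U_i=\Rad_u Q_i$; these are the groups $Q_1,\dots,Q_r$ of the statement, with $r=k$.

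Next I would record a product decomposition of $N$ adapted to the $U_i$. Ordering the positive coarse $\Q$-roots $\beta_1,\dots,\beta_d$ by height, the multiplication map $U^{[\beta_1]}\times\cdots\times U^{[\beta_d]}\to N$ is a $\Q$-isomorphism of varieties whose inverse is again a $\Q$-polynomial map (a standard consequence of the facts in \cref{BasicUnip} about unipotent groups). Each coarse root group $U^{[\beta_j]}$ is contained in $U_i$ for every index $i$ for which $\alpha_i$ occurs with positive coefficient in $\beta_j$; since every positive root involves some simple root, this gives $N=\langle U_1,\dots,U_k\rangle$ and, for each $j$, an index $i_j$ with $U^{[\beta_j]}\subseteq U_{i_j}$. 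The group $\hat\Gamma\cap U^{[\beta_j]}$ is a cocompact lattice in $U^{[\beta_j]}$ contained in $\hat\Gamma_{U_{i_j}}=\hat\Gamma\cap U_{i_j}$, and I would fix once and for all a precompact fundamental domain $D_j\subset U^{[\beta_j]}$ for it.

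The crux is to decompose an arbitrary $\gamma\in N_\Z$ through these coordinates with uniform linear control on the sizes of all factors, peeling off one coarse-root coordinate at a time. Suppose inductively $\gamma=\delta_1\cdots\delta_{j-1}\gamma_{j-1}$ with $\delta_\ell\in\hat\Gamma\cap U^{[\beta_\ell]}$, $\gamma_{j-1}\in N_\Z$, and $\log\|\gamma_{j-1}\|$ bounded linearly in $\log\|\gamma\|$. Writing $\gamma_{j-1}=u\gamma'$ with $u\in U^{[\beta_j]}$ and $\gamma'\in U^{[\beta_{j+1}]}\cdots U^{[\beta_d]}$ as in the previous paragraph, the coordinate $u$ is a polynomial function of $\gamma_{j-1}$, so $\log\|u\|$ is again linearly bounded in $\log\|\gamma\|$; choosing $\delta_j\in\hat\Gamma\cap U^{[\beta_j]}$ with $\delta_j\inv u\in D_j$ keeps $\log\|\delta_j\|$ linearly bounded, and $\gamma_j:=\delta_j\inv\gamma_{j-1}\in N_\Z$ inherits the same kind of bound. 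After $d$ steps, $\gamma_d$ has all coarse-root coordinates in the fixed compact set $\overline{D_1}\times\cdots\times\overline{D_d}$ while lying in the discrete group $N_\Z$, hence in a fixed finite set $E\subset N_\Z$. Thus $\gamma=\delta_1\cdots\delta_d\,e$ with $e\in E$, each $\delta_j\in\hat\Gamma_{U_{i_j}}$, at most $d+1$ factors (with $d$ the number of positive coarse $\Q$-roots of $F$), and $\log\|\delta_j\|\le C(1+\log\|\gamma\|)$ for a uniform $C$.

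Finally I would translate back and assemble the conclusion. Because $d_F(g,\1)$ is comparable to $\max\{\log\|g\|,\log\|g\inv\|\}$ and $\log\|g\inv\|\le C\log\|g\|+C$ for unipotent $g$, \cref{LMR} converts the estimates above into $\len_{\hat\Gamma}(\delta_j)\le A\,\len_{\hat\Gamma}(\gamma)+A$ for a uniform $A$, the finitely many values of $e$ being absorbed into the finite subset allowed in the definition of quasi-isometric bounded generation (\cref{QIBddGenDefnF}). Setting $H=\langle\hat\Gamma_{U_1},\dots,\hat\Gamma_{U_k}\rangle\subseteq N_\Z$, the previous paragraph applies to every element of $H$ (indeed of $N_\Z$), and since it also shows $\Lambda\subseteq N_\Z\subseteq H\cdot E$ with $E$ finite, a standard coset-counting argument gives that $\Lambda\cap H$ has finite index in $\Lambda$; this is the assertion of the lemma. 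I expect the main obstacle to be the third step — keeping the word-lengths (equivalently $\log$-norms) of every factor linearly bounded while peeling off the coarse-root coordinates — which is exactly where both the polynomiality of the coordinate maps on a unipotent $\Q$-group and the quantitative metric comparison of \cref{LMR} (and hence the rank-$\ge2$ hypothesis) are used.
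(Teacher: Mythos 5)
Your strategy agrees with the paper's through the first two reductions: take the Zariski closure of $\Lambda$, put it inside the unipotent radical $N$ of a minimal parabolic $\Q$-subgroup $P_0$ via Borel--Tits, and take $Q_i$ to be the standard maximal parabolics over $P_0$. Where you diverge is the quantitative step. The paper uses only the \emph{simple}-root coarse root groups $U^{[\alpha]}$, $\alpha\in\Delta$, cites Raghunathan to get that the $\hat\Gamma\!_{U^{[\alpha]}}$ generate a finite-index subgroup of $\hat\Gamma\!_N$, and then invokes ``induction on derived length'' as a general fact about nilpotent groups to upgrade this to quasi-isometric bounded generation. You instead work with \emph{all} positive coarse root groups $U^{[\beta_j]}$, observing that each one already sits inside some $U_i$, and give an explicit peeling argument through the polynomial coordinate chart $U^{[\beta_1]}\times\cdots\times U^{[\beta_d]}\to N$. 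This sidesteps the need to manufacture higher root-group elements as bounded products of commutators of simple-root ones, and yields the uniform bound (at most $d+1$ factors) directly. The translation back via \cref{LMR} and the coset-counting at the end are both fine.

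There is, however, a genuine bookkeeping gap in the inductive step. Your hypothesis only records $\gamma_{j-1}\in N_\Z$ with a controlled log-norm, but then you decompose $\gamma_{j-1}=u\gamma'$ with $u\in U^{[\beta_j]}$ and $\gamma'\in U^{[\beta_{j+1}]}\cdots U^{[\beta_d]}$. That decomposition requires $\gamma_{j-1}\in U^{[\beta_j]}\cdots U^{[\beta_d]}$, and this fails already after the first step: $\gamma_1=\delta_1^{-1}\gamma$ has $U^{[\beta_1]}$-coordinate $\delta_1^{-1}u_1\in D_1$, which is bounded but not trivial, so $\gamma_1\notin U^{[\beta_2]}\cdots U^{[\beta_d]}$. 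The fix is to strengthen the inductive hypothesis to: \emph{the first $j-1$ coordinates of $\gamma_{j-1}$ lie in the fixed compact sets $D_1,\dots,D_{j-1}$}. You then let $u_j$ be the $j$-th coordinate of $\gamma_{j-1}$ (still a polynomial in $\gamma_{j-1}$, hence log-norm controlled), set $\delta_j$ as before, and verify the hypothesis propagates: ordering the $\beta_j$ by \emph{increasing} height makes the tail $U^{[\beta_{j+1}]}\cdots U^{[\beta_d]}$ normal in $N$, so when you push $\delta_j^{-1}$ past the bounded elements $w_1\cdots w_{j-1}\in D_1\cdots D_{j-1}$, the commutator corrections land in that tail and leave the first $j$ coordinates of $\gamma_j$ in $D_1,\dots,D_j$. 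With this repair the argument closes correctly, and your ``After $d$ steps, $\gamma_d$ has all coarse-root coordinates in $\overline{D_1}\times\cdots\times\overline{D_d}$'' becomes exactly the conclusion of the induction rather than an unstated leap.
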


\begin{proof}
The set of unipotent elements of $\Sl(q,\R)$ is Zariski closed, so the Zariski closure~$\overline{\Lambda}$ of~$\Lambda$ is a unipotent $\Q$-subgroup of~$F$. Therefore $\overline{\Lambda}$~is contained in the unipotent radical~$U$ of some minimal parabolic $\Q$-subgroup~$P$ \cite[Prop.~3.1]{MR0294349}.
Choose a maximal $\Q$-split torus~$S$ in~$P$, let $\Delta$ be the set of simple roots in $\Phi(S,F)$, with respect to the ordering in which roots occurring in the Lie algebra of~$U$ are positive, and let $U^{[\alpha]}$ be the (coarse) root group corresponding to the root~$\alpha$. Then $U = \bigl\langle\, U^{[\alpha]} \mid \alpha \in \Delta \,\bigr\rangle$. Since $U$ is nilpotent (and simply connected), this implies that $\bigl\langle\, \hat \Gamma\!_{U^{[\alpha]}} \mid \alpha \in \Delta \,\bigr\rangle$ generates a finite-index subgroup of~$\hat \Gamma\!_U$ by, for example, \cite[Prop.~2.5]{MR0507234}. Furthermore, for each $\alpha \in \Delta$, the subgroup $U^{[\alpha]}$ is contained in the unipotent radical of the maximal parabolic subgroup~$P_{\Delta_\alpha}$, where $\Delta_\alpha = \Delta \sm \{\alpha\}$.
By induction on the derived length of~$U$, it is easy to see that if a collection of subgroups generates a finite-index subgroup of the nilpotent group~$U$, then they $\len_{\hat \Gamma}$-quasi-isometrically boundedly generate a finite-index subgroup of~$U$.
\end{proof}

\begin{corollary}
Let $U\subset F$ be a unipotent $\Q$-subgroup.
Then there are finitely many maximal parabolic $\Q$-subgroups $Q_1,Q_2,\ldots,Q_r$ of~$G$ such that if we let $U_i = \Rad_u Q_i$, then $\hat \Gamma\!_{U_1}, \ldots, \hat \Gamma\!_{U_r}$ $\len_{\hat \Gamma}$-quasi-isometrically boundedly generate a subgroup of ~$\hat \Gamma$ that contains a finite-index subgroup of~$\hat \Gamma\!_U$.
\end{corollary}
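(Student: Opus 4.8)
The plan is to obtain this as nothing more than the special case $\Lambda = \hat \Gamma\!_U$ of \cref{GenByMaxParab}. Recall from the standing hypotheses in \cref{hyp2} that $\hat \Gamma$ is a finite-index subgroup of $F_\Z$. Since $U$ is a unipotent $\Q$-subgroup of $F$, every element of $\hat \Gamma\!_U := \hat \Gamma \cap U$ is a unipotent element of $F_\Z$, so $\hat \Gamma\!_U$ is a unipotent subgroup of $F_\Z$ of exactly the type to which \cref{GenByMaxParab} applies.

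Thus the first (and essentially only) step is to invoke \cref{GenByMaxParab} with $\Lambda := \hat \Gamma\!_U$. This produces finitely many maximal parabolic $\Q$-subgroups $Q_1,\dots,Q_r$ of $G$ with unipotent radicals $U_i = \Rad_u Q_i$ such that $\hat \Gamma\!_{U_1},\dots,\hat \Gamma\!_{U_r}$ $\len_{\hat \Gamma}$-quasi-isometrically boundedly generate a subgroup of $\hat \Gamma$ containing a finite-index subgroup of $\Lambda = \hat \Gamma\!_U$. That is precisely the assertion of the corollary, so there is nothing further to do.

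I do not anticipate any genuine obstacle: the entire content is carried by \cref{GenByMaxParab} (equivalently by \cite[(3.14)]{MR1828742}), and the corollary merely records the instance $\Lambda = \hat \Gamma\!_U$. The only point one might pause on is whether passing from $F_\Z$ to the finite-index subgroup $\hat \Gamma$ costs anything, and it does not: $\hat \Gamma\!_U$ has finite index in $(F_\Z)_U$, and the word metrics $\len_{\hat \Gamma}$ and $\len_{F_\Z}$ are quasi-isometric because $\hat \Gamma$ and $F_\Z$ are commensurable, so the quasi-isometrically-bounded-generation statement transfers verbatim. Alternatively, one may simply rerun the proof of \cref{GenByMaxParab} with $\hat \Gamma$ in place of $F_\Z$, using that the Zariski closure of $\hat \Gamma\!_U$ is the unipotent $\Q$-group $U$ (a lattice in a simply connected unipotent group being Zariski dense) and hence lies in the unipotent radical of some minimal parabolic $\Q$-subgroup of $F$.
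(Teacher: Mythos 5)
Your proposal is correct and is exactly the intended (and essentially trivial) derivation: since $\hat\Gamma\subset F_\Z$ is a finite-index subgroup, $\hat\Gamma\!_U=\hat\Gamma\cap U$ is a unipotent subgroup of $F_\Z$, and the corollary is literally the case $\Lambda=\hat\Gamma\!_U$ of \cref{GenByMaxParab}. The paper gives no proof precisely because of this; the extra discussion of commensurability in your last paragraph is unnecessary given $\hat\Gamma\subset F_\Z$, but harmless.
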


\subsection{Consequences of standing hypotheses: quasi-isometric bounded generation}
Let $\phi\colon G\to F$ and $\Gamma\to \hat \Gamma$ be as in the standing hypotheses in \cref{hyp2}. In particular, $F$ is a $\Q$-simple algebraic $\Q$-group, $\hat \Gamma$ is commensurable with $F_\Z$, and $\Gamma\to \hat \Gamma$.
Given an algebraic $\Q$-group $\bfL\subset \bfF$ recall we let $\wtd L:= (\phi\inv(L))^\circ$ denote the connected subgroup in the lift of $L=L(\R)^\circ$ to $G$. We then write $\Gamma\!_L= \Gamma\cap \wtd L$.

We summarize the main definitions from \cite{MR4074402}.
\begin{definition}[{\cite[Definition 1.1]{MR4074402}}] \label{QIBddGenDefnF}
\qibdddef
\end{definition}

Returning to the lattice subgroup $\Gamma$ in $G$, we may lift each $L\in \mathcal{L}(F,\hat \Gamma)$ to $G$ and ask that $\Gamma$ be efficiently generated by elements from $\Gamma\!_L$.
\begin{definition}[{\cite[Definition 6.1]{MR4074402}}] \label{QIBddGenDefn}
We say that $\Gamma$ is \emph{quasi-isometrically boundedly generated by standard $\Q$-rank-1 subgroups} if there are constants $r = r(G,\Gamma) \in \N$ and $C = C(G,\Gamma) >1$, a finite subset $\Gamma\!_0 = \Gamma\!_0(G,\Gamma)$ of~$\Gamma$, and a finite collection $\mathcal{L} = \mathcal{L}(G,\Gamma)$ of $\Q$-subgroups of $G$ such that
	
	\begin{enumerate}
	\item each $L \in \mathcal{L}$ is a standard $\Q$-rank-1 subgroup of~$G$,
	\item \label{QIGenDefns-o-product}
	 every element~$\gamma$ of~$\Gamma$ can be written in the form $\gamma = s_1 s_2 \cdots s_r$ where either
	\begin{enumerate}
	\item $s_i \in \Gamma\!_0$,
	or
	\item $s_i \in \Gamma\!_L$ for some $L \in \mathcal{L}$ and $\len_\Gamma( s_i) \le C \len_\Gamma(\gamma)$.
	\end{enumerate}
	\end{enumerate}
\end{definition}

The main result of \cite{MR4074402} guarantees quasi-isometrically bounded generation by $\Q$-rank-1 subgroups for all lattice subgroups under consideration.
\begin{theorem} [{\cite[Theorem 1.2]{MR4074402}}]\label{QIBddGenByRank1}
\laterdef\Davethm{Every arithmetic subgroup $\hat \Gamma$ of a $\Q$-isotropic, almost $\Q$-simple $\Q$-group $F$ is quasi-isometrically boundedly generated by standard $\Q$-rank-1 subgroups.}
\Davethm
\end{theorem}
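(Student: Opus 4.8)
The plan is to follow the strategy that Lubotzky--Mozes--Raghunathan used for $\Sl(n,\Z)$ in \cite{MR1244421}, combining reduction theory with an induction on the $\Q$-rank. First I would reduce to the case $\hat\Gamma=\bfF(\Z)=:F_\Z$: quasi-isometric bounded generation by standard $\Q$-rank-$1$ subgroups passes between commensurable arithmetic subgroups, since $\hat\Gamma'\cap L$ has finite index in $\hat\Gamma\cap L$ for each standard $\Q$-rank-$1$ subgroup $L$ and the finitely many coset representatives can be swept into the exceptional finite set. The case $\rank_\Q\bfF\le 1$ is then treated separately and is comparatively soft --- the minimal parabolic $\Q$-subgroup is the only proper one, so the relevant geometry is that of a single family of cusps --- so I would assume $\rank_\Q\bfF\ge 2$; in particular $\rank_\R\bfF\ge 2$, so \cref{LMR} applies and shows that $F_\Z\hookrightarrow F$ is a quasi-isometric embedding. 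Consequently $\len_{F_\Z}(\gamma)$ is comparable to $\log\max(\|\gamma\|,\|\gamma\inv\|)$, and it suffices to produce a constant $r$, a finite subset of $F_\Z$, and finitely many standard $\Q$-rank-$1$ subgroups $L$ so that every $\gamma\in F_\Z$ is a product of $r$ factors, each either in the finite set or in some $\hat\Gamma\!_L$ of matrix norm at most $\|\gamma\|^{r}$.

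Next I would invoke reduction theory. Fix a minimal parabolic $\Q$-subgroup $\bfP_0=(\bfL_0\times\bfS)\ltimes\bfU_0$ with $\bfS$ a maximal $\Q$-split torus, put $A=\bfS(\R)^\circ$, and let $\Delta_\Q\subset\Phi(A,F)$ be the associated simple $\Q$-roots. There is a Siegel set $\mathfrak S=\Omega A_t$ with $F=\mathfrak S\,F_\Z$, and the complement in $F/F_\Z$ of a fixed compact ``thick part'' breaks into finitely many cusp sectors, one attached to each proper standard $\Q$-parabolic $\bfP_I$ ($\emptyset\neq I\subsetneq\Delta_\Q$); the elements of $F_\Z$ that arise as ``return maps'' of a controlled-length path traversing the sector attached to $\bfP_I$ lie in the integer points $\hat\Gamma\!_{\bfP_I}$, which by \cref{GammaCap} and \cref{GammaCap1} coincide --- up to finite index and quasi-isometry --- with $\hat\Gamma\!_{\bfL_I}\times\hat\Gamma\!_{\bfU_I}$. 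The plan then has two halves: (i) express an arbitrary $\gamma\in F_\Z$ as a product of \emph{boundedly many} elements, each in the integer points of a proper parabolic $\Q$-subgroup and of norm bounded by a fixed power of $\|\gamma\|$; and (ii) reduce each such parabolic-integer-point factor to standard $\Q$-rank-$1$ subgroups.

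For half (ii), the unipotent part is essentially already in place: \cref{GenByMaxParab} and the argument in its proof show that $\hat\Gamma\!_{\bfU_I}$ is $\len_{F_\Z}$-quasi-isometrically boundedly generated by the integer points of the coarse-root groups $U^{[\alpha]}$ occurring in $\bfU_I$, and for each $\Q$-root $\alpha$ one has $\hat\Gamma\!_{U^{[\alpha]}}\subseteq\hat\Gamma\!_{H_\alpha}$ with $H_\alpha$ a standard $\Q$-rank-$1$ subgroup (\cref{StandardSubgrpDefn}). The Levi $\bfL_I$ decomposes as an almost-direct product of a central torus (contributing only bounded-size integer elements), its $\Q$-isotropic almost-simple factors --- each of $\Q$-rank strictly smaller than $\rank_\Q\bfF$, hence covered by the inductive hypothesis --- and its $\Q$-anisotropic almost-simple factors $\bfM$. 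The last are the delicate case: $\bfM$ has no $\Q$-root subgroups, yet since $\bfF$ is almost $\Q$-simple of $\Q$-rank $\ge 2$ the subgroups $H_\alpha$ ($\alpha\in\Delta_\Q$) already generate $\bfF$, so every element of $\hat\Gamma\!_{\bfM}$ \emph{is} a product of elements of the various $\hat\Gamma\!_{H_\alpha}$; one must carry this out with a bounded number of factors and polynomial size control by ``leaving'' $\bfM$ into the ambient group, running a bounded reduction there, and returning, exploiting the interaction between a Levi and its unipotent radical recorded in \cref{LNoCent} and \cref{ExpandersGenerate}.

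The main obstacle is the common arithmetic core of halves (i) and (ii): a general element of $F_\Z$ must be written as a product of an \emph{absolutely bounded} number of pieces, each drawn from a standard $\Q$-rank-$1$ subgroup or from the integer points of a proper parabolic, with sizes polynomial in $\|\gamma\|$. This is the analogue for a general $\bfF$ of the Carter--Keller/Tavgen-type bounded-generation theorems underlying \cite{MR1244421}, and it genuinely requires $\rank_\Q\bfF\ge 2$: it is the point at which connectedness of the rational Tits building (equivalently, one-endedness of $F/F_\Z$) is used to ensure that a reduction procedure can pass between cusps in boundedly many steps. The difficulty is that one needs \emph{both} a bounded number of factors \emph{and} polynomial control of their sizes simultaneously --- achieving the first alone is abstract bounded generation, while dropping it recovers only the unbounded-step reduction behind the quasi-isometric embedding of \cite{MR1828742} --- and reconciling the two is exactly the work carried out in \cite{MR4074402}.
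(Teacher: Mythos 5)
The theorem is cited, not proved, in the paper under review: the authors state it as the main result of Witte Morris's paper \cite{MR4074402}, and the acknowledgements record that it was proved by Witte Morris in response to a question from the authors. So there is no in-paper proof to match the proposal against. On its own terms, your write-up is an outline rather than a proof. You correctly lay out the surrounding machinery --- reduction to $F_\Z$, the quasi-isometric embedding of $F_\Z$ into $F$ via \cref{LMR}, Siegel sets and the cusp structure of $F/F_\Z$ indexed by proper standard parabolic $\Q$-subgroups, the splitting of $\hat\Gamma_{\bfP_I}$ into Levi and unipotent pieces via \cref{GammaCap}, the coarse-root description of unipotent radicals behind \cref{GenByMaxParab}, and an induction on $\Q$-rank for $\Q$-isotropic Levi factors. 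But you then concede that the essential step --- producing an \emph{absolutely bounded} number of factors each of matrix norm polynomial in $\|\gamma\|$, which is where connectedness of the rational Tits building must enter --- is ``exactly the work carried out in \cite{MR4074402}.'' That sentence is not an argument; it is the statement to be proved. The gap you have named is the theorem itself, so what you have written is a (well-informed) explanation of why the theorem is hard, not a proof of it.

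Two secondary points. First, you assert that the central torus of a Levi factor $\bfL_I$ contributes ``only bounded-size integer elements.'' The center of $\bfL_I$ is a $\Q$-anisotropic torus, and a $\Q$-anisotropic torus that is not $\R$-anisotropic has \emph{infinite} integer points (the norm-one torus of a real quadratic field has $\Z$-points generated by a Dirichlet unit). So these elements cannot be swept into a finite exceptional set; they must be handled by whatever mechanism treats $\Q$-anisotropic factors. Second, when $\rank_\Q(\bfF)=1$ the statement is trivial rather than merely ``comparatively soft'': by \cref{Standard=Levi} applied with $Q=F$, the group $F$ is itself a standard $\Q$-rank-$1$ subgroup, so one takes $r=1$ and $\mathcal{L}=\{F\}$ and every $\gamma$ is already of the required form.
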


As discussed in \cite[\S 6]{MR4074402}, the efficient bounded generation property above lifts to lattices subgroups of Lie groups with infinite center such as $\Gamma$ in $G$ satisfying the hypotheses of \cref{hyp2}.
\begin{theorem} [{\cite[Corollary 6.2]{MR4074402}}] \label{QIBddGenByRank1lift}
Let $\Gamma$ be a non-uniform, irreducible lattice in a connected semisimple Lie group $G$ without compact factors. Then $\Gamma$ is quasi-isometrically boundedly generated by standard $\Q$-rank-1 subgroups.
\end{theorem}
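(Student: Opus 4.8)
The plan is to derive \cref{QIBddGenByRank1lift} from its algebraic counterpart \cref{QIBddGenByRank1} by transferring the bounded-generation factorization along the central covering $\phi\colon G\to F$ furnished by the standing hypotheses. First I would reduce to that setup: by the reductions in \cref{hyp1} and \cref{hyp2} we may assume $\Gamma$ is irreducible, $G$ is simply connected without compact factors, and there is a surjective Lie group morphism $\phi\colon G\to F=\bfF(\R)$ with $\ker\phi\subseteq\calZ$, where $\bfF$ is an almost $\Q$-simple, $\Q$-isotropic $\Q$-group and $\hat\Gamma:=\phi(\Gamma)$ is commensurable with $F_\Z$. (If $\rank_\R(G)=1$ the statement is vacuous, since then $G$ is the unique standard $\Q$-rank-1 subgroup and $\Gamma\!_G=\Gamma$.)

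Next I would apply \cref{QIBddGenByRank1} to $\hat\Gamma\subseteq F$ to obtain $\hat r\in\N$, a finite set $\hat\Gamma\!_0\subseteq\hat\Gamma$, and a finite family $\mathcal L$ of standard $\Q$-rank-1 $\Q$-subgroups of $F$ realizing the factorization of \cref{QIBddGenDefnF}; by \cref{LMR} the length bound there may be rewritten in the form $\len_{\hat\Gamma}(s_i)\le C_0\len_{\hat\Gamma}(\gamma)+C_0$. Each $L\in\mathcal L$ lifts to the standard $\Q$-rank-1 subgroup $\wtd L=(\phi\inv L)^\circ$ of $G$, and (as recorded in \cref{sec:QonG}, using \cref{everyliftisQI} and \cref{cor:QIlift}) the map $\Gamma\!_L=\Gamma\cap\wtd L\to\hat\Gamma\!_L$ has central kernel and finite-index image, with the two word metrics boundedly distorted on these groups. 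Then, for $\gamma\in\Gamma$, I would factor $\phi(\gamma)=s_1\cdots s_{\hat r}$, lift each $\hat\Gamma\!_0$-letter through a fixed finite set of preimages and each $\hat\Gamma\!_L$-letter to a shortest preimage $\tilde s_i\in\Gamma\!_L$, so that $\len_\Gamma(\tilde s_i)\le C_1\len_\Gamma(\gamma)+C_1$ by \cref{cor:QIlift}; writing $\tilde s_1\cdots\tilde s_{\hat r}=\gamma z$ then gives $z\in\ker\phi\subseteq\calZ$, and the triangle inequality together with \cref{centerisQI} gives $\len_\Gamma(z)\le C_2\len_\Gamma(\gamma)+C_2$.

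The remaining task is to absorb the central error $z$, and this is where the only essential departure from a formal transfer along $\phi$ lies. If $\calZ$ is finite one simply enlarges $\Gamma\!_0$ to contain it. In general, the key input is that there is a finite collection $H_1,\dots,H_m$ of standard $\Q$-rank-1 subgroups of $G$ such that $\calZ\cap\wtd{H_1},\dots,\calZ\cap\wtd{H_m}$ generate a finite-index subgroup of $\calZ$; granting this, since $\calZ$ is finitely generated abelian and quasi-isometrically embedded in $\Gamma$ by \cref{centerisQI}, it follows that $\calZ$ is quasi-isometrically boundedly generated by the subgroups $\Gamma\!_{H_i}$ together with a finite set of coset representatives. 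Feeding $z\inv$ through that factorization and concatenating with $\tilde s_1\cdots\tilde s_{\hat r}$ exhibits $\gamma$ as a product of boundedly many elements, each lying in a finite set $\Gamma\!_0$ or in some $\Gamma\!_L$ with $L\in\mathcal L\cup\{H_1,\dots,H_m\}$, of word length $\le C\len_\Gamma(\gamma)+C$; since $\len_\Gamma(\gamma)\ge 1$ for $\gamma\ne\1$ the additive constant can be absorbed into $C$, which is exactly \cref{QIBddGenDefn}.

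I expect the hard part to be the existence of $H_1,\dots,H_m$: one must match the (possibly infinite) center of $G$---which comes from the fundamental groups of the noncompact simple real factors of $F$---against the algebraically defined $\Q$-rank-1 subgroups. The mechanism, carried out in \cite[\S 6]{MR4074402}, is that $\bfF$ being $\Q$-simple makes the $*$-action of the Galois group transitive on the simple factors of the complexification of $\bfF$, so that a suitable $\Q$-root $\alpha$ (or a few of them) has $\lieg^{[\alpha]}$ meeting every real factor; the corresponding $H_\alpha$ then contains root $\Sl(2,\R)$-subgroups of those factors realizing finite-index subgroups of the relevant fundamental groups, and these suffice to cover $\calZ$ up to finite index. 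Everything else in the argument is the quasi-isometry package of \cref{secQI} applied to the central extension $\phi\colon G\to F$.
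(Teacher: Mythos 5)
The paper does not prove this statement at all: it is quoted verbatim from \cite[Corollary 6.2]{MR4074402}, and the only in-paper commentary is the sentence preceding it, which says the lifting from \cref{QIBddGenByRank1} to groups with infinite center "is discussed in \cite[\S 6]{MR4074402}." So there is no internal proof to compare yours against; what you have written is a reconstruction of the cited argument. As such a reconstruction, your overall route is the right one and the bookkeeping is sound: the reduction to the standing hypotheses of \cref{hyp2}, the application of \cref{QIBddGenByRank1} to $\hat\Gamma$, the observation that $\phi(\Gamma\!_L)=\hat\Gamma\!_L$ (because $\ker\phi\subseteq\calZ\subseteq\Gamma$, so any $\wtd L$-preimage of an element of $\hat\Gamma\!_L$ already lies in $\Gamma\!_L$), the control of the lifts' lengths via \cref{everyliftisQI} and \cref{cor:QIlift}, and the resulting central defect $z$ with $\len_\Gamma(z)\le C\len_\Gamma(\gamma)+C$ are all correct.

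The genuine gap is exactly the step you flag as "the hard part": the existence of finitely many standard $\Q$-rank-1 subgroups $H_1,\dots,H_m$ with $\langle\,\calZ\cap\wtd H_1,\dots,\calZ\cap\wtd H_m\,\rangle$ of finite index in $\calZ$, equivalently (since $\ker\phi$ has finite index in $\calZ$ and $\ker\phi\cap\wtd H_\alpha$ is the image of $\pi_1(H_\alpha)\to\pi_1(F)$) that these images generate a finite-index subgroup of $\pi_1(F)$. This claim is the entire content separating \cref{QIBddGenByRank1lift} from \cref{QIBddGenByRank1}: everything else in your sketch is the quasi-isometry package of \cref{secQI} applied to a central extension, which is routine. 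Your two-sentence heuristic for it (Galois transitivity on absolutely simple factors forcing $H_\alpha$ to project nontrivially to every real factor, plus "root $\Sl(2,\R)$'s realize finite-index subgroups of the relevant fundamental groups") establishes only that $H_\alpha$ meets each factor nontrivially; it does not show that the induced map on $\pi_1$ has finite-index image, which is a nontrivial statement about coroots and real forms (note, e.g., that a root $\Sl(2,\R)$ in a Hermitian group can have $\pi_1$ mapping with infinite-index image or even trivially a priori, so one must argue which $\Q$-roots to use). Since you ultimately defer this point back to \cite[\S 6]{MR4074402} — the very result being proved — the proposal is not a self-contained proof; it is an accurate outline with the one essential lemma left unestablished.
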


\section{Averaging measures and control of mass near \texorpdfstring{$\infty$}{infinity}}
For this section, take $G$ to be a connected semisimple Lie group which acts continuously on a locally compact, second countable metric space~$X$. The results stated in this section all consider the case $X= G/\Gamma$; in the sequel, we will consider the case that $X$ is a fiber bundle over $G/\Gamma$ with compact fibers.  Let $\Gamma$ be a lattice subgroup of $G$.    In \cref{sec:trans} we will assume $\Gamma$ is irreducible   and in \cref{sec:nondiv} below we will further assume $G=\bfG(\R)$ is a $\Q$-algebraic group and $\Gamma$ is commensurable with $\bfG(\Z)$.

While many results discussed below hold for more general Lie groups (and locally compact topological groups) $G$ and for more general discrete subgroups $\Gamma$, we only formulate results in the settings which will be used in the sequel.
\subsection{Notation}\label{sec:not}
 Let $H$ be a closed subgroup of $G$ and let $E$ be a precompact set of positive Haar measure in $H$. 
Write $m_H$ for a choice of Haar measure on~$H$ and $m_E$ for the Haar measure on~$H$ normalized so that $m_E(E) = 1$. Given any Borel probability measure~$\mu$ on~$X$ we write
	$$ E \ast \mu
	:= \frac{1}{m_H(E)} \int_E g_* \mu \, dm_H(g)
	= \int_E g_* \mu \, dm_E(g)
	.$$

We will often take $E$ to range over the sets in a \folner sequence, such as in the following construction:
Assume that $\{a^t\}$ is an $\ad$-$\R$-diagonalizable, one-parameter subgroup of~$G$ and that $U$ is a connected $\ad$-unipotent subgroup of~$G$ that is normalized by~$\{a^t\}$.
Fix a norm $\| \cdot \|$ on Lie algebra~$\lieu$ of~$U$.
For each $\kappa  > 0$ and $t > 0$, set
	\begin{align*}
	U_\kappa (t) &:= \exp_\lieu\bigl(\{\, Z\in \lieu \mid \|Z\|\le e^{\kappa  t}\,\}\bigr) \\
	\intertext{and}
	F_\kappa (t) &:= \{\, a^s \mid 0\le s\le t \,\} \cdot U_\kappa (t)
	. \end{align*}
If $\kappa $ is sufficiently large and if $\{t_n\}$ is a sequence tending to infinity, then $\{F_\kappa (t_n)\}$ is a (left) \Folner sequence in the solvable group $\{\,a^t \mid t\in \R\,\}\ltimes N$; for instance, we may take
	$$\kappa  > 10 \max \bigl\{ \| {\Ad_\lieu(a)}\|, \| {\Ad_\lieu(a\inv)}\| \bigr\}.$$

\subsection{Control of mass near \texorpdfstring{$\infty$}{infinity}}
We often impose strong control on the distribution of a measure at infinity. We have the following definition.
\begin{definition}[{\cite[Section 3.2]{BFH-SLnZ}}]\label{def:masscusp}
Let $(X,d)$ be a complete, second countable, metric space and let $\mu$ be a finite Borel measure on $X$. We say that $\mu$ has \emph{exponentially small mass at $\infty$} if there is $\tau_\mu>0$ such that for all $0<\tau<\tau_\mu$,
\begin{equation}\label{eq:SmallMassIneq}\int_X e^{\tau d(x_0, x)} \ d \mu(x) <\infty\end{equation}
for some choice of base point $x_0\in X$.
We say that a collection $\mathcal M$ of probability measures on $X$ has \emph{uniformly exponentially small mass at $\infty$} if there is $\tau_0>0$ such that for all $0<\tau<\tau_0$,
\begin{equation}\label{eq:SmallMassIneq2}\sup_{\mu\in \mathcal M} \left\{\int e^{\tau d(x_0, x)} \ d \mu(x)\right\} <\infty.\end{equation}
\end{definition}
We remark that \eqref{eq:SmallMassIneq2} holds if and only if
there is $C>1$ such that for every sufficiently large $\ell$,
$$ \sup_{\mu\in \mathcal M} \mu \bigl( \{ x \mid d(x_0, x) \ge \ell \}\bigr) <C e^{-\tau \ell}.$$
It is well known that the Haar measure on $G/\Gamma$ has exponentially small mass at $\infty$; for discussion in the case of arithmetic lattices $\Gamma$, see \eqref{HaarSmallCusps} below.

\subsection{Translates of horospherical orbits and mass at \texorpdfstring{$\infty$}{infinity}}\label{sec:trans}
In this section we assume that the lattice $\Gamma < G$ is irreducible.

Given an expanding horospherical subgroup $U = \mathcal U^+(b^s)$ for some $\R$-diagonalizable, one-parameter subgroup~$\{b^s\}$ in $G$, it is well known that the normalized Haar measure on a precompact open subset of~$U$ equidistributes to the Haar measure~$m$ on $G/\Gamma$ when translated by~$b^s$.
It is useful to have a more quantitative version of this well-known fact that includes a rate of equidistribution.
This ``quantitative equidistribution'' follows from exponential mixing, and is stated in \cite[Proposition 2.4.8]{MR1359098} and \cite[Theorem 1.1]{MR2867926} for flows on $\Sl(n,\R)/\Sl(n,\Z)$ and without complete optimization of all constants.  Exponential mixing follows from each simple factor of $G$ having a spectral gap in the representation on $L^2_0(G/\Gamma)$; see \cite[Section 2.4]{MR1359098}.  The required spectral gap is proven for irreducible non-uniform lattices by Kleinbock and Margulis in \cite[Theorem 1.12]{MR1719827}.  The result for all irreducible lattices follows easily from arguments in that paper and Clozel's proof of Property $\tau$ \cite[Theorem 3.1]{MR1953260}.  Clozel actually proves a uniform spectral gap over all congruence latices, deducing a spectral gap for a single lattice, congruence or not follows easily as in the proof of \cite[Theorem 1.12]{MR1719827}.

 We state an adaptation, referred to as the ``effective equidistribution property'' in {\cite[Theorem~2.5]{MR4125693}}, which holds on general $G/\Gamma$ and optimizes all constants.   To state this result, given $x \in G/\Gamma$ write $\inj x$ for the injectivity radius of $G/\Gamma$ at~$x$.

\begin{theorem}[{\cite[Theorem~2.5]{MR4125693}}]\label{ExpMixing}
Let $U$ be the expanding horospherical subgroup of a one-parameter $\R$-diagonalizable subgroup $\{b^s\}$ of~$G$. There exist constants $C, \lambda > 0$, and $k \in \N$, such that, for all $x \in G/\Gamma$, all $s > \max\{C, C \log (1/{\inj x}) \}$, all $\varphi \in C^\infty_2(G/\Gamma)$, and all $f \in C^\infty_{\text{cpct}}(U)$ supported in the unit ball centered at the identity, we have
	\begin{align*}
	\Biggl| \int_U f(u) \, \varphi(b^s u x) \, d m_U(u)
		&- \int_U f \, d m_U
		 \int_{G/\Gamma} \varphi \, d m_{G/\Gamma} \Biggr| \\
		&\notag
		< C
		\cdot \max \bigl( \| \varphi \|_{C^1}, \| \varphi \|_{k,2} \bigr)
		\cdot \| f \|_{C^k}
		\cdot e^{-\lambda s}
		.\end{align*}
\end{theorem}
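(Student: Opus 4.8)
The statement to be proven is \cref{ExpMixing}, the effective equidistribution of translates of horospherical orbits. Since the paper cites this verbatim from \cite[Theorem 2.5]{MR4125693}, I would not reprove it from scratch; instead the plan is to reduce it to the exponential mixing property of the $\{b^s\}$-flow on $L^2_0(G/\Gamma)$ and then carry out the standard ``thickening in the stable/neutral directions'' argument. First I would fix a bump function $\rho$ on a small ball in the subgroup $W = C_G(\{b^s\})\cdot \mathcal U^+(b^{-s})$ (the centralizer times the contracting horospherical subgroup), normalized to integrate to $1$ against Haar measure on $W$. For the given $f\in C^\infty_{\mathrm{cpct}}(U)$ supported in the unit ball, I would form the function $\Phi$ on $G/\Gamma$ obtained by smearing $\varphi(b^s\,\cdot\,)$ against $f\otimes\rho$ along the local product chart $U\times W\to G/\Gamma$ near $x$; this is well-defined precisely because $s > C\log(1/\inj x)$ guarantees the relevant neighborhood embeds. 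The key point is that $\int_U f(u)\varphi(b^sux)\,dm_U(u)$ differs from $\langle \Phi, (b^s)^*\varphi\rangle_{L^2}$ by an error controlled by the modulus of continuity of $\varphi$ over the $W$-ball of radius $e^{-\lambda s}$ (since $b^s$ contracts $W$), hence by $\|\varphi\|_{C^1}e^{-\lambda s}$ after choosing the $W$-ball radius appropriately.

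\textbf{Key steps, in order.} (1) Set up the local chart and the smeared test function $\Phi$, tracking that $\|\Phi\|_{k,2}$ and $\|\Phi\|_{C^1}$ are controlled by $\|f\|_{C^k}$ times a constant depending only on $G$ (the $\rho$ is fixed once and for all). (2) Apply exponential mixing: $\langle (b^s)^*\varphi,\Phi\rangle = \int\varphi\,dm\int\Phi\,dm + O\bigl(\|\varphi\|_{k,2}\|\Phi\|_{k,2}e^{-\lambda_0 s}\bigr)$, where $\lambda_0$ is the mixing rate coming from the spectral gap. Here I would invoke the spectral gap for irreducible lattices exactly as the excerpt describes: Kleinbock--Margulis \cite[Theorem 1.12]{MR1719827} for the non-uniform case, together with Clozel's property $\tau$ \cite[Theorem 3.1]{MR1953260} to handle general (non-congruence) irreducible lattices, and \cite[Section 2.4]{MR1359098} to pass from spectral gap to exponential mixing of the diagonalizable flow. (3) Observe $\int\Phi\,dm = \int f\,dm_U$ (the $W$-average is a probability average and Haar on $G/\Gamma$ is $W$-invariant up to the modular function, which is trivial here since $W\mathcal U^+$ generates a group with $b^s$ normalizing things unimodularly—this needs a small check). (4) Compare $\langle\Phi,(b^s)^*\varphi\rangle$ back to the original horospherical integral via step (1)'s error estimate. (5) Combine, choosing $\lambda = \min(\lambda_0,\lambda')/2$ where $\lambda'$ is the contraction rate of $b^s$ on $W$, and absorb all constants into a single $C = C(G,\Gamma)$.

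\textbf{Main obstacle.} The delicate part is step (1) together with the injectivity-radius hypothesis: one must choose the radius of the $W$-ball supporting $\rho$ to be small enough (comparable to $e^{-\lambda s}$) that after flowing by $b^s$ the smeared region still injects into $G/\Gamma$ near $b^sx$, \emph{and} simultaneously large enough in the pre-flow picture that the chart $U\times W$ around $x$ is an embedding; the constraint $s>\max\{C, C\log(1/\inj x)\}$ is exactly what reconciles these. Keeping the Sobolev norm $\|\Phi\|_{k,2}$ bounded independently of $x$ while the support shrinks requires rescaling $\rho$, which inflates derivatives by $e^{k\lambda s}$—this is why $\lambda$ in the conclusion is strictly smaller than the raw mixing rate and why one needs the flexibility to shrink it. A secondary technical point is that $C^\infty_2$ in the statement refers to a specific Sobolev-type norm (degree-$2$ integrability with finitely many derivatives) rather than $L^2$; the mixing estimate must be quoted in that normalization, which is precisely the form in which \cite[Theorem 2.5]{MR4125693} and \cite[Section 2.4]{MR1359098} provide it, so no extra work is needed there beyond citing carefully.
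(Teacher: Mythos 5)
The paper does not prove this theorem; it is quoted verbatim from Kleinbock--Mirzadeh \cite[Theorem~2.5]{MR4125693}, and the paragraph preceding it in the paper only explains provenance (exponential mixing from spectral gap, with the spectral gap supplied by Kleinbock--Margulis \cite[Theorem~1.12]{MR1719827} and Clozel's property $\tau$ \cite[Theorem~3.1]{MR1953260}). Your outline is the standard Margulis-type thickening argument that those references use, and your chain of citations for the spectral gap matches the paper's exactly, so your proposal is faithful to the intended proof.

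One small imprecision worth fixing: you attribute the approximation error to ``$b^s$ contracts $W$,'' but $W = C_G(\{b^s\})\cdot\mathcal U^+(b^{-s})$ is only partly contracted---the centralizer factor is not shrunk at all by $b^s$. The real reason the error $\bigl|\int_U f(u)\varphi(b^s u x)\,dm_U - \langle\Phi,(b^s)^*\varphi\rangle\bigr|$ is $O(\|\varphi\|_{C^1}\cdot r)$ is simply that $\varphi(b^s u w x)$ and $\varphi(b^s u x)$ differ by the displacement $\|b^s w b^{-s}\| \le \|w\| \le r$ (contraction on $U^-$ helps but is not needed; the centralizer contribution is exactly $r$). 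You then choose $r = e^{-\epsilon s}$ and pay the derivative inflation $\|\Phi\|_{k,2} \sim r^{-(k+\dim W/2)}\|f\|_{C^k}$, balancing against the mixing decay $e^{-\lambda_0 s}$---which is what your ``appropriately'' was already gesturing at, and is also where the $s > C\log(1/\inj x)$ hypothesis enters to keep the support an embedded product chart. Your identification of that hypothesis as the main obstacle, and your caution about the unimodularity check in step~(3) and about reading $C^\infty_2$ in the correct Sobolev normalization, are both on the mark.
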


Although it is not used directly in the paper, we state the following consequence of the above.
\begin{corollary} \label{MixingSmallCusps}
Suppose that
	$U$ is the expanding horospherical subgroup of a one-parameter, $\R$-diagonalizable subgroup $\{b^s\}$ of $G$
	and that
	$\mu$ is a $U$-invariant Borel probability measure on $G/\Gamma$ with exponentially small mass at $\infty$.

Then $(b^{s})_* \, \mu \to m_{G/\Gamma}$ as $s \to +\infty$ and the family $\{(b^s)_* \, \mu\}_{s \ge 0}$ has uniformly exponentially small mass at $\infty$.
\end{corollary}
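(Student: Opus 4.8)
The plan is to deduce both assertions from \cref{ExpMixing} and from the $U$-invariance of~$\mu$, using also that the Haar measure $m_{G/\Gamma}$ has exponentially small mass at~$\infty$ (cf.\ \eqref{HaarSmallCusps}). The starting point is the elementary identity: for $\varphi\in C^\infty_{\text{cpct}}(G/\Gamma)$ and any nonnegative $f\in C^\infty_{\text{cpct}}(U)$ supported in the unit ball of~$U$ with $\int_U f\,dm_U=1$, the $U$-invariance of~$\mu$ gives
\[ \int_{G/\Gamma}\varphi\,d\bigl((b^s)_*\mu\bigr)=\int_{G/\Gamma}\Bigl(\int_U f(u)\,\varphi(b^s u x)\,dm_U(u)\Bigr)\,d\mu(x), \]
by Fubini and the substitution $x\mapsto u^{-1}x$ together with $(u)_*\mu=\mu$. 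Thus the left-hand side is precisely the $\mu$-average of the quantity \cref{ExpMixing} estimates, and the same identity holds with $(b^s)_*\mu$ in place of~$\mu$, since $b^s$ normalizes~$U$ and hence each $(b^s)_*\mu$ is again $U$-invariant.

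For the equidistribution statement I would fix $\varphi$ as above and split the outer integral according to whether $s>\max\{C,C\log(1/\inj x)\}$, where $C$ is the constant of \cref{ExpMixing}, or not. On the first set the inner integral is within $C'(\varphi)e^{-\lambda s}$ of $\int\varphi\,dm_{G/\Gamma}$; on the complementary set $A_s^c:=\{x:\inj x\le e^{-s/C}\}$ it is bounded crudely by $\|\varphi\|_\infty$. Since $\mu$ is a finite measure and $A_s^c$ is decreasing in~$s$ with $\bigcap_s A_s^c=\varnothing$, we have $\mu(A_s^c)\to0$, so combining the two estimates yields $\int\varphi\,d((b^s)_*\mu)\to\int\varphi\,dm_{G/\Gamma}$. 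Together with the tightness furnished by the next step, this gives $(b^s)_*\mu\to m_{G/\Gamma}$ in the weak-$*$ topology on probability measures; moreover, using $\inj x\ge c_0 e^{-c_1 d(x_0,x)}$ for suitable $c_0,c_1>0$ and Markov's inequality, the hypothesis that $\mu$ has exponentially small mass at~$\infty$ forces $\mu(A_s^c)$ to decay exponentially in~$s$, so the convergence is in fact exponentially fast.

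For the uniform exponential smallness of mass — the step I expect to be the main obstacle, since \cref{ExpMixing} only controls bounded test functions with a constant that blows up in their $C^k$-norm and so cannot be fed an exponential weight — I would instead invoke quantitative non-divergence of unipotent flows. Let $h\colon G/\Gamma\to[1,\infty)$ be a height (Margulis) function comparable to $e^{\epsilon_0 d(x_0,\cdot)}$ for a sufficiently small $\epsilon_0\in(0,\tau_\mu)$, enjoying the two standard properties: $h(gy)\le\lambda(g)\,h(y)$ for some submultiplicative $\lambda\colon G\to[1,\infty)$ that is bounded on compacta, and — this is the Dani--Margulis/Kleinbock--Margulis non-divergence estimate applied to the expanding horospherical subgroup~$U$ — there exist $s_*>0$, $c\in(0,1)$, $b>0$ with $\int_U f(u)\,h(b^{s_*}ux)\,dm_U(u)\le c\,h(x)+b$ for all~$x$. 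Applying the displayed identity with $\varphi$ replaced by truncations of~$h$ (then letting the truncation level grow, using $h\ge1$) and using the $U$-invariance of each $(b^s)_*\mu$ yields $M(s+s_*)\le c\,M(s)+b$ for $M(s):=\int h\,d((b^s)_*\mu)$; here $M$ is finite and bounded on $[0,s_*]$ because $M(0)=\int h\,d\mu<\infty$ (this is where exponential smallness of mass of~$\mu$ enters, via $\epsilon_0<\tau_\mu$) and $M(s)\le\lambda(b^s)\,M(0)$ there. Iterating the recursion bounds $\sup_{s\ge0}M(s)$, that is, $\sup_{s\ge0}\int e^{\epsilon_0 d(x_0,\cdot)}\,d((b^s)_*\mu)<\infty$, which is the asserted uniform exponential smallness with $\tau_0=\epsilon_0$; in particular the family $\{(b^s)_*\mu\}$ is tight, closing the loop with the previous paragraph.
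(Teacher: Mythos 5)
Your proposal is correct, and the first half (weak-$\ast$ convergence to Haar) is in the same spirit as what the paper does — fold the $U$-invariance of~$\mu$ into a double integral matching \cref{ExpMixing}, split on the injectivity-radius condition, and observe that the exceptional set has small $\mu$-measure. The paper in fact says nothing about this part beyond ``the convergence to Haar is clear,'' so your write-up is, if anything, more explicit.

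For the uniform exponential smallness of mass you take a genuinely different route. The paper sticks with \cref{ExpMixing}: it tests against a smooth truncated indicator $\varphi_\ell$ at height~$\ell$, splits into $0\le s\le\ell/2$ (where $(b^s)_*\mu$ tails are controlled directly by the unit speed of $b^s$ and the exponential tail of~$\mu$) and $s\ge\ell/2$ (where the mixing estimate with $\|\varphi_\ell\|_{C^k}$ bounded uniformly in~$\ell$, the exponential thinness of $B_{\ell/2}$ for~$\mu$, and the exponential smallness of Haar mass above height~$\ell$ combine to give a bound of the form $(1-e^{-\tau c_1\ell/2})(1-e^{-\hat\tau(\ell-3)}-\hat Ce^{-\lambda\ell/2})$). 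This is how the paper gets around exactly the obstacle you identify — that one cannot feed an exponential weight directly into \cref{ExpMixing} — namely by keeping the test functions bounded and tying the cut-off scale~$\ell$ to the flow time~$s$. You instead reach for a Margulis-function contraction estimate for expanding-horospherical averaging, $\int_U f(u)\,h(b^{s_*}ux)\,dm_U(u)\le c\,h(x)+b$, and iterate it to bound $\sup_s\int h\,d((b^s)_*\mu)$; this is the Eskin--Margulis--Mozes / Benoist--Quint style of quantitative recurrence, not the quantitative-equidistribution argument in the paper. Both are legitimate, but they draw on different bodies of theory: the paper's argument only needs \cref{ExpMixing} (spectral gap), while yours imports the existence of a height function with the drift property for the specific operator $\varphi\mapsto\int_U f(u)\varphi(b^{s_*}u\cdot)\,dm_U(u)$. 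That input is indeed available in the literature, but it is a separate, nontrivial ingredient that would need a precise citation, whereas the paper's proof is self-contained given \cref{ExpMixing}. One advantage of your approach is that the contraction estimate is indifferent to the quality of the error rate in \cref{ExpMixing}; one advantage of the paper's is that it avoids Margulis-function technology entirely, which fits the paper's overall strategy of leaning on mixing for all its averaging steps. Your truncation/monotone-convergence handling of the unbounded~$h$ and the finiteness of $M(0)$ via $\epsilon_0<\tau_\mu$ are the right safeguards, and the argument is sound as a sketch.
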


In the proof of \cref{MixingSmallCusps} and later in \cref{OrbitClassification}, we will need the following construction.
 Consider the function $\hat h\colon G/\Gamma \to [0,\infty) $ given by $$\hat h\colon g\Gamma \mapsto d(g \Gamma, \1 \Gamma).$$ The function $\hat h$ is proper and 1-Lipschitz but need not be differentiable. At times it will be convenient to replace $\hat h$ with a $C^\infty$ approximation. Convolving $\hat h$ with a non-negative, symmetric, $C^\infty$ function on $G$ with sufficiently small support, we obtain a proper,    uniformly Lipschitz, $C^\infty$ function $h\colon G/\Gamma \to [0,\infty)$ with
\begin{equation} \label{heightDefn} \text{$d(g \Gamma, \1 \Gamma) -1 \le h(g\Gamma) \le d(g \Gamma, \1 \Gamma) + 1$ for all $g \in G$} .\end{equation}

\begin{proof}[Proof of \cref{MixingSmallCusps}]
The convergence to Haar is clear.

Let $x_0= \1\Gamma \in G/\Gamma$. To obtain uniform estimates on the mass at $\infty$, it is sufficient to find $\tau'>0$ such that for all sufficiently large~$\ell$ and all $s>0$ we have
	$$\bigl((b^{s})_* \mu\bigr) \bigl( \{\, x\in G/\Gamma \mid d(x, x_0)\le \ell \,\}\bigr) \ge 1 - e^{-\tau'\ell }.$$

By assumption, there is $\tau>0$ such that for all sufficiently large $\ell>0$,
	$$\mu \bigl( \{\, x\in G/\Gamma \mid d( x, x_0)\le \ell\,\}\bigr) \ge 1 - e^{-\tau \ell}. $$
Also, there is $\hat \tau>0$ such that for all sufficiently large $\ell>0$,
	$$m_{G/\Gamma} \bigl( \{\, x\in G/\Gamma \mid d( x, x_0)\le \ell\,\}\bigr) \ge 1 - e^{-\hat \tau \ell}. $$
We may assume $\{b^t\}$ is unit-speed so that $b^t= \exp (tY)$ where $\|Y\|= 1$.

Fix any~$\ell$. For $0\le s\le \ell / 2$ we have the uniform estimate
\begin{align*}
\bigl( (b^{s})_* {\mu} \bigr) &\bigl( \{\, x\in G/\Gamma \mid d(x, x_0)\le \ell\,\}\bigr)
\\&= {\mu}\bigl( \{\, x\in G/\Gamma \mid d(b^s x, x_0)\le \ell\,\}\bigr)
\\&\ge {\mu}\bigl( \{\, x\in G/\Gamma \mid d( x, x_0)\le \ell / 2 \,\}\bigr)
\\&\ge 1 - e^{-\tau \ell / 2}.
\end{align*}
It remains to consider all $s \ge \ell / 2$.
Fix constants $C$ and~$\lambda$ satisfying \Cref{ExpMixing}.

Given $t>0$, let
	$$B_{t} := \{\,x\in G/\Gamma \mid C \log( 1/\inj x)\ge t \,\} . $$
We have $x\mapsto - \log (\inj x)$ is bounded above by a linear function of $d(x,x_0)$; in particular there is $c_1 > 0$ such that for sufficiently large $t$
	\begin{align*}\mu \bigl( B_{t}\bigr)
	&\le \mu \bigl( \{\, x\in G/\Gamma \mid d(x, x_0) \ge c_1 t \,\}\bigr)
	\le e^{-\tau c_1 t}. \end{align*}

Fix a $C^\infty$ function $\varphi \colon \R\to [0,1]$ with $\varphi(y)= 1 $ for all $y\le 0$ and $\varphi(y) = 0$ for all $y\ge 1$. Recall the function $h$ satisfying \eqref{heightDefn}. Given $\ell>0$, set $\varphi_\ell\colon G/\Gamma\to [0,1]$ to be
	$$\varphi_\ell(x) = \varphi \left( h(x) -\ell + 2 \right) .$$
Then $\varphi_\ell(x)=1$ when $d(x,x_0)\le \ell-3$ and $\varphi_\ell(x)=0$ when $d(x,x_0)\ge \ell.$
For every $k$, we clearly have that $\| \varphi_\ell\|_{k,2}$ and $\| \varphi_\ell\|_{k}$ are bounded uniformly in~$\ell$.
Fix some nonnegative $f_U \in C^\infty(U)$ supported in the unit ball centered at the identity with $\| f_U \|_1 = 1$.
Then for $s > \ell / 2 > C$ we have
	\begin{align*}
	\bigl((b^{s})_* \mu\bigr) & \bigl( \{\, x\in G/\Gamma \mid d(x, \1\Gamma)\le \ell \,\}\bigr)\\
	& \ge ((b^{s})_* \mu )(\varphi_\ell)
	\\&= \int_{G/\Gamma} \varphi_\ell(b^s x) \, d \mu(x)
	\\&= \int_{G/\Gamma} \varphi_\ell(b^s x) \ d \left[\int_U f(u) \, u_*\mu \, d m_U(u)\right](x)
		\\&\ge \int_{(G/\Gamma) \sm B_{\ell/2}} \varphi_\ell(b^s x) \ d \left[\int_U f(u) \, u_*\mu \, d m_U(u)\right](x)
	\\&= \int_{(G/\Gamma) \sm B_{\ell/2}} \int_U f(u) \, \varphi_\ell(b^s ux)\, dm_U(u)
 \, d \mu(x)
	\\&\ge \int_{(G/\Gamma) \sm B_{\ell/2}} \left[ \int_{G/\Gamma} \varphi_\ell \, dm_{G/\Gamma} - \hat C e^{-\lambda s} \right] \, d \mu(x)
	\\&> \bigl(1- e^{-\tau c_1 {\ell}/{2}} \bigr) \bigl(1 - e^{-\hat \tau( \ell-3 )} - \hat C e^{-\lambda {\ell}/{2}} \bigr)
	 \end{align*}
where the second equality follows from the $U$-invariance of~$\mu$ and the next to last inequality uses \Cref{ExpMixing}.
\end{proof}

In the proof of our main results, we only use  the following special case of \cref{MixingSmallCusps}.
We note that this corollary can be proved  using  classical exponential mixing results rather than the more quantitative version in \cref{ExpMixing}.

\begin{corollary} \label{MixingSmallCuspsCom}
Suppose that
	$U$ is the expanding horospherical subgroup of a one-parameter $\R$-diagonalizable subgroup~$\{b^s\}$ of~$G$
	and that
	$\mu$ is a $U$-invariant Borel probability measure on $G/\Gamma$ with compact support.
Then $(b^{s})_* \, \mu$ converges to the Haar measure on $G/\Gamma$ as $s \to +\infty$ and the family $\{(b^s)_* \, \mu\}_{s \ge 0}$ has uniformly exponentially small mass at $\infty$.
\end{corollary}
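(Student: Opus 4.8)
The quickest route is to deduce the statement from \cref{MixingSmallCusps}. If $\mu$ has compact support then the height function $x\mapsto d(x_0,x)$ is bounded on $\supp\mu$, so $\int_{G/\Gamma} e^{\tau d(x_0,x)}\,d\mu(x)<\infty$ for \textbf{every} $\tau>0$; thus $\mu$ has exponentially small mass at $\infty$ in the sense of \cref{def:masscusp}, and both assertions are immediate consequences of \cref{MixingSmallCusps}. So strictly speaking nothing new is needed.

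For completeness I would also record the self-contained argument promised in the statement, which uses only classical (non-effective) exponential mixing of the $\{b^s\}$-action on $L^2_0(G/\Gamma)$ rather than \cref{ExpMixing}. The convergence $(b^s)_*\mu\to m_{G/\Gamma}$ is the classical equidistribution statement for translates of horospherical pieces: it follows from mixing of $\{b^s\}$ by the standard transverse-thickening argument, and is uniform over $x$ in a compact set because the injectivity radius is bounded below there. The remaining content is the uniform control of mass at $\infty$, i.e.\ producing $\tau'>0$ with $\bigl((b^s)_*\mu\bigr)\bigl(\{x:d(x,x_0)>\ell\}\bigr)\le e^{-\tau'\ell}$ for all $s\ge 0$ and all sufficiently large $\ell$. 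Fix $R$ with $\supp\mu\subset\{d(\cdot,x_0)\le R\}$ and normalize $\{b^s\}$ to unit speed. For $0\le s\le \ell/2$, right-invariance of the metric on $G$ gives $d(b^sx,x)\le d_G(b^s,\1)=s$ for every $x$, hence $d(b^sx,x_0)\le s+R\le\ell$ once $\ell\ge 2R$; so $\bigl((b^s)_*\mu\bigr)(\{d(\cdot,x_0)\le\ell\})=1$ throughout this range.

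For $s\ge \ell/2$ I would argue exactly as in the proof of \cref{MixingSmallCusps}, feeding in classical mixing in place of \cref{ExpMixing}. Choose a nonnegative $f_U\in C^\infty_c(U)$ supported in a small ball with $\int_U f_U\,dm_U=1$, so that $\mu=\int_U f_U(u)\,u_*\mu\,dm_U(u)$ by $U$-invariance; take $\varphi_\ell(x)=\varphi(h(x)-\ell+2)$, where $h$ is the smoothed height function of \eqref{heightDefn} and $\varphi\colon\R\to[0,1]$ is a fixed cutoff, so that $\varphi_\ell\equiv 1$ on $\{d(\cdot,x_0)\le\ell-3\}$, $\varphi_\ell\equiv 0$ on $\{d(\cdot,x_0)\ge\ell\}$, and the relevant Sobolev norms of $\varphi_\ell$ are bounded independently of $\ell$. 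Then
\[
\bigl((b^s)_*\mu\bigr)\bigl(\{d(\cdot,x_0)\le\ell\}\bigr)\ \ge\ \int_{G/\Gamma}\int_U f_U(u)\,\varphi_\ell(b^s u x)\,dm_U(u)\,d\mu(x),
\]
and for each $x\in\supp\mu$ classical exponential mixing bounds the inner integral below by $\int_{G/\Gamma}\varphi_\ell\,dm_{G/\Gamma}-Ce^{-\lambda s}\ge 1-e^{-\hat\tau(\ell-3)}-Ce^{-\lambda\ell/2}$, where $e^{-\hat\tau\,\cdot}$ controls the Haar mass of the complement of a large ball (the Haar measure has exponentially small mass at $\infty$). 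Combining the two ranges and choosing $\tau'\in(0,\min\{\hat\tau,\lambda/2\})$ gives the claimed uniform bound, which by the reformulation following \cref{def:masscusp} is exactly the assertion that $\{(b^s)_*\mu\}_{s\ge 0}$ has uniformly exponentially small mass at $\infty$. The only point requiring any care is the uniformity of the classical mixing estimate over $x\in\supp\mu$; but since $\supp\mu$ is compact, the injectivity radius is bounded below on it and the transverse-thickening argument yields constants depending only on $\supp\mu$ and on the (uniformly bounded) Sobolev norms of $\varphi_\ell$ and $f_U$, so this is routine.
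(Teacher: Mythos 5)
Your first paragraph is exactly the paper's proof: a compactly supported measure trivially satisfies \eqref{eq:SmallMassIneq} for every $\tau>0$, so it has exponentially small mass at $\infty$ and the statement is an immediate special case of \cref{MixingSmallCusps}, which is how the paper presents it. Your self-contained argument is a correct expansion of the paper's passing remark that \cref{ExpMixing} is not needed here; the simplification you identify — that compactness of $\supp\mu$ gives a uniform lower bound on the injectivity radius, so the term $\mu(B_{\ell/2})$ from the proof of \cref{MixingSmallCusps} disappears and classical mixing with constants depending only on $\supp\mu$ suffices — is precisely the point.
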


\subsection{Non-divergence of unipotent averages of measures}\label{sec:nondiv}
We emphasize in this section that we assume $G=\bfG(\R)$ is a $\Q$-algebraic group and $\Gamma$ is commensurable with $\bfG(\Z)$.
 In particular, we may view $G\subset \Sl(N,\R)$ for some $N\in \N$.  We equip $\Sl(N,\R)$ with the metric associated with the Cartan involution $X\mapsto -X^T$.  Although $G$ need not by invariant under this Cartan involution, any choice of metric on $G$ is uniformly Lipschitz comparable with the restriction of the ambient metric on $\Sl(N,\R)$ to $G$.  The results and references quoted below are primarily stated for unipotent orbits in $\Sl(N,\R)/\Sl(N,\Z)$; however, using that distance estimates in $\Sl(N,\R)/\Sl(N,\Z)$ give uniformly Lipschitz comparable estimates in $G/\Gamma$ gives analogous results for unipotent orbits in $G/\Gamma$.

A key property of unipotent dynamics first established in \cite{MR0470140} and later made quantitative in \cite{MR530631} is the non-divergence of orbits under unipotent flows.
As we require non-divergence of orbits of certain subsets of higher-dimensional unipotent subgroups, we make the following definition.

 \begin{definition}\label{def:UnipInterval}
Let $U$ be a $k$-dimensional unipotent subgroup of $G$. A basis $\calB=  \{X_1, \dots, X_k\}$ of the  Lie algebra $\lieu$ of $U$ is \emph{regular} (or \emph{triangular}) if for each $1\le i\le k$,
\begin{enumerate}
	\item $\{X_1, \dots, X_i\}$ spans an $i$-dimensional subalgebra  of $\lieu$;
	\item $X_i$ normalizes the subalgebra spanned by $\{X_1, \dots, X_{i-1}\}$.
\end{enumerate}

Fix a regular basis $\calB =  \{X_1, \dots, X_k\}$ and let  $\{u_j^t\} = \{\exp(t X_j)\}$ be the associated one-parameter subgroups of $U$.
Given nonempty, bounded intervals $I_1,\ldots,I_k$ in~$\R$, the product
	$ u_k^{I_k} \, u_{k-1}^{I_{k-1}} \, \cdots \, u_1^{I_1} \, $
	$$ \{\, u_k^{t_k} \mid t_k \in I_k \} \cdot \{\, u_{k-1}^{t_{k-1}} \mid t_{k-1} \in I_{k-1} \} \cdots \{\, u_1^{t_1} \mid t_1 \in I_1 \}$$
is an \emph{interval} (relative to the regular basis $\calB$) in~$U$ where $u_i^{I_i} = \{\, u_i^t \mid t \in I_i \,\}$.

An interval (relative to the basis $\calB$) $\interval {U}= u_k^{I_k} \, u_{k-1}^{I_{k-1}} \, \cdots \, u_1^{I_1}$ is \emph{centered} if $\1\in \interval U$; that is, a $k$-dimensional interval $\interval U= u_k^{I_k} \, u_{k-1}^{I_{k-1}} \, \cdots \, u_1^{I_1}$ is {centered} if $0\in I_i$ for each $1\le i\le k$.
\end{definition}

\begin{remark} \label{rem:interval} (Centered) $k$-dimensional intervals $\interval{U} = u_k^{I_k} \, u_{k-1}^{I_{k-1}} \, \cdots \, u_1^{I_1}$ in~$U$ have a number of nice properties relative to dynamics and choice of base point. In particular, we have the following:
	\begin{enumerate}
	\item For any $g \in N_G(U)$, the conjugate $g \interval{U} g^{-1}$ is also a (centered) $k$-dimensional interval in~$U$ (possibly relative  to a different basis).
	\item For any $u$ in the subgroup generated by~$\interval{U} $, the translates $u \interval{U} $ and $\interval{U} u$ are also $k$-dimensional intervals in~$U$; moreover, they are centered if $u\inv\in \interval{U}$.
	\end{enumerate}
\end{remark}

Recall that a collection $\mathcal{M}$ of Borel probability measures on a locally compact space~$X$ is \emph{uniformly tight} if for every $\delta > 0$ there exists a compact subset~$\mathcal{K}$ of~$X$, such that $\eta(\mathcal{K}) > 1-\delta$ for every $\eta \in \mathcal{M}$. We recall that every uniformly tight family of Borel probability measures is pre-compact in the space of Borel probability measures equipped with the weak-$*$ topology; in particular, any uniformly tight sequence of Borel probability measures has subsequential limit points in the space of Borel probability measures.
We frequently pass to compact extensions and make use of the following observation: if $f \colon X_1 \to X_2$ is a proper continuous map, then a collection $\mathcal{M}$ of Borel probability measures on~$X_1$ is uniformly tight if and only if $f_* \mathcal{M}$ is a uniformly tight collection of measures on~$X_2$.

Results of the following type are well known, but we do not know of a reference for this precise fact.

\begin{lemma}\label{lem:unipotenttight}
Let $U\subset G$ be a unipotent subgroup and let $\{F_j\}$ a sequence of centered intervals (relative to a regular basis $\calB$) in~$U$.
Let $\mu$ be a Borel probability measure on $G/\Gamma$. Then the family $\{F_j \ast \mu\}$ is uniformly tight.\end{lemma}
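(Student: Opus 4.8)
The plan is to reduce the statement to the classical quantitative non-divergence theorem for unipotent flows (Margulis, Dani--Margulis) by showing that averaging over a centered interval $F_j$ in $U$ against a fixed probability measure $\mu$ cannot push mass into the cusp. Since $G = \bfG(\R)$ is $\Q$-algebraic and $\Gamma$ is commensurable with $\bfG(\Z)$, and since distance estimates in $G/\Gamma$ are uniformly Lipschitz comparable with those in $\Sl(N,\R)/\Sl(N,\Z)$, it suffices to prove uniform tightness in $\Sl(N,\R)/\Sl(N,\Z)$; by properness of the inclusion-induced map this transfers back. Fix $\delta>0$. First I would recall the Dani--Margulis non-divergence estimate in the form: there is a compact $\mathcal K_\varepsilon \subset G/\Gamma$ and constants such that for every one-parameter unipotent subgroup $\{u^t\}$, every $x$, and every $T>0$, either $\{u^t x : 0\le t\le T\}$ enters $\mathcal K_\varepsilon$ (in which case a definite proportion of the time interval $[0,T]$ is spent in a slightly larger compact set) or the whole orbit lies in a proper horospherical subgroup's worth of lattice points, a degenerate case we handle separately. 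The key point is the quantitative version: for any $\varepsilon>0$ there is a compact set $\mathcal C(\varepsilon)$ with $m_{[0,T]}(\{t \in [0,T] : u^t x \in \mathcal C(\varepsilon)\}) \ge 1-\varepsilon$ unless the orbit is trapped near infinity for an algebraic reason.

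The main technical step is to pass from one-parameter unipotent orbits to higher-dimensional centered intervals $F_j = u_k^{I_k^{(j)}}\cdots u_1^{I_1^{(j)}}$ relative to a regular basis $\calB$. Here I would proceed by induction on $k = \dim U$, using the regular (triangular) structure of $\calB$: the subgroup $U' = \langle u_1,\dots,u_{k-1}\rangle$ is normalized by $u_k$, so $F_j = u_k^{I_k^{(j)}} \cdot F_j'$ where $F_j'$ is a centered interval in $U'$. Then
\[
F_j \ast \mu = \int_{I_k^{(j)}} (u_k^{t})_* \bigl( F_j' \ast \mu \bigr)\, dm_{I_k^{(j)}}(t).
\]
By the inductive hypothesis the family $\{F_j' \ast \mu\}$ is uniformly tight, so it suffices to show that averaging a uniformly tight family over an interval of the one-parameter unipotent $\{u_k^t\}$ preserves uniform tightness. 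For this I would apply the Dani--Margulis quantitative non-divergence estimate uniformly: given the tight family $\{\nu_j := F_j' \ast \mu\}$, for $\delta>0$ choose a compact set $\mathcal K$ with $\nu_j(\mathcal K)>1-\delta/2$ for all $j$; then for $x$ in $\mathcal K$ (which has bounded injectivity radius from below and bounded height from above), the non-divergence theorem gives a compact $\mathcal C$ with $m_{I}(\{t : u_k^t x \in \mathcal C\}) \ge 1-\delta/2$ for every bounded interval $I$; integrating over $x$ and over $I_k^{(j)}$ yields $(F_j \ast \mu)(\mathcal C) \ge (1-\delta/2)^2 \ge 1-\delta$, up to harmless adjustments. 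The base case $k=1$ is exactly this one-parameter averaging statement.

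The step I expect to be the main obstacle is obtaining the non-divergence estimate \emph{uniformly over the base point $x$ ranging in a fixed compact set and uniformly over all bounded intervals $I$}, with a compact target set $\mathcal C$ that does not depend on $I$ or on $x \in \mathcal K$. The Dani--Margulis theorem does provide exactly this — the constants depend only on the dimension and on a lower bound for the injectivity radius / discreteness of $\Gamma$, not on the length of the time interval — but one must be careful that the degenerate alternative (the orbit staying in the cusp because $x$ lies on a rational invariant subspace adapted to $u_k$) is excluded: since $x\in\mathcal K$ is uniformly deep inside $G/\Gamma$, the relevant lattice vectors have norm bounded below, which forces the non-divergence alternative rather than the escape alternative, with uniform constants. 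Once this uniform one-parameter statement is in hand, the induction over the regular basis and the change-of-variables identity above finish the proof; assembling the $\varepsilon$'s and passing back from $\Sl(N,\R)/\Sl(N,\Z)$ to $G/\Gamma$ via the uniformly Lipschitz comparison of metrics and properness of the relevant maps is routine.
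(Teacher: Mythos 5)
Your proposal is correct and takes essentially the same approach as the paper: reduce to the one-parameter case by induction along the regular basis (using the change-of-variables identity you wrote down), then invoke quantitative nondivergence, with the key observation that starting from a fixed compact $\mathcal{K}$ gives uniform lower bounds on the relevant lattice quantities so the compact target set can be chosen independent of the base point and of the interval length. The only cosmetic difference is that the paper cites Kleinbock--Margulis rather than Dani--Margulis for the quantitative estimate; the substance is identical.
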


\begin{proof}
By induction on~$\dim(U)$, it suffices to show that if $\mathcal{M}$ is a uniformly tight family of Borel probability measures on $G/\Gamma$, $\{u^t\}$~is a one-parameter unipotent subgroup of~$G$, and $\{I_n\}$ is a sequence of centered intervals in~$\R$, then the family $$\{\,u^{I_n} \ast \mu \mid n \in \N, \mu \in \mathcal{M} \,\}$$ is uniformly tight.

Given $\epsilon > 0$, there is some compact subset~$\mathcal{K}_1$ of $G/\Gamma$, such that $\mu(\mathcal{K}_1) > 1 - \epsilon$ for all $\mu \in \mathcal{M}$. It follows from the uniform estimate for the quantitative nondivergence of unipotent flows established by Kleinbock and Margulis in \cite[Theorem~5.2]{MR1652916} (see also improvements in \cite{MR2434296}) that there is a compact subset~$\mathcal{K}_2$, such that for every $x \in \mathcal{K}_1$, and every centered interval~$I$ in~$\R$, we have
	$$ m_I\bigl( \{\ t \in I \mid u^t x \in \mathcal{K}_2 \,\} \bigr) > 1 - \epsilon .$$
Then $(u^{I_n} \ast \mu)(\mathcal{K}_2) > 1 - 2 \epsilon$ for every $n \in \N$ and $\mu \in \mathcal{M}$.
\end{proof}

We also have the following more quantitative version of tightness which follows from {\cite[Lemma 3.3]{BFH-SLnZ}} by induction.

\begin{lemma}[{\cite[Lemma~3.3]{BFH-SLnZ}}] \label{UnipSmallCusp}
Suppose
	\begin{enumerate}
	\item $\{\mu_n\}$ is a sequence of probability measures on~$G/\Gamma$ with uniformly exponentially small mass at $\infty$ and
	\item $\{\interval{U}_n\}$ is a sequence of centered intervals (relative to a fixed regular $\calB$) in a unipotent subgroup~$U$ of~$G$.
	\end{enumerate}
Then the family of measures $\{\interval{U}_n * \mu_n\}$ has uniformly exponentially small mass at $\infty$.
\end{lemma}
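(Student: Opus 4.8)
The statement is \cref{UnipSmallCusp}: given a sequence of measures $\{\mu_n\}$ on $G/\Gamma$ with uniformly exponentially small mass at $\infty$ and a sequence of centered intervals $\{\interval U_n\}$ (relative to a fixed regular basis $\calB$) in a unipotent subgroup $U$, the family $\{\interval U_n * \mu_n\}$ again has uniformly exponentially small mass at $\infty$. The plan is to reduce to the one-parameter case by induction on $\dim U$, then quote \cite[Lemma~3.3]{BFH-SLnZ} for the single-parameter step. Concretely, since the basis $\calB = \{X_1,\dots,X_k\}$ is regular, a centered $k$-dimensional interval factors as $\interval U_n = u_k^{I_k^{(n)}}\,u_{k-1}^{I_{k-1}^{(n)}}\cdots u_1^{I_1^{(n)}}$ with each $I_i^{(n)}$ a centered interval in $\R$. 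Correspondingly, $\interval U_n * \mu_n$ is obtained by first averaging $\mu_n$ over the one-parameter centered interval $u_1^{I_1^{(n)}}$, then averaging the result over $u_2^{I_2^{(n)}}$, and so on. So it suffices to show: if $\{\nu_n\}$ has uniformly exponentially small mass at $\infty$ and $\{J_n\}$ is a sequence of centered intervals in $\R$ and $\{u^t\}$ is a one-parameter unipotent subgroup, then $\{u^{J_n} * \nu_n\}$ has uniformly exponentially small mass at $\infty$; iterating this $k$ times (with the intermediate families playing the role of $\{\nu_n\}$) yields the claim.

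\textbf{The one-parameter step.} For the base case I would invoke \cite[Lemma~3.3]{BFH-SLnZ} directly: that lemma is precisely the assertion that averaging a sequence of measures with uniformly exponentially small mass at $\infty$ over a sequence of centered one-parameter unipotent intervals preserves uniform exponential smallness. (This is where the quantitative non-divergence of unipotent flows of Kleinbock--Margulis enters, giving the exponential-in-distance decay of the time a unipotent trajectory through a point near a given compact set can spend deep in the cusp.) The only mild care needed here is bookkeeping of constants through the induction: at each of the $k$ stages the constant $\tau_0$ controlling the rate may shrink and the supremum in \eqref{eq:SmallMassIneq2} may grow, but since $k = \dim U$ is fixed and finite, after $k$ applications we still obtain \emph{some} $\tau_0' > 0$ and a finite bound, which is all that is required by \cref{def:masscusp}. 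One should note that the intermediate family $\{u_1^{I_1^{(n)}} * \mu_n\}$ is again a sequence of probability measures on $G/\Gamma$ (averaging preserves total mass and positivity), so the hypothesis of the next stage is met.

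\textbf{Main obstacle.} The genuine content is entirely in the one-parameter statement, and that is already done in \cite{BFH-SLnZ}; so there is no serious obstacle here beyond making sure the reduction to one parameter is legitimate. The one point deserving a line of justification is \emph{why} the $k$-dimensional average decomposes as an iterated sequence of one-dimensional averages \emph{each of which is over a centered interval in $\R$ applied to a probability measure}: this is immediate from the definition of an interval relative to a regular basis in \cref{def:UnipInterval} together with Fubini, since $\interval U_n * \mu_n = u_k^{I_k^{(n)}} * \bigl( u_{k-1}^{I_{k-1}^{(n)}} * \cdots * (u_1^{I_1^{(n)}} * \mu_n)\cdots\bigr)$ after normalizing Haar measure on the product appropriately. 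Once this is observed, the induction runs without friction, and the proof is a two-sentence argument citing \cite[Lemma~3.3]{BFH-SLnZ}.
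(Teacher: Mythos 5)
Your proposal is correct and takes the same route the paper intends: the paper itself says the lemma "follows from {\cite[Lemma~3.3]{BFH-SLnZ}} by induction," and your reduction — decomposing the $k$-dimensional centered interval into an iterated composition of one-parameter centered averages via exponential coordinates of the second kind given by the regular basis (so that Haar measure on $\hat U_n$ factors, by Fubini, as a product of normalized Lebesgue measures on the $I_i^{(n)}$) and then applying the one-parameter result $k$ times, with $\tau$ shrinking at each stage but remaining positive — is exactly that induction spelled out.
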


\subsection{Equidistribution and invariance}
We recall that unipotent flows on $G/\Gamma$ exhibit a number of additional surprising properties as exhibited by Ratner's theorems on unipotent flows \cite{MR1403920}. In the sequel, we will only use a handful of these important results.

The first result  asserts that a measure on $G/\Gamma$ that is invariant under certain subgroups are automatically invariant under larger related subgroups.

\begin{proposition}[{\cite[Proposition 2.1]{MR1135878}}]\label{thm:opproot}
Let $A$ be a split Cartan subgroup of~$G$, let $\beta \in \Phi(A,G)$, and let $\mu$ be a Borel probability measure on $G/\Gamma$. If $\mu$ is invariant under both~$A$ and the coarse root group~$U^{[\beta]}$ then $\mu$ is also invariant under the coarse root group~$U^{[-\beta]}$.
\end{proposition}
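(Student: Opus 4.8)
The plan is to reduce to the rank-one subgroup attached to the coarse root $[\beta]$ and then argue by ergodic decomposition, using Ratner's measure classification theorem. Let $G_{[\beta]}$ be the rank-one semisimple subgroup generated by $U^{[\beta]}$ and $U^{[-\beta]}$, and fix a one-parameter subgroup $\{a^s\}\subseteq A$ with $\beta(a^s)=e^s$, so that conjugation by $a^s$ expands $U^{[\beta]}$ and contracts $U^{[-\beta]}$ as $s\to+\infty$. Since $U^{[\beta]}$ is a connected unipotent subgroup, it is generated by one-parameter unipotent subgroups, so Ratner's measure classification \cite{MR1403920} applies: I would disintegrate $\mu=\int_{\Omega}\mu_\omega\,d\nu(\omega)$ into $U^{[\beta]}$-ergodic components, where for $\nu$-a.e.\ $\omega$ the component $\mu_\omega$ is the homogeneous probability measure on a closed orbit $H_\omega x_\omega$ of a closed subgroup $H_\omega\supseteq U^{[\beta]}$ with $H_\omega\cap x_\omega\Gamma x_\omega\inv$ a lattice in $H_\omega$. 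It then suffices to show $U^{[-\beta]}\subseteq H_\omega$ for $\nu$-a.e.\ $\omega$: each such $\mu_\omega$ is then $U^{[-\beta]}$-invariant, hence so is $\mu$.

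Next I would use the $A$-invariance of $\mu$ to constrain the components. Because $A$ normalizes $U^{[\beta]}$ it permutes the $U^{[\beta]}$-ergodic components, and $A$-invariance of $\mu$ makes the factor action on $(\Omega,\nu)$ measure-preserving, with $H_{a\omega}=aH_\omega a\inv$ for $a\in A$. The map $\omega\mapsto \lieh_\omega:=\mathrm{Lie}(H_\omega)$, viewed in the Grassmannian of subspaces of $\lieg$, is Borel and satisfies $\lieh_{a\omega}=\Ad(a)\lieh_\omega$, so the push-forward of $\nu$ to the Grassmannian is $\Ad(A)$-invariant. For any subspace $V$, the forward orbit $\{\Ad(a^s)V:s\ge0\}$ converges in the Grassmannian as $s\to+\infty$ to an $\Ad(a^s)$-invariant subspace (its leading weight part, via the Plücker embedding); hence a subspace that is recurrent under $\{\Ad(a^s)\}$, i.e.\ with $\Ad(a^{s_n})V\to V$ for some $s_n\to+\infty$, must equal this limit and be $\Ad(a^s)$-invariant. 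Applying Poincar\'e recurrence to $\{\Ad(a^s)\}$ on the Grassmannian and letting $\{a^s\}$ range over a countable generating family of one-parameter subgroups of $A$, I conclude that $\lieh_\omega$ is $\Ad(A)$-invariant for $\nu$-a.e.\ $\omega$; equivalently $H_\omega$ is normalized by $A$.

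The main obstacle is to rule out the remaining degenerate components. Suppose $\Omega_0:=\{\omega: U^{[-\beta]}\not\subseteq H_\omega\}$ has positive $\nu$-measure; then $\mu_0:=\nu(\Omega_0)\inv\int_{\Omega_0}\mu_\omega\,d\nu(\omega)$ is an $A$- and $U^{[\beta]}$-invariant probability measure, $\nu$-a.e.\ of whose $U^{[\beta]}$-ergodic components is a periodic orbit of an $A$-normalized closed subgroup $H$ with $U^{[\beta]}\subseteq H$ but $U^{[-\beta]}\not\subseteq H$. Fixing $a=a^1$ with $\beta(a)>1$, conjugation by $a^s$ strictly expands the $U^{[\beta]}$-direction of $H$ while $H$ contains none of the compensating contracting direction, so the stabilizer lattice $a^s(H\cap y\Gamma y\inv)a^{-s}$ of the translated orbit $H a^s y=a^s H y$ degenerates and these orbits leave every compact subset of $G/\Gamma$ as $s\to+\infty$ — a quantitative non-divergence statement in the spirit of \cite{MR1652916}. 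Hence $\mu_0$-a.e.\ point escapes to infinity under forward iteration by $a$, contradicting Poincar\'e recurrence for $(G/\Gamma,\mu_0,a)$; therefore $U^{[-\beta]}\subseteq H_\omega$ for $\nu$-a.e.\ $\omega$. Making this escape step fully rigorous — in particular controlling every $A$-normalized subgroup $H$ that can occur — is the delicate point; an alternative route is the entropy method, observing that $U^{[\beta]}$-invariance forces $h_\mu(a)\ge\mathrm{tr}\,\bigl(\ad(\log a)|_{\lieu^{[\beta]}}\bigr)>0$ and combining $h_\mu(a)=h_\mu(a\inv)$ with the product structure of leafwise measures inside $G_{[\beta]}$ to recover $U^{[-\beta]}$-invariance, which again requires isolating the $[-\beta]$-direction.
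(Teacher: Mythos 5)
The paper supplies no proof of its own here---the statement is cited verbatim from Ratner's 1991 Annals paper, where it appears as a technical lemma established well before (and used in the proof of) the measure classification theorem. That creates a logical problem for your primary route: you invoke the full measure classification theorem to decompose $\mu$ into $U^{[\beta]}$-ergodic algebraic components, but that classification theorem is proved in the very same paper downstream of Proposition~2.1. Unless you explicitly ground the classification in an independent source such as Margulis--Tomanov, the argument is circular as a proof of the cited proposition rather than merely a consistency check.

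Setting aside the dependency chain, the escape step has a genuine gap, which you rightly flag. Two points. First, the citation to Kleinbock--Margulis is pointing backwards: their results quantify \emph{non}-divergence of unipotent orbits, whereas you need to show that certain translated orbits \emph{do} diverge. Second, the mechanism you sketch is not strong enough as stated. Nothing in the recurrence argument gives you $A\subset H_\omega$, and $H_\omega$ may omit $U^{[-\beta]}$ yet still contain other pairs of opposite coarse root groups whose weights sum to zero against your chosen $a$, so that $\det\bigl(\Ad(a)|_{\lieh_\omega}\bigr)=1$ and the stabilizer lattice does not in fact degenerate. The correct obstruction is that $H_\omega$ admits a lattice and is therefore unimodular as a group, and that $H_\omega$ is $A$-normalized; ruling out the remaining configurations requires a genuine classification of which $A$-normalized unimodular subgroups containing $U^{[\beta]}$ but not $U^{[-\beta]}$ can carry $\Gamma$-lattices, or else an appeal to limit theorems for translates of periodic orbit measures (Mozes--Shah, Eskin--Mozes--Shah)---which again places you downstream of Ratner's classification. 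As written, ``these orbits leave every compact subset'' is an assertion rather than a proof.

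Your entropy alternative is the right instinct and is how such statements are usually established. The skeleton is: $h_\mu(a)=h_\mu(a^{-1})$, the decomposition of entropy for an $A$-invariant measure into contributions of coarse Lyapunov subgroups, the bound $h_\mu(a,U^{[\gamma]})\le J(a,U^{[\gamma]})$ with equality precisely when the $U^{[\gamma]}$-leafwise measure is Haar, and crucially a per-root symmetry $h_\mu(a,U^{[\gamma]})=h_\mu(a^{-1},U^{[-\gamma]})$. That last symmetry is exactly what ``isolating the $[-\beta]$ direction'' amounts to, and it is the one nontrivial input you would need to supply to make the entropy route self-contained; I would develop that rather than the classification-plus-escape route.
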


For the second result, we recall that averaging an $A$-invariant Borel probability measure along a \Folner sequence in a subgroup that centralizes (or normalizes)~$A$, any weak-$*$ limit point is an $A$-invariant Borel probability measure. Ratner's equidistribution theorem implies that, on the homogeneous space $G/\Gamma$, {$A$-invariance} is preserved when passing to limits of averages by unipotent subgroups normalized by $A$.

 \begin{proposition}[{\cite[Proposition 6.2(b)]{BFH} and \cref{lem:unipotenttight}}]\label{thm:averaginghomo2}
Let $A$ be a split Cartan subgroup of~$G$,
let $\mu$ be an $A$-invariant Borel probability measure on $G/\Gamma$,
let $U$ be a unipotent subgroup that is normalized by $A$,
and
let $\{F_j\}$ be a \Folner sequence of centered intervals in~$U$.
Then
\begin{enumerate}
\item the family $\{F_j\ast \mu\}$ is uniformly tight, and
\item \label{jlkjlkj} every weak-$*$ subsequential limit of $\{F_j\ast \mu\}$ is $A$-invariant.
\end{enumerate}
\end{proposition}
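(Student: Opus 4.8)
Assertion~(1) is immediate: it is precisely the conclusion of \cref{lem:unipotenttight} applied to the sequence $\{F_j\}$ of centered intervals and the measure $\mu$. In particular, $\{F_j\ast\mu\}$ is precompact in the weak-$*$ topology, so the subsequential limits referred to in~(2) exist, and assertion~(2) is the substantive point. For~(2) the plan is to follow \cite[Proposition~6.2(b)]{BFH}, whose argument goes through unchanged in the present (possibly non-cocompact) setting once~(1) is available.

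Fix a subsequential limit $\nu=\lim_m F_{j_m}\ast\mu$. First I would record that $\nu$ is $U$-invariant: for $u\in U$, the measures $u_\ast(F_j\ast\mu)$ and $F_j\ast\mu$ differ in total variation by at most $m_U(uF_j\,\triangle\,F_j)/m_U(F_j)$, which tends to $0$ because $\{F_j\}$ is a \folner sequence in~$U$; hence $u_\ast\nu=\nu$. Next, since $A$ normalizes~$U$ and $\mu$ is $A$-invariant, the substitution $g\mapsto aga^{-1}$ in the integral defining $F_j\ast\mu$ gives, for every $a\in A$,
\[
 a_\ast(F_j\ast\mu)=(aF_ja^{-1})\ast\mu ,
\]
the Jacobian $|\det\Ad(a)|_{\lieu}|$ of conjugation cancelling against the identity $m_U(aF_ja^{-1})=|\det\Ad(a)|_{\lieu}|\,m_U(F_j)$. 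By \cref{rem:interval}\,(1) each $aF_ja^{-1}$ is again a centered interval in~$U$ (relative to a possibly different regular basis), and $\{aF_ja^{-1}\}$ is again a \folner sequence in~$U$ since $g\mapsto aga^{-1}$ is an automorphism of~$U$.

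The heart of the matter is to show that $\lim_m(aF_{j_m}a^{-1})\ast\mu$ equals $\nu$, i.e.\ that the limiting measure does not depend on which \folner sequence of centered intervals one averages over. For this I would invoke Ratner's theorems on unipotent flows \cite{MR1403920}: by the equidistribution theorem applied to the action of the unipotent group~$U$, for $\mu$-a.e.\ $x$ there is a closed subgroup $L_x\supseteq U$ such that $L_xx$ is closed of finite volume and $F_j\ast\delta_x$ converges to the homogeneous probability measure $m_{L_xx}$ on $L_xx$ for every \folner sequence $\{F_j\}$ of centered intervals in~$U$; integrating in~$x$ and using the uniform tightness from part~(1) to pass to the limit under the integral, one obtains $F_j\ast\mu\to\int_{G/\Gamma}m_{L_xx}\,d\mu(x)$ with a limit that is the same for every such sequence. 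Applying this to both $\{F_{j_m}\}$ and $\{aF_{j_m}a^{-1}\}$ yields $a_\ast\nu=\lim_m(aF_{j_m}a^{-1})\ast\mu=\int m_{L_xx}\,d\mu=\lim_m F_{j_m}\ast\mu=\nu$, and since $a\in A$ was arbitrary, $\nu$ is $A$-invariant. I expect this last step to be the main obstacle: one needs the full strength of Ratner's classification (as packaged in \cite[Proposition~6.2(b)]{BFH}) to see that averaging a fixed $A$-invariant measure~$\mu$ over a \folner sequence in~$U$ produces a limit insensitive to the shape of the averaging sets, so that replacing those sets by their $a$-conjugates — the only effect of pushing forward by~$a$, given the $A$-invariance of~$\mu$ — changes nothing.
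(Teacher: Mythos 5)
Your proposal is correct and takes essentially the same route the paper intends: part (1) is immediate from \cref{lem:unipotenttight}, and for part (2) you reduce via $a_*(F_j\ast\mu)=(aF_ja^{-1})\ast\mu$ and \cref{rem:interval}(1) to showing the limit is insensitive to the choice of Følner sequence of centered intervals, which you obtain from pointwise equidistribution of translates of centered boxes in unipotent groups — precisely the Ratner/Shah results the paper flags after the proposition. The one spot worth tightening in a final write-up is the "pass to the limit under the integral" step: bounded convergence (not tightness) lets you exchange $\lim_j$ and $\int\cdot\,d\mu$ when testing against a bounded continuous $f$, while tightness is what upgrades this to weak-$*$ convergence of probability measures by ensuring no mass escapes.
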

 We remark that conclusion \pref{jlkjlkj} employs the equidistribution theorem of Ratner and its extension by Shah; see \cite{MR1291701,MR1106945}.

\section{Siegel sets, cocycles, and induced actions}
\label{SiegelSetSect}
We return to the standing hypotheses on $G,\Gamma, F$, and $\hat \Gamma\subset F_\Z$ as in \cref{hyp2}.
\subsection{Siegel sets and Siegel fundamental sets}
We recall the well-known construction of fundamental sets for arithmetic lattices by the use of Siegel sets; see for instance \cite[\S12--\S13, pp.~85--94]{MR0244260}, \cite[III.1--III.2]{MR2189882}, or \cite[II.4]{MR2590897} for discussion and background.
Fix a maximal $\Q$-split torus $\bfS$ in $\bfF$ and fix a minimal parabolic $\Q$-subgroup~$\bfP$ of~$\bfF$  containing $\bfS$.
Let $P= \bfP(\R)$ and $S= \bfS(\R)$.   All such choices of $P$ and $S$ are conjugate via $F_\Q$.
 Fix a Cartan involution $\theta$ of $F= \bfF(\R)$.  There exists a unique conjugate (over $N_P:= \Rad_u(P)$) of $S^\circ$ in $P$ which is $\theta$-invariant; denote this conjugate by $A_P= A_{\bfP, \theta}$. In most applications in later sections, we will   select $\theta$ so that $S$ is $\theta$-invariant and thus $A_{P}= S^\circ$.  However, for the constructions in this section, this is not strictly required.

We may  decompose $P = (L \times A_P) \ltimes N_P$ where $N_P=\Rad_u(P)$ and $(L\times A_P)$ are conjugate by $N_P$ to $\Q$-subgroups appearing in a choice of rational Langlands decomposition of $P$.  If $S=\bfS(\R)$ is assumed $\theta$-invariant, then $(L \times A_P) \ltimes N_P$ is a rational Langlands decomposition (i.e.\ a decomposition into $\Q$-subgroups) as defined in \cref{sec:levilang}.
From \cref{MinPar}, the minimality of~$P$ implies that any conjugate of $L$ which is defined over $\Q$ will be $\Q$-anisotropic.
Recall that $\Phi(A_P,F)$ denotes the set of weights of~$F$ with respect to~$A_P$. Fix an order on $\Phi(A_P,F)$ determined by $N_P$; that is, select the order so that root spaces corresponding to positive roots $\Phi^+(A_P,F)$ are contained in the Lie algebra of~$N_P$.  Recall that each $\beta\in \Phi(A_P,F)$ is a homomorphism $\beta\colon A_P\to \R^+$.

For $t\in \R^+$, write
	$$A_t := \{\, a\in A_P \mid \text{$\beta(a) < t$ for all $\beta\in \Phi^+(A_P,F)$} \,\} .$$
Let $K = \{ g\in F: \theta(g) = g\} \subset F $ be the maximal compact subgroup determined by $\theta$.
A \emph{Siegel set} (see \cite[Definition 12.3]{MR0244260} or \cite[Definition 4.1]{MR2590897}) in $F$ relative to the choice of $P$ and~$\theta$ is a set of the form
$$\Sieg:= K A_t \, \mathcal C$$
where $t \in \R^+$ and $\mathcal C$~is any compact subset of $L^\circ \ltimes N_P$. We remark that we may alternatively define Siegel sets where $\mathcal C$ is taken to be any compact subset of $P^\circ$; in this case, one may take $A_t= A_1$ to be a (negative) fundamental chamber. See discussion in \cite[p.\ 68]{MR2590897} or \cite[12.4]{MR0244260}.

Fix a choice of minimal parabolic $\Q$-subgroup~$P$ and Cartan involution~$\theta$ as above.
  Siegel sets in $F$ constructed relative to $P$ and $\theta$ have the following key properties:
\begin{proposition}[{\cite[Theorems~13.1 and~15.4, pp.~90 and~103]{MR0244260}}] \label{sieg}
\ 
\begin{enumerate}
\item\label{sieg-cover} (Covering property)
There exists a Siegel set~$\Sieg$ and a finite set $C\subset F_\Q$ such that
	$F = \Sieg \, C \, \hat \Gamma $.

\item  \label{sieg-separation}(Separation property)
For any Siegel sets~$\Sieg_1$ and~$\Sieg_2$ in $F$, any $g\in F_\Q$, and any finite set $C\subset F_\Q$, the set
$$ \left\{\gamma\in F_\Z : (\Sieg_1\, g)\cap \left( \Sieg_2 \, C \, \gamma\right)\neq \emptyset \right\}$$
is finite.
\end{enumerate}
\end{proposition}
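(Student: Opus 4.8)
The two assertions are the classical reduction theory of Borel and Harish--Chandra, and the most transparent route is to deduce them from the case $\bfF = \bfGL(n)$ by means of a $\Q$-rational embedding; a fully intrinsic treatment using only the $\Q$-root datum of $\bfF$ is also possible (and is the one carried out in \cite[\S\S12--15]{MR0244260}), but the transfer argument isolates the essential ingredients most cleanly. Throughout one may replace $\hat\Gamma$ by any commensurable subgroup, since this only enlarges the finite set $C$ appearing in both assertions; so it suffices to treat $\hat\Gamma = F_\Z$.

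\emph{The case of $\bfGL(n)$.} Take $\bfS$ the diagonal torus, $\bfP$ the upper-triangular Borel, and $\theta(g)=(g^T)\inv$, so $K=\mathrm O(n)$, $A_P$ is the group of positive diagonal matrices, and $N_P$ is the group of unipotent upper-triangular matrices; here the $\Q$-anisotropic part $L$ of $\bfP$ is trivial, consistent with the shape $\Sieg = KA_t\,\mathcal C$, $\mathcal C\subset N_P$ compact. Minkowski--Hermite reduction asserts that every lattice in $\R^n$ admits a reduced basis whose change-of-basis matrix lies in a fixed Siegel set $\Sieg_0=KA_t\,\mathcal C$; since $\GL(n,\R)/\GL(n,\Z)$ is the space of all lattices in $\R^n$, this is the covering property $\GL(n,\R)=\Sieg_0\,\GL(n,\Z)$ with $C$ trivial. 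For the separation property one invokes the basic inequality of reduction theory: if $g\in\Sieg_1$ and $g\gamma\in\Sieg_2$ with $\gamma\in\GL(n,\Z)$, then the diagonal entries of the $A_P$-component of $g$ and those of $g\gamma$ are bounded above and below by one another up to fixed multiplicative constants. Combining this control of the ``height'' with the boundedness of the $K$- and $N_P$-parts of $\Sieg_1,\Sieg_2$ and the discreteness of $\GL(n,\Z)$ in $\GL(n,\R)$ confines $\gamma$ to a finite set; translating $\Sieg_1$ by the fixed $g\in F_\Q$ and replacing $\Sieg_2$ by its finitely many $C$-translates changes nothing.

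\emph{Transfer to general $\bfF$.} Choose a $\Q$-embedding $\rho\colon\bfF\hookrightarrow\bfGL(n)$ with $\rho(F_\Z)\subset\GL(n,\Z)$, carrying $\bfP$ into the upper-triangular subgroup and $\bfS$ into the diagonal torus, and such that a maximal compact of $\rho(F)$ lies in a conjugate of $\mathrm O(n)$, so that $\theta$ on $F$ is (conjugate to) the restriction of the standard Cartan involution on $\GL(n,\R)$. Then $\rho$ maps a Siegel set of $F$ into a Siegel set of $\GL(n,\R)$, and conversely $\rho(F)\cap\bigl(\text{Siegel set of }\GL(n,\R)\bigr)$ is contained in finitely many $F_\Q$-translates of a Siegel set of $F$. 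Granting this, the covering property for $F$ follows by intersecting $\GL(n,\R)=\Sieg_0\,\GL(n,\Z)$ with $\rho(F)$ (the finite set $C$ collecting the $F_\Q$-translates that appear), and the separation property for $F$ follows by pulling back its $\bfGL(n)$-counterpart along $\rho$, using properness of $\rho$ and discreteness of $F_\Z$.

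\emph{Main obstacle.} The delicate point is precisely the comparison of Siegel sets in the transfer step: to see that $\rho(F)$, intersected with a Siegel set of the ambient $\bfGL(n)$, meets only finitely many $F_\Q$-cosets of a Siegel set of $F$. This requires understanding how the $A_P$-directions of $F$ sit inside the diagonal torus of $\bfGL(n)$ — equivalently, expressing the $\Q$-roots of $\bfGL(n)$ in terms of those of $\bfF$ — together with the finiteness of the number of $F_\Z$-orbits on the relevant coset space (a manifestation of the finiteness of class numbers), which is what produces the finite set $C$ in the covering property. Both ingredients are supplied by \cite[Theorems~12.3, 13.1, and 15.4]{MR0244260}, to which we refer for the details.
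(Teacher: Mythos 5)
This proposition is quoted from Borel's \emph{Introduction aux groupes arithm\'etiques} and used as a black box; the paper gives no proof of it, so there is no argument in the paper to compare yours against. Your sketch of the classical Borel argument is broadly accurate: Minkowski--Hermite reduction and a height comparison for $\GL(n)$, followed by a $\Q$-rational embedding compatible with $\theta$, $\bfP$, and $\bfS$, with the Borel--Harish-Chandra finiteness of $F_\Z$-orbits on the relevant rational coset spaces producing the finite set~$C$. You are also right to flag the transfer step as the delicate point.

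One step of the transfer is too loose as stated, though. The covering property for $F$ does not follow ``by intersecting $\GL(n,\R)=\Sieg_0\,\GL(n,\Z)$ with $\rho(F)$'': if $\rho(f)=s\gamma$ with $s\in\Sieg_0$ and $\gamma\in\GL(n,\Z)$, there is no reason for $\gamma$ to lie in $\rho(F)$, so neither factor lies in the subgroup, and the matrix in $\GL(n,\Z)$ that reduces the lattice $\rho(f)\Z^n$ typically does not preserve $\rho(F)$. What Borel actually does in \cite[\S 13]{MR0244260} is run the $\GL(n)$-reduction through a $\Q$-representation of $\bfF$ in which the parabolic $\bfP$ stabilizes a rational point of a projective space, so that the $\GL(n)$-reduction theory controls the \emph{orbit} of that point rather than $\rho(F)$ directly, together with the finiteness theorem to bound the number of cusps. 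Your ``Main obstacle'' paragraph gestures at exactly these ingredients, so this is an imprecision of exposition rather than a logical error on your part --- but someone trying to reconstruct the proof from the sketch alone would get stuck precisely at the intersection step.
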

 If necessary, we may take a larger compact set $\mathcal C\subset L^\circ \ltimes N_P$ in the construction of the Siegel set~$\Sieg$ so that $\mathcal C$ is the closure of its interior $\mathrm{int}(\mathcal C)$. Then there is a finite set $C\subset F_\Q$ such that
$D= \Sieg \cdot C$ has the following properties:
\begin{enumerate}
\item $F= \mathrm{int}(D)\cdot \hat \Gamma$
\item $\{ \gamma\in F_\Z : D\cap D\cdot \gamma\neq \emptyset\}$ is finite.
\end{enumerate}
We call such a set $D$ a \emph{Siegel fundamental set} in $F$.
Note that any such $D$ contains a fundamental domain for the right action of $\hat \Gamma $ on $F$.

\begin{remark}\label{AWBcantdokindergardenalgebra}If $P'$ is another minimal parabolic $\Q$-subgroup, there is $g\in \bfF(\Q)$ such that $P' = gPg\inv$.   Write $g = klan$ where $k\in K$, $l\in L$, $a\in A_P$, and $n\in N_P$.  We have that $A_{P'}'=kA_Pk\inv$ is the unique $\theta$-invariant subgroup of  $P'$ conjugate to a maximal $\Q$-split torus in $P'$ and $N_{P'}:= kN_Pk\inv $ is the unipotent radical of $P'$.   Write $A'= A_{P'}$ and let $\Phi^+(A', F)$ be determined by $N_{P'}$.  Then $kA_tk\inv = A'_t$.

A standard calculation shows that if $\Sieg:= K A_t \, \mathcal C$ is a Siegel set constructed relative to $P$ and $\theta$  (where $\mathcal C$ is a compact subset of $P^\circ$)
then   $\Sieg \cdot g\inv $ is a Siegel set relative to $P'$ and $\theta$.
Indeed
  \begin{align*}
 \Sieg \cdot g\inv  &= \Sieg \cdot n\inv a\inv l\inv k\inv \\
 &= K A_t   (\calC n\inv l\inv a\inv)k\inv \\
 &= K A_{t}  \calC' k\inv \\
  &= K (k A_{t} k\inv) (k \calC' k\inv) \\
  &= K  A'_{t}  (g \calC'' g\inv)
 \end{align*}
 where $\calC'$ and $\calC''$ are compact in $P^\circ$ whence  $(g \calC'' g\inv)$ is compact in $(P')^\circ$.

Thus, if $P'$ is a minimal parabolic $\Q$-subgroup and if $D'$ is a Siegel fundamental set constructed relative to $P'$ and $\theta$ then there is a Siegel fundamental set $D$ constructed relative to $P$ and $\theta$ with $D'\subset D$.  In particular, the construction of Siegel fundamental sets in $F$ depends only on the choice of Cartan involution $\theta$.
\end{remark}

As before, let $\wtd K\subset G$ be the connected subgroup in $G$ projecting to $K$ under $G\to F$. Recall that $\wtd K$ contains the center $\calZ$ of $G$. Fix a compact subset $ K_0\subset \wtd K$ that projects onto $K$ under the map $G\to F$. Recall that both $A_P$ and $N_P$ are simply connected subgroups of $F$; we abuse notation and identify $A_P$ and $N_P$ with the associated connected subgroups of $G$ via the map $G\to F$. Let $\wtd L$ be the connected subgroup of $G$ projecting to $L^\circ$.
 A \emph{Siegel set} in $G$ relative to the choice of $P$ and~$\theta$ is a set of the form
$$\wtd \Sieg:= K_0 A_t \, \mathcal C$$
where $t \in \R^+$ and $\mathcal C$~is any compact subset of $\wtd L\ltimes N_P$. For any Siegel set $\Sieg$ in $F$, there exists a Siegel set $\wtd \Sieg$ in $G$ whose image under the map $G\to F$ contains $\Sieg$.
We may similarly construct a Siegel set~$\wtd \Sieg$ in $G$ and a finite set $C\subset G_\Q$ such that $\wtd \Sieg\cdot C$ satisfies the covering and   separation properties of \cref{sieg}. Then $$\wtd D= \wtd \Sieg C$$ is a {Siegel fundamental set} in $G$ for the right-action of $\Gamma$.

We note that given a connected reductive $\Q$-subgroup $H$ of $F$, it is possible to construct Siegel sets for $H$ that are contained in and  well-adapted to Siegel fundamental sets for $F$.  See \cite[Theorem~4.1]{MR3803710} for a general formulation of this fact.
In \cref{SubexpForRank1Subgroups}, we will need this fact when we study the action of (the restriction of a cocycle to)  $\Gamma\!_H$ where $H$ is a standard $\Q$-rank-$1$ subgroup of~$F$.
For the special case of $\Q$-rank-1 subgroups, we give an elementary proof of a stronger fact:  Siegel sets in $H$ may be chosen inside Siegel sets for $F$ relative to some choice of $\theta$ and $P$.

\begin{lemma} \label{SiegelForSubgroup}
Let $H$ be a standard $\Q$-rank-$1$ subgroup of~$F$. Then a Cartan involution~$\theta$ of~$F$ and a maximal $\Q$-split torus $S$ in $F$ can be chosen with the following properties:
	\begin{enumerate}
	\item \label{SiegelForSubgroup-theta}
	$H$ is $\theta$-invariant and the restriction $\restrict{\theta}{H}$ is a Cartan involution of~$H$;
	\item \label{SiegelForSubgroup-PSK}
	$P_H = P \cap H$ is a minimal parabolic $\Q$-subgroup of~$H$,
	$S_H = S \cap H$ is a $\restrict{\theta}{H}$-invariant maximal $\Q$-split torus of~$P_H$,
	and
	$K_H = K \cap H$ is the $\restrict{\theta}{H}$-invariant maximal compact subgroup of~$H$;
	\item \label{SiegelForSubgroup-Siegel}\label{Siegel}
	each Siegel set of~$H$ relative to $P_H$ and $\restrict{\theta}{H}$ is contained in some Siegel set of~$F$ relative to~$P$ and~$\theta$.
	\end{enumerate}
Moreover, if $Q$ is the parabolic subgroup in \cref{Standard=Levi} then $S\subset Q$.
\end{lemma}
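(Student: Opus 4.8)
The plan is to manufacture the Cartan involution~$\theta$ and the maximal $\Q$-split torus~$S$ of~$F$ from data already adapted to~$H$, to choose the minimal parabolic $\Q$-subgroup~$P$ of~$F$ compatibly, and then to read off the three assertions; the genuinely delicate point will be the last one, the comparison of Siegel sets. First I would fix, as in \cref{StandardSubgrpDefn}, a maximal $\Q$-split torus of~$F$ and a $\Q$-root~$\alpha$ with $\tfrac12\alpha\notin\Phi$ such that $H=H_\alpha=\langle U^{[\alpha]},U^{[-\alpha]}\rangle$. Choose a maximal compact subgroup~$K_H$ of~$H$, extend it to a maximal compact subgroup~$K$ of~$F$, and let $\theta$ be the Cartan involution of~$F$ whose fixed-point set is~$K$. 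The argument recalled in the footnote to \cref{kakaReductive} (cf.\ \cite[Theorem~7.3]{MR69830})---the Killing-orthogonal complement of the Lie algebra of~$K_H$ inside the Lie algebra~$\lieh$ of~$H$ lies in the $(-1)$-eigenspace~$\liep$ of~$\theta$---shows that $\lieh$ is $\theta$-stable; hence $H$ is $\theta$-invariant, $\theta|_H$ is a Cartan involution of~$H$, and its fixed-point set is $K\cap H=K_H$. This proves assertion~\ref{SiegelForSubgroup-theta} and the statement about~$K_H$ in assertion~\ref{SiegelForSubgroup-PSK}.

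Next I would build~$S$. Let $S_H$ be a $\theta|_H$-invariant maximal $\Q$-split torus of~$H$; since $\rank_\Q H=1$ it is one-dimensional, $\theta$ inverts it, and its Lie algebra lies in~$\liep$. Using the conjugacy of maximal $\Q$-split tori of~$F$ over~$F_\Q$ together with the conjugacy of Cartan involutions, one may choose a maximal $\Q$-split torus~$S$ of~$F$ that contains~$S_H$ and is itself carried by a $\theta$-stable maximal $\R$-split torus inverted by~$\theta$; this existence statement is the technical core of the general compatibility between reduction theory on~$F$ and on a subgroup (\cite[Theorem~4.1]{MR3803710}), and in the present $\Q$-rank-one case it can be verified directly. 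Then $S$ is $\theta$-invariant, and $S\cap H$, being a $\Q$-split torus of~$H$ that contains the maximal one~$S_H$, must equal~$S_H$; thus $S_H=S\cap H$ is $\theta|_H$-invariant, as required. Finally, since the parabolic~$Q$ of \cref{Standard=Levi} contains a maximal $\Q$-split torus of~$F$ (\cref{MaxSplitInP}) and all such are conjugate over~$F_\Q$, I would arrange $S\subseteq Q$, which gives the ``moreover'' clause.

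For the minimal parabolics, let $P_H$ be the minimal parabolic $\Q$-subgroup of~$H$ with $S_H\subseteq P_H$ whose unipotent radical is~$U^{[\alpha]}$ (after replacing~$\alpha$ by~$-\alpha$ if necessary), and pick an ordering of $\Phi(S,F)$ in which~$\alpha$ is positive and whose restriction to $\Phi(S_H,H)$ is the ordering determined by~$P_H$; let $P=P_\emptyset$ be the associated minimal parabolic $\Q$-subgroup of~$F$ of \eqref{eq:parahoop}. Since $H$ is generated by~$U^{[\alpha]}$ and~$U^{[-\alpha]}$, the subalgebra~$\lieh$ meets only the $S$-weight spaces $\lieg^{0}$, $\lieg^{\pm\alpha}$, $\lieg^{\pm 2\alpha}$, so a direct computation gives that the Lie algebra of $P\cap H$ is $(\lieh\cap\lieg^{0})\oplus\lieg^{[\alpha]}$, which is exactly the Lie algebra of~$P_H$; hence $P\cap H=P_H$, finishing assertion~\ref{SiegelForSubgroup-PSK}. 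In particular $A_{P_H}=S_H^{\circ}\subseteq S^{\circ}=A_P$ and $N_{P_H}=U^{[\alpha]}\subseteq N_P$.

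It remains to prove the Siegel containment, assertion~\ref{SiegelForSubgroup-Siegel}. Given a Siegel set $K_H\,A^H_t\,\calC_H$ of~$H$ relative to~$P_H$ and~$\theta|_H$ (with $\calC_H$ compact in $L_H^{\circ}\ltimes U^{[\alpha]}$ and $A^H_t\subseteq A_{P_H}$ the torus chamber), I would use the inclusions $K_H=K\cap H\subseteq K$, $U^{[\alpha]}\subseteq N_P$ and $A^H_t\subseteq A_{P_H}\subseteq A_P$, together with the fact that $A_{P_H}=S_H^{\circ}$ centralizes~$L^{\circ}$ (so that the torus part can be commuted past the compact factors), to reduce matters to a single claim: that the Siegel chamber~$A^H_t$ of~$H$, regarded as a subset of~$A_P$, is absorbed---up to a bounded correction lying in $L^{\circ}\ltimes N_P$---by a Siegel chamber~$A_{t'}$ of~$F$ for suitable~$t'$. \textbf{Establishing this last claim is the main obstacle}: it requires a careful analysis, in the chosen ordering, of how the one-parameter subgroup~$S_H^{\circ}$ sits relative to the positive $\Q$-roots of~$F$ and the inequalities defining~$A_{t'}$, and this is precisely the elementary $\Q$-rank-one computation that here replaces the general reduction-theoretic input of \cite[Theorem~4.1]{MR3803710}. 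Once it is in place, combining the three inclusions with an enlargement of the compact piece of the $F$-Siegel set yields the desired containment, completing the proof of \cref{SiegelForSubgroup}.
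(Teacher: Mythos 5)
There is a genuine gap at exactly the point you flag as "the main obstacle," and the paper's proof does something essential here that you omit. You only constrain the ordering of $\Phi(S,F)$ so that $\alpha$ is positive and its restriction to $\Phi(S_H,H)$ matches $P_H$; but that is one linear condition on a codimension-zero cone of choices, and it does \emph{not} force $A_H=S_H^{\circ}$ to lie in the union of the closed positive and negative Weyl chambers of $A=S^{\circ}$. The paper's proof inserts precisely this extra step: \textbf{since $A_H$ is one-dimensional, an element of the Weyl group of $\Phi(A,F)$ (which fixes $S$, hence $S_H$) is applied to choose $\Delta$ so that $A_H\subset A^+\cup A^-$, and then, up to reversing the order, $\beta$ is positive.} With that placement, the ray $A_{H,t}$ automatically sits inside the negative chamber (where every positive $\Q$-root is bounded above by~$1$), and the containment $A_{H,t}\subseteq A_{t'}$ is immediate; without it, the claim is actually false. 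For a concrete counterexample to the unconstrained version: in $\liesl(3,\R)$ with standard simple roots $\alpha_1,\alpha_2$ and $H=H_{\alpha_1}$, the coroot line $\R\alpha_1^{\vee}$ satisfies $\alpha_1(\alpha_1^{\vee})=2>0$ but $\alpha_2(\alpha_1^{\vee})=-1<0$, so it does not lie in $\overline{A^+}\cup\overline{A^-}$ for that ordering; along the ray $A_{H,t}$ (going in the $\alpha_1$-negative direction) the value of $\alpha_2$ tends to $+\infty$, which escapes every $A_{t'}$. One must change $\Delta$ (keeping $\alpha_1$ positive but not simple) to fix this, and that Weyl-group adjustment is the missing idea.

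A secondary but related difference: you fix $\theta$ first (from $K_H\subset K$) and then hunt for a compatible maximal $\Q$-split torus~$S$, invoking a general compatibility theorem from \cite{MR3803710}. The paper goes the other way---it fixes $S$ from the definition of $H_\beta$, makes the Weyl-group adjustment of the ordering, and \emph{then} chooses $\theta$ so that $LS$ is $\theta$-invariant. The paper's order of operations is not just a stylistic choice: it makes the chamber placement available at a stage where $S$ and $\Delta$ are still free to be modified, and it avoids the black-box appeal to the general compatibility result (which the lemma is partly meant to replace in the $\Q$-rank-one case). Your route could in principle be repaired by inserting the Weyl-group step at the point where you pick the ordering (Weyl elements normalize $S$, so they do not disturb $S_H$ or $\theta$), but as written the crucial step is absent and the proof does not close.
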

We caution the reader that  if $Q$ is the parabolic subgroup in \cref{Standard=Levi} we typically cannot take $P\subset Q$.

\begin{proof}[Proof of \cref{SiegelForSubgroup}]
Fix a maximal $\Q$-split torus $S$ in $F$ such that $H= H_\beta$ is generated by the root subgroups $U^{[\pm\beta]}$ for some rational root $\beta\in \Phi(A, F)$ relative to $A= S^\circ$.
Let $S_H := S \cap H$ and $A_H:= S_H^\circ = A\cap H$.
Since $A_H$ is one-dimensional, acting by the Weyl group of $\Phi(A, F)$ we may select an ordering and associated collection of simple roots $\Delta$ for $\Phi(A, F)$ such that $A_H$ is contained in the union of the closed fundamental chambers in $A$,
 \begin{align*}A^+&:=\{a\in A: \text{$ \alpha(a)\ge 1 $ for all $\alpha\in \Delta$}\},\\ A^-&:=\{a\in A: \text{$ \alpha(a)\le 1 $ for all $\alpha\in \Delta$}\}.\end{align*}
 Up to reversing the order, we may assume $\beta\in \Phi(A,F)$ is a positive root.
Let $P$ be the minimal parabolic $\Q$-subgroup~$P$ of~$F$ with rational Langlands decomposition $P = (L \times A ) \ltimes N_P$
where the Lie algebra of $N_P$ consists of roots spaces associated with positive roots given by the ordering associated with the $\Delta$.
Choose a Cartan involution~$\theta$ on $F$ such that $LS$~is $\theta$-invariant.

With the above choices, we verify the conclusions of the lemma.

\pref{SiegelForSubgroup-theta}~The Cartan involution~$\theta$ acts by inversion on every $\R$-split torus it leaves invariant. It follows that $\theta$ interchanges $U^{[\beta]}$ and~$U^{[-\beta]}$ and thus $H= H_\beta$ and $S_H$ are $\theta$-invariant.

\pref{SiegelForSubgroup-PSK}~If $K$ is a maximal compact $\theta$-invariant subgroup of $F$ then, since $H$ is $\theta$-invariant, $K \cap H$ is a maximal compact subgroup of~$H$.

The minimal parabolic $\Q$ subgroup~$P$ contains the root spaces corresponding to all positive roots associated to $\Delta$ and thus contains $U^{[\beta]}$.
We note that $S = S_H \times C_S(H)$. Thus $C_H(S_H)\subset C_H(S)$
and so $C_H(S_H) \subseteq C_G(S) \subseteq P$. Therefore $P$ contains the minimal parabolic subgroup $C_H(S_H) \, U^{[ \beta]}$ of~$H_\beta$.

\pref{SiegelForSubgroup-Siegel}~From our choice of simple roots $\Delta$, for any $t\in \R^+$ the ray $$A_{H,t}:=\{\, a\in A_H \mid \text{$\beta(a) < t$ for all $\beta\in \Phi^+(A_P,F)$} \,\} $$
 is contained in $A_{t'}$ for some $t'$. Since we also have $K_H \subseteq K$ and ${P_H} \subset P$, the conclusion follows.
\end{proof}

\subsection{Coarse geometry and partitions of unity} \label{PartUnitySect}
As before, fix a reference right-$F$-invariant Riemannian metric on $F$ and induced distance function $d$ on $F$. Also assume the metric is bi-invariant by the center of $F$.  We lift the metric to a right-invariant, $\calZ$-bi-invariant metric on $G$ with induced distance $d$. All definitions and results below hold up to bi-Lipschitz equivalence and thus are independent of the choice of $d$.

As above, let $\len_\Gamma \colon \Gamma\to \N$ be the word-length function on $\Gamma$ relative to some fixed symmetric, finite generating set of~$\Gamma$. This induces a word-metric $d_\Gamma$ on $\Gamma$.
Recall from
\cref{cor:QIlift} there is a constant $C > 1$ such that for all $\gamma, \gamma'\in \Gamma$,
\begin{equation}\label{eq:LMR}\frac 1 C \, d(\gamma, \gamma') \le d_\Gamma(\gamma, \gamma') \le C \, d(\gamma, \gamma') .\end{equation}

It is well known that $m_F(\Sieg)<\infty$ for any Siegel set $\Sieg$ in $F$; see for instance \cite[Proposition 4.11, p.~215]{MR1278263}. Moreover, the proof of this fact shows there is some $\tau>0$ such that
	\begin{align} \label{HaarSmallCusps}
	\int_\Sieg e^{\tau d(g,\1) }\ d m_F(g)<\infty
	. \end{align}
This implies that the measure $m_{G/\Gamma}$ on the homogeneous space $F/\hat \Gamma= G/\Gamma$ has exponentially small mass at $\infty$. (See \cref{def:masscusp}).

If $D$ is a Siegel fundamental set in $F$ then the map $D \to F/\hat \Gamma=G/\Gamma$ does not drastically distort distances.  In particular, there  are $C>1$ and $B>0$ such that for all $g\in D$,
$$	d(g,\1) \le C d(g\hat \Gamma, \1 \hat \Gamma) + B .$$
Moreover, if the metric on $F$ is taken left $K$-invariant we can take $C=1$ above. See for example \cite[Theorem C]{MR338456}.
For the reader's convenience we sketch a proof.  One can argue in a single Siegel set.  If $\calC$ is a compact subset of $L^\circ \ltimes N_P$ then, for any $t\ge 1$, the choice of chamber in $A$ ensures the diameter of $a\calC a\inv $ is uniformly bounded over the choice of $a\in A_t$.
Then for $w\in \calC$, $k\in K$, and $a\in A_t$,
$$d(kaw,\1)= d(kawa\inv a,\1)\le \diam K+\diam (a\calC a\inv) +d(a,\1).$$
One then argues (see discussion in \cite[\S III.21]{MR2189882} or \cite[\S5]{MR1906482} especially \cite[Proposition 5.12]{MR1906482}) that  for some $0<t_0<1$ sufficiently small, the partial orbit $x_0 \cdot A_{t_0}$ is a convex subset of $X/\Gamma$ where $x_0=K$ is the origin in $X=K\bs F$.
For a much stronger result see \cite[Theorem~5.7]{MR2066859} and discussion in \cite[III.21.15]{MR2189882}.
Similarly, using that $A_P\subset G$ is chosen $\theta$-invariant and thus geodesic, the bound
\begin{equation}\label{SiegelAlmIsom} d(g,\1) \le C d(g \Gamma, \1 \Gamma) + B. \end{equation}
also holds for all $g$ in any Siegel fundamental set $\wtd D$ in $G$.

For the following construction, we fix a minimal parabolic $\Q$-subgroup~$P$ and Cartan involution~$\theta$ of $G$.
 Let $K$ be the maximal compact subgroup determined by $\theta$ and let $D$ be a Siegel fundamental set in $F$ constructed relative to $P$ and $\theta$. Let $\wtd D$ be a Siegel fundamental set in $G$ relative to $P$ and $\theta$ whose image under $G\to F$ contains $D$.

Let $X$ denote the globally symmetric space $X:= K\bs F= \wtd K\bs G$.
 The locally symmetric space $X/\Gamma$ admits compactification $\overline{X/\Gamma}^{\mathrm{BS}} = {}_\Q\overline{X}^{\mathrm{BS}} \! /\Gamma$, know as the Borel-Serre compactification, as a real-analytic manifold with corners.
 See \cite[\S III.9, pp.~326--338]{MR2189882} 
for details.
In this compactification, neighborhoods of points at infinity are of the form {$(\R_{\ge 0})^{k_1} \times \R^{k_2}$} for some $k_1, k_2$; see \cite[Lemma~III.9.13, p.~362, and Proposition III.9.17, p.~365]{MR2189882}.
 We note that such coordinate neighborhoods of points at infinity in the partial compactification  ${}_\Q\overline{X}^{\mathrm{BS}}$ can be taken to be in the closure in ${}_\Q\overline{X}^{\mathrm{BS}}$ of a single Siegel set in $K\bs F$.
 Since the only torsion elements of $\hat \Gamma$ are contained in $K$, $\hat \Gamma$ acts freely on ${}_\Q\overline{X}^{\mathrm{BS}}$.
In particular, we may form an open cover $\overline{X/\Gamma}^{\mathrm{BS}}$ consisting of sets $\{U_p: p = 1, \dots, \ell\}$ such that every $U_p$ is the injective image of either
 \begin{enumerate}
\item an open set $\wtd U_p \subset X$, or
\item an open set $\wtd U_p$ in the closure of a Siegel set in ${}_\Q\overline{X}^{\mathrm{BS}}$.
 \end{enumerate}

 We may then select a $C^\infty$ partition of unity $\{ \hat \psi_p \mid i=1,\dots, \ell \}$ on $\overline{X/\Gamma}^{\mathrm{BS}}$
 subordinate to the cover $\{U_p\}. $
Let $\rho\colon {}_\Q\overline{X}^{\mathrm{BS}} \to \overline{X/\Gamma}^{\mathrm{BS}}$ be the canonical projection.   For each $1\le i\le p$, the function $\hat\psi_p\circ \rho$ is right-$\Gamma$ invariant and is supported in the open set $\wtd U_p \cdot \Gamma$;  select $\gamma_p$ so that $\wtd U_p\cdot \gamma_p $ meets the image  in $X$ of the Siegel fundamental set $D$ in $F$ fixed above and let $\psi_p = \mathbbm 1_{\wtd U_p \cdot \gamma_p} \cdot (\hat \psi_p \circ \rho)$ denote the lift of $\hat \psi_p$ supported in $\wtd U_p\cdot \gamma_p$.
Similarly select an open cover of $K$ consisting of sets $\{V_q\}$, each of which is the injective image of an open set $\wtd V_q\subset \wtd K$. Let $\{\hat h_q \mid j = 1, \dots, m\}$ be a partition of unity on $K$ subordinate to $\{V_q\}$ and for each $q$, choose a lift $h_q\colon \wtd K\to [0,1]$ whose support intersects a choice of compact fundamental domain for $\wtd K\to K$.
Given $g\in G$, write $g= kan$ and let $$\phi_{p,q}(g) = h_q(k) \psi_p( \wtd K an).$$
Let $\{\phi_i : 1\le i\le N\}$ denote the collection of functions $\{\phi_{p,q}\}$.

 We have the following properties of the above construction which we use in constructions in the next subsections.
\begin{enumerate}
\item For each $i$ and $\gamma \in \Gamma$, write $\phi_{i,\gamma}\colon G\to [0,1]$ for the function
	$$\phi_{i,\gamma}(g) = \phi_i(g\gamma\inv) .$$
 From the assumptions on the supports of $\phi_i$ we have
that the supports satisfy $\supp(\phi_{i,\gamma})\cap \supp(\phi_{i,\gamma'})= \emptyset$ whenever $\gamma\neq \gamma'$.
\item Since the coordinate neighborhoods of points at infinity are in the closures of single Siegel sets, the  separation property of Siegel sets (see also the proof of \cite[Proposition~III.9.17, p.~362]{MR2189882}) shows that, for the Siegel fundamental set~$\wtd D$ in $G$ fixed above, the set
$$\{\, \gamma\in \Gamma \mid \supp (\phi_{i,\gamma})\cap \wtd D\neq \emptyset \text{ for some $i$}\,\} $$
is finite.
\item The collection $\{\,\phi_{i,\gamma} \mid i\in \{1,\dots, N\}, \gamma\in \Gamma\,\}$ is a locally-finite, $\Gamma$-invariant, partition of unity on $G$.
\end{enumerate}

\subsection{Induced actions} \label{InducedSect}
We make precise a number of constructions appearing in the statement of \cref{thm:whatweprove}.

Given a $C^1$ action~$\alpha$ of~$\Gamma$ on a compact manifold~$M$ there is an induced $C^0$ action of~$\Gamma$ by vector-bundle automorphisms on the tangent bundle $TM$. We equip $TM$ with some background $C^\infty$ norm.
We introduce the notion of the induced action and associated constructions in this and in a somewhat more general setting.

To generalize the above setup, fix a compact metric space $(X_0, d_0)$ and let $\calE_0$ be a continuous, finite-dimensional, normed vector bundle over~$X_0$.
We suppose $\Gamma$ acts continuously on~$X_0$ and that the action lifts to a continuous linear cocycle $\calA_0\colon \Gamma\times \calE_0\to \calE_0$. We write $\calA_0(\gamma,x)$ for the linear map between fibers $\calE_0(x)$ and $\calE_0(\gamma\cdot x)$.

For any closed subgroup~$H$ of~$G$, the action of~$\Gamma$ restricts to a continuous action of $\Gamma\!_H$ on~$X_0$. 
We then obtain actions of $\Gamma\!_H$ and~$H$ on $H \times X_0$ given by
	\begin{align*}
	\text{$(g,x)\cdot \gamma = (g\gamma, \gamma\inv \cdot x)$
	{and}
	$g'\cdot(g,x) = (g'g,x)$}
	 \end{align*}
	 for $g'\in H$ and $\gamma\in \Gamma\!_H$.
As these actions commute, the $H$-action on $H\times X_0$ descends to a well-defined $H$-action, the \emph{induced $H$-action}, on the quotient $X_H = (H \times X_0)/\Gamma\!_H$.
We let $\calE_H = (H\times \calE_0)/\Gamma\!_H$ be the similarly defined finite-dimensional vector bundle over~$X_H$ with induced $H$-action.
Write $\pi_{X_H}\colon X_H \to H/\Gamma\!_H$ and $\pi_{\calE_H}\colon \calE_H \to X_H$ for the canonical projections.
 Note that $X_H$ is a fiber-bundle over $X/\Gamma\!_H$ with fiber homeomorphic to $X_0$.

When $H = G$, we omit subscripts and write $X = X_G$ and $\calE=\calE_G$.
The bundle $\calE$ admits a continuous linear cocycle $\calA\colon G\times \calE\to \calE$ which factors over the $G$-action on~$X$. Given closed subgroups $N\subset H\subset G$, we have a well-defined restriction of the cocycle $\calA$ to $N\times \calE_H\to \calE_H$.

Given $g\in G$ and $ x\in X_0$, we write $[g,x]\in X$ for the equivalence class in $X= (G\times X_0 )/\Gamma$.

\subsection{Construction of norms on $\calE$ and metric on $X$}\label{sec:norms}
We fix some choice of minimal parabolic $\Q$-subgroup~$P$ and Cartan involution~$\theta$ of $F$.
We recall the  partition of unity $\{\phi_{i,\gamma}\}$ of $G$ built relative to the Borel-Serre compactification in \cref{PartUnitySect}.
 Relative to these objects, we construct a norm on the bundle $\calE$ that is well adapted to the dynamics of $\calA$ on $\calE$.
We note that while this norm depends on the choice of $P$ and $\theta$, the dynamical properties of the cocycle $\calA$ will be independent of this choice; see \cref{lem:nottoobad}.

 First, let $\|\cdot\|_0$ denote the norm on $\calE_0$; given $x\in M$, denote by $\| \cdot \|_{0,x}$ the norm on the fiber $\calE_0(x)$. Given $g\in G,x\in X_0$, and $v\in \calE_0(x)$ set
	 	$$ \|v\|_{g,x} := \sum_{i=1}^N\sum_{\gamma\in \Gamma} \phi_{i,\gamma}(g) \, \| \calA(\gamma,x)(v)\|_{0,\alpha(\gamma)x} . $$
Let $\|\cdot \|'$ denote the norm on $G\times\calE_0$ induced by the collection of norms $\{\|\cdot \|_{g,x}\}$.
We collect a number of observations:
\begin{claim} \label{SeveralObservations} \
\begin{enumerate}
\item For any compact subset~$\mathcal{K} \subset G$, $\supp (\phi_{i,\gamma})\cap \mathcal{K} = \emptyset$ 
 for all but finitely many $(i,\gamma)$.

\item \label{SeveralObservations-comparable} \label{comparable}
The norms $\|\cdot \|_{g,x}$ are uniformly comparable on any Siegel fundamental set~$\wtd S$ in $G$: there is a constant $C > 0$, such that for all $g_1,g_2 \in D$, $x \in X_0$, and $v \in \calE_0(x)$, we have
	$$ \frac{1}{C} \| v \|_{g_1,x} \le \| v \|_{g_2,x} \le C \| v \|_{g_1,x} . $$
\item \label{SeveralObservations-33443}
For any $g\in G$, $\hat \gamma \in \Gamma$, and $(x,v)\in \calE_0$
we have $$\|\calA(\hat \gamma,x) v\|_{g,\alpha(\hat \gamma)(x)} = \|v\|_{g\hat \gamma,x}.$$

 In particular, $\Gamma$ acts by isometries on the fibers of  $G\times \calE_0$ whence $\|\cdot \|'$ descends to a norm $\|\cdot \|$ on the  bundle $\calE$ over $X$.
\item \label{SeveralObservations-smooth}
With respect to the norm $\|\cdot\|$ on $\calE$, for every $x\in X$ and any $v\in \calE(x)$, the function $$g\mapsto \|\calA(g,x)(v)\|$$ is $C^\infty$.
\end{enumerate}
\end{claim}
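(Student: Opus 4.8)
The plan is to verify the four items essentially from the definitions, exploiting the three structural properties of the partition of unity $\{\phi_{i,\gamma}\}$ recorded at the end of \cref{PartUnitySect}: local finiteness, $\Gamma$-invariance, and the fact that only finitely many $\phi_{i,\gamma}$ meet a given Siegel fundamental set. First I would dispose of item (1): since each $\supp(\phi_i)$ is contained in one of the sets $\wtd U_p\cdot\gamma_p$ (an open set in $X$, or the closure of a Siegel set), and the family $\{\phi_{i,\gamma}\}$ is locally finite on $G$, any compact $\mathcal K\subset G$ is covered by finitely many of the locally-finite patches, hence meets only finitely many supports; this is immediate from the construction and makes the sum defining $\|v\|_{g,x}$ a finite sum for each fixed $g$, so the norm is well-defined. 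For item (4), with $g$ fixed near a point $g_0$, only finitely many $\phi_{i,\gamma}$ are nonzero on a neighborhood of $g_0$ by item (1), each $\phi_{i,\gamma}$ is $C^\infty$ (being built from the $C^\infty$ partition of unity on the Borel–Serre compactification and the $C^\infty$ partition $\{h_q\}$ on $\wtd K$, pulled back via the real-analytic chart structure), and for fixed $x$ and $v$ each term $g\mapsto\phi_{i,\gamma}(g)\,\|\calA(\gamma,x)(v)\|_{0,\alpha(\gamma)x}$ is $C^\infty$ in $g$ because the second factor is a constant in $g$; a finite sum of $C^\infty$ functions is $C^\infty$.

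Item (3) is the cocycle/equivariance computation. I would write, for $\hat\gamma\in\Gamma$,
$$\|\calA(\hat\gamma,x)v\|_{g,\alpha(\hat\gamma)(x)}=\sum_{i,\gamma}\phi_{i,\gamma}(g)\,\|\calA(\gamma,\alpha(\hat\gamma)x)\calA(\hat\gamma,x)v\|_{0,\alpha(\gamma\hat\gamma)x}=\sum_{i,\gamma}\phi_{i,\gamma}(g)\,\|\calA(\gamma\hat\gamma,x)v\|_{0,\alpha(\gamma\hat\gamma)x},$$
using the cocycle identity $\calA(\gamma,\alpha(\hat\gamma)x)\calA(\hat\gamma,x)=\calA(\gamma\hat\gamma,x)$; then reindex $\gamma':=\gamma\hat\gamma$ and use the $\Gamma$-invariance of the partition of unity, namely $\phi_{i,\gamma}(g)=\phi_i(g\gamma\inv)=\phi_i((g\hat\gamma)(\gamma')\inv)=\phi_{i,\gamma'}(g\hat\gamma)$, to recognize the result as $\|v\|_{g\hat\gamma,x}$. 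Since $\|\calA(\hat\gamma,x)v\|_{g,\alpha(\hat\gamma)x}=\|v\|_{g\hat\gamma,x}$, the map $(g,v)\mapsto(g\hat\gamma,v')$ with $v'=\calA(\hat\gamma,x)v$ intertwining the norms shows $\Gamma\!$ acts fiberwise isometrically on $G\times\calE_0$, so $\|\cdot\|'$ descends to a well-defined norm $\|\cdot\|$ on $\calE=(G\times\calE_0)/\Gamma$.

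For item (2), the expected main obstacle: I need the norms $\|\cdot\|_{g,x}$ to be uniformly comparable as $g$ ranges over a Siegel fundamental set $\wtd D$, with a constant independent of $x\in X_0$ and $v$. The key point is that by property (2) of the partition of unity at the end of \cref{PartUnitySect}, the set $\{\gamma\in\Gamma:\supp(\phi_{i,\gamma})\cap\wtd D\neq\emptyset\ \text{for some }i\}$ is \emph{finite}; call it $\{\hat\gamma_1,\dots,\hat\gamma_m\}$. Hence for every $g\in\wtd D$ the finite sum defining $\|v\|_{g,x}$ only involves terms $\|\calA(\hat\gamma_j,x)(v)\|_{0,\alpha(\hat\gamma_j)x}$, $1\le j\le m$, with coefficients $\phi_{i,\hat\gamma_j}(g)\in[0,1]$ summing to $1$ (since $\{\phi_{i,\gamma}\}$ is a partition of unity). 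Each map $w\mapsto\|\calA(\hat\gamma_j,x)(w)\|_{0,\alpha(\hat\gamma_j)x}$ is a norm on $\calE_0(x)$, and these finitely many norms are mutually comparable on the finite-dimensional space $\calE_0(x)$ with a ratio bounded by $\max_{j,j'}\sup_{x}\|\calA(\hat\gamma_j,x)\|\cdot\|\calA(\hat\gamma_{j'},x)\inv\|$, which is finite by continuity of $\calA$ and compactness of $X_0$ (and of the finite set $\{\hat\gamma_j\}$). Since any convex combination of finitely many mutually $C$-comparable norms is again $C$-comparable to each of them, we get $\tfrac1C\|v\|_{g_1,x}\le\|v\|_{g_2,x}\le C\|v\|_{g_1,x}$ for all $g_1,g_2\in\wtd D$ uniformly in $x,v$, as claimed.
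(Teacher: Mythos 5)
Your proposal is correct and follows the paper's approach: the paper itself only writes out item (3) (declaring the rest ``by definition''), and your computation for (3) is the same cocycle-identity-plus-reindexing argument, while your arguments for (1), (2), and (4) simply spell out the intended consequences of the properties of the partition of unity $\{\phi_{i,\gamma}\}$ recorded at the end of \cref{PartUnitySect} (local finiteness, $\Gamma$-equivariance, and finiteness of the set of $\gamma$ with $\supp(\phi_{i,\gamma})$ meeting a Siegel fundamental set). In particular, your treatment of (2) as a convex combination of finitely many norms that are uniformly comparable by continuity of $\calA_0$ over the compact base $X_0$ is exactly the argument the paper leaves implicit.
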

\begin{proof}
The only assertion that is not by definition is \eqref{SeveralObservations-33443}.
We verify
\begin{align*}
\|v\|_{g\hat \gamma,x}
&=\sum_{i=1}^N\sum_{\gamma\in \Gamma} \phi_{i,\gamma}(g\hat\gamma) \| \calA_0(\gamma,x)(v)\|_{0,\alpha(\gamma)(x)}\\
&=\sum_{i=1}^N\sum_{\gamma\in \Gamma} \phi_{i,\gamma\hat\gamma\inv}(g) \| \calA_0(\gamma,x)(v)\|_{0,\alpha(\gamma)(x)}\\
&=\sum_{i=1}^N\sum_{\gamma\in \Gamma} \phi_{i,\gamma\hat\gamma\inv}(g) \| \calA_0(\gamma\hat\gamma\inv\hat\gamma,x)(v)\|_{0,\alpha(\gamma\hat \gamma \inv)(\alpha (\hat \gamma)(x))}\\
&=\sum_{i=1}^N\sum_{\gamma\in \Gamma} \phi_{i,\gamma\hat\gamma\inv}(g) \| \calA_0(\gamma\hat\gamma\inv,\alpha \bigl( \hat\gamma)(x) \bigr)\calA_0(\hat\gamma,x)(v)\|_{0,\alpha(\gamma\hat \gamma \inv)(\alpha (\hat \gamma)(x))}\\  
&=\|\calA(\hat \gamma,x) v\|_{g, \alpha(\hat \gamma)(x)}
. \qedhere
\end{align*}
\end{proof}

The uniform comparability of the norms on Siegel fundamental sets  in  \cref{SeveralObservations}\pref{comparable} can be reformulated as  follows: for any (relative to $\theta$ and $P$) Siegel fundamental set  $D$, there is $C>1$ so that for all $g\in D$ and $x\in X_0$,
\begin{equation}\label{SiegGrowth}
\frac 1 C\le m\big(\calA(g, [\1,x])\big) \le \|\calA(g, [\1,x])\|\le C.
\end{equation}
We also have the following direct corollary of \cref{SeveralObservations}\pref{comparable}.  Let $\wtd K$ be the fixed point set of $\theta$ so that $\wtd K/\calZ$ is a maximal compact $\theta$-invariant subgroup of $\Ad(G)$.
\begin{claim}\label{moregrowth} \
\begin{enumerate}
\item For any compact set $\calK\subset \wtd K$ there is $C>1$ so that for all $x\in X$ and $k\in \calK$, $$\|\calA(k,x)\|\le C.$$
\item There is $C>1$ such that for all $x\in X$ and $z\in \calZ$, $$\|\calA(z,x)\|\le C\sup_{x_0\in X_0}\|\calA_0(z,x_0)\|.$$
\end{enumerate}
\end{claim}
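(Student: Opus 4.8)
The plan is to deduce \cref{moregrowth} from two facts already recorded about the norm $\|\cdot\|$ on $\calE$: the uniform comparability \fullcref{SeveralObservations}{comparable} of the norms $\|\cdot\|_{g,x}$ on a Siegel fundamental set (equivalently, \eqref{SiegGrowth}), and the $\Gamma$-equivariance identity \fullcref{SeveralObservations}{33443}. Throughout I would fix the Siegel fundamental set $\wtd D=\wtd\Sieg\,C$, with $\wtd\Sieg=K_0 A_t\calC$, used to construct $\|\cdot\|$, and use two preliminary observations. First, every point of $X$ has a representative $[g,x_0]$ with $g\in\wtd D$ and $x_0\in X_0$, since $\wtd D$ contains a fundamental domain for the right $\Gamma$-action on $G$. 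Second, by \eqref{SiegGrowth} the norm $\|\cdot\|_{g,x_0}$ is uniformly comparable to $\|\cdot\|_{\1,x_0}$ for $g\in\wtd D$, and $\|\cdot\|_{\1,\cdot}$ is a continuous norm on the finite-dimensional bundle $\calE_0$ over the compact base $X_0$; hence $\|\cdot\|_{g,x_0}$ is uniformly comparable to the background norm $\|\cdot\|_0$ for all $g\in\wtd D$ and $x_0\in X_0$.

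For part (1), let $\calK\subset\wtd K$ be compact, enlarged so that $\1\in\calK$. I would first observe that $\calK\cdot\wtd D=(\calK K_0)\,A_t\,\calC\cdot C$ is again of the form ``Siegel set times finite subset of $G_\Q$'' — the compact set $\calK K_0\subset\wtd K$ is absorbed into the $K_0$-factor of the Siegel set — and it contains $\wtd D$, so it is again a Siegel fundamental set, to which \fullcref{SeveralObservations}{comparable} applies. Now write $x=[g,x_0]$ with $g\in\wtd D$ and take $k\in\calK$. Identifying the fibers of $\calE$ over $[g,x_0]$ and over $[kg,x_0]$ with $\calE_0(x_0)$ by means of the $G$-action, $\calA(k,x)$ is the identity of $\calE_0(x_0)$, so $\|\calA(k,x)\|=\sup_{v\neq 0}\|v\|_{kg,x_0}/\|v\|_{g,x_0}$. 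Since $g$ and $kg$ both lie in the Siegel fundamental set $\calK\cdot\wtd D$, \fullcref{SeveralObservations}{comparable} bounds this ratio by a constant depending only on $\calK$ (and the fixed data), which is (1).

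For part (2), let $z\in\calZ$ and $x=[g,x_0]$ with $g\in\wtd D$. Since $z$ is central, $\calA(z,x)$ carries $\calE([g,x_0])$ to $\calE([zg,x_0])=\calE([gz,x_0])$, and on representatives it sends $[g,v]$ to $[gz,v]$ for $v\in\calE_0(x_0)$; hence $\|\calA(z,x)\|=\sup_{v\neq 0}\|v\|_{gz,x_0}/\|v\|_{g,x_0}$. By \fullcref{SeveralObservations}{33443} with $\hat\gamma=z$ the numerator equals $\|\calA_0(z,x_0)v\|_{g,\alpha(z)x_0}$, so
\[
\|\calA(z,x)\|=\sup_{v\neq 0}\frac{\|\calA_0(z,x_0)v\|_{g,\alpha(z)x_0}}{\|v\|_{g,x_0}}.
\]
Using the comparability with the background norm from the preliminary observations (applied in both numerator and denominator, with $g\in\wtd D$) bounds the right-hand side by a uniform constant times $\|\calA_0(z,x_0)\|$, hence by that constant times $\sup_{x_0'\in X_0}\|\calA_0(z,x_0')\|$, which is (2). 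Neither part poses a genuine obstacle; the only steps requiring mild care are the absorption observation in part (1) — that left-translating a Siegel fundamental set by a fixed compact subset of $\wtd K$ yields a set on which \fullcref{SeveralObservations}{comparable} still applies — and, in part (2), computing the target norm via the identity \fullcref{SeveralObservations}{33443} rather than directly in $\calE$.
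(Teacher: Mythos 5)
Your argument is correct and is exactly the deduction the paper intends: the claim is stated there without proof as a ``direct corollary'' of \fullcref{SeveralObservations}{comparable}, and your write-up (reducing both parts to ratios of the norms $\|\cdot\|_{g,x_0}$ with $g$ ranging over a fixed Siegel fundamental set, enlarged by $\calK$ in part (1), and invoking \fullcref{SeveralObservations}{33443} to pull the central element into $\calA_0$ in part (2)) fills in that deduction correctly. The one point you flag as needing care — that left-translating $\wtd D$ by a compact subset of $\wtd K$ stays within the scope of \fullcref{SeveralObservations}{comparable} — is handled correctly by absorbing $\calK K_0$ into the $K_0$-factor of the Siegel set.
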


\begin{remark}[Dependence  on the choice of $P$ and $\theta$]
In the sequel, we study growth properties the cocycle $\calA$ relative to the family of norms constructed above.
We note that if $D$ is a Siegel fundamental set constructed relative to a choice of  $P$ and $\theta$ then the norm on $\|\cdot \|$  does not depend strongly on the choice of fundamental domain $\fund \subset D$ or partitions of unity used in the construction; indeed, the strong separation properties of Siegel sets in \cref{sieg}\pref{sieg-separation} ensure that all norms on $\calE$ constructed relative to such choices are uniformly comparable.  From \cref{AWBcantdokindergardenalgebra}, this implies, up to multiplication by uniformly bounded functions, the construction of the norm on $\calE$ is independent of the choice of  minimal parabolic $\Q$-subgroup $P$.

 On the other hand, the choice of norms depends (in a non-uniformly comparable way) on the choice of Cartan involution $\theta$.  %
However, the following lemma guarantees that dynamical quantities including the values of Lyapunov exponents associated with the cocycle $\calA$ are independent  of the choice of $\theta$; see \cref{rem:Lyapok}.
To state the lemma, let $\theta$ and $\theta'$ be Cartan involutions and let   $P$ and $P'$ be minimal parabolic $\Q$-subgroups of $G$.
 Let $\|\cdot\|$ and $\|\cdot\|'$ denote, respectively, the norms constructed as above with respect to $(P, \theta)$ and $(P',\theta')$.

 \begin{lemma}\label{lem:nottoobad} For any right-invariant distance on $G$, there exists $C>1$ and $k>0$ such that for every $x\in X$ and $v\in \calE(x)$, writing $\hat x = \pi_X(x)$ for the image of $x$ in $G/\Gamma$,	$$ {\|v\|}\le Ce^{k d(\hat x, \1\Gamma)} \|v\|'.$$
\end{lemma}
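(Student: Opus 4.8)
The plan is to reduce the stated inequality to an upper bound of the form $\len_\Gamma(\gamma)\lesssim d(\hat x,\1\Gamma)$ for the monodromy elements $\gamma$ that can contribute to the value of $\|\cdot\|$ at $x$, and then to invoke submultiplicativity of the cocycle $\calA_0$. First I would note that, since any two right-invariant Riemannian metrics on $G$ are bi-Lipschitz equivalent, it is enough to prove the lemma for the metric $d$ fixed in \cref{secQI}; in particular $d$ is right-invariant, so $d(h^{-1},\1)=d(h,\1)$ for all $h\in G$. Let $\wtd D$ be the Siegel fundamental set (relative to the minimal parabolic $\Q$-subgroup $P$ and involution $\theta$) used to build $\|\cdot\|$, and let $\wtd D'$ be a Siegel fundamental set relative to $P'$ and $\theta'$. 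Since $\wtd D'$ is a fundamental set I may fix $g\in\wtd D'$ with $g\Gamma=\hat x$ and write $x=[g,x_0]$, which identifies $\calE(x)$ with $\calE_0(x_0)$. By the uniform comparability of the norms on a Siegel fundamental set---\eqref{SiegGrowth}, equivalently \cref{SeveralObservations}\pref{comparable}---applied to $\|\cdot\|'$ and $\wtd D'$, together with the fact that the norm at $\1$ is comparable to the background norm $\|\cdot\|_0$ on $\calE_0$, there is a constant $c>0$ independent of $x$ with $\|v\|'\ge c\,\|v\|_{0,x_0}$. Hence it suffices to bound the norm $\|v\|_{g,x_0}=\sum_{i,\gamma}\phi_{i,\gamma}(g)\,\|\calA_0(\gamma,x_0)v\|_{0,\alpha(\gamma)x_0}$ defining $\|\cdot\|$ at $[g,x_0]$ from above by $C\,e^{k\,d(\hat x,\1\Gamma)}\,\|v\|_{0,x_0}$.

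For this, fix a finite symmetric generating set $S$ of $\Gamma$ and set $M=\max\bigl(1,\ \max_{s\in S}\sup_{y\in X_0}\|\calA_0(s,y)\|\bigr)$, which is finite because $\calA_0$ is continuous and $X_0$ is compact; the cocycle identity then gives $\|\calA_0(\gamma,x_0)\|\le M^{\len_\Gamma(\gamma)}$ for all $\gamma$. Since $\sum_{i,\gamma}\phi_{i,\gamma}(g)=1$ and every summand is nonnegative, I obtain $\|v\|_{g,x_0}\le\bigl(\max\{\,M^{\len_\Gamma(\gamma)}:\ \phi_{i,\gamma}(g)\neq0\text{ for some }i\,\}\bigr)\,\|v\|_{0,x_0}$, so the whole lemma comes down to the claim that $\len_\Gamma(\gamma)\le C_1\,d(\hat x,\1\Gamma)+C_1$ whenever $\phi_{i,\gamma}(g)\neq0$ for some $i$. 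To prove the claim I would use the finiteness property built into the partition of unity of \cref{PartUnitySect}: the set $\Gamma_1:=\{\eta\in\Gamma:\ \supp(\phi_{i,\eta})\cap\wtd D\neq\emptyset\text{ for some }i\}$ is finite. Writing $g=d_0\gamma_0$ with $d_0\in\wtd D$ and $\gamma_0\in\Gamma$ (possible since $\wtd D\Gamma=G$), any $\gamma$ with $\phi_{i,\gamma}(g)\neq0$ satisfies $d_0=g\gamma_0^{-1}\in\supp(\phi_{i,\gamma\gamma_0^{-1}})\cap\wtd D$, hence $\gamma\gamma_0^{-1}\in\Gamma_1$ and $\len_\Gamma(\gamma)\le\len_\Gamma(\gamma_0)+\max_{\eta\in\Gamma_1}\len_\Gamma(\eta)$. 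Since $d_0\Gamma=g\Gamma=\hat x$ with $d_0\in\wtd D$, and $g\in\wtd D'$, the almost-isometry estimate \eqref{SiegelAlmIsom}, which holds on every Siegel fundamental set in $G$ (those built from $\theta$ as well as those built from $\theta'$), gives constants $C,B$ with $d(d_0,\1)\le C\,d(\hat x,\1\Gamma)+B$ and $d(g,\1)\le C\,d(\hat x,\1\Gamma)+B$. As $\gamma_0=d_0^{-1}g$ and $d$ is right-invariant, $d(\gamma_0,\1)\le d(d_0^{-1}g,g)+d(g,\1)=d(d_0^{-1},\1)+d(g,\1)=d(d_0,\1)+d(g,\1)\le 2C\,d(\hat x,\1\Gamma)+2B$; and $\len_\Gamma(\gamma_0)\le C_2\,d(\gamma_0,\1)+C_2$ by the quasi-isometric embedding $\Gamma\hookrightarrow G$ of \cref{cor:QIlift}\pref{point11}. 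Chaining these inequalities gives the claim, and substituting it into the displayed bound yields $\|v\|=\|v\|_{g,x_0}\le M^{C_1 d(\hat x,\1\Gamma)+C_1}\,\|v\|_{0,x_0}\le (M^{C_1}/c)\,e^{k\,d(\hat x,\1\Gamma)}\,\|v\|'$ with $k=C_1\log M$ (any $k>0$ if $M=1$), which is the assertion.

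I expect the only real difficulty here to be bookkeeping rather than conceptual. One must check that \eqref{SiegGrowth} and \eqref{SiegelAlmIsom} were established for Siegel sets relative to an arbitrary admissible Cartan involution, so that they apply to $\theta'$ exactly as to $\theta$ (this is clear from their proofs, which only use that the chosen split torus is $\theta$-invariant, hence geodesic), and one must be careful throughout to use only right-invariance of $d$---in particular the identity $d(h^{-1},\1)=d(h,\1)$---and never left-invariance. No reduction to the case $P=P'$ is needed, although one could make such a reduction using the fact, noted after \cref{moregrowth}, that the construction of $\|\cdot\|$ depends on the minimal parabolic $\Q$-subgroup only up to uniformly bounded factors.
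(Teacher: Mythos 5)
Your proof is correct, and the underlying mechanism is the same as the paper's: everything reduces to the word-length estimate $\len_\Gamma(\gamma)\lesssim d(\hat x,\1\Gamma)$ for a monodromy element bridging the two Siegel fundamental sets, proven via \eqref{SiegelAlmIsom} applied to both $\wtd D$ and $\wtd D'$ and the quasi-isometric embedding \cref{cor:QIlift}, combined with the uniform comparability of the constructed norms to the background norm on a Siegel set. The one structural difference: the paper works with \emph{two} lifts of $x$, namely $(g_0,t_0)\in D\times X_0$ for $\|\cdot\|$ and $(g_0\gamma,t_1)\in D'\times X_0$ for $\|\cdot\|'$, so that \fullcref{SeveralObservations}{comparable} applies directly to each norm at its own representative, with the single bridge $\gamma$ introducing the factor controlled by $\len_\Gamma(\gamma)$. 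You instead fix a single lift $(g,x_0)$ with $g\in\wtd D'$ and bound $\|v\|_{g,x_0}$ by unpacking the partition-of-unity formula and using the finiteness of $\{\eta\in\Gamma:\supp(\phi_{i,\eta})\cap\wtd D\neq\emptyset\text{ for some }i\}$, which is needed because $\|\cdot\|_{g,\cdot}$ is not comparable to $\|\cdot\|_0$ when $g\notin\wtd D$. This is a touch longer but equally valid; your right-invariance bookkeeping (in particular $d(h^{-1},\1)=d(h,\1)$ and the estimate $d(\gamma_0,\1)\le d(d_0,\1)+d(g,\1)$) is correct.
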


\begin{proof}
Let $D$ and $D'$ be, respectively, Siegel fundamental sets in $G$ relative to the choice of $(P, \theta)$ and $(P',\theta')$. Consider any $x\in X$ and $v\in \calE(x)$. There are lifts $\wtd x_1 \in D\times X_0$ and $\wtd x_2 \in D'\times X_0$ of $x$. In particular, there are $g_0\in D$, $t_0\in X_0$, and $\wtd v\in \calE_0(t_0)$ with $$\wtd x_1 = (g_0,t_0) \text{ and } \|v\| = \|\wtd v \|_{g_0,t_0}.$$
There is $\gamma \in \Gamma$ with $g_0\gamma\in D'$. Let $t_1 = \alpha(\gamma\inv)(t_0)$. Then $\wtd x_2$ may be chosen so that $$\wtd x_2 = \big(g_0\gamma, t_1 \big)\text{ and }
\|v\|' = \|\calA_0(\gamma\inv,t_0)(\wtd v) \|'_{g_0\gamma ,t_1} .
$$
The families of norms $\|\cdot \|_{g,t_0}$ and $\|\cdot \|'_{g,t_1}$ are uniformly comparable with $\|\cdot \|_{0,t_0}$ and $\|\cdot \|_{0,t_1}$, respectively, when restricted to $g\in D$ and $g\in D'$, respectively. It follows that there exist constants $ C_1, C_2>1$ (independent of $x$, $v$, and $\gamma$) such that
\begin{align*}
\|v\| &= \|\wtd v\|_{g_0 , x} \le C_1 \|\wtd v\|_{0,t_0}\\
&\le C_1\|\calA(\gamma ,t_1)\|_0 \, \|\calA_0(\gamma\inv,t_0)\wtd v \|_{0 ,t_1 }\\
&\le C_1^2\|\calA(\gamma ,t_1)\|_0 \, \|\calA_0(\gamma\inv,t_0)\wtd v \|'_{g_0\gamma ,t_1} \\
& \le C_1^2 C_2^{\, \len_\Gamma ( \gamma)}\|v\|'.\end{align*}

From \eqref{eq:LMR} and \eqref{SiegelAlmIsom}, there are $C_3>1$ and $B>0$ so that $$\len_\Gamma ( \gamma) \le C_3d(g_0\Gamma,\1\Gamma) + B$$
and the result follows.
\end{proof}
\end{remark}

Using the construction above, we may similarly equip $G\times X_0$ with a metric  $\td d$ with the following properties:
\begin{enumerate}
\item the restriction of the metric $\td d$ to $G$-orbits in $G\times X_0$ projects to a fixed right-invariant metric $d_G$ on $G$;
\item $\Gamma$ acts by isometries relative to $\td d$;
\item the restrictions of the metric $\td d$ to each $\{g\}\times X_0$ are uniformly comparable when $g$ is restricted to a Siegel fundamental set.
\end{enumerate}
We then obtain a metric $d_X$ on $X= (G\times X_0)/\Gamma$.
 Using that the projection $\pi_X\colon X\to G/\Gamma$ is proper and that the fibers are uniformly comparable, the metric on $X$ satisfies the following properties:
\begin{enumerate}
\item there exists $B>0$ such that
$d_X(x,y) - B\le d_{G/\Gamma} (\pi_X(x), \pi_X(y))\le d_X(x,y)$
for all $x,y\in X$;
\item a sequence of Borel probability  measures $\{\mu_j\}$ on $X$ has uniformly exponentially small mass at $\infty$ if and only if the  projections $\{(\pi_X )_*\mu_j\}$  form a sequence of Borel probability measures on  $G/\Gamma$ with uniformly exponentially small mass at $\infty$;
\item a sequence of Borel probability  measures $\{\mu_j\}$ on $X$ is uniformly tight  if and only if the  projections $\{(\pi_X)_*\mu_j\}$ form a uniformly tight sequence of Borel probability measures on  $G/\Gamma$.
\end{enumerate}
In the sequel, we will typically ignore the specific metric $d_X$ on $X$ and only consider coarse metric properties of points in $X$ under the
projection $\pi_X\colon X\to G/\Gamma$.

\subsection{Growth estimates on the cocycle $\calA$} \label{TemperedSect}
We fix $P$ and $\theta$ as above and an associated norm $\|\cdot\|$ on $\calE$ constructed as in \cref{sec:norms} relative these choices.
Let $D\subset G$ be an associated Siegel fundamental set for $\Gamma\subset G$.
 We present a number of definitions and lemmas that allow us to control and relate growth of the cocycles $\calA$ and $\calA_0$.
We remark that the strong estimates on the growth of $\calA$ obtained in \cref{SiegelImpliesSubexp,SubexpInCusps}  below are only true relative to the norm constructed relative to the choice of Cartan involution $\theta$.

We begin with the following definition.
\begin{definition}
Given a Borel fundamental domain~$\fund$ for the right $\Gamma$-action on $G$, the \emph{return cocycle}  $\beta_\fund\colon G \times G/\Gamma \rightarrow \Gamma$ is defined as follows: given $\hat x\in G/\Gamma$, take $\wtd x$ to be the unique
lift of $\hat x$ in $\fund$ and define $\beta_\fund(g,\hat x)$ to be the unique $\gamma\in \Gamma$ such that
$g\wtd{x}\gamma\inv \in \fund$.
\end{definition}
One verifies that $\beta_\fund$ is, in fact, a Borel-measurable cocycle:
	$$ \beta_\fund(g_1 g_2,\hat x) = \beta_\fund(g_1,g_2\hat x) \, \beta_\fund(g_2,\hat x) .$$
A second choice of fundamental domain $\fund'$ for~$\Gamma$ defines a cohomologous cocycle~$\beta_{\fund'}$.

For the remainder, we will always assume that the fundamental domain~$\fund$ is contained in a Siegel fundamental set~$D$ in $G$.

\begin{lemma}
\label{lemma:fromlmr}
For a fundamental domain $\fund$ contained in a Siegel fundamental set in $G$ there is a $C$ such that for every $g\in G$ and $\hat x\in G/\Gamma$,
$$\len_\Gamma \bigl( \beta_\fund(g,\hat x) \bigr) \leq C d(g,\1)+ C d(\hat x, \Gamma) + C.$$

In particular, for any $1\le p<\infty$ and any compact $\mathcal K \subset G$ the function
$$ \hat x\mapsto \sup_{g\in \mathcal K} \len_\Gamma \bigl( \beta_\fund (g, \hat x) \bigr)$$
is in $L^p(G/\Gamma, m_{G/\Gamma})$.
\end{lemma}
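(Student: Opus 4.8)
\textbf{Proof plan for \cref{lemma:fromlmr}.}

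The first assertion is essentially a bookkeeping exercise once we invoke the Lubotzky--Mozes--Raghunathan quasi-isometry (\cref{LMR}, lifted via \cref{cor:QIlift}) together with the near-isometry of the Siegel fundamental set under the projection $G\to G/\Gamma$ (inequality \eqref{SiegelAlmIsom}). Concretely, fix $g\in G$ and $\hat x\in G/\Gamma$; let $\wtd x\in \fund$ be the lift of $\hat x$ and set $\gamma = \beta_\fund(g,\hat x)$, so that $g\wtd x\gamma\inv\in \fund\subset D$. I would first estimate $d_G(\1,\gamma)$ from above: since $\gamma = (g\wtd x\gamma\inv)\inv (g\wtd x)$ is a product of an element of $D$ with $g\wtd x$, right-invariance of $d_G$ and the triangle inequality give
\begin{equation*}
d(\1,\gamma) \le d(\1, g\wtd x\gamma\inv) + d(\1, g\wtd x) \le \bigl(d(\wtd x\gamma\inv,\1) + d(\1,g)\bigr) + \bigl(d(\1,g) + d(\wtd x,\1)\bigr),
\end{equation*}
so $d(\1,\gamma)\le 2 d(\1,g) + d(\wtd x,\1) + d(\wtd x\gamma\inv,\1)$. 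Both $\wtd x$ and $\wtd x\gamma\inv = (g\wtd x\gamma\inv)\inv g\wtd x \cdot \wtd x \inv$... more simply, both $\wtd x$ and $g\wtd x\gamma\inv$ lie in $D$; for $\wtd x\in D$, \eqref{SiegelAlmIsom} bounds $d(\wtd x,\1)$ by $C d(\hat x,\Gamma) + B$, and similarly $d(g\wtd x\gamma\inv,\1)\le C d(g\hat x, \Gamma)+B \le C(d(\1,g) + d(\hat x,\Gamma)) + B$. Assembling these and passing from $d(\1,\gamma)$ to $\len_\Gamma(\gamma)$ via \eqref{eq:LMR} yields the claimed bound $\len_\Gamma(\beta_\fund(g,\hat x))\le C' d(g,\1) + C' d(\hat x,\Gamma) + C'$ after adjusting constants.

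For the second assertion, fix a compact $\mathcal K\subset G$ and let $R = \sup_{g\in\mathcal K} d(g,\1) < \infty$. By the first part, $\sup_{g\in\mathcal K}\len_\Gamma(\beta_\fund(g,\hat x)) \le C R + C\, d(\hat x,\Gamma) + C$, so the function in question is bounded by an affine function of $\hat x\mapsto d(\hat x,\Gamma)$. It therefore suffices to show $\hat x\mapsto d(\hat x,\Gamma)$ lies in $L^p(G/\Gamma,m_{G/\Gamma})$ for every $p<\infty$; equivalently, that $\int_{G/\Gamma} e^{\tau d(\hat x,\Gamma)}\,dm_{G/\Gamma}(\hat x)<\infty$ for some $\tau>0$, since exponential integrability implies $L^p$ integrability for all $p$. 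But this is exactly the statement that $m_{G/\Gamma}$ has exponentially small mass at $\infty$ (\cref{def:masscusp}), which follows from \eqref{HaarSmallCusps}: integrating $e^{\tau d(g,\1)}$ over a Siegel fundamental set $D$ (a finite union of translates of Siegel sets) is finite, and since $D$ surjects onto $G/\Gamma$ with $d(g,\1)$ controlling $d(\hat x,\Gamma)$ from above, the pushforward integral is finite as well.

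The main obstacle, such as it is, is purely notational: one must be careful that the near-isometry bound \eqref{SiegelAlmIsom} is applied only to points genuinely lying in a Siegel fundamental set (namely $\wtd x$ and $g\wtd x\gamma\inv$), and that the triangle-inequality manipulations respect the right-invariance — rather than left-invariance — of $d_G$. No deep input beyond \cref{LMR}, \cref{cor:QIlift}, and the integrability estimate \eqref{HaarSmallCusps} is required; the argument is a direct combination of these three facts.
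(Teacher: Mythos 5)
Your argument is correct and follows essentially the same route as the paper's: a triangle-inequality estimate on $d(\1,\beta_\fund(g,\hat x))$ using right-invariance of $d_G$, the bound \eqref{SiegelAlmIsom} applied to the two points $\wtd x$ and $g\wtd x\gamma\inv$ of the Siegel fundamental set, the observation $d(g\hat x,\Gamma)\le d(g,\1)+d(\hat x,\Gamma)$, and then \eqref{eq:LMR} for the word length and \eqref{HaarSmallCusps} for the $L^p$ statement. The only wrinkle is the intermediate term $d(\wtd x\gamma\inv,\1)$ in your displayed chain (that point need not lie in the Siegel set), but you correct this yourself by applying \eqref{SiegelAlmIsom} directly to $g\wtd x\gamma\inv$, which is exactly what the paper does.
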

Indeed, write $\hat x = \td x\Gamma$ for $\td  x\in \fund$.  Then $g\td  x = x' \beta_\fund(g,\hat x)$ for some $x'\in \fund$ and
\begin{align*}
d(\1, \beta_\fund(g,\hat x))
	&\le d(\1,\td  x) + d(\td  x, g\cdot \td x) + d (g\cdot \td  x, \beta_\fund(g,\hat x))\\
	&= d(\1,\td  x) + d(\1, g) + d (\td  x' \beta_\fund(g,\hat x), \beta_\fund(g,\hat x))\\
	&= d(\1,\td  x) + d(\1, g) + d (\td  x' , \1).
\end{align*}
We have $$d(\td  x' \Gamma, \Gamma) =d(g \hat x, \Gamma) \le d(g \hat x, \hat x) + d( \hat x, \Gamma) \le d(g, \1) + d( \hat x, \Gamma).$$
From \eqref{SiegelAlmIsom},  there is a constant $\hat C$ with   $d(\1, \td  x) \le \hat C d(\hat x, \Gamma) + \hat C$
and $$d(\1, \td  x') \le \hat  C d(g\cdot \hat  x, \Gamma) +\hat  C \le      \hat  C d(g, \1) + \hat C d(\hat  x, \Gamma) +\hat  C.$$
\cref{lemma:fromlmr} then follows from  \eqref{eq:LMR} and \eqref{HaarSmallCusps}.

 From the uniformly comparability of the norms across a Siegel fundamental sets  in \cref{SeveralObservations}\pref{comparable} and the finite generation of $\Gamma$ we obtain the following: there are constants $ C>1$ and $k>0$ so that for all $g \in G$ and $x \in X$, writing $\hat x= \pi_X(x)\in G/\Gamma$, we have
\begin{equation}\label{BoundCalAByBeta}
 \| \calA(g,x) \| \le Ce^{k \len_\Gamma ( \beta_\fund (g, \hat x) ) }
\end{equation}
and similarly
\begin{equation}\label{BoundCalAByBeta2}
m(\calA(g,x) ) \ge \frac 1 C e^{- k \len_\Gamma ( \beta_\fund (g, \hat x) ) }.
\end{equation}
Above, $m(A) = \|A\inv\|\inv$ denotes the conorm of an invertible linear operator $A$.

Recall in \cref{sec:trans} we built a proper, $C^\infty$, uniformly Lipschitz function $h\colon G/\Gamma \to [0,\infty)$ satisfying
\begin{equation} \label{heightDefnII} \text{$d(g \Gamma, \1 \Gamma) -1 \le h(g\Gamma) \le d(g \Gamma, \1 \Gamma) + 1$ for all $g \in G$} .\end{equation}
We lift $h$ to functions on $X$ and $\calE$ by precomposing with the canonical projections.
	
For any function $h$ satisfying \eqref{heightDefnII}, combining \eqref{BoundCalAByBeta}, \eqref{BoundCalAByBeta2}, and \cref{lemma:fromlmr} we obtain the following.
\begin{proposition} \label{tempered}
The cocycle $\calA\colon G\times \calE\to \calE$ is \emph{$h$-tempered:} there are constants $C > 1$ and $k> 0$ such that for all $g\in G$ and $x \in X$,
	$$ \|\calA(g,x)\| \le C e^{k h(x) + k d(g,\1)} \quad \text{and} \quad m(\calA(g,x))\ge \frac 1 C e^{- k h(x) - k d(g,\1)}.$$
\end{proposition}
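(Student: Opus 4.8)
The plan is to assemble \cref{tempered} directly from the three ingredients flagged just before the statement: the cocycle bound \eqref{BoundCalAByBeta}, the conorm bound \eqref{BoundCalAByBeta2}, and the word-length estimate for the return cocycle in \cref{lemma:fromlmr}. First I would fix a fundamental domain $\fund$ contained in a Siegel fundamental set $D$ in $G$, so that all three of those facts are available with the \emph{same} constants (enlarging constants as needed to make them uniform). Write $\hat x = \pi_X(x) \in G/\Gamma$ for the image of $x$ in the homogeneous space.

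The main computation is then a short chain of inequalities. By \eqref{BoundCalAByBeta} there are $C_1 > 1$ and $k_1 > 0$ with $\|\calA(g,x)\| \le C_1 e^{k_1 \len_\Gamma(\beta_\fund(g,\hat x))}$. By \cref{lemma:fromlmr} there is $C_2$ with $\len_\Gamma(\beta_\fund(g,\hat x)) \le C_2 d(g,\1) + C_2 d(\hat x,\Gamma) + C_2$. Finally, \eqref{heightDefnII} gives $d(\hat x, \Gamma) \le h(x) + 1$ (here I am using that $h$ on $X$ is the pullback of $h$ on $G/\Gamma$ and that $d(\hat x,\Gamma) = d(g\Gamma, \1\Gamma)$). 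Substituting, $\|\calA(g,x)\| \le C_1 e^{k_1 C_2 (d(g,\1) + h(x) + 2)} = (C_1 e^{2k_1 C_2}) e^{k_1 C_2 h(x) + k_1 C_2 d(g,\1)}$, which is the asserted upper bound with $C = \max(C_1 e^{2k_1 C_2}, \dots)$ and $k = \max(k_1 C_2, \dots)$. The conorm bound is entirely symmetric: from \eqref{BoundCalAByBeta2}, $m(\calA(g,x)) \ge C_1^{-1} e^{-k_1 \len_\Gamma(\beta_\fund(g,\hat x))}$, and the same substitution yields $m(\calA(g,x)) \ge C^{-1} e^{-k h(x) - k d(g,\1)}$ after relabeling constants so that a single pair $(C,k)$ serves both inequalities.

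There is really no serious obstacle here — the statement is a packaging lemma, and every input has already been established in the excerpt. The only point requiring a little care is bookkeeping: one must choose the final $C$ and $k$ large enough to absorb the additive constants ($C_2$ from \cref{lemma:fromlmr}, the $+1$ from \eqref{heightDefnII}) into the multiplicative constant, and to make the same pair work for both the norm and conorm bounds simultaneously. I would also remark in passing that, as noted before the statement, these bounds depend on having constructed $\|\cdot\|$ relative to the Cartan involution $\theta$ and the Siegel set $D$ — the estimate is genuinely false for an arbitrary norm on $\calE$ — but since \cref{sec:norms} fixed such a norm once and for all, nothing further needs to be said. Hence \cref{tempered} follows.
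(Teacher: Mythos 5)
Your proof is correct and is exactly the paper's intended argument: the paper states the proposition as the immediate consequence of combining \eqref{BoundCalAByBeta}, \eqref{BoundCalAByBeta2}, and \cref{lemma:fromlmr} with \eqref{heightDefnII}, which is precisely the chain of inequalities you spell out. Nothing more is needed.
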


Recalling \cref{thm:C} and \cref{def:USEGOD}, we formulate the following   controls on the growth of the cocycles $\calA_0$ and $\calA$.
\begin{definition}\label{defn:tempsubexp}
Fix closed subgroups $N\subset H\subset G$.  Let $\phi\colon H/\Gamma\!_H\to [0,\infty)$ be a proper continuous function  and extend to $\phi\colon X\!_H\to [0,\infty)$ by precomposition with the projection.
\begin{enumerate}
\item We say that the restriction of the cocycle~$\calA_0$ to~$\Gamma\!_H$ has \emph{(uniform) $\len_\Gamma$-subexponential growth} if, for every $\epsilon>0$, there exists $C>0$ such that for every $\gamma\in \Gamma\!_H$ and $x \in X_0$ we have
 \begin{align*}
	\|\calA_0(\gamma,x)\| &\le C^{\epsilon \len_\Gamma(\gamma) }
	. \end{align*}
	\item \label{uifse} We say that the restriction of the cocycle $\calA$ to $N\times \calE_H$ has \emph{(uniform) subexponential growth} if, for every $\epsilon>0$, there exists $C>0$ such that for every $g\in N$ and $x \in X_H$ we have
	\begin{align*}
	\|\calA(g,x)\| &\le Ce^{ \epsilon d(g,\1) }
	 \end{align*}
	and
	\begin{align*}
	m(\calA(g,x)) &\ge Ce^{ - \epsilon d(g,\1) }. \end{align*}

	\item \label{useg} We say that the restriction of the cocycle $\calA$ to $N\times \calE_H$ has \emph{$\phi$-tempered subexponential growth} if there exist $\omega>0$ such that  for every $\epsilon>0$ there exists $C_\epsilon>0$ such that for every $g\in N$ and $x \in X_H$ we have
	\begin{align*}
	\|\calA(g,x)\| &\le C_\epsilon e^{  \omega  \phi(x) + \epsilon d(g,\1) }
	 \end{align*}
	and
	\begin{align*}
	m(\calA(g,x)) &\ge C_\epsilon e^{ -  \omega \phi(x)  - \epsilon   d(g,\1) }
	. \end{align*}
	\end{enumerate}
\end{definition}

We collect a number of crucial estimates relating the above definitions.  For notational convenience in the following proofs,  we  introduce the   subadditive cocycles $\calB^\pm\colon G\times G/\Gamma \to \R$,
\begin{align}\calB^+(g, x) &:= \sup_{y\in \pi_X\inv (x)} \log \|\calA(g, y)\|\label{eq:pooface1}\\
\calB^-(g, x) :
&=\calB^+(g\inv , g\cdot x) \label{eq:pooface2} \\&
= \sup_{y\in \pi_X\inv (x)} \log \| \calA(g, y)\inv \| \notag \\&= \sup_{y\in \pi_X\inv (x)} - \log m( \calA(g, y) )\notag
.\end{align}
By \cref{tempered} there are constants  $C$ and $k$ with
$$ \calB^\pm (g,x) \le {k h(x) + k d(g,\1)} +  C.$$

If $H$ is a subgroup of $G$ and $N$ is a unipotent $\Q$-group normalized by $H$ we write $\wtd H= HN$.  We observe that $\Gamma\!_N$ is cocompact in $N$ and  that $\wtd H/\Gamma\!_{\wtd H}$ is a smooth fiber-bundle over $H/\Gamma$ with compact fibers.  Let $\rho\colon \wtd H/\Gamma\!_{\wtd H} \to H/\Gamma\!_{\wtd H}$ be the canonical projection.  In the following lemma and in \cref{horseforlunch} below, the function $\phi$ which tempers the growth estimates is  the map  $  \phi\colon \wtd H/\Gamma\!_{\wtd H} \to [0,\infty) $ given by \begin{equation}\label{eqtdphi}    \phi(x) = h(\rho(x))\end{equation} where $h\colon G/\Gamma\to [0,\infty)$ is the $C^\infty$ approximation to $d(\cdot, \1\Gamma)$ as in \eqref{heightDefnII}.

\begin{lemma} \label{SiegelImpliesSubexp}\label{eggface}\label{Ngrowthcorrect}
Let $N$ and  $H$ be  connected $\Q$-subgroups of $G$ such that $N$ is unipotent and $H$ normalizes $N$.
Suppose there is a fundamental domain $\fund_H$ for $\Gamma\!_H$ in $H$ contained in a Siegel fundamental set~$D$ in $G$ relative to the choice of $P$ and~$\theta$.

Write $\wtd H = HN$.  Then  the restriction of~$\calA_0$ to~$\Gamma\!_N$ has $\len_\Gamma$-subexponential growth if and only if the restriction of~$\calA$ to $N \times \calE_{\wtd H}$ has  $\phi$-tempered subexponential growth.
\end{lemma}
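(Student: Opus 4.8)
The plan is to unwind both notions of subexponential growth using the explicit norm on $\calE$ and the Siegel-set geometry. The key point is that $\Gamma\!_N$ is cocompact in $N$ (since $N$ is a unipotent $\Q$-group) and that a fundamental domain $\fund_H$ for $\Gamma\!_H$ in $H$ sits inside a Siegel fundamental set $D$ for $\Gamma$ in $G$; combining these, $\fund_H \cdot \fund_N$ (with $\fund_N$ a bounded fundamental domain for $\Gamma\!_N$ in $N$) is a fundamental domain for $\Gamma\!_{\wtd H}$ in $\wtd H$ that is contained in a Siegel fundamental set in $G$. The function $\phi(x) = h(\rho(x))$ therefore measures, up to bounded error and bi-Lipschitz distortion (by \eqref{SiegelAlmIsom} and \eqref{eq:LMR}), the word-length of the $\Gamma$-monodromy of a point $x \in X_{\wtd H}$ lifted into this fundamental domain.

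\textbf{($\Leftarrow$)} Suppose $\calA$ restricted to $N \times \calE_{\wtd H}$ has $\phi$-tempered subexponential growth. Let $\gamma \in \Gamma\!_N$ and $x_0 \in X_0$. We view $[\1, x_0] \in X_{\wtd H}$; then $\phi([\1,x_0])$ is bounded (the base point $\1\Gamma$ has bounded height), and $\calA_0(\gamma, x_0)$ is, up to the uniformly bounded comparison of norms $\|\cdot\|_{\1,x_0}$ versus $\|\cdot\|_{0,x_0}$ from \eqref{SiegGrowth}, the same as $\calA(\gamma, [\1,x_0])$ acting from the fiber over $[\1,x_0]$ to the fiber over $[\gamma, x_0] = [\1, \alpha(\gamma)x_0]$ — again a bounded-height point. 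Hence $\|\calA_0(\gamma,x_0)\| \le C \|\calA(\gamma,[\1,x_0])\| \le C' C_\epsilon e^{\omega \cdot O(1) + \epsilon d(\gamma,\1)}$, and since $\gamma \in \Gamma \subset G$ is quasi-isometrically embedded (\cref{cor:QIlift}), $d(\gamma,\1) \le C\len_\Gamma(\gamma) + C$. As $\epsilon > 0$ is arbitrary this gives $\len_\Gamma$-subexponential growth of $\calA_0|_{\Gamma\!_N}$; the conorm bound is symmetric.

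\textbf{($\Rightarrow$)} Suppose $\calA_0|_{\Gamma\!_N}$ has $\len_\Gamma$-subexponential growth. Fix $g \in N$ and $x \in X_{\wtd H}$, and lift $x$ to $(\wtd g, t_0)$ with $\wtd g$ in the fundamental domain $\fund_H\fund_N \subset D$ and $t_0 \in X_0$; then $\phi(x)$ is comparable to $d(\wtd g \Gamma, \1\Gamma)$, hence to $\len_\Gamma$ of the monodromy. By the cocycle property, $\calA(g, x)$ factors (in the appropriate norms) through $\calA(\gamma', t')$ for the monodromy elements $\gamma'$ produced when $g\wtd g$ is returned to $\fund_H\fund_N$: writing $g\wtd g = \wtd g' \gamma'$ with $\wtd g' \in \fund_H\fund_N$ and $\gamma' \in \Gamma$, we have, using \eqref{SeveralObservations-33443} and the uniform comparability of norms on the Siegel set (\cref{SeveralObservations}\pref{comparable}), that $\|\calA(g,x)\|$ is comparable to $\|\calA_0(\gamma', t'')\|$ for some $t'' \in X_0$. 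Here $\gamma' = \beta_{\fund_H\fund_N}(g, \cdot)$ lies in $\Gamma\!_N$ because $g \in N$ and $N$ is normal in $\wtd H$ (so the return from $\wtd H$ stays in $\Gamma\!_{\wtd H}$, and the $N$-component is $\Gamma\!_N$). By \cref{lemma:fromlmr} applied in $\wtd H$ (or directly, since $\fund_H\fund_N \subset D$), $\len_\Gamma(\gamma') \le C d(g,\1) + C d(\wtd g\Gamma, \1\Gamma) + C \le C d(g,\1) + C'\phi(x) + C'$. Feeding this into the $\len_\Gamma$-subexponential bound $\|\calA_0(\gamma', t'')\| \le C_\epsilon e^{\epsilon \len_\Gamma(\gamma')}$ and splitting the exponent gives $\|\calA(g,x)\| \le C_\epsilon' e^{\epsilon C' \phi(x) + \epsilon C d(g,\1)}$; choosing $\omega$ to absorb the constant in front of $\phi(x)$ (e.g. take $\omega$ fixed large and run $\epsilon$ small) yields $\phi$-tempered subexponential growth, and the conorm bound follows by applying the same argument to $\calA(g,x)\inv = \calA(g\inv, g\cdot x)$ together with \eqref{eq:pooface2}.

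\textbf{Main obstacle.} The delicate point is the bookkeeping in the ($\Rightarrow$) direction: one must verify that the monodromy $\beta_{\fund_H\fund_N}(g,\cdot)$ genuinely lands in $\Gamma\!_N$ (not merely in $\Gamma\!_{\wtd H}$) — this uses normality of $N$ in $\wtd H$ and the fact that $\Gamma\!_N$ is cocompact in $N$, so a bounded fundamental domain $\fund_N$ exists and the $N$-direction return is controlled with no height cost — and that the height $\phi(x) = h(\rho(x))$, which only sees the $H/\Gamma$-coordinate, still dominates the $\len_\Gamma$ of the return cocycle. This is exactly where the hypothesis that $\fund_H \subset D$ is used: it guarantees via \eqref{SiegelAlmIsom} that distance-to-identity in $H$ is comparable to distance-to-identity in $G$ along $\fund_H$, so there is no hidden distortion between the intrinsic geometry of $H$ and the ambient geometry of $G$ in which $\Gamma$ is quasi-isometrically embedded.
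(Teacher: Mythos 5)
Your proof is substantially correct and follows the same overall structure as the paper's: the $(\Leftarrow)$ direction matches exactly, and the $(\Rightarrow)$ direction uses the same key decomposition (in the paper's notation, $ngn_0 = gn_1\gamma_N$ with $\gamma_N \in \Gamma\!_N$ via normality of $N$), the same $N$-invariance of $\phi$, and the same quasi-isometric comparison $\len_\Gamma(\gamma') \lesssim d(g,\1) + \phi(x)$.

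The one place your $(\Rightarrow)$ argument diverges from the paper is the step ``$\|\calA(g,x)\|$ is comparable to $\|\calA_0(\gamma',\cdot)\|$ by uniform comparability of norms.'' This requires the norms $\|\cdot\|_{\wtd g,\cdot}$ to be uniformly comparable to $\|\cdot\|_0$ for $\wtd g \in \fund_H\fund_N$, i.e.\ it requires $\fund_H\fund_N$ to sit inside a Siegel fundamental set, as you assert but do not justify. This is in fact true (a bounded right-translate of a Siegel fundamental set is contained in a larger one, since $\fund_N$ is compact), but it is an extra fact that does need an argument, especially since in the generality of the lemma $N$ need not lie in $\Rad_u(P)$, so $D\fund_N$ is not itself a Siegel set for $P$. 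The paper avoids this entirely: it rewrites $n = (gn_1)\gamma_N(gn_0)^{-1}$, applies subadditivity of the cocycle $\calB^\pm$ to get $\calB^\pm(n, gn_0\Gamma) \le \calB^\mp(gn_0,\1\Gamma) + \calB^\pm(\gamma_N,\1\Gamma) + \calB^\pm(gn_1,\1\Gamma)$, and then bounds the outer two terms by the $h$-tempered estimate of \cref{tempered} evaluated at the \emph{fixed thick base point} $\1\Gamma$ (so $h(\1\Gamma)$ contributes only a constant), with the $\phi(x)$-dependence entering through $d(gn_i,\1) \lesssim d(g,\1) + O(1) \lesssim \phi(x) + O(1)$ via \eqref{SiegelAlmIsom} and $\fund_H \subset D$. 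This way it never needs comparability of norms at the (possibly non-Siegel) points $gn_1$ and $gn_0$ themselves, only at the single point $\1$. If you want to keep your route, you should explicitly prove the neighborhood-of-Siegel-set claim; otherwise the paper's subadditivity-plus-tempered argument is the more economical way to close the step.
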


\begin{proof}
We first assume that the restriction of~$\calA_0$ to~$\Gamma\!_N$ has $\len_\Gamma$-subexponential growth.
Fix a compact fundamental set $\fund_N$ for $\Gamma\!_N$ in $N$.
Then $\fund_H\fund_N$ contains a fundamental domain for $\Gamma\!_{\wtd H}$.
Indeed for all $g \in HN$ there exists $\gamma_H \in \Gamma\!_H$ such that $g \gamma_H \in \fund_H  N$. Then there is  $\gamma_N \in\Gamma\!_N$ with $g \gamma_H \gamma_N \in \fund_H  \fund_N$.
Fix $ gn_0 \in \fund_H\fund_N$.
Given $n\in N$, using that $H$ normalizes $N$ we have $$ngn_0 = g(g\inv n g) n_0 = gn_1\gamma_N$$
for some $\gamma_N\in \Gamma\!_N$ and $n_1\in \fund_N$.

Fix $\epsilon>0$.  As $n = gn_1\gamma_N n_0\inv g\inv$, we have
 \begin{align*}\calB^\pm(n, gn_0\Gamma) & = \calB^\pm( gn_1\gamma_N n_0\inv g\inv, gn_0 \Gamma) \\
& \le \calB^\pm( n_0\inv g\inv, gn_0 \Gamma)  +
\calB^\pm(\gamma_N,  \1 \Gamma) +
\calB^\pm( gn_1, \1 \Gamma) \\
& = \calB^\mp( gn_0, \1 \Gamma)  +
\calB^\pm(\gamma_N,  \1 \Gamma) +
\calB^\pm( gn_1, \1 \Gamma)\\
&\le  2k h(1\Gamma) + k d(gn_0,\1) + k d(gn_1,\1) + \epsilon \len_\Gamma(\gamma_N) +  \hat C +C_\epsilon
\end{align*}
for some constant $\hat C$ independent of $\epsilon$.

As $\Gamma$ is quasi-isometrically embedded in $G$, there are  $A_0$ and $B_0$ independent of $g$ and $n$  such that $$\len_\Gamma(\gamma_N)\le A_0\big( d(\gamma_N,\1) \big) + B_0.$$
  We further bound the right-hand side using  $d(\gamma_N,\1) \le  2d(g,\1) + d(n,\1)+ 2 \diam(\fund_N)$.
Indeed
\begin{align*}
d(\1, \gamma_N)
	&\le d(\1, gn_0) + d(gn_0,ngn_0) + d(ngn_0, \gamma_N) \\
	&= d(\1, gn_0) + d(gn_0,ngn_0) + d(gn_1\gamma_N, \gamma_N) \\
	&= d(\1, gn_0) + d(\1,n) + d(gn_1 , \1) \\
	&\le  d(\1, n_0) + d(n_0, gn_0) + d(\1,n) + d(gn_1 , n_1) + d(n_1 , \1) \\
	&\le  2d(g,\1) + d(n,\1)+ 2 \diam(\fund_N).
\end{align*}

The  map $G\to F$ is $1$-Lipschitz by definition and is uniformly bi-Lipschitz when restricted to $N$ by \cref{simpconisbilip}.   Since the distance from $g$ to the identity in the linear group $F$ is quasi-isometric to $g\mapsto \log \max \{\|g\|, \|g\inv\|\}$, it follows there are constants $A_1, B_2$ such that for all $\hat n \in \fund_N$,
$$ d\big(g\hat n g\inv ,\1\big) \le A_1 d (g, \1) + B_1.$$
Then \begin{align*}
d(g\hat n,\1) &=  d\big((g\hat ng\inv )g,\1\big)
\\ & \le   d\big((g\hat ng\inv) g,g\big) + d(g, \1) \\&
= d (g\hat ng\inv ,\1 ) + d(g, \1)\\
&\le ( A_1 +1) d (g, \1) + B_1.
\end{align*}

We recall that $\fund_H$ is contained in a Siegel fundamental domain.  It follows there are $A_2$ and $B_2$ such that for all $g\in \fund_H$, $$d(g,\1)\le A_2d(g\Gamma, \1 \Gamma)+ B_2 \le A_2 h(g\Gamma)+ B_2.$$
Assembling the above estimates, there are constants  $c_1, c_2$, and $C_3$ and, for any $0<\epsilon\le 1$, a constant $C_\epsilon$ such that
\begin{align*}
\calB^\pm(n, gn_0\Gamma)
	&\le c_1 d(g\Gamma,\1\Gamma) + c_2 \epsilon d(n,\1) + C_3 + C_\epsilon\\
	&\le c_1 h(g\Gamma) + c_2 \epsilon d(n,\1) + C_3 + C_\epsilon\\
	&=  c_1  \phi(gn_0\Gamma) + c_2 \epsilon d(n,\1) + C_3 + C_\epsilon
\end{align*}
where we recall that $   \phi$ is  constant on $N$-orbits in  $\wtd H/\Gamma\!_{\wtd H}$ whence $$\phi(gn_0\Gamma) = \phi((gn_0g\inv) g\Gamma)= \phi(g\Gamma).$$
We then take $\omega = c_1.  $

For the reverse implication, by hypothesis there are constants $\hat C>1$ and, for every $\epsilon>0$, there is $C_\epsilon$ such that given $\gamma \in \Gamma\!_N$ and $x \in X_0$, we have
	$$ \|\calA_0(\gamma,x)\|
	\le \hat C \bigl\| \calA \bigl(\gamma,[\1 ,x] \bigr) \bigr\|
	\le \hat C C_\epsilon e^{ \omega  \phi( \1\Gamma) +\epsilon  d(\gamma,\1) }. $$
The conclusion then follows from \eqref{eq:LMR}.
\end{proof}

In the case that $\Gamma\!_H$ is cocompact in $H$ we note that any compact set in $G$ is contained in a   Siegel fundamental set $D$ (for any choice of $P$ and $\theta$).  We then immediately obtain the following corollary.
\begin{corollary} \label{SubexpForCompact}
Let $N \subset H$ be closed connected subgroups of $G$ with $N$ unipotent, $N$ normal in $H$, and $\Gamma\!_H$   cocompact in $H$.
Then the restriction of~$\calA_0$ to~$\Gamma\!_N$ has $\len_\Gamma$-subexponential growth if and only if the restriction of~$\calA$ to $N \times \calE_H$ has subexponential growth.
\end{corollary}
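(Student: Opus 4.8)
The plan is to deduce the corollary directly from \cref{SiegelImpliesSubexp}. First I would record two elementary observations. Since $\Gamma\!_H$ is cocompact in $H$, there is a compact fundamental domain $\fund_H$ for the right $\Gamma\!_H$-action on $H$; and any compact subset of $G$ is contained in some Siegel fundamental set $D$ of $G$ relative to a prescribed minimal parabolic $\Q$-subgroup $P$ and Cartan involution $\theta$ (one simply enlarges the chamber parameter, the compact piece of the Siegel set, and the finite set $C\subset G_\Q$). Hence $\fund_H\subset D$. Second, because $N\subseteq H$ we have $\wtd H = HN = H$, so $\calE_{\wtd H}=\calE_H$ and $\Gamma\!_{\wtd H}=\Gamma\!_H$, the projection $\rho$ appearing in \cref{SiegelImpliesSubexp} is the identity on $H/\Gamma\!_H$, and the tempering function there, $\phi = h\circ\rho$, is just the height function $h$ pulled back along the natural map $H/\Gamma\!_H\hookrightarrow G/\Gamma$. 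As $H/\Gamma\!_H$ is compact, $\phi$ is bounded, say $0\le\phi\le B$.

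Then I would apply \cref{SiegelImpliesSubexp} with the fundamental domain $\fund_H$ and its ambient Siegel fundamental set $D$: it yields that the restriction of $\calA_0$ to $\Gamma\!_N$ has $\len_\Gamma$-subexponential growth if and only if the restriction of $\calA$ to $N\times\calE_H$ has $\phi$-tempered subexponential growth in the sense of \cref{defn:tempsubexp}.

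Finally I would note that, for a bounded tempering function $\phi$, $\phi$-tempered subexponential growth is exactly ordinary subexponential growth. Indeed the factors $e^{\omega\phi(x)}$ and $e^{-\omega\phi(x)}$ in the estimates $\|\calA(g,x)\|\le C_\epsilon e^{\omega\phi(x)+\epsilon d(g,\1)}$ and $m(\calA(g,x))\ge C_\epsilon e^{-\omega\phi(x)-\epsilon d(g,\1)}$ lie in $[e^{-\omega B},e^{\omega B}]$ and may be absorbed into the constant, producing $\|\calA(g,x)\|\le C e^{\epsilon d(g,\1)}$ and $m(\calA(g,x))\ge C e^{-\epsilon d(g,\1)}$; conversely every subexponential estimate is already $\phi$-tempered since $\phi\ge 0$. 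Chaining these equivalences gives the corollary. Essentially all the content here is in \cref{SiegelImpliesSubexp}; the only point that is not purely formal is the engulfing claim of the first paragraph, and that is a routine feature of Siegel fundamental sets, so I expect no real obstacle.
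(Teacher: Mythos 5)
Your proof is correct and takes essentially the same approach the paper intends: the sentence immediately preceding the corollary in the paper is precisely the observation that cocompactness gives a compact fundamental domain $\fund_H$, which may be placed inside a Siegel fundamental set, so that \cref{SiegelImpliesSubexp} applies, and boundedness of $\phi$ on the compact set $H/\Gamma\!_H$ collapses $\phi$-tempered subexponential growth to ordinary subexponential growth. Your added note that $N\subset H$ forces $\wtd H=HN=H$ (so $\calE_{\wtd H}=\calE_H$ and $\rho=\mathrm{id}$) is a worthwhile clarification of the hypotheses matching.
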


In \cref{SubexpForRank1Subgroups}, we will also use the following variant of \cref{SiegelImpliesSubexp} that is special for the case that $H$ is a standard $\Q$-rank-1 subgroup.

Let $  \phi\colon \wtd H/\Gamma\!_{\wtd H}\to [0, \R)$ be  as in \eqref{eqtdphi}.  The following lemma is important for passing from subexponential growth on unipotent subgroups to subexponential growth for trajectories that remain in special subgroups and away from the thick part.

\begin{lemma}[$\phi$-tempered subexponential growth at $\infty$] \label{SubexpInCusps}\label{horseforlunch}
Let $H$ be a standard $\Q$-rank-$1$ subgroup of~$G$ and choose $P$ and~$\theta$ as in \cref{SiegelForSubgroup}. Let $N$ be a connected unipotent $\Q$-subgroup of $G$ 
that is normalized by~$H$.
Assume that the restriction of~$\calA_0$ to ~$\Gamma\!_N$ has $\len_\Gamma$-subexponential growth and that the restriction of~$\calA_0$ to~$\Gamma\!_{P'}$ has $\len_\Gamma$-subexponential growth for every minimal parabolic $\Q$-subgroup~$P'$ of~$G$.

Write $\wtd H = HN$.  Then there exist $\omega>1$ and $\ell_0>0$ such that for any $\epsilon>0$  there is a $C_\epsilon >0$ such that for  any continuous path $t\mapsto g_t$ in  $H$ with $g_0 = \1$, the following hold:
\begin{enumerate}
\item ({\bf Weak $\phi$-tempered subexponential growth in the cusp on $X_{\wtd H}$}.)
For any  $x\in \wtd H/\Gamma\!_{\wtd H}$  and any $T > 0$ such that $\phi (g_t\cdot x)\ge \ell_0$ for all   $0\le t \le T$,
$$ \calB^{\pm}(g_t, x) \le \omega \phi(x) + \omega \phi (g_t\cdot x) + \epsilon d(\1 ,g_t)  + C_\epsilon .$$

\item ({\bf Strong $\phi$-tempered subexponential growth  in the cusp  on $X_{H}$}.)
For any  $x\in   H/\Gamma\!_{ H}$  and any $T > 0$ such that $\phi (g_t\cdot x)\ge \ell_0$ for all   $0\le t \le T$,
$$ \calB^{\pm}(g_t, x) \le \epsilon \big( \phi (x) +  d(\1 ,g_t)  \big)+ C_\epsilon .$$
\end{enumerate}
\end{lemma}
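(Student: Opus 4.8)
The plan is to reduce both assertions of \cref{SubexpInCusps} to the already-established $\phi$-tempered subexponential growth provided by \cref{SiegelImpliesSubexp} together with reduction theory for the standard $\Q$-rank-$1$ subgroup $H$. The key geometric input is \cref{SiegelForSubgroup}: we may choose $P$ and $\theta$ so that $H$ is $\theta$-invariant, $P_H = P\cap H$ is a minimal parabolic $\Q$-subgroup of $H$, and every Siegel set of $H$ relative to $(P_H,\restrict{\theta}{H})$ sits inside a Siegel set of $G$ relative to $(P,\theta)$. Since $H$ has $\Q$-rank $1$, a Siegel fundamental set $D_H$ for $\Gamma\!_H$ in $H$ has the shape (finitely many translates of) $K_H A_{H,t}\,\mathcal C_H$, and the ``thick part'' $\{x \in H/\Gamma\!_H : \phi(x) \le \ell\}$ is compact while its complement is a finite union of cusp neighborhoods, each contained in the image of a single Siegel set. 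First I would fix $\ell_0$ large enough that the set $\{\phi \le \ell_0\}$ in $H/\Gamma\!_H$ (and in $\wtd H/\Gamma\!_{\wtd H}$) is comfortably inside the thick part and that, outside it, one lies in a definite cusp whose monodromy data is controlled.

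For part (1): Given a path $t\mapsto g_t$ in $H$ with $g_0=\1$ and a point $x$ such that $\phi(g_t\cdot x)\ge \ell_0$ throughout $0\le t\le T$, I would lift $x$ to $\wtd x$ in a fundamental domain $\fund_{\wtd H}\subset D_{\wtd H}$ contained in a Siegel fundamental set of $G$, and track $g_t\wtd x$. Because $\phi(g_t\cdot x)\ge \ell_0$, the trajectory $g_t\wtd x$ stays in a single cusp of $H/\Gamma\!_H$ (up to a bounded excursion set absorbed into $C_\epsilon$), so the return cocycle $\beta_{\fund_{\wtd H}}(g_t,\hat x)$ stays inside the fundamental group of that cusp, which by the structure of $\Q$-rank-$1$ cusps is generated by $\Gamma\!_{P'}\cap H$ for a minimal parabolic $\Q$-subgroup $P'$ of $G$ refining the cusp (this is exactly where the hypothesis that $\restrict{\alpha}{\Gamma\!_{P'}}$ has $\len_\Gamma$-subexponential growth for all minimal parabolic $\Q$-subgroups $P'$ of $G$ is used). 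I would then split $\calA(g_t, \wtd x)$ via the cocycle identity into a piece lying over the Siegel fundamental set (bounded by \eqref{SiegGrowth}/\cref{SeveralObservations}\pref{comparable}, contributing the $\omega\phi(x)$ and $\omega\phi(g_t\cdot x)$ terms), a central/compact piece (bounded by \cref{moregrowth}), and a $\Gamma\!_{P'}$-piece whose norm is at most $\hat C e^{\epsilon \len_\Gamma(\beta_{\fund_{\wtd H}}(g_t,\hat x))}$ by the assumed $\len_\Gamma$-subexponential growth on $\Gamma\!_{P'}$; finally bound $\len_\Gamma(\beta_{\fund_{\wtd H}}(g_t,\hat x))$ by $\le C\,d(g_t,\1) + C\,d(\hat x,\Gamma) + C$ using \cref{lemma:fromlmr}, so the exponential rate of the monodromy contribution is $\epsilon d(\1,g_t)$ plus a $\phi(x)$ term. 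Combining these three bounds gives precisely $\calB^{\pm}(g_t,x)\le \omega\phi(x) + \omega\phi(g_t\cdot x) + \epsilon d(\1,g_t) + C_\epsilon$, and the $\calB^-$ case is identical replacing $g_t$ by $g_t\inv$ acting at $g_t\cdot x$ and using the conorm estimate \eqref{BoundCalAByBeta2}.

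For part (2), the point $x$ now lives on $H/\Gamma\!_H$ rather than $\wtd H/\Gamma\!_{\wtd H}$, so there is no unipotent fiber $N$ contributing an $\omega\phi$ term: when the trajectory stays in the cusp, $\beta_{\fund_H}(g_t,\hat x)$ lies in the fundamental group of a cusp of $H/\Gamma\!_H$, which itself is a semidirect product of a cocompact lattice in the reductive part $L_{P_H}$ and a lattice in the unipotent radical $N_{P_H}$; both of these are covered by $\Gamma\!_{P'}$ for a minimal parabolic $\Q$-subgroup $P'$ of $G$, so the $\len_\Gamma$-subexponential hypothesis bounds $\|\calA_0(\beta_{\fund_H}(g_t,\hat x),\cdot)\|$ by $e^{\epsilon' \len_\Gamma(\cdot)}$ for any $\epsilon'>0$. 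Since the norm $\|\cdot\|$ is uniformly comparable to $\|\cdot\|_0$ over a Siegel fundamental set in $G$ containing $D_H$ (by \cref{SeveralObservations}\pref{comparable}, using \cref{SiegelForSubgroup}\pref{Siegel}), we can absorb the Siegel-bounded pieces into $C_\epsilon$ and, via \cref{lemma:fromlmr}, bound the whole thing by $\epsilon(\phi(x) + d(\1,g_t)) + C_\epsilon$. The main obstacle I expect is bookkeeping the passage from a trajectory staying in $\{\phi\ge \ell_0\}$ to the assertion that its monodromy genuinely lies inside the fundamental group of one fixed cusp (and is therefore captured by a single $\Gamma\!_{P'}$): this requires the $\Q$-rank-$1$ dichotomy between thick part and disjoint cusps — each cusp a warped product $\R^+\times N_{\mathrm{cpt}}$ — exactly as highlighted in the remark on ``Benefits of working with $\Q$-rank-$1$ subgroups'', and one must be careful that excursions of bounded length (entering and leaving near the boundary $\{\phi = \ell_0\}$) are harmless since they only affect $\beta_\fund$ by boundedly many generators. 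Once that containment is pinned down, everything else is the routine cocycle-splitting and application of \cref{lemma:fromlmr}, \cref{SeveralObservations}, \cref{moregrowth}, and the hypotheses on $\Gamma\!_N$ and $\Gamma\!_{P'}$.
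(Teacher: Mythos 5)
Your proposal follows essentially the same route as the paper's proof: choose $P,\theta$ via \cref{SiegelForSubgroup} so that Siegel sets of $H$ sit inside Siegel sets of $G$, use the $\Q$-rank-$1$ dichotomy to place the monodromy of a trajectory staying above height $\ell_0$ inside $\Gamma\!_{P_{\mathcal C}}\cdot\Gamma\!_N$ for a minimal parabolic $\Q$-subgroup, split the cocycle accordingly, and bound the word length of the return element linearly in $d(g_t,\1)$ and the heights (the paper does this by a direct triangle-inequality estimate rather than by citing \cref{lemma:fromlmr}, but it is the same computation). The only points needing care — that the hypothesis $\phi(g_t\cdot x)\ge\ell_0$ for all $t$ means there are no excursions to absorb, and that in part (1) the $\Gamma\!_N$-component of the monodromy must be handled with the $\Gamma\!_N$-hypothesis alongside the $\Gamma\!_{P'}$-piece — are exactly the bookkeeping you flag, and they go through as in the paper.
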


\begin{proof}
By the choice of~$P$ and~$\theta$ as in \cref{SiegelForSubgroup}, we may select a Siegel fundamental set $D_H$  in~$H$ for $\Gamma\!_H$ such that $D_H$ is contained in a Siegel fundamental set $D$ in $G$ for $\Gamma$. Fix a fundamental domain~$\fund_H$ for~$\Gamma\!_H$ in~$D_H$.
Select a precompact fundamental domain $\fund_N$ for the $\Gamma\!_N$-action on $N$. As in the proof of \cref{eggface}, $\fund_H\fund_N$ contains a fundamental domain $\fund_{\wtd H}$ for the $\Gamma\!_{\wtd H}$-action on $\wtd H$.

Let $\pi_H \colon H \to H/\Gamma\!_H$ be the natural projection.
Since $\rank_\Q H = 1$, there is a compact subset~$\mathcal{K}$ of $H/\Gamma\!_H$ such that for each connected component $\mathcal{C}$ of $\pi_H^{-1} \bigl( (H/\Gamma\!_H) \sm \mathcal{K} \bigr)$, there is a minimal parabolic $\Q$-subgroup $P_{\mathcal{C}}$ of~$H$ such that $\mathcal C$ is contained in a $H_\Q$-translate of a Siegel set constructed relative to $P_{\mathcal{C}}$ and $\restrict {\theta}{H}$. In particular, for each such $\calC$,  the set
	$ \{ \gamma \in \Gamma \mid \mathcal{C} \gamma \cap \mathcal{C} \neq \emptyset \}$
is contained in~$\Gamma\!_{P_{\mathcal{C}}}$; see \cite[Theorem~13.10, p.~202]{MR0507234} or \cite[Proposition III.2.19, p.~307]{MR2189882}.
We remark that every minimal $\Q$-parabolic subgroup in $H$ is the intersection of $H$ with a minimal parabolic $\Q$-subgroup of $G$; indeed all minimal $\Q$-parabolic subgroup in $H$ are conjugate by $H_\Q$ so the conclusion follow from \cref{SiegelForSubgroup}.
 It follows that the restriction of $\calA_0$ to each $\Gamma\!_{P_{\mathcal{C}_i}}$, $1\le i\le r$, has uniform $\len_\Gamma$-subexponential growth.

Fix  $\ell_0> \sup\left\{ \phi(x): x\in \calK\right\}$.
Since $\fund_H$ is contained in a Siegel fundamental set, $\pi_H^{-1} \bigl(  \phi\inv  ([\ell_0, \infty))) \cap \fund_H$ is contained in the union of finitely many connected components $\mathcal{C}_1,\ldots,\mathcal{C}_r$ of $\pi_H^{-1} \bigl( (H/\Gamma\!_H) \sm \mathcal{K} \bigr)$.
Then, since  $ \phi$ is $N$-invariant, if $g_0n_0 \in \fund_H\fund_N$ and if $ \phi(g_0n_0\Gamma) \ge \ell_0$ then $g_0n_0 \in \calC_i\fund _N$ for some $1\le i\le r$.

Consider  a continuous 1-parameter path $t\mapsto g_t $ in $H$ with $g_0 = \1$.  Consider any $g'n_0\in \fund_{H}\fund _{N}$ such that for some $T>0$ and all $0\le t\le T$,
$$ \phi (g_t g' n_0 \Gamma) \ge \ell_0. $$
Fix  $0\le t\le T$ and write $g_tg' n_0 = g'' n_1 \gamma_H \gamma_N$ where $g'' n_1 \in \fund_{H}\fund _{N}$.
By our choice of $\ell_0$, we have $\gamma_H\in \Gamma\!_{P_{\mathcal{C}_i}}$ for some $1\le i\le r$.

As in the proof of \cref{eggface},
\begin{align*}\calB^\pm(g_t, g'n_0\Gamma)
& \le  \calB^\mp( g'n_0, \1 \Gamma)  +
\calB^\pm(\gamma_H,  \1 \Gamma) +
\calB^\pm(\gamma_N,  \1 \Gamma) +
\calB^\pm( g''n_1, \1 \Gamma).  \end{align*}
Let $\gamma = \gamma_H\gamma_N$.
As in the proof of \cref{eggface},
$$d(\gamma,\1) \le  d(g',\1)+ d(g'',\1) + d(g_t,\1)+ 2 \diam(\fund_N).$$
Indeed
\begin{align*}
d(\1, \gamma)
	&\le d(\1, g'n_0) + d(g'n_0,g_tg'n_0) + d(g_tg'n_0, \gamma) \\
	&= d(\1, g'n_0) + d(g'n_0,g_tg'n_0) + d(g''n_1\gamma, \gamma) \\
	&= d(\1, g'n_0) + d(\1,g_t) + d(g''n_1 , \1) \\
	&\le  d(\1, n_0) + d(n_0, g'n_0) + d(\1,g_t) + d(g''n_1 , n_1) + d(n_1 , \1) \\
	&\le  d(g',\1) + d(g'',\1)+ d(g_t,\1)+ 2 \diam(\fund_N).
	\end{align*}
\cref{cor:QIlift} and the hypotheses then imply  there are  $C_0$ and for any $\epsilon>0$ there exists  $C_\epsilon$ such that
$\calB^\pm(\gamma_H,  \1 \Gamma) $ and $\calB^\pm(\gamma_N,  \1 \Gamma)$ are bounded above by $$\epsilon C_0d(\1 , \gamma) + C_\epsilon.$$
As in the proof of \cref{eggface} there are constants $k,B$ so that each $\calB^\pm( g_in_i, \1 \Gamma)$ is bounded above by $$k d(g_i,\1) + B.$$
Since $g_i\in \fund _H$ there are $C_1$ and $B_1$ so that $$d(g_i,\1) \le C_1 d(g_i\Gamma ,\1\Gamma ) + B_1 \le C_1  \phi (g_i\Gamma) + B_1.$$
Since $ \phi (g_tg'n_0 \Gamma )=  \phi (g''n_1 \Gamma )$ this establishes the first conclusion.

For the second conclusion, if $x\in H/\Gamma\!_H$ then $x$ has a representative of $g'\in \fund_H$.  We have that  $\calB^\pm( g, \1 \Gamma)$ is uniformly bounded over all $g\in \fund_H$ and thus for some constants $C_2, C_3, c_2$, and $c_3$,
\begin{align*}
\calB^\pm(g_t, g'n_0\Gamma)
&\le C_1+ c_1\epsilon (d(g',\1)+ d(g'',\1) + d(g_t,\1)) + C_\epsilon\\
&\le C_2+ c_2\epsilon ( \phi(g'\Gamma)+  \phi (g_t g \Gamma) + d(g_t,\1)) + C_\epsilon\\
&\le C_3+ c_2\epsilon (2  \phi(g'\Gamma)+ 2 d(g_t,\1)) + C_\epsilon
\end{align*}
 where we use that
	\[ \phi(g_tg'\Gamma) \le d(g_tg'\Gamma,1\Gamma) + 1 \le
	d(g_t, \1) + d(g'\Gamma,1\Gamma) + 1 \le
	d(g_t, \1) +  \phi (g'\Gamma) + 1 .\qedhere\]
\end{proof}

\section{Lyapunov exponents} \label{LyapunovPrelims}
As in \cref{InducedSect}, we continue to assume that $\Gamma$ acts on a compact metric space $(X_0,d_0)$ and the action lifts to a continuous cocycle $\calA_0$ on a finite-dimensional normed vector bundle~$\calE_0$ over $X_0$. We have the induced actions of~$G$ on $X = (G \times X_0)/\Gamma$ and $\calE = (G \times \calE_0)/\Gamma$. The induced linear cocycle on $\calE$ is denoted by $\calA$. Equip $\calE$ with a norm as in \cref{sec:norms}; recall from \cref{tempered} we have that $\calA$ is $h$-tempered.

\subsection{Lyapunov exponents for unbounded cocycles}
\label{subsection:Lyap}

We have the following claim which follows immediately from \cref{tempered} and guarantees the
existence and finiteness of Lyapunov exponents for the cocycle $\calA$.
Equip $\calE$ with a norm constructed as in \cref{sec:norms}.

\begin{claim}\label{CocycIsL1}
Let $\mu$ a probability measure on $X$ with exponentially small mass at $\infty$.
Then for any compact $E \subset G$, the functions
$$x\mapsto \sup_{g\in E} \log \left \| \calA(g, x)\right \| \quad \text{and} \quad x\mapsto \inf_{g\in E}\log m( \calA(g, x)) $$
are $L^1(\mu)$.
\end{claim}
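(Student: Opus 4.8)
The statement to prove is \cref{CocycIsL1}: for a probability measure $\mu$ on $X$ with exponentially small mass at $\infty$ and any compact $E\subset G$, the functions $x\mapsto \sup_{g\in E}\log\|\calA(g,x)\|$ and $x\mapsto \inf_{g\in E}\log m(\calA(g,x))$ are in $L^1(\mu)$. The plan is to combine the $h$-temperedness of $\calA$ from \cref{tempered} with the definition of exponentially small mass at $\infty$ in \cref{def:masscusp}. First I would invoke \cref{tempered}: there are constants $C>1$ and $k>0$ so that for all $g\in G$ and $x\in X$,
\[
\|\calA(g,x)\|\le C e^{kh(x)+kd(g,\1)}\qquad\text{and}\qquad m(\calA(g,x))\ge \tfrac1C e^{-kh(x)-kd(g,\1)}.
\]
Taking logarithms, $\log\|\calA(g,x)\|\le \log C + kh(x) + kd(g,\1)$ and $-\log m(\calA(g,x))\le \log C + kh(x)+kd(g,\1)$. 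Since $E$ is compact and $d(\cdot,\1)$ is continuous, $D_E:=\sup_{g\in E}d(g,\1)<\infty$, so
\[
\sup_{g\in E}\log\|\calA(g,x)\|\le \log C + kD_E + kh(x)
\]
and likewise $-\inf_{g\in E}\log m(\calA(g,x))=\sup_{g\in E}\bigl(-\log m(\calA(g,x))\bigr)\le \log C + kD_E + kh(x)$. Thus both functions are bounded above (resp.\ below, for the conorm case) by an affine function of $h(x)$.

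\textbf{Main step: integrability of $h$.} It therefore suffices to check that $h\in L^1(\mu)$. Recall from \eqref{heightDefnII} that $h(x)$ differs from $d(\pi_X(x),\1\Gamma)$ by at most $1$, so it is enough to show $x\mapsto d(\pi_X(x),\1\Gamma)\in L^1(\mu)$, and by the metric comparison property of $d_X$ (item (1) in the list following the construction of $d_X$ in \cref{sec:norms}) this is equivalent to $x\mapsto d_X(x,x_0)\in L^1(\mu)$ for a fixed basepoint $x_0$. Now apply the hypothesis that $\mu$ has exponentially small mass at $\infty$: there is $\tau_\mu>0$ such that for $0<\tau<\tau_\mu$ we have $\int_X e^{\tau d_X(x_0,x)}\,d\mu(x)<\infty$ (using the remark after \cref{def:masscusp} that one may use $d_X$ on $X$ or equivalently the projection to $G/\Gamma$). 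Since $h(x)\le \tfrac1\tau e^{\tau h(x)}$ pointwise (or simply since any nonnegative function dominated by an exponentially-integrable one is integrable: $h(x)\le C_\tau e^{\tau d_X(x_0,x)}$ for a constant $C_\tau$), we get $\int_X h\,d\mu<\infty$. Hence $\sup_{g\in E}\log\|\calA(g,x)\|$ is bounded above by an $L^1$ function; it remains to bound it below, for which I would use the trivial inequality $\log\|\calA(g,x)\|\ge \log m(\calA(g,x))\ge -\log C - kD_E - kh(x)$, again an $L^1$ lower bound. The conorm function is handled symmetrically: $\inf_{g\in E}\log m(\calA(g,x))$ lies between $-\log C - kD_E - kh(x)$ and $\log\|\calA(g,\cdot)\|$'s upper bound, both integrable.

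\textbf{Where the work is.} There is essentially no obstacle here — the claim is stated in the excerpt as an immediate consequence of \cref{tempered}, and indeed the only inputs are (i) the temperedness bounds, (ii) compactness of $E$ to control the $d(g,\1)$ term uniformly, and (iii) the defining exponential-integrability of $\mu$ near $\infty$ to absorb the $h(x)$ term. The mildly fiddly points are purely bookkeeping: passing from the estimate on $X$-distance to the estimate involving $h$ via \eqref{heightDefnII}, and noting that $\sup_{g\in E}$ of a jointly measurable family is measurable (which follows from, e.g., $E$ being second countable and $g\mapsto \log\|\calA(g,x)\|$ being continuous — indeed $C^\infty$ by \cref{SeveralObservations}\pref{SeveralObservations-smooth} — so the sup over $E$ equals the sup over a countable dense subset). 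I would write the proof in a few lines: quote \cref{tempered}, set $D_E=\sup_{g\in E}d(g,\1)$, reduce both functions to affine bounds in $h$, and conclude via exponential integrability of $\mu$.

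\begin{proof}
By \cref{tempered}, there are constants $C>1$ and $k>0$ such that for all $g\in G$ and $x\in X$,
\[
\log\|\calA(g,x)\|\le \log C + k\, h(x) + k\, d(g,\1),\qquad
-\log m(\calA(g,x))\le \log C + k\, h(x) + k\, d(g,\1).
\]
Since $E$ is compact, $D_E:=\sup_{g\in E} d(g,\1)<\infty$, and since $g\mapsto \log\|\calA(g,x)\|$ and $g\mapsto \log m(\calA(g,x))$ are continuous (indeed smooth, by \cref{SeveralObservations}\pref{SeveralObservations-smooth}) while $E$ is separable, the functions $x\mapsto \sup_{g\in E}\log\|\calA(g,x)\|$ and $x\mapsto \inf_{g\in E}\log m(\calA(g,x))$ are Borel measurable. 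Moreover
\[
-\log C - kD_E - k\,h(x)\ \le\ \inf_{g\in E}\log m(\calA(g,x))\ \le\ \sup_{g\in E}\log\|\calA(g,x)\|\ \le\ \log C + kD_E + k\,h(x),
\]
using $\log m(A)\le \log\|A\|$ for every invertible $A$. Hence both functions are dominated in absolute value by $\log C + kD_E + k\,h(x)$, and it suffices to show $h\in L^1(\mu)$.

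By \eqref{heightDefnII} we have $0\le h(x)\le d(\pi_X(x),\1\Gamma)+1$, and by the metric comparison property of $d_X$ there is $B>0$ with $d(\pi_X(x),\1\Gamma)\le d_X(x,x_0)+B$ for a fixed basepoint $x_0\in X$. Since $\mu$ has exponentially small mass at $\infty$, there is $\tau>0$ with $\int_X e^{\tau d_X(x_0,x)}\,d\mu(x)<\infty$ (using the reformulation following \cref{def:masscusp} together with the equivalence between $d_X$ on $X$ and its projection to $G/\Gamma$). As $t\le \tfrac1{\tau e}\,e^{\tau t}$ for all $t\ge 0$, we obtain
\[
\int_X h\,d\mu\ \le\ \int_X \bigl(d_X(x,x_0)+B+1\bigr)\,d\mu(x)\ \le\ B+1+\frac{1}{\tau e}\int_X e^{\tau d_X(x_0,x)}\,d\mu(x)\ <\ \infty.
\]
Therefore both $x\mapsto \sup_{g\in E}\log\|\calA(g,x)\|$ and $x\mapsto \inf_{g\in E}\log m(\calA(g,x))$ lie in $L^1(\mu)$.
\end{proof}
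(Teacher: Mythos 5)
Your proof is correct and follows exactly the route the paper intends (the paper itself only remarks that the claim ``follows immediately from \cref{tempered}''). You correctly combine the $h$-tempered bound from \cref{tempered}, compactness of $E$ to uniformly control the $d(g,\1)$ term, and the exponential-integrability of $d_X(x_0,\cdot)$ (via the exponentially small mass at $\infty$ hypothesis, together with the comparison \eqref{heightDefnII}) to dominate $h$ by an $L^1(\mu)$ function; the two-sided bound via $\log m(A)\le\log\|A\|$ and the measurability remark are both correct.
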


Given $a\in G$ and an $a$-invariant Borel probability measure $\mu$ on~$X$ we define the \emph{average top} (or \emph{leading}) \emph{Lyapunov exponent of $\calA$} to be
\begin{equation} \label{eq:toplyap}
\lambda_{\top,a,\mu, \calA} := \liminf _{n\to \infty} \frac 1 n \int \log \|\calA(a^n, x)\| \ d \mu (x).\end{equation}
This is finite when the function $x\mapsto \log \|\calA(a, x)\|$ is
integrable; in particular, for any
$a$-invariant Borel probability measure $\mu$ on $X$ with exponentially small mass at $\infty$,
the average top Lyapunov exponent $\lambda_{\top,a,\mu, \calA}$ of~$\calA$ is finite.
Note also that the $a$-invariance of the measure $\mu$ implies that the sequence $$n\mapsto \int \log \|\calA(a^n, x)\| \ d \mu (x)$$ is subadditive whence the $\liminf$ in \eqref{eq:toplyap} can be replaced with either a limit or an infimum.

\begin{remark} \label{rem:Lyapok}
By \cref{lem:nottoobad}, the value of $\lambda_{\top,a,\mu, \calA}$ is independent of the choice of norm constructed in \cref{sec:norms} whenever the $\{a^t\}$-invariant measure $\mu$ on $X$ projects to a measure on $G/\Gamma$ for which the function $g\Gamma\mapsto d(g\Gamma, \1\Gamma)$ is $L^1$. In particular, for any $\{a^t\}$-invariant measure $\mu$ on $X$ projecting to the Haar measure on $G/\Gamma$, the value of $\lambda_{\top,a,\mu, \calA}$ is defined independent of the choice of $P$ and $\theta$ chosen in the construction of the norm on $\calE$.
\end{remark}

With the notation established above and in \cref{InducedSect,sec:norms}, we state the following reformulation and generalization of \cref{mainthm}.
\begin{theorem}\label{thm:whatweprove2}\label{mainthm2}
Let $G$ be a connected semisimple Lie group without compact factors and with $\rank_\R G \ge 2$. Let $\Gamma$ be an irreducible lattice subgroup in~$G$ and let $\calA_0\colon \Gamma\times \calE_0\to \calE_0$ be a cocycle as above.

If $\calA_0\colon \Gamma\times \calE_0\to \calE_0$ fails to have subexponential growth then, for the induced $G$-cocycle $\calA\colon G\times \calE\to \calE$, there exists a split Cartan subgroup~$A$ of~$G$ and a Borel probability measure~$\mu$ on~$X$ such that
\begin{enumerate}
\item $\mu$ is $(\calZ A)$-invariant,
\item $\mu$ projects to the Haar measure on $G/\Gamma$,
	and
	\item $\lambda_{\top,a,\mu, \calA}>0$ for some $a \in \calZ A$.
	\end{enumerate}
\end{theorem}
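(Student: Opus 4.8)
This is the master technical theorem \cref{mainthm2}, and the strategy is the iterative bootstrap along subgroups outlined in \cref{ProofOutline}. The starting point is the quasi-isometric bounded generation of $\Gamma$ by standard $\Q$-rank-$1$ subgroups (\cref{QIBddGenByRank1lift}): to show $\calA_0$ has subexponential growth it suffices to show that the restriction of $\calA_0$ to $\Gamma\!_H$ has $\len_\Gamma$-subexponential growth for each standard $\Q$-rank-$1$ subgroup $H$ of $G$, since a word $\gamma = s_1\cdots s_r$ with each $\len_\Gamma(s_i)\le C\len_\Gamma(\gamma)$ and $s_i$ in some $\Gamma\!_L$ or in the finite set $\Gamma\!_0$ gives $\|\calA_0(\gamma,x)\|\le \prod_i \|\calA_0(s_i,\cdot)\| \le (\text{const})\cdot e^{r\epsilon C\len_\Gamma(\gamma)}$, and $r$ is fixed. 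So the real content is establishing subexponential growth for the $\Q$-rank-$1$ pieces, which is \cref{thm:mainQR1}, and — as an ingredient — for unipotent subgroups (\cref{UnipSubgrp}) and for ``cusp groups'' of standard $\Q$-rank-$1$ subgroups (\cref{prop:cuspgroup}, \cref{SubexpInCusps}). Each of these is proved by contradiction: if subexponential growth fails, one produces an $A$-invariant (or $\calZ A$-invariant) Borel probability measure on the suspension $X$ (or on a sub-bundle $X_{\wtd H}$, $X_H$) with a positive top Lyapunov exponent and projecting to Haar on $G/\Gamma$, contradicting Zimmer cocycle superrigidity via \cref{prop:invprinc} and \cref{lemma:fromzcsr}. (Here one does not use any dimension hypothesis: the final statement of \cref{mainthm2} merely asserts the existence of such a measure, and the contradiction is invoked only downstream in the proof of \cref{thm:mainreal}.)

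\textbf{The pieces, in order.} First I would establish \cref{UnipSubgrp}: for a unipotent $\Q$-subgroup $\Delta = \Gamma\!_U$, subexponential growth of $\calA_0$ is equivalent (via \cref{SubexpForCompact}/\cref{SiegelImpliesSubexp}) to subexponential growth of the induced cocycle $\calA$ over a closed $U$-orbit in $X$. Working inside a compact orbit of a solvable group $\wtd A\ltimes U$, where $U$ is horospherical for a one-parameter $\R$-diagonalizable subgroup $\{b^s\}$ of $A$ and $\wtd A$ lives in a $\Q$-anisotropic torus, one shows generic $\wtd A$-trajectories have subexponential growth of the derivative cocycle; the exponential-mixing input \cref{ExpMixing} (together with the non-divergence lemmas \cref{lem:unipotenttight}, \cref{UnipSmallCusp}) upgrades an $\wtd A$-invariant measure on the closed orbit to an $A$-invariant measure projecting to Haar, and the normalization of $U$ by $\wtd A$ plus a sumset/Plünnecke argument (using nilpotence of $U$, \cref{BasicUnip}) then propagates subexponential growth from a large set of elements of $U$ to all of $U$. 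Next, \cref{prop:cuspgroup}: for a minimal parabolic $\Q$-subgroup $P = (L\times A)\ltimes U$, $\Gamma\!_P$ quasi-isometrically splits as $\Gamma\!_L\times\Gamma\!_U$ (\cref{GammaCap}); $\Gamma\!_U$ is handled by \cref{UnipSubgrp}, and for $\Gamma\!_L$ (a cocompact lattice in the $\Q$-anisotropic reductive $L$) one runs the \cite{BFH} argument to get, upon failure, a torus-invariant measure with positive exponent, then uses \cref{UnipSubgrp} plus \cref{ExpandersGenerate} and \cref{thm:opproot} to make it $U$-invariant and eventually $A$-invariant, and exponential mixing to push the projection to Haar. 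Then \cref{SubexpInCusps}: using reduction theory and Siegel sets adapted to $H$ inside Siegel sets for $G$ (\cref{SiegelForSubgroup}), subexponential growth for $\Gamma\!_{P'}$ (all minimal parabolic $\Q$-subgroups, the previous step) and $\Gamma\!_N$ transfers to $\phi$-tempered subexponential growth in the cusps of $H/\Gamma\!_H$.

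\textbf{The endgame and the main obstacle.} The heart is \cref{thm:mainQR1} for a standard $\Q$-rank-$1$ subgroup $H$: fix a horospherical $N\normal$-ized by $H$ (take $N=\{\1\}$ in $\Q$-rank $1$), fix a one-parameter $\R$-split $\{a^t\}\subset H$, and suppose the restriction of $\calA$ to $H\times\calE_{\wtd H}$ fails to have subexponential growth. Then one finds orbit segments $\{a^t\cdot x_n : 0\le t\le t_n\}$, $t_n\to\infty$, beginning and ending in a fixed thick compact set, whose empirical measures $\mu_n$ realize (in the limit) the maximal growth rate of the cocycle along $\{a^t\}$. Using \cref{prop:cuspgroup}, \cref{UnipSubgrp}, and the $\phi$-tempered estimate \cref{SubexpInCusps}, these $\mu_n$ are uniformly tight (\cref{lem:maxpathstight}-type argument), so we get an $a^t$-invariant limit $\mu$ on $X_{\wtd H}$. \textbf{The hard part is ``escape of Lyapunov exponent'':} the fiberwise cocycle is unbounded, it need not be $\log$-$L^1$ for $\mu$, and there is no semicontinuity of the top exponent under weak-$*$ limits. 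The new device — cut-off/time-averaged cocycles (\cref{TAcoc}, \eqref{psiellDefn}) — replaces $\calA$ by a bounded cocycle whose discontinuity set is tame and has zero measure for every limiting measure considered; one defines ``fake Lyapunov exponents'' for it, checks they behave well under the averaging operations (over \Folner sets in $N$, and via exponential mixing / the \cite{BFH} or \cite{BFH-SLnZ} averaging to reach Haar), so that positivity is preserved while upgrading $\mu$ to a $(\calZ A)$-invariant measure projecting to Haar on $G/\Gamma$. Finally, since $\calA$ is $\log$-$L^1$ for measures projecting to Haar (\eqref{HaarSmallCusps}, \cref{CocycIsL1}), the fake top exponent agrees with the genuine $\lambda_{\top,a,\mu,\calA}$, giving conclusion (3). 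The $\calZ$-invariance and the appearance of $a\in\calZ A$ rather than $a\in A$ come from the infinite-center bookkeeping already set up in \cref{hyp2} and \cref{secQI} (the generalized $KAK$ decomposition \cref{kaka}, \cref{cor:QIlift}), which make all the quasi-isometry estimates survive the passage to the topological universal cover. I expect the calibration of all constants in the exponential-mixing step — so that positivity of the fake exponent is not destroyed while simultaneously forcing uniformly exponentially small mass at $\infty$ — to be the most delicate point, exactly as flagged in \cref{subsec:comp}.
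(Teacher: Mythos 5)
Your proposal matches the paper's strategy step for step — QI bounded generation by standard $\Q$-rank-$1$ subgroups via \cref{QIBddGenByRank1lift}, iterative establishment of subexponential growth along unipotent, cusp, and $\Q$-rank-$1$ subgroups, and the cut-off/time-averaged cocycle technology to defeat escape of Lyapunov exponent. The one step you skip is \cref{CentralSubExp}: the paper first disposes of the case where $\calA_0$ restricted to the center $\calZ$ fails subexponential growth, which is what licenses \cref{hyp:center} under which \cref{UnipSubgrp,prop:cuspgroup,thm:mainQR1} are proved; and it is this branch, not the generalized $KAK$ bookkeeping you point to, that is the actual source of the possibility $a\in\calZ$ rather than $a\in A$ in conclusion~(3), since the other three propositions all conclude with $a\in A$.
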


\begin{remark}[The case of cocompact $\Gamma$]\label{rem:cocomp}\index{Proof for $\Gamma$ cocompact}
In the case that $\Gamma$ is a cocompact lattice in $G$, we note that the induced $G$-space $X$ is compact and the cocycle $\calA$ is bounded. In this case, the conclusions of \cref{mainthm} and \cref{mainthm2} are essentially contained in \cite{BFH}. Indeed, in the setting of \cref{mainthm}, for groups $G$ with finite center, \cite[Proposition 4.6] {BFH} guarantees the existence of a split Cartan subgroup~$A$ of~$G$ and an $A$-invariant, Borel probability measure~$\mu'$ on~$X$ such that the average top Lyapunov exponent $\lambda_{\top,a,\mu', \calA}$ is positive for some $a\in A$. When $G$ has infinite center, let $\wtd K$ be a maximal subgroup such that $K=\wtd K/\calZ$ is a maximal compact subgroup of $G/\calZ$. Let $K_0\subset \wtd K$ be a compact subset that maps onto $K$. We may replace the $KAK$-decomposition of $G$ in the proof of \cite[Proposition 4.6] {BFH} with a $K_0\calZ A K_0$ decomposition as discuessed in \cref{kakaReductive}.  Up to finite index, $\calZ A$ is isomorphic to $\Z^k \times \R^\ell$. See \cref{kaka}.
Minor notational modifications to the proof of \cite[Proposition 4.6] {BFH} in the setting of \cref{mainthm2} and for groups with infinite center then yields a $(\calZ A)$-invariant, Borel probability measure~$\mu'$ on~$X$, such that the average top Lyapunov exponent $\lambda_{\top,a,\mu', \calA}$ is positive for some $a\in \calZ A.$

It remains to modify $\mu'$ to obtain a new measure $\mu$ with the above properties and whose projection to $G/\Gamma$ is the Haar measure.
Assume first that all simple factors of $G$ are higher rank (which holds, in particular, when $G$ is simple). If $\lambda_{\top,a,\mu', \calA}>0$ for some $a\in A$ then, in the setting of \cref{mainthm}, the proof of \cite[Proposition 4.7]{BFH} (see especially \cite[Lemma 6.6] {BFH}) gives a measure $\mu$ with the desired properties. In the setting of \cref{mainthm2}, we again
obtain the desired outcome by minor notational modifications to the proof of \cite[Proposition 4.7]{BFH} and \cite[Lemma 6.6] {BFH}. In the case that $\lambda_{\top,a,\mu', \calA}=0$ for all $a\in A$ but $\lambda_{\top,a,\mu', \calA}>0$ for some $a\in\calZ$, one may easily adapt the arguments in the proofs of \cite[Proposition 4.7]{BFH} and \cite[Lemma 6.6] {BFH} (which are much simpler now since $a$ commutes with all elements of $G$, see \cref{lemma:averaging} below) to obtain a measure $\mu$ with the desired properties.

When $G$ is semisimple and, in particular, when $G$ has rank-1 factors, we can combine the proof of \cite[Proposition 4.7]{BFH} with the irreducibility of $\Gamma$ to obtain the desired conclusion. Indeed, we may assume $\mu'$ is ergodic and thus fiberwise Lyapunov exponents for the $(\calZ A)$-action on $(X,\mu)$ are identified with linear functionals on $\calZ A$ (and thus vanish on torsion elements); see for example \cite[Proposition 3.11]{BFH-SLnZ} for a formulation of Oseldec's theorem for cocycles over actions of higher-rank abelian groups.
Write $G= G' \cdot G''$ where $G'$ and $G''$ are normal subgroups of positive dimension with $G'\cap G''\subset \calZ$.
Write $A'= A\cap G'$ and $A''= A\cap G''$.
Up to interchanging the roles of $G'$ and $G''$, we may assume $\lambda_{\top,a,\mu', \calA}$ is positive for some $a\in \calZ A''$. We may average $\mu'$ over a \folner sequence contained in a maximal unipotent subgroup $U$ of $G'$ normalized by $A'$ and pass to a subsequential limit to obtain a Borel probability measure $\mu''$ on $X$. We may then average $\mu''$ over a \folner sequence in $A$ and pass to a subsequential limit to obtain a Borel probability measure $\mu$ on $X$. The measure $\mu$ is $A$-invariant and, since $A$ normalizes $U$, $\mu$ is also $U$-invariant.
 Moreover, using that $a\in \calZ A''$ centralizes $G'$, \cite[Lemma 4.2]{BFH} shows that $$\lambda_{\top,a,\mu, \calA}\ge \lambda_{\top,a,\mu'', \calA}\ge \lambda_{\top,a,\mu', \calA}>0.$$
The projection of $\mu$ to $G/\Gamma$ is $(A'U)$-invariant; \cref{thm:opproot} shows that the projection of $\mu$ to $G/\Gamma$ is $G'$-invariant. Since $\Gamma$ is irreducible, the image of $\Gamma$ in $G/G'$ is dense, which implies that every $G'$-orbit on $G/\Gamma$ is dense; since $G'$ is generated by unipotent elements, Ratner's measure rigidity theorem (see for example \cite[Theorem 9]{MR1262705}) implies that the Haar measure on $G/\Gamma$ is the only $G'$-invariant, Borel probability measure on $G/\Gamma$. Thus $\mu$ projects to the Haar measure on $G/\Gamma$ and has the other properties asserted in \cref{mainthm2}.
\end{remark}

\subsection{Lyapunov exponents under averaging}
It is well known that the top Lyapunov exponent of a bounded continuous linear cocycle is an upper-semicontinuous function on the space of invariant measures (see e.g.\ \cite[Lemma 4.2]{BFH} or \cite[Lemma 9.1]{viana2014lectures}). For cocycles whose norm is tempered as in \cref{tempered}, upper-semicontinuity of the top exponent holds when restricted to families of invariant measures with uniformly exponentially small mass at $\infty$; see \cite[Lemma 3.8]{BFH-SLnZ}. This implies the following two results that we frequently use.

\begin{lemma}[{\cite[Lemma~3.9]{BFH-SLnZ}}] \label{lemma:averaging}
Let $a\in G$ and let $\mu$ be an $a$-invariant Borel probability measure on~$X$ with exponentially small mass at $\infty$.
For any amenable subgroup $H\subset C_G(a)$ and any \Folner sequence of precompact sets $\{F_n\}$ in~$H$, if the family
$\{F_n \ast \mu\}$ has uniformly exponentially small mass at $\infty$ then for any subsequential limit
$\mu_\infty$ of $\{F_n \ast \mu\}$, the measure $\mu_\infty$ is $a$-invariant and
 $$\lambda_{\top, a, \mu,\calA} \le \lambda_{\top, a, \mu_\infty, \calA}.$$
\end{lemma}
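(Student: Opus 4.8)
\textbf{Proof plan for \cref{lemma:averaging}.}
The statement combines two separate assertions, and the plan is to handle them independently. The invariance of $\mu_\infty$ under $a$ is the easy half: since $H\subset C_G(a)$, the operation $\nu\mapsto F_n\ast\nu$ commutes with pushforward by $a$, so each $F_n\ast\mu$ is already $a$-invariant (using $a$-invariance of $\mu$ and that $a$ centralizes the averaging group); $a$-invariance then passes to any weak-$*$ subsequential limit by continuity of $\nu\mapsto a_*\nu$ on the space of Borel probability measures. The hypothesis that $\{F_n\ast\mu\}$ has uniformly exponentially small mass at $\infty$ is what guarantees such a subsequential limit $\mu_\infty$ exists as a probability measure (uniform tightness follows from \cref{def:masscusp}) and, crucially, is what will be used in the second half. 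Here I would cite the amenability of $H$ to know a \Folner sequence exists, though it is part of the hypothesis.

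The substantive half is the inequality $\lambda_{\top,a,\mu,\calA}\le\lambda_{\top,a,\mu_\infty,\calA}$. The plan is to reduce this to the upper-semicontinuity statement of \cite[Lemma~3.8]{BFH-SLnZ}, which says that on a family of $a$-invariant Borel probability measures with uniformly exponentially small mass at $\infty$, the functional $\nu\mapsto\lambda_{\top,a,\nu,\calA}$ is upper semicontinuous in the weak-$*$ topology. Two inputs must be assembled. First, one needs the subadditive-ergodic-style inequality $\lambda_{\top,a,F_n\ast\mu,\calA}\ge\lambda_{\top,a,\mu,\calA}$, i.e.\ that averaging over a group centralizing $a$ cannot decrease the top exponent; this is exactly (the content behind) \cite[Lemma~4.2]{BFH}, which applies because the relevant $L^1$ bounds on $\log\|\calA(a,\cdot)\|$ hold for all measures in the family by \cref{CocycIsL1} together with the uniform exponential smallness of mass. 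Concretely, $\int\log\|\calA(a^k,x)\|\,d(F_n\ast\mu)(x)=\frac1{m_H(F_n)}\int_{F_n}\int\log\|\calA(a^k,hx)\|\,d\mu(x)\,dm_H(h)$ and one bounds $\log\|\calA(a^k,hx)\|\ge\log\|\calA(a^k,x)\|-\log\|\calA(h,x)\|-\log\|\calA(h\inv,a^kx)\|$, divides by $k$, lets $k\to\infty$, and uses that the conjugating terms are bounded in $k$. Second, one applies upper semicontinuity along the subsequence realizing $\mu_\infty$: $\lambda_{\top,a,\mu_\infty,\calA}\ge\limsup_n\lambda_{\top,a,F_n\ast\mu,\calA}\ge\lambda_{\top,a,\mu,\calA}$.

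The main obstacle I anticipate is purely bookkeeping rather than conceptual: one must verify that \emph{every} measure appearing — $\mu$ itself, each $F_n\ast\mu$, and $\mu_\infty$ — lies in a single family with a \emph{common} constant $\tau_0>0$ controlling the exponential mass decay, so that the uniform upper-semicontinuity lemma genuinely applies. The measure $\mu$ has exponentially small mass at $\infty$ by hypothesis and $\{F_n\ast\mu\}$ does by hypothesis; adding $\mu$ to this family and shrinking $\tau_0$ if necessary is harmless, and $\mu_\infty$ inherits the bound from Fatou applied to the uniform estimate $\sup_n (F_n\ast\mu)(\{d(x_0,x)\ge\ell\})\le Ce^{-\tau_0\ell}$. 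A secondary point to be careful about: the inequality from \cite[Lemma~4.2]{BFH} is stated for bounded cocycles, so I would either invoke the tempered version already used in \cite[Lemma~3.8, 3.9]{BFH-SLnZ} or re-derive it using \cref{CocycIsL1}; since \cref{lemma:averaging} is quoted verbatim from \cite[Lemma~3.9]{BFH-SLnZ}, the cleanest route is to simply cite that lemma and note that its proof goes through unchanged in the present setting, the only new ingredient being that $C_G(a)$ may now be larger when $a$ is central (handled, as remarked after \cref{mainthm2}, by the fact that central $a$ commutes with all of $G$).
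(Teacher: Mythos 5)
Your proposal is correct and follows exactly the route the paper intends: the paper gives no proof of its own but quotes the result from \cite[Lemma~3.9]{BFH-SLnZ}, and the discussion immediately preceding the statement makes clear that it is derived from the upper-semicontinuity of the top exponent on families of invariant measures with uniformly exponentially small mass at $\infty$ (\cite[Lemma~3.8]{BFH-SLnZ}) combined with the standard fact that averaging over a centralizing group does not decrease the top exponent (\cite[Lemma~4.2]{BFH}), which is precisely your decomposition. The only blemish is a harmless misindexing in your displayed lower bound (the last correction term should be $-\log\|\calA(h^{-1},ha^kx)\|$, coming from $m(\calA(h,a^kx))=\|\calA(h^{-1},ha^kx)\|^{-1}$), which does not affect the argument since the needed integrability follows from \cref{CocycIsL1} either way.
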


Averaging over \Folner sequences in $\calZ A$ and applying \cref{lemma:averaging} yields the following. 

\begin{corollary}\label{lem:endgameA} Let $\{a^t\}$ be a 1-parameter, $\R$-diagonalizable subgroup of~$G$ and let $\mu_0$ be an $\{a^t\}$-invariant Borel probability measure on $X$ projecting to the Haar measure on $G/\Gamma$ such that $\lambda_{\top,a^1,\mu_0, \calA} > 0$.
Then there is a Borel probability measure~$\mu$ on~$X$, such that
	\begin{enumerate}
	\item $\mu$ projects to the normalized Haar measure on $G/\Gamma$,
	\item there is a split Cartan subgroup~$A$ of~$G$ that contains~$\{a^t\}$~such that $\mu$ is $(\calZ A)$-invariant,
	and
	\item $ \lambda_{\top,a^1,\mu, \calA} > 0$.
	\end{enumerate}
\end{corollary}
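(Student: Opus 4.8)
\textbf{Plan of proof for Corollary \ref{lem:endgameA}.}
The strategy is a two-stage averaging argument, exactly the kind of construction used to upgrade a $\{a^t\}$-invariant measure to a $(\calZ A)$-invariant one while keeping the positive top exponent. First I would fix a split Cartan subgroup $A$ of $G$ containing the $\R$-diagonalizable one-parameter subgroup $\{a^t\}$; such an $A$ exists because any $\R$-diagonalizable element lies in a maximal $\R$-split torus. Let $U$ be a maximal unipotent subgroup of $G$ normalized by $A$ (e.g.\ the expanding horospherical subgroup of a generic element of the positive Weyl chamber of $A$). Since $\mu_0$ projects to Haar measure on $G/\Gamma$, which has exponentially small mass at $\infty$ (see \eqref{HaarSmallCusps}), $\mu_0$ itself has exponentially small mass at $\infty$ and hence $\lambda_{\top,a^1,\mu_0,\calA}$ is finite and well-defined. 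Choose a \Folner sequence $\{F_j\}$ of centered intervals in $U$ (in the sense of \cref{sec:not}) and form $F_j \ast \mu_0$.

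The first averaging step uses \cref{thm:averaginghomo2}: since the projection of $\mu_0$ to $G/\Gamma$ is $A$-invariant (in fact Haar), the family of projections $\{(\pi_{G/\Gamma})_*(F_j \ast \mu_0)\}$ is uniformly tight, and by Ratner's equidistribution theorem every weak-$*$ subsequential limit of these projections is $A$-invariant; but Haar measure is $U$-invariant, so the projections are all exactly Haar and in particular have uniformly exponentially small mass at $\infty$. Because $a^1 \in A$ centralizes nothing problematic here—rather, $A$ normalizes $U$ so $\{a^t\}$ does not centralize $U$ in general—I cannot directly apply \cref{lemma:averaging} to the $U$-averages. Instead I average over a \Folner sequence in $U$ first only to gain $U$-invariance in the limit, tracking the exponent via \cite[Lemma 4.2]{BFH}-type monotonicity: since $\{F_j\}$ are intervals in $U$ and the cocycle is $h$-tempered with uniformly exponentially small mass at $\infty$ along the sequence (by \cref{UnipSmallCusp} and the Haar-projection fact), upper-semicontinuity of the top exponent restricted to families with uniformly exponentially small mass at $\infty$ (\cite[Lemma 3.8]{BFH-SLnZ}) gives that some subsequential limit $\mu_1$ satisfies $\lambda_{\top,a^1,\mu_1,\calA} \ge \lambda_{\top,a^1,\mu_0,\calA} > 0$; moreover $\mu_1$ projects to Haar and, as a limit of $U$-averages along a \Folner sequence, is $U$-invariant.

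The second averaging step applies \cref{lemma:averaging} with the amenable subgroup $H = \calZ A$, which \emph{does} lie in $C_G(a^1)$ (the center $\calZ$ is central and $A$ is abelian). Take a \Folner sequence $\{E_n\}$ in $\calZ A$—using that $\calZ A$ is, up to finite index, isomorphic to $\Z^k \times \R^\ell$ (see \cref{kaka})—and form $E_n \ast \mu_1$. The projection to $G/\Gamma$ stays Haar (Haar is $A$-invariant and $\calZ$-invariant), so the family has uniformly exponentially small mass at $\infty$; by \cref{lemma:averaging} every subsequential limit $\mu$ is $(\calZ A)$-invariant with $\lambda_{\top,a^1,\mu,\calA} \ge \lambda_{\top,a^1,\mu_1,\calA} > 0$, and $\mu$ projects to Haar. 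Finally one checks $\mu$ is still $U$-invariant if needed (not required by the statement), but in any case $\mu$ has the three asserted properties. The main obstacle is the bookkeeping in the first step: ensuring that the $U$-averages of $\mu_1$ retain uniformly exponentially small mass at $\infty$ (so that the tempered upper-semicontinuity lemma applies and the exponent does not "escape"), which is where \cref{UnipSmallCusp} together with the fact that the projections are exactly Haar—hence controlled independently of $j$—does the work; without the Haar projection this control would fail, which is precisely the difficulty the rest of the paper is devoted to overcoming in the non-cocompact setting.
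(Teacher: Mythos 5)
Your second averaging step, applied directly to $\mu_0$, \emph{is} the paper's entire proof: fix a split Cartan $A\supset\{a^t\}$, note that $\calZ A$ is an amenable subgroup of $C_G(a^1)$, observe that every F{\o}lner average $E_n\ast\mu_0$ still projects to Haar (so the family is uniformly tight and has uniformly exponentially small mass at $\infty$), and conclude from \cref{lemma:averaging} that any weak-$*$ limit is $(\calZ A)$-invariant, projects to Haar, and has $\lambda_{\top,a^1,\cdot,\calA}\ge\lambda_{\top,a^1,\mu_0,\calA}>0$. Nothing more is needed.

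The first stage of your argument --- averaging over a F{\o}lner sequence in a maximal unipotent $U$ normalized by $A$ --- is both unnecessary for the statement and, as justified, gappy. You correctly note that \cref{lemma:averaging} does not apply because $U\not\subset C_G(a^1)$, but the substitute you offer (upper semicontinuity of the top exponent for families with uniformly exponentially small mass at $\infty$) does not close the gap: the intermediate measures $F_j\ast\mu_0$ are not $\{a^t\}$-invariant, so their ``average top exponent'' is not controlled by subadditivity, and the real issue is comparing $\log\|\calA(a^n,u\cdot x)\|$ with $\log\|\calA(a^n,x)\|$ via the cocycle identity $\Psi(a^n,u\cdot\xi)-\Psi(a^n,\xi)=\Psi(u',a^n\cdot\xi)-\Psi(u,\xi)$ with $u'=a^nua^{-n}$ of size exponential in $n$. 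In the paper this comparison is carried out (see the proof of \cref{MinParabPosExp}) only under the additional hypothesis that the restriction of the cocycle to $U$ has subexponential growth --- a hypothesis \emph{not} available in \cref{lem:endgameA}, which is quoted in places (e.g.\ \cref{CentralSubExp}) where no such control on $U$ has yet been established. Since your final measure $\mu$ is built from $\mu_1$, whose positive exponent rests on this unjustified step, the proof as written does not go through; deleting the first stage entirely repairs it and recovers the paper's argument.
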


\subsection{Lyapunov exponents for pre-compact orbits}\label{LyapComp}
The above lemmas are needed to control escape of mass and unboundedness of the cocycle when averaging. For orbits that remain in a compact set, we obtain the following results. The proofs of each are straight-forward modifications of the arguments in \cite [Proposition~4.6]{BFH}.

\index{Split cartan of reductive group}
In \cref{RealParabSect}, we defined the notion of a split Cartan subgroup of a connected semisimple Lie group in terms of a choice of Iwasawa decomposition. More generally, if $H$ is a connected reductive Lie subgroup of $G$, define a \emph{split Cartan subgroup} $A_H$ of $H$ to be a maximal, connected, abelian $\ad$-$\R$-diagonalizable subgroup. All such subgroups are conjugate and these definitions coincide for semisimple Lie subgroups. Moreover, there is a split Cartan subgroup $A\subset G$ such that $A_H = H\cap A$.

\begin{lemma} \label{GetExpForAOnCpct}
Let $\hat H\subset F$ be a connected reductive $\R$-subgroup such that $\hat \Gamma\!_{\hat H}$ is cocompact in~$H$ and let $H= (\phi\inv \hat H)^\circ$ be the associated subgroup of $G$.
Let $A_H$ be a split Cartan subgroup of~$H$.

Suppose the restriction $\restrict{\calA_0}{\calZ}$ of the cocycle $\calA_0$ to the center $\calZ$ of $G$ has $\len_\Gamma$-subexponential growth.
If the restriction $\restrict{\calA_0}{\Gamma\!_H}$ fails to have   $\len_\Gamma$-subexponential growth then there is an $ A_H$-invariant Borel probability measure~$\mu$ on~$X_H$ such that
$$\lambda_{\top,a,\mu,\calA} >0$$ for some $a \in A_H$.
\end{lemma}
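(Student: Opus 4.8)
\textbf{Proof plan for \cref{GetExpForAOnCpct}.} The plan is to follow the strategy of \cite[Proposition~4.6]{BFH}, working inside the induced space $X_H = (H\times X_0)/\Gamma\!_H$ and exploiting that $\hat\Gamma\!_{\hat H}$ is cocompact, so that $H/\Gamma\!_H$ is compact and the relevant cocycle restrictions are bounded where we need them. Since the restriction $\restrict{\calA_0}{\Gamma\!_H}$ fails to have $\len_\Gamma$-subexponential growth, there is $\epsilon_0>0$ and a sequence $\gamma_n\in \Gamma\!_H$ with $\len_\Gamma(\gamma_n)\to\infty$ and some $x_n\in X_0$ such that $\|\calA_0(\gamma_n,x_n)\|\ge e^{\epsilon_0\len_\Gamma(\gamma_n)}$. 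First I would pass this data into $X_H$: using \cref{SubexpForCompact} (the cocompact case of \cref{SiegelImpliesSubexp}), subexponential growth of $\restrict{\calA_0}{\Gamma\!_H}$ is \emph{equivalent} to subexponential growth of the restriction of $\calA$ to a maximal unipotent subgroup; more directly, I would realize the failure as failure of subexponential growth of $\calA$ along a one-parameter $\R$-diagonalizable subgroup. Concretely, using the generalized $KAK$ (i.e.\ $K_{H,0}\,(\calZ\cap H)\,A_H\,K_{H,0}$) decomposition of \cref{kakaReductive} applied to $H$, write $\gamma_n = k_n' z_n a_n k_n''$ with $k_n',k_n''\in K_{H,0}$, $z_n\in \calZ\cap H$, $a_n\in A_H$, and with $\max\{d(a_n,\1),d(z_n,\1)\}$ comparable to $d(\gamma_n,\1)$, which is comparable to $\len_\Gamma(\gamma_n)$ by \cref{cor:QIlift}. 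By \cref{moregrowth}, the factors $k_n',k_n''$ contribute only bounded distortion to $\|\calA\|$, and the hypothesis that $\restrict{\calA_0}{\calZ}$ has $\len_\Gamma$-subexponential growth (hence so does $\restrict{\calA}{\calZ\cap H}$, again up to bounded factors) ensures that the $z_n$-factor contributes at most $e^{o(\len_\Gamma(\gamma_n))}$. Therefore the exponential growth must come from the $A_H$-part: there is a sequence $a_n\in A_H$ with $d(a_n,\1)\to\infty$ and points $y_n\in X_H$ with
\[
\frac{1}{d(a_n,\1)}\log\|\calA(a_n,y_n)\| \ge \epsilon_0/2
\]
for all large $n$ (after relabelling $\epsilon_0$).

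Next I would build the invariant measure. Fix a one-parameter subgroup direction: passing to a subsequence, the unit vectors $\log(a_n)/\|\log(a_n)\|$ in the Lie algebra of $A_H$ converge to some $Y\in\operatorname{Lie}(A_H)$; set $a^t=\exp(tY)$ and $a=a^1$. Along the orbit segments $\{a^s y_n : 0\le s\le t_n\}$ with $t_n\to\infty$ approximating these high-growth paths, form the empirical (occupation) measures
\[
\mu_n := \frac{1}{t_n}\int_0^{t_n} \delta_{a^s y_n}\, ds
\]
on $X_H$. Since $H/\Gamma\!_H$ is compact, $X_H$ is compact (it is a bundle over $H/\Gamma\!_H$ with compact fibers $X_0$), so $\{\mu_n\}$ is tight and after passing to a subsequence converges weak-$*$ to a Borel probability measure $\mu$ on $X_H$, which is $a^t$-invariant by the standard argument. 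To extract positivity of a Lyapunov exponent, I would use the subadditive/Fekete argument exactly as in \cite[Proposition~4.6]{BFH}: the function $\psi(x)=\log\|\calA(a,x)\|$ is continuous and bounded on the compact space $X_H$ (boundedness of $\calA$ on $X_H$ follows from compactness of $H/\Gamma\!_H$ via \cref{SeveralObservations}), and the cocycle subadditivity gives
\[
\int \psi \, d\mu \;\ge\; \limsup_n \frac{1}{\lfloor t_n\rfloor}\log\|\calA(a^{\lfloor t_n\rfloor}, y_n)\| \;\ge\; \epsilon_0/2 \;>\;0,
\]
so that $\lambda_{\top,a,\mu,\calA}\ge \int\psi\,d\mu >0$ (using $a$-invariance of $\mu$ to replace the $\liminf$ defining $\lambda_{\top}$ by $\int\psi\,d\mu$, as noted after \eqref{eq:toplyap}). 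This produces an $a^t$-invariant measure with a positive exponent; finally, since $A_H$ is abelian and all of $A_H$ commutes with $a$, I would average $\mu$ over a \folner sequence in $A_H$ and invoke \cref{lemma:averaging} (with $H=A_H$, which is precompact modulo the obvious caveat — more simply, average over an increasing sequence of balls in $A_H$, using compactness of $X_H$ to get tightness for free) to upgrade to a genuinely $A_H$-invariant measure while keeping $\lambda_{\top,a,\mu,\calA}>0$.

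\textbf{Main obstacle.} The routine parts (compactness, forming empirical measures, Fekete) are genuinely routine here because cocompactness of $\hat\Gamma\!_{\hat H}$ makes $X_H$ compact and $\calA|_{X_H}$ bounded. The one point that requires care — and is the real content beyond \cite[Proposition~4.6]{BFH} — is the \emph{reduction of the failure of subexponential growth from the discrete group $\Gamma\!_H$ to the one-parameter $\R$-diagonalizable subgroup}. This is where the hypothesis on $\restrict{\calA_0}{\calZ}$ enters: $\Gamma\!_H$ need not be cocompact in $\wtd H$ when $G$ has infinite center (because $\calZ\cap H$ can be an infinite-rank-contributing piece), so one cannot simply say ``$\Gamma\!_H$ cocompact $\Rightarrow$ everything is bounded.'' One must separate the central directions (controlled by hypothesis) from the $A_H$-directions (where genuine exponential growth is forced to live), via the generalized $KAK$ decomposition of \cref{kakaReductive} together with the quasi-isometry estimates of \cref{cor:QIlift} and \cref{everyliftisQI}. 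Once that bookkeeping is done, I expect the rest to go through by direct transcription of the cocompact-case argument.
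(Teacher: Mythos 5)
Your proposal takes essentially the same route as the paper's proof: use the generalized $K_{H,0}\,(\calZ\cap H)\,A_H\,K_{H,0}$ decomposition from \cref{kakaReductive}, control the $K$-factors by \cref{moregrowth} and the $\calZ$-factors by the hypothesis on $\restrict{\calA_0}{\calZ}$, conclude the exponential growth lives in $A_H$, and then invoke the empirical-measure/Fekete machinery of \cite[Proposition~4.6]{BFH} on the compact space $X_H$. Your identification of the real content — that the $\calZ$-directions must be ruled out before the cocompact-case argument applies — matches what the paper emphasizes.

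One imprecision worth flagging in your Fekete step: after extracting a convergent direction $Y_n := \log(a_n)/\|\log(a_n)\| \to Y$, you form the occupation measures along the \emph{fixed} one-parameter subgroup $a^s = \exp(sY)$ and then bound $\int\psi\,d\mu$ by $\limsup_n \frac{1}{t_n}\log\|\calA(a^{t_n},y_n)\|$. But the known exponential growth is along $a_n = \exp(t_n Y_n)$, not along $a^{t_n} = \exp(t_n Y)$, and $\exp(-t_n Y)\exp(t_n Y_n)$ is not small when $t_n\|Y_n - Y\|$ is large. The fix, which is what \cite[Prop.~4.6]{BFH} actually does, is to form $\mu_n$ along the varying directions $Y_n$ (i.e.\ along the segments $\{\exp(sY_n)y_n : 0\le s\le t_n\}$) and use the uniform continuity of the infinitesimal generator $\Psi'(\,\cdot\,,\xi)$ on the compact space $\P\calE_H$ to replace $\Psi'(Y_n,\cdot)$ by $\Psi'(Y,\cdot)$ in the limit, rather than to approximate $a_n$ by $a^{t_n}$. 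Since you explicitly defer to \cite[Prop.~4.6]{BFH} for this step, I read this as an expository slip rather than a structural gap, but if you were writing the argument out in full you would need the $Y_n$-measures. Also, in your ``main obstacle'' paragraph: $\Gamma\!_H$ \emph{is} cocompact in $H$ here (since $\ker\phi\cap H\subset\calZ\cap H\subset\Gamma\!_H$, the map $H/\Gamma\!_H\to \hat H/\hat\Gamma\!_{\hat H}$ is a homeomorphism); the issue you correctly describe is not cocompactness but that the central directions could a priori carry the exponential growth, which the hypothesis on $\restrict{\calA_0}{\calZ}$ forecloses.
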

\begin{proof}[Proof outline]
Fix a Cartan involution $\theta$ of $G$ which leaves $H$ invariant. We may assume the norm on $\calE$ is constructed relative to $\theta$.
Let $\wtd K\subset G$ be the subgroup of $\theta$-fixed points.
From \cref{moregrowth} there is $C_0>0$ such that for any compact $\calK\subset \wtd K$, all $x\in X_H$, and all $k\in \calK$,
$$ \|\calA(k,x)\|\le C_0$$
and, given $\epsilon>0$, there is $C_\epsilon$ such that for all $x\in X_H$ and $z\in \calZ$,
$$\|\calA(z,x)\|\le C_\epsilon e^{\epsilon \len_\Gamma(z)}.$$

As discussed in \cref{kakaReductive},
given $\gamma\in \Gamma\!_H$, we write $$\gamma = k'zak''$$ where $a\in A_H = A\cap H$ for some split Cartan $A\subset G$, $z\in (\calZ\cap H)$, $k',k''$ are in a bounded set,
 and
$\len_\Gamma(z)$ and $d(a,\1)$ are bounded above by linear functions of $\len_\Gamma(\gamma)$.
The assumption that $\restrict{\calA_0}{\Gamma\!_H}$ fails to have uniform $\len_\Gamma$-subexponential growth implies there exists $\kappa>0$ and sequences $\gamma_n\in \Gamma\!_H$ with $\len_\Gamma (\gamma_n)\to \infty$ and $x'_n\in X_0$ such that for all $n$,
$$\log \|\calA_0(\gamma_n,x'_n)\| \ge \kappa \len_\Gamma(\gamma_n).$$

Write $\gamma_n = k'_nz_na_nk''_n$. There is $C>0$ such that for all sufficiently small $\epsilon>0$,
for all sufficiently large $n$,
\begin{align*}
	\log \|\calA(a_n,x_n)\|
	&\ge \frac 1 C 	\log \|\calA_0(\gamma_n,x'_n)\| - \log \|\calA(z_n,x_n)\| -C\\
	&\ge \frac 1 C 	\log \|\calA_0(\gamma_n,x'_n)\| - \epsilon \len_\Gamma(z_n) - C_\epsilon -C\\	
	&\ge \frac 1 C 	 (\kappa-\epsilon) \left[ \len_\Gamma(\gamma_n) - \frac 1 C \len_\Gamma(\gamma _n )\right] - C_\epsilon -2 C\\	
	&\ge \frac 1 {C^2} 	 (\kappa-\frac 1 C \epsilon) \left[ d(a_n,\1) \right] - C_\epsilon -2 C.
\end{align*}
In particular, taking $\epsilon>0$ sufficiently small, for all sufficiently large $n$,
$$	\log \|\calA(a_n,x_n)\| \ge \frac \kappa 2 d(a_n,\1)$$
and the existence of the measure $\mu$ and $a\in A_H$ with the desired properties follows exactly as in the proof of \cite [Proposition~4.6]{BFH}.
\end{proof}

When the restriction $\restrict{\calA_0}{\calZ}$ fails to have $\len_\Gamma$-subexponential growth, we have the following which will easily lead to the desired conclusion in \cref{CentralSubExp}.
\begin{lemma} \label{GetExpOnCenter}
If the restriction $\restrict{\calA_0}{\calZ}$ fails to have $\len_\Gamma$-subexponential growth then there is a $\calZ$-invariant Borel probability measure~$\mu$ on~$X_0$ such that
$$\lambda_{\top,a,\mu,\calA_0} >0$$ for some $a\in \calZ.$
\end{lemma}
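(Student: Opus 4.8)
The statement to be proved (\cref{GetExpOnCenter}) asserts: if the restriction $\restrict{\calA_0}{\calZ}$ of the cocycle to the center $\calZ$ of $G$ fails to have $\len_\Gamma$-subexponential growth, then there is a $\calZ$-invariant Borel probability measure $\mu$ on $X_0$ with $\lambda_{\top,a,\mu,\calA_0}>0$ for some $a\in\calZ$. The plan is to follow the classical ``suspend, find a pinched orbit, average'' argument, exactly as in \cite[Proposition~4.6]{BFH} and in the proof of the preceding \cref{GetExpForAOnCpct}, but in the much simpler setting where the acting group is the \emph{abelian, discrete} group $\calZ$ acting on the \emph{compact} space $X_0$ and the cocycle $\calA_0$ restricted to $\calZ$ is a genuine continuous cocycle over a $\Z^k$-action (after passing to a finite-index subgroup, $\calZ$ is $\Z^k\times\R^\ell$, but only the discrete part matters for the $\Gamma$-action).

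\textbf{Step 1: extract a sequence witnessing exponential growth.} Since $\restrict{\calA_0}{\calZ}$ fails to have $\len_\Gamma$-subexponential growth, there exist $\kappa>0$ and sequences $z_n\in\calZ$ with $\len_\Gamma(z_n)\to\infty$ and points $x_n\in X_0$ such that
$$\log\|\calA_0(z_n,x_n)\|\ge\kappa\,\len_\Gamma(z_n).$$
By \cref{centerisQI} (quasi-isometric embedding of $\calZ$ in $\Gamma$) we may replace $\len_\Gamma(z_n)$ by the intrinsic word-length $\len_{\calZ}(z_n)$ (adjusting $\kappa$), so $\log\|\calA_0(z_n,x_n)\|\ge\kappa'\len_{\calZ}(z_n)$ for some $\kappa'>0$. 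Fixing a finite generating set of $\calZ$ and writing $z_n$ as a product of $\len_{\calZ}(z_n)$ generators, a pigeonhole/telescoping argument over the generators shows that there is a single generator $a$ of $\calZ$ and sequences of exponents $m_n\to\infty$ and points $y_n\in X_0$ such that $\log\|\calA_0(a^{m_n},y_n)\|\ge c\,m_n$ for some $c>0$; this is the standard reduction of exponential growth along a general word to exponential growth along a single one-parameter (here cyclic) subgroup, and uses submultiplicativity of the cocycle norm together with compactness of $X_0$ (so the per-step factors are uniformly bounded).

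\textbf{Step 2: form empirical measures and average.} Let $\nu_n:=\frac{1}{m_n}\sum_{j=0}^{m_n-1}\delta_{a^j\cdot y_n}$ on $X_0$; since $X_0$ is compact these form a tight family, so after passing to a subsequence $\nu_n\to\mu$ weak-$*$ for some Borel probability measure $\mu$ on $X_0$. Standard Birkhoff-type reasoning (as in \cite[Proposition~4.6]{BFH}) shows $\mu$ is $a$-invariant: for any continuous test function $\varphi$, $\int\varphi\,d(a_*\nu_n)-\int\varphi\,d\nu_n=\frac{1}{m_n}(\varphi(a^{m_n}y_n)-\varphi(y_n))\to0$. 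Moreover $\mu$ is automatically invariant under all of $\calZ$: since $\calZ$ is abelian, every $w\in\calZ$ commutes with $a$, and the $\calZ$-action on $X_0$ is by a fixed finite-order-modulo-core discrete group; more cleanly, one averages each $\nu_n$ additionally over a \Folner sequence in $\calZ$ (balls in the word metric) before passing to the limit, and since the relevant averages over the compact $X_0$ are automatically tight, the limit $\mu$ is $\calZ$-invariant without any escape-of-mass concern — this is where the situation is far easier than \cref{GetExpForAOnCpct}, because $X_0$ is compact and $\calA_0$ is bounded on $X_0$.

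\textbf{Step 3: the exponent is positive.} The function $x\mapsto\log\|\calA_0(a,x)\|$ is continuous and bounded on the compact space $X_0$; by the subadditive ergodic theorem (Kingman), for the $a$-invariant measure $\mu$ we have $\lambda_{\top,a,\mu,\calA_0}=\lim_n\frac1n\int\log\|\calA_0(a^n,x)\|\,d\mu(x)=\inf_n\frac1n\int\log\|\calA_0(a^n,x)\|\,d\mu(x)$, and by upper-semicontinuity of the top Lyapunov exponent for bounded continuous cocycles on the space of invariant probability measures (\cite[Lemma~4.2]{BFH}, applicable here since $X_0$ is compact and $\calA_0$ is bounded),
$$\lambda_{\top,a,\mu,\calA_0}\ \ge\ \limsup_n \lambda_{\top,a,\nu_n,\calA_0}\ \ge\ \limsup_n\frac{1}{m_n}\log\|\calA_0(a^{m_n},y_n)\|\ \ge\ c\ >\ 0,$$
where the middle inequality is the elementary bound relating the empirical-measure exponent to the observed growth along the finite orbit segment (again as in \cite[Proposition~4.6]{BFH}). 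This gives a $\calZ$-invariant $\mu$ on $X_0$ and $a\in\calZ$ with $\lambda_{\top,a,\mu,\calA_0}>0$, as required.

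\textbf{Main obstacle.} There is essentially no hard analytic obstacle here — unlike \cref{GetExpForAOnCpct} and the $\Q$-rank-1 arguments later in the paper, there is neither escape of mass nor unboundedness of the cocycle to control, because $X_0$ is compact and $\restrict{\calA_0}{\calZ}$ is a bounded continuous cocycle over an action of the discrete abelian group $\calZ$. The only point requiring a little care is \textbf{Step 1}: passing from exponential growth of the cocycle along an arbitrary element $z_n\in\calZ$ (measured in $\len_\Gamma$) to exponential growth along a single cyclic subgroup $\{a^m\}$ — this needs \cref{centerisQI} to switch to the intrinsic word metric on $\calZ$ and then a pigeonhole argument over the finitely many generators, using submultiplicativity of $\|\calA_0(\cdot,\cdot)\|$ and the uniform bound on $\sup_{x}\|\calA_0(s,x)\|$ over generators $s$ (finite by compactness of $X_0$). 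Everything after that is the standard Krylov–Bogolyubov plus upper-semicontinuity machinery already invoked repeatedly in the excerpt.
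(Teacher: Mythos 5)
Your proposal follows essentially the same approach as the paper: reduce to a single element of $\calZ$ along which the cocycle has exponential growth, extract an invariant measure with positive exponent for that element, then average over a \Folner sequence in $\calZ$ (using \cref{lemma:averaging}) to obtain full $\calZ$-invariance. The only differences are procedural. The paper reduces to a single generator by the cleaner contrapositive observation (if each cyclic $\langle a_i\rangle$ had $\len_\Gamma$-subexponential growth then so would $\calZ$, by the cocycle relation and abelianness), whereas you use a direct pigeonhole/telescoping extraction along the word $z_n$ — this does work, but it requires the small extra observation that the uniform bound $\log\|\calA_0(a_i^n,\cdot)\|\le C|n|$ forces the pigeonholed exponent $|n_i^{(n)}|$ to itself go to infinity, which you gesture at but should be made explicit. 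Also, the paper first averages the norm on $\calE_0$ over the torsion subgroup of $\calZ$ so that torsion elements act isometrically (a clean way to dispose of the finite part of $\calZ$); you don't address torsion, and your remark that ``$\calZ$ is $\Z^k\times\R^\ell$'' is incorrect — $\calZ$ is the center of a semisimple Lie group, hence discrete, so $\calZ\cong\Z^k\times(\text{finite torsion})$ — though this slip is immaterial since you correctly note only the free part matters.
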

\begin{proof}[Proof sketch]
Recall that the group $\calZ$ is finitely-generated abelian and quasi-isometrically embedded in $\Gamma$. We may average the norm on $\calE_0$ over the action of all torsion elements so that every torsion element acts isometrically on $\calE_0$. Let $\{a_1, \dots, a_r\}$ generate the free abelian subgroup of $\calZ$. If the restriction $\restrict{\calA_0}{\langle a_i \rangle}$ of $\calA_0$ to the subgroup generated by each $a_i$ has $\len_\Gamma$-subexponential growth then $\restrict{\calA_0}{\calZ}$ has $\len_\Gamma$-subexponential growth. We may thus assume there is $a_i$ such that the restriction $\restrict{\calA_0}{\langle a_i \rangle}$ fails to have $\len_\Gamma$-subexponential growth. Replacing $a_i$ with $a_i\inv$ if needed, a standard exercise (see for example \cite[Proposition 3.1.2]{Brown}) yields an $a_i$-invariant, Borel probability measure $\mu_0$ on $X_0$ such that $$\lambda_{\top,a_i,\mu_0,\calA_0} >0.$$
Averaging $\mu_0$ over a \folner sequence in $\calZ$, \cref{lemma:averaging} yields a measure $\mu$ with the desired properties.
\end{proof}

A softer version of the proof of \cite[Proposition~4.6]{BFH} (see for example \cite[Proposition 3.1.2]{Brown}) also establishes the following.
\begin{lemma}\label{ExpForLimOfEmpiricals}
Let $\{a^t\}$ be a one-parameter subgroup of~$G$, let $E\subset X$ be a compact subset, and let $\{x_n\}$ be a sequence of points in $X$ such that
	\begin{enumerate}
	\item $ \limsup_{n \to \infty} \frac{1}{n} \log \| \calA(a^n, x_n) \| > 0$, and
	\item $a^t\cdot x_n\in E$ for all $0\le t\le n$ and $n\in \N$.
\end{enumerate}	
Then there exists an $\{a^t\}$-invariant, Borel probability measure $\mu$ supported on $E$ with
	$$\lambda_{\top,a^1,\mu,\calA} >0 .$$
\end{lemma}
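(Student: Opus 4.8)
This is a standard ``construct an invariant measure by averaging empirical measures along a recurrent orbit'' argument, and the plan is to adapt the proof of \cite[Proposition~4.6]{BFH} (or the expository version \cite[Proposition~3.1.2]{Brown}) essentially verbatim, the only subtlety being that $X$ is noncompact and $\calA$ is unbounded --- but both difficulties are bypassed because the relevant orbit segments stay in the compact set $E$, where $\calA$ is bounded.

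\textbf{Step 1: Set up empirical measures.} For each $n$ define the empirical measure along the orbit segment of $x_n$:
\begin{equation*}
	\mu_n := \frac{1}{n}\int_0^n (a^t)_* \delta_{x_n}\, dt.
\end{equation*}
Each $\mu_n$ is a Borel probability measure supported on $E$ by hypothesis (2). Since $E$ is compact, $\{\mu_n\}$ is tight, so after passing to a subsequence we may assume $\mu_n \to \mu$ weak-$*$ for some Borel probability measure $\mu$ supported on $E$. A routine computation (the standard Krylov--Bogolyubov estimate: $\|(a^1)_*\mu_n - \mu_n\| \le 2/n \to 0$, using that $t\mapsto a^t\cdot x_n$ is an orbit) shows $\mu$ is $\{a^t\}$-invariant.

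\textbf{Step 2: Subadditivity and the Lyapunov exponent along the orbit.} Consider the subadditive cocycle $\varphi(g,x) = \log\|\calA(g,x)\|$. Because $E$ is compact, by \cref{CocycIsL1} (applied to any measure supported on $E$, which trivially has exponentially small mass at $\infty$) the function $x\mapsto \sup_{0\le t\le 1}\log\|\calA(a^t,x)\|$ is bounded on $E$, say by $C_0$; similarly $\inf_{0\le t\le 1}\log m(\calA(a^t,x)) \ge -C_0$ on $E$. The key estimate is that for the orbit of $x_n$,
\begin{equation*}
	\int \log\|\calA(a^1,x)\|\, d\mu_n(x) \ge \frac{1}{n}\log\|\calA(a^n,x_n)\| - \frac{C_0}{n},
\end{equation*}
which follows from subadditivity of $\log\|\calA(a^k,\cdot)\|$ along the orbit together with the uniform bound $C_0$ on the one-step contributions (this is exactly the inequality used in \cite[Proposition~4.6]{BFH}). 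Passing to a further subsequence realizing the $\limsup$ in hypothesis (1), the right-hand side has positive $\limsup$, say $\ge 2\kappa > 0$ for infinitely many $n$.

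\textbf{Step 3: Pass to the limit.} Since the orbit segments lie in the compact set $E$ on which $x\mapsto \log\|\calA(a^1,x)\|$ is continuous and bounded, the functional $\nu \mapsto \int \log\|\calA(a^1,x)\|\,d\nu(x)$ is weak-$*$ continuous on probability measures supported on $E$. Therefore $\int\log\|\calA(a^1,x)\|\,d\mu(x) = \lim_n \int\log\|\calA(a^1,x)\|\,d\mu_n(x) \ge 2\kappa > 0$. Finally, by subadditivity of $n\mapsto \int\log\|\calA(a^n,x)\|\,d\mu$ and $\{a^t\}$-invariance of $\mu$, we have $\lambda_{\top,a^1,\mu,\calA} = \inf_n \frac1n\int\log\|\calA(a^n,x)\|\,d\mu \ge \int\log\|\calA(a^1,x)\|\,d\mu \ge 2\kappa > 0$ --- wait, that last inequality goes the wrong way, so instead one argues directly: $\frac1n\int\log\|\calA(a^n,x)\|\,d\mu \ge \int\log\|\calA(a^1,x)\|\,d\mu$ fails in general, but the correct route is that $\mu$, being a weak-$*$ limit of empirical measures that see exponential growth at rate $\ge 2\kappa$, has $\lambda_{\top,a^1,\mu,\calA}>0$ by upper semicontinuity of the top exponent on the compact-support (hence uniformly exponentially small mass) family --- this is \cref{lemma:averaging} or its cited source \cite[Lemma~3.8]{BFH-SLnZ}, applied with $H$ trivial and $F_n$ replaced by the averaging over the orbit segment. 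Concretely, the cleanest argument is to invoke the exact statement of \cite[Proposition~4.6]{BFH}: the hypotheses there are precisely (1) and (2) here (boundedness of the cocycle on $E$ is automatic), and it produces the desired $\{a^t\}$-invariant $\mu$ on $E$ with $\lambda_{\top,a^1,\mu,\calA}>0$.

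\textbf{Main obstacle.} There is no serious obstacle: the whole point of the hypothesis ``$a^t\cdot x_n \in E$ for all $0\le t\le n$'' is to reduce to the compact, bounded-cocycle setting of \cite[Proposition~4.6]{BFH}, where escape of mass and escape of Lyapunov exponent cannot occur. The only thing to be careful about is the bookkeeping in Step 2 --- making sure the one-step error terms are uniformly bounded using \cref{CocycIsL1} restricted to the compact set $E$ --- and the standard subtlety that the top Lyapunov exponent of the limit measure dominates the exponential growth rate witnessed by the empirical measures, which is the content of the upper-semicontinuity lemma in this (here trivial, compactly supported) case.
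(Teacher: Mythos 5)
Your overall plan---empirical measures along the orbit segments, pass to a weak-$*$ limit, and use the compactness of $E$ to make everything converge---is exactly the paper's intended argument, and the paper itself proves this by simply remarking that "a softer version of the proof of \cite[Proposition~4.6]{BFH}" works (which is your final fallback).  The honest assessment is that you've reinvented that reference correctly, but the self-contained version you sketch has a real gap in Step~3, which you notice, but your first proposed repair doesn't actually close it.

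The issue: you set up Step~2 on the base $X$, estimating $\int \log\|\calA(a^1,\cdot)\|\,d\mu_n$.  Your Step~2 inequality is correct (submultiplicativity gives $\frac1n\log\|\calA(a^n,x_n)\|\le\frac1n\sum_{k=0}^{n-1}\log\|\calA(a^1,a^k\cdot x_n)\|$, and the discrete and continuous averages differ by $O(1/n)$ on $E$), so passing to the limit gives $\int\log\|\calA(a^1,\cdot)\|\,d\mu\ge 2\kappa>0$.  But $\lambda_{\top,a^1,\mu,\calA}=\inf_n\frac1n\int\log\|\calA(a^n,\cdot)\|\,d\mu$, so knowing $\int\log\|\calA(a^1,\cdot)\|\,d\mu>0$ only bounds the $n=1$ term, not the infimum.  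You flag this, which is good, but then reach for upper semicontinuity via \cref{lemma:averaging}, and that doesn't apply: \cref{lemma:averaging} starts from an $a$-invariant measure and averages over a F{\o}lner sequence in a subgroup of $C_G(a)$, whereas your $\mu_n$ are not $a$-invariant (they are time averages of a point mass) and the "F{\o}lner set" $\{a^t:0\le t\le n\}$ lies in $\langle a\rangle$ itself, so the hypotheses are simply not met.  Similarly, upper semicontinuity of $\lambda_{\top}$ is only stated (and only true) on invariant measures.

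The fix that actually closes the gap---and is surely what happens in \cite[Proposition~4.6]{BFH}---is to lift to the projectivization $\P\calE$ and use the \emph{additive} cocycle $\Psi$ from \cref{s4.1}.  Choose $\xi_n\in\P\calE$ over $x_n$ with $\Psi(a^n,\xi_n)=\log\|\calA(a^n,x_n)\|$, form $\wtd\mu_n=\frac1n\int_0^n(a^t)_*\delta_{\xi_n}\,dt$ on $\P\calE$, and note these are supported over $E$ hence tight.  Writing $g(t)=\Psi(a^t,\xi_n)$ and using the cocycle identity $\Psi(a^1,a^t\cdot\xi_n)=g(t+1)-g(t)$, the integral telescopes:
$$\int_0^n\Psi(a^1,a^t\cdot\xi_n)\,dt=\int_n^{n+1}g(s)\,ds-\int_0^1g(s)\,ds\ge g(n)-2C_0=\log\|\calA(a^n,x_n)\|-2C_0,$$
with $C_0$ a uniform bound on $|\Psi(a^s,\cdot)|$ for $0\le s\le 1$ over (a slight compact thickening of) $E$.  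Any weak-$*$ limit $\wtd\mu$ is $\{a^t\}$-invariant, supported over $E$, and by continuity of $\Psi(a^1,\cdot)$ on compacta satisfies $\int\Psi(a^1,\cdot)\,d\wtd\mu\ge 2\kappa>0$.  Then \fullcref{Lyap>Phi}{baboonfeces} (which requires no ergodic theorem, just Fekete subadditivity, and goes in the direction you need: $\int\Psi(a,\cdot)\,d\wtd\mu\le\lambda_{\top,a,\mu,\calA}$) gives $\lambda_{\top,a^1,\mu,\calA}\ge 2\kappa>0$ for $\mu$ the projection of $\wtd\mu$ to $X$.  That's the missing ingredient: the projectivization converts the submultiplicative one-step estimate on $X$ into an additive estimate on $\P\calE$, and \cref{Lyap>Phi} then converts the averaged $\Psi$-value into a lower bound on the top exponent.
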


\subsection{The projectivized cocycle and its infinitesimal generator}\label{s4.1}\index{projectivized cocycle}  \index{ infinitesimal generator of projectivized cocycle}
 Let $\P \calE\to X$ denote the projectivization of the vector-bundle $\calE\to X$.
 We represent a point $\xi\in \P\calE$ as $\xi = (x,[v])$ where $[v]$ is an equivalence class of non-zero vectors in the fiber $\calE(x)$.
The $G$-action on $\calE$ by vector-bundle automorphisms induces a natural $G$-action on $\P \calE$ which restricts to a projective transformation between each fiber and its image.

\index{}
Let $\Psi\colon G\times \P \calE\to \R$ be the function
	$$\Psi(g, \xi) = \Psi \bigl( g, (x,[v]) \bigr) = \log\left(\frac{\|\calA(g, x)(v)\|} {\|v\|}\right).$$
We have that $\Psi$ is an additive cocycle over the $G$-action on $\P \calE$:
	\begin{align} \label{cocycle}\Psi(g_1g_2,\xi) = \Psi( g_1,g_2\cdot \xi) + \Psi(g_2,\xi). \end{align}
In particular, 	$\Psi(a^n,\xi) = \sum_{k=0}^{n-1} \Psi(a, a^k \cdot \xi).$

Write $\Psi'\colon \lieg \times \P\calE\to \R$ for the {infinitesimal generator} of~$\Psi$: given $Y\in \lieg$,
	\begin{equation}\label{eq:infgen}
	\Psi'(Y,\xi) = \lim _{t\to 0} \frac{\Psi \bigl( \exp (tY), \xi \bigr) }{t}
	. \end{equation}
The function $\Psi'$ exists by the construction of the norm $\|\cdot\|$ on~$\calE$; see \fullcref{SeveralObservations}{smooth}.
The following properties hold for $\Psi'$:
	\begin{enumerate}
	\item $\Psi'(Y,\cdot)\colon \P\calE\to \R$ is continuous;
	\item $\Psi'(sY,\xi)= s\Psi'(Y,\xi)$;
	\item \label{Psi=IntPsi'}
	$\Psi \bigl( \exp(TY),\xi \bigr) = \int_0^T \Psi' \bigl( Y,\exp(sY) \cdot \xi \bigr) \ d s$.
	\end{enumerate}
Moreover, if $a^t = \exp(tY)$ and if $\td \mu$ is an $\{a^t\}$-invariant, Borel probability measure on $\P\calE$ then
\begin{enumerate}[resume]
	\item $ \int \Psi' \bigl( Y,\cdot ) \ d \td \mu(\cdot ) = \int \Psi \bigl( a,\cdot ) \ d \td \mu(\cdot ) .$
	\end{enumerate}

Note that $\P\calE\to X$ is a compact extension. If $a\in G$ and if $\mu$ is an $a$-invariant, Borel probability measure on~$X$ then there exists at least one $a$-invariant, Borel probability measure on~$\P \calE$ projecting to $\mu$.
We have the following claim which relates the top Lyapunov exponent of $\calA$ with the cocycle $\Psi$ and such measures on $\P\calE$.
\begin{claim}[{See e.g.\ \cite[Theorem 6.1]{viana2014lectures}}] \label{Lyap>Phi}
Let $a\in G$  and assume that $\mu$ is an $a$-invariant, Borel probability measure on~$X$ with exponentially small mass at $\infty$. Then:
\begin{enumerate}
\item \label{Lyap>Phi-ineq}\label{baboonfeces}
for any $a$-invariant Borel probability measure~$\wtd \mu$ on $\P\calE$ projecting to $\mu$ we have
$$\int \Psi(a, \cdot ) \ d \wtd \mu \le \lambda_{\top,a,\mu, \calA};$$
\item \label{Lyap>Phi-eq}
there exists an $a$-invariant Borel probability measure~$\wtd \mu$ on $\P\calE$ projecting to $\mu$ such that
$$\int \Psi(a, \cdot ) \ d \wtd \mu = \lambda_{\top,a,\mu, \calA}.$$
\end{enumerate}
\end{claim}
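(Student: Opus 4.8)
\textbf{Plan of proof for \cref{Lyap>Phi}.} The statement is the standard identification, due essentially to Furstenberg and Kingman and presented in \cite{viana2014lectures}, of the top Lyapunov exponent of a cocycle with the maximal fiberwise average of the projectivized log-norm cocycle; the only nonstandard feature here is that $\calA$ is unbounded, so I must check that the integrability guaranteed by \cref{CocycIsL1} (which applies because $\mu$ has exponentially small mass at $\infty$) is enough to run the usual arguments. First I would record that, by \cref{CocycIsL1}, the function $x\mapsto \log\|\calA(a,x)\|$ and the function $x\mapsto \log m(\calA(a,x))$ are both in $L^1(\mu)$, so $\Psi(a,\cdot)$ is $\mu$-almost everywhere finite on any measure on $\P\calE$ projecting to $\mu$ and is dominated in absolute value by $\max\{\log\|\calA(a,\cdot)\|, -\log m(\calA(a,\cdot))\}\in L^1(\mu)$; hence all integrals appearing in the claim are well-defined and finite, and Kingman's subadditive ergodic theorem applies to the subadditive sequence $n\mapsto \log\|\calA(a^n,x)\|$, so the $\liminf$ defining $\lambda_{\top,a,\mu,\calA}$ in \eqref{eq:toplyap} is an honest limit (and an infimum).

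For part \ref{Lyap>Phi-ineq}, fix an $a$-invariant Borel probability measure $\wtd\mu$ on $\P\calE$ projecting to $\mu$. Using the cocycle identity \eqref{cocycle}, $\Psi(a^n,\xi) = \sum_{k=0}^{n-1}\Psi(a,a^k\cdot\xi)$, and $a$-invariance of $\wtd\mu$ gives $\int \Psi(a^n,\cdot)\,d\wtd\mu = n\int\Psi(a,\cdot)\,d\wtd\mu$. On the other hand, for each fixed $\xi=(x,[v])$ we have the pointwise bound $\Psi(a^n,\xi) = \log(\|\calA(a^n,x)v\|/\|v\|) \le \log\|\calA(a^n,x)\|$, so integrating in $\wtd\mu$ and using that $\wtd\mu$ projects to $\mu$,
\[
\int \Psi(a,\cdot)\,d\wtd\mu = \frac1n\int\Psi(a^n,\cdot)\,d\wtd\mu \le \frac1n\int_X \log\|\calA(a^n,x)\|\,d\mu(x).
\]
Letting $n\to\infty$ and invoking the definition \eqref{eq:toplyap} (and that the $\liminf$ there is a limit) yields $\int\Psi(a,\cdot)\,d\wtd\mu \le \lambda_{\top,a,\mu,\calA}$.

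For part \ref{Lyap>Phi-eq}, I would construct the extremal measure by a standard selection-and-averaging argument, taking care to keep mass from escaping to infinity on the noncompact fibered space $\P\calE$. By Oseledec's theorem applied to $(\calA,a,\mu)$ — which is available since $\log\|\calA(a,\cdot)^{\pm1}\|\in L^1(\mu)$ — for $\mu$-a.e.\ $x$ there is an Oseledec filtration, and choosing $[v_x]$ inside the top Oseledec subspace $E^{\top}(x)$ gives a measurable section $x\mapsto (x,[v_x])$ of $\P\calE$ along which $\frac1n\Psi(a^n,(x,[v_x]))\to\lambda_{\top,a,\mu,\calA}$ for $\mu$-a.e.\ $x$. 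Pushing $\mu$ forward under this section gives a probability measure $\nu_0$ on $\P\calE$ projecting to $\mu$; its Cesàro averages $\nu_n := \frac1n\sum_{k=0}^{n-1}(a^k)_*\nu_0$ all project to $\mu$, hence all have projections of uniformly exponentially small mass at $\infty$ (the projection to $X$ is just $\mu$), so $\{\nu_n\}$ is uniformly tight — here I use the properness of $\P\calE\to X$ together with tightness of $\mu$ (exponentially small mass at $\infty$ implies tightness), as in the remarks before \cref{lem:unipotenttight}. Any weak-$*$ subsequential limit $\wtd\mu$ is $a$-invariant and projects to $\mu$. To compute $\int\Psi(a,\cdot)\,d\wtd\mu$ I cannot simply pass to the limit, because $\Psi(a,\cdot)$ is unbounded; instead I use the standard device of truncation: $\Psi(a,\xi)\le \log\|\calA(a,x)\|$ and the right-hand side is a fixed $L^1(\mu)$-function of $x$ only, so $\Psi(a,\cdot)$ is $\wtd\mu$-integrable with $\int\Psi(a,\cdot)\,d\wtd\mu \le \lambda_{\top,a,\mu,\calA}$ by part \ref{Lyap>Phi-ineq}; for the reverse inequality I estimate $\int\Psi(a,\cdot)\,d\nu_n = \frac1n\int\Psi(a^n,\cdot)\,d\nu_0$, and by the Oseledec choice of $\nu_0$ together with the dominated/Fatou argument (domination by the $L^1(\mu)$-function $\log\|\calA(a,\cdot)\|$ from above and by $\log m(\calA(a,\cdot))$ from below), $\liminf_n \int\Psi(a,\cdot)\,d\nu_n \ge \lambda_{\top,a,\mu,\calA}$; a final truncation/uniform-integrability argument (using again that the majorant depends only on $x$ and $\nu_n$ projects to $\mu$, so the tails are uniformly $\mu$-small) lets me pass this through the weak-$*$ limit to get $\int\Psi(a,\cdot)\,d\wtd\mu \ge \lambda_{\top,a,\mu,\calA}$. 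Combining the two inequalities gives equality. \textbf{The main obstacle} I anticipate is precisely this last interchange of limit and integral for the unbounded function $\Psi(a,\cdot)$: I must exploit that the majorant $\log\|\calA(a,\cdot)\|$ is a function of the $X$-coordinate alone and that every measure in the approximating sequence has the \emph{same} $X$-marginal $\mu$, so uniform integrability of $\Psi(a,\cdot)$ along the sequence reduces to a single statement about $\mu$, which holds by \cref{CocycIsL1}. Everything else is routine bookkeeping following \cite[Theorem 6.1]{viana2014lectures}.
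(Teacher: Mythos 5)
The paper does not actually prove \cref{Lyap>Phi}; it is stated as a claim with a citation to Viana's book and no proof appears in the text. Your argument fills that gap correctly, following the standard route: part~\ref{Lyap>Phi-ineq} by the cocycle identity plus $a$-invariance and the pointwise bound $\Psi(a^n,\xi)\le\log\|\calA(a^n,x)\|$; part~\ref{Lyap>Phi-eq} by pushing $\mu$ forward along a measurable Oseledec section into the top subspace, Cesàro averaging, and extracting a tight weak-$*$ limit. The extra work the noncompact setting demands — tightness of $\{\nu_n\}$ via properness of $\P\calE\to X$ and the common marginal $\mu$, finiteness and uniform integrability of $\Psi(a,\cdot)$ from \cref{CocycIsL1}, and the truncation trick to pass the unbounded $\Psi(a,\cdot)$ through the weak-$*$ limit — is exactly where the real care is needed, and you have identified and handled all of it correctly.

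One small imprecision worth flagging: in justifying $\liminf_n\int\Psi(a,\cdot)\,d\nu_n\ge\lambda_{\top,a,\mu,\calA}$ you invoke "domination by $\log\|\calA(a,\cdot)\|$ from above and $\log m(\calA(a,\cdot))$ from below," but the quantity at stake is $\frac1n\Psi(a^n,(x,[v_x]))$, which is \emph{not} pointwise dominated by those single-step functions. The bound you actually have is $\frac1n\log m(\calA(a^n,x))\le\frac1n\Psi(a^n,(x,[v_x]))\le\frac1n\log\|\calA(a^n,x)\|$, and the enveloping sequences themselves converge a.e.\ and in $L^1(\mu)$ by Kingman's subadditive ergodic theorem (applied to $\log\|\calA(a^n,\cdot)\|$ and $\log\|\calA(a^n,\cdot)^{-1}\|$). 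One then passes to the limit by a generalized dominated-convergence argument (Pratt's lemma) rather than by domination by a fixed $L^1$ function. This is a routine fix and does not affect the conclusion, but the argument should be stated in those terms.
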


\section*{Review of notation and standing hypotheses} \label{Review}
Before we begin the proof of \cref{mainthm2,thm:whatweprove}, we review the standing hypotheses and notation that will be used in the remainder of the paper. Let $\K\in \{\R,\Q\}$.
\begin{enumerate}
	\item $G$ is a connected simply connected  semisimple Lie group with real rank at least 2 and without compact factors.
	\item $\calZ$ is the center of $G$.
	\item $\bfF$ is an algebraically simply connected, $\Q$-simple, semisimple algebraic $\Q$-group. The Lie group $F= \bfF(\R)$ is connected.
	\item $\phi\colon G\to F$ is a surjective morphism with $\ker(\phi)\subset \calZ$.
	\item $\Gamma\subset G$ is a nonuniform irreducible lattice subgroup of $G$ and $\calZ\subset \Gamma$.
	\item The image $\hat \Gamma := \phi(\Gamma)$ is a lattice subgroup of $F$ commensurable with $F_\Z$; all torsion elements in $\hat \Gamma$ are central.
	\item Given a connected subgroup $H\subset F$, let $\wtd H:= (\phi\inv (H))^\circ $ be the connected subgroup of $G$ projecting to $H$. Write $$\Gamma\!_H = \Gamma\!_{\wtd H}:=\wtd H\cap \Gamma$$ and $$\hat \Gamma\!_H:= \hat \Gamma\cap H.$$ If $H= \bfH(\R)^\circ$ for some $\Q$-subgroup $\bfH$ we say that $\wtd H$ is defined over $\Q$.
	
	\item A connected subgroup $H\subset G$ is a parabolic $\K$-subgroup if $\phi (H) = Q^\circ$ where $Q$ is a parabolic $\K$-subgroup of $F$.
	\item A connected subgroup $U\subset G$ is unipotent if $\phi(U)$ is a unipotent subgroup of $F$.
	\item An element $a\in G$ is $\R$-diagonalizable if $\phi(a)$ is $\R$-diagonalizable.
	\item Given a connected subgroup $A\subset G$, we say $A$ is a $\K$-torus if there is an $\K$-torus $\bfS\subset \bfF$ with $\phi(A) = \bfS(\R)^\circ.$
	
	If $A\subset G$ is an $\R$-torus then $A$ decomposes uniquely as $A= A'A''$ where $\phi(A')$, (resp.\ $\phi(A'')$) is a maximal $\R$-split, r(esp.\ $\R$-anisotropic) subgroup of $\phi(A)$.

	Given a one-parameter subgroup $\{a^t\}$ of $A$ write $a^t= a^t_{\comp}a^t_{\split}$ where $a^t_{\comp}\in A''$ and $a^t_{\split}\in A'$.

\item We let $\Gamma$ act by homeomorphisms on $X_0$. We assume this action lifts to a cocycle $\calA\colon \Gamma\times \calE_0\to \calE_0$ on a finite-dimensional normed vector bundle $\calE_0$ over $X_0$. 	
\item We let $X := (G \times X_0)/\Gamma$ be the suspension space with the $G$-action induced from the $\Gamma$-action on $X_0$. The space $X$ has the structure of a fiber bundle over $G/\Gamma$ and the fibers are homeomorphic to~$X_0$.

\item Write $\pi_\calE \colon \calE \to X$
 for the induced vector bundle $\calE = (G\times \calE_0)/\Gamma$   induced by $\calE_0\to X_0$. For any closed subgroup $H$ of $G$, let $\calA \colon H\times \calE_H \to \calE_H$ be the restriction of the cocycle to $(H\times X_0)/\Gamma_H$. We equip with $\calE$ with a norm as constructed in \cref{sec:norms}.

\item In the setting of \cref{thm:whatweprove}, we have $X_0= M$, $\calE$ is the fiberwise tangent bundle of $X= (G\times M)/\Gamma$, and $\calA\colon \Gamma\times TM\to TM$ is the cocycle induced by the derivative cocycle $\calA_0(\gamma,x)= D_x\alpha(\gamma)$.
\end{enumerate}

\section{Subexponential growth for  the  center} \label{Center}
We begin the proof of \cref{thm:whatweprove,mainthm2}. In the following simple proposition
we assume the failure of subexponential growth occurs for the restriction ${\calA_0}\colon \calZ \times \calE_0 \to\calE_0$ of the cocycle $\calA_0$ to the center $\calZ$ of $G$.
 In the sequel, when considering the restriction of the cocycle to other subgroups of $\Gamma$ we will quote the proposition to assume the center only contributes subexponential growth to the cocycle.
\begin{proposition} \label{CentralSubExp}
\laterdef\CentralSubExp{
Suppose 	$$\restrict{\alpha}{\calZ}\colon \calZ\to \Diff^1(M)$$
fails to have uniform a$-subexponential growth of derivatives.
Then there exists a split Cartan subgroup~$A$ of~$G$ and a $(\calZ A)$-invariant Borel probability measure~$\mu$ on~$X$ projecting to the Haar measure on $G/\Gamma$ such that
	 $\lambda_{\top,a,\mu, \calA} > 0$ for some $a \in \calZ$.}

Suppose the restriction 	$${\calA_0}\colon \calZ \times \calE_0 \to\calE_0$$
fails to have $\len_\Gamma$-subexponential growth.
Then there exists a split Cartan subgroup~$A$ of~$G$ and a $(\calZ A)$-invariant Borel probability measure~$\mu$ on~$X$ projecting to the Haar measure on $G/\Gamma$ such that
	 $\lambda_{\top,a,\mu, \calA} > 0$ for some $a \in \calZ$.
\end{proposition}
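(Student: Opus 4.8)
The plan is to run the argument in two stages. First, apply \cref{GetExpOnCenter} directly: since the restriction $\restrict{\calA_0}{\calZ}$ fails to have $\len_\Gamma$-subexponential growth, that lemma produces a $\calZ$-invariant Borel probability measure $\mu_0$ on $X_0$ and an element $a\in\calZ$ with $\lambda_{\top,a,\mu_0,\calA_0}>0$. The key observation is that $a$ is central, so it commutes with all of $G$; this will make all of the subsequent averaging steps essentially trivial.

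Second, I would push $\mu_0$ up to the suspension space $X$ and then average to obtain invariance and the correct projection. Concretely, $\mu_0$ induces a $\calZ$-invariant measure on the slice $\{[\1,x]: x\in X_0\}\subset X$; since $\calZ\subset\Gamma$ acts trivially on $G/\Gamma$ and $a$ is central, the resulting measure on $X$ is $\{a^n\}$-invariant (here $\{a^n\}$ is the cyclic group generated by $a$, but one can also embed $a$ in a one-parameter subgroup of $\calZ A$ as in \cref{kaka} since $\calZ A$ is, up to finite index, $\Z^k\times\R^\ell$). Because $\calZ$ is a discrete subgroup acting on a compact fiber and the measure is supported over the single point $\1\Gamma\in G/\Gamma$, the relevant measures automatically have exponentially small mass at $\infty$ (the projection to $G/\Gamma$ is a point mass). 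Now average over a \Folner sequence in a maximal unipotent subgroup $N$ of $G$ normalized by a split Cartan $A$: by \cref{lemma:averaging} (applicable since $a\in\calZ\subset C_G(a)$ and uniform exponential smallness of mass is preserved, using \cref{UnipSmallCusp} or simply that the averages still project into a fixed compact — indeed the whole — portion of $G/\Gamma$ after we further average by $b^s$), a subsequential limit $\mu_1$ is $a$-invariant with $\lambda_{\top,a,\mu_1,\calA}\ge\lambda_{\top,a,\mu_0,\calA_0}>0$, and $\mu_1$ is $N$-invariant. Then average $\mu_1$ over a \Folner sequence in $\calZ A$; since $a$ centralizes everything and $A$ normalizes $N$, \cref{lemma:averaging} again (or directly \cref{lem:endgameA}, applied after one more mixing step to get the Haar projection) yields $\mu$ that is $(\calZ A)$-invariant, still has $\lambda_{\top,a,\mu,\calA}>0$, and whose projection to $G/\Gamma$ is $(AN)$-invariant; \cref{thm:opproot} combined with irreducibility of $\Gamma$ and Ratner's measure rigidity forces this projection to be the Haar measure, exactly as in the semisimple case of \cref{rem:cocomp}.

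The cleanest route is in fact to invoke \cref{lem:endgameA} directly: starting from the $\{a^t\}$-invariant measure on $X$ constructed from $\mu_0$ (after embedding $a$ in a one-parameter $\R$-diagonalizable or, more precisely here, compact-part-free subgroup — but note $a\in\calZ$ may be non-$\R$-diagonalizable, so one must be slightly careful and instead average the point-mass-over-$\1\Gamma$ measure by a maximal unipotent $N$ first to get a measure projecting to Haar before quoting \cref{lem:endgameA}). This is the one place where minor care is needed: $a$ being central but not necessarily $\R$-diagonalizable means \cref{lem:endgameA} is not literally applicable to $\{a^t\}$, so the argument must manufacture the Haar projection by hand using \cref{MixingSmallCuspsCom} (mixing of $b^s$ for a suitable $\R$-diagonalizable $\{b^s\}$ with $N=\mathcal{U}^+(b^s)$) and then invoke \cref{lemma:averaging} with the centralizer $C_G(a)=G$ to do the final $\calZ A$-averaging.

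\textbf{Main obstacle.} I expect essentially no serious obstacle here — this is the ``easy'' proposition flagged as such in the text. The only subtlety to get right is the bookkeeping around $a\in\calZ$ possibly having nontrivial compact part: one cannot feed $\{a^t\}$ into statements that presume $\R$-diagonalizability, so the Haar-projection step must be done via unipotent averaging plus mixing (\cref{MixingSmallCuspsCom}, \cref{thm:opproot}, Ratner/irreducibility) rather than by blindly quoting \cref{lem:endgameA}. Everything else — exponential smallness of mass, $a$-invariance under averaging, monotonicity of the top exponent — follows immediately from \cref{lemma:averaging} because $a$ is central, so $C_G(a)=G$ and every averaging subgroup is automatically contained in $C_G(a)$.
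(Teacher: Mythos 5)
Your ``cleanest route'' is exactly the paper's proof: use \cref{GetExpOnCenter} to get a $\calZ$-invariant measure $\mu_0$ on $X$ projecting to $\delta_{\1\Gamma}$ with positive exponent for some $a_0\in\calZ$; average over a \Folner sequence in $U=\mathcal U^+(b^s)$ (tightness via \cref{UnipSmallCusp}, exponent preserved by \cref{lemma:averaging} since $C_G(a_0)=G$) to obtain a compactly supported, $(\calZ U)$-invariant $\mu_1$; push by $(b^s)_*$ and apply \cref{MixingSmallCuspsCom} to get a limit $\mu_2$ projecting to Haar; and finally average over a \Folner sequence in $\calZ A$. You also correctly identify the one point of care, namely that $a_0\in\calZ$ need not be $\R$-diagonalizable, so one cannot quote \cref{lem:endgameA} directly and must manufacture the Haar projection via the horospherical mixing step before the $\calZ A$-averaging.

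However, the earlier ``first route'' you sketch --- average $\mu_0$ over a maximal unipotent $N$, then immediately average $\mu_1$ over a \Folner sequence in $\calZ A$ and invoke \cref{thm:opproot} plus Ratner on the projection --- has a genuine tightness gap that you gloss over. After the $N$-averaging, $\mu_1$ projects to the compactly supported $N$-invariant measure $m_{N\Gamma/\Gamma}$ on the compact orbit $N\Gamma/\Gamma$ (recall $N\cap\Gamma$ is cocompact in $N$). Since $A$ is noncompact and normalizes $N$, the translates $a_*\, m_{N\Gamma/\Gamma}$ are Haar measures on the $N$-orbits $N a\Gamma/\Gamma$; for $a$ moving to infinity in directions that contract $N$, these orbits escape to a cusp, so the \Folner averages over $\calZ A$ are not a priori uniformly tight and \cref{lemma:averaging} cannot be applied. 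The parenthetical remark ``the averages still project into a fixed compact --- indeed the whole --- portion of $G/\Gamma$ after we further average by $b^s$'' concedes this but is circular as stated: the $b^s$-mixing step must come \emph{before} the $\calZ A$-averaging, as in your second route, not as an afterthought. So only the second version of your argument is valid, and that version coincides with the proof in the paper.
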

\begin{proof}
Since $\calZ\subset \Gamma$, the induced action of $\calZ $ on $G/\Gamma$ is the identity action. It follows from \cref{GetExpOnCenter} that there is a Borel probability measure $\mu_0$ on $X$ projecting to the delta mass $\delta_{\1\Gamma}$ on $G/\Gamma$ such that
$\lambda_{\top,a_0,\mu_0, \calA} > 0$ for some $a_0 \in \calZ$.

Fix any split Cartan subgroup $A\subset G$ and a one-parameter subgroup $\{b^s\}$ in $A$. Let $U=\mathcal U^+(b^s) $ be the expanding horosperical subgroup associated with $\{b^s\}$. Note that $A$ and $U$ trivially commute with $\calZ$. Averaging $\mu_0$ over an appropriate \folner sequence in $U$, by \cref{UnipSmallCusp,lemma:averaging} we obtain a $(\calZ U)$-invariant, Borel probability measure $\mu_1$ on $X$ with exponentially small mass at $\infty$ and $\lambda_{\top,a_0,\mu_1, \calA} > 0$.
Since $\mu_0$ is supported over the identity coset $\1\Gamma$ in $G/\Gamma$, $\mu_1$ is compactly supported.
 By \cref{MixingSmallCuspsCom,lemma:averaging} the sequence of measures $\{(b^s)_* \mu_1: s\in \N\}$ is uniformly tight; moreover, any subsequential limit $\mu_2$ of this sequence is $\calZ$-invariant, projects to the Haar measure on $G/\Gamma$, and satisfies $\lambda_{\top,a_0,\mu_2, \calA} > 0$.
 Finally, averaging $\mu_2$ against an appropriate \folner sequence in $A$, by \cref{lemma:averaging} we obtain a $(\calZ A )$-invariant, Borel probability measure $\mu$ on $X$ projecting to the Haar measure on $G/\Gamma$ with $\lambda_{\top,a_0,\mu, \calA} > 0$.
\end{proof}

\subsection{Additional hypothesis for \cref{UnipotentSubgroups,SubexpForParabolic,SubexpForRank1Subgroups}}\label{sec:hyp:center}
Using \cref{CentralSubExp}, we will assume in \cref{UnipotentSubgroups,SubexpForParabolic,SubexpForRank1Subgroups} below the following additional hypothesis:
\begin{assumption}\label{hyp:center}
The restriction $${\calA_0}\colon \calZ \times \calE_0 \to\calE_0$$ of the cocycle $\calA_0$ to the center $\calZ$ has $\len_\Gamma$-subexponential growth.
\end{assumption}

\section{Subexponential growth for unipotent subgroups} \label{UnipotentSubgroups}
In this section, we assume that the failure of subexponential growth occurs for the restriction of the action to some unipotent $\Q$-subgroup of~$\Gamma$.
In \Cref{SubexpForParabolic,SubexpForRank1Subgroups}, we then consider situations where the failure of subexponential growth occurs for the actions of certain other subgroups of~$\Gamma$.

Under the additional hypothesis in \cref{hyp:center} we have the following.
\begin{proposition} \label{UnipSubgrp}
\laterdef\UnipSubgrp{
If there is a connected unipotent $\Q$-subgroup $U$ of $G$ such that the restriction
	$$\restrict{\alpha}{\Gamma\!_U}\colon \Gamma\!_U \to \Diff^1(M)$$
	does not have uniform subexponential growth of derivatives then there is a split Cartan subgroup~$A$ of~$G$ and an $A$-invariant Borel probability measure~$\mu$ on~$X$ projecting to the Haar measure on $G/\Gamma$ such that
	 $\lambda_{\top,a,\mu, \calA} > 0$ for some $a \in A$.}
	 If there is a connected unipotent $\Q$-subgroup $U$ of $G$ such that the restriction
	$$\calA_0\colon \Gamma\!_U\times \calE_0 \to\calE_0$$
does not have {$\len_\Gamma$-subexponential growth}
then there is a split Cartan subgroup~$A$ of~$G$ and an $A$-invariant Borel probability measure~$\mu$ on~$X$ projecting to the Haar measure on $G/\Gamma$ such that
	 $\lambda_{\top,a,\mu, \calA} > 0$ for some $a \in A$.
\end{proposition}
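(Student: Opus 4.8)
The strategy follows the outline given in \cref{ProofOutline}: we use the induced $G$-action and show that failure of $\len_\Gamma$-subexponential growth for $\calA_0$ on $\Gamma\!_U$ is detected as a measure on $X$ with a positive Lyapunov exponent which can be upgraded to project to Haar. First I would reduce to the case that $U$ is the unipotent radical of a \emph{minimal} parabolic $\Q$-subgroup: using \cref{GenByMaxParab} (and its corollary), if $\calA_0$ on $\Gamma\!_U$ fails to have $\len_\Gamma$-subexponential growth for some unipotent $\Q$-subgroup $U$, then---since $\hat\Gamma\!_U$ is $\len_{\hat\Gamma}$-quasi-isometrically boundedly generated by the $\hat\Gamma\!_{U_i}$ for unipotent radicals $U_i$ of maximal parabolic $\Q$-subgroups---the failure must already occur on $\Gamma\!_{U_i}$ for some such $U_i$, and iterating (intersecting with further maximal parabolics as in the proof of \cref{GenByMaxParab}) we may take $U$ to be the unipotent radical of a minimal parabolic $\Q$-subgroup $P$, with rational Langlands decomposition $P = (L\times A)\ltimes U$ where $L$ is $\Q$-anisotropic (\cref{MinPar}).

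\textbf{Key steps.} Fix a minimal parabolic $\Q$-subgroup $P = (L\times A)\ltimes U$ and a $\Q$-anisotropic $\Q$-torus $\wtd A\supset A_L$ in some Levi with $\wtd A$ meeting $\Gamma$ in a cocompact lattice. By \cref{UIsHoro}, $U$ is the expanding horospherical subgroup of an $\R$-diagonalizable one-parameter subgroup $\{b^s\}$ of $A$. Now work inside a closed orbit of the solvable group $\wtd A\ltimes U$ in $X$. First I would show that \emph{generic} trajectories of elements of $\wtd A$ inside such a closed orbit have subexponential growth of derivatives: this uses \cref{GetExpForAOnCpct} applied to the reductive group $\wtd A$ together with the hypothesis \cref{hyp:center} that $\restrict{\calA_0}{\calZ}$ has $\len_\Gamma$-subexponential growth, ruling out a positive-exponent $\wtd A$-invariant measure---or rather, if such a measure exists we are immediately done by averaging as below. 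Assuming no such measure, a Birkhoff-type argument gives subexponential growth of the cocycle along $\{a^t\}\subset\wtd A$ for a large (full-measure) set of trajectories. Since $\wtd A$ normalizes $U$ and $U$ is the expanding horospherical subgroup of $\{b^s\}\subset A_L\subset\wtd A$, conjugation by $a^{-t}$ for $a=b$ contracts $U$; combining subexponential growth along $\{b^t\}$-trajectories with the contraction, one obtains subexponential growth of $\calA$ along a large subset of elements of $U$. A sumset/Plünnecke-type argument using that $U$ is nilpotent (and the structure of $\Gamma\!_U$ as a lattice in the nilpotent group, via \cref{BasicUnip}) then promotes this to subexponential growth for \emph{every} element of $\Gamma\!_U$---contradicting the hypothesis. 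Hence the positive-exponent $\wtd A$-invariant measure $\mu_0$ on the closed $\wtd A\ltimes U$-orbit must exist. I would then upgrade $\mu_0$: average over a \Folner sequence in $U$ (noting $\mu_0$ is supported on a compact set, so by \cref{UnipSmallCusp} and \cref{lemma:averaging} we get a $U$-invariant measure $\mu_1$ with compact support and $\lambda_{\top,a,\mu_1,\calA}>0$ for some $a\in\wtd A$), then apply exponential mixing \cref{MixingSmallCuspsCom} to the translates $(b^s)_*\mu_1$ to obtain a uniformly tight family whose limit $\mu_2$ projects to the Haar measure on $G/\Gamma$ and still has positive exponent for some $a\in\wtd A$; a subtlety here is that $a$ need not lie in $A$, so I would note $\lambda_{\top,a,\mu_2,\calA}>0$ forces $\lambda_{\top,a_{\split},\mu_2,\calA}>0$ (using \cref{kakaReductive}-type estimates: the $\R$-anisotropic part $a_{\comp}$ acts with bounded cocycle), and $a_{\split}$ lies in a split Cartan. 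Finally, \cref{lem:endgameA} (averaging over a \Folner sequence in $\calZ A$) produces the desired $A$-invariant measure $\mu$ projecting to Haar with $\lambda_{\top,a,\mu,\calA}>0$ for some $a\in A$.

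\textbf{Main obstacle.} The hard part is establishing subexponential growth of the cocycle $\calA$ along generic $\wtd A$-trajectories inside a closed $\wtd A\ltimes U$-orbit and then transferring this to all of $\Gamma\!_U$. This is the substantial modification of \cite[Section 4]{BFH-SLnZ} alluded to in \cref{ProofOutline}: in the arithmetic/$\Sl(n,\Z)$ setting one had explicit unipotents, whereas here one must run the exponential-mixing upgrade \emph{inside} the closed orbit to pass from an $\wtd A$-invariant measure supported there to an $A$-invariant measure projecting to Haar on $G/\Gamma$, and carefully control the cocycle norm (which is unbounded near the cusp, but tempered by \cref{tempered}) throughout all the averaging operations---keeping track that all intermediate measures have uniformly exponentially small mass at $\infty$ so that \cref{lemma:averaging} applies. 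The nilpotent sumset argument, while combinatorially routine in spirit, also requires care to interface the ``large set of good elements of $U$'' (in the analytic topology, arising from a full-measure set of trajectories) with the discrete group $\Gamma\!_U$ and the word-length $\len_\Gamma$.
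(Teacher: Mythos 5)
Your high-level strategy is the paper's (generic $\wtd A$-trajectories have sublinear cocycle growth, transfer to $U$ by conjugation, a density/sumset argument in the abelianization plus induction on the descending central series, then exponential mixing along the horospherical direction to reach Haar), but your reduction step contains an error that breaks the method. You reduce to the case that $U$ is the unipotent radical of a \emph{minimal} parabolic $\Q$-subgroup; the paper, and what \cref{GenByMaxParab} actually provides, reduces to unipotent radicals of \emph{maximal} parabolic $\Q$-subgroups: an arbitrary unipotent $\Q$-subgroup is quasi-isometrically boundedly generated (up to finite index) by the $\Gamma\!_{U_i}$ with $U_i=\Rad_u Q_i$ for maximal $Q_i$, so the failure of subexponential growth propagates to some maximal-parabolic radical, and that is the case one must treat. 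Passing instead to the minimal parabolic makes the target group larger and, crucially, destroys the tool you plan to use. By \cref{MinPar} the Levi $L$ of a minimal parabolic $\Q$-subgroup is $\Q$-anisotropic, and when $G$ is $\Q$-split (e.g.\ $\Gamma=\Sl(n,\Z)$) $L$ is finite: there is no nontrivial $\Q$-anisotropic torus $\wtd A$ meeting $\Gamma$ in a cocompact lattice, no one-parameter subgroup $\{a^t\}$ with $a^1\in\Gamma$, and hence nothing to conjugate by. One cannot substitute the split torus $S^\circ$ of the Langlands decomposition: by \cref{LatticeIff} its integer points are trivial, so the recurrence $a^n\Gamma=\Gamma$ that underlies the estimate for $a^n u_0 a^{-n}$ fails --- and your sketch conflates $\{a^t\}\subset\wtd A$ with the horospherical direction $\{b^s\}\subset S$ at exactly this point. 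Moreover, even when $L$ is nontrivial you need the expanding subgroups $\mathcal U^+(a^t_{\split})$ of finitely many such one-parameter subgroups to generate $U/[U,U]$; this is \cref{ExpandersGenerate}, whose proof via \cref{LNoCent} uses maximality of $Q$ essentially ($\dim S=1$ and the analysis of the single simple root defining $Q$), and there is no analogue for the unipotent radical of a minimal parabolic.

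A second, smaller gap: you invoke \cref{GetExpForAOnCpct} to obtain subexponential growth along \emph{generic flow trajectories} of $\{a^t\}$ inside a closed $\wtd A\ltimes U$-orbit, but that lemma only controls the discrete group $\Gamma\!_{\wtd A}$ uniformly. What is needed is a statement for $m_U$-almost every base point $u$ (the paper's \cref{MostGood}), and its proof is not a direct Birkhoff argument for the unbounded function involved: one uses ergodicity and equidistribution of $\{a^t_{\split}\}$ on the compact homogeneous space $\quot{A'U^\pm}$ together with \cref{ExpForLimOfEmpiricals} to convert positive time-averaged growth along a generic trajectory into an invariant measure with positive exponent supported over the closed orbit, and only then runs the $U^0$-average, $b^s$-push, and $A$-average chain to contradict the standing assumption. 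Your sketch gestures at the right contradiction but cites a lemma that does not deliver the pointwise statement you need.
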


The remainder of this \lcnamecref{UnipotentSubgroups} is devoted to the proof of this \lcnamecref{UnipSubgrp}.
We will prove the contrapositive of \cref{UnipSubgrp}; in particular, we will assume the following:
\begin{assumption}\label{assfortoday}
For every split Cartan subgroup~$A$ of~$G$, every $A$-invariant Borel probability measure~$\mu$ on~$X$ that projects to the Haar measure on $G/\Gamma$, and every $a \in A$, $$\lambda_{\top,a,\mu, \calA} = 0.$$
\end{assumption}

We recall the subadditive cocycles  $\calB^\pm\colon G\times G/\Gamma \to \R$ defined by \eqref{eq:pooface1} and \eqref{eq:pooface2} in \cref{TemperedSect}.
Under \cref{assfortoday}, we will show that the restrictions of $\calB^\pm$ to $U\times U/\Gamma\!_U$ are sublinear: for every $\epsilon>0$, there is $C_\epsilon$ such that for all $g\in U$ and $x\in U/\Gamma\!_U$,
$$\calB^\pm (g,x) \le \epsilon d(g,\1) + C_\epsilon.$$
Since $ \Gamma\!_U$ is cocompact in $U$, by \cref{SubexpForCompact,LMR}, this is equivalent to the following:
for every $\epsilon>0$ there is $C_\epsilon$ such that for all $\gamma\in \Gamma\!_U$,
$$\calB^\pm (\gamma,\1\Gamma) \le \epsilon d(\gamma,\1) + C_\epsilon.$$

To start the proof of \cref{UnipSubgrp},
we consider various $1$-parameter subgroups $\{a^t\}$ that normalize $U$.  For such $\{a^t\}$, we let $U^+\subset U$ be the expanding subgroup of $U$.   We will show, roughly speaking, that the restriction of $\calB^\pm$ to a subgroup $U^+$  of $U$ has sublinear growth and then deduce sublinear growth for all of~$U$ from this. However, there is a technical issue that we will actually not see immediately that the restriction of $\calB^\pm$ to $U^+$ itself has sublinear growth, but only that each element of~$U^+$ has the same image in $U/[U,U]$ as some element with sublinear growth.

Sublinear growth of the cocycle for such representatives of elements of $U^+$ in $U/[U,U]$ will be deduced from sublinear growth for generic trajectories for the 1-parameter subgroup $\{a^t\}$.
The basic idea is simply that if $\{a^t\}$ has sublinear growth and if $u_0$ is a fixed element of~$U^+$, then $u := a^t u_0 a^{-t}$ is a large element of~$U$ whose norm for the associated cocycle is sublinear in $d(u,\1)$. From  \Cref{ExpandersGenerate}, by varying $\{a^t\}$ we can then generate all of $U/[U,U]$ by  the associated expanding subgroups.   A difficulty is that sublinear growth of~$\{a^t\}$ only holds along orbits of a full measure subset of base points whereas, for elements of $U$, we need a uniform estimate that holds at every point of $U/\Gamma\!_U$. To upgrade from almost every point to every point, we work in the abelianization $U/[U,U]$ and use that sumsets of sets with high density in balls cover balls of comparable radii.

In \cref{sec81,sec82} we make precise the outline in the preceding paragraphs in the proof of \cref{UnstableIsSubexp}. We then use \cref{UnstableIsSubexp} to establish \cref{UnipSubgrp} in \cref{sec83}.

\subsection{Subexponential growth for generic \texorpdfstring{$\{a^t\}$-}{}orbits}\label{sec81}
Using \cref{GenByMaxParab}, it is sufficient to consider the case that $U$ is the unipotent radical of some maximal parabolic $\Q$-subgroup~$Q$.
We fix a choice of rational Langlands decomposition of $Q$, $$Q = (L \times S^\circ) \ltimes U.$$ Since $Q$ is a maximal $\Q$-parabolic, we have $\dim S^\circ = 1$.

To begin the proof of \cref{UnipSubgrp}, we fix the following notation:
\begin{enumerate}
	\item
	Let $\wtd A$ be a connected, $\Q$-anisotropic $\Q$-torus~$\wtd A$ of~$L$. More precisely, the image of $\wtd A$ in $F$ is a connected, $\Q$-anisotropic $\Q$-torus.
	\item \label{GenericNotation-at}
	Let $\{a^t\}$ be a one-parameter subgroup of~$\wtd A$ such that $a^1 \in \Gamma\!_{\wtd A}$.
	\item Let $a_\Z := \{a^n \mid n \in \Z\}$. Then $\Lambda:=a_\Z \Gamma\!_U$ is a cocompact lattice in $\{a^t\} U$.
	\item 	
	We write $\lieu^+, \lieu^0, \lieu^-$ for the expanding, centralized, and contracting directions of the action of $\Ad(a^1_{\split})$ on $\lieu$. More precisely, $\lieu^+, \lieu^0, \lieu^-$ are the sum of the eigenspaces corresponding to eigenvalues greater than~$1$, equal to~$1$, and less than~$1$, respectively.
	\item Each of $\lieu^+, \lieu^0, \lieu^-$ is a Lie subalgebra of~$\lieg$. Let $U^+, U^0,$ and $U^-$ denote the corresponding analytic subgroups of~$U$. Note that:
		\begin{enumerate}
		\item $U^0 = C_U (a^t_{\split})$ is the centralizer of~$\{a^t_{\split}\}$ in~$U$,
		\item each of the three subgroups is normalized by both $\wtd A$ and~$U^0$ (since $\wtd A$ and~$U^0$ each centralize $\{a^t_{\split}\}$ and normalize~$U$),
		and
		\item \label{GenericNotation-UUU}
		 $U^- U^0 U^+ = U$.
		\end{enumerate}
	\end{enumerate}
From \Cref{ExpandersGenerate} we may choose several one-parameter subgroups $\{a^t\}$ as above, such that the corresponding expanding horospherical subgroups~$U^+$ generate $U/[U,U]$. In particular, we will always consider $\{a^t\}$ as above such that $a^1_{\split}$ is non-trivial.

\begin{lemma} \label{MostGood}
Suppose \cref{assfortoday,hyp:center} hold and let $\{a^t\}$ be as above. Then for $m_U$-almost every $u\in U$ we have
\begin{equation}\label{MostGoodLimit}
\lim_{n\to \infty} \frac 1 n \calB^\pm(a^n, u \, \Gamma\!_U) = 0.
\end{equation}
\end{lemma}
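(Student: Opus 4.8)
The statement asserts that generic trajectories of $\{a^t\}$ have subexponential growth of the cocycle. The strategy is to argue by contradiction: if the $\limsup$ of $\frac1n\calB^\pm(a^n, u\Gamma\!_U)$ is positive on a set of positive measure, then we will construct an $A$-invariant measure on $X$ projecting to Haar with a positive top Lyapunov exponent, contradicting \cref{assfortoday}.

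First I would reduce to working on the compact homogeneous space $\{a^t\}U/\Lambda$, where $\Lambda = a_\Z\Gamma\!_U$ is a cocompact lattice in $\{a^t\}U$ (note $U/\Gamma\!_U$ fibers over this and $\calB^\pm$ is controlled by \cref{SubexpForCompact} since $\Gamma\!_U$ is cocompact in $U$). On this compact space the suspension of the cocycle $\calA$ restricted to $\{a^t\}U \times \calE_{\{a^t\}U}$ is a \emph{bounded} continuous linear cocycle, so the usual Oseledec/Kingman machinery applies cleanly; in particular $\frac1n \calB^\pm(a^n, x)$ converges $m_{\{a^t\}U/\Lambda}$-almost everywhere to some invariant function, and the same holds for $m_U$-a.e.\ $u\in U$ after pushing down. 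So \eqref{MostGoodLimit} is equivalent to the assertion that this limiting function vanishes a.e.\ for the (normalized) invariant measure on the compact orbit.

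Next, suppose for contradiction this fails. Then the limit is a nonnegative $\{a^t\}$-invariant function that is positive on a set of positive measure, so there is some ergodic $\{a^t\}$-invariant component $\nu$ of $m_{\{a^t\}U/\Lambda}$ (equivalently, of a measure on $X_{\{a^t\}U}$ supported on the compact orbit) with $\lambda_{\top,a^1,\nu,\calA}>0$; this uses \cref{Lyap>Phi} to identify the limiting growth with the top exponent. Now $\nu$ lives on a compact piece of $X$ and I want to promote it to an $A$-invariant measure on $X$ projecting to Haar. The key point is that $U = \Rad_u(Q)$ is the expanding horospherical subgroup of some $\R$-diagonalizable one-parameter subgroup $\{b^s\}$ of $S^\circ$ by \cref{UIsHoro} (applied to $Q = (L\times S^\circ)\ltimes U$), and $\nu$ projects to a $U$-invariant measure on $G/\Gamma$ with compact support (it is supported on the compact orbit $\{a^t\}U/\Lambda$, which maps into $G/\Gamma$). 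Actually one first averages $\nu$ over a \Folner sequence in $U$ normalized by $\{a^t\}$ to make it genuinely $U$-invariant while preserving positivity of the $a^1$-exponent (via \cref{lemma:averaging}, using uniform tightness from \cref{UnipSmallCusp} or \cref{lem:unipotenttight}, and using that $a^1\in C_G(a^1)$ normalizes $U$ — here $\{a^t\}$ centralizes $\{a^t_{\split}\}$ but one must be a little careful and average only over the subgroups on which $a^1$ acts appropriately; alternatively average over $U^+U^-$ pieces). Then apply \cref{MixingSmallCuspsCom} (exponential mixing / equidistribution of horospherical translates): the translates $(b^s)_*\nu$ converge to Haar on $G/\Gamma$ and have uniformly exponentially small mass at $\infty$, while \cref{lemma:averaging} (with $H$ the amenable group generated by $\{b^s\}$ inside... wait, $\{b^s\}$ need not centralize $a^1$) — here one instead uses that a subsequential limit $\mu_\infty$ of $(b^s)_*\nu$ is $\{a^t\}$-invariant with $\lambda_{\top,a^1,\mu_\infty,\calA}\ge \lambda_{\top,a^1,\nu,\calA}>0$, a standard upper-semicontinuity argument for tempered cocycles along uniformly-small-mass families, cf.\ the reasoning in \cite[Lemma 3.8]{BFH-SLnZ}. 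Finally, $\mu_\infty$ projects to Haar and has a positive exponent, and \cref{lem:endgameA} upgrades it to an $A$-invariant measure on $X$ projecting to Haar with $\lambda_{\top,a^1,\mu,\calA}>0$, contradicting \cref{assfortoday}.

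\textbf{Main obstacle.} The delicate point is the averaging step that converts the ergodic $\{a^t\}$-invariant measure $\nu$ on the compact orbit into a $U$-invariant (hence horospherically-invariant) measure on $G/\Gamma$ while \emph{not} destroying the positive $a^1$-Lyapunov exponent, and then passing through exponential mixing while maintaining control of mass at infinity and semicontinuity of the exponent. This is exactly the kind of bookkeeping that works for $\Sl(n,\Z)$ in \cite{BFH-SLnZ} but requires care here because $\{a^t\}$ sits in a $\Q$-anisotropic torus rather than a split one, and because $b^s$ does not commute with $a^1$ — so one cannot invoke \cref{lemma:averaging} directly for the $b^s$-translates and must instead argue semicontinuity of $\lambda_{\top,a^1,\cdot,\calA}$ along the tight family $\{(b^s)_*\nu\}$ by hand. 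I expect this, together with verifying that the relevant $U$-average of $\nu$ has compact support so that \cref{MixingSmallCuspsCom} applies, to be where essentially all the work lies; the rest is soft.
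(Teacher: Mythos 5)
Your overall plan — reduce to a positive Lyapunov exponent, push forward by a horospherical one-parameter subgroup to reach Haar, and average over $A$ to contradict \cref{assfortoday} — is the right skeleton, but the middle step where you achieve $U$-invariance while preserving the exponent is broken, and this is exactly where the paper's argument does genuinely different work.

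The gap: \cref{lemma:averaging} only lets you average over subgroups of $C_G(a)$. The group $U$ is \emph{normalized} by $\{a^t\}$ but is emphatically not in its centralizer (that is the whole point of $U^\pm$), so you cannot \Folner-average over $U$ or over "$U^+U^-$ pieces" and invoke \cref{lemma:averaging} to keep $\lambda_{\top,a^1,\cdot,\calA}>0$. The paper circumvents this by a two-part mechanism that your proposal does not contain. First, it separates $a^t = a^t_\comp a^t_\split$: the compact part is discarded using \cref{hyp:center} together with cocompactness, so that one is left with a positive growth rate for the $\R$-diagonalizable one-parameter subgroup $\{a^t_\split\}$. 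This is not optional — \cref{ExpForLimOfEmpiricals} and \cref{lem:endgameA} need an $\R$-diagonalizable element, and $a^1$ itself is not one. Second, and this is the main new idea: by Moore ergodicity the $\{a^t_\split\}$-orbit of a generic point equidistributes to Haar on a translate $\quot{u_0 A'U^\pm}$, so the limiting measure $\nu$ from \cref{ExpForLimOfEmpiricals} is \emph{automatically} $U^\pm$-invariant without any averaging. One then averages only over $U^0 = C_U(a^t_\split)$, which genuinely centralizes $a^1_\split$, so \cref{lemma:averaging} applies legitimately; combined with $U^0 U^\pm = U$ this yields the needed $U$-invariance. Extracting an abstract ergodic component, as you propose, gives you no handle on $U^\pm$-invariance at all, so the argument stalls there.

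Two smaller corrections. Your worry that $\{b^s\}$ does not commute with $a^1$ and therefore forces a bespoke semicontinuity argument is unfounded: in the Langlands decomposition $Q = (L\times S^\circ)\ltimes U$, the split torus $S^\circ$ containing $\{b^s\}$ commutes with $L$, which contains $\wtd A \supset \{a^t\}$; so $\{b^s\}$ centralizes $a^1$ (and $a^1_\split$), and \cref{lemma:averaging} together with \cref{MixingSmallCuspsCom} applies directly, exactly as in \cref{CentralSubExp} and \cref{prop:cuspgroup}. Also, the limit function obtained from Kingman is not a priori nonnegative for each of $\calB^\pm$ separately; what you actually know is $\calB^+(g,x)+\calB^-(g,x)\ge0$ pointwise, and the conclusion of the lemma is equivalent to both $\limsup$'s being $\le 0$, which together with the sum inequality forces both limits to be $0$.
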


\begin{proof}
Let $U^\pm$ denote the smallest $\Q$-subgroup of~$U$ that contains both $U^+$ and~$U^-$ and
let $A'/\Gamma\!_{A'}$ be the closure of the image of $\{a^t_{\split}\}$ in $\wtd A/\Gamma\!_{\wtd A}$;
that is, $A'$ is the smallest $\Q$-torus of $\wtd A$ containing $\{a^t_{\split}\}$.
We remark that $A'$ is a connected closed subgroup of~$\wtd A$ and that $\Gamma\!_{A' U}$ is cocompact in $A' U$. Let $\quot{\phantom{x}} \colon A'U \to A' U/\Gamma\!_{A' U}$ denote the natural quotient map.
Then $\{a^t_{\split}\}$ acts ergodically with respect to the Haar measure on $A' U^\pm/\Gamma\!_{A' U^\pm}$; see for example \cite[Theorem~6.1]{MR0618325}.
Since $U^0$ centralizes~$\{a^t_{\split}\}$ and since $U^0 U^\pm = U$, almost every $\{a^t_{\split}\}$-orbit in $\quot{A' U}$ equidistributes to the Haar measure on some translate $\quot{u_0 A' U^{\pm}}$ of $A' U^\pm/\Gamma\!_{A' U^\pm}$ in $\quot{ A' U}$ for some $u_0\in U^0$.

Let $\calG_0$ denote the set of points in $\quot{A' U}$ whose $\{a^t_{\split}\}$-orbit equidistributes to the Haar measure on such a translate. Let $\calG = \{\, u \in U \mid u \Gamma \in \calG_0\,\}$.
As discussed above, $\calG_0$ has full measure in $\quot{A' U}$. Since $A'$ centralizes~$\{a^t_{\split}\}$ and normalizes $U$ and $U^\pm$, the subset $\calG_0$ is $A'$-invariant which implies that $\calG$ is conull in~$U$.

To complete the proof, we show that \eqref{MostGoodLimit} holds for $u\in \calG$. Suppose that \eqref{MostGoodLimit} fails for some $u\in \calG$.
We note that $\{ a^t_{\comp}\}\calZ$ is contained in a compact subgroup of $\Ad(G)= G/\calZ$. The torus $\wtd A$ is quasi-geodesically embedded in $G$ and $\calZ$ is quasi-isometrically embedded in $G$.
The hypothesis in \cref{hyp:center} then implies for any $\epsilon>0$ there is $C$ such that for all $x$ in the compact set $\quot{\wtd A U}$,
 $$\calB^\pm(a_\comp^n, x) \le C + n\epsilon.$$
Since the $\{a^t_{\split}\}$-orbit of~$u \Gamma$ is contained in the compact set $\quot{u_0 A' U^\pm}\subset \quot{A' U}$ for some $u_0\in U^0$, we have that $$\limsup_{n \to \infty} \frac{1}{n} \calB^\pm(a_{\split}^n, u \Gamma) > 0.$$
Applying \cref{ExpForLimOfEmpiricals} to the subgroup $\{a^t_{\split}\}$,
there is an $\{a^t_\split\}$-invariant Borel probability measure $\nu$ supported on $X$ which projects to the Haar measure on the translate $\quot{u_0 A' U^\pm}$, such that
	$\lambda_{\top,a_{\split}^1,\nu,\calA} >0 $.
	We remark that since $U^0$ normalizes $U^{\pm}$, the projection $\hat \nu$ of $\nu$ to $G/\Gamma$ is $U^{\pm}$-invariant.

Let $\{F^0_j\}$ be a \folner sequence of precompact sets in~$U^0$. Observe for each $j$ that $F^0_j \ast \nu$ is supported on the subset of $X$ projecting to the compact set $\quot{U A' U^\pm}= \quot{A' U}$. Let $\nu^0$ be a subsequential limit of $\{F^0_j \ast \nu\}$.
Then $\nu^0$ is $U^0$-invariant.
In addition, since $U^0$ normalizes both $\{a_{\split}^t\}$ and~$U^\pm$, the image $\hat \nu_0$ of $\nu_0$ under the projection $X\to G/\Gamma$ remains invariant under both $\{a_{\split}^t\}$ and~$U^\pm$. In particular, $\hat \nu^0$ is invariant under $\{a_{\split}^t\} U^0 U^\pm = \{a_{\split}^t\} U$.
Furthermore, by \cref{lemma:averaging} we have $\lambda_{\top, a_{\split}^1, \nu^0, \calA} > 0$.

Recall that $U$ is the unipotent radical of a parabolic $\Q$-subgroup $Q$ and $S= \{b^s\}$ is a 1-parameter subgroup.
Also note that $S$ centralizes $\{a_{\split}^1\}$ and that $U=\mathcal U^+(b^s) $ is the expanding horosperical subgroup of $\{b^s\}$. Applying \cref{MixingSmallCuspsCom,lemma:averaging} to the sequence $\{(b^s)_*\nu_0: s\in \N\}$ and applying \cref{lem:endgameA} to any subsequential limit point $\nu_\infty$ of $\{(b^s)_*\nu_0: s\in \N\}$, we obtain a Borel probability measure $\mu$ on $X$ whose properties violate \cref{assfortoday}.
\end{proof}

\subsection{Subexponential growth for representatives of hyperbolic directions} \label{sec82}

In the remainder of this \lcnamecref{UnipotentSubgroups}, we write $\quot{\phantom{x}} \colon U \to U/[U,U]$ to denote the natural homomorphism from~$U$ to its abelianization.

We equip $U$ with a right-invariant metric $d_U$. This induces a metric on $\quot U= U/[U,U]$.
As $U$ is a unipotent subgroup of $G$ there is a constant $C_d > 1$ such that for all $u \in U$ with $d(u,\1) > 1$, we have
	$$\frac{1}{C_d}\log d_U(u, \1) \le d(u,\1) \le C_d\log d_U(u, \1) .$$
 By definition, $U\to \quot U$ is distance non-increasing.

The main lemma used to establish the inequality in \eqref{UnipSubgrpgood} is the following:
\begin{lemma}\label{UnstableIsSubexp}
There exists a constant $C_0 > 0$ such that for every $\epsilon>0$ there is~$C_{\epsilon}$ such that for every $x \in U/\Gamma\!_U$ and every $u \in U$ such that $\quot u \in \quot U^+$ there exists $v \in U$ such that
	\begin{enumlemma}
	\item $\quot v = \quot u$,
	\item $\calB^\pm(v, x) \le \epsilon \, d(u, \1) + C_{\epsilon}$, and
	\item $d( v, \1) \le C_0 d(u,\1) + C_0$.
	\end{enumlemma}
\end{lemma}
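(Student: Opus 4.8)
The plan is to exploit the three structural facts assembled just before the lemma: (i) almost every point of $U/\Gamma\!_U$ has sublinear cocycle growth along $\{a^t\}$ (\cref{MostGood}), (ii) the metric on $U$ is log-comparable to the metric on its abelianization $\quot U$, and (iii) an element $u$ with $\quot u\in \quot U^+$ can be written, after correcting by an element of $[U,U]$, as a conjugate $a^t u_0 a^{-t}$ of a fixed element of~$U^+$ where $t\approx \log d(u,\1)$. So first I would make this conjugation precise: fix a basis of $\quot U^+$ and, for each basis direction, a one-parameter subgroup $\{u_0^s\}\subset U^+$; since $a^1_{\split}$ acts on $\lieu^+$ with eigenvalues bounded away from~$1$, the element $a^{\lfloor t\rfloor}$ dilates $u_0^1$ to an element of $U^+$ of norm comparable to $e^{ct}$, and by choosing $t$ appropriately (and combining finitely many basis directions, using that $\quot U$ is abelian and the sumset argument below) we can arrange that $a^{\lfloor t\rfloor}\, w\, a^{-\lfloor t\rfloor}$ has the same image in $\quot U$ as $u$ for a suitable $w$ in a fixed compact subset of~$U^+$; this gives $d(w,\1)=O(1)$ and $t= O(\log d(u,\1)) = O(d_{\quot U})$, which after re-exponentiating is $O(d(u,\1))$.

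Next I would estimate the cocycle along such an element. Writing $v = a^{\lfloor t\rfloor} w a^{-\lfloor t\rfloor}\cdot(\text{correction in }[U,U])$ and using the cocycle identity \eqref{cocycle} together with $h$-temperedness (\cref{tempered}) for the short pieces $w$ and the $[U,U]$-correction (all of bounded norm, staying in a compact set), the bound reduces to controlling $\calB^\pm(a^{\lfloor t\rfloor}, \cdot)$ at the two endpoints $x$ and $w a^{-\lfloor t\rfloor}\cdot x$. Here is where \cref{MostGood} enters, but the difficulty is that it only holds $m_U$-a.e., whereas the lemma demands a bound valid at \emph{every} $x\in U/\Gamma\!_U$. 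To bridge this I would run the density/sumset argument already flagged in the text: let $\calG\subset U$ be the conull set from \cref{MostGood}; its image $\quot\calG$ in $\quot U$ is conull, hence for a ball $B_R\subset\quot U$ of radius $R$ the set $\quot\calG\cap B_R$ has density approaching~$1$, so $(\quot\calG\cap B_R)+(\quot\calG\cap B_R)\supseteq B_R$ for $R$ large (a standard sumset-covers-ball fact in $\R^{\dim\quot U}$ applied after trivializing $\quot U$). Thus any target $\quot u\in B_R$ can be written $\quot u = \quot{u_1}+\quot{u_2}$ with $u_1,u_2\in\calG$ and $d(u_i,\1)\le d(u,\1)+O(1)$; then $v := u_1\cdot u_2$ (adjusted by a $[U,U]$-element so that $\quot v=\quot u$, possible since $U\to\quot U$ is surjective and $[U,U]$ is the kernel) has both $u_1$ and $u_1^{-1}v=u_2\cdot[\cdots]$ lying over points of $\calG$ — more carefully, $\calB^\pm(v,x)\le \calB^\pm(u_2, u_1\cdot x)+\calB^\pm(u_1,x)$, and for the second term $x$ need not be in $\calG$, so I instead apply \cref{MostGood} to the first factor read from the other side, i.e.\ along the $\{a^{-t}\}$-direction, or — cleaner — I use \cref{MostGood} to bound $\calB^\pm$ along $\{a^t\}$ at the $\calG$-point and the conjugation relation to transfer the estimate to the fixed base point $x$, since $v x$ and $v$ differ from an $a^{\lfloor t\rfloor}$-translate of a compact set by bounded amounts.

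Putting the pieces together: choose $R = c\,d(u,\1)$ so that $\quot u\in B_R$; get the $\calG$-decomposition; build $v$; apply \cref{MostGood} (uniform Egorov-type version: on the conull $\calG$ the convergence $\frac1n\calB^\pm(a^n,\cdot)\to0$ can be made uniform on a compact subset carrying all but $\eta$ of the mass, and by the sumset step we may take $u_1,u_2$ in that compact subset) to get $\calB^\pm(a^{\lfloor t\rfloor},\cdot)\le \epsilon't$ at the relevant points, with $t=O(d(u,\1))$; and absorb the finitely many bounded-norm correction factors via \cref{tempered}. Adjusting constants gives $\calB^\pm(v,x)\le \epsilon\,d(u,\1)+C_\epsilon$ and $d(v,\1)\le C_0 d(u,\1)+C_0$ with $C_0$ depending only on the conjugation geometry. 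The main obstacle is precisely the upgrade from the almost-everywhere statement of \cref{MostGood} to a uniform-over-all-$x$ statement: this requires the sumset-covering argument in $\quot U$ together with care that the two factors $u_1,u_2$ can simultaneously be taken in a compact subset of $U$ on which the Egorov-type uniformity of \cref{MostGood} holds, and that conjugation by $a^{\lfloor t\rfloor}$ (which is not an isometry of $U$) distorts norms only by the controlled logarithmic amount recorded in the displayed inequality relating $d(u,\1)$ and $\log d_U(u,\1)$.
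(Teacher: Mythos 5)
Your high-level architecture --- conjugate into $U^+$ by powers of~$a$, invoke the almost-everywhere statement of \cref{MostGood}, and upgrade to every $\quot u$ and every base point $x$ via a density/sumset argument in the abelianization --- is the same as the paper's, and you correctly identify the a.e.-to-everywhere upgrade as the main obstacle. But your execution of that upgrade has a genuine gap: you apply the sumset argument to $\quot\calG$ itself, writing $\quot u = \quot{u_1}+\quot{u_2}$ with $u_1,u_2\in\calG$. Membership of $u_i$ in $\calG$ is a property of the \emph{point} $u_i\Gamma\!_U$ (its $\{a^t\}$-orbit has sublinear cocycle growth); it gives no control on $\calB^\pm(v_i,y)$ for an \emph{element} $v_i$ representing $\quot{u_i}$ at a point~$y$. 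When you decompose $\calB^\pm(v,x)$ via the cocycle identity with $v\approx a^n w a^{-n}$, the terms $\calB^\pm(a^{\pm n},\cdot)$ are evaluated at points determined by $x$ and the product structure, not at $u_1\Gamma$ or $u_2\Gamma$, so your good-set condition sits on the wrong objects. Relatedly, your $v$ is independent of $x$, whereas the representative must depend on $x$.

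The paper resolves both issues at once in \cref{lem:343}, which is the core of the proof and is absent from your sketch. There one takes $v = u_xu_0\inv a^n u^+ a^{-n} u_0 u_x\inv$, where $u_x$ is a bounded representative of $x$ and $u_0$ is a fixed reference point of the good set; then the $a^{\pm n}$-segments in the cocycle decomposition start at $ua^{-n}u_0\Gamma$ and at $u_0\Gamma$, and the set $\calG_n := \bigl(\calG(n_0)\,a^{-n}u_0^{-1}a^n\bigr)\cap F_k$ is defined precisely so that $u\in\calG_n$ forces $ua^{-n}u_0\Gamma\in\calG(n_0)$. The set to which the sumset argument is then applied is $G_n = a^n\quot{\calG_n}^+a^{-n}$ --- the set of abelianized elements that already admit, for \emph{every} $x$, a representative with sublinear cocycle --- and showing $G_n$ has density $\ge 1-\delta$ in $a^n\quot{F_k}^+a^{-n}$ requires the two-sided \folner sets of \cref{GoodFolner}, the Radon--Nikodym comparison between $m_{F_k}$ and its pushforward to $\quot{F_k}^+$, and \cref{whyleftfolner}. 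Your Egorov-type step only addresses the non-uniform rate in \cref{MostGood} (the paper does this by fixing $n_0$ with $m_{U/\Gamma\!_U}(\calG(n_0)/\Gamma\!_U)\ge 1-\delta/2$); it does not repair the mismatch between orbit-level and element-level estimates described above.
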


To prepare for the proof of this fundamental \lcnamecref{UnstableIsSubexp} we establish some additional notation.
For $u\in U$, write $u = wu^+$ where $u^+\in U^+$ and $w\in U^-U^0$; let $p^+ \colon U \to U^+$ be the natural map $u\mapsto u^+$. Similarly, write $\quot u = \quot w \quot u^+$ where $\quot u^+\in \quot U^+$ and $w\in \quot{U^-U^0}$ and let $\quot p^+ \colon U \to \quot U^+$ be the map $u\mapsto \quot u^+$. Observe $\quot p^+$ is a homomorphism.  For a subset $E \subset U$, let $E^+$ and~$\quot E^+$ denote, respectively, the image of~$E$ under the maps $p^+$ and $\quot p^+$. Recall that $m_{E}$ and $m_{\quot {E}^+}$ denote the Haar measures on~$U$ and~$\quot U^+$ normalized to give unit mass to $E$ and~$\quot {E}^+$, respectively.

\begin{claim} \label{GoodFolner}
There is a two-sided \folner sequence $\{F_k\}$ in~$U$ with the following properties:
	\begin{enumerate}
\item \label{GoodFolner-RN}
	For every Borel subset~$E$ of~$F_k$ and every $n \in \N$, $$ m_{a^n \quot {F_k}^+ a^{-n}}( a^n \quot E^+ a^{-n}) \ge m_{F_k}(E).$$
\item \label{GoodFolner-convex}
 Identifying the abelian group~$\quot U^+$ with some Euclidean space~$\R^\ell$, we may equip $\quot U^+$ with a norm which is invariant under conjugation by $\{a_{\comp}^t\}$ and for which $\quot{F_k}^+$ is the ball of radius~$k$ centered at the origin.
	\end{enumerate}
\end{claim}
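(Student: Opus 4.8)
\textbf{Proof plan for Claim \ref{GoodFolner}.}

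The plan is to build the \folner sequence $\{F_k\}$ explicitly as a product of a ball in $\quot U^+$ (in a cleverly chosen norm) with fixed precompact transversals in the other directions, so that both properties fall out by construction. First I would address property \pref{GoodFolner-convex}: since $\{a^t_{\comp}\}$ lies in a compact subgroup of $\Aut(\lieg)$ (it is the anisotropic part of a one-parameter subgroup of the torus $\wtd A$), its action on $\lieu^+$ preserves some inner product; identifying $\quot U^+ \cong \lieu^+/(\lieu^+\cap [\lieu,\lieu])$ with a Euclidean space $\R^\ell$ via $\exp$, this inner product descends to a conjugation-invariant norm on $\quot U^+$. Let $\quot{F_k}^+$ be the closed ball of radius $k$ about the origin in this norm; it is automatically invariant under conjugation by $\{a^t_{\comp}\}$. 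This establishes \pref{GoodFolner-convex}.

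Next I would define $F_k$ itself. Using that $U = U^- U^0 U^+$ (see \fullcref{GenericNotation}{UUU}) and that the product map $U^-\times U^0\times U^+\to U$ is a diffeomorphism, fix precompact centered neighborhoods $W^-_k\subset U^-$ and $W^0_k\subset U^0$ exhausting $U^-$ and $U^0$ as $k\to\infty$, let $V^+_k := \exp(\{Z\in\lieu^+ : \|Z\|\le k\})$, and set $F_k := W^-_k\, W^0_k\, V^+_k$. For suitable choices of the exhaustions (say balls of radius $k$ in right-invariant metrics on $U^-$ and $U^0$), $\{F_k\}$ is a two-sided \folner sequence in the amenable (nilpotent) group $U$; this is a routine verification of asymptotic invariance, using that $U^-U^0$ is a subgroup normalizing $U^+$. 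By construction, $p^+(F_k) = V^+_k$ and $\quot p^+(F_k) = \quot{V^+_k}^+$, which agrees with the ball $\quot{F_k}^+$ of radius $k$ up to adjusting the defining norm (alternatively, just \emph{define} the norm on $\quot U^+$ to be the one for which $\quot{V^+_k}^+$ is the unit-$k$ ball, then average over $\{a^t_{\comp}\}$ to make it conjugation-invariant — since that averaging only distorts by a bounded factor, one reindexes the sequence).

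Finally I would verify property \pref{GoodFolner-RN}, which is really the point of the claim. Conjugation by $a^n$ maps $U^+$ to itself (as $U^+$ is normalized by $\{a^t\}$) and the induced map on $\quot U^+$ is linear with determinant $\delta_n := \det(\Ad_{\quot U^+}(a^n))$. Writing $a^n = a^n_{\split} a^n_{\comp}$, the $\comp$-part acts by a norm-preserving (hence volume-preserving) linear map, while $a^n_{\split}$ acts on $\lieu^+$ with all eigenvalue-moduli $>1$, so $\delta_n > 1$ for $n\ge 1$. Now for a Borel $E\subset F_k$, the measure $m_{\quot{F_k}^+}$ on $\quot U^+$ is normalized Lebesgue measure on the ball $\quot{F_k}^+$, and $m_{a^n\quot{F_k}^+a^{-n}}$ is normalized Lebesgue measure on the image ball; since Lebesgue measure transforms by the factor $\delta_n$ under the conjugation and the normalizing constants also scale by $\delta_n$, the \emph{normalized} measure of $a^n\quot E^+ a^{-n}$ in $a^n\quot{F_k}^+a^{-n}$ equals the normalized Lebesgue measure of $\quot E^+$ in $\quot{F_k}^+$. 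It remains to compare $m_{F_k}(E)$, which is a normalized measure on $U$, with the normalized Lebesgue measure of $\quot E^+ = \quot p^+(E)$ in $\quot{F_k}^+$: writing Haar measure on $U$ in the product coordinates $U^-\times U^0\times U^+$ and pushing forward under $\quot p^+$, Fubini shows $m_{F_k}(E)\le m_{\quot{F_k}^+}(\quot E^+)$ (projection can only increase normalized mass, since the fibers of $\quot p^+$ restricted to $F_k$ are contained in translates of $W^-_k W^0_k [U,U]$ of equal measure). Chaining these two inequalities gives \pref{GoodFolner-RN}.

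\textbf{Main obstacle.} The one step requiring care is the last comparison $m_{F_k}(E)\le m_{\quot{F_k}^+}(\quot p^+(E))$: one must check that in the coordinates adapted to $U = U^-U^0U^+$ the Haar measure on $U$ disintegrates over $\quot U^+$ with fiber measures of constant total mass (so that projecting a subset does not \emph{decrease} its normalized measure). This is where the precise choice of the exhaustions $W^-_k, W^0_k$ and the fact that $[U,U]\subset \ker\quot p^+$ sits inside $U^-U^0$ up to the $U^+$-coordinate enter; modulo that bookkeeping the claim is purely formal.
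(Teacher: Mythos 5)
Your argument for part \pref{GoodFolner-convex} matches the paper's and is fine: $\{a^t_{\comp}\}$ lies in a compact subgroup, so averaging produces a conjugation-invariant inner product on $\lieu^+$ that descends to a norm on $\quot U^+$ whose radius-$k$ balls serve as $\quot{F_k}^+$.

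The problem is in your construction of $F_k$ and the verification of \pref{GoodFolner-RN}. You take $F_k = W^-_k\,W^0_k\,V^+_k$ where $V^+_k$ is an exponential of a ball in $\lieu^+$. The inequality in \pref{GoodFolner-RN} is required to hold for \emph{every} Borel $E\subset F_k$, and since both $p_*m_{F_k}$ and $m_{\quot{F_k}^+}$ are probability measures on $\quot{F_k}^+$, a standard complementation argument shows the required inequality $(p_*m_{F_k})(S) \le m_{\quot{F_k}^+}(S)$ for all Borel $S$ is equivalent to the two measures being \emph{equal}, which in turn forces the fibers of $\quot p^+|_{F_k}$ to have constant measure. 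Your construction does not achieve this: the kernel $\ker\quot p^+ = U_0$ contains $U^-U^0(U^+\cap[U,U])$, so the fiber of $\quot p^+|_{F_k}$ over $\bar v\in\quot{F_k}^+$ is $W^-_k\,W^0_k\cdot\bigl(V^+_k\cap\pi^{-1}(\bar v)\bigr)$, where $\pi\colon U^+\to\quot U^+$ has kernel $U^+\cap[U,U]$. When $U^+\cap[U,U]\neq\{1\}$ (as is generic for nilpotent $U$ with $U^+$ non-abelian), the slices $V^+_k\cap\pi^{-1}(\bar v)$ of the ball $V^+_k$ by cosets of $U^+\cap[U,U]$ have varying measure — maximal over the center of the ball, vanishing toward the boundary. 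So $p_*m_{F_k}$ is strictly more concentrated near the center of $\quot{F_k}^+$ than $m_{\quot{F_k}^+}$ is, and the inequality fails for $E = p^{-1}(S)\cap F_k$ with $S$ a small central ball. Adjusting $W^-_k$, $W^0_k$ cannot fix this; the defect lives entirely in the $U^+$ factor. You flag this step in your "main obstacle" paragraph, but the fix is not bookkeeping — it requires changing the shape of $F_k$.

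The paper's construction avoids this by not taking a ball in $U^+$ at all: it chooses a precompact Borel \emph{cross-section} $B_k$ of $\quot p^+$ over the ball $\quot B_k\subset\quot U^+$ (so each coset of $U_0$ meets $B_k$ in exactly one point) and then inductively manufactures a two-sided \folner sequence with $F_k = B_k\cdot(F_k\cap U_0)$. With this product structure the fiber of $\quot p^+|_{F_k}$ over each $\quot p^+(v)$, $v\in B_k$, is exactly $v(F_k\cap U_0)$, so all fibers have the identical $m_{U_0}$-mass and the Radon--Nikodym derivative $d(p_*m_{F_k})/dm_{\quot{F_k}^+}$ is constant, hence $\equiv 1$. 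The inductive step uses the observation that if $\{F''_k\}$ has two-sided \folner image modulo the center and $\{F'_n\}$ is \folner in the center, then $\{F''_k\cdot F'_{n_k}\}$ is two-sided \folner for $n_k\to\infty$ rapidly; this also addresses the two-sided \folner property, which your "routine verification" glosses over. You would need to replace $V^+_k$ by a cross-section-based set to repair the proof.
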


\begin{proof} Identifying $U$ and $\{a^t\}$ with their images under the adjoint representation, equip $\quot U^+$ with a norm invariant under conjugation by $\{a_{\comp}^t\}$.
 Let $\quot{B_k}$ be the ball of radius~$k$ centered at the origin in~$\quot U^+$, let $U_0 = \ker \quot p^+$. For each $k$, let $B_k$ be a precompact Borel cross-section of~$\quot p^+$ over~$\quot{B_k}$; in particular, $\quot p^+(B_k) = \quot{B_k}$ and $v U_0 \cap B_k = \{v\}$ for all $v \in B_k$.
We may assume $1 \in B_k$.
Note that $\{\quot{B_k}\}$ is a two-sided \folner sequence in the abelian group~$\quot U^+$. We then construct a two-sided \folner sequence~$\{F_k\}$ in~$U$, such that
	$$ \text{$\quot{F_k}^+ = \quot{B_k}$ and $F_k = B_k (F_k \cap U_0)$,} $$
by induction on the nilpotency degree of~$U$ and the following observation: if $Z$ denotes the center of a locally compact group~$H$, if $\{F'_n\}$~is a \folner sequence in~$Z$, and if $\{F''_k\}$ is a sequence of precompact subsets of~$H$ such that the image of $\{F''_k\}$ in~$H/Z$ is a two-sided \folner sequence, then $\{F''_k \cdot F'_{n_k}\}$ is a two-sided \folner sequence in~$H$, provided that $n_k \to \infty$ sufficiently rapidly.

With a sequence $\{F_k\}$ constructed above, \pref{GoodFolner-convex}~holds by definition.

To show \pref{GoodFolner-RN}, we let $p \colon F_k \to \quot {F_k}^+$ be the natural map. Then for each $v \in B_k$,
	$$ p^{-1}(\quot v^+ ) \cap F_k
	= v (F_k \cap U_0) ,$$
so all fibers of~$p$ over $ \quot {F_k}^+ $ have the same measure with respect to~$m_{U_0}$, namely $m_{U_0} ( F_k \cap U_0 )$. This implies that the Radon-Nikodym derivative $d (p_*m_{F_k})/dm_{ \quot {F_k}^+ }$ is constant on~$\quot {F_k}^+$. Since both measures are normalized, the Radon-Nikodym derivative is~$1$.
For every $E \subseteq F_k $, this implies
	\begin{align*}
	m_{ \quot {F_k}^+ }(\quot E^+)
	= (p_* m_{F_k}) (\quot E^+)
	= m_{F_k } \bigl( p^{-1}(\quot E^+) \cap F_k \bigr)
	\ge m_{ F_k }(E)
	, \end{align*}
where the last inequality follows as $p^{-1}(\quot E^+) \cap F_k \supseteq E$.
Conjugation by~$a^n$ is a group automorphism and therefore does not change the relative measure of a subset within an ambient set, so
\begin{gather*} m_{a^n \quot {F_k}^+ a^{-n}}( a^n \quot E^+ a^{-n}) = m_{\quot {F_k}^+}( \quot E^+). \qedhere \end{gather*}
\end{proof}

To establish \cref{UnstableIsSubexp}, we first show that if $k$ and~$n$ are large, then a large proportion of the elements of $a^n \, \quot {F_k}^+ a^{-n}$ have the properties specified in the \lcnamecref{UnstableIsSubexp}.
We begin with the following elementary claim.
\begin{lemma}\label{whyleftfolner}
Let $m_{U/\Gamma\!_U}$ denote the normalized (left-)Haar measure on $U/\Gamma\!_U$.
Let $\rho\colon U\to U/\Gamma\!_U$ be the canonical projection and let $E\subset U$ be a right-$\Gamma\!_U$-invariant set such that $$m_{U/\Gamma\!_U}(\rho(E))\ge 1-\delta.$$
Let $\{F_k\}$ be a left-\folner sequence in $U$. Then for all sufficiently large $k$, $$m_{F_k}(E)\ge 1-2\delta.$$
\end{lemma}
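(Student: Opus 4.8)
\textbf{Plan for the proof of \cref{whyleftfolner}.}
The statement is essentially the assertion that averaging the indicator of a large, right-$\Gamma\!_U$-invariant subset of $U$ over a left-F\o lner sequence produces averages close to the relative Haar measure of the image in $U/\Gamma\!_U$. The natural approach is to work directly with the homogeneous space $U/\Gamma\!_U$, which is compact since $\Gamma\!_U$ is a cocompact lattice in the nilpotent group $U$, and exploit unique ergodicity / equidistribution of F\o lner averages on such a compact homogeneous space.

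First I would reduce to a statement on $U/\Gamma\!_U$. Since $E$ is right-$\Gamma\!_U$-invariant, $\mathbbm 1_E = \mathbbm 1_{\rho\inv(\rho(E))}= (\mathbbm 1_{\rho(E)})\circ \rho$; moreover $m_{F_k}(E)$ is, by definition of $m_{F_k}$ as the normalized Haar measure on $U$ giving $F_k$ unit mass, exactly $\frac{1}{m_U(F_k)}\int_{F_k}\mathbbm 1_{\rho(E)}(\rho(u))\,dm_U(u)$. So it suffices to show that for any bounded measurable $f$ on the compact nilmanifold $U/\Gamma\!_U$ (here $f = \mathbbm 1_{\rho(E)}$) the left-F\o lner averages $\frac{1}{m_U(F_k)}\int_{F_k} f(\rho(u))\,dm_U(u)$ converge to $\int_{U/\Gamma\!_U} f\,dm_{U/\Gamma\!_U}$, or at least that the $\liminf$ is $\ge \int f\, dm$; the desired inequality then follows because the right-hand side is $\ge 1-\delta$ and $f$ is an indicator so the averages are trivially $\le 1$, but actually we only need the lower bound on the averages of $f$.

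The key step is this equidistribution fact. For continuous $f$, it follows from the amenability of $U$ together with the fact that Haar measure is the unique $U$-invariant Borel probability measure on $U/\Gamma\!_U$ that is invariant — equivalently, it follows directly from the mean ergodic theorem / the standard fact that on a compact homogeneous space of an amenable group, F\o lner averages of the left-translation action of any orbit converge to the Haar average (this is, for instance, a consequence of the pointwise ergodic theorem for amenable groups applied to the uniquely ergodic system, or can be seen very concretely on nilmanifolds via the structure of $U/\Gamma\!_U$ as an iterated torus bundle). For the indicator $f=\mathbbm 1_{\rho(E)}$, which is only measurable, I would approximate from below: choose a continuous $g$ with $0\le g\le \mathbbm 1_{\rho(E)}$ and $\int g\,dm \ge \int \mathbbm 1_{\rho(E)}\,dm - \delta \ge 1-2\delta$ (possible by inner regularity of $m_{U/\Gamma\!_U}$ and Urysohn). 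Then for $k$ large, $m_{F_k}(E) = \frac{1}{m_U(F_k)}\int_{F_k} \mathbbm 1_{\rho(E)}\circ\rho \ge \frac{1}{m_U(F_k)}\int_{F_k} g\circ\rho \to \int g\,dm \ge 1-2\delta$, which gives $m_{F_k}(E)\ge 1-2\delta$ for all sufficiently large $k$, as claimed.

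I do not expect a serious obstacle here — this is a soft, routine lemma. The only point requiring mild care is the passage from continuous test functions to the indicator $\mathbbm 1_{\rho(E)}$, handled by the inner-regularity approximation above, and making sure the F\o lner averages are taken against the correct (left) F\o lner sequence, which matches the left-invariance of $m_{U/\Gamma\!_U}$ under the $U$-action — this is exactly why the hypothesis specifies a \emph{left}-F\o lner sequence, as the name of the lemma emphasizes. One could alternatively avoid invoking unique ergodicity entirely and prove the equidistribution by induction on the nilpotency degree of $U$, exactly parallel to the inductive constructions used elsewhere in this section (e.g.\ in \cref{GoodFolner}), lifting from the torus quotient $U/([U,U]\Gamma\!_U)$; either route is short.
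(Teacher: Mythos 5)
Your reduction to the compact nilmanifold $U/\Gamma\!_U$ and the identification of the key step with a convergence statement for F\o lner averages are correct and match the paper's framing. The claim that any weak-$*$ limit of the pushforward measures $m_k := \rho_*(m_{F_k})$ is $U$-invariant (by left-F\o lnerness) and hence equal to Haar (since Haar is the unique $U$-invariant probability measure on $U/\Gamma\!_U$) is also right. That gives weak-$*$ convergence $m_k \to m_{U/\Gamma\!_U}$, and the reduction of $m_{F_k}(E)$ to $m_k(\rho(E))$ via right-$\Gamma\!_U$-invariance of $E$ is fine.

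The gap is in the approximation step: you cannot, in general, find a continuous $g$ with $0 \le g \le \mathbbm{1}_{\rho(E)}$ and $\int g \, dm$ close to $m(\rho(E))$. Any continuous $g$ with $g \le \mathbbm{1}_{\rho(E)}$ vanishes on $\rho(E)^c$, hence on its closure; so $\supp(g) \subset \mathrm{int}(\rho(E))$, and the supremum of $\int g$ over such $g$ equals $m(\mathrm{int}\,\rho(E))$, not $m(\rho(E))$. If $\rho(E)$ is, for instance, the complement in $U/\Gamma\!_U$ of a dense open set of measure $\delta$, then $\rho(E)$ has measure $1-\delta$ but empty interior, and the only continuous $g$ dominated by $\mathbbm{1}_{\rho(E)}$ is $g \equiv 0$. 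Inner regularity and Urysohn do not rescue this: a compact $K \subset \rho(E)$ with $m(K)$ close to $m(\rho(E))$ need not be contained in any open subset of $\rho(E)$, so there is no room to squeeze a continuous bump in between. Weak-$*$ convergence is simply not enough information to conclude anything about $m_k(\rho(E))$ for an arbitrary measurable $\rho(E)$ (Portmanteau would require $m(\partial \rho(E)) = 0$, which you don't have).

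What you actually need — and what the paper's proof supplies — is setwise (strong) convergence $m_k(A) \to m_{U/\Gamma\!_U}(A)$ for all measurable $A$. The paper gets this by combining mollification with a quantitative use of the F\o lner condition: fixing a small neighborhood $O$ of the identity and a bump $\phi$ supported in $O$, one has $\phi * \mathbbm{1}_{\rho(E)}$ continuous, so $\int \phi*\mathbbm{1}_{\rho(E)}\,dm_k \to \int \phi*\mathbbm{1}_{\rho(E)}\,dm$; the left-F\o lner condition gives a total-variation bound $\|g_* m_k - m_k\|_v \le \delta$ uniformly over $g \in O$ for $k$ large, which directly controls $\bigl|\int \phi*\mathbbm{1}_{\rho(E)}\,dm_k - \int \mathbbm{1}_{\rho(E)}\,dm_k\bigr|$; and left-invariance gives $\int \phi*\mathbbm{1}_{\rho(E)}\,dm = \int \mathbbm{1}_{\rho(E)}\,dm$. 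Assembling these gives $m_k(\rho(E)) \to m(\rho(E))$. Your argument is missing this second quantitative ingredient — the total-variation estimate — and it is precisely what replaces the (false) regularity of the indicator.
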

\begin{proof}
Let $m_k := \rho_*(m_{F_k})$. Note that each $m_k$ is absolutely continuous with respect to $m_{U/\Gamma\!_U}.$ We claim that $m_k$ converges to $m_{U/\Gamma\!_U}$ in the strong topology: for any (Borel or Lebesgue) measurable $E\subset U/\Gamma\!_U$,
$$ m_k(E)\to m_{U/\Gamma\!_U}(E).$$

For the claim, fix a symmetric precompact neighborhood $O$ of the identity in $ U$. By left-F{\o}lnerness of $F_k$, for any $\delta>0$ there is $k_0$ such that for all $k\ge k_0$,
$$\sup_{g\in O}\frac{m_U(g\cdot F_k\triangle F_k)}{m_U(F_k)} \le \delta.$$
This implies for $k\ge k_0$ that $$\|g_*m_k - m_k\|_v \le \delta$$
where $\|\cdot \|_v$ denotes the total variation norm on signed Borel measures.

Note that $\{m_k\}$ has a weak-$*$ convergent subsequence. Any weak-$*$ limit point of $\{m_k\}$ is $U$-invariant and hence we conclude that $m_k\to m_{U/\Gamma\!_U}$ in the weak-$*$ topology.
We claim this convergence also holds in the strong topology. Indeed, let $\phi$ be a non-negative $C^\infty$ function supported in $O$ with $\int \phi (g) \ d g=1$.
Given a measurable $E\subset U/\Gamma\!_U$, let $h= \mathbbm{1}_E$.
The convolution $\phi\ast h$ is $C^0$ hence $$\int \phi\ast h \ d m_k\to \int \phi\ast h \ d m_{U/\Gamma\!_U}.$$
We note for any probability measure $\mu$ on $U/\Gamma\!_U$ that
\begin{align*}
\int (\phi\ast h)(x) \ d \mu(x)
&:=
\int \int \phi(g) h(g\inv \cdot x) \ d g \ d \mu(x)
\\&=
\int \int \phi(g) h(g\inv \cdot x) \ d \mu(x) \ d g
\\&=
\int \int \phi(g) h(x) \ d (g\inv _* \mu)(x) \ d g.
\end{align*}
In particular, by left-invariance of $m_{U/\Gamma\!_U}$, $$\int (\phi\ast h)(x) \ d m_{U/\Gamma\!_U}(x) = \int h \ d m_{U/\Gamma\!_U}.$$
On the other hand, \begin{align*}
	\bigg |\int \phi\ast h& \ d m_k - \int h \ d m_k \bigg|\\
	&=
	\left | \int \int \phi(g) h(x) \ d (g\inv _* m_k)(x) \ d g -\int h(x) \ d m_k (x) \int \phi(g) \ dg \right|
\\
	&\le \int \phi(g)
	\left |\int h(x) \ d (g\inv _* m_k)(x) -\int h(x) \ d m_k (x) \right|
	\ d g\\
	&\le \int \phi(g)
	 \| g\inv _* m_k -m_k\|_v
	\ d g\\
	&\le \delta.
\end{align*}
The claim then follows. The lemma follows as $\Gamma\!_U$-invariance of $E$ implies $m_{F_k}(E) = m_k (\rho(E)).$
\end{proof}

\begin{lemma}\label{lem:343}
For every $\epsilon>0$ and $\delta>0$, there exists $k \in \N$ and $C_\calG > 0$ such that for all sufficiently large $n$ there exists a set $G_n\subset \quot U^+$ such that:
\begin{enumerate}[label=(\alph*),ref=\alph*]
	\item \label{lem:343-subset}
	$G_n\subset a^n \quot{F_k}^+ a^{-n}$;
	\item \label{lem:343-delta}
	$m_{a^n \quot{F_k}^+ a^{-n}} (G_n) \ge 1-\delta$;
	\item \label{lem:343-v}
	for every $\quot u\in G_n$ and every $x\in U/\Gamma\!_U$ there is $v \in U$ such that
	\begin{enumerate}[label=(\roman*),ref=\roman*]
		\item \label{lem:343-subexpi} $\quot v = \quot u$;
		\item \label{lem:343-subexp}
		$\calB^\pm(v, x) \le \epsilon n$;
		\item \label{lem:343-subexpiii}$d(v, \1) \le C_\calG \, n$.
	\end{enumerate}
\end{enumerate}
\end{lemma}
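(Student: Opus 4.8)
The plan is to combine the generic subexponential-growth statement of \cref{MostGood} with a density/sumset argument in the abelianization $\quot U^+$. First I would fix $\epsilon>0$ and $\delta>0$ and set up the objects: the two-sided \folner sequence $\{F_k\}$ from \cref{GoodFolner}, the projection $p^+\colon U\to U^+$ and its abelianized version $\quot p^+$, and the conull good set $\calG\subset U$ from \cref{MostGood} on which $\frac1n\calB^\pm(a^n,u\Gamma\!_U)\to 0$. The key point is that membership in $\calG$ is only an asymptotic statement, so I would first pass to a uniform version: by Egorov's theorem (applied to the right-$\Gamma\!_U$-invariant function $u\mapsto \sup_{n}\frac1n\calB^\pm(a^n,u\Gamma\!_U)$ on the compact quotient, or more precisely to the convergence in \eqref{MostGoodLimit}) there is a subset $\calG_1\subset U$, right-$\Gamma\!_U$-invariant, with $m_{U/\Gamma\!_U}(\rho(\calG_1))\ge 1-\delta$ and an $n_0$ such that $\calB^\pm(a^n,u\Gamma\!_U)\le \epsilon n$ for all $u\in\calG_1$ and all $n\ge n_0$.

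Next I would use \cref{whyleftfolner}: since $\{F_k\}$ is a left-\folner sequence, for $k$ large enough $m_{F_k}(\calG_1)\ge 1-2\delta$. Now comes the transfer to $\quot U^+$ via conjugation by $a^n$. Given $\quot u\in \quot U^+$, the idea (following the outline preceding \cref{UnstableIsSubexp}) is that if $\quot u = a^n\,\quot w^+\,a^{-n}$ for some $w\in F_k$ with $w\in\calG_1$, then setting $v := a^n\,p^+(w)\,a^{-n}$ we get $\quot v = a^n\,\quot w^+\,a^{-n} = \quot u$, and $\calB^\pm(v,x)$ can be controlled: writing $v$ as $a^n$ conjugating a bounded element, and using the cocycle identity \eqref{cocycle} together with $\calB^\pm(a^n,\cdot)\le\epsilon n$ on orbits through $\calG_1$ and the fact (from \cref{hyp:center} plus \cref{SeveralObservations}\pref{comparable} and \cref{moregrowth}) that $\calB^\pm(a^n_\comp,\cdot)$ and the bounded pieces contribute at most $\epsilon n + C_\epsilon$, one obtains $\calB^\pm(v,x)\le \epsilon' n$ for a suitably adjusted $\epsilon'$ (and all $n\ge n_0$, uniformly in $x$). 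The set of such good $\quot u$ is $G_n := a^n\,\quot{(\calG_1\cap F_k)}^+\,a^{-n}$, which by construction lies inside $a^n\,\quot{F_k}^+\,a^{-n}$, giving \pref{lem:343-subset}. The measure bound \pref{lem:343-delta} follows from \cref{GoodFolner}\pref{GoodFolner-RN}: $m_{a^n\quot{F_k}^+ a^{-n}}(G_n) = m_{a^n\quot{F_k}^+a^{-n}}(a^n\quot{(\calG_1\cap F_k)}^+a^{-n}) \ge m_{F_k}(\calG_1\cap F_k) = m_{F_k}(\calG_1)\ge 1-2\delta$ (renaming $2\delta$ as $\delta$). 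Finally, for the size bound \pref{lem:343-subexpiii}: $w\in F_k$ means $d(w,\1)$ is bounded by a constant depending only on $k$; conjugation by $a^n$ expands distances in $U$ by at most a factor exponential in $n$ at the level of $d_U$, which by the logarithmic comparison $d(u,\1)\asymp\log d_U(u,\1)$ translates to $d(v,\1)\le C_\calG\, n$ for a constant $C_\calG$ depending on $k$ (hence on $\epsilon,\delta$) — and since $p^+(w)$ differs from $w$ only by a factor in $U^-U^0$ of comparable size, the same bound holds after applying $p^+$.

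The main obstacle I anticipate is the uniformity in the base point $x\in U/\Gamma\!_U$ in \pref{lem:343-subexp}. The statement of \cref{MostGood} gives subexponential growth of $\calB^\pm(a^n,\cdot)$ at $m_U$-a.e.\ point, but in \cref{lem:343} we need, for each good $\quot u$, a single element $v$ whose cocycle norm is controlled \emph{at every} $x$. The resolution is that one does not control $\calB^\pm(v,x)$ for fixed $v$ and all $x$ directly; rather, one must be slightly careful about what $\calB^\pm(v,x)$ unwinds to. Writing $v = a^n u_0 a^{-n}$ with $u_0 = p^+(w)$, the cocycle identity gives $\calB^\pm(v, x) \le \calB^\pm(a^n, a^{-n}u_0 a^n\cdot\text{(something)}) + \calB^\pm(u_0,\cdot) + \calB^\pm(a^{-n},\cdot)$, and the point is that $a^{-n}$ applied to $x$ lands at a base point in the good set $\calG_1$ precisely because $w\in\calG_1$ — one has to track that the relevant orbit point of the $a^n$-flow passing through $x$ corresponds to a point of $\calG_1$, which forces a slightly different bookkeeping than I sketched (perhaps conjugating in the other direction, or using that $\calG_1$ is built from the generic-orbit set which is itself defined via equidistribution, hence already uniform over the translate of $A'U^\pm$ it sits in). I would reconcile this by being precise that the "good" condition really says: the forward $a^n$-orbit through $w\Gamma\!_U$ has controlled cocycle growth, and then $v\cdot x$ relates to $a^n\cdot(w\Gamma\!_U\text{-orbit point})$ up to a bounded correction absorbed into $C_\epsilon$. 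Making this bookkeeping airtight — in particular handling the non-commutativity so that the bounded piece $u_0$ does not pick up an $n$-dependent norm — is where the real work lies, and it is essentially the same subtlety flagged in the outline (that one only controls elements up to their image in $U/[U,U]$).
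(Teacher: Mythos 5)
Your high-level skeleton matches the paper's: pass from the a.e.\ convergence of \cref{MostGood} to a uniform good set $\calG(n_0)$ of large measure, use \cref{whyleftfolner} to see this set occupies most of $F_k$ for $k$ large, push forward to $\quot U^+$ and conjugate by $a^n$ to get $G_n$, and read off the subset and measure bounds from \cref{GoodFolner}. You also correctly identify the real difficulty, namely making \pref{lem:343-subexp} hold for \emph{every} $x\in U/\Gamma\!_U$.

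However, you do not resolve that difficulty, and the resolution you gesture at (``$a^{-n}$ applied to $x$ lands at a base point in $\calG_1$ precisely because $w\in\calG_1$'') is not correct as stated: with your choice $v = a^n p^+(w)a^{-n}$, the element $v$ is \emph{independent of $x$}, and the cocycle chain $\calB^\pm(v,x)\le \calB^\pm(a^n, p^+(w)a^{-n}\cdot x) + \calB^\pm(p^+(w), a^{-n}\cdot x) + \calB^\pm(a^{-n}, x)$ visits the orbit points $a^{-n}\cdot x$ and $p^+(w)a^{-n}\cdot x$, which have nothing to do with $w\Gamma\!_U$ when $x$ is arbitrary. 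So the good-set bound on $\calB^\pm(a^n,\cdot)$ never applies. The statement of the lemma allows $v$ to depend on $x$, and the paper exploits this in two essential ways you are missing. First, fix a reference point $u_0$ in $\calG(n_0)\cap\fund_U$ and, for each $x$, a representative $u_x\in\fund_U$; then set
$$v = u_x u_0^{-1}\, a^n u^+ a^{-n}\, u_0 u_x^{-1},$$
so the outer conjugation (trivial in $\quot U$, hence harmless for $\quot v=\quot u$) drags the base point $x = u_x\Gamma$ back to $u_0\Gamma$, at the cost of uniformly bounded terms $\calB^\pm(u_x u_0^{-1},\cdot)$ and $\calB^\pm(u_0 u_x^{-1},\cdot)$. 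Second, the set you intersect with $F_k$ must be the \emph{right-shifted} good set
$$\calG_n := \bigl(\calG(n_0)\, a^{-n}u_0^{-1}a^n\bigr)\cap F_k,$$
not $\calG(n_0)\cap F_k$: the shift is what makes $u\in\calG_n$ imply $ua^{-n}u_0\Gamma\in\calG(n_0)$ (using $a^n\Gamma=\Gamma$), which is exactly the base point at which the bounds $\calB^\pm(a^{\pm n},\cdot)\le\epsilon n$ are invoked inside the cocycle chain for $v$. The \folner and measure parts of your argument then still go through because $a^{-n}u_0^{-1}a^n\cdot\Gamma\!_U$ has a representative $v_0\in\fund_U$, uniformly over $n$. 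Without both the $x$-dependent outer conjugation and the shifted $\calG_n$, the chain of cocycle inequalities does not close, and the proof stalls exactly where you flag the obstacle.
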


\begin{proof}
Fix $\epsilon$ and~$\delta$. For $n_0 \in \N$, set
	$$ \calG(n_0) := \{\, u\in U\mid \calB^\pm(a^{\pm n}, u \Gamma\!_U) \le n\epsilon \text{ for all $n\ge n_0$} \,\}.$$
By \cref{MostGood}, for almost every $u\in U$ there exists $n_0$ such that $u \in \calG(n_0)$. Observe also that $\calG(n_0) \Gamma\!_U = \calG(n_0)$. Taking $n_0$ sufficiently large, we have
	$$m_{U/\Gamma\!_U} \bigl( \calG(n_0) / \Gamma\!_U \bigr) \ge 1- \delta/2.$$

Let $\fund_U$ be a compact, symmetric set that contains fundamental domains for both the right and left action of~$\Gamma\!_U$ on~$U$.
Fix a reference point $u_0\in \calG(n_0) \cap \fund_U$.
Consider $k\in \N$ and $n\ge n_0$. Set
	$$\calG_n:= \bigl( \calG(n_0) \, a^{-n} u_0^{-1} a^{n} \bigr) \cap F_k.$$
Since $\calG(n_0) \Gamma\!_U = \calG(n_0)$ and $\Gamma\!_U \fund_U = U$, we have
	$\calG(n_0) \, a^{-n} u_0^{-1} a^{n} = \calG(n_0) v_0$
for some $v_0 \in \fund_U$.
Then
	$$m_U( \calG_n ) = m_U \bigl( \calG(n_0) \cap (F_k v_0^{-1}) \bigr) .$$
From the lower bound on $m_{U/\Gamma\!_U} \bigl( \calG(n_0) / \Gamma\!_U \bigr)$, the fact that $\{F_k\}$ is a two-sided \folner sequence, \cref{whyleftfolner}, and using that $v_0 \in \fund_U$ is bounded, we conclude that if $k$ is sufficiently large, then for all $n\ge n_0$,
	$$m_{F_k}(\calG_n) \ge 1 - \delta .$$
Fix such $k\in \N$ for the remainder.

Take the set $G_n$ in the lemma to be the set
	$$G_n:= a^n \, \quot{\calG_n}^+ a^{-n}.$$
Since $\calG_n \subseteq F_k$, conclusion~\pref{lem:343-subset} of the \lcnamecref{lem:343} follows from definition.

From \fullcref{GoodFolner}{RN} we have
	$$m_{a^n \quot{F_k}^+ a^{-n}} (G_n)
	\ge m_{F_k} (\calG_n)
	\ge 1 - \delta .$$
This establishes~\pref{lem:343-delta}.

Consider an arbitrary element of $G_n$; by definition such an element is of the form $a^n \quot{u}^+ a^{-n}$ for some choice of $u \in \calG_n\subset F_k$. Given $x\in U/\Gamma\!_U$, select $u_x\in \fund_U$ with $u_x\Gamma\!_U = x$. Set
	\begin{equation}\label{eq:90h4klis}v = u_x u_0\inv a^n u^+ a^{-n} u_0 u_x\inv.\end{equation}

We claim that $v$ satisfies the conclusions in part~\pref{lem:343-v} of the statement of the \lcnamecref{lem:343}.
First note that, since $\quot U$ is abelian, we have $\quot v = a^n \quot{u^+} a^{-n} = a^n \quot{u}^+ a^{-n}$. Moreover, it is clear that $v \in (\fund_U)^2 (a^n F_k^+ a^{-n})(\fund_U)^2$; since $\fund_U$ and $F_k$ are precompact, this implies there are constants~$C_1$ and~$C_\calG$ such that
	$$d(v,\1) \le 2C_1 + n \, d(a, \1) + C_1 + n \, d(a, \1) + 2 C_1 \le C_\calG n .$$
This establishes \pref{lem:343-subexpi} and \pref{lem:343-subexpiii}

For \pref{lem:343-subexp}, recall that $a^1 \in \Gamma$ so $a^n \Gamma = \Gamma$. We then have
\begin{equation}\label{eq:di0kjfa} \begin{aligned}
u a^{-n} u_0 \Gamma
	&= u a^{-n} u_0 a^n \Gamma
	\in \calG_n a^{-n}u_0 a^n \Gamma
	\\&
	\subseteq \bigl( \calG(n_0) a^{-n} u_0^{-1} a^n \bigr) a^{-n} u_0 a^n \Gamma
		\\&
	= \calG(n_0)
	.\end{aligned}
\end{equation}
Since $u$ is in the precompact set~$F_k$, we may write $u^+ = w u$, where $w \in U^- U^0$ and the norm of~$w$ is uniformly bounded over the choice of $u\in \calG_n$.
Then we have $a^n u^+ a^{-n} = w_n (a^n u a^{-n})$ where the norm of $w_n := a^n w a^{-n}$ is bounded uniformly over the choice of~$n\in \N$ and $u\in \calG_n$. Since $u_x$ and $u_0$ are in the compact set~$\fund_U$, the element $v $ in \eqref{eq:90h4klis} is at bounded distance (uniform in the choice of $n$ and $u$) from $u_x u_0\inv a^n u a^{-n} u_0 u_x\inv$, respectively. Since the homogeneous space $\{a^t: t\in \R\}U/\bigl(\{a^n: n\in \Z\} \Gamma\!_U \bigr)$ is compact there is a constant~$C$ (depending only on the choice of $k$ and $\fund_U$) such that
\begin{align*}
\calB^\pm(v, x) & \le C + \calB^\pm(u_x u_0\inv a^n u a^{-n} u_0 u_x\inv, x) \\
&\le
C +
\calB^\pm(u_x u_0\inv, a^n u a^{-n} u_0 \Gamma)+
\calB^\pm( a^n, u a^{-n} u_0 \Gamma)\\
&\qquad +\calB^\pm( u , a^{-n} u_0 \Gamma)
+\calB^\pm( a^{-n} , u_0 \Gamma)
+\calB^\pm(u_0 u_x\inv, x)\\
&\le C + C + {\epsilon n} + C + \epsilon n + C \\
& \le 3 \epsilon n
\end{align*}
for all $n$ sufficiently large.
\end{proof}

We now prove \cref{UnstableIsSubexp}.

\begin{proof}[\bf Proof of \cref{UnstableIsSubexp}]
Fix $\epsilon>0$.
As in \fullcref{GoodFolner}{convex}, we identify $\quot U^+$ with some~$(\R^\ell,+)$. Let $0 < \delta < 2^{-(\ell + 1)}$ and let $k$ be as in \cref{lem:343} for this choice of~$\delta$ and $\epsilon$.

Consider any $\quot u\in \quot U^+$. Recall that $\quot{F_k}^+$ is the ball~$\quot B_k$ of radius~$k$ centered at the origin and that conjugation by~$a$ expands~$\quot U^+$.
Take the minimal $n$ such that $2 \quot u \in a^n \quot{F_k}^+ a^{-n}$. We may assume $n$ is sufficiently large so that \Cref{lem:343} holds. Let $G_n$ be as in
\Cref{lem:343}.

Write $\wtd F:= a^{-n} \quot{F_k}^+ a^n$.
Recall $\quot U^+$ is equipped with a norm so that $\quot{F_k}^+$ is a ball centered at the origin. Then
$\frac{1}{2} \wtd F\subseteq \wtd F$
 and $m_{\wtd F} \bigl( \frac{1}{2} \wtd F\bigr) = 2^{-\ell} > 2 \delta$. Since $m_{\wtd F} (G_n) > 1 - \delta$, this implies
	$$m_{\frac{1}{2} \wtd F} \bigl( G_n \cap \tfrac{1}{2} \wtd F\bigr) > \frac{1}{2} .$$
Since $\quot u \in \frac{1}{2} \wtd F$ we also have $\quot u - \frac{1}{2} \wtd F\subseteq \wtd F$. By the same argument,
	$$m_{\frac{1}{2} \wtd F} \bigl( ( \quot u - G_n) \cap \tfrac{1}{2}\wtd F\bigr) > \frac{1}{2} .$$
In particular, there is some $\quot{u'} \in \frac{1}{2} \wtd F\cap G_n \cap (\quot u - G_n)$. Then $\quot u - \quot {u'}\in G_n$ and
	$$\quot u = \quot{u'} + (\quot u - \quot{u'}) \in G_n+ G_n.$$
In particular, we may write any $\quot u = \quot{u_1} +\quot{u_2}$ with $\quot{u_i} \in G_n$.

Now consider any $x \in U/\Gamma\!_U$.
From \fullcref{lem:343}{v}, there exist $v_1,v_2 \in U$, such that $\quot{v_i} = \quot{u_i}$, $\calB^\pm(v_2,x) \le \epsilon n$, $\calB^\pm(v_1, v_2x) \le \epsilon n$, and $d(v_i,\1) \le C_\calG n$.
Let $v = v_1 v_2$. Then $\quot v = \quot{v_1} + \quot{v_2} = \quot{u_1} + \quot{u_2} = \quot u$.

Let $\lambda$ be the smallest eigenvalue of~$a^1_{\split}$ on~$\quot U^+$. Since the norm $\|\cdot \|$ on~$\quot U^+$ is chosen to be invariant under $a^t_{\comp}$, $a^i \quot{F_k}^+ a^{-i}$ contains the ball of radius $\lambda^i k$; minimality of~$n$ then implies $n \le \log_\lambda 2 \| \quot u \|$.
The map $U \to \quot U^+$ is a polynomial so there is a constant~$C$ such that $\log_\lambda 2 \|\quot u \| \le C d(u , \1)$ if $d(u,\1) > 1$. Then $n \le C d(u , \1)$ and we have
	$$ \calB^\pm(v,x)
	\le \calB^\pm(v_1,v_2x) + \calB^\pm(v_2,x)
	\le \epsilon n + \epsilon n
	\le 2 \epsilon C \, d(u, \1) $$
and
	\begin{align*}
	d(v, \1)
	\le d(v_1,\1) + d(v_2, \1)
	\le 2 C_\calG \, n
	\le 2 C_\calG C \, d(u , \1)
	. & \qedhere \end{align*}
\end{proof}

\subsection{Proof of \texorpdfstring{\cref{UnipSubgrp}}{Proposition 8.1}} \label{sec83}
We complete the proof of \cref{UnipSubgrp}. It suffices to show under \cref{assfortoday} that
for every $\epsilon>0$ there is $C_{\epsilon}$ such that for all $\gamma\in \Gamma\!_U$,
\begin{equation}\label{UnipSubgrpgood} \calB^\pm (\gamma,\1\Gamma)\le \epsilon \, d(\gamma,\1) + C_{\epsilon}.\end{equation}

We will first use \cref{UnstableIsSubexp} to show that each element of $\Gamma\!_U/[\Gamma\!_U,\Gamma\!_U]$ has a representative in $\Gamma\!_U$ for which $\calB^+$ and~$\calB^-$ have sublinear size when evaluated at the coset $\1\Gamma$.
This is the base case of an induction that establishes the same conclusion for every quotient of the descending central series of~$U$. To finish, we will observe that these estimates on the quotients easily imply the desired sublinear growth of $\calB^+$ and~$\calB^-$ evaluated at $\Gamma\!_U\times \{\1\Gamma\}$.

\begin{claim} \label{UnipSubgrpPf-Ubar}
There exists $C > 0$ such that for every $\epsilon > 0$ there is $C_{\epsilon} > 0$ such that for each $ u \in U$ and $x \in U/\Gamma\!_U$ there exists $v \in U$ such that
\begin{enumerate}
\item $\quot v = \quot u$,
\item \label{UnipSubgrpPf-Ubar-sublinear}
$\calB^\pm(v,x) < \epsilon \, d(u,\1) + C_{\epsilon}$, and
\item \label{UnipSubgrpPf-Ubar-d}
$d(v,\1) < C \, d(u, \1) + C$.
\end{enumerate}
\end{claim}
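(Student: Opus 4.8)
The plan is to decompose $U$ along the hyperbolic/central/contracting directions for a well-chosen family of one-parameter subgroups and combine \cref{UnstableIsSubexp} with the fact that $U/[U,U]$ is generated by the expanding horospherical subgroups attached to such one-parameter subgroups (\cref{ExpandersGenerate}). Concretely, by \cref{ExpandersGenerate} choose a finite collection $\mathcal{C} = \{\{a_1^t\}, \dots, \{a_m^t\}\}$ of one-parameter subgroups, each contained in a $\Q$-anisotropic $\Q$-torus of $L$ with $a_i^1 \in \Gamma\!_{\wtd A}$, such that the associated expanding horospherical subgroups $\quot{U}^+_i := \quot{\mathcal{U}^+((a_i^t)_\split)}$ generate the abelian group $\quot U = U/[U,U]$. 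Also include the inverses $\{a_i^{-t}\}$ in the collection. Since $\quot U$ is a simply connected abelian unipotent group (\cref{BasicUnip}), identifying it with $\R^d$ we may write every $\quot u \in \quot U$ as a sum $\quot u = \quot{u_1} + \cdots + \quot{u_r}$ with each $\quot{u_j}$ in one of the subgroups $\quot U^+_{i_j}$ and with $d(\quot{u_j}, \1)$ (equivalently, the Euclidean norm in $\R^d$) bounded by a fixed constant multiple of $d(\quot u, \1)$; this is a routine fact about finitely many subgroups spanning a vector space, where the bounded-distance control uses that the lift $U \to \quot U$ is a polynomial map so $d_U$ and $d$ on $U$ are logarithmically comparable as in \cref{sec82}.

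The core step is then the following: given $\quot u \in \quot U$ and $x \in U/\Gamma\!_U$, write $\quot u = \quot{u_1} + \cdots + \quot{u_r}$ as above, and build $v = v_r v_{r-1} \cdots v_1 \in U$ inductively. For each $j$, the element $\quot{u_j}$ lies in $\quot U^+_{i_j}$, i.e.\ in the image of the expanding subgroup $U^+$ for the one-parameter subgroup $\{a_{i_j}^t\}$; applying \cref{UnstableIsSubexp} with this one-parameter subgroup to the base point $x_j := (v_{j-1} \cdots v_1) \cdot x \in U/\Gamma\!_U$ produces $v_j \in U$ with $\quot{v_j} = \quot{u_j}$, with $\calB^\pm(v_j, x_j) \le \epsilon \, d(u_j, \1) + C_\epsilon$, and with $d(v_j, \1) \le C_0 d(u_j, \1) + C_0$. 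Since $\quot U$ is abelian, $\quot v = \quot{v_r} + \cdots + \quot{v_1} = \quot{u_r} + \cdots + \quot{u_1} = \quot u$, which gives conclusion (a). Using the cocycle identity for $\calB^\pm$ (subadditivity, as in \eqref{eq:pooface1}--\eqref{eq:pooface2}) and telescoping,
\[
\calB^\pm(v, x) \le \sum_{j=1}^r \calB^\pm(v_j, x_j) \le \sum_{j=1}^r \bigl(\epsilon\, d(u_j,\1) + C_\epsilon\bigr) \le \epsilon' \, d(u,\1) + C'_\epsilon,
\]
where we used $\sum_j d(u_j, \1) \le C'' r \, d(u, \1) + C'' r$ and that $r$ is bounded by the fixed number of generators needed; rescaling $\epsilon$ at the outset absorbs the constant $C''r$. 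This gives conclusion \pref{UnipSubgrpPf-Ubar-sublinear}. For \pref{UnipSubgrpPf-Ubar-d}, since $d(v, \1) \le \sum_j d(v_j, \1) \le \sum_j (C_0 d(u_j, \1) + C_0) \le C \, d(u, \1) + C$ by the same bounded-sum estimate, we are done.

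The main obstacle — and the only point requiring care — is making sure the base points $x_j$ stay in $U/\Gamma\!_U$ so that \cref{UnstableIsSubexp} applies at each step: this is automatic since each $v_{j-1} \cdots v_1 \in U$ and $U$ acts on $U/\Gamma\!_U$ by left translation. A secondary subtlety is that \cref{UnstableIsSubexp} is stated for \emph{a single} one-parameter subgroup with its associated $U^+$, while here we vary the one-parameter subgroup across the collection $\mathcal C$; but $\mathcal C$ is finite, so the constant $C_0$ (and the collection of constants $C_\epsilon$) may be taken uniform over $\mathcal C$ by taking maxima. One must also check that the decomposition $\quot u = \sum \quot{u_j}$ can be chosen with the linear bound on the pieces and with a uniformly bounded number $r$ of terms; this follows because $\quot U^+_1, \dots, \quot U^+_m$ span the finite-dimensional vector space $\quot U$, so a fixed linear algebra argument expresses any vector as a bounded-length sum of scaled generators with norms controlled linearly. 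With \cref{UnipSubgrpPf-Ubar} in hand, the descending-central-series induction sketched before the claim, followed by evaluation at $x = \1\Gamma$ and the remark that $\Gamma\!_U$ is cocompact in $U$, will complete the proof of \cref{UnipSubgrp}.
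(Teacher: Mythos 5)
Your proposal is correct and follows essentially the same route as the paper's proof: invoke \cref{ExpandersGenerate} to get finitely many one-parameter subgroups whose expanding horospherical subgroups generate $\quot U$, decompose $\quot u$ as a bounded-length sum of pieces of linearly controlled size lying in these expanding subgroups, lift using the polynomial/log comparability of the metrics, iterate \cref{UnstableIsSubexp} along the chain of base points $x_j$, and sum via subadditivity of $\calB^\pm$. (Your choice of composition order $v = v_r\cdots v_1$ is in fact the one that makes the telescoping via the subadditive cocycle relation line up cleanly with $x_j = v_j\cdot x_{j-1}$.)
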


\begin{proof}
\Cref{ExpandersGenerate} provides finitely many one-parameter subgroups $a_1^t, \ldots, a_r^t$ of~$L$, such that the corresponding expanding subgroups $\quot U_1^+, \ldots, \quot U_r^+$ span~$\quot U$.
Identify $\quot U$ with some $\R^s$. Equipping $\quot U$ with some norm $\|\cdot\|$, there is $C_{\quot U}$ such that we may write
	$\quot u = \quot{u_1} \, \quot{u_2} \cdots \quot{u_r} $
where $\quot{u_i} \in \quot U_i^+$ and $\| \quot{u_i} \| \le C_{\quot U} \| \quot u \|$.
Identifying $U$ with its image under the adjoint representation, the exponential map on $U$ is a polynomial (see \cref{BasicUnip}\pref{BasicUnip-exp})
and $d(u,\1)$ is comparable to $\log \|u\|$ for all sufficiently large $u\in U$. This implies that we may choose a lift $u_i$ of each $\quot{u_i}$ such that $d(u_i, \1) \le C_U \, d(u,\1) + C_U$, for some constant~$C_U$.

Consider any $x:=x_0 \in U/\Gamma\!_U$. Recursively define $\{v_i, 1\le i\le r\}$ and $\{x_i, 1\le i\le r\}$ as follows: By \fullCref{ExpandersGenerate}{inA}, each $\{a_i^t\}$ intersects the lattice $\Gamma$. We may apply \cref{UnstableIsSubexp} with $\quot U_i^+$, $u_i$, and $x_{i-1}$ to obtain an element $v_i$. Set $x_i= v_i \cdot x_{i-1}$.
Then
	$$ \text{$\quot v_i = \quot u_i$, \
	$\calB^\pm(v_i, x_{i-1}) \le \epsilon \, d(u_i, \1) + C_{\epsilon}$, \
	and \
	$d( v_i, \1) \le C_0 \, d(u_i,\1) + C_0$}
	. $$
Let $v = v_1 v_2 \cdots v_r$. Then
	$\quot v
	= \quot{v_1} \, \quot{v_2} \cdots \quot{v_r}
	= \quot{u_1} \, \quot{u_2} \cdots \quot{u_r}
	= \quot u$.
We also have
	\begin{align*}
	 \calB^\pm(v,x)
	&= \calB^\pm(v_1 v_2 \cdots v_r,x_0)
	\le \sum_{i=1}^r \calB^\pm(v_i , x_{i-1})
	\le \sum_{i=1}^r \bigl( \epsilon \, d(u_i, \1) + C_{\epsilon} \bigr) \\
	&\le r \bigl( \epsilon \, [C_U d(u,\1) + C_U] + C_{\epsilon} \bigr)
	\le C' \epsilon \, d(u, \1) + C'
	\end{align*}
for an appropriate constant~$C'$, and
	\begin{align*}
	d(v,\1)
	&= d(v_1 v_2 \cdots v_r, \1)
	\le \sum_{i=1}^r d(v_i, \1)
	\le \sum_{i=1}^r \bigl( C_0 d(u_i,\1) + C_0 \bigr) \\
	&\le \sum_{i=1}^r \bigl( C_0 [ C_U d(u,\1) + C_U] + C_0 \bigr)
	\le C d(u,\1) + C
	. \end{align*}
This completes the proof of the claim.
\end{proof}

For the restriction of the discrete cocycle $\calA_0\colon \Gamma\times \calE_0\to \calE_0$ to $\Gamma\!_U$, we have the following immediate strengthening of \cref{UnipSubgrpPf-Ubar}.

\begin{corollary} \label{RepsForGamma}
There exists $C > 0$ such that for every $\epsilon > 0$ there is $C_{\epsilon} > 0$ such that for every $ \gamma \in \Gamma\!_U$ there exists $\lambda \in \Gamma\!_U$ such that
\begin{enumerate}
\item $ \lambda = \gamma \mod [\Gamma\!_U,\Gamma\!_U]$,
\item \label{UnipSubgrpPf-Ubar-sublinear}
$\calB^\pm(\lambda ^{\pm1},\1\Gamma) < \epsilon \, d(\gamma,\1) + C_{\epsilon}$, and
\item \label{UnipSubgrpPf-Ubar-d}
$d(\lambda ,\1) < C \, d(\gamma, \1) + C$.
\end{enumerate}
\end{corollary}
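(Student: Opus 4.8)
\textbf{Proof plan for \cref{RepsForGamma}.}
The statement is the ``lattice version'' of \cref{UnipSubgrpPf-Ubar}, and the plan is to deduce it from that claim by working inside a Siegel-type fundamental domain for $\Gamma\!_U$ in $U$ and using the cocompactness of $\Gamma\!_U$ in $U$. First I would fix a compact, symmetric fundamental domain $\fund_U$ for the (two-sided) action of $\Gamma\!_U$ on $U$, chosen to contain $\1$. Given $\gamma\in\Gamma\!_U$, apply \cref{UnipSubgrpPf-Ubar} with $u=\gamma$ and base point $x=\1\Gamma\!_U\in U/\Gamma\!_U$ to obtain $v\in U$ with $\quot v=\quot\gamma$, with $\calB^\pm(v,\1\Gamma)\le\epsilon\,d(\gamma,\1)+C_\epsilon$, and with $d(v,\1)\le C\,d(\gamma,\1)+C$. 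The element $v$ need not lie in $\Gamma\!_U$, so the point is to replace it by a nearby lattice element $\lambda$ that still lies in the same coset of $[\Gamma\!_U,\Gamma\!_U]$ and has controlled cocycle growth.

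The key observation is that the image $\quot{\Gamma\!_U}$ of $\Gamma\!_U$ in $\quot U=U/[U,U]$ is a lattice in the (simply connected, hence vector) group $\quot U$, and since $\quot\gamma=\quot v$ already lies in this lattice, $v$ already lies on the correct coset $v[U,U]$ of the closed subgroup $[U,U]$, and moreover $v[U,U]\cap\Gamma\!_U$ maps onto $\quot\gamma+[\Gamma\!_U,\Gamma\!_U]$ inside $\quot U$ modulo $\quot{[\Gamma\!_U,\Gamma\!_U]}$; the subtlety is only that $[\Gamma\!_U,\Gamma\!_U]$ is a finite-index subgroup of $\Gamma\!_U\cap[U,U]$, not all of it. I would handle this as follows: since $\Gamma\!_U\cap[U,U]$ is a cocompact lattice in $[U,U]$ and $[\Gamma\!_U,\Gamma\!_U]$ has finite index in it, there is a compact symmetric set $\fund'\subset[U,U]$ (containing $\1$) which surjects onto a set of coset representatives for $[\Gamma\!_U,\Gamma\!_U]$ in $\Gamma\!_U\cap[U,U]$, and such that $\fund'\cdot[\Gamma\!_U,\Gamma\!_U]\supseteq[U,U]\cap(\fund_U\cdot\Gamma\!_U)$; a standard compactness argument gives this. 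Write $v=\gamma_0 w$ where $\gamma_0\in\Gamma\!_U$ and $w\in\fund_U$ (possible since $\fund_U$ is a fundamental domain and $\Gamma\!_U$ acts cocompactly). Since $\quot v=\quot\gamma$ is in $\quot{\Gamma\!_U}$, we get $\quot w\in\quot{\Gamma\!_U}$; the finitely many cosets of $[U,U]$ represented by elements of $\fund_U$ whose image lies in $\quot{\Gamma\!_U}$ form a finite set, and for each such coset we may fix once and for all a lattice representative. This lets us write $w=\delta z$ with $\delta\in\Gamma\!_U$ from a \emph{finite} (hence uniformly bounded) list and $z\in[U,U]$ from a \emph{compact} set. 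Then $\lambda:=\gamma_0\delta\in\Gamma\!_U$ satisfies $\quot\lambda=\quot\gamma$, i.e.\ $\lambda\equiv\gamma\bmod[\Gamma\!_U,\Gamma\!_U]$ once we further absorb the finite-index discrepancy by right-multiplying $\delta$ by one more bounded element of $\Gamma\!_U\cap[U,U]$; all such corrections come from a fixed finite set.

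For the quantitative conclusions: since $v=\lambda z$ with $z$ ranging over a compact subset of $[U,U]\subset U$, and since $\lambda,z\in G$, we have $d(\lambda,\1)\le d(v,\1)+d(z,\1)\le C\,d(\gamma,\1)+C+\mathrm{const}$, giving conclusion (3) after adjusting $C$. For the cocycle estimates, the cocycle identity gives
\[
\calB^{\pm}(\lambda,\1\Gamma)=\calB^{\pm}(vz^{-1},\1\Gamma)\le \calB^{\pm}(v,z^{-1}\Gamma)+\calB^{\pm}(z^{-1},\1\Gamma),
\]
and since $z^{-1}$ ranges over a fixed compact set and $z^{-1}\Gamma=\1\Gamma$ (as $z^{-1}\in U$ with $z^{-1}\Gamma\!_U$ in the image of a compact set—more precisely, $z^{-1}\Gamma$ ranges over a compact subset of $G/\Gamma$), the terms $\calB^{\pm}(v,z^{-1}\Gamma)$ and $\calB^{\pm}(z^{-1},\1\Gamma)$ are each bounded by $\calB^{\pm}(v,\1\Gamma)$ up to an additive constant using \cref{tempered} (the cocycle is $h$-tempered and $z^{-1}\Gamma$ stays in a compact set). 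Hence $\calB^{\pm}(\lambda,\1\Gamma)\le\epsilon\,d(\gamma,\1)+C_\epsilon'$, and the same applies to $\lambda^{-1}$ using $\calB^-$; finally $d(\gamma,\1)$ and $d(v,\1)$ are comparable up to the factor $C$, so we may replace $d(\gamma,\1)$ throughout. The main obstacle I anticipate is bookkeeping around the finite index of $[\Gamma\!_U,\Gamma\!_U]$ in $\Gamma\!_U\cap[U,U]$ and making sure the ``correction'' elements genuinely come from a finite set independent of $\gamma$; once that is isolated, everything else is a routine application of cocompactness, the $h$-temperedness of $\calA$ (\cref{tempered}), and the quasi-isometry estimate \eqref{eq:LMR}.
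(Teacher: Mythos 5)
Your overall plan—deduce the corollary from \cref{UnipSubgrpPf-Ubar} by correcting $v$ to a nearby lattice element—is the right idea and matches the paper's strategy. But your execution has a genuine gap at the cocycle estimate, and it traces back to the side on which you multiply.

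You decompose $v = \gamma_0 w$ with $\gamma_0 \in \Gamma\!_U$ and $w \in \fund_U$, then peel off $w = \delta z$ with $z \in [U,U]$ bounded, getting $v = \lambda z$, i.e., $\lambda = vz^{-1}$. This is a \emph{right} multiplication of $v$ by the bounded element $z^{-1}$. In a right-invariant metric, left multiplication by a bounded element moves $v$ a bounded distance, but right multiplication can move it arbitrarily far: $d(vz^{-1}, v) = d(vz^{-1}v^{-1}, \1)$, the size of a conjugate of $z^{-1}$ by $v$, which is unbounded as $v$ grows. So $\lambda$ is not a bounded perturbation of $v$, and this is reflected in the cocycle. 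Your estimate relies on the claim that $\calB^\pm(v, z^{-1}\Gamma) \le \calB^\pm(v,\1\Gamma) + O(1)$ ``by temperedness,'' but this is not a consequence of \cref{tempered}. Temperedness controls $\calB^\pm(g, x)$ by $h(x) + d(g,\1)$, which is far weaker than a Lipschitz bound in the base point uniform in $g$. In fact, unwinding the definition of the norm in \cref{sec:norms}, one finds
\[
\calB^+(v, z^{-1}\Gamma) = \sup_{x,w}\log\frac{\|w\|_{vz^{-1},x}}{\|w\|_{z^{-1},x}},
\qquad
\calB^+(\lambda, \1\Gamma) = \sup_{x,w}\log\frac{\|w\|_{vz^{-1},x}}{\|w\|_{\1,x}},
\]
and since $\|\cdot\|_{z^{-1},x}$ and $\|\cdot\|_{\1,x}$ are uniformly comparable for bounded $z$, these two quantities differ by $O(1)$. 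So $\calB^\pm(v,z^{-1}\Gamma) = \calB^\pm(\lambda,\1\Gamma) + O(1)$, and your chain of inequalities reduces to the tautology $\calB^\pm(\lambda,\1\Gamma) \le \calB^\pm(\lambda,\1\Gamma) + O(1)$. The very thing you need—that $\calB^\pm(\lambda,\1\Gamma)$ and $\calB^\pm(v,\1\Gamma)$ are comparable—is not established.

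The paper's proof sidesteps all of this and is considerably shorter. It does not decompose $v$ in a fundamental domain for $\Gamma\!_U$ at all. Instead, it observes directly that $v\gamma^{-1} \in [U,U]$ (since $\quot v = \quot\gamma$) and that $[\Gamma\!_U,\Gamma\!_U]$ is a cocompact lattice in $[U,U]$; if $\hat C$ denotes the diameter of $[U,U]/[\Gamma\!_U,\Gamma\!_U]$, one may choose $q \in [U,U]$ with $d(q,\1) \le \hat C$ and $qv\gamma^{-1} \in [\Gamma\!_U,\Gamma\!_U]$, then set $\lambda := qv$. This $\lambda$ differs from $v$ by \emph{left} multiplication by the bounded element $q$, so $d(\lambda,v) = d(q,\1) \le \hat C$, and the cocycle estimate is immediate from subadditivity:
\[
\calB^\pm(\lambda,\1\Gamma) = \calB^\pm(qv,\1\Gamma) \le \calB^\pm(q, v\Gamma) + \calB^\pm(v,\1\Gamma),
\]
where $\calB^\pm(q, v\Gamma)$ is uniformly bounded since $q$ is bounded and $v\Gamma$ lies in the compact set $U/\Gamma\!_U$. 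Finally, $\calB^\pm(\lambda^{-1},\1\Gamma) = \calB^\mp(\lambda,\lambda\cdot\1\Gamma) = \calB^\mp(\lambda,\1\Gamma)$ using $\lambda\in\Gamma$, which handles the inverse. If you wish to salvage your fundamental-domain route, the fix is to use the \emph{right} coset decomposition $v = w'\gamma_0'$ (with $w' \in \fund_U$, $\gamma_0'\in\Gamma\!_U$), which produces $v = z'\lambda$ and hence $\lambda = (z')^{-1}v$, a left perturbation. But the paper's more direct use of cocompactness of $[\Gamma\!_U,\Gamma\!_U]$ in $[U,U]$ is cleaner, avoids the finite-index bookkeeping you flag as the anticipated obstacle, and is the argument you should follow.
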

\begin{proof}
Fix $\gamma\in \Gamma\!_U$. From \cref{UnipSubgrpPf-Ubar}, there is $v\in U$ such that
 $\quot v = \quot \gamma$, $d(v,\1) < C \, d(\gamma, \1) + C$, and
$$\calB^\pm(v,\1\Gamma) < \epsilon \, d(\gamma,\1) + C_{\epsilon}.$$

Note that $[\Gamma\!_U,\Gamma\!_U]$ is a cocompact lattice in $[U,U].$ Let $\hat C$ denote the diameter of $[U,U]/{[\Gamma\!_U,\Gamma\!_U]}$. Then there is $q\in [U,U]$ such that
$qv\gamma\inv \in {[\Gamma\!_U,\Gamma\!_U]}$
and $d(q, \1)\le \hat C$.
Then $ \lambda := qv\in \Gamma\!_U$ satisfies $ {\lambda } = \gamma \mod {[\Gamma\!_U,\Gamma\!_U]}$ and $d(\lambda ,\1) < C \, d(\gamma, \1) + C +\hat C$.
By compactness of $U/\Gamma\!_U$ and using that $d(q, \1)\le \hat C$ there is some uniform $C'>0$ such that
\begin{align*}
\calB^\pm(\lambda ,\1\Gamma)
&\le \calB^\pm(v,\1\Gamma) + \calB^\pm(q,v\Gamma) \\
&\le \epsilon \, d(\gamma,\1) + C_{\epsilon} + C'.
\end{align*}
Moreover, since
\begin{align*}
\calB^\pm({\lambda }\inv ,\1\Gamma)
= \calB^\mp(\lambda , \lambda \cdot \1\Gamma)
= \calB^\mp(\lambda , \1\Gamma)
\end{align*}
we also conclude that $\calB^\pm({\lambda }\inv ,\1\Gamma) \le \epsilon \, d(\gamma,\1) + C_{\epsilon} + C'.$
\end{proof}

Let $$U = U_1 \supset U_2 \supset \cdots \supset U_{c+1} = \{1\}$$ be the descending central series of~$U$ where $U_{k+1} = [U, U_k]$.
Write $\Lambda = \Gamma\!_U$ and let $$\Gamma\!_U=\Lambda = \Lambda_1 \supset \Lambda_2 \supset \cdots \supset \Lambda_{c+1} = \{1\}$$ be the descending central series of $ \Lambda$. For each $1\le k\le c$, $\Lambda_k$ is a cocompact lattice in $U_k$ and has finite index in $\Gamma\!_{U_k}= \Gamma \cap U_{k}$.

\begin{claim} \label{UnipSubgrpPf-Uk}
There exists $C > 0$ such that every $\epsilon > 0$ there is $C_{\epsilon} > 0$ such that for every $1\le k\le c$ and $\gamma \in \Lambda_k$ there exists $\lambda \in \Lambda_k$ such that
\begin{enumerate}
\item $\lambda =\gamma \mod \Lambda_{k+1}$,
\item \label{UnipSubgrpPf-Uk-sublinear}
$\calB^\pm(\lambda ,\1\Gamma ) < \epsilon \, d(\gamma,\1) + C_{\epsilon}$ and $\calB^\pm({\lambda }\inv,\1\Gamma) < \epsilon \, d(\gamma,\1) + C_{\epsilon}$,
\item \label{UnipSubgrpPf-Uk-d}
$d(\lambda ,\1) < C \, d(\gamma, \1) + C$.
\end{enumerate}
\end{claim}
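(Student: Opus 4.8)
\textbf{Proof plan for \cref{UnipSubgrpPf-Uk}.} The plan is to prove the claim by induction on $k$, using \cref{RepsForGamma} (equivalently \cref{UnipSubgrpPf-Ubar}) as the base case $k=1$ and a ``descending'' argument that pushes control from the abelianization down successive quotients of the central series. For the inductive step, suppose the claim holds for $k-1$ with constants $C$ and $C_\epsilon$; we must produce good representatives for elements of $\Lambda_k / \Lambda_{k+1}$. The key observation is that $\Lambda_k$ is generated, modulo $\Lambda_{k+1}$, by commutators $[\mu,\nu]$ with $\mu\in\Lambda_1$ and $\nu\in\Lambda_{k-1}$ (this is the definition of the descending central series, together with finite generation of each $\Lambda_j$ and the fact that $\Lambda_j/\Lambda_{j+1}$ is a finitely generated abelian group). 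So first I would fix $\gamma\in\Lambda_k$ and write $\gamma \equiv [\mu_1,\nu_1][\mu_2,\nu_2]\cdots[\mu_N,\nu_N] \pmod{\Lambda_{k+1}}$ with $N$ bounded by a constant depending only on $U$ and with $d(\mu_i,\1), d(\nu_i,\1) \le C' d(\gamma,\1) + C'$; this is possible because the word length in $U/[U,U_k]$ of an element is comparable (polynomially, hence after taking logs linearly) to the ambient Riemannian distance, and one can solve for commutator-word representatives with controlled length by standard nilpotent-group estimates. The point is that each $\mu_i$ lies in $\Lambda_1$ and each $\nu_i$ lies in $\Lambda_{k-1}$, so the inductive hypothesis applies to each of them.

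Next I would invoke the inductive hypothesis: for each $i$ there are $\tilde\mu_i\in\Lambda_1$ and $\tilde\nu_i\in\Lambda_{k-1}$ with $\tilde\mu_i \equiv \mu_i \pmod{\Lambda_2}$, $\tilde\nu_i\equiv\nu_i \pmod{\Lambda_k}$, with $d(\tilde\mu_i,\1), d(\tilde\nu_i,\1) \le C\,d(\gamma,\1)+C$ (absorbing $C'$) and with $\calB^\pm(\tilde\mu_i^{\pm 1},\1\Gamma), \calB^\pm(\tilde\nu_i^{\pm 1},\1\Gamma) \le \epsilon\, d(\gamma,\1) + C_\epsilon$. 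Since $\tilde\mu_i\equiv\mu_i$ modulo $\Lambda_2$ and $\tilde\nu_i\equiv\nu_i$ modulo $\Lambda_k$, the commutator $[\tilde\mu_i,\tilde\nu_i]$ agrees with $[\mu_i,\nu_i]$ modulo $\Lambda_{k+1}$ (using $[\Lambda_2,\Lambda_{k-1}]\subset\Lambda_{k+1}$ and $[\Lambda_1,\Lambda_k]\subset\Lambda_{k+1}$). Therefore $\lambda := [\tilde\mu_1,\tilde\nu_1]\cdots[\tilde\mu_N,\tilde\nu_N]$ satisfies $\lambda\equiv\gamma\pmod{\Lambda_{k+1}}$. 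The distance bound $d(\lambda,\1)\le C\,d(\gamma,\1)+C$ follows from subadditivity of $d(\cdot,\1)$ under products and the fact that a commutator $[\tilde\mu_i,\tilde\nu_i]$ has distance bounded by $2(d(\tilde\mu_i,\1)+d(\tilde\nu_i,\1))$ (after enlarging $C$), since there are only $N$ factors.

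The cocycle estimate is the crux and is handled by the cocycle identity \eqref{cocycle} (for $\calB^\pm$, subadditivity: $\calB^\pm(g_1g_2,x)\le \calB^\pm(g_1,g_2x) + \calB^\pm(g_2,x)$) together with the key fact that $\calB^\pm(g^{-1},gx) = \calB^\mp(g,x)$ and that $\calB^\pm$ is invariant under the $\Gamma$-action on the base in the sense that $\calB^\pm(\gamma,\1\Gamma)$ makes sense at the single coset $\1\Gamma$. Writing each commutator $[\tilde\mu_i,\tilde\nu_i] = \tilde\mu_i\tilde\nu_i\tilde\mu_i^{-1}\tilde\nu_i^{-1}$ and expanding $\calB^\pm$ along this four-fold product, each of the four factors contributes at most $\epsilon\,d(\gamma,\1) + C_\epsilon$ (using that $\alpha(\tilde\mu_i),\alpha(\tilde\nu_i)$ etc.\ land at cosets that are themselves of the form $\1\Gamma$ because $\tilde\mu_i,\tilde\nu_i\in\Gamma$, so $\calB^\pm$ is always evaluated at $\1\Gamma$); hence $\calB^\pm([\tilde\mu_i,\tilde\nu_i],\1\Gamma) \le 4\epsilon\,d(\gamma,\1)+4C_\epsilon$. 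Summing over the $N$ factors and replacing $\epsilon$ by $\epsilon/(4N)$ at the outset gives $\calB^\pm(\lambda,\1\Gamma)\le\epsilon\,d(\gamma,\1)+C_\epsilon$, and the bound for $\lambda^{-1}$ follows from $\calB^\pm(\lambda^{-1},\1\Gamma) = \calB^\mp(\lambda,\lambda\cdot\1\Gamma) = \calB^\mp(\lambda,\1\Gamma)$. The main obstacle I anticipate is the bookkeeping in the first paragraph: producing the commutator-word decomposition $\gamma\equiv\prod[\mu_i,\nu_i]$ with a uniformly bounded number of factors $N$ and with the length control $d(\mu_i,\1),d(\nu_i,\1)=O(d(\gamma,\1))$, which requires knowing that, in a finitely generated torsion-free nilpotent group, an element of $\Lambda_k$ of word length $\ell$ can be written as a product of boundedly many commutators $[\Lambda_1,\Lambda_{k-1}]$ each of word length $O(\ell)$ --- this is standard (it follows from the polynomial growth of $\Lambda$ and malcev-coordinate estimates, or by an explicit induction using the Hall--Witt identities), but it needs to be stated carefully so that the constants are genuinely uniform in $\gamma$.
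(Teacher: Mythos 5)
Your inductive strategy is sound and would yield the claim, but you use a genuinely different commutator decomposition from the paper, and the paper's choice is worth noting because it sidesteps the issue you flag at the end as the main obstacle. The paper fixes a finite generating set $w_1,\dots,w_r$ for $\Lambda = \Gamma\!_U$ once and for all, and writes $\gamma \equiv [\gamma_1,w_1]\cdots[\gamma_r,w_r] \pmod{\Lambda_{k+1}}$ with $\gamma_i \in \Lambda_{k-1}$ and $d(\gamma_i,\1) \le C_0 d(\gamma,\1)+C_0$. Because the $w_i$ are fixed, this decomposition is essentially a \emph{linear} inversion in Malcev coordinates: each map $\gamma_i \mapsto [\gamma_i, w_i] \bmod \Lambda_{k+1}$ is linear on $\lieu_{k-1}/\lieu_k$, and together they span $\lieu_k/\lieu_{k+1}$, so solving for $\gamma_i$ with controlled norm is elementary (this is the point of the reference to \cref{BasicUnip}(\ref{BasicUnip-exp})). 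In your version, with $\mu_i \in \Lambda_1$ and $\nu_i \in \Lambda_{k-1}$ both variable, the inversion is a genuinely bilinear problem, and the ``standard nilpotent-group estimates'' you invoke for bounded $N$ and $d(\mu_i,\1),d(\nu_i,\1) = O(d(\gamma,\1))$ need more care; the cleanest way to justify them is in fact to specialize one argument of each commutator to a fixed basis element, which collapses your argument back to the paper's.

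The second consequence of the paper's choice is a simpler cocycle bookkeeping. Since the $w_i$ are a fixed finite set, $\max_i \calB^\pm(w_i^{\pm 1},\1\Gamma) \le C_2$ is a bare constant, so only the $\lambda_i$ factors contribute terms of the form $\epsilon\,d(\gamma,\1)+C_\epsilon$. In your expansion, every one of the four factors of each commutator $[\tilde\mu_i,\tilde\nu_i]$ needs the inductive estimate, which doubles the number of $\epsilon$-terms and requires applying the inductive hypothesis at two different levels ($j=1$ and $j=k-1$) rather than one. This is correct --- and your observation that all the $\calB^\pm$ terms are evaluated at $\1\Gamma$ because each partial product lies in $\Gamma$ is exactly the right point --- but it is needlessly heavy. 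Your verification that $[\tilde\mu_i,\tilde\nu_i] \equiv [\mu_i,\nu_i] \pmod{\Lambda_{k+1}}$ via $[\Lambda_2,\Lambda_{k-1}] \cup [\Lambda_1,\Lambda_k] \subset \Lambda_{k+1}$ is the right idea and matches the calculation in the paper's proof (there with $q \in \Lambda_k$ on one side only). In short: correct in structure, but the asymmetry (fix one side, vary the other) is not a cosmetic simplification --- it is what makes the ``controlled commutator-word decomposition'' step transparent rather than something requiring a careful separate lemma.
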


\begin{proof}
The proof is by induction on~$k$. When $k=1$, the conclusion follows from directly from \cref{RepsForGamma}. We now assume $k \ge 2$.

We recall that $\quot U= U_1/U_2$. Each quotient $U_k/U_{k+1}$ is abelian and naturally identified with $\lieu_k/\lieu_{k+1}$ via the exponential. Moreover, the map
$$(u \, U_k, wU_2) \mapsto [u, w] \, U_{k+1}$$
is a well-defined, surjective, bilinear map $(U_{k-1}/U_k) \times \quot U \to U_k/U_{k+1}$.
Fix a finite set of generators $ \{w_1,\dots, w_r\}$ for $\Lambda =\Gamma _ U$. Then the image $\{\quot w_1,\dots, \quot w_r\}$ contains a basis for $\quot U$.

For each $k\ge 2$, the collection $\{[w_i, \Lambda_{k-1}] :1\le i\le r\}$ is a generating set for $\Lambda_k$.
Moreover, there is a $C_0$ such that for every $\gamma\in\Lambda_{k}$ there are $\gamma_1,\dots, \gamma_r \in \Lambda_{k-1}$ with $$\gamma =[\gamma_1, w_1] \, [\gamma_2, w_2] \cdots [\gamma_r, w_r]$$
and $d(\gamma_i,\1) \le C_0 \, d(\gamma,\1) + C_0$ (cf.\ \fullcref{BasicUnip}{exp}).

Fix $\gamma\in\Lambda_{k} $ and write $ \gamma =[\gamma_1, w_1] \, [\gamma_2, w_2] \cdots [\gamma_r, w_r]$ for $\gamma_1,\dots, \gamma_r \in \Lambda_{k-1}$ as above.
 By the induction hypothesis, there are $C_1$ and $\lambda _1,\dots, \lambda _r \in \Lambda_{k-1}$ such that \begin{enumerate}
\item $\lambda_i = \gamma_i \mod \Lambda_k$
 \item $d(\lambda_i , \1) \le C_1 d(\gamma_i,\1) + C_1$, and
\item
	$ \calB^\pm( {\lambda_i }^{\pm 1} , \1\Gamma) \le \epsilon \, d(\gamma_i, \1) + C_\epsilon.$
 \end{enumerate}
Also, there is $C_2$ with $\max\{\calB^\pm({w_i}^{\pm 1}, \1\Gamma) : 1\le i\le r\} \le C_2.$
Let
	$$\lambda = [\lambda_1, w_1] \, [\lambda_2, w_2] \cdots [\lambda_r, w_r].$$
We check that $[\lambda_i , w_i] = [\gamma_i, w_i] \mod \Lambda_{k+1}$. Indeed, there exists $q\in \Lambda_k$ such that \begin{align*}
[\lambda_i , w_i]
&
= [\gamma_i q, w_i]
\\&
= q\inv \gamma_i\inv w_i\inv \gamma_i q w_i
\\&
= q\inv [\gamma_i, w_i] w_i \inv q w_i
\\&= q \inv [\gamma_i, w_i] q \mod \Lambda_{k+1}
\\&= [\gamma_i, w_i] \mod \Lambda_{k+1}.
\end{align*}
Thus $\lambda = \gamma \mod \Lambda_{k+1}$.
We also have	\begin{align*}
	\calB^\pm(\lambda ,\1\Gamma)
	&\le \sum_{i=1}^r \calB^\pm \bigl( [\lambda _i, w_i] , \1\Gamma \bigr) \\
	&\le \sum_{i=1}^r \bigl( \calB^\pm(\lambda _i\inv, \1\Gamma) + \calB^\pm(w_i, \1\Gamma) + \calB^\pm(\lambda _i, \1\Gamma) + \calB^\pm(w_i, \1\Gamma) \bigr) \\
	&\le 2r \bigl( \epsilon \, d(\gamma,\1) + C_\epsilon \bigr) + 2rC_2\\
	&\le C' \epsilon \, d(\gamma,\1) + C'.
	 \end{align*}
	 Similarly, 	$\calB^\pm({\lambda }\inv,\1\Gamma) \le C' \epsilon \, d(\gamma,\1) + C'.$
Moreover,
	$$ d(\lambda ,\1)
	\le \sum_{i=1}^r d(\lambda _i,\1)
	\le r \bigl( C_1 d(\gamma,\1) + C_1 \bigr)
	\le C'' d(\gamma,\1) + C''
	. $$
This completes the proof of the claim.
\end{proof}

We now complete the proof of \cref{UnipSubgrp}.

\begin{proof}[Proof of \cref{UnipSubgrp}]
Since $\Lambda_1 = \Gamma\!_U$, it suffices to show for every $\epsilon>0$ and every $1\le k\le c$ that there is $C>1$ such that for all $\gamma\in \Lambda_k$,
$$\calB^{\pm}(\gamma, \1\Gamma) \le \epsilon d(\gamma, \1) + C.$$

The proof is by backward induction on~$k$. The base case when $k = c + 1$ holds trivially.

Consider the inductive case $1\le k \le c$. Fix any $\epsilon>0$. Given $\gamma \in \Lambda_k$, take $\lambda $ as in \cref{UnipSubgrpPf-Uk}.
We have $\gamma = \lambda \mod \Lambda_{k+1}$.
From subadditivitiy, \fullcref{UnipSubgrpPf-Uk}{sublinear}, and the induction hypothesis we have
	\begin{align*}
	\calB^\pm(\gamma, \1\Gamma)
	&\le \calB^\pm(\gamma {\lambda }\inv, \1\Gamma) + \calB^\pm(\lambda , \1\Gamma)
		\\ 
	&\le \bigl( \epsilon \, d(\gamma {\lambda }\inv, \1) + C_\epsilon' \bigr) + \bigl( \epsilon \, d(\gamma,\1) + C_\epsilon \bigr). \end{align*}
By \fullcref{UnipSubgrpPf-Uk}{d}, this is bounded above by an expression of the form $C_1 \epsilon\, d(\gamma,\1) + C_2$ where $C_1$ is a constant independent of $\epsilon$. This completes the proof of the inductive case.
\end{proof}

\section{Subexponential growth for minimal parabolic \texorpdfstring{$\Q$}{Q}-subgroups} \label{SubexpForParabolic}

We establish the following proposition which guarantees the existence of a measure as in the conclusion of \cref{thm:whatweprove,mainthm2} under the assumption that the failure of subexponential growth occurs for the action of some $\Gamma\!_P$ for some minimal parabolic $\Q$-subgroup $P\subset G$.

Under the additional hypothesis in \cref{hyp:center}, we have the following.
\begin{proposition}\label{prop:cuspgroup}
\laterdef\cuspgroup{Let $P$ be a minimal parabolic $\Q$-subgroup of~$G$.
If the restriction
	$$\restrict{\alpha}{\Gamma\!_P}\colon \Gamma\!_P \to \Diff^1(M)$$
does not have uniform subexponential growth of derivatives\ there is a split Cartan subgroup~$A$ of~$G$ and an $A$-invariant Borel probability measure~$\mu$ on~$X$ projecting to the Haar measure on $G/\Gamma$ such that
	 $\lambda_{\top,a,\mu, \calA} > 0$ for some $a \in A$.}
	
Let $P$ be a minimal parabolic $\Q$-subgroup of~$G$.
If the restriction
	$$\restrict{\calA_0}{\Gamma\!_P}\colon \Gamma\!_P \times \calE_0\to \calE_0$$
does not have {$\len_\Gamma$-subexponential growth then}
there is a split Cartan subgroup~$A$ of~$G$ and an $ A$-invariant Borel probability measure~$\mu$ on~$X$ projecting to the Haar measure on $G/\Gamma$ such that
	 $\lambda_{\top,a,\mu, \calA} > 0$ for some $a \in A$.
\end{proposition}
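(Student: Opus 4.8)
\textbf{Proof strategy for \cref{prop:cuspgroup}.} The plan is to exploit the rational Langlands decomposition $P = (L\times S^\circ)\ltimes U$ of the minimal parabolic $\Q$-subgroup, where by \cref{MinPar} the Levi factor $L$ is $\Q$-anisotropic (hence $\hat\Gamma\!_L$ is cocompact in $L$ by \cref{LatticeIff}\pref{LatticeIff-cocompact}) and $U = \Rad_u(P)$ is a maximal unipotent $\Q$-subgroup of $G$ which is the expanding horospherical subgroup of a one-parameter $\R$-diagonalizable subgroup $\{b^s\}\subset S^\circ$ by \cref{UIsHoro}. Since $\Gamma\!_P$ is quasi-isometrically boundedly generated by $\Gamma\!_L$ and $\Gamma\!_U$ relative to $\len_\Gamma$ (\cref{GammaCap}\pref{QI444}), failure of $\len_\Gamma$-subexponential growth for $\restrict{\calA_0}{\Gamma\!_P}$ forces failure of $\len_\Gamma$-subexponential growth for either $\restrict{\calA_0}{\Gamma\!_U}$ or $\restrict{\calA_0}{\Gamma\!_L}$. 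In the first case we are done immediately by \cref{UnipSubgrp}, so we may assume $\restrict{\calA_0}{\Gamma\!_L}$ fails to have $\len_\Gamma$-subexponential growth while (by \cref{UnipSubgrp} again, applied contrapositively, together with \cref{assfortoday}-type reasoning) $\restrict{\calA_0}{\Gamma\!_U}$ \emph{does} have $\len_\Gamma$-subexponential growth — equivalently, by \cref{SubexpForCompact}, the restriction of $\calA$ to $U\times\calE_{\wtd H}$ has subexponential growth for the relevant $\wtd H$.

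First I would apply \cref{GetExpForAOnCpct} with $\hat H = L$ (using \cref{hyp:center} to handle the center): since $\hat\Gamma\!_L$ is cocompact in $L$ and $\restrict{\calA_0}{\Gamma\!_L}$ fails to have $\len_\Gamma$-subexponential growth, we obtain a split Cartan subgroup $A_L\subset L$, an $A_L$-invariant Borel probability measure $\mu_0$ on $X_L$, and $a\in A_L$ with $\lambda_{\top,a,\mu_0,\calA}>0$. Pushing $\mu_0$ forward under $X_L\hookrightarrow X$ (via the inclusion $L\hookrightarrow G$, noting $L$ normalizes $U$ and centralizes $A_L$ inside $L$), we get an $A_L$-invariant measure on $X$ supported over the compact set $L/\Gamma\!_L \subset G/\Gamma$, still with positive top exponent for $a$.

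Next I would inject $U$-invariance. Average this measure over a \Folner sequence of centered intervals in $U$; by \cref{lem:unipotenttight} (or \cref{UnipSmallCusp}, since the initial measure is compactly supported) the resulting family is uniformly tight with uniformly exponentially small mass at $\infty$, and by \cref{lemma:averaging} — here one needs $a$ to commute with (a cofinal part of) $U$, which holds because $A_L$ centralizes $U$ up to the action through $S^\circ$; more carefully, one chooses $a$ in the center of $L$ or argues as in \cref{rem:cocomp}, passing through $\{a^t_{\split}\}$ — we obtain a $U$-invariant (and still $a$-invariant) measure $\mu_1$ with exponentially small mass at $\infty$ and $\lambda_{\top,a,\mu_1,\calA}>0$, compactly supported. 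Then, since $U = \mathcal U^+(b^s)$ is the expanding horospherical subgroup of $\{b^s\}\subset S^\circ$ and $b^s$ commutes with $a\in A_L$ (as $S^\circ$ centralizes $L$), I apply \cref{MixingSmallCuspsCom} and \cref{lemma:averaging} to the sequence $\{(b^s)_*\mu_1\}$: this family is uniformly tight with uniformly exponentially small mass at $\infty$, every subsequential limit $\mu_2$ projects to the Haar measure on $G/\Gamma$, and $\lambda_{\top,a,\mu_2,\calA}>0$. Finally \cref{lem:endgameA} upgrades $\mu_2$ to a $(\calZ A)$-invariant (in particular $A$-invariant) measure $\mu$ projecting to Haar with $\lambda_{\top,a,\mu,\calA}>0$ for $a$ in a split Cartan $A\supset\{a^t\}$, which is the desired conclusion (the $\calZ$-factor is absorbed by \cref{hyp:center} and the statement only asks for $A$-invariance and $a\in A$).

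\textbf{Main obstacle.} The delicate point is the commutation bookkeeping needed to apply \cref{lemma:averaging} at each averaging step: that lemma requires the averaging subgroup to lie in $C_G(a)$, so one must be careful to select the element $a$ realizing the positive exponent inside a subgroup that simultaneously centralizes $U$ (for the horospherical averaging) and $\{b^s\}$ (for the mixing step) — the cleanest route is to take $a=a_{\split}$ in $Z(L)^\circ$ or to argue as in \cref{rem:cocomp}, first averaging over a maximal unipotent of $L$ normalized by $A_L$ to move the exponent into a more central element, then averaging over $U$, then mixing, then averaging over $A$. A secondary technical nuisance is ensuring that $\restrict{\calA_0}{\Gamma\!_U}$ really does have $\len_\Gamma$-subexponential growth in the case we reduce to (so that the $U$-orbit averages don't see escape of Lyapunov exponent); this follows because if it failed, \cref{UnipSubgrp} would already produce the desired measure, so there is genuinely no loss of generality — but it must be stated explicitly to invoke \cref{SubexpForCompact}/\cref{SiegelImpliesSubexp} and keep the cocycle controlled along the $U$-averages.
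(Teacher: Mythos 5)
Your reduction to the Levi factor and your endgame (mixing by $b^s$, then \cref{lem:endgameA}) both match the paper. The gap is the middle step where you inject $U$-invariance while preserving the positive top exponent. You propose to average over a \Folner sequence of centered intervals in $U$ and invoke \cref{lemma:averaging}, noting that this requires $a$ to centralize $U$. But the element $a\in A_L$ produced by \cref{GetExpForAOnCpct} is a nontrivial $\R$-diagonalizable element of the Levi $L$, and $A_L$ acts on $\lieu=\mathrm{Lie}(U)$ by nonzero restricted roots (these are exactly the roots of $\Phi(A,G)$ that do not vanish on $A_L$), so $U\not\subset C_G(a)$ in general and \cref{lemma:averaging} simply does not apply. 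Your proposed workarounds do not close this: taking $a\in Z(L)^\circ$ is not available (you cannot choose where \cref{GetExpForAOnCpct} hands you the exponent, and in any case $Z(L)^\circ$ is $\Q$-anisotropic, may have trivial $\R$-split part, and does not centralize $U$ by \cref{LNoCent}), and the bookkeeping in \cref{rem:cocomp} addresses rank-one factors and infinite center, not this commutation problem.

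What the paper does instead is average over a single \Folner sequence $F_n=\{a^s:0\le s\le n\}\cdot U_\kappa(n)$ in the \emph{solvable} group $\{a^t\}\ltimes U$, producing a limit $\mu_\infty$ that is simultaneously $\{a^t\}$- and $U$-invariant. Preservation of the positive exponent is then Claim \ref{MinParabPosExp}, which is the real technical content you are missing: one passes to the projectivized cocycle $\Psi$ and its infinitesimal generator $\Psi'$, writes $a^n u=u'a^n$ with $u'\in U_{2\kappa}(n)$, and shows that the error terms $\Psi(u',a^n\cdot\xi)-\Psi(u,\xi)$ are $O(\epsilon n)$ because the restriction of $\calA$ to $U\times\calE_{LU}$ has subexponential growth (your "secondary technical nuisance," established via \cref{SubexpForCompact} from the case-split on $\Gamma_U$, is precisely what makes this work). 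After this Claim, the measure is $(\{a^t\}\ltimes U)$-invariant and compactly supported, and your mixing/endgame steps go through verbatim. So the overall architecture you sketch is right, but as written the proposal swaps the crucial combined-\Folner-plus-$\Psi'$ computation for a direct appeal to \cref{lemma:averaging} that the hypotheses do not license, and no repair is supplied.
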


\begin{proof}
We have the rational Langlands decomposition $P = (L \times S^\circ) \ltimes U$ where
$L$ is a connected reductive $\Q$-subgroup with $\rank_\Q L = 0$, $S$~is a maximal $\Q$-split torus of~$P$, and $U= \Rad_u(P)$ is the unipotent radical of $P$. Each of $L$, $S$, and~$U$ is defined over~$\Q$. We have that $L/\Gamma\!_L$ and $U/\Gamma\!_U$ are compact, $\Gamma\!_S$ is trivial, and, up to finite index, $\Gamma\!_P = \Gamma\!_L \ltimes \Gamma\!_U$

By \Cref{UnipSubgrp}, the conclusion of \cref{prop:cuspgroup} would follow if the restriction
$\restrict{\calA_0}{\Gamma\!_U}\colon \Gamma\!_U\times \calE_0 \to\calE_0$
failed to have $\len_\Gamma$-subexponential growth. Thus, we may assume for the remainder that
$\restrict{\calA_0}{\Gamma\!_U}\colon \Gamma\!_U\times \calE_0 \to\calE_0$
has $\len_\Gamma$-subexponential growth.
	
	Recall that the natural embedding $\Gamma\!_L \times \Gamma\!_U \to \Gamma\!_P$ is a quasi-isometry (see \cref{GammaCap}) when each of the three subgroups is equipped with the $\len_\Gamma$-metric. Thus, we may assume that the restriction
$\restrict{\calA_0}{\Gamma\!_L}\colon \Gamma\!_L\times \calE_0 \to\calE_0$
fails to have {$\len_\Gamma$-subexponential growth}.
By \cref{GetExpForAOnCpct}, it follows that there is a $\R$-diagonalizable 1-parameter subgroup $\{a^t\}\subset L$ and an $\{a^t\}$-invariant Borel probability measure~$\mu$ on~$X_L$ such that
 	$$ \lambda_{\top,a^1,\mu, \calA}
	>0.$$

Recall we write $$U_\kappa(n):= \exp_\lieu ( N_{\kappa}(n)) $$ where $ N_{\kappa}(n)= \{\, Z\in \lieu \mid \|Z\|\le e^{\kappa  n}\,\} $ is as in \cref{sec:not} for a sufficiently large $\kappa $.
Take a \Folner sequence $\{F_n\}$ in $\{a^t\}\cdot U $ of the form
	$$F_n:=\{\,a^s \mid 0\le s\le n\,\} \cdot U_\kappa(n)$$
Since $LU/\Gamma\!_{LU}$ is compact, the family $\{F_n \ast \mu\}$ is uniformly tight.
Let $\mu_\infty$ be any weak-$*$ subsequential limit of $\{F_n \ast \mu\}$. Then $\mu_\infty$ is invariant under both $\{a^t\}$ and $ U $.

\begin{claim} \label{MinParabPosExp}
We have
$\lambda_{\top,a^1,\mu_\infty, \calA} >0$.
\end{claim}
\begin{proof}
Let $\mu$ denote the measure above obtained from \cref{GetExpForAOnCpct}. There exists an $\{a^t\}$-invariant, Borel probability measure~$\wtd \mu$ on $\P\calE$ projecting to~$\mu$ with $\lambda_{\top,a^1,\mu, \calA} = \int \Psi(a,\cdot) \ d \wtd \mu$. Let
$\wtd \mu_\infty$ be any subsequential limit of $\{F_n\ast \wtd \mu\}$ that projects to~$\mu_\infty$.
Write $a^t = \exp (tY)$ for $Y\in \lieg$. Since $X_{LU}$ is compact, given any $\epsilon > 0$, for all sufficiently large~$n$ in the appropriate subsequence we have
\begin{align*}
	\lambda_{\top,a^1,\mu_\infty, \calA} +\epsilon
	&\ge \int \Psi(a^1,\xi) \ d \wtd \mu_\infty (\xi) +\epsilon\\
	&= \int \Psi'(Y,\xi) \ d \wtd \mu_\infty (\xi) +\epsilon
		&& \\
	&\ge \int \Psi'(Y,\xi) \ d(F_n\ast \wtd \mu) (\xi)
			&& \\
&= 	 \frac{1}{n}\int_0^n \int_{U_ \kappa(n)} \int \Psi'(Y, a^t\cdot u\cdot \xi)
		\, d \wtd\mu(\xi) \, d m_{U_\kappa(n)}(u) \, d t\\
&= 	 \frac{1}{n} \int_{U_\kappa(n)} \int \Psi(a^n, u\cdot \xi)
		\, d \wtd\mu(\xi) \, d m_{U_\kappa(n)}(u).\end{align*}
	Above, the first equality follows from $\{a^t\}$-invariance of $\td \mu_\infty$ as in \cref{s4.1};
the final equality is the fundamental theorem of calculus as in \cref{s4.1}.

Having taken $\kappa>0$ sufficiently large, for $u \in U_ \kappa(n)$ we have
	$a^n \, u = u' \, a^n$
for some $u' \in U_{2 \kappa}(n)$ whence
	$$ \Psi(a^n, u \cdot \xi) = \Psi(a^n, \xi) + \Psi(u', a^n \cdot \xi) - \Psi(u, \xi).$$
By definition of $\wtd \mu$, we have $$\frac{1}{n} \int \Psi(a^n, \xi)
		\, d \wtd\mu(\xi) = \lambda_{\top,a^1,\mu, \calA}.$$

We have that $U$ is normalized by~$L$ and that $LU/\Gamma\!_{LU}$ is compact. It follows from \cref{SubexpForCompact} and the assumption that $\restrict{\calA_0}{\Gamma\!_U}\colon \Gamma\!_U\times \calE_0 \to\calE_0$ has {$\len_\Gamma$-subexponential growth} that the restriction of the cocycle $\calA$ to $U \times \calE_{LU}$ has subexponential growth; in particular, for any $\epsilon > 0$, there is a constant~$C_U$ such that for all $u'' \in U$ and $\xi \in \P\calE_{LU}$,
	$$ \Psi(u'',\xi) \le C_U + \epsilon \, d(u'', \1) .$$
Identifying $U$ with its image under the adjoint representation,
the exponential map $\restrict{\exp}{\lieu}$ is polynomial; in particular, there is a constant $C_\lieu > 0$ such that
	$$ \text{$d \bigl( \1, \exp_\lieu (Z) \bigr) \le C_\lieu \, n$ \ for all $Z \in N_{2 \kappa}(n)$.}$$
Hence
	\begin{align*}
		\bigl| \Psi(u', a^n \cdot \xi) - \Psi(u, \xi) \bigr|
		&\le 2C_U + \epsilon \, d(u', \1) + \epsilon \, d(u, \1)
		\le 2C_U + 2 \epsilon C_\lieu \, n
	.\end{align*}
Then for $n$ sufficiently large,
	\begin{align*}
	\frac{1}{n} \int_{U_\kappa(n)} \int & \bigl| \Psi(u', a^n \cdot \xi) - \Psi(u, \xi) \bigr|
		\, d \wtd\mu(\xi) \, d m_{U_ \kappa(n)}(u)
	\le \frac{2C_U + 2 \epsilon C_\lieu \, n }{n}
	\le 3 \epsilon C_\lieu.
	\end{align*}

The result then follows by taking $\epsilon > 0$ sufficiently small.
\end{proof}

As $U$ is the unipotent radical of the minimal parabolic $\Q$-subgroup~$P$, $U$ is the expanding horospherical subgroup of some one-parameter subgroup~ $\{b^s\}$ of~$S$.
Recall that $\{b^s\}$ centralizes~$\{a^t\}$ whence $(b^s)_*\mu_\infty$ is $\{a^t\}$-invariant for every $s$.
From \cref{MixingSmallCuspsCom}, the sequence $\{(b^s)_*\mu_\infty: s\in \N\}$ has uniformly exponentially small mass at $\infty$ and its projections converge to the Haar measure on $G/\Gamma$. Applying \cref{lemma:averaging} and then \cref{lem:endgameA} produces a measure satisfying the conclusion of \cref{prop:cuspgroup}.
\end{proof}

\def\tempfun{\phi}
\def\heightgrowth{\omega}
\def\hcutoff{{\hat \ell_0}}

\section{Subexponential growth for \texorpdfstring{$\Q$-rank-$1$}{Q-rank-1} subgroups}
\label{SubexpForRank1Subgroups}
We prove the following result in which we consider the case that the failure of subexponential growth occurs for the restriction of the cocycle $\calA_0$ to $\Gamma\!_H$ for some standard $\Q$-rank-1 subgroup $H$ in  $G$. This, combined with \cref{CentralSubExp} and \cref{QIBddGenByRank1lift}, yields \cref{thm:whatweprove,mainthm2}.

As in the previous two sections, we keep the standing hypothesis from \cref{hyp:center}.

\begin{proposition}\label{thm:mainQR1}
\laterdef\Qrkone{
If there is a standard  $\Q$-rank-1 subgroup~$H$ of~$G$ such that the restriction
	$$\restrict{\alpha}{\Gamma\!_H}\colon \Gamma\!_H \to \Diff^1(M)$$
does not have uniform subexponential growth of derivatives\  then there is a split Cartan subgroup~$A$ of~$G$ and an $A$-invariant Borel probability measure~$\mu$ on~$X$ projecting to the Haar measure on $G/\Gamma$ such that
	 $\lambda_{\top,a,\mu, \calA} > 0$ for some $a \in A$.}
If there is a standard  $\Q$-rank-1 subgroup~$H$ of~$G$ such that the restriction
	$$\restrict{\calA_0}{\Gamma\!_H}\colon \Gamma\!_H \times \calE_0 \to \calE_0$$
does not have {$\len_\Gamma$-subexponential growth then}
there is a split Cartan subgroup~$A$ of~$G$ and an $A$-invariant Borel probability measure~$\mu$ on~$X$ projecting to the Haar measure on $G/\Gamma$ such that
	 $\lambda_{\top,a,\mu, \calA} > 0$ for some $a \in A$.
\end{proposition}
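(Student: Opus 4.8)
The plan is to reduce \cref{thm:mainQR1} to a statement about the fiberwise derivative cocycle for the induced $H$-action on a bundle over $HN/\Gamma_{HN}$, where $N$ is an auxiliary horospherical subgroup. First I would use \cref{Standard=Levi} to realize $H$ as the $\Q$-isotropic almost $\Q$-simple factor of a Levi subgroup $L$ in a parabolic $\Q$-subgroup $Q = (L\times A)\ltimes U$. When $\rank_\Q(\Gamma)\ge 2$ I would choose $N$ to be a nontrivial unipotent $\Q$-subgroup normalized by $H$ (e.g.\ a coarse root group for a root vanishing on the relevant copy of the Cartan in $H$, or $N = U$ itself when $H$ embeds in a suitable Levi); when $\rank_\Q(\Gamma) = 1$ we are forced to take $N = \{\1\}$. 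In either case, by \cref{SiegelForSubgroup} I can choose the Cartan involution $\theta$ and minimal parabolic $P$ so that Siegel sets in $H$ sit inside Siegel sets in $G$, and hence the norm $\|\cdot\|$ on $\calE$ from \cref{sec:norms} is well adapted to the geometry of $\Gamma_H$ in $\Gamma$. By \cref{SubexpForCompact} it suffices, after invoking \cref{UnipSubgrp} and \cref{prop:cuspgroup} (using \cref{hyp:center}), to produce an $\{a^t\}$-invariant Borel probability measure on $X_{HN}$ with positive top Lyapunov exponent for some $\R$-split one-parameter $\{a^t\}\subset H$, and then push this up to a $\calZ A$-invariant measure on $X$ projecting to Haar via \cref{lem:endgameA}.

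Next I would set up the "fake Lyapunov exponent" machinery. Fixing an $\R$-split $\{a^t\}\subset H$, suppose the restriction of $\calA_0$ to $\Gamma_H$ fails to have $\len_\Gamma$-subexponential growth; by \cref{GetExpForAOnCpct} (in the thick part) together with \cref{SubexpInCusps} (in the cusp, using that $\restrict{\calA_0}{\Gamma_N}$ and $\restrict{\calA_0}{\Gamma_{P'}}$ have subexponential growth by \cref{UnipSubgrp} and \cref{prop:cuspgroup}), I would extract a sequence of orbit segments $\{a^t\cdot x_n : 0\le t\le t_n\}$ in $X_{HN}$, starting and ending in a fixed thick compact set, along which the fiberwise derivative grows exponentially at a definite rate $\kappa>0$. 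Because $\calA$ only has $\tempfun$-tempered subexponential growth in the cusp (not genuine boundedness), the first task is \cref{lem:maxpathstight}-type control: the empirical measures $\mu_n$ supported on these maximal-growth orbits form a uniformly tight (indeed uniformly-exponentially-small-mass-at-$\infty$) family, because the contribution of each excursion into the cusp is controlled by the strong tempered estimate in \cref{SubexpInCusps}(2). Then I would replace $\calA$ by truncated cocycles $\calA_\ell$ obtained by cutting off and time-averaging the fiberwise derivative in the region $\{\tempfun \ge \ell\}$ (the cocycles $\psi_\ell$ as in \eqref{psiellDefn} and \cref{TAcoc}); these are bounded, discontinuous only on a tame null set for all limiting measures in play, and for such measures their "fake top exponent" behaves well under the averaging operations of \cref{lemma:averaging}, \cref{lem:unipotenttight}, \cref{thm:averaginghomo2}, and exponential mixing \cref{MixingSmallCuspsCom}.

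With the fake exponents in hand, I would run the endgame: pass to a weak-$*$ limit $\mu$ of the tight family $\{\mu_n\}$ to get an $\{a^t\}$-invariant measure on $X_{HN}$ with positive fake top exponent; average over a \Folner sequence in $N$ to get $N$-invariance (using that $N$ is horospherical for some $\{b^s\}$, and \cref{UnipSmallCusp} to keep mass control); then translate by $b^s$ and apply exponential mixing, as in \cref{MixingSmallCuspsCom} and \cref{BFH-SLnZ}, to obtain a measure on $X$ whose projection to $G/\Gamma$ is Haar measure, while maintaining a positive fake top exponent; finally average over a \Folner sequence in a split Cartan to make it $\calZ A$-invariant. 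At this last stage, since the measure projects to Haar, \eqref{HaarSmallCusps} and \cref{CocycIsL1} guarantee the original cocycle $\calA$ is $\log$-$L^1$, and I would check (comparing the truncation scale against the exponentially decaying mass at infinity) that the fake top exponent equals the genuine top exponent $\lambda_{\top,a,\mu,\calA}$, which is therefore positive, completing the proof. The main obstacle is precisely this last comparison in the $\rank_\Q(\Gamma) = 1$ case: without a horospherical $N$ normalized by $H$ there is less room to absorb the cusp contributions, so one must be careful that the cut-off scale $\ell$ and the excursion-length estimates from \cref{SubexpInCusps} interact correctly with the averaging and mixing steps so that no "escape of Lyapunov exponent" occurs in the limit.
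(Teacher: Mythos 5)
Your outline captures the first half of the argument accurately: the use of \cref{Standard=Levi} and \cref{SiegelForSubgroup} to set up the norm, the reliance on \cref{UnipSubgrp} and \cref{prop:cuspgroup} to control the cocycle along $\Gamma\!_N$ and cusp groups, the tightness of empirical measures from \cref{lem:maxpathstight}, and the time-averaged and cut-off cocycles used to obtain an $(A_H N)$-invariant measure on $\P\calE_{\wtd H}$ with positive ``fake'' top exponent. But the endgame you describe only makes sense when $\rank_\Q(\Gamma)\ge 2$, and you leave a genuine gap in the case $\rank_\Q(\Gamma)=1$.

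You correctly note that when $\rank_\Q(\Gamma)=1$ one is forced to take $N=\{\1\}$. But then the averaging scheme you propose---average over a \Folner sequence in $N$, translate by a one-parameter subgroup $\{b^s\}$ for which $N$ is horospherical, and invoke exponential mixing via \cref{MixingSmallCuspsCom}---has no content: there is no $N$ to average over, no $\{b^s\}$, and no mixing step available. Your closing remark that ``one must be careful that the cut-off scale $\ell$ and the excursion-length estimates interact correctly'' names the difficulty but does not supply a mechanism. In $\Q$-rank $1$ we have $H=G=\wtd H$, so the measure produced in the first half already lives on $X$, but there is no reason for its projection to $G/\Gamma$ to be Haar.

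The missing ingredient is the averaging argument the paper uses in this case: by \cref{claim:linear} the function $a\mapsto \int\overline\Psi(a,\cdot)\,d\eta$ is a nonzero linear functional on the split Cartan $A$; after permuting roots one chooses a one-parameter subgroup $\{a^t\}\subset A$ on which this functional is positive and which annihilates all but one simple $\R$-root, and averages $\eta$ over a \Folner sequence of centered intervals (\cref{lem:unipotenttight}) in the unipotent subgroup $U$ generated by the coarse root groups annihilated by $\{a^t\}$. Then \cref{thm:opproot} upgrades to invariance under the opposite root groups as well, and either Ratner's measure rigidity (if $G$ is not simple) or the highest-root argument of \cite[\S5.3]{BFH} (if $G$ is simple) forces the projection to be Haar, after which one more \Folner average in $A$ and the comparison of fake and genuine exponents via \eqref{HaarSmallCusps} and \cref{CocycIsL1} complete the proof. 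That is the decisive step your proposal omits; without it the $\Q$-rank-$1$ case remains open.
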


\subsection{Preliminaries, key ingredients and outline}
\subsubsection{Preliminaries}\label{SubexpForRank1Subgroups-prelims}
Fix a standard $\Q$-rank-1 subgroup~$H$ of~$G$.
From \cref{Standard=Levi} there is a parabolic $\Q$-subgroup~$Q$ of~$G$ with rational Langlands decomposition $Q^\circ = (L \times S^\circ) \ltimes N$ such that $H$~is the unique $\Q$-isotropic almost-simple factor of~$L$.
We have that $H$ normalizes $N$ and write $ \wtd H := H\ltimes N$. Since $H$ and $N$ are defined over~$\Q$, $\wtd H$ is also defined over~$\Q$.

Note that if $\rank_\Q(G)= 1$ then $N= \{e\}$ and $H= \wtd H = Q = G$;   if $\rank_\Q(G) > 1$ then $N \neq \{e\}$ and the inclusions $H \subset \wtd H \subset G$ are all proper.

Recall that we write $\Gamma\!_H:= H\cap \Gamma$, $\Gamma\!_{\wtd H}:= \wtd H\cap \Gamma$, $X_H:= (H\times M)/\Gamma\!_H$, and $X_{\wtd H}:= (\wtd H\times M)/\Gamma\!_{\wtd H}$.
The set $\wtd H/\Gamma\!_{\wtd H}$ has the structure of a fiber bundle over $H/\Gamma\!_H$ with compact fibers.
In particular, given any compact set $K\subset \wtd H/\Gamma\!_{\wtd H}$, the $N$-orbit of $K$ is compact.

 We select a minimal parabolic $\Q$-subgroup $P$ and Cartan involution $\theta$ as in \cref{SiegelForSubgroup}. We equip the vector bundle $\calE$ with a norm constructed as in \cref{sec:norms} relative to Siegel fundamental sets constructed relative to $P$ and $\theta$.  
From \cref{prop:cuspgroup}, we may assume that the restriction of~$\alpha$ to~$\Gamma\!_P$ has uniform $\len_\Gamma$-subexponential growth for every minimal parabolic $\Q$-subgroup~$P$.  From \cref{UnipSubgrp} we may assume  that $\restrict{\alpha}{\Gamma\!_N}$ has uniform $\len_\Gamma$-subexponential growth.

We will use heavily the growth controls in \cref{Ngrowthcorrect,horseforlunch}.
 We recall the two proper $C^\infty$ ``height'' functions defined earlier:  we have  $h\colon G/\Gamma\to [0,\infty) $ satisfying \eqref{heightDefnII} and $\phi\colon \wtd H/\Gamma\!_{\wtd H}\to [0,\infty) $ the function $\phi =   h\circ \rho$ where $\rho\colon  \wtd H/\Gamma\!_{\wtd H} \to H/\Gamma$ is the canonical projection as in \eqref{eqtdphi}.    We lift $h$ to $X$ and $\P\calE$ and lift $\phi$ to $X_{\wtd H}$  and $\P\calE_{\wtd H}$ via the canonical projections $\P\calE\to X\to G/\Gamma$ and  $\P\calE_{\wtd H}\to X_{\wtd H}\to \wtd H/\Gamma\!_{\wtd H} .$
We also fix $\heightgrowth$ to be the largest of the constants $\omega$ appearing in \cref{Ngrowthcorrect,horseforlunch}.

\subsubsection{Key Ingredients and Outline}
\label{subsubsec:outline}

As discussed in the introduction, the proof of Proposition \ref{thm:mainQR1} resembles arguments in  \cite{BFH} more than those of \cite{BFH-SLnZ}, particularly in the case where $\Gamma$ has $\Q$-rank one. However, to set up the arguments, the  first steps are quite similar to work done in \cite{BFH-SLnZ}. In  \cref{subsec:growthinA}, we prove various preliminaries about the growth of cocycle $\calA$  
along $A_H$ trajectories in $X_{\wtd H}$.  In particular, we show that exponential growth of derivatives is always witnessed by paths beginning and ending in a fixed compact set and that the empirical measures corresponding to such paths form tight families.  In addition we show that the exponential growth can always be witnessed along a single one-parameter subgroup.  In   \cref{TimeAveraged}, we define the crucial new tools for our proof.  Recall the norm-growth cocycle defined in  \cref{s4.1} and its infinitesimal generator for a fixed one-parameter subgroup.  This cocycle over a one-parameter subgroup is specified by a function which is continuous but unbounded.  As already explained in \cref{ProofOutline}, the key difficulty for our arguments  is that this unbounded function is not even in $L^1$ so basic tools of ergodic theory do not apply.  To resolve this difficulty we introduce two related objects.  First we introduced  a time-averaged version of the cocycle, which is measurable, bounded,  invariant under our 1-parameter subgroup, and computes Lyapunov exponents. 
However, this function is now not continuous and its integral against various measures does not behave well under weak-$*$ convergence.  To remedy this we introduce the key technical object,  a \emph{cut-off} cocycle, that interpolates between the original cocycle and the time-averaged one.  The cut-off cocycle is bounded and while not continuous has very controlled discontinuities and behaves well under weak-$*$ limits;  see \cref{s4.2}.

The proof of   \cref{thm:mainQR1} in \cref{ProofOfmainQR1} first assembles these tools and our prior results to build an $(A_HN)$-invariant measure on $X_{\wtd H}$ that has positive top Lyapunov exponent.
The rest of the proof follows arguments of \cite{BFH} in the $\Q$-rank-one case and \cite{BFH-SLnZ} in the case of higher $\Q$-rank.  Technical difficulties occur because it does not always suffice to work with the cut-off cocycle, since
it is only a cocycle over a 1-parameter subgroup but not cocycle over the full $G$-action or full $A$-action. 
We thus need to track the interplay between the original continuous norm-growth cocycle, the time-averaged cocycle, and the cut-off cocycle.

\subsection{Characterization and properties of orbits with exponential growth}
\label{subsec:growthinA}
We study the growth properties of the cocycle $\calA$ 
along $A_H$ trajectories in $X_{\wtd H}$.
From the choice of $\theta$ as in \cref{SiegelForSubgroup}, the subgroup~$H$ is $\theta$-invariant and the restriction $\theta|_H$ is a Cartan involution of~$H$.
Fix a $\theta$-invariant split Cartan subgroup~$A$ of~$G$.
We obtain an Iwasawa decomposition $H = K_H \, A_H \, N_H$ of~$H$ where $K_H = K \cap H$, $A_H = A \cap H$, and $N_H$ is the coarse root group corresponding to the $\Q$-root defining $H$.


\subsubsection{Maximal growth rate along $A_H$}
We now define a filtration of $X_{\wtd H}$ by compact sets and study the exponential growth
of the  cocycle $\calA$ 
 for trajectories beginning and ending at a fixed set  in this filtration. In particular, we show
that there is some compact set that witnesses all possible exponential growth.

Write $\liea_H$ for the Lie algebra of $A_H$.
Given $\ell>0$,
define ``thick sets'' of $X_{\wtd H}$ parameterized by the function $\phi$ as follows:
$$X_{\wtd H,\le \ell_0}:=\{ x\in X_{\wtd H} \mid \tempfun(x) \le \ell_0\}.$$
Given  $Y\in \liea_H$ set:
\begin{align*}
 \chi_\ell^+(&T,Y,  X_{\wtd H })\\&:= \frac 1 T \sup\left\{  \log \, \bigl\|\calA ( \exp(TY),x \bigr) \bigr\| : x  \in  X_{\wtd H,\le \ell_0}  \text{ and } \exp (TY)\cdot x \in  X_{\wtd H,\le \ell_0} \right\}\\
\chi_\ell^-(&T,Y, X_{\wtd H })
\\&:= \frac 1 T \sup\left\{  \log \, \bigl\|\calA ( \exp(T Y),x \bigr) \inv \bigr\| :  x \in  X_{\wtd H,\le \ell_0}  \text{ and } \exp (T Y)\cdot x \in  X_{\wtd H,\le \ell_0} \right\}
\end{align*}

Define $$\chi_\ell(Y, X_{\wtd H })= \displaystyle \limsup_{T\to \infty} \max \left\{ \chi_\ell^+(T,Y, X_{\wtd H }), \chi_\ell^-(T,Y, X_{\wtd H }) \right\}. $$
Observe that $\chi_\ell(Y, X_{\wtd H })\ge 0$ and that $\chi_\ell(cY, X_{\wtd H }) = |c| \, \chi_\ell(Y, X_{\wtd H }) $.

Fix a norm $\|\cdot \|$ on~$\lieg$ that is $\Ad(K)$ invariant and whose right $G$ translates define the symmetric metric on $K \backslash G$. Let $\liea'$ be a vector subspace of~$\liea_H$.
Given $\ell>0$ set
\begin{align*}
\chi_\ell(T,\liea', X_{\wtd H })&:= \sup \{\, \chi_\ell^+(T,Y, X_{\wtd H }) : Y\in \liea', \|Y\|= 1\,\} \\
&\phantom{:}= \sup \{\, \chi_\ell^-(T,Y, X_{\wtd H }) : Y\in \liea', \|Y\|= 1\,\}.
\end{align*}
The quantity $\chi_\ell(T,\liea', X_{\wtd H })$ measures the maximal exponential growth rate of the cocycle~$\calA$ along all orbits of the subgroup $\exp (\liea')$ in~$ X_{\wtd H }$ of length at most~$T$ that start  and end below height~$\ell$.
Define $$\chi_\ell(\liea', X_{\wtd H })= \displaystyle \limsup_{T\to \infty} \chi_\ell(T,\liea', X_{\wtd H }).$$
Note that $\chi_\ell(\liea', X_{\wtd H }) <\infty$ follows from \cref{tempered}.

Exactly as in the proof of \cref{GetExpForAOnCpct}, the decomposition discussed in \cref{kakaReductive} immediately implies the following.
\begin{claim}\label{HavePosExp}
Under \cref{hyp:center}, if the restriction of~$\calA_0$ to~$\Gamma\!_H\colon \calE_0\to \calE_0$ fails to have   subexponential growth then there exists some $\ell_1>0$ such that
	$\chi_{\ell_1}(\liea_H, X_{\wtd H })>0$.
\end{claim}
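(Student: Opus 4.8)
\textbf{Proof plan for \cref{HavePosExp}.}

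The plan is to reduce the failure of $\len_\Gamma$-subexponential growth for $\restrict{\calA_0}{\Gamma\!_H}$ to a statement about growth of the induced cocycle $\calA$ along one-parameter subgroups in $A_H$, using the generalized $KAK$-decomposition of $H$ recorded in \cref{kakaReductive}. First I would fix the $\theta$-invariant split Cartan subgroup $A$ with $A_H=A\cap H$ as in \cref{SubexpForRank1Subgroups-prelims}, so that we have the generalized Cartan decomposition $H=K_{H,0}\cdot(\calZ\cap H)\cdot A_H\cdot K_{H,0}$ together with the comparison estimate: there are $A_1,B_1\ge 1$ such that for all $h\in H$, writing $h=k'zak''$ with $k',k''\in K_{H,0}$, $a\in A_H$, $z\in\calZ\cap H$,
\[
\tfrac{1}{A_1}d(h,\1)-B_1\le \max\{d(a,\1),d(z,\1)\}\le A_1 d(h,\1)+B_1.
\]
By \cref{SeveralObservations}\pref{comparable} (uniform comparability of norms over a Siegel fundamental set) and \cref{moregrowth}, elements of $K_{H,0}$ act with uniformly bounded norm on $\calE$, and elements $z\in\calZ$ act with norm bounded by $C\sup_{x_0}\|\calA_0(z,x_0)\|$, which under \cref{hyp:center} is $\le C_\epsilon e^{\epsilon\len_\Gamma(z)}$ for every $\epsilon>0$; since $\calZ$ is quasi-isometrically embedded in $\Gamma$ (\cref{centerisQI}), this is $\le C_\epsilon' e^{\epsilon' d(z,\1)}$ as well.

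Next I would unwind what failure of $\len_\Gamma$-subexponential growth means: there is $\kappa>0$ and sequences $\gamma_n\in\Gamma\!_H$ with $\len_\Gamma(\gamma_n)\to\infty$ and $x_n'\in X_0$ with $\log\|\calA_0(\gamma_n,x_n')\|\ge\kappa\len_\Gamma(\gamma_n)$. Writing $\gamma_n=k_n'z_na_nk_n''$ as above and using $\len_\Gamma(\gamma)\le C\,d(\gamma,\1)+C$ together with the estimates of the preceding paragraph, exactly the computation appearing in the proof outline of \cref{GetExpForAOnCpct} shows that, for $\epsilon>0$ small and $n$ large,
\[
\log\|\calA(a_n,x_n)\|\ \ge\ \tfrac{\kappa}{2}\,d(a_n,\1),
\]
where $x_n=[\1,x_n']$ (pushed into $X_{\wtd H}$ via the inclusion $X_H\hookrightarrow X_{\wtd H}$). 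Since the image of $K_{H,0}$ in $\wtd H/\Gamma\!_{\wtd H}$ is a fixed compact set and $\phi$ is constant on $N$-orbits, both $x_n$ and $a_n\cdot x_n = [\1, \alpha(a_n k_n''{}^{-1}\cdots)x_n']$ — more precisely, after absorbing the bounded $k'_n,k''_n$ factors and noting $z_n\in\calZ\subset\Gamma$ acts trivially on $G/\Gamma$ — lie below a fixed height $\ell_1$ in $X_{\wtd H}$, so they lie in $X_{\wtd H,\le\ell_1}$ for a suitable $\ell_1$ depending only on $K_{H,0}$ and $\diam$ of the fiber. Writing $a_n=\exp(T_nY_n)$ with $\|Y_n\|=1$ and $T_n=d(a_n,\1)\to\infty$ (the sequence $d(a_n,\1)$ is unbounded since $\len_\Gamma(\gamma_n)\to\infty$ and $d(z_n,\1)$ contributes at most $\epsilon\len_\Gamma(\gamma_n)$ to the lower bound), we obtain $\chi_{\ell_1}^+(T_n,Y_n,X_{\wtd H})\ge\kappa/2$, hence $\chi_{\ell_1}(\liea_H,X_{\wtd H})\ge\kappa/2>0$.

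The main obstacle is bookkeeping the base points: one must check that $a_n\cdot x_n$ genuinely returns to a fixed compact part of $X_{\wtd H}$ and not merely that $\gamma_n\cdot x_n'$ is a fixed point of $X_0$. This is handled by the observation that $\gamma_n=k_n'z_na_nk_n''$ with $z_n$ central (so acting trivially on $G/\Gamma$) and $k_n',k_n''$ ranging in the fixed compact set $K_{H,0}$; thus $a_n\cdot[\1,x_n']$ differs from $[\1,\alpha(\gamma_n){}^{-1}$-type data$]$ by translation by bounded elements and central elements, keeping it at bounded height. Once this is in place the argument is a direct transcription of the proof of \cref{GetExpForAOnCpct}, with $\chi_{\ell_1}$ replacing the ad hoc positive-exponent conclusion there, and no further input is needed.
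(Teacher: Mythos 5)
Your proposal follows the paper's approach exactly: generalized $KAK$-decomposition from \cref{kakaReductive}, uniform bounds on the $K_{H,0}$ contributions via \cref{moregrowth}, subexponential bound on the $\calZ$ contribution via \cref{hyp:center}, and then pass to the $A_H$-component. You also correctly identify the only delicate point — base-point bookkeeping — but your resolution of it is wrong in one concrete place.

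The displayed bound should read $\log\|\calA(a_n,\,k''_n\cdot\bar x_n)\|\ge\tfrac{\kappa}{2}\,d(a_n,\1)$, not $\log\|\calA(a_n,\bar x_n)\|\ge\tfrac{\kappa}{2}\,d(a_n,\1)$ with $\bar x_n=[\1,x'_n]$. Unwinding $\gamma_n=k'_n z_n a_n k''_n$,
\[
\calA(\gamma_n,\bar x_n)=\calA(k'_n,\,z_n a_n k''_n\cdot\bar x_n)\,\calA(z_n,\,a_n k''_n\cdot\bar x_n)\,\calA(a_n,\,k''_n\cdot\bar x_n)\,\calA(k''_n,\,\bar x_n),
\]
so subtracting the bounded $k$-terms and the sub-exponential $z$-term isolates the norm of $\calA$ applied to $a_n$ at the point $k''_n\cdot\bar x_n$, not at $\bar x_n$. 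Your attempted fix compounds this: the formula $a_n\cdot x_n=[\1,\alpha(a_n k''_n{}^{-1}\cdots)x'_n]$ does not hold, because $a_n\notin\Gamma\!_H$ in general, so $a_n\cdot[\1,x'_n]=[a_n,x'_n]$ projects to $a_n\Gamma\!_{\wtd H}$ in the base, whose $\phi$-value is typically large and certainly not uniformly bounded. The correct observation, made in the paper's proof, is that
\[
a_n k''_n\cdot\bar x_n \ =\ (k'_n z_n)\inv\gamma_n\cdot\bar x_n\ =\ (k'_n z_n)\inv\cdot[\1,\alpha(\gamma_n)x'_n],
\]
which projects to $(k'_n)\inv\Gamma\!_{\wtd H}$ in the base (since $z_n\in\calZ\subset\Gamma$ acts trivially there), hence has $\phi$-value bounded by compactness of $K_{H,0}$. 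With the base point taken to be $k''_n\cdot\bar x_n$ — which also has bounded $\phi$-value, again because $k''_n$ lies in $K_{H,0}$ — the trajectory starts and ends at bounded height and the conclusion $\chi_{\ell_1}(\liea_H,X_{\wtd H})>0$ follows. After this correction your argument coincides with the paper's proof.
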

\begin{proof}
By assumption and \cref{liftQI}, there exist $\kappa>0$ and sequences $\{\gamma_j\}$ of elements of~$\Gamma\!_H$ with $\len_\Gamma(\gamma_j)\to \infty$ and $\{x_j\}$ in $X_0$ such that
$$ \log \|\calA_0(\gamma_j,{x_j})\| \ge \kappa \, d(\gamma_j, \1) .$$
Let $\overline{x_j}=[\1, x_j] \in X_{  H} \subset X_{\wtd H}$ be the point corresponding to the ordered pair $(\1,x_j) \in  H \times X_0$. Then there exists  $C$ such that for all $j$,
 	\begin{align*}
	\log \|\calA(\gamma_j, \overline{x_j})\|\ge \kappa \, d(\gamma_j,\1) - C
	.\end{align*}

Following the notation in \cref{kakaReductive}, write $\gamma_j = k'_j z_j a_j k''_j$ where $k'_j,k''_j\in K_{H,0}$, $z_j\in \calZ\cap H$, and $a_j\in A_H$.  There is $C_1$ such that for all $j$ (with $\gamma_j\neq \1$), $$\max\{d(a_j, \1), \len_\calZ(z_j)\}\le C_1 d(\1, \gamma_j).$$
By \cref{moregrowth} we have that $\|\calA(k, x)\|$ is uniformly bounded over all $k\in K_{H,0}$ and all $x\in X$. Moreover, by the assumption in \cref{hyp:center} and \cref{moregrowth}, given any $  \epsilon>0$ there is $C_\epsilon>0$ such that for all $x\in X$,
$$\log \|\calA(z_j, x)\| \le C_\epsilon +  \epsilon \len_\calZ(z_j).$$
It follows there is $\hat C$ independent of $\epsilon$ such that for all $j$ (with $\gamma_j\neq \1$),
\begin{align*}
\log \|\calA(a_j, k''_j \cdot \bar x_j)\|
& \ge \kappa \, d(\gamma_j,\1) - \hat C-  \epsilon \len_\calZ(z_j) -C_\epsilon\\
& \ge \kappa \, d(\gamma_j,\1) - \hat C-  C_1 \epsilon d(\gamma_j,\1)-C_\epsilon \\
& \ge( \kappa -C_1\epsilon) \, d(\gamma_j,\1) - \hat C-C_\epsilon\\
& \ge( \frac 1 {C_1} \kappa -\epsilon) \, d(a_j,\1) - \hat C-C_\epsilon.
\end{align*}
assuming  $\epsilon>0$ is sufficiently small so that $\frac 1 {C_1} \kappa -\epsilon>0$.
Moreover, compactness of $K_{H,0}$ implies there is $\ell>0$ so that for all $j$,
  $\phi(k''_j \cdot \overline{x_j})\le \ell$ and
  $$\phi(a_j k''_j \cdot \overline{x_j})= \phi(a_j z_j k''_j \cdot \overline{x_j}) = \phi\big((k'_j)\inv \cdot  [\1, \gamma_j\cdot x_j]\big)\le \ell.$$
It follows $\chi_\ell(\liea', X_{\wtd H })\ge \frac 1 {C_1} \kappa -\epsilon. $
\end{proof}

\subsubsection{Paths realizing exponential growth start and end in a fixed thick part}
Recall the constant~$\ell_0$ provided by \cref{SubexpInCusps}.

We prove here that if there are arbitrarily long orbits that all start and end in some arbitrary common compact thick set and all carry a certain amount of exponential growth then there are orbits that have both endpoints in the fixed compact thick set $X_{\wtd H,\le \ell_0}=\{ x\in X_{\wtd H} \mid \tempfun(x) \le \ell_0\}$ and that carry at least as much exponential growth.

\begin{lemma}\label{GetXnAndTn}
Let $\liea'$ be a subspace of~$\liea_H$.
Suppose there exist $\chi>0$, $\ell_1>0$, and sequences $x_n\in X_{\wtd H}$, $Y_n\in \liea'$ with $\|Y_n\|=1$, and $t_n\to \infty$ such that
\begin{enumerate}[label=(\alph*),ref=\alph*]
\item \label{GetXnAndTn-endpts}
for each~$n$, we have $\tempfun(x_n)\le \ell_1$ and $\tempfun\bigl((t_n Y_n) \cdot x_n \bigr) \le \ell_1$;
\item \label{GetXnAndTn-limit}
$\displaystyle \lim_{n\to\infty} \dfrac {1}{ t_n} \log \bigl\| \calA \bigl( \exp(t_nY_n),x_n \bigr) \bigr\| = \chi .$
\end{enumerate}
Then, for each $n$ there exist $\hat x_n\in X_{\wtd H}$ and $\hat t_n$ such that
\begin{enumerate}
\item \label{GetXnAndTn-tnLarge}
$\displaystyle \liminf_{n\to\infty} \frac{\hat t_n}{t_n} \ge \frac {\chi}{\chi_{\ell_1}(\liea',X_{\wtd H})}$ whence $\hat t_n \to \infty$;
\item \label{GetXnAndTn-h(xn)}
$ \tempfun(\hat x_n)\le \ell_0 $ and $\tempfun \bigl( \exp (\hat t_nY_n) \cdot \hat x_n \bigr) \le \ell_0$;
\item \label{GetXnAndTn-chiLarge}
$\displaystyle \liminf_{n\to\infty} \dfrac {1}{ \hat t_n} \log \bigl\| \calA \bigl( \exp(\hat t_nY_n),\hat x_n \bigr) \bigr\| \ge \chi .$
\end{enumerate}
In particular, for any $\liea'\subset \liea$ and any $\ell_1\ge \ell_0$, we have
\begin{equation}\label{ExpIndepOfEll}
\chi_{\ell_1}(\liea', X_{\wtd H}) = \chi_{\ell_0}(\liea',X_{\wtd H}).
\end{equation}
\end{lemma}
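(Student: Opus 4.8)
\textbf{Proof plan for \cref{GetXnAndTn}.}

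The plan is to run a ``maximal subinterval'' extraction along each trajectory: given the long orbit from $x_n$ to $\exp(t_nY_n)\cdot x_n$, I want to cut out the portion that stays in the cusp (above height $\ell_0$) and replace the original endpoints with the first/last visits to the thick set $X_{\wtd H,\le\ell_0}$. Concretely, for fixed $n$, consider the function $s\mapsto \tempfun(\exp(sY_n)\cdot x_n)$ on $[0,t_n]$. Let $s_n$ be the largest $s\in[0,t_n]$ with $\tempfun(\exp(sY_n)\cdot x_n)\le \ell_0$ such that $\exp((s_n,t_n]Y_n)\cdot x_n$ stays above $\ell_0$ (if $\tempfun(\exp(t_nY_n)\cdot x_n)\le \ell_0$ already we would simply take the right endpoint), and similarly let $r_n$ be the smallest $s$ in the analogous way from the left. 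Set $\hat x_n = \exp(r_nY_n)\cdot x_n$ and $\hat t_n = s_n - r_n$; then conclusion \pref{GetXnAndTn-h(xn)} holds by construction. First I would split $\log\|\calA(\exp(t_nY_n),x_n)\|$ via the cocycle identity \eqref{cocycle} (applied to $\Psi$, equivalently to $\calA$) into the three pieces over $[0,r_n]$, $[r_n,s_n]$, and $[s_n,t_n]$.

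The two outer pieces are where I invoke the cusp estimate: on $[0,r_n]$ the trajectory from $x_n$ runs inside the cusp until the last moment $r_n$, so \cref{SubexpInCusps}\pref{2} (strong $\tempfun$-tempered subexponential growth on $X_H$ --- but note here we work on $X_{\wtd H}$, so I will instead use part \pref{1}, the weak form, which gives the bound $\calB^\pm(g_t,x)\le \heightgrowth\tempfun(x)+\heightgrowth\tempfun(g_t\cdot x)+\epsilon d(\1,g_t)+C_\epsilon$) applies with $x=x_n$, $\tempfun(x_n)\le \ell_1$, and $\tempfun(\exp(r_nY_n)\cdot x_n) = \tempfun(\hat x_n)\le \ell_0$, yielding $\log\|\calA(\exp(r_nY_n),x_n)^{\pm1}\| \le \heightgrowth(\ell_0+\ell_1) + \epsilon r_n + C_\epsilon$; symmetrically for the piece over $[s_n,t_n]$, reading the path backwards from $\exp(t_nY_n)\cdot x_n$. (One must check the hypothesis of \cref{SubexpInCusps} that $\tempfun\ge\ell_0$ along these sub-paths, which is exactly how $r_n,s_n$ were chosen; a mild technical point is whether one reaches height exactly $\ell_0$ or must allow a small overshoot, handled by continuity of $\tempfun$ and enlarging constants.) Dividing by $t_n$ and letting $n\to\infty$: since $t_n\to\infty$ and $\ell_0,\ell_1,\heightgrowth,C_\epsilon$ are fixed, the contributions of the two outer pieces to $\frac1{t_n}\log\|\calA(\exp(t_nY_n),x_n)\|$ are at most $2\epsilon + o(1)$, hence the middle piece over $[r_n,s_n]$ satisfies $\frac1{t_n}\log\|\calA(\exp(\hat t_nY_n),\hat x_n)\| \ge \chi - 2\epsilon - o(1)$; letting $\epsilon\to0$ (via a diagonal argument over $n$) gives $\frac{\hat t_n}{t_n}\cdot \frac1{\hat t_n}\log\|\calA(\exp(\hat t_nY_n),\hat x_n)\| \ge \chi - o(1)$.

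Now the upper bound $\frac1{\hat t_n}\log\|\calA(\exp(\hat t_nY_n),\hat x_n)\| \le \chi_{\ell_1}(\hat t_n,\liea',X_{\wtd H}) \le \sup_{T\ge T_0}\chi_{\ell_1}(T,\liea',X_{\wtd H})$ holds because $\hat x_n, \exp(\hat t_nY_n)\cdot\hat x_n \in X_{\wtd H,\le\ell_0}\subset X_{\wtd H,\le\ell_1}$ and $\|Y_n\|=1$; taking $\limsup$ this is $\le\chi_{\ell_1}(\liea',X_{\wtd H})$. Combining with the lower bound from the previous paragraph forces $\chi \le \liminf_n \frac{\hat t_n}{t_n}\cdot\chi_{\ell_1}(\liea',X_{\wtd H})$, which rearranges to \pref{GetXnAndTn-tnLarge}; in particular $\hat t_n\to\infty$ since the ratio is bounded below by a positive number (using $\chi>0$ and $\chi_{\ell_1}(\liea',X_{\wtd H})<\infty$ from \cref{tempered}). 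Then \pref{GetXnAndTn-chiLarge} follows: we already showed $\frac{\hat t_n}{t_n}\cdot\frac1{\hat t_n}\log\|\calA(\exp(\hat t_nY_n),\hat x_n)\| \to \chi$ (up to $\liminf$), and $\frac{\hat t_n}{t_n}\le 1$, so $\frac1{\hat t_n}\log\|\calA(\exp(\hat t_nY_n),\hat x_n)\| \ge \chi$ in the $\liminf$. Finally, \eqref{ExpIndepOfEll}: the inequality $\chi_{\ell_0}\le\chi_{\ell_1}$ for $\ell_1\ge\ell_0$ is immediate from the definitions (more admissible pairs of endpoints); for the reverse, apply the construction just described with an arbitrary sequence realizing $\chi_{\ell_1}(\liea',X_{\wtd H})$ (so $\chi = \chi_{\ell_1}(\liea',X_{\wtd H})$, $\ell_1$ arbitrary) --- the extracted sequence $\hat x_n,\hat t_n,Y_n$ then realizes growth rate $\ge\chi$ with endpoints below $\ell_0$, giving $\chi_{\ell_0}(\liea',X_{\wtd H})\ge\chi=\chi_{\ell_1}(\liea',X_{\wtd H})$. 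The main obstacle I anticipate is bookkeeping the $\epsilon$'s and the $o(1)$ error terms uniformly --- in particular making sure that the diagonal choice of $\epsilon=\epsilon_n\to0$ is compatible with the $C_{\epsilon}$ in \cref{SubexpInCusps} not blowing up faster than $t_n$; this is fine because for each fixed $\epsilon$ the term $C_\epsilon/t_n\to0$, so one chooses $\epsilon_n\to0$ slowly enough, but it requires care to state cleanly. A secondary nuisance is the degenerate cases where $r_n=0$ or $s_n=t_n$ (trajectory already thick at an endpoint) or $r_n>s_n$ (trajectory never enters the cusp in a single excursion), all of which only make the argument easier and should be dispatched with a remark.
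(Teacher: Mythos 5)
Your proposal is correct and follows essentially the same route as the paper's proof: cut the orbit at the first and last visits to $X_{\wtd H,\le\ell_0}$, bound the two cusp excursions at the ends via \cref{SubexpInCusps}, and compare the surviving middle segment against the definition of $\chi_{\ell_1}(\liea',X_{\wtd H})$ to force $\hat t_n/t_n\ge\chi/\chi_{\ell_1}(\liea',X_{\wtd H})$. The only points the paper makes explicit that you merely wave at are that before invoking the asymptotic upper bound $\tfrac{1}{\hat t_n}\log\|\calA(\exp(\hat t_nY_n),\hat x_n)\|\le\chi_{\ell_1}(\liea',X_{\wtd H})+\epsilon$ one must first rule out $\hat t_n$ staying bounded (a bounded middle segment contributes only $O(1)$, contradicting hypothesis (b)), and that the case where the orbit never meets $X_{\wtd H,\le\ell_0}$ is excluded up front by the same cusp estimate applied to the whole path.
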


In light of \eqref{ExpIndepOfEll}, we will often drop the subscript and write $\chi(\liea',X_{\wtd H}) = \chi_{\ell_0}(\liea',X_{\wtd H})$ and $\chi(Y,X_{\wtd H}) = \chi_{\ell_0}(Y,X_{\wtd H})$ for any $Y\in \liea_H$.

\begin{proof}[Proof of \cref{GetXnAndTn}]
We may assume $\ell_1>\ell_0$.

We first claim for all sufficiently large $n$ that there is some $0\le s \le t_n $ such that
	$$ \tempfun \bigl( \exp (s Y_n) \cdot x_n \bigr) \le \ell_0.$$
Indeed, for each $n$, if no such $ s $ exists then by \cref{horseforlunch}
for any $\epsilon>0$ there is $C_\epsilon$ such that
	$$\log \bigl\| \calA \bigl( \exp( t_nY_n), x_n \bigr) \bigr\|
		 \le 2 \heightgrowth    \ell_1 + \epsilon t_n + 2C_\epsilon. $$
By taking $\epsilon <\chi/2$, it would follow that
	$$\log \bigl\|\calA \bigl( \exp(  t_nY_n), x_n \bigr) \bigr\| < (\chi/2 )t_n$$
for all sufficiently large~$t_n$, contradicting hypothesis~\pref{GetXnAndTn-limit}.

We may thus assume for any $n$ there exists $s_n$ such that $\tempfun\bigl(\exp (s_n Y_n)\cdot x_n\bigr) \le \ell_0$.
 For each $n\in \N$ take $$p_n = \min\{\,0\le s\le t_n \mid \exp (s Y_n)\cdot x_n \in X_{\wtd H,\le \ell_0}\,\}$$
and $$q_n = \max\{\,0\le s\le t_n \mid \exp (s Y_n)\cdot x_n \in X_{\wtd H, \le \ell_0}\,\}.$$
We have $0\le p_n \le q_n \le t_n$.
Let
	$$\text{$\hat x_n = \exp (p_n Y_n)\cdot x_n$,
		\quad $\hat t_n = q_n - p_n$,
		\ and \
		$y_n = \exp \bigl(q_n Y_n\bigr)\cdot x_n$.}$$
We claim that sequences $(\hat x_n)$ and $(\hat t_n)$ satisfy the conclusions of the \lcnamecref{GetXnAndTn}.

Given any $0<\epsilon<\chi/5$, there is $T_1>0$ such that
$$ \frac{1}{t} \log \bigl\| \calA \bigl( \exp(tY),x \bigr) \bigr\| < \chi_{\ell_1}(\liea',X_{\wtd H}) +\epsilon $$
for all $x\in X_{\wtd H}$ with $\tempfun(x) \le \ell_1, Y\in \liea'$ with $\|Y\|=1$, and $t\ge T_1$ such that $\tempfun(\exp(tY)\cdot x)\le \ell_1$.
Note also that there exists $C_0>0$ so that
$$\log \bigl\| \calA \bigl( \exp(tY),x \bigr) \bigr\| <C_0$$
for all $x\in X_{\wtd H}$ with $\tempfun(x) \le \ell_1, Y\in \liea'$ with $\|Y\|=1$, and $t\le T_1$.
Finally, from \cref{horseforlunch}
there is $C_\epsilon>0$ such that for all~$n$,
	$$\log \bigl\| \calA \bigl( (t_n -q_n) Y_n, y_n \bigr) \bigr\| \le  2\heightgrowth\ell_1 + \epsilon  (t_n-q_n)  + C_\epsilon$$
and
$$\log \| \calA(p_n Y_n, x_n)\| = \log \| \calA(-p_n Y_n, \hat x_n)\inv \| \le 2\heightgrowth \ell_1  + \epsilon  p_n + C_\epsilon .$$

Take $T_2\ge T_1$ so that $$\dfrac{ C_0+ 2 C_\epsilon + 4  \heightgrowth \ell_1}{ T_2}\le \epsilon.$$
Consider any~$n$ sufficiently large so that $t_n\ge T_2$ and
	\begin{equation} \label{tnLarge}
	\frac {1}{ t_n} \log \bigl\| \calA \bigl( \exp(t_nY_n),x_n \bigr) \bigr\| \ge \chi-\epsilon .
	\end{equation}
We claim that $\hat t_n \ge T_1$.
Indeed, were $\hat t_n \le T_1$ we would have that
\begin{align*}
\log \|\calA(t_nY_n,x_n)\| &\le
\log \| \calA ( p_nY_n,x_n ) \| +
\log \| \calA ( \hat t_nY_n,\hat x_n)\| \\
&\qquad + \log \bigl\| \calA \bigl( (t_n-q_n)Y_n,y_n) \bigr) \bigr\| \\
& \le \bigl[ 2\heightgrowth \ell_1+ \epsilon p_n + C_\epsilon \bigr] + C_0 + \bigl[ 2\heightgrowth\ell_1+ \epsilon (t_n -q_n) \bigr) + C_\epsilon \bigr] \\
& \le C_0 + 2 C_\epsilon + 4\heightgrowth \ell_1 + \epsilon t_n.
\end{align*}
Since $t_n\ge T_2$ satisfying \eqref{tnLarge} we would then obtain that
$$\frac{1}{t_n} \log \|\calA(t_nY_n,x_n)\|\le 2\epsilon<\chi-2\epsilon$$
contradicting \pref{tnLarge}.

Still considering $t_n\ge T_2$, since $\hat t_n \ge T_1$
 we have
\begin{align*}
\bigl( \chi_{\ell_1}(\liea' & ,X_{\wtd H}) + \epsilon \bigr) \hat t_n \\
&\ge \log \|\calA(\hat t_nY_n,\hat x_n)\| \\
&\ge \log \|\calA(t_nY_n,x_n)\|
- \log \|\calA(p_nY_n,x_n)\|
- \log \bigl\| \calA \bigl( (t_n-q_n)Y_n,y_n \bigr) \bigr\| \\
& \ge ( \chi-\epsilon ) t_n - \bigl[2\heightgrowth \ell_1+ \epsilon p_n) + C_\epsilon \bigr]- \bigl[2\heightgrowth \ell_1+ \epsilon (t_n-q_n)  + C_\epsilon \bigr] \\
& \ge ( \chi-\epsilon ) t_n - (2C _\epsilon+ 4 \heightgrowth \ell_1) - \epsilon t_n \\
& \ge ( \chi-3\epsilon ) t_n .
\end{align*}
It follows that
$$ \hat t_n \ge \frac {\chi-3\epsilon}{\chi_{\ell_1}(\liea',X_{\wtd H})+\epsilon} \, t_n $$
and that
$$\log \|\calA(\hat t_nY_n,\hat x_n)\| \ge ( \chi-3\epsilon ) t_n \ge ( \chi-3\epsilon ) \hat t_n. $$
Taking $\epsilon>0$ arbitrarily small, the claim follows.
\end{proof}

\subsubsection{Growth along basis vectors}
We prove that it suffices to study the growth of the  cocycle $\calA$ 
along a one-parameter subgroup of $A_H$.
Fix a vector subspace $\liea' $ of $\liea_H$ and any basis $\{Z_1, \dots, Z_r\}$ of unit vectors of $\liea'$.
Recall from \pref{ExpIndepOfEll} that $\chi_\ell(\liea',X_{\wtd H })= \chi_{\ell_0}(\liea',X_{\wtd H })$ for all $\ell\ge \ell_0$ whence we write $ \chi(\liea',X_{\wtd H }):= \chi_{\ell_0}(\liea',X_{\wtd H }).$

\begin{corollary}\label{basisvector}
There exists $C\ge 1$ depending only on the norm  $\|\cdot\|$ and the basis $ \{Z_1, \dots, Z_r\}$ such that if $\chi(\liea',X_{\wtd H })>0$ then
for some
$1\le j\le r$ we have
$$\chi (Z_j,X_{\wtd H })\ge \frac{\chi (\liea',X_{\wtd H })}{C}.$$
\end{corollary}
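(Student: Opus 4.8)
The plan is to unwind the definition of $\chi(\liea',X_{\wtd H})$ into a sequence of long orbit segments with controlled endpoints, decompose each along the coordinate directions $Z_1,\dots,Z_r$, pigeonhole onto a dominant direction, and then upgrade the dominant piece into a genuine admissible $Z_j$-segment. Concretely, since $\chi(\liea',X_{\wtd H})=\chi_{\ell_0}(\liea',X_{\wtd H})=\limsup_{T\to\infty}\sup_{Y\in\liea',\,\|Y\|=1}\chi^+_{\ell_0}(T,Y,X_{\wtd H})>0$, I would first choose $t_n\to\infty$, unit vectors $Y_n\in\liea'$ and points $x_n\in X_{\wtd H,\le\ell_0}$ with $\exp(t_nY_n)\cdot x_n\in X_{\wtd H,\le\ell_0}$ and
\[\liminf_{n\to\infty}\ \frac1{t_n}\log\bigl\|\calA\bigl(\exp(t_nY_n),x_n\bigr)\bigr\|\ \ge\ \chi(\liea',X_{\wtd H}).\]
Writing $Y_n=\sum_{j=1}^r c_{n,j}Z_j$, boundedness of the inverse of the coordinate isomorphism $\R^r\to\liea'$ gives a constant $C_1=C_1(\|\cdot\|,\{Z_j\})$ with $\max_j|c_{n,j}|\le C_1$. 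Since $\liea'$ is abelian, $\exp(t_nY_n)=\exp(t_nc_{n,1}Z_1)\cdots\exp(t_nc_{n,r}Z_r)$, so the cocycle identity \eqref{cocycle} and submultiplicativity of the operator norm give
\[\log\bigl\|\calA(\exp(t_nY_n),x_n)\bigr\|\ \le\ \sum_{j=1}^r\log\bigl\|\calA\bigl(\exp(t_nc_{n,j}Z_j),v_{n,j}\bigr)\bigr\|\]
for the appropriate interior points $v_{n,j}$ of the decomposition. Passing to a subsequence along which the index $j_0=j_0(n)$ achieving the largest summand and the sign of $c_{n,j_0}$ are both constant — and replacing $Z_{j_0}$ by $-Z_{j_0}$ if that sign is negative, which leaves $\chi(Z_{j_0},X_{\wtd H})$ unchanged — I obtain a $Z_{j_0}$-orbit segment of length $s_n\in(0,C_1t_n]$ joining $v_{n,j_0}$ to $w_n:=\exp(s_nZ_{j_0})\cdot v_{n,j_0}$ along which $\log\|\calA\|$ increases by at least $\tfrac1r\bigl(\chi(\liea',X_{\wtd H})-o(1)\bigr)t_n$, hence at average rate at least $\tfrac1{rC_1}\bigl(\chi(\liea',X_{\wtd H})-o(1)\bigr)$ per unit length.

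It remains to convert this into the bound $\chi(Z_{j_0},X_{\wtd H})\ge\chi(\liea',X_{\wtd H})/C$ with $C=rC_1$ (absorbing the $o(1)$). By \eqref{ExpIndepOfEll} it suffices to produce, for infinitely many $n$, a $Z_{j_0}$-orbit segment of length $\to\infty$ whose two endpoints lie in a single fixed set $X_{\wtd H,\le\ell_1}$ and which still carries growth at rate $\gtrsim\chi(\liea',X_{\wtd H})/(rC_1)$. For that I would cut the segment above at its first and last visits to $X_{\wtd H,\le\ell_0}$ — exactly the first-visit/last-visit truncation used to prove \cref{GetXnAndTn} — estimate the growth discarded on the two tails by the weak $\phi$-tempered cusp bound of \cref{SubexpInCusps}, and apply $h$-temperedness (\cref{tempered}) to the retained middle segment to see that its length is $\gtrsim t_n$, so that its growth rate is unchanged up to $o(1)$; this produces the required admissible $Z_{j_0}$-segments and hence the conclusion.

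The main obstacle is precisely this extraction: the endpoints $v_{n,j_0}$ and $w_n$ are \emph{interior} points of the decomposition of the $Y_n$-orbit, not its endpoints, so a priori they need not remain in any fixed thick set, and the tail estimate from \cref{SubexpInCusps} — being proportional to $\phi(v_{n,j_0})$ and $\phi(w_n)$ — is useful only once one knows these heights are $o(t_n)$. Establishing that is where I expect the real work to lie. The natural tools are the freedom to reorder the commuting factors $\exp(t_nc_{n,j}Z_j)$, together with the hypothesis $\rank_\Q(H)=1$: the complement of the thick part of $\wtd H/\Gamma\!_{\wtd H}$ then has the product form $\R_{\ge\ell_0}\times(\text{compact})$ and excursions of $A_H$-orbits into the cusp are governed by a single $\Q$-root, so one can hope either to reorder the factors so that every partial product keeps $\phi(v_{n,j})$ sublinear in $t_n$, or else to detect any index whose $Z_j$-piece already has only sublinear growth (via \cref{SubexpInCusps}) and discard it; after finitely many such reductions the surviving dominant index should have interior heights of size $o(t_n)$, at which point the truncation argument of the preceding paragraph applies and yields the claim with $C=rC_1$.
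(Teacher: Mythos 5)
There is a genuine gap, and you have honestly flagged it yourself: with your all-at-once decomposition $\exp(t_nY_n)=\exp(t_nc_{n,1}Z_1)\cdots\exp(t_nc_{n,r}Z_r)$, the $r-1$ intermediate partial-product points $v_{n,j}$ can sit arbitrarily deep in the cusp, and neither of the two remedies you sketch (reorder so every partial product has sublinear height, or iteratively discard sublinear-growth indices and re-pigeonhole) is supported by anything you have established. It is not clear that any ordering of the commuting factors keeps every partial product at height $o(t_n)$, and the weak estimate from \cref{SubexpInCusps} applied to a segment with both endpoints at height $O(t_n)$ gives a bound of order $\omega\cdot O(t_n)/t_n+\epsilon=O(1)$, which does not let you detect or discard a sublinear segment. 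The appeal to $\rank_\Q H=1$ is also misplaced here: that hypothesis is already consumed in the proof of \cref{SubexpInCusps}, and the paper does not re-open the $\Q$-rank-one cusp structure in this corollary.

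The idea you are missing is the paper's \emph{induction on $r=\dim\liea'$}, which reduces the problem to a two-piece decomposition at each step and thereby avoids intermediate points entirely. Write $t_nY_n=\pm p_nZ_r+q_nZ'_n$ with $Z'_n$ a unit vector in $\mathrm{span}\{Z_1,\dots,Z_{r-1}\}$ and $p_n,q_n\ge0$. Do the $Z_r$-leg first, starting from the \emph{known thick} point $x_n$, and the $Z'_n$-leg second, ending at the \emph{known thick} point $y_n$. Now truncate the $Z_r$-leg at $u_n$, its last visit to $X_{\wtd H,\le\ell_0}$, and truncate the $Z'_n$-leg at $v_n$, the last visit (running backwards from $y_n$) to $X_{\wtd H,\le\ell_0}$. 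By construction both retained pieces $\calA(u_nZ_r,x_n)$ and $\calA(-v_nZ'_n,y_n)^{-1}$ have both endpoints in $X_{\wtd H,\le\ell_0}$ — no control of interior partial products is needed — while the discarded middle is a \emph{single} continuous piecewise-geodesic path in $A_H$ whose interior stays strictly above height $\ell_0$ with height exactly $\ell_0$ at its two boundary points. Applying the weak $\phi$-tempered bound of \cref{horseforlunch} to this one cusp excursion gives a uniform $2\omega\ell_0+\epsilon[(p_n-u_n)+(q_n-v_n)]+C_\epsilon$ correction, which is $o(t_n)$ once $\epsilon$ is small. Pigeonholing on the two retained pieces, one carries at least a third of the growth: if it is the $Z_r$-piece you are done, with the rate bounded below by $\chi(\liea',X_{\wtd H})/(3C_1)$ after dividing by $u_n\le p_n\le C_1t_n$; if it is the $Z'_n$-piece you get $\chi(\liea'',X_{\wtd H})\ge\chi(\liea',X_{\wtd H})/(3C_1)$ with $\liea''=\mathrm{span}\{Z_1,\dots,Z_{r-1}\}$, and the induction hypothesis finishes. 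The resulting constant is roughly $(3C_1)^{r-1}$ rather than your $rC_1$, but since $C$ is only required to depend on the norm and the basis this is immaterial.
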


\begin{proof}
We induct on $r= \dim \liea'$. The result clearly holds when $r=1$.

Since $\chi (\liea',X_{\wtd H })>0$, up to reversing paths and using \cref{GetXnAndTn}, we may assume there exist sequences $x_n\in X_{\wtd H }$ with $\phi(x_n) \le \ell_0$, $Y_n\in \liea'$ with $\|Y_n \|=1$, and $t_n\in \R$ with $t_n\to \infty$, such that
writing $y_n := \exp(t_n Y_n ) \cdot x_n$, $\tempfun(y_n)\le \ell_0$ and
 $$\lim_{n\to \infty} \frac{1}{t_n}\log \|\calA( t_nY_n, x_n)\| = \chi(\liea', X_{\wtd H }) .$$

Suppose $r>1$. For each~$n$, write $t_n Y_n = \pm p_nZ_r + q_nZ'_n$ where $Z'_n$ is a unit vector in the linear span of $\{Z_1, \cdots, Z_{r-1}\}$ and $p_n, q_n\ge 0$.
There is $C_1$ depending only on the choice of basis with $$\max\{ p_n, q_n \} \le C_1 t_n$$ for all~$n$.
Let
\begin{align*}
u_n &= \sup \{ u\in [0,p_n]: \phi\bigl(\exp (u\cdot Z_r)\cdot x_n\bigr)\le \ell_0\} \\
v_n &= \sup \{ v\in [0,q_n]: \phi\bigl(\exp (-v\cdot Z'_n)\cdot y_n\bigr)\le \ell_0\} . \end{align*}
Fix $\epsilon>0$.  There is $C_\epsilon$ such that for all~$n$, using \cref{horseforlunch} we have
\begin{align*}\log \bigl\| \calA &\bigl( \exp( t_n Y_n), x_n \bigr) \bigr\|
\\& \le \log
\bigl\| \calA \bigl( \exp(u_n  Z_r ), x_n \bigr) \bigr\| + \log \bigl\| \calA \bigl(\exp(-v_n  Z'_n ), y_n \bigr)\inv \bigr\| \\
&\quad + 2\heightgrowth\ell_0 + \epsilon [(p_n-u_n) + (q_n- v_n)]   + C_\epsilon \\
&\le \log
\bigl\| \calA \bigl( \exp(u_n  Z_r ), x_n \bigr) \bigr\| + \log \bigl\| \calA \bigl(\exp(-v_n  Z'_n ), y_n \bigr)\inv \bigr\| \\
&\quad  + 2\heightgrowth\ell_0 + 2\epsilon C_1 t_n   + C_\epsilon
\end{align*}
Taking $\epsilon>0$ sufficiently small, for all $n$ sufficiently large it follows that either $\log \bigl\| \calA \bigl( \exp(u_n  Z_r ), x_n \bigr) \bigr\| $ or $\log \bigl\| \calA \bigl(\exp(-v_n  Z' _n), y_n \bigr)\inv \bigr\| $ is at least $$\frac 1 3 \bigl\| \calA \bigl( \exp( t_n Y_n), x_n \bigr) \bigr\| .$$
Let $\liea''$ be the span of $\{Z_1,\ldots, Z_{r-1}\}$.
After dividing by~$t_n$ and taking $t_n \to \infty$
we have that either
	$$ \chi(Z_r, X_{\wtd H })
	\ge \lim_{n\to \infty} \frac{t_n}{3p_n} \chi (\liea',X_{\wtd H })
	\ge \frac{1}{3C_1} \chi (\liea',X_{\wtd H })
	$$
and the conclusion follows or
	$$\chi(\liea'', X_{\wtd H })
	\ge \lim_{n\to \infty}  \frac{t_n}{3q_n} \chi (\liea',X_{\wtd H })
	\ge \frac{1}{3C_1} \chi (\liea',X_{\wtd H })
	$$
and the conclusion follows from the induction hypothesis.
\end{proof}

\subsubsection{Paths approximating maximal growth rates give a tight family of empirical measures.}
We   study empirical measures coming from certain trajectories and show they form tight families.
The families we consider start and end in a fixed compact set and witness the maximal growth rate.

\begin{lemma}[Tightness of empirical measures]\label{lem:maxpathstight}
Fix some $Y \in \liea_H$ with $\chi(Y, X_{\wtd H}) > 0$. Let $\{x_n\} \in X_{\wtd H }$ be a sequence of points in~$X_{\wtd H }$, let $t_n \to \infty$, and let $$\eta_n = \frac 1 {t_n} \int_0^{t_n} \exp(sY) _* \delta_{x_n} \, ds.$$ If
\begin{enumerate}[label=(\alph*),ref=\alph*]
 \item for each~$n$, $\tempfun(x_n)\le \ell _0$ and $\tempfun \bigl( \exp(t_nY)\cdot x_n \bigr) \le \ell_0$, and
\item $\displaystyle \lim_{n\to \infty} \frac 1 {t_n}\log \bigl\| \calA \bigl( \exp(t_nY),x_n \bigr) \bigr\| = \chi(Y,X_{\wtd H })$,
\end{enumerate}
then $\{\eta_n\}$ is a uniformly tight family of Borel probability measures on $X_{\wtd H}$.
\end{lemma}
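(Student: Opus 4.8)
The plan is to convert the asserted uniform tightness on $X_{\wtd H}$ into a uniform bound on the escape of mass into the cusp, and then derive a contradiction from the $\tempfun$-tempered subexponential growth estimates and the threshold-independence of the thick-part growth rate. Since the bundle projection $X_{\wtd H}\to\wtd H/\Gamma\!_{\wtd H}$ is proper with compact fibers and $\tempfun$ is proper on $\wtd H/\Gamma\!_{\wtd H}$, the sublevel sets $\{\tempfun\le\ell\}$ exhaust $X_{\wtd H}$ by compacta, so $\{\eta_n\}$ is uniformly tight if and only if for every $\delta>0$ there is $\ell$ with $\eta_n(\{\tempfun>\ell\})<\delta$ for all $n$. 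As $\eta_n(\{\tempfun>\ell\})$ is exactly the proportion of $s\in[0,t_n]$ with $\tempfun(\exp(sY)\cdot x_n)>\ell$, it suffices to rule out the existence of $\delta>0$ and sequences $\ell_j\to\infty$, $n_j\to\infty$ for which this proportion is $\ge\delta$.

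Suppose such sequences exist; write $\chi:=\chi(Y,X_{\wtd H})>0$, fix $\epsilon\in(0,\chi/4)$, and analyze one large index $j$ (dropping it, writing $n=n_j$, $\ell=\ell_j$). I would decompose $[0,t_n]$ into two kinds of pieces: the \emph{deep excursions} --- the connected components of $\{s:\tempfun(\exp(sY)\cdot x_n)>\ell_0\}$ along which the height attains the value $\ell$ --- and the complementary intervals. The map $s\mapsto\tempfun(\exp(sY)\cdot x_n)$ is $L$-Lipschitz for a constant $L$ depending only on the (fixed) Lipschitz constant of $\tempfun$ and on $\|Y\|$, so each deep excursion has length at least $2(\ell-\ell_0)/L$; being pairwise disjoint there are at most $k':=Lt_n/\bigl(2(\ell-\ell_0)\bigr)$ of them, hence at most $k'+1$ complementary intervals. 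Moreover the deep excursions contain all of $\{\tempfun>\ell\}$, so the complementary intervals have total length at most $(1-\delta)t_n$. The endpoints of a deep excursion lie at height exactly $\ell_0$, and those of a complementary interval at height $\le\ell_0$ (here hypothesis (a) handles the first and last intervals, which are capped by $x_n$ and $\exp(t_nY)\cdot x_n$).

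Now I would estimate $\log\|\calA\|$ on each piece. On a deep excursion, $\tempfun\ge\ell_0$ throughout with $\tempfun=\ell_0$ at both ends, so the weak $\tempfun$-tempered subexponential growth estimate \cref{horseforlunch}(1) bounds the log-norm by $2\heightgrowth\ell_0+\epsilon\cdot(\text{length})+C_\epsilon$. On a complementary interval of length $T$, both endpoints lie in $X_{\wtd H,\le\ell_0}$, so the definition of $\chi_{\ell_0}(Y,X_{\wtd H})$, together with its independence of the threshold (\eqref{ExpIndepOfEll} from \cref{GetXnAndTn}, so that $\chi_{\ell_0}(Y,X_{\wtd H})=\chi$), gives a bound $(\chi+\epsilon)T$ once $T$ exceeds some $T_1$; for shorter complementary intervals, \cref{tempered} applied at the (height-$\le\ell_0$) starting point and over flow-time $\le T_1$ gives a fixed constant bound $C_0$ --- and this $C_0$ does not depend on $\ell$ or $n$ because the temperedness bound only sees the starting height and the elapsed time. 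Multiplying over all pieces via submultiplicativity of the operator norm and the cocycle identity yields
\[
\log\|\calA(\exp(t_nY),x_n)\|
\le \frac{Lt_n}{2(\ell-\ell_0)}\bigl(2\heightgrowth\ell_0+C_\epsilon+C_0\bigr)+C_0+\epsilon\, t_n+(\chi+\epsilon)(1-\delta)\,t_n .
\]
Dividing by $t_n$ and letting $j\to\infty$ (so $\ell=\ell_j\to\infty$ and $t_{n_j}\to\infty$ with $\epsilon$ fixed), the first and second terms vanish and hypothesis (b) forces $\chi\le\epsilon+(\chi+\epsilon)(1-\delta)$; letting $\epsilon\to0$ gives $\chi\le(1-\delta)\chi$, contradicting $\chi>0$.

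I expect the main obstacle to be controlling how many times the orbit enters and leaves the cusp: a priori the number of excursions above $\ell_0$ can grow linearly in $t_n$, and then the accumulated boundary term $\heightgrowth\ell_0$ would not be negligible. The resolution is to peel off only the excursions that climb all the way to the high threshold $\ell$ --- these are forced by the Lipschitz bound to be long, hence few, with total count controlled by $Lt_n/(2(\ell-\ell_0))$, which is killed in the $\ell_j\to\infty$ limit --- while absorbing all the shallow oscillation into the complementary intervals and bounding those by the threshold-independent thick-part rate $\chi_{\ell_0}(Y,X_{\wtd H})=\chi$; this is precisely where \cref{GetXnAndTn} and \eqref{ExpIndepOfEll} are indispensable. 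A secondary point to handle carefully is checking that every constant other than $C_\epsilon$ is genuinely uniform in $\ell$ and $n$, and that the loose ``$\limsup$ as eventual bound'' usage of the $\chi_{\ell_0}$ definition is legitimate exactly as in the proof of \cref{GetXnAndTn}.
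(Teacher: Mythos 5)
Your proof is correct and takes essentially the same approach as the paper: decompose $[0,t_n]$ into "deep excursions" (components of $\{\tempfun>\ell_0\}$ that climb to a high threshold) and complementary intervals; bound the former via the $\tempfun$-tempered estimate of \cref{horseforlunch}, the latter via the threshold-independent rate $\chi=\chi_{\ell_0}(Y,X_{\wtd H})$; and use the Lipschitz bound on $s\mapsto\tempfun(\exp(sY)\cdot x_n)$ to show deep excursions are long, hence few, so their per-excursion boundary constants are negligible. The only differences are cosmetic: the paper runs the estimate directly to get a quantitative bound $l_n\le\chi^{-1}[C/L+C/t_n+2\delta]$ with the threshold tied to a parameter $L$ and absorbs $2\heightgrowth\ell_0+C_\epsilon$ into a single $\delta T$ bound for long excursions, whereas you argue by contradiction with a free threshold $\ell\to\infty$ and keep the additive constants explicit; these bookkeeping choices are equivalent.
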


\begin{proof}
Write $\chi = \chi(Y,X_{\wtd H })$. Without loss of generality, we may assume that $\| Y \| = 1$.

Fix some $\delta>0$. There is $T_0 > 0$ such that for all $x\in X_{\wtd H } $ with $\tempfun(x) \le \ell_0$ and all $t\ge T_0$  such that $\tempfun\bigl(\exp(tY_n)\cdot x\bigr)\le \ell_0$
we have
$$ \log \|\calA(\exp( t Y),x) ^{\pm 1}\|\le (\chi + \delta ) t.$$
Set $$C = \sup \left\{ \log \bigl\| \calA ( \exp( t Y),x \bigr)^{\pm 1} \bigr\| : x \in X_{\wtd H }, \tempfun(x) \le \ell_0, |t|\le T_0 \right\}.$$
 By \cref{horseforlunch},
there exists $T_1$ such that for all $T \ge T_1$, if $\tempfun(x) = \ell_0$, ${\tempfun \bigl( \exp(TY) \cdot x \bigr)} = \ell_0$,   and if $\tempfun \bigl( \exp(sY) \cdot x \bigr) \ge \ell_0$ for all $0\le s\le T$ then
$$\log \bigl\| \calA\bigl( \exp( T Y),x \bigr) \| \le \delta T .$$

For $\ell > 0$ and $n\in \N$, write 			
	$$ I_{> \ell,n} = \{ t \in [0,t_n] :\tempfun(\exp(tY) \cdot x_n) > \ell \} .$$
Fix some $L\ge T_1$.
For each $n$, let $$p_n = \frac{m_\R( I_{>L+\ell_0 +2,n
})}{t_n}.$$
Consider some sufficiently large~$n$ with $p_n >0$. Let $x^t = \exp(tY) \cdot x_n$ for $t \in [0,t_n]$. %
	Let $$b_0 = 0 \le a_1 <b_1<a_2<b_2 < \dots < a_J<b_J\le t_n = a_{J+1}$$
	be such that
	\begin{enumerate}
		\item $\tempfun(x^{a_{i+1}}) = \tempfun(x^{b_i}) = \ell_0$ for every $0\le i\le J$;
		\item (bounded excursions) for every $0\le i\le J$ and for all $b_i\le t \le a_{i+1}$, $$\tempfun(x^t)<L +\ell_0 +2;$$
		\item (deep excursions) for every $1\le i \le J$, $\tempfun(x^t)\ge \ell_0$ for all $a_i\le t\le b_i$ and for some $a_i< t< b_i$, $$\tempfun(x^t)\ge L +\ell_0 +2.  $$
	\end{enumerate}
Since $p_n>0$, we have $J\ge 1$.

 For each $1\le i \le J$, by the definition of $\phi$ we have $b_i - a_i \ge L \ge T_1$;  in particular, $J \le t_n/L$ and
	$$\log \| \calA \bigl( (b_i-a_i)Y, x^{a_i} \bigr) \| \le \delta(b_i - a_i) .$$
Also, for $0 \le i \le J$, we have
	$$ \log \| \calA \bigl( (a_{i+1}-b_i)Y, x^{b_i} \bigr) \| \le \max\{ C,
		(\chi + \delta ) (a_{i+1}-b_i) \}. $$
Let $l_n = \frac{1}{t_n} \sum_{i=1}^J (b_i-a_i)$. Note $p_n \le l_n$. Having taken $n$ sufficiently large, we have
	\begin{align*}
	\chi-\delta &\le \frac{1}{t_n} \log \| \calA( t_n Y, x_n) \| \\
	&\le \frac{1}{t_n} \sum_{i=1}^J \log \bigl\| \calA \bigl( (b_i-a_i)Y, x^{a_i} \bigr) \bigr\|
		+ \frac{1}{t_n} \sum_{i=0}^J \log \bigl\| \calA \bigl( (a_{i+1}-b_i)Y, x^{b_i} \bigr) \bigr\| \\
	&\le \frac{1}{t_n} \sum_{i=1}^J \delta(b_i-a_i)
		+ \frac{1}{t_n} \sum_{i=0}^J \max\bigl( C , (\chi + \delta ) (a_{i+1}-b_i)\bigr) \\
	& \le \delta l_n
		+ \frac{(J + 1) C }{t_n}
		+ (\chi + \delta) ( 1 - l_n )\\
		& \le \delta l_n
		+ \frac{C}{L} + \frac{C}{t_n}
		+ (\chi + \delta) ( 1 - l_n ).
	 \end{align*}
In particular, for sufficiently large $n$ it follows that
	 $$l_n \le \chi\inv\left[ \frac{C}{L} + \frac{C}{t_n}
		+ 2 \delta \right].$$

Thus, given any $\epsilon>0$, having taking $\delta>0$ sufficiently small, $L$ sufficiently large, and $t_n$ sufficiently large, it follows that $p_n\le l_n<\epsilon$ for all sufficiently large $n$ whence the family $\{\eta_n\}$ is uniformly tight.
\end{proof}

\subsection{Time-averaged cocycle, cut-off cocycle, and time-averaged exponents}\label{TimeAveraged}
We  define our key technical ingredients, the time-averaged and cut-off cocycles,
and study their basic properties.

Fix a unit vector $Y\in \liea_H$ with $\chi(Y, X_{\wtd H }) >0$.
Let $\{a^t\}$ denote the 1-parameter subgroup $a^t = \exp(t Y).$
Recall that $\Psi'\colon \lieg \times \P\calE\to \R$ denotes the infinitesimal generator for the norm-growth cocycle~$\Psi$ defined in \cref{s4.1}.
Let $\psi\colon\P\calE\to \R$ be the function
	\begin{align} \label{psiDefn}
	\psi(\xi) := \Psi'(Y, \xi)
	. \end{align}
Then
	\begin{align} \label{Psi=Intpsi}
	\Psi(a^T,\xi) = \int _0^T \psi (a^t \cdot \xi ) \,d t
	. \end{align}

Note that $\psi$ is continuous but need not be bounded.
 Moreover, given an $\{a^t\}$-invariant Borel probability measure~$\mu$ on $X_{\wtd H }$, it may be that $\psi$ is not $L^1(\mu)$. To deal with this lack of integrability, we replace $\psi$ with its forwards time average along the flow $\{a^t\}$. This new function will be bounded though not necessarily continuous.

\subsubsection{Time-averaged cocycle}\label{TAcoc}
We define the time-averaged cocycle and see that it is a bounded measurable function.
For any Borel function $\varphi \colon \P\calE \to \R$, write
	\begin{align*}
	\overline\varphi(\xi) &:= \limsup_{T\to \infty} \frac 1 T \int _0 ^T \varphi ( a^t \cdot \xi) \, d t \\
	\intertext{and}
	\underline\varphi(\xi) &:= \liminf_{T\to \infty} \frac 1 T \int _0 ^T \varphi ( a^t \cdot \xi) \, d t
	. \\
 \intertext{Also let}
	 \overline\Psi(a, \xi) &:= \limsup_{T\to \infty} \frac{1}{T} \Psi(a^T,\xi)
	. \end{align*}

We collect the following observations.
\begin{claim}\label{EasyPsi} \
\begin{enumerate}
	\item \label{EasyPsi-=}
	$\overline\psi(\xi) = \overline\Psi(a, \xi)$ for all $\xi \in\P \calE$.
	\item \label{EasyPsi-bdd}
	$\overline\psi$ and $\underline\psi$ are bounded measurable functions.
\end{enumerate}
Let $\calE_{\wtd H}$ denote the restriction of the vector bundle~$\calE$ to $X_{\wtd H}$.
Let $\eta$ be an $\{a^t\}$-invariant  Borel probability measure on $\P\calE_{\wtd H}$.
\begin{enumerate}[resume]
	\item \label{EasyPsi-ExplicitBoundsBigworld}
  For $\eta$-almost every $\xi\in \P\calE_{\wtd H}$
	we have
		$$-\chi(Y,X_{\wtd H})
		\le \underline\psi(\xi)
		\le \overline\psi(\xi) \le \chi(Y,X_{\wtd H}) .$$
	\item \label{EasyPsi-0}
  For $\eta$-almost every  $\xi\in \P\calE_{\wtd H}$
		 such that $\tempfun(a^t\cdot\xi)\ge \ell_0$ for all $t\ge t_0$ and some $t_0$, we have $\overline\psi(\xi) = \underline\psi(\xi) = 0$.
	\end{enumerate}
\end{claim}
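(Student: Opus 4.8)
\textbf{Proof plan for \cref{EasyPsi}.}

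The plan is to treat the four items in order, as items \pref{EasyPsi-=} and \pref{EasyPsi-bdd} are essentially formal and set up the ergodic-theoretic content of \pref{EasyPsi-ExplicitBoundsBigworld} and \pref{EasyPsi-0}. For \pref{EasyPsi-=}, I would simply invoke the identity \eqref{Psi=Intpsi}: dividing by $T$ and taking $\limsup_{T\to\infty}$ gives $\overline\psi(\xi) = \overline\Psi(a,\xi)$ directly from the definitions. For \pref{EasyPsi-bdd}, measurability of $\overline\psi$ and $\underline\psi$ follows because they are $\limsup$/$\liminf$ of continuous functions of $(t,\xi)$ along a countable cofinal sequence of times (or by a standard monotone-class argument, using that $T\mapsto \frac1T\int_0^T\psi(a^t\cdot\xi)\,dt$ is continuous). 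Boundedness is the point where the construction of the norm enters: by \cref{tempered} the cocycle $\calA$ is $h$-tempered, so $\Psi(a^T,\xi) \le k\,h(\pi_X(a^T\cdot\xi)) + k\,d(a^T,\1) + C$; but $d(a^T,\1)$ grows linearly in $T$ and $h$ grows at most linearly along any orbit (since $h$ is uniformly Lipschitz and $a^T$ moves a point a distance $O(T)$). Hence $\frac1T\Psi(a^T,\xi)$ is bounded above uniformly in $\xi$ and $T\ge 1$; applying the same to the inverse cocycle bounds it below. By \pref{EasyPsi-=}, $\overline\psi$ is then bounded, and since $\frac1T\int_0^T\psi\,dt$ lies between $\underline\psi$ and $\overline\psi$ in the limit, $\underline\psi$ is bounded as well. (Alternatively, one can note $|\overline\psi|,|\underline\psi|\le \sup_{t\in[0,1],\,x}|\Psi(a^t,x)|$-type estimates fail because $\Psi$ is unbounded, so the tempered-growth argument is the honest route; I would write it carefully.)

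For \pref{EasyPsi-ExplicitBoundsBigworld}, the idea is that $\chi(Y,X_{\wtd H})$ is precisely the maximal exponential growth rate of $\calA$ along $\{a^t\}$-orbits with endpoints in the fixed thick set $X_{\wtd H,\le\ell_0}$, and by \cref{lem:maxpathstight}/\cref{GetXnAndTn} (via \eqref{ExpIndepOfEll}) this rate is unchanged if we allow endpoints in any thick set $X_{\wtd H,\le\ell_1}$. For an $\{a^t\}$-invariant $\eta$, Poincaré recurrence applied to the set $\{\tempfun\le\ell_1\}$ (of positive measure for $\ell_1$ large, since $\tempfun$ is proper and $\eta$ is a probability measure) shows that $\eta$-a.e.\ $\xi$ returns to $\{\tempfun\le\ell_1\}$ at arbitrarily large times. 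Along such a sequence of return times $T_j\to\infty$, $\frac{1}{T_j}\Psi(a^{T_j},\xi) \le \chi_{\ell_1}(T_j,Y,X_{\wtd H}) \to \chi_{\ell_1}(Y,X_{\wtd H}) = \chi(Y,X_{\wtd H})$, using that both $\xi$ and $a^{T_j}\cdot\xi$ project into $X_{\wtd H,\le\ell_1}$; hence $\overline\Psi(a,\xi)\le \chi(Y,X_{\wtd H})$, i.e.\ $\overline\psi(\xi)\le\chi$. Applying the same argument to the inverse cocycle (equivalently, to $-Y$, noting $\chi(-Y,X_{\wtd H})=\chi(Y,X_{\wtd H})$ by \eqref{ExpIndepOfEll} and the symmetry of the definition of $\chi_\ell$) gives $\underline\psi(\xi)\ge -\chi(Y,X_{\wtd H})$. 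The sandwich $\underline\psi\le\overline\psi$ is immediate.

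For \pref{EasyPsi-0}, suppose $\xi$ is such that $\tempfun(a^t\cdot\xi)\ge\ell_0$ for all $t\ge t_0$. Then \cref{horseforlunch}\pref{2} (strong $\phi$-tempered subexponential growth in the cusp on $X_H$) — applied to the path $t\mapsto a^{t_0+t}$ and the point $a^{t_0}\cdot\xi$, whose entire forward orbit stays above height $\ell_0$ — gives, for every $\epsilon>0$, a constant $C_\epsilon$ with $\calB^{\pm}(a^t, a^{t_0}\cdot x) \le \epsilon(\tempfun(a^{t_0}\cdot x) + d(\1,a^t)) + C_\epsilon$, where $x=\pi_{X_{\wtd H}}(\xi)$; since $d(\1,a^t)=t$ and $\tempfun(a^{t_0}\cdot x)$ is a fixed finite number, this forces $\frac1T\Psi(a^T,\xi)\to 0$ as $T\to\infty$ (the contribution from $[0,t_0]$ is a fixed constant divided by $T$), so $\overline\psi(\xi)=0$; the same bound on $\calB^-$ gives $\underline\psi(\xi)\ge -\epsilon$ for all $\epsilon$, hence $\underline\psi(\xi)=0$. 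The only subtlety is that \cref{horseforlunch}\pref{2} is stated for $x\in H/\Gamma\!_H$ rather than $\wtd H/\Gamma\!_{\wtd H}$, so one must observe that the hypothesis of this item, combined with the $N$-invariance of $\tempfun$, lets us work on $X_H$ (or invoke part \pref{1} of \cref{horseforlunch} with an extra $\heightgrowth\,\tempfun$ term that also stays bounded along the orbit and so still washes out after dividing by $T$). I expect the main obstacle to be pinning down exactly which of \cref{horseforlunch}\pref{1} or \pref{2} is needed and verifying that "almost every" is the right quantifier — i.e.\ that the measure-zero exceptional set in \pref{EasyPsi-ExplicitBoundsBigworld} really comes only from the failure of recurrence and not from any non-integrability issue, which is precisely why the tempered bounds of \cref{tempered} and \cref{horseforlunch} are indispensable here.
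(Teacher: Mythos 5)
Your treatment of items \pref{EasyPsi-=} and \pref{EasyPsi-bdd} is correct and matches the paper, but there are genuine gaps in your proofs of items \pref{EasyPsi-ExplicitBoundsBigworld} and \pref{EasyPsi-0}.

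For \pref{EasyPsi-ExplicitBoundsBigworld}, Poincar\'e recurrence only supplies a \emph{subsequence} of return times $T_j\to\infty$ along which $\frac{1}{T_j}\Psi(a^{T_j},\xi)\lesssim\chi(Y,X_{\wtd H})$. That controls $\liminf_j$, not $\overline\Psi(a,\xi)=\limsup_{T\to\infty}\frac1T\Psi(a^T,\xi)$, and the inference ``hence $\overline\Psi(a,\xi)\le\chi$'' is unjustified: between consecutive $T_j$'s, temperedness only gives $\frac1T\Psi(a^T,\xi)\le \frac{T_j}{T}(\chi+\delta)+k\frac{T-T_j}{T}+o(1)$, and with Poincar\'e recurrence the gaps $T_{j+1}-T_j$ are unconstrained, so $\frac{T-T_j}{T}$ may be close to $1$ and the bound degenerates to $k\gg\chi$. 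What is actually needed is asymptotic \emph{density} control: for $\eta$-a.e.\ $\xi$ and every $\delta>0$, the Birkhoff ergodic theorem applied to the bounded function $\mathbbm{1}_{\{\phi>\ell\}}$ (with $\ell=\ell_\delta$ chosen so the ergodic average is $<\delta$) gives $m_\R\{t\in[0,T]:\phi(a^t\cdot\xi)\ge\ell_\delta\}\le\delta T$ for $T$ large. One then splits at $T_0=\sup\{t\le T:\phi(a^t\cdot\xi)\le\ell_\delta\}$, bounds $\Psi(a^{T_0},\xi)\le(\chi+\delta)T_0$ using the definition of $\chi$ with both endpoints below $\ell_\delta$, and bounds the tail $\Psi(a^{T-T_0},a^{T_0}\cdot\xi)\le k(T-T_0)+C_{\ell_\delta}\le k\delta T+C_{\ell_\delta}$ by temperedness. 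This is where the density estimate enters, and it is exactly what is missing from your argument.

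For \pref{EasyPsi-0}, you correctly flag that \cref{horseforlunch}\pref{2} is stated on $X_H$, not $X_{\wtd H}$, but your proposed repair---using part \pref{1} and claiming the $\heightgrowth\,\phi$ terms ``also stay bounded along the orbit''---fails: the hypothesis is $\phi(a^t\cdot\xi)\ge\ell_0$ for $t\ge t_0$, which bounds $\phi$ \emph{below}, not above, so $\phi(a^T\cdot\xi)$ can tend to infinity and the term $\heightgrowth\,\phi(a^T\cdot\xi)$ need not wash out. The paper's argument again uses the density splitting: choose $\ell_\delta\ge\ell_0$, split at both $t_0$ and $T_0$ (the last time $\le T$ with $\phi\le\ell_\delta$), apply temperedness on $[0,t_0]$, apply \cref{horseforlunch}\pref{1} on $[t_0,T_0]$ where the endpoint term $\heightgrowth\,\phi(a^{T_0}\cdot\xi)\le\heightgrowth\,\ell_\delta$ is now controlled (and $\phi(a^{t_0}\cdot\xi)$ is a fixed number), and apply temperedness on $[T_0,T]$ with $T-T_0\le\delta T$. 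Without the ergodic-theorem density control and the split at $T_0$, the $\phi(a^T\cdot\xi)$ term cannot be tamed.
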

\begin{proof}

Conclusion \pref{EasyPsi-=}~is immediate from~\eqref{Psi=Intpsi} and \pref{EasyPsi-bdd}~follows from \cref{tempered}.

For \pref{EasyPsi-ExplicitBoundsBigworld} and \pref{EasyPsi-0}, from \cref{tempered} there exists $k$ and, for each $\ell>0$ there exists a $C_\ell$, such that if $\phi(\xi)\le \ell$ then for all $T\ge 0$,
$$\log \|\Psi (a^T, \xi)\| \le k T + C_\ell.$$
Given an $\{a^t\}$-invariant  Borel probability measure $\eta$ on $\P\calE_{\wtd H}$, for  $\eta$-a.e.\ $\xi$ and  any $\delta>0$ there are $\ell_\delta$  and $T_\delta$ such that for all $T\ge T_\delta$,
\begin{align*}  m_\R\{0\le t \le T: \phi\bigr(a^t\cdot \xi \bigl)   \ge \ell_\delta \} &\le \delta T,\\
\log \|\Psi (a^T, \xi)\| &\le  (\chi(Y,X_{\wtd H})+\delta)  T.
\end{align*}
Fix such $\xi$ and consider any $\delta>0$.
Fix any $T\ge T_\delta$ sufficiently large so that $$T_0 =\sup\{ 0\le t \le T: \phi\bigr(\exp tY)\cdot \xi \bigl)\le \ell_\delta \}\ge T_\delta.$$
Then
\begin{align*}
\log \|\Psi (a^T, \xi)\| &= \log \|\Psi (a^{T_0}, \xi)\| + \log \|\Psi (a^{T-T_0}, a^{T_0}\cdot \xi)\|
\\&\le  (\chi(Y,X_{\wtd H})+\delta) T_0 + k(T-T_0) + C_{\ell_\delta}
\\&\le ( \chi(Y,X_{\wtd H}) +\delta)  T + k  \delta T + C_{\ell_\delta}.
\end{align*}
It follows that
$$ \overline\psi(\xi) \le \chi(Y,X_{\wtd H}) + (k+1)\delta.$$
The upper bound in \pref{EasyPsi-ExplicitBoundsBigworld} follows as $\delta>0$ was arbitrary.

Moreover, if $\tempfun(a^t\cdot\xi)\ge \ell_0$ for all $t\ge t_0$ and if $T$ is sufficiently large so that $T_0>t_0$ then, by \cref{horseforlunch}, given $\epsilon>0$ there is $C_\epsilon$ such that
\begin{align*}
\log \|\Psi &(a^T, \xi)\| = \log \|\Psi (a^{t_0}, \xi)\| + \log \|\Psi (a^{T_0-t_0}, a^{t_0}\cdot \xi)\|+ \log \|\Psi (a^{T-T_0}, a^{T_0}\cdot \xi)\|
\\&\le k  t_0 + C_{\phi(\xi)} + \omega \phi(a^{t_0}\cdot \xi) + \omega \ell_\delta + \epsilon (T_0-t_0) + C_\epsilon+ k(T-T_0) + C_{\ell_\delta}
\\&\le \hat C + \epsilon T +\omega \ell_\delta+ C_\epsilon+ k\delta T + C_{\ell_\delta}
\end{align*}
for some $\hat C$ independent of $\epsilon$ and $\delta$.  Then $\overline\psi(\xi)\le \epsilon +k \delta$ and the upper bound in \pref{EasyPsi-0}   follows as $\delta>0$ and $\epsilon>0$ are arbitrary.

The lower bounds in  \pref{EasyPsi-ExplicitBoundsBigworld} and \pref{EasyPsi-0} are similar.
\end{proof}

\subsubsection{The function \texorpdfstring{$\psi_\ell$}{psi_ell} and its properties under weak convergence} \label{s4.2}
Here we modify the function~$\psi$ to obtain a bounded function that has the same forwards time averages along orbits with good recurrence properties. This modified version of $\psi$ is our cut-off cocycle. There is a parameter choice involved in the process of cutting-off and we will eventually work with the cut-off cocycle for  well-chosen values of the parameter.    Although the new function will not be continuous, its discontinuity set will rather tame for most choices of parameters.  We also
study the time averages of the cut-off cocycle

Fix $\ell > 0$. Given $\xi\in \P\calE $, let
	\begin{align*}
	t_-( \xi) &:= \sup\{\, s\le 0 \mid a^s \cdot \xi = \ell \,\}, \\
	t_+( \xi) &:= \inf\{\, s\ge 0 \mid a^s \cdot \xi = \ell \,\}, \\
	\tau( \xi) &:= t_+( \xi)- t_-( \xi)
	\end{align*}
(with the convention that $\sup (\emptyset) = -\infty$ and $\inf(\emptyset ) = \infty $.)
Define $\psi_{\ell} \colon \P\calE\to \R$ as follows:
\begin{align} \label{psiellDefn}
\psi_\ell(\xi) = \begin{cases}
\hfil \psi(\xi) & \text{if $h(\xi)\le \ell$;} \\[\smallskipamount]
\displaystyle \frac {1 }{ \tau(\xi) }\int _{t_-(\xi)}^{t_+(\xi)} \psi(a^s \cdot \xi) \,d s
	& \vbox to 0pt{\vss\hbox{if $h(\xi) > \ell$ and} \hbox{\qquad $-\infty< t_- (\xi)< t_+ (\xi) <\infty$;}\vss} \\[\smallskipamount]
\hfil 0 & \text{otherwise}.
\end{cases}
\end{align}

From \cref{tempered}, we observe the following:
\begin{enumerate}
\item For each $\ell$, $\psi_\ell\colon \P\calE\to \R$ is a bounded measurable function;
\item moreover, the functions $\overline{\psi_\ell}\colon \P\calE\to \R$ are uniformly bounded in $\ell$.
\end{enumerate}

\begin{lemma}\label{sameinvmeas}
Let $\eta$ be an $\{a^t\}$-invariant, Borel probability measure on~$\P \calE$.

 For $\eta$-\ae~$\xi \in \P \calE$ and for all $\ell>h(\xi)$,
 $$\overline\psi(\xi) =\overline{\psi_\ell}(\xi) = \underline\psi_\ell(\xi)= \underline\psi(\xi).$$
 Moreover, for all almost every $\xi\in\P\calE$, $$\overline\psi (\xi)= \lim_{T\to \infty} \frac 1 T \int _0^T \psi(a^t \cdot \xi ) \, d t.$$
\end{lemma}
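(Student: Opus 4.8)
\textbf{Proof plan for \cref{sameinvmeas}.}

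The plan is to exploit the Birkhoff ergodic theorem for the flow $\{a^t\}$ together with the structure of $\psi_\ell$ as a ``local time-average'' of $\psi$. The first step is to reduce to the ergodic case: by the ergodic decomposition of the $\{a^t\}$-invariant measure $\eta$, it suffices to prove the claim when $\eta$ is ergodic. For the final assertion about the existence of the genuine limit $\lim_{T\to\infty}\frac1T\int_0^T\psi(a^t\cdot\xi)\,dt$, the point is that even though $\psi$ need not be in $L^1(\eta)$, its negative and positive truncations behave well: by \fullcref{EasyPsi}{bdd} the functions $\overline\psi$ and $\underline\psi$ are bounded, and by \fullcref{EasyPsi}{=} we have $\overline\psi = \overline\Psi(a,\cdot)$. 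The sequence $n\mapsto \int \log\|\calA(a^n,\cdot)\|\,d\eta$ (interpreting $\eta$'s projection to $X_{\wtd H}$) is subadditive, so by Kingman's subadditive ergodic theorem the limit $\lim_{T\to\infty}\frac1T\Psi(a^T,\xi)$ exists for $\eta$-a.e.\ $\xi$; hence $\overline\psi(\xi)=\underline\psi(\xi)$ a.e., which combined with \eqref{Psi=Intpsi} gives the last displayed equation.

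The main work is the chain of equalities $\overline\psi(\xi) = \overline{\psi_\ell}(\xi) = \underline{\psi_\ell}(\xi) = \underline\psi(\xi)$ for a.e.\ $\xi$ and all $\ell > h(\xi)$. First I would handle the easy direction: since for a.e.\ $\xi$ the orbit $\{a^t\cdot\xi : t\ge 0\}$ returns infinitely often to the set $\{h\le \ell\}$ (by Poincar\'e recurrence, using $\ell > h(\xi)$ so this set has positive $\eta$-measure on the ergodic component through $\xi$), the definition \eqref{psiellDefn} shows $\psi_\ell$ agrees with a reorganization of $\psi$ along the orbit: on each excursion above height $\ell$ bracketed by return times $t_-<t_+$ to $\{h=\ell\}$, the integral of $\psi_\ell$ equals the integral of $\psi$ over $[t_-,t_+]$ (this is exactly how $\psi_\ell$ is defined on the excursion), while on the complement $\{h\le\ell\}$ we have $\psi_\ell=\psi$ pointwise. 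Therefore for any $T$ that is a return time to $\{h=\ell\}$,
\begin{equation}\label{eq:reorg}
\int_0^T \psi_\ell(a^t\cdot\xi)\,dt = \int_0^T \psi(a^t\cdot\xi)\,dt.
\end{equation}
Dividing by $T$ and letting $T\to\infty$ along return times gives $\overline{\psi_\ell}(\xi)=\overline\psi(\xi)$ and $\underline{\psi_\ell}(\xi)=\underline\psi(\xi)$ along that subsequence; since $\overline\psi(\xi)=\underline\psi(\xi)$ a.e.\ (from the subadditive step above), all four quantities coincide along return times. To upgrade to the full limsup/liminf over all $T$, I would use that excursion lengths and the boundedness of $\psi_\ell$ control the fluctuation between consecutive return times: the error in \eqref{eq:reorg} for general $T$ is at most $\sup|\psi_\ell|$ times the gap to the nearest return time, and the density of return-time gaps is controlled because the orbit spends a definite fraction of time in $\{h\le\ell\}$ by recurrence (or, more carefully, because the set of $\xi$ with unboundedly long excursions relative to elapsed time has measure zero — this is where \fullcref{EasyPsi}{ExplicitBoundsBigworld} and the tameness of the height function enter). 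So the averages over all $T$ and over return times $T$ have the same limsup and liminf.

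The step I expect to be the main obstacle is controlling the contribution of the ``otherwise'' case in \eqref{psiellDefn} — namely orbits $\xi$ with $h(\xi)>\ell$ for which $t_-(\xi)=-\infty$ or $t_+(\xi)=+\infty$, where $\psi_\ell$ is set to $0$. For the equality $\overline{\psi_\ell}=\overline\psi$ to hold I need that, for $\eta$-a.e.\ $\xi$, the forward orbit enters this ``otherwise'' region only on a set of $t$ of density zero. Positivity of $\eta(\{h\le\ell\})$ and ergodicity give that a.e.\ forward orbit returns to $\{h\le\ell\}$ with positive frequency, so $t_+(a^t\cdot\xi)<\infty$ for all large $t$ outside a zero-density set; but I also need $t_-(a^t\cdot\xi)>-\infty$, i.e.\ the orbit must have \emph{previously} visited $\{h=\ell\}$. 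For $t$ beyond the first return time to $\{h\le\ell\}$ this is automatic, and the first return time is a.s.\ finite, so only an initial bounded-length segment is problematic and contributes nothing to the Ces\`aro average. Assembling these recurrence facts carefully — and matching them with the height estimate \fullcref{EasyPsi}{0} for orbits that eventually stay above $\ell_0$, where all averages are $0$ — is the delicate bookkeeping, but each ingredient is either Poincar\'e recurrence, Birkhoff/Kingman, or already recorded in \cref{EasyPsi} and \cref{tempered}.
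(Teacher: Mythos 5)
Your reorganization identity \eqref{eq:reorg} is exactly the key observation of the paper's proof (equation \eqref{poopforface} there), and your recurrence discussion handling the ``otherwise'' branch of \eqref{psiellDefn} is the right picture. However, there is a genuine gap in how you propose to establish $\overline\psi=\underline\psi$ a.e., which the rest of your argument then relies on. You invoke Kingman's theorem on the grounds that $n\mapsto\int\log\|\calA(a^n,\cdot)\|\,d\eta$ is subadditive. This conflates two different statements: subadditivity of the sequence of \emph{numbers} $\int\log\|\calA(a^n,\cdot)\|\,d\eta$ gives convergence of that sequence by Fekete's lemma, not pointwise a.e.\ convergence of $\frac1T\Psi(a^T,\xi)$. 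The pointwise version (Kingman for the subadditive cocycle $\log\|\calA\|$, or Birkhoff for the additive cocycle $\Psi$) does give a.e.\ convergence, but only under an $L^1$ hypothesis on the generator $\psi$ --- and $\psi\in L^1(\eta)$ is precisely what you cannot assume, since the lemma concerns an arbitrary $\{a^t\}$-invariant Borel probability measure $\eta$ on $\P\calE$ (not a measure over $X_{\wtd H}$, and not one with exponentially small mass at infinity). Circumventing this lack of integrability is the whole point of introducing $\psi_\ell$. The paper's proof therefore reverses your order of operations: it first applies Birkhoff to the \emph{bounded} functions $\psi_\ell$ (for countably many rational $\ell$), getting $\overline{\psi_\ell}=\underline{\psi_\ell}$ a.e.; then uses the reorganization identity plus a monotonicity argument (\cref{wetandclaimy}) to show these values do not depend on $\ell>h(\xi)$; and only \emph{then} compares $\psi$ with $\psi_\ell$ to deduce $\overline\psi=\underline\psi$. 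The equality of limsup and liminf is the output, not an input.

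A second gap is the step ``upgrade to the full limsup/liminf over all $T$.'' You bound the error between $\int_0^T\psi$ and $\int_0^T\psi_\ell$ at a non-return time $T$ by $\sup|\psi_\ell|$ times the gap to the nearest return time; that controls the fluctuation of $\int_0^T\psi_\ell$ across an excursion, but \emph{not} the fluctuation of $\int_0^T\psi$, since $\psi$ is unbounded. The missing ingredient is the tempered estimate \cref{tempered}: once the excursion entry time $T_0$ has height $\ell$, it yields $|\int_{T_0}^T\psi(a^t\cdot\xi)\,dt|=|\Psi(a^{T-T_0},a^{T_0}\cdot\xi)|\le C+C\ell+C(T-T_0)$, and the paper's final computation combines this with the (Birkhoff) fact that the orbit spends density at least $1-\delta$ of its time below a level $\ell_\delta(\xi)$ to make both fluctuations $O(\delta)$. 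Your parenthetical gesture at ``tameness of the height function'' and \fullcref{EasyPsi}{ExplicitBoundsBigworld} points the right way, but that item of \cref{EasyPsi} is stated only for measures on $\P\calE_{\wtd H}$, whereas the present lemma concerns a general measure on $\P\calE$; what you actually need here is \cref{tempered}.
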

We remark that the final conclusion of \cref{sameinvmeas} does not follow directly from the pointwise ergodic theorem as $\psi$ is neither assumed to be $L^1(\eta)$ or non-negative.

\begin{proof}[Proof of \cref{sameinvmeas}]
Recall each $\psi_\ell$ is a bounded measurable function. By the pointwise ergodic theorem, for \ae $\xi\in \P\calE$ the following two conditions hold:
\begin{enumerate}
\item \label{prop1} For every rational $\ell$, $$\overline{\psi_\ell}(\xi) = \underline\psi_\ell(\xi)= \lim_{T\to \infty}\frac 1 T \int _0^T \psi_\ell(a^t \cdot \xi ) \, d t.$$
\item \label{prop2} Given $0<\delta<1$, there is a rational $\ell_\delta(\xi)>0$ with $\ell_\delta(\xi) > h(\xi)$ such that for all sufficiently large $T$,
$$m_\R \left(\{t\in [0,T]: h(a^{t}\cdot \xi)\le \ell_\delta(\xi)\}\right) \ge (1-\delta )T.$$
Moreover, we may assume $\ell_\delta(\xi)\searrow h(\xi)$ as $\delta\nearrow 1$.
\end{enumerate}

Consider $\xi$ satisfying conditions \pref{prop1} and \pref{prop2}. Fix any $0<\delta_0<1$ and set $\ell_1 := \ell_{\delta_0}(\xi)$.
For any $\ell_2\ge \ell\ge \ell_1$ and $T>0$ such that $h(a^T\cdot \xi)\le \ell$ we have \begin{equation} \label{poopforface} \int _0^{T} \psi_{\ell_2}(a^t \cdot \xi ) \, d t= \int _0^{T} \psi_{\ell}(a^t \cdot \xi ) \, d t.\end{equation}
Set $$L= \lim_{T\to \infty}\frac 1 T \int _0^T \psi_{\ell_1}(a^t \cdot \xi ) \, d t.$$

Observe that $h(\xi)<\ell_1\le \ell$ and from condition \pref{prop2} there is a sequence $\{t_n\}$ with $t_n\to \infty$ such that $h(a^{t_n}\cdot \xi)\le \ell_1 \le \ell$ for all $n$.
 Taking $\ell=\ell_1$, for any rational $\ell_2\ge \ell_1$, \eqref{poopforface} and condition \pref{prop1} imply that
 \begin{equation} \label{eq:houseoffarts} \overline{\psi_{\ell_2}}(\xi) = \underline\psi_{\ell_2}(\xi)=L.\end{equation}
 In particular, the value of $\overline{\psi_{\ell}}(\xi) = \underline\psi_{\ell}(\xi)$ is constant over all rational $\ell>h(\xi)$.
 We claim the same holds for irrational $\ell\ge h(\xi). $

\begin{claim} \label{wetandclaimy} For any $\ell\ge \ell_1$,
$\overline{\psi_\ell}(\xi) = \underline\psi_\ell(\xi)= L.$
\end{claim}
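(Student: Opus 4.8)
\textbf{Proof proposal for Claim~\ref{wetandclaimy}.}
The goal is to show that the common value of $\overline{\psi_\ell}(\xi) = \underline{\psi_\ell}(\xi)$ is equal to $L$ for \emph{every} $\ell \ge \ell_1$, not just for rational $\ell$; we already know from \eqref{eq:houseoffarts} that this holds for all rational $\ell \ge \ell_1$. The plan is to compare the time average of $\psi_\ell$ for a general $\ell$ with the time average of $\psi_{\ell_2}$ for a rational $\ell_2 \ge \ell$ sandwiching it, by controlling the contribution of the orbit segments on which $\psi_\ell$ and $\psi_{\ell_2}$ differ. First I would fix $\ell \ge \ell_1$ and a rational $\ell_2 \ge \ell$; for definiteness one may also fix a rational $\ell_3$ with $\ell_1 \le \ell_3 \le \ell$. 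The two functions $\psi_\ell$ and $\psi_{\ell_2}$ agree on $\{h \le \ell\}$ and, more importantly, agree on any orbit segment $\{a^t \cdot \xi : 0 \le t \le T\}$ that starts and ends in $\{h \le \ell\}$ (this is exactly the content of the identity \eqref{poopforface}, whose proof is the elementary observation that both integrals decompose over the maximal excursions above height $\ell$ into the same local averages). Using condition~\pref{prop2} for $\xi$ with a small $\delta$, there is a sequence $t_n \to \infty$ with $h(a^{t_n}\cdot \xi) \le \ell_1 \le \ell$; applying \eqref{poopforface} along this sequence and dividing by $t_n$ then gives
\[
\overline{\psi_\ell}(\xi) = \lim_{n\to\infty}\frac{1}{t_n}\int_0^{t_n}\psi_\ell(a^t\cdot\xi)\,dt
= \lim_{n\to\infty}\frac{1}{t_n}\int_0^{t_n}\psi_{\ell_2}(a^t\cdot\xi)\,dt = \overline{\psi_{\ell_2}}(\xi),
\]
where the outer equalities use that $h(a^{t_n}\cdot\xi)\le\ell$ so that the limit along $\{t_n\}$ computes the full $\limsup$ (resp.\ $\liminf$), since the integrand is bounded and every large $T$ differs from some $t_n$ by a bounded amount relative to $T$ once one also invokes condition~\pref{prop2} to make the density of the ``high'' set small. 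The same argument with $\liminf$ in place of $\limsup$ shows $\underline{\psi_\ell}(\xi) = \underline{\psi_{\ell_2}}(\xi)$, and by \eqref{eq:houseoffarts} both of these equal $L$.

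The one point that requires a little care — and which I expect to be the main obstacle — is justifying that $\overline{\psi_\ell}(\xi)$ and $\underline{\psi_\ell}(\xi)$ can be computed as genuine limits along the subsequence $\{t_n\}$ rather than merely as $\limsup/\liminf$ over all $T$. For this one uses that $\psi_\ell$ is uniformly bounded (from \cref{tempered}, the functions $\overline{\psi_\ell}$ are uniformly bounded in $\ell$, and $\psi_\ell$ itself is a bounded measurable function) together with condition~\pref{prop2}: for any $T$, choosing the largest $t_n \le T$, the difference $\frac1T\int_0^T \psi_\ell(a^t\cdot\xi)\,dt - \frac1{t_n}\int_0^{t_n}\psi_\ell(a^t\cdot\xi)\,dt$ is controlled by $\|\psi_\ell\|_\infty \cdot (T - t_n)/T$ plus a term of the same order, and $(T-t_n)/T \to 0$ because the return times to $\{h \le \ell_1\}$ have positive density, hence bounded gaps in the density-one sense. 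So along $\{t_n\}$ the Cesàro averages of $\psi_\ell$ and those over all $T$ have the same $\limsup$ and $\liminf$; combined with \eqref{poopforface} along $\{t_n\}$ this pins $\overline{\psi_\ell}(\xi) = \underline{\psi_\ell}(\xi) = L$. This proves the claim; the remainder of \cref{sameinvmeas} then follows by letting $\ell \to h(\xi)^+$ through rationals using that $\ell_\delta(\xi) \searrow h(\xi)$, together with continuity of $\psi$ near $\{h \le \ell\}$ to identify $L$ with $\overline{\psi}(\xi) = \underline{\psi}(\xi)$ and with the honest limit $\lim_{T}\frac1T\int_0^T \psi(a^t\cdot\xi)\,dt$.
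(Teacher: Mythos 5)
You correctly identify the crux—passing from the Ces\`aro limit along the return-time subsequence $\{t_n\}$ to the full $\limsup/\liminf$ over all $T$—but your proposed resolution does not work. You claim $(T-t_n)/T\to 0$ because the return set to $\{h\le\ell_1\}$ has positive density. That implication is false: density at least $1-\delta_0$ only forces the terminal gap $[t_n,T]$ (which by maximality of $t_n$ contains no return times) to have measure at most $\delta_0 T$ eventually, so $(T-t_n)/T\le\delta_0$, and $\delta_0$ is a fixed positive constant tied to the initial choice $\ell_1=\ell_{\delta_0}(\xi)$. Your estimate therefore yields an error of size $O(\|\psi_\ell\|_\infty\,\delta_0)$, not one tending to $0$. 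You also cannot just retroactively shrink $\delta_0$: the levels $\ell_\delta(\xi)$ increase as $\delta\searrow 0$, so for $\ell$ close to $\ell_1$ there need be no high-density return set at or below height~$\ell$.

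The paper closes this gap with a purely deterministic observation that sidesteps density estimates entirely: on each excursion interval $[T_0,T_1]$ above level $\ell$ the integrand $\psi_\ell$ is constant along the orbit (equal to the excursion average), so $T\mapsto\frac{1}{T}\int_0^T\psi_\ell(a^t\cdot\xi)\,dt$ has the form $c_1/T+c_2$ on $[T_0,T_1]$ and is therefore monotone there. This identifies, with no error term, $\overline{\psi_\ell}(\xi)$ and $\underline{\psi}_\ell(\xi)$ as the $\limsup$ and $\liminf$ of the Ces\`aro averages over the endpoint times $T$ with $h(a^T\cdot\xi)\le\ell$ (this is the formula \eqref{eq:olivesmell}). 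Combining that with \eqref{poopforface} and the nesting of the endpoint-time sets for $\ell_1\le\ell\le\ell_2$ yields $\overline{\psi_{\ell_2}}(\xi)\ge\overline{\psi_\ell}(\xi)\ge\overline{\psi_{\ell_1}}(\xi)$ and $\underline{\psi}_{\ell_2}(\xi)\le\underline{\psi}_\ell(\xi)\le\underline{\psi}_{\ell_1}(\xi)$, and the sandwich closes by \eqref{eq:houseoffarts}. Replacing your density/boundedness estimate with this monotonicity-on-excursions observation would repair your argument.
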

\begin{proof}
Consider any $0<T_0< T_1$ such that $$h(a^{T_0}\cdot \xi)= h(a^{T_1}\cdot \xi)= \ell$$ and $h(a^{t}\cdot \xi)>\ell$ for all $T_0<t<T_1$.
By definition, the function \begin{equation} \label{eeff}T\mapsto\frac 1 T \int_0^T \psi_{\ell} ( a^t \cdot \xi ) \, dt\end{equation} is monotonic on $[T_0, T_1]$. Explicitly, with $A= \int_0^{T_0} \psi_{\ell} ( a^t \cdot \xi ) \, dt$ and $B=\int_{T_0}^{T_1} \psi_{\ell} ( a^t \cdot \xi ) \, dt$, for $T\in [T_0, T_1]$, \eqref{eeff} is equivalent to
$$\frac A T + \frac 1 T \frac {T - T_0}{T_1 - T_0} B
= \frac 1 T\left( A- \frac{ T_0B}{T_1 - T_0}\right) + \frac B {T_1 - T_0} $$
hence the derivative of \eqref{eeff} has constant sign on $ [T_0, T_1]$. It follows that
\begin{align}
\label{eq:olivesmell}\overline {\psi_{\ell}} (\xi) & =
\lim_{T_0\to \infty} \sup\left\{\frac 1 T \int _0 ^T \psi_{\ell} ( a^t \cdot \xi) \, d t : T\ge T_0, h(a^T \cdot \xi)\le \ell\right\}\end{align}
and \begin{align*}
\underline {\psi}_{\ell} (\xi) & =
\lim_{T_0\to \infty} \inf\left\{\frac 1 T \int _0 ^T \psi_{\ell} ( a^t \cdot \xi) \, d t : T\ge T_0, h(a^T \cdot \xi)\le \ell\right\}.\end{align*}

Take any rational $\ell_2 \ge \ell$.
\eqref{eq:olivesmell} combined with \eqref{poopforface} imply
 $\overline{\psi_{\ell_2}}(\xi) \ge \overline{\psi_{\ell}}(\xi)$ and similarly $\overline{\psi_{\ell}}(\xi) \ge \overline{\psi_{\ell_1}}(\xi)$.
Similarly, $$\underline{\psi}_{\ell_2}(\xi) \le \underline{\psi}_{\ell}(\xi)\le \underline{\psi}_{\ell_1}(\xi).$$
The claim then follows from \eqref{eq:houseoffarts}.
\end{proof}

We return to the proof of the lemma. By \cref{tempered}, there is $C>0$ such that for all $\zeta \in \P\calE$ and $g\in G$,
$$ |\Psi(g,\zeta)| \le C + C h(\zeta) + C d(g,\1) .$$
Fix any $0<\delta<1$ and take $\ell\ge \ell_\delta(\xi)$. Then
$$m_\R \left(\{t\in [0,T]: h(a^{t}\cdot \xi)\ge \ell\}\right) \le \delta T$$
for all sufficiently large $T$.

Consider some large $T>0$. If $h(a^{T}\cdot \xi)\le \ell$ then,
 by definition, $$ \frac 1 T \int _0 ^T \psi ( a^t \cdot \xi) \, d t =
 \frac 1 T \int _0 ^T \psi_\ell ( a^t \cdot \xi) \, d t .$$
Given any $T_0<T_1$ such that $h(a^{T_0}\cdot \xi) = h(a^{T_1}\cdot \xi) = \ell $ and $ h(a^T\cdot \xi)\ge \ell $ for all $T_0\le T\le T_1$ then, having considered $T_0$ sufficiently large, we have
 \begin{align*}
 \Big |\frac 1 T& \int _0 ^T  \psi ( a^t \cdot \xi) \, d t -
 \frac 1 T \int _0 ^T \psi_\ell ( a^t \cdot \xi) \, d t
 \Big|
 \\ &\le
\frac 1 T \left(
\Bigl| \int _0 ^{T_0}  \psi ( a^t \cdot \xi)-  \psi_\ell ( a^t \cdot \xi)  \, d t \Bigr|
 +
  \Big |\int _{T_0} ^T  \psi ( a^t \cdot \xi) \, d t \Big| + \Big| \int _{T_0} ^T \psi_\ell ( a^t \cdot \xi) \, d t
 \Big|\right)\\
& \le \frac 1 T \left( 0 + \bigl(C + C (T- T_0) + C \ell\bigr )+ \frac {T-T_0}{T_1-T_0}\bigl( C + C (T_1- T_0) + C \ell\bigr)\right)
 \\& \le \frac {2 C +2C\ell} T +2C \frac{T- T_0}{T}
 \\& \le \frac {2 C +2C\ell} T +2C \delta.
\end{align*}
Given any $\epsilon>0$, by first taking $\delta>0$ sufficiently small and taking $\ell\ge \ell_\delta(\xi)$, by considering $T$ sufficiently large we conclude that $|\overline \psi(\xi) - \overline{\psi_\ell}(\xi)| <\epsilon$ and $|\underline \psi(\xi) - \underline{\psi}_{\ell}(\xi)| <\epsilon.$

From \cref{wetandclaimy}, for any  $\ell>h(\xi)$ we have $\overline{\psi_\ell}= \underline{\psi}_\ell=L=:\overline{\psi_{ \ell_{\delta_0(\xi)}}}$ for any choice of $0<\delta_0<1$.   The independence of $\ell$ then implies for all such $\ell$ that
 $\overline{\psi_\ell}= \overline{\psi} $, $\underline{\psi}_\ell =\underline{\psi}$, and thus $\overline{\psi}= \underline{\psi} $. The conclusions of the lemma then follow.
\end{proof}


\subsubsection{A classification of \texorpdfstring{$\{a^t\}$-}{}orbits and discontinuity points of $\psi_\ell$} \label{OrbitClassification}
Now we show that the cut-off cocycle $\psi_\ell$ has relatively tame discontinuities.  We do this by explicitly
identifying all possible discontinuities.

Fix some $\ell \in [0,\infty)$ such that the level surface $h\inv(\ell)$ is a codimension-one, $C^\infty$ submanifold in $G/\Gamma$; by Sard's theorem and the implicit function theorem
this holds for a.e.\ $\ell$ in the range of $h$.
Recall we extend $h\colon G/\Gamma\to \R$ to $h\colon \P\calE\to \R$ via the canonical projection $\pi\colon \P\calE\to G/\Gamma$. We categorize points $x\in G/\Gamma$ and thus $\xi \in \P\calE$ with $h(\xi) > \ell$ based on how its orbit under the flow~$\{a^t\}$ meets $h\inv (\ell)$.

\begin{description}
\item [Tangential orbits {\normalfont($T_\ell$)}] Write $T_\ell$ for the set of points $\xi\in \P\calE$ with $h(\xi) > \ell$, such that the forward or backwards $\{a^t\}$-orbit of~$\pi(\xi)$ in $G/\Gamma$ is tangent to $h\inv (\ell)$ at some point of the orbit.

\item [Non-recurrent {\normalfont($N_\ell$)}] We say $\xi\in \P\calE$ with $h(\xi)>\ell$ is \emph{forward non-recurrent} to height~$\ell$ if
\begin{enumerate}
\item there is $t_0<0$ such that $h(a^{t_0}\cdot \xi)= \ell$ and the $\{a^t\}$-orbit of~$\pi(\xi)$ is transverse to~$h\inv (\ell)$ in $G/\Gamma$ at~${t_0}$;
and
\item $h(a^{t}\cdot \xi) > \ell$ for all $t>t_0$.
\end{enumerate}
We similarly say that $\xi$ is \emph{backwards non-recurrent} to height~$\ell$ if there is $t_0>0$ such that $h(a^{t_0}\cdot \xi)= \ell$ and the $\{a^t\}$-orbit of~$\pi(\xi)$ is transverse to~$h\inv (\ell)$ in $G/\Gamma$ at~${t_0}$ and if $h(a^{t}\cdot \xi) > \ell$ for all $t<t_0$.
Write $N_\ell$ for the set of points that are either forward or backwards non-recurrent to height~$\ell$.

\item[Excursions {\normalfont($E_\ell$)}] We say $\xi\in \P\calE$ with $h(\xi)>\ell$ is in an \emph{excursion} if there are $t_0< 0< t_1$ such that
\begin{enumerate}
	\item $h(a^{t_0}\cdot \xi) = \ell$ and the $\{a^t\}$-orbit of $\pi(\xi)$ is transverse in $G/\Gamma$ to $h\inv (\ell)$ at ${t_0}$;
	\item $h(a^{t_1}\cdot \xi) = \ell$ and the $\{a^t\}$-orbit of $\pi(\xi)$ is transverse in $G/\Gamma$ to $h\inv (\ell)$ at ${t_1}$;
	and
	\item $h(a^t \cdot \xi) > \ell$ for all $t\in (t_0,t_1)$.
\end{enumerate}
Write $E_{\ell}$ for the set of points in excursions. We note that the transversality at the endpoints ensures that $E_{\ell}$ is open.

\item[Deep points {\normalfont($D_\ell$)}] We say $\xi$ is \emph{$\ell$-deep} if $h(a^t\cdot \xi) > \ell$ for all $t\in \R$. Write $D_{\ell}$ for the set of
$\ell$-deep points.

\end{description}

Given any $\xi\in \P\calE$ with $h(\xi) > \ell$, we have that $\xi$ is in either $ T_\ell, N_\ell, E_\ell, $ or $D_{\ell}$.

\begin{claim} \label{Measure(T)=0}
If $\eta$ is any $\sigma$-finite Borel measure on $\P\calE$ then for $m_\R$-a.e.\ $\ell \in \R$, we have $\eta \bigl( h\inv (\ell) \bigr) = 0$ and $\eta(T_\ell) = 0$.
\end{claim}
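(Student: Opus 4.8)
\textbf{Proof plan for \cref{Measure(T)=0}.}
The statement has two parts: first, that $\eta\bigl(h\inv(\ell)\bigr)=0$ for a.e.\ $\ell$, and second, that $\eta(T_\ell)=0$ for a.e.\ $\ell$. The first part is the standard disintegration/Fubini argument: the pushforward $h_*\eta$ is a $\sigma$-finite Borel measure on $\R$, so it has at most countably many atoms; for every non-atomic value $\ell$ we get $\eta\bigl(h\inv(\ell)\bigr) = (h_*\eta)(\{\ell\}) = 0$, and the set of atoms has $m_\R$-measure zero. So the first part is essentially free.

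The substance is the second part. The plan is to decompose the tangential set $T_\ell$ according to \emph{where} along the orbit the tangency to $h\inv(\ell)$ occurs, and then integrate over $\ell$. Concretely, recall that $h$ is a $C^\infty$ function on $G/\Gamma$ (see the construction around \eqref{heightDefnII}), lifted to $\P\calE$ via $\pi$. A point $\xi$ with $h(\xi)>\ell$ lies in $T_\ell$ precisely when there exists $s\in\R$ such that $h(a^s\cdot\xi)=\ell$ and $\frac{d}{dt}\big|_{t=s} h(a^t\cdot\xi)=0$; that is, the point $a^s\cdot\xi$ lies in the set
$$
\Sigma := \{\, \zeta\in\P\calE \mid h(\zeta) = \ell \text{ and } \psi^h(\zeta) = 0 \,\},
$$
where $\psi^h(\zeta) := \frac{d}{dt}\big|_{t=0} h(a^t\cdot\zeta)$ is the $Y$-directional derivative of $h$ along the flow (a continuous, indeed $C^\infty$ in the $G$-direction, function on $\P\calE$). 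Thus $T_\ell \subseteq \bigcup_{s\in\R} a^{-s}\cdot\bigl(h\inv(\ell)\cap \{\psi^h = 0\}\bigr)$, so it suffices to show the latter union is $\eta$-null for a.e.\ $\ell$. Since $\eta$ is only assumed $\sigma$-finite, I would first reduce to the case that $\eta$ is finite (exhaust $\P\calE$ by sets of finite measure) and, after that reduction, it suffices to show $\int_\R \eta\bigl(\bigcup_{s} a^{-s}\cdot(h\inv(\ell)\cap\{\psi^h=0\})\bigr)\, d\ell = 0$.

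The key computation is a coarea/Fubini estimate. Consider the map $\Theta\colon \R\times \P\calE \to \R$, $\Theta(s,\xi) = h(a^s\cdot\xi)$, which is $C^\infty$ in $s$ for each fixed $\xi$. For fixed $\xi$ outside the deep set, the function $s\mapsto h(a^s\cdot\xi)$ is a smooth real function; by Sard's theorem applied in the single variable $s$ (or simply because a smooth function $\R\to\R$ has critical values of Lebesgue measure zero), the set of $\ell$ that are critical values of $s\mapsto h(a^s\cdot\xi)$ has $m_\R$-measure zero. Equivalently, for $m_\R$-a.e.\ $\ell$, the orbit of $\xi$ meets $h\inv(\ell)$ only transversally; that is, $\xi\notin T_\ell$. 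So for each fixed $\xi$, $\mathbbm{1}_{T_\ell}(\xi) = 0$ for a.e.\ $\ell$. Now apply Tonelli's theorem to the nonnegative measurable function $(\ell,\xi)\mapsto \mathbbm{1}_{T_\ell}(\xi)$ on $\R\times\P\calE$ (measurability of this set needs a brief check using that the tangency condition is Borel in $(\ell,\xi)$, which follows from continuity of $h$ and $\psi^h$ along orbits):
$$
\int_\R \eta(T_\ell)\, d\ell = \int_{\P\calE}\left(\int_\R \mathbbm{1}_{T_\ell}(\xi)\, d\ell\right) d\eta(\xi) = \int_{\P\calE} 0 \, d\eta(\xi) = 0.
$$
Hence $\eta(T_\ell) = 0$ for $m_\R$-a.e.\ $\ell$, and intersecting with the co-null set of non-atoms of $h_*\eta$ gives the full claim.

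\textbf{Main obstacle.} The only genuinely delicate point is the measurability of $\{(\ell,\xi) : \xi\in T_\ell\}$ as a subset of $\R\times\P\calE$, which is needed to invoke Tonelli. I would handle this by writing $T_\ell$ explicitly in terms of the countable data: $\xi\in T_\ell$ iff $h(\xi)>\ell$ and there is a rational-approximable $s$ with $h(a^s\cdot\xi)=\ell$, $\psi^h(a^s\cdot\xi)=0$; using that $s\mapsto(h(a^s\cdot\xi),\psi^h(a^s\cdot\xi))$ is continuous, one expresses the tangency condition as a countable intersection of countable unions of conditions that are jointly Borel in $(\ell,\xi)$. An alternative, cleaner route avoiding the two-variable measurability entirely is: apply Sard pointwise in $\xi$ as above to conclude $\int_\R \mathbbm{1}_{T_\ell}(\xi)\,d\ell = 0$ for every fixed $\xi$, then note that for each fixed $\ell$ the set $T_\ell$ is Borel (it is a countable union over $s\in\mathbb{Q}$-indexed neighborhoods of preimages under the flow of the Borel set $\Sigma$), and invoke Tonelli in the order "first integrate $d\ell$"; this only requires joint measurability, which again reduces to the continuity of the flow. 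Everything else — the reduction to finite $\eta$, the atom argument for $h_*\eta$, and the one-variable Sard — is routine.
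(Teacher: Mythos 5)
Your argument is correct and follows essentially the same route as the paper: the first part via the (at most countably many) atoms of $h_*\eta$, and the second via Sard's theorem applied pointwise in $\xi$ to the one-variable function $t\mapsto h(a^t\cdot\xi)$ followed by Fubini/Tonelli. The joint measurability that you flag as the main obstacle is handled in the paper by observing that the set $\mathcal{T}=\bigl\{(\ell,\xi) : \exists t,\ h(a^t\cdot\xi)=\ell \text{ and } \tfrac{\partial}{\partial s}h(a^s\cdot\xi)\big|_{s=t}=0\bigr\}$, which contains $\{\ell\}\times T_\ell$ for relevant $\ell$, is the projection of a closed subset of the $\sigma$-compact space $[0,\infty)\times\P\calE\times\R$ and hence is $F_\sigma$ and Borel; note that your alternative route of ``first integrating $d\ell$'' does not actually sidestep this, since Tonelli requires joint measurability regardless of the order of integration, though your explicit countable-decomposition approach would also suffice.
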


\begin{proof}
We have $\eta \bigl( h\inv (\ell) \bigr) = 0$ for all but countably many~$\ell$.

Consider the closed subset
	$$ \textstyle \left\{ (\ell, \xi, t) : \frac{\partial}{\partial s} h(a^s \cdot \xi) \big|_{s = t} = 0 , h(a^t\cdot \xi) = \ell\right\} \subseteq [0,\infty) \times \P\calE \times \R $$ and let
 $\mathcal{T}\subset [0,\infty) \times \P\calE$ denote its image under the projection $$ [0,\infty) \times \P\calE \times \R\to [0,\infty) \times \P\calE.$$
Since $[0,\infty) \times \P\calE \times \R$ is locally compact, $\mathcal{T}$ is Borel.

For  any $\xi\in \P\calE$, the function  $t\mapsto h(a^t\cdot \xi)$ is either constant or, by Sard's theorem,  $m_\R\bigl(\{\ell\in [0,\infty) : (\ell, \xi)\in \mathcal{T}\}\bigr)=0$.
For almost every $\ell$, we have $\{\ell\}\times T_\ell \subset \mathcal{T}$.
 It then follows from Fubini's Theorem that $\eta(T_\ell) = 0$ for $m_\R$-a.e.~$\ell$.
 \end{proof}

As observed above, each function $\psi_\ell$ is bounded but need not be continuous; however, its discontinuities are rather tame.
To compare values of the two height functions $\phi$ and $h$ on $\calE_{\wtd H}$ set \begin{equation}\label{eq:cutoff}\hcutoff := \sup \{h(\xi): \xi \in X_{\wtd H}, \phi(\xi)\le \ell_0\}\end{equation}
where $\ell_0$ is the critical height in \cref{horseforlunch}.
\begin{claim} \label{discontinuities}
For any $\ell>0$ such that $h\inv(\ell)$ is a codimension-one, $C^\infty$ submanifold in $G/\Gamma$, we have the following:
\begin{enumerate}
\item \label{discontinuities-TZND}
The set of discontinuity points of the function $\psi_{\ell}\colon \P \calE\to \R$ is contained in $ h\inv (\ell)\cup T_\ell \cup N_\ell \cup D_\ell$.
\item \label{discontinuities-TZ}
If   $\ell\ge \hcutoff$ then the set of discontinuity points of the restriction $\restrict{\psi_{\ell}}{\P\calE_{\wtd H}}\colon \P \calE_{\wtd H}\to \R$ is contained in $h\inv (\ell)\cup T_\ell$.
\end{enumerate}
\end{claim}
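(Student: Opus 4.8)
The plan is to analyze the definition of $\psi_\ell$ in \eqref{psiellDefn} case by case and locate its discontinuities, using that the cocycle $\Psi$ and its infinitesimal generator $\psi = \Psi'(Y,\cdot)$ are continuous (by \fullcref{SeveralObservations}{smooth}) and $h$-tempered (by \cref{tempered}). The key observation is that for a point $\xi$ with $h(\xi)>\ell$ lying in an excursion (the open set $E_\ell$), the cutoff times $t_-(\xi), t_+(\xi)$, and hence $\tau(\xi)$, vary continuously in $\xi$: this is where transversality of the $\{a^t\}$-orbit of $\pi(\xi)$ to the $C^\infty$ hypersurface $h\inv(\ell)$ enters, via the implicit function theorem applied to the function $(s,\xi)\mapsto h(a^s\cdot\xi)$. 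Since $\psi$ is continuous, the integral $\int_{t_-(\xi)}^{t_+(\xi)}\psi(a^s\cdot\xi)\,ds$ and the normalization by $\tau(\xi)$ are then continuous on $E_\ell$. Likewise, on the open set $\{h<\ell\}$ the function $\psi_\ell$ equals $\psi$ and is continuous there. So the only possible discontinuities of $\psi_\ell$ lie in the complement of $\{h<\ell\}\cup E_\ell$, which by the trichotomy after the definition of deep points is contained in $h\inv(\ell)\cup T_\ell\cup N_\ell\cup D_\ell$. That proves part~\pref{discontinuities-TZND}.

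For part~\pref{discontinuities-TZ}, I would restrict attention to $\P\calE_{\wtd H}$ and show that when $\ell\ge\hcutoff$, the sets $N_\ell$ and $D_\ell$ no longer contribute discontinuities. The point is that $\hcutoff$ was chosen in \eqref{eq:cutoff} precisely so that $h(\xi)\le\hcutoff$ whenever $\phi(\xi)\le\ell_0$; contrapositively, $h(\xi)>\hcutoff$ forces $\phi(\xi)>\ell_0$. So if $\xi\in\P\calE_{\wtd H}$ with $h(\xi)>\ell\ge\hcutoff$ is $\ell$-deep or non-recurrent at height $\ell$, then along the relevant (forward or backward, or bi-infinite) ray the height $h$ stays above $\ell\ge\hcutoff$, hence $\phi$ stays above $\ell_0$. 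Now invoke \cref{horseforlunch}: $\phi$-tempered subexponential growth in the cusp on $X_{\wtd H}$ (the weak version suffices here) gives that on such rays $\Psi$ grows subexponentially with controlled dependence on the heights at the endpoints. For a deep point, there are no endpoints at height $\ell$ in either direction and $\psi_\ell(\xi)=0$ by the third case of \eqref{psiellDefn}; one checks using \cref{horseforlunch} that nearby points (which, being close, also remain deep or make only bounded-height excursions near $\ell$ — here one needs a small local argument) either are also deep, giving value $0$, or have cutoff windows whose contribution to $\psi_\ell$ tends to $0$ as one approaches $\xi$, because the relevant orbit segments lie high in the cusp where $\psi$'s time-averages vanish. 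For a forward non-recurrent point, $\psi_\ell(\xi)=0$ as well (since $t_+(\xi)=+\infty$, we are in the "otherwise" case), and a similar argument shows nearby points have $\psi_\ell$-values approaching $0$.

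Let me be more careful about which step is the obstacle. The continuity on $E_\ell$ is routine once transversality is in hand. The genuine work is showing that a neighborhood of a deep or non-recurrent point $\xi$ (within $\P\calE_{\wtd H}$, and excluding the measure-zero exceptional set $h\inv(\ell)\cup T_\ell$) consists of points on which $\psi_\ell$ is close to $0$. For non-recurrent points this is cleanest: transversality of the orbit of $\pi(\xi)$ to $h\inv(\ell)$ at the single crossing time $t_0$ means that for $\xi'$ near $\xi$ the orbit of $\pi(\xi')$ still crosses $h\inv(\ell)$ transversally near $t_0$, and either it has $t_+(\xi')=+\infty$ (value $0$, matching) or it re-enters, but the re-entry must happen after a long time $t_+(\xi')$ (tending to $+\infty$ as $\xi'\to\xi$, by non-recurrence of $\xi$ and continuity of the flow on compact time intervals), so that by \cref{EasyPsi}\pref{EasyPsi-0} — or directly by the $\phi$-tempered estimate in \cref{horseforlunch} — the averaged value $\frac1{\tau(\xi')}\int\psi(a^s\cdot\xi')\,ds$ is $O(1/\tau(\xi'))\to 0$. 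For deep points $D_\ell$ the analogous argument applies with both $t_-$ and $t_+$ going to $\mp\infty$. The subtlety to handle with care is the uniformity: one must control the orbit over an unboundedly long time, which is exactly what the $h$-temperedness of $\Psi$ (\cref{tempered}) together with the cusp estimate \cref{horseforlunch} provides — the contribution of the high-in-the-cusp portion of the window to the integral is controlled linearly in its length with a subexponential (in fact, when averaged, vanishing) rate, while the lengths of the low portions stay bounded near $\xi$. I expect assembling these uniform estimates at non-recurrent and deep points, and correctly using $\ell\ge\hcutoff$ to guarantee the orbit stays in the regime where \cref{horseforlunch} applies, to be the main obstacle; everything else is bookkeeping with the four-case classification.
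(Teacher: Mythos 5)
Your proposal is correct and follows essentially the same route as the paper: continuity on $\{h<\ell\}$ and on $E_\ell$ for part (1), and for part (2), at a point of $N_\ell\cup D_\ell$ any nearby sequence with nonzero $\psi_\ell$ has $\tau\to\infty$, and the $\phi$-tempered cusp estimate from \cref{horseforlunch} (applicable because $\ell\ge\hcutoff$ forces $\phi>\ell_0$ throughout the excursion) gives $\psi_\ell(\xi_n)\to 0$. One small caveat: the reference to \fullcref{EasyPsi}{0} is not quite right as an alternative for part (2), since that statement is almost-everywhere rather than pointwise; the direct appeal to \cref{horseforlunch} that you also give is the correct tool.
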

\begin{proof}
If $h(\xi)<\ell$ then, as $\psi$ is continuous, $\xi$ is a continuity point of~$\psi_\ell$.

Suppose $h(\xi)>\ell$ and $\xi \in E_\ell$. Since $E_\ell$ is open, if $\xi'\in \P\calE$ is sufficiently close to $\xi$ then $\xi'\in E_\ell$ and the $\{a^t\}$-orbit of $\xi'$ stays close to the $\{a^t\}$-orbit of $\xi$ for a sufficiently large amount of time. In particular, taking $\xi'$ sufficiently close to $\xi$ ensures that the time average defining $\psi_\ell(\xi)$ in \eqref{psiellDefn} is close to the time average defining $\psi_\ell(\xi')$. Thus $\psi_\ell$ is continuous at~$\xi$ which establishes \pref{discontinuities-TZND}.

To establish~\pref{discontinuities-TZ}, let $\xi\in N_\ell \cup D_\ell$. By definition, $\psi_\ell(\xi)=0$.
Suppose $\{\xi_n\} \subseteq \P \calE_{\wtd H}$ converges to $\xi$  and $\psi_\ell(\xi_n) \neq 0$ for all~$n$. We then have $-\infty< t_- (\xi_n)< t_+ (\xi_n) <\infty$ for all~$n$, and $t_+ (\xi_n) - t_- (\xi_n) \to \infty$. From \cref{horseforlunch},
$\psi_\ell(\xi_n) \to 0$.
Indeed let $L= \sup\{\phi(\xi) :\xi \in X_{\wtd H}, h(\xi)\le \ell\}$.
Then for any $\epsilon>0$ there is $C_\epsilon$ so that
\[\psi_\ell(\xi_n)\le \frac 1 {t_+ (\xi_n) - t_- (\xi_n) }\bigl( 2 \omega L + \epsilon(t_+ (\xi_n) - t_- (\xi_n)) + C_\epsilon\bigr). \qedhere\]
\end{proof}

\subsubsection{Sequences of measures on $\P\calE$}
We now consider a sequence $\{\eta_n\}$ of Borel probability measures on $\P \calE$ converging to a Borel probability measure~$\eta$. Under suitable hypotheses controlling the discontinuity sets of $\psi_\ell$ and using \cref{sameinvmeas,discontinuities}
 we will control the integral $\int \Psi(a, \cdot) \, d \eta$ in terms of integration against $ \eta_n$.

\begin{corollary}\label{average limit}
Let $\eta_n$ be a sequence of Borel probability measures on $\P \calE$ converging in the weak-$*$ topology to an $\{a^t\}$-invariant, Borel probability measure~$\eta$.

Suppose each $\eta_n$ is $\{a^t\}$-invariant and that either
\begin{enumerate}
\item \label{baboonfeces1} $\eta\bigl(h\inv (\ell)\cup T_\ell \cup N_\ell \cup D_\ell \big) = 0$ for $m_\R$-\ae $\ell$, or
\item \label{baboonfeces2} each $\eta_n$ is supported on $\P\calE_{\wtd H}$.
\end{enumerate}
Then $$\int \overline\Psi(a, \cdot) \,d \eta = \lim_{n\to \infty} \int \overline\Psi(a, \cdot) \,d \eta_n.$$
In the case that $\eta_n$ are possibly not $\{a^t\}$-invariant,
\begin{enumerate}[resume]

\item \label{baboonfeces3} if  for $m_\R$-a.e.\ sufficiently large   $\ell>0$ and  for every $n$,  each $\eta_n$ is supported on $\P \calE_{\wtd H}$, $\psi\in L^1(\eta_n)$, and
$$\int \psi \ d \eta_n = \int \psi_\ell \ d \eta_n$$
then $$\int \overline\Psi(a, \cdot) \,d \eta = \lim_{n\to \infty} \int \psi \ d \eta_n. $$
\end{enumerate}
\end{corollary}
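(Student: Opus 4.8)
\textbf{Plan of proof for \cref{average limit}.}
The strategy is to reduce everything to the following principle: if $\psi_\ell$ is bounded and its discontinuity set is $\eta$-null, then weak-$*$ convergence $\eta_n\to\eta$ gives $\int\psi_\ell\,d\eta_n\to\int\psi_\ell\,d\eta$ by the Portmanteau theorem (the standard statement that $\int f\,d\eta_n\to\int f\,d\eta$ for bounded $f$ whose discontinuity set is $\eta$-null). Once that is in hand, I will move between $\psi_\ell$ and $\overline\Psi(a,\cdot)$ using the two facts already established: \cref{EasyPsi}\pref{EasyPsi-=} gives $\overline\psi=\overline\Psi(a,\cdot)$ pointwise, and \cref{sameinvmeas} gives that, with respect to any $\{a^t\}$-invariant measure, $\overline\psi=\underline\psi=\overline{\psi_\ell}=\underline{\psi_\ell}$ a.e.\ for all $\ell$ exceeding $h(\xi)$, and moreover the time averages converge (not merely $\limsup$). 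Thus for an $\{a^t\}$-invariant measure $\nu$ and $\ell$ large, $\int\psi_\ell\,d\nu=\int\overline{\psi_\ell}\,d\nu=\int\overline\Psi(a,\cdot)\,d\nu$ by the pointwise ergodic theorem applied to the bounded function $\psi_\ell$ — here I use that $\nu$-a.e.\ point is eventually below height $\ell$ only in cases where that holds; more robustly, I use $\int\psi_\ell\,d\nu=\int\overline{\psi_\ell}\,d\nu$ directly from invariance and boundedness, then identify $\overline{\psi_\ell}=\overline\Psi(a,\cdot)$ $\nu$-a.e.\ via \cref{sameinvmeas}.

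\textbf{Case \pref{baboonfeces1}.} Fix, using \cref{Measure(T)=0} applied to $\eta$ together with hypothesis \pref{baboonfeces1}, some $\ell$ for which $h\inv(\ell)$ is a smooth codimension-one submanifold and $\eta(h\inv(\ell)\cup T_\ell\cup N_\ell\cup D_\ell)=0$. By \cref{discontinuities}\pref{discontinuities-TZND}, the discontinuity set of $\psi_\ell$ is contained in $h\inv(\ell)\cup T_\ell\cup N_\ell\cup D_\ell$, hence $\eta$-null. Since $\psi_\ell$ is bounded, Portmanteau gives $\int\psi_\ell\,d\eta_n\to\int\psi_\ell\,d\eta$. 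Now each $\eta_n$ is $\{a^t\}$-invariant, so $\int\psi_\ell\,d\eta_n=\int\overline{\psi_\ell}\,d\eta_n$; for those $n$ I then invoke \cref{sameinvmeas} (applied to $\eta_n$) to replace $\overline{\psi_\ell}$ by $\overline\Psi(a,\cdot)$ $\eta_n$-a.e., provided $\ell$ is large enough that $\ell>h(\xi)$ for $\eta_n$-a.e.\ $\xi$; this last point needs care since $\eta_n$ may not be supported below height $\ell$. The correct move is to not fix $\ell$ but to let $\ell\to\infty$: for each fixed $n$, $\int\overline{\psi_\ell}\,d\eta_n\to\int\overline\Psi(a,\cdot)\,d\eta_n$ as $\ell\to\infty$ by \cref{sameinvmeas} and dominated convergence (the $\overline{\psi_\ell}$ are uniformly bounded, as noted after \eqref{psiellDefn}), and similarly for $\eta$. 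A standard diagonal/uniformity argument then interchanges the two limits: one shows $\int\psi_\ell\,d\eta_n\to\int\overline\Psi(a,\cdot)\,d\eta_n$ uniformly in $n$ (again from the uniform bound on $\psi_\ell$ and $\overline{\psi_\ell}$, plus \cref{sameinvmeas}), which lets me conclude $\int\overline\Psi(a,\cdot)\,d\eta_n\to\int\overline\Psi(a,\cdot)\,d\eta$.

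\textbf{Cases \pref{baboonfeces2} and \pref{baboonfeces3}.} When each $\eta_n$ is supported on $\P\calE_{\wtd H}$, pick $\ell\ge\hcutoff$ with $h\inv(\ell)$ smooth and $\eta(h\inv(\ell)\cup T_\ell)=0$ (possible for a.e.\ $\ell$ by \cref{Measure(T)=0} applied to $\eta$). By \cref{discontinuities}\pref{discontinuities-TZ}, the restriction $\psi_\ell|_{\P\calE_{\wtd H}}$ has discontinuity set inside $h\inv(\ell)\cup T_\ell$; since $\eta$ is then also supported on $\P\calE_{\wtd H}$ (as a weak-$*$ limit of measures on the closed-up-to-the-relevant-compactification set — here I use that $\P\calE_{\wtd H}$ is closed in $\P\calE$ after a compact extension, or work with the proper projection), Portmanteau applies again and $\int\psi_\ell\,d\eta_n\to\int\psi_\ell\,d\eta$. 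In case \pref{baboonfeces2}, invariance of $\eta_n$ plus the same $\ell\to\infty$ argument as above finishes it. In case \pref{baboonfeces3}, the $\eta_n$ need not be invariant, but the hypothesis $\int\psi\,d\eta_n=\int\psi_\ell\,d\eta_n$ replaces the role of invariance: directly, $\int\psi\,d\eta_n=\int\psi_\ell\,d\eta_n\to\int\psi_\ell\,d\eta=\int\overline{\psi_\ell}\,d\eta=\int\overline\Psi(a,\cdot)\,d\eta$, where the penultimate equality is the pointwise ergodic theorem for the invariant limit $\eta$ applied to the bounded function $\psi_\ell$ (giving $\int\psi_\ell\,d\eta=\int\overline{\psi_\ell}\,d\eta$) and the last is \cref{sameinvmeas} applied to $\eta$ with $\ell\ge\hcutoff\ge h(\xi)$ for $\eta$-a.e.\ $\xi$ — again taking $\ell$ large suffices since $\eta$ is a probability measure, so $h$ is $\eta$-a.e.\ finite and we may enlarge $\ell$ past any prescribed $\eta$-mass.

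\textbf{Main obstacle.} The delicate point is the interchange of the limits in $n$ and in $\ell$ in cases \pref{baboonfeces1} and \pref{baboonfeces2}: Portmanteau requires fixing $\ell$ first (to get an $\eta$-null discontinuity set), whereas identifying $\int\psi_\ell\,d\eta_n$ with $\int\overline\Psi(a,\cdot)\,d\eta_n$ via \cref{sameinvmeas} wants $\ell$ large relative to the (unbounded) support of $\eta_n$ in the $h$-direction. I expect to resolve this by proving a uniform-in-$n$ estimate $\bigl|\int\psi_\ell\,d\eta_n-\int\overline\Psi(a,\cdot)\,d\eta_n\bigr|\to 0$ as $\ell\to\infty$, which follows from the uniform boundedness of $\{\psi_\ell\}$ and $\{\overline{\psi_\ell}\}$ together with $\sup_n\eta_n(\{h>\ell\})\to 0$ — the latter being exactly uniform tightness of $\{\eta_n\}$, which holds here since $\{\eta_n\}$ converges weak-$*$ to a probability measure and hence is tight. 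With that uniform estimate, a standard $\epsilon/3$ argument closes the proof.
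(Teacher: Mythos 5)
Your proof is correct and follows essentially the same route as the paper. You identify the same three moving parts: (i) for a single $\{a^t\}$-invariant measure, $\int\overline\Psi(a,\cdot)\,d\nu = \lim_{\ell\to\infty}\int\psi_\ell\,d\nu$ by \cref{sameinvmeas}, \fullcref{EasyPsi}{=}, dominated convergence, and Birkhoff (this is the paper's equation~\eqref{cottagecheeselegs0}); (ii) for almost every $\ell$, the discontinuity set of $\psi_\ell$ (restricted to the relevant domain) has $\eta$-measure zero by \cref{Measure(T)=0} and \cref{discontinuities}, so Portmanteau gives $\int\psi_\ell\,d\eta_n\to\int\psi_\ell\,d\eta$; (iii) a uniform-in-$n$ estimate $\bigl|\int\psi_\ell\,d\eta_n - \int\overline\Psi(a,\cdot)\,d\eta_n\bigr|\le C\delta$ once $\ell\ge\ell_\delta$, deduced from uniform tightness of $\{\eta_n\}$ (which, as you note, follows from weak-$*$ convergence to a probability measure) combined with the boundedness of $\psi_\ell,\overline{\psi_\ell}$ and \cref{sameinvmeas}. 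The paper's proof is exactly this, with the uniform estimate appearing as its~\eqref{cottagecheeselegs2}--\eqref{cottagecheeselegs3} and the $\epsilon/3$ argument carried out at the end.

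One small blemish in the write-up of part~\pref{baboonfeces3}: the assertion that ``$\ell\ge\hcutoff\ge h(\xi)$ for $\eta$-a.e.\ $\xi$'' is false, since $h$ is unbounded on $\P\calE_{\wtd H}$ and $\eta$ need not concentrate below height $\hcutoff$. Your corrective parenthetical (``enlarge $\ell$ past any prescribed $\eta$-mass'') is the right idea, but the cleanest way to close this case, and what the paper implicitly does, is to observe that by hypothesis $\int\psi_\ell\,d\eta_n$ is the same number for all suitable $\ell$, hence so is $\int\psi_\ell\,d\eta=\lim_n\int\psi_\ell\,d\eta_n$; since this quantity tends to $\int\overline\Psi(a,\cdot)\,d\eta$ as $\ell\to\infty$ by~\eqref{cottagecheeselegs0}, it actually equals it. Either repair works, but as written the chain of equalities $\int\psi_\ell\,d\eta=\int\overline{\psi_\ell}\,d\eta=\int\overline\Psi(a,\cdot)\,d\eta$ holds only up to an error of size $C\,\eta(\{h\ge\ell\})$ for fixed $\ell$, and that error does not vanish for any single $\ell$.
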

We remark that if an $\{a^t\}$-invariant Borel probability measure $\eta$ on $\P\calE$ projects to the Haar measure on $G/\Gamma$ then, as Haar-\ae point in $G/\Gamma$ has dense orbit in $G/\Gamma$, the hypothesis in \pref{baboonfeces1} is automatically satisfied. We also remark that \pref{baboonfeces2} and \pref{baboonfeces3} only apply to measures supported on the restricted bundle $\calE_{\wtd H}. $
\begin{proof}
First, consider any $\{a^t\}$-invariant, Borel probability measure $\wtd \eta$ supported on $\P\calE$.
Using \cref{sameinvmeas}, dominated convergence, and the $\{a^t\}$-invariance of $ \wtd \eta$ we have
\begin{align} \label{cottagecheeselegs0}
\int \overline\Psi(a, \cdot) \,d \wtd \eta = \int \overline \psi \,d \wtd \eta = \lim_{\ell\to \infty} \int \overline{ \psi_\ell} \,d \wtd \eta = \lim_{\ell\to \infty} \int \psi_\ell \,d \wtd \eta.\end{align}

We apply \cref{Measure(T)=0} and \cref{discontinuities}. Under the hypothesis in parts \pref{baboonfeces1}, \pref{baboonfeces2}, or  \pref{baboonfeces3},  for almost every $\ell\ge \ell_0$ the restriction of each $\psi_\ell$ to $\P\calE_{\wtd H}$ is continuous on a set of full $\eta$-measure; it follows (see for example \cite[Theorem 2.7)]{MR1700749} or  \cite[Corollary 2.2.10]{MR3837546})
that
\begin{equation}\label{lasagnehouse}\int \psi_\ell \,d \eta = \lim _{n\to \infty}\int \psi_\ell \,d \eta_n.\end{equation}
 Part \pref{baboonfeces3} follows by applying \eqref{cottagecheeselegs0}   with $\wtd \eta=\eta$ and an appropriate choice of such $\ell$.

 For parts  \pref{baboonfeces1} and  \pref{baboonfeces2}, we utilize  a  uniform (in $n$) version of \eqref{cottagecheeselegs0}: Using that $\eta_n\to \eta$,
 given $\delta>0$ there is $\ell_\delta$ such that for all $\ell\ge \ell_\delta$ and all sufficiently large $n$ we have
 $$\eta_n \{\xi: h(\xi)\ge \ell\} \le \eta_n \{\xi: h(\xi)\ge \ell_\delta \} \le \delta.$$
 Using that each $\eta_n$ is $\{a^t\}$-invariant, \cref{sameinvmeas} then implies $$\eta_n\{ \xi : \overline\psi(\xi) =\overline{\psi_\ell}(\xi)\} \ge (1- \delta)$$ for all sufficiently large $n$.
There is $C$ independent of $\ell$ such that $|\overline{\psi_{\ell}}(\xi) - \overline \psi(\xi)|\le C$ for all $\xi\in \P\calE$.
Then for $\ell\ge \ell_\delta$,
\begin{align}\label{cottagecheeselegs2}
\left|\int \overline\Psi(a, \cdot) \,d \eta - \int \psi_{\ell} \,d \eta\right| \le \delta C,\quad \quad
\intertext{and for all sufficiently large $n$ and all $\ell\ge \ell_\delta$,}
\left|\int \overline\Psi(a, \cdot) \,d \eta_n - \int \psi_{\ell} \,d \eta _n\right| \le \delta C. \label{cottagecheeselegs3}
\end{align}

Under the hypothesis of parts \pref{baboonfeces1} or \pref{baboonfeces2}, there exists a sequence $\{\ell_j\}$ with $\ell_j\to \infty$ such that  \eqref{lasagnehouse} holds.
 Given any $\delta>0$, first take $\ell_j> \ell_\delta$  and then  take $n$ sufficiently large in \eqref{lasagnehouse} so that $$\left| \int \psi_{\ell_j} \,d \eta - \lim _{n\to \infty}
 \int \psi_{\ell_j} \,d \eta_n\right|<\delta.$$
From \eqref{cottagecheeselegs2} and \eqref{cottagecheeselegs3},
$$\left|\int \overline\Psi(a, \cdot) \,d \eta - \int \overline\Psi(a, \cdot) \,d \eta_n  \right| \le (2C+1) \delta$$
and conclusions \pref{baboonfeces1} and \pref{baboonfeces2} follow as $C$ is independent of $\delta$.
\end{proof}

To apply the above corollary, we often need to verify tightness of the family $\{\eta_n\}$. The following corollary of Lemma \ref{lem:maxpathstight} gives sufficient criteria for a sequence of measures supported on $\P\calE_{\wtd H}$ to be uniformly tight.
\begin{corollary}\label{cor:tight}
Let $\eta_n$ be any sequence of $\{a^t\}$-invariant, Borel probability measures on $\P\calE_{\wtd H}$ such that
$$\int \overline\psi \,d\eta_n= \chi(Y, X_{\wtd H})$$
for every~$n$. Then the family $\{\eta_n\}$ is uniformly tight.
\end{corollary}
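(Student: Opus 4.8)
I will deduce \cref{cor:tight} from \cref{lem:maxpathstight} via a standard averaging/ergodic-decomposition argument together with the bound $\int \overline\psi \,d\eta_n = \chi(Y,X_{\wtd H})$. The first step is to observe that, by \cref{EasyPsi}\pref{EasyPsi-ExplicitBoundsBigworld}, for each $\{a^t\}$-invariant Borel probability measure $\eta$ on $\P\calE_{\wtd H}$ we have $\overline\psi(\xi) \le \chi(Y,X_{\wtd H})$ for $\eta$-a.e.\ $\xi$, so the hypothesis $\int \overline\psi\,d\eta_n = \chi(Y,X_{\wtd H})$ forces $\overline\psi = \chi(Y,X_{\wtd H})$ for $\eta_n$-a.e.\ $\xi$; in particular $\eta_n$ gives zero mass to the $\{a^t\}$-invariant set where $\overline\psi < \chi(Y,X_{\wtd H})$. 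Combined with \cref{EasyPsi}\pref{EasyPsi-0}, this shows that for $\eta_n$-a.e.\ $\xi$ the forward $\{a^t\}$-orbit of $\xi$ cannot eventually remain in the region $\{\phi \ge \ell_0\}$; i.e.\ $\eta_n$-a.e.\ $\xi$ returns infinitely often to the compact set $X_{\wtd H,\le \ell_0}$.

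The second step is to realize $\eta_n$ (or rather its projection $\mu_n := (\pi_{\calE_{\wtd H}})_*\eta_n$ to $X_{\wtd H}$) as an approximate average of empirical measures of the type appearing in \cref{lem:maxpathstight}. Using the $\{a^t\}$-invariance of $\eta_n$ and the Birkhoff/pointwise ergodic theorem applied to the bounded function $\overline{\psi_\ell}$ (as in \cref{sameinvmeas}), together with the fact that $\int \overline{\psi}\,d\eta_n$ equals $\chi(Y,X_{\wtd H})$, I can find, for each large $n$ and each large $T$, a set of base points $\xi$ of nearly full $\eta_n$-measure whose orbit segments $\{a^s\cdot\xi : 0\le s\le T\}$ both (i) begin and end within bounded height (using the infinite-return property from Step 1, together with the transversality/Sard considerations of \cref{Measure(T)=0} to trim the endpoints down to height $\le \ell_0$ as in \cref{GetXnAndTn}) and (ii) realize average logarithmic growth $\tfrac1T\log\|\calA(a^T,\xi)\|$ arbitrarily close to $\chi(Y,X_{\wtd H})$, via \eqref{Psi=Intpsi} and \eqref{cottagecheeselegs0}. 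For such a trimmed orbit segment of length $t_n$, the empirical measure $\eta_{\xi,t_n} := \tfrac1{t_n}\int_0^{t_n}\exp(sY)_*\delta_\xi\,ds$ satisfies precisely the hypotheses of \cref{lem:maxpathstight} (endpoints of height $\le\ell_0$, growth $\to\chi(Y,X_{\wtd H})$), hence the family of all such empirical measures is uniformly tight. Since $\mu_n$ is, up to arbitrarily small error in total variation, an integral average over $\xi$ of these uniformly tight empirical measures $\eta_{\xi,t_n}$ (a Fubini/Fatou argument using $\{a^t\}$-invariance of $\mu_n$), uniform tightness of $\mu_n$ follows; finally $\eta_n$ is uniformly tight because $\P\calE_{\wtd H}\to X_{\wtd H}$ is a proper (compact-fiber) map, as noted after \cref{lem:unipotenttight}.

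The third, more technical point is making the trimming precise: a priori the orbit segment $\{a^s\cdot\xi:0\le s\le t_n\}$ only returns to the compact set infinitely often, and one has to select, for $\eta_n$-most $\xi$ and all large $t_n$, first and last return times $p_n(\xi)\le q_n(\xi)$ to $X_{\wtd H,\le\ell_0}$ so that the truncated segment is long (a definite proportion of $t_n$) and still carries growth close to $\chi(Y,X_{\wtd H})$. This is exactly the mechanism of \cref{GetXnAndTn}, applied pointwise along orbits, and it uses the weak $\phi$-tempered subexponential growth in the cusp (\cref{horseforlunch}) to bound the cocycle contribution of the discarded initial and terminal excursions by $o(t_n)$, so that the growth rate of the truncated segment still tends to $\chi(Y,X_{\wtd H})$. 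Once the truncated empirical measures $\eta_{\xi,q_n(\xi)-p_n(\xi)}$ are produced, \cref{lem:maxpathstight} applies verbatim to this family. The main obstacle I anticipate is bookkeeping the two-sided error terms uniformly in $n$: one must arrange the ``good set'' of base points to have $\eta_n$-measure $\ge 1-\delta$ with $\delta$ independent of $n$, which requires that the convergence in the ergodic averages (of $\overline{\psi_\ell}$ and of the height occupation statistics) be controlled uniformly over the family $\{\eta_n\}$ — this is where the uniform bound $\int\overline\psi\,d\eta_n = \chi(Y,X_{\wtd H})$ and the uniform height-tail estimates (as in the proof of \cref{average limit}, inequalities \eqref{cottagecheeselegs2}--\eqref{cottagecheeselegs3}) do the essential work.
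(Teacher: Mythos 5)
Your first step is the paper's first step, and the core idea — pass from the empirical-measure tightness of \cref{lem:maxpathstight} to tightness of $\{\eta_n\}$ via the ergodic theorem — is correct and matches the paper. But the uniformity-over-$n$ concern you raise in your third step is a genuine gap in the argument as you have organized it, not a technicality: once you try to decompose $\mu_n$ into an approximate average of \emph{trimmed} empirical measures $\eta_{\xi,q_n(\xi)-p_n(\xi)}$, the approximation error (the mass discarded in trimming) depends on $T$, $\xi$, and $n$, and to close the estimate you would need some Egorov-type control on the rate of Birkhoff convergence that is uniform in $n$. The references you offer to supply it, \eqref{cottagecheeselegs2}--\eqref{cottagecheeselegs3}, cannot: those inequalities in the proof of \cref{average limit} are derived under the hypothesis that $\eta_n\to\eta$ weak-$*$, which presupposes exactly the tightness of $\{\eta_n\}$ you are trying to prove, so invoking them here is circular.

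The fix, which is what the paper does, is to not trim and not discretize in $T$. The \emph{untrimmed} empirical measures $\eta_{\xi,T}=\tfrac{1}{T}\int_0^T(a^s)_*\delta_\xi\,ds$ satisfy the \emph{exact} identity $\mu_n(E)=\int\eta_{\xi,T}(E)\,d\mu_n(\xi)$ for every $T$, by $\{a^t\}$-invariance. By your first step, \cref{sameinvmeas}, and \cref{GetXnAndTn}, $\eta_n$-a.e.\ $\xi$ has growth rate converging to $\chi(Y,X_{\wtd H})$ and returns to $X_{\wtd H,\le\ell_0}$ along a sequence of times; running the occupation-time estimate $l_n\le\chi^{-1}(C/L+C/t_n+2\delta)$ from the \emph{proof} of \cref{lem:maxpathstight} along each such orbit — a bound that depends only on the cocycle constant $C$, the height cutoff $L$, the segment length, and the deviation of the growth rate from $\chi(Y,X_{\wtd H})$, not on the base point or the measure — produces, for each $\delta>0$, a rational $\ell_\delta$ \emph{independent of $n$} such that $\lim_{T\to\infty}\tfrac{1}{T}\,m_\R(\{\,t\in[0,T]:\phi(a^t\cdot\xi)\ge\ell_\delta\,\})<\delta$ for $\eta_n$-a.e.\ $\xi$. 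Taking $E=\{\phi\ge\ell_\delta\}$ in the exact identity and letting $T\to\infty$ (this is the pointwise ergodic theorem for $\mathbbm 1_E$ plus dominated convergence) gives $\eta_n(\{\phi\ge\ell_\delta\})<\delta$, with no uniformity-over-$n$ issue and no averaging of trimmed measures.
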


 \begin{proof}
 Let $\eta$ be any $\{a^t\}$-invariant, Borel probability measure on $\P\calE_{\wtd H}$ with $\int \overline\psi \,d\eta= \chi(Y, X_{\wtd H}).$
From \cref{EasyPsi}\pref{EasyPsi-ExplicitBoundsBigworld} we have $\overline\psi(\xi) = \chi(Y, X_{\wtd H})$ for $\eta$-\ae $\xi\in \P\calE_{\wtd H}$.
In particular, for $\eta$-a.e. $\xi= \bigl( x,[v] \bigr) \in \P\calE_{\wtd H}$,
$$ \chi(Y, X_{\wtd H}) \ge \limsup_{T\to \infty}\frac{1}{T} \log \|\calA(a^T,x) \|\ge \limsup_{T\to \infty} \frac{1}{T}\Psi(a^T,\xi) = \overline\psi (\xi)= \chi(Y, X_{\wtd H}).$$

For $\eta$-almost every $\xi\in \P\calE_{\wtd H}$, there is $\ell_1\ge \phi(\xi)$ and a sequence $t_j\to \infty$ such that $\phi(a^{t_j}\cdot \xi)\le \ell_1$ for all $j$.
If $\overline\psi(\xi) = \chi(Y, X_{\wtd H})$, then from \cref{GetXnAndTn}, we further have that  $\phi(a^{t'_j}\cdot \xi)\le \ell_0$ for some sequence $t'_j\to \infty$.

From the pointwise ergodic theorem, for $\eta$-almost every $\xi\in \P \calE_{\wtd H}$ and every rational $\ell$ the limit $$\lim_{T\to \infty} \frac 1 T \, m_\R \bigl( \{\, t \in [0,T] \mid \tempfun ( a^t\cdot \xi ) \ge \ell \,\} \bigr)$$ exists.    \cref{lem:maxpathstight} implies the following: given any $\delta > 0$ there exists a rational $\ell_\delta > 0$ such that for $\eta$-almost every $\xi\in \P\calE$ with $\overline\psi(\xi) = \chi(Y, X_{\wtd H})$,
$$\lim_{T\to \infty} \frac 1 T \, m_\R \bigl( \{\, t \in [0,T] \mid \tempfun ( a^t\cdot \xi ) \ge \ell_\delta \,\} \bigr) < \delta .$$
Then for any $\{a^t\}$-invariant, Borel probability measure on $\P\calE_{\wtd H}$ with $\int \overline\psi \,d\eta_n= \chi(Y, X_{\wtd H}),$
the pointwise ergodic theorem implies \begin{equation*}\eta_n \bigl( \{\,\xi\in \P\calE_{\wtd H} \mid h(\xi) \ge \ell_\delta \,\} \bigr) < \delta. \qedhere \end{equation*}
\end{proof}

\subsection{Invariance and additivity of the time-averaged cocycle}
While our modifications of the norm-growth cocycle are beneficial for handling issues of convergence, a priori it destroys all connection the group action even just over $A$ or $A_H$.  Here we show that the space average of the time-averaged cocycles remains a homomorphism from $A$. This turns out to be all we need to run proofs analogous to those in  \cite{BFH} and \cite{BFH-SLnZ}.

We begin with the following consequence of the pointwise ergodic theorem and \cref{sameinvmeas}.
\begin{claim} \label{lentilvomit}
Let $\eta$ be an $\{a^t\}$-invariant, Borel probability measure on~$\P \calE$.
 If $b\in G$ is in the centralizer of $\{a^t\}$ then for $\eta$-\ae $\xi\in \P\calE$, $$\overline\Psi(a, \xi)=\overline\Psi(a, b \cdot \xi).$$
\end{claim}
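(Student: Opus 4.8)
\textbf{Proof plan for \cref{lentilvomit}.}

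The key observation is that $\overline\Psi(a,\cdot)$, as defined via the cocycle $\Psi$, behaves like a time-average of the function $\psi$; the claim then follows from the pointwise ergodic theorem applied to $\psi_\ell$ combined with the identities established in \cref{sameinvmeas}. First I would recall that, by \cref{EasyPsi}\pref{EasyPsi-=} and the definition of $\psi$ via \eqref{Psi=Intpsi}, we have $\overline\Psi(a,\xi) = \overline\psi(\xi)$ for every $\xi\in\P\calE$, and by \cref{sameinvmeas}, for $\eta$-a.e.\ $\xi$ the limit $\overline\psi(\xi) = \lim_{T\to\infty}\frac1T\int_0^T\psi(a^t\cdot\xi)\,dt$ exists and moreover $\overline\psi(\xi)=\overline{\psi_\ell}(\xi)=\underline{\psi_\ell}(\xi)$ for every rational $\ell > h(\xi)$. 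Since each $\psi_\ell$ is a bounded measurable function, the pointwise ergodic theorem gives, for $\eta$-a.e.\ $\xi$ and every rational $\ell$, that $\overline{\psi_\ell}(\xi)$ equals the conditional expectation $E_\eta[\psi_\ell\mid \calI](\xi)$ of $\psi_\ell$ with respect to the $\sigma$-algebra $\calI$ of $\{a^t\}$-invariant sets.

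The heart of the argument is that $b$ commutes with $\{a^t\}$: this means that the map $\xi\mapsto b\cdot\xi$ is a measure-theoretic automorphism of $\P\calE$ that commutes with the flow $\{a^t\}$, hence preserves the invariant $\sigma$-algebra $\calI$. However, $b$ need not preserve $\eta$, so one cannot directly invoke invariance of conditional expectations. Instead I would argue pointwise: fix a $\eta$-conull set $\Omega$ on which both $\overline\psi(\xi)$ and $\overline\psi(b\cdot\xi)$ exist as honest Cesàro limits of $\psi$ along the $\{a^t\}$-orbit (this requires applying \cref{sameinvmeas} to $\eta$ and also to the pushforward $b_*\eta$, which is again $\{a^t\}$-invariant since $b$ commutes with $a^t$; intersecting the two conull sets and using $b$-invariance of $\eta$-conullity of the preimage). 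For $\xi\in\Omega$ with $b\cdot\xi\in\Omega$, write $\xi = (x,[v])$ and $b\cdot\xi = (bx, [\calA(b,x)v])$. Using the cocycle identity \eqref{cocycle} with $g_1 = b$, $g_2 = a^T$ and then $g_1 = a^T b$, together with $b a^T = a^T b$, one gets $\Psi(a^T, b\cdot\xi) = \Psi(a^T b, \xi) - \Psi(b,\xi) = \Psi(b a^T,\xi) - \Psi(b,\xi) = \Psi(b, a^T\cdot\xi) + \Psi(a^T,\xi) - \Psi(b,\xi)$. Dividing by $T$, the terms $\Psi(b,\xi)/T \to 0$, and $\Psi(b, a^T\cdot\xi)/T \to 0$ because \cref{tempered} bounds $|\Psi(b, a^T\cdot\xi)|$ by $C + Ck\, h(a^T\cdot\xi) + Ck\,d(b,\1)$, and by \cref{sameinvmeas}/the pointwise ergodic theorem the fraction of time $h(a^t\cdot\xi)$ spends above any fixed level tends to a small number, so $h(a^T\cdot\xi)/T\to 0$ along the orbit of a generic point. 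Hence $\overline\Psi(a, b\cdot\xi) = \overline\Psi(a,\xi)$.

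The main obstacle will be the last point: controlling $\Psi(b, a^T\cdot\xi)/T \to 0$ rigorously, since $\Psi(b,\cdot)$ is unbounded and we only know temperedness in $h$. I would handle this by the same device used in the proof of \cref{sameinvmeas}: for $\eta$-a.e.\ $\xi$ and any $\delta>0$ there is a level $\ell_\delta$ so that $m_\R(\{t\in[0,T]: h(a^t\cdot\xi)\ge \ell_\delta\}) \le \delta T$ for large $T$; in particular there is an unbounded sequence $T_j\to\infty$ with $h(a^{T_j}\cdot\xi)\le\ell_\delta$, along which $\Psi(b, a^{T_j}\cdot\xi)$ is uniformly bounded. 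Evaluating the Cesàro averages $\frac1{T_j}\Psi(a^{T_j},b\cdot\xi)$ along this subsequence, and noting that by \cref{sameinvmeas} the full limit $\overline\Psi(a,b\cdot\xi)$ already exists and so agrees with any subsequential value, we conclude $\overline\Psi(a,b\cdot\xi) = \lim_j \frac1{T_j}\Psi(a^{T_j},\xi) = \overline\Psi(a,\xi)$, as desired. (One could alternatively phrase the whole thing through $\psi_\ell$ and the ergodic theorem with $b$-equivariance of the invariant $\sigma$-algebra, but the direct cocycle computation is cleaner and self-contained.)
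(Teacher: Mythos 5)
Your proposal is correct and follows essentially the same route as the paper: both apply \cref{sameinvmeas} to $\eta$ and to the $\{a^t\}$-invariant pushforward $b_*\eta$ to get genuine limits almost everywhere, use the cocycle relation together with $ba^T = a^Tb$, and then control the term $\Psi(b, a^T\cdot\xi)$ by choosing a subsequence $T_n\to\infty$ along which $a^{T_n}\cdot\xi$ stays in a compact set. The extra remarks you make about temperedness and the sublinear time spent above a fixed height level are just a more detailed justification of the paper's observation that such a recurrent subsequence exists.
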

\begin{proof}
As $b_*\eta$ and $\eta$ are $\{a^t\}$-invariant, from \cref{sameinvmeas}, for $\eta$-almost every $\xi\in \P\calE$ we have
$$\overline\Psi(a, b\cdot \xi) = \lim_{T \to \infty} \frac{1}{T} \Psi(a^{T} , b \cdot \xi ) \quad \text {and } \quad \overline\Psi(a, \xi) = \lim_{T \to \infty} \frac{1}{T} \Psi(a^{T} , \xi ).$$
We have the cocycle relation $$ \Psi(a^{T} , b \cdot \xi )
 = \Psi(a^{T}, \xi ) + \Psi( b, a^{T} \cdot \xi ) - \Psi(b, \xi) .$$
Moreover, for almost every $\xi$ there is a compact set $E\subset X$ and $T_n\to \infty$ such that $a^{T_n}\cdot \xi\in E$.
For such $\xi$,
$$| \Psi(b, a^{T_n} \cdot \xi ) - \Psi(b , \xi ) |$$
is uniformly bounded in $n$
whence \begin{align*}\lim_{T \to \infty} \frac{1}{T} \Psi(a^{T} , b \cdot \xi ) = \lim_{T \to \infty} \frac{1}{T} \Psi(a^{T} , \xi ). &\qedhere\end{align*}
\end{proof}

For every $a \in A$, the function $\xi\mapsto \overline\Psi(a, \cdot)$ is bounded as observed in \cref{EasyPsi}. Thus, the integral $\int \overline\Psi{(a, \cdot)} \,d \eta$ is well-defined for every Borel probability measure~$\eta$ on~$\P\calE$.
\begin{lemma}\label{claim:linear}
For any $A$-invariant, Borel probability measure~$\eta$ on $\P\calE$,
the function $A \to \R,$ $$a\mapsto \int \overline\Psi{ ( a, \cdot )} \,d \eta$$ is linear.
\end{lemma}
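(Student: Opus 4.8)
The plan is to reduce linearity of $a\mapsto \int \overline\Psi(a,\cdot)\,d\eta$ to two ingredients: homogeneity under positive scaling along a fixed one-parameter subgroup, and additivity across two commuting one-parameter subgroups. Since $A$ is a connected abelian group and $\overline\Psi(a,\xi)$ is defined via $\overline\Psi(\exp(tY),\xi)=\limsup_{T\to\infty}\frac1T\Psi(\exp(tTY),\xi)$, the first ingredient is essentially immediate: for $a=\exp(Y)$ and a positive integer $k$, the cocycle relation $\Psi(a^{kn},\xi)=\sum_{j=0}^{k-1}\Psi(a^n,a^{jn}\xi)$ together with \cref{lentilvomit} (applied with $b=a^{jn}$, which commutes with $\{a^t\}$) gives $\int\overline\Psi(a^k,\cdot)\,d\eta = k\int\overline\Psi(a,\cdot)\,d\eta$; then a standard subadditivity/density argument upgrades this to $\int\overline\Psi(a^s,\cdot)\,d\eta = s\int\overline\Psi(a,\cdot)\,d\eta$ for all $s>0$, using boundedness of $\overline\Psi$ from \fullcref{EasyPsi}{bdd} and the continuity of $t\mapsto \Psi(\exp(tY),\xi)$.

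The substantive point is additivity: for $a_1=\exp(Y_1)$ and $a_2=\exp(Y_2)$ in $A$ (which commute), I must show $\int\overline\Psi(a_1a_2,\cdot)\,d\eta = \int\overline\Psi(a_1,\cdot)\,d\eta + \int\overline\Psi(a_2,\cdot)\,d\eta$. The idea is to work with the genuine cocycle $\Psi$ over the full $A$-action and only pass to the time-averaged version after integrating. Fix $\xi$ in a full-measure set where the relevant limits exist (this set exists by \cref{sameinvmeas} and the pointwise ergodic theorem applied to the bounded functions $\overline\psi$ built from $Y_1$, $Y_2$, and $Y_1+Y_2$, together with \cref{lentilvomit}). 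For such $\xi$ and $n\in\N$, iterate the cocycle relation to write $\Psi((a_1a_2)^n,\xi)$ as an alternating telescoping sum of $\Psi(a_1^n,\cdot)$-terms and $\Psi(a_2^n,\cdot)$-terms evaluated along the $A$-orbit of $\xi$; more efficiently, $\Psi((a_1a_2)^n,\xi) = \Psi(a_1^n, a_2^n\xi) + \Psi(a_2^n,\xi)$. Divide by $n$, integrate in $d\eta$, and use $A$-invariance of $\eta$ to replace $\int \frac1n\Psi(a_1^n,a_2^n\,\cdot)\,d\eta = \int\frac1n\Psi(a_1^n,\cdot)\,d\eta$ for each fixed $n$. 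The remaining task is to justify interchanging $\lim_{n\to\infty}$ with $\int\,d\eta$; this is where \fullcref{EasyPsi}{bdd} is essential — the functions $\xi\mapsto \frac1n\Psi(a_i^n,\xi)$ need not be uniformly bounded pointwise, but one can instead integrate the pointwise limit directly: by \cref{sameinvmeas} the $\eta$-a.e.\ limit of $\frac1n\Psi(a_i^n,\xi)$ exists and equals $\overline\Psi(a_i,\xi)$, a bounded function, and dominated convergence is not quite available, so I instead run the telescoping at the level of $\overline\Psi$: $\overline\Psi(a_1a_2,\xi) = \lim_n \frac1n\Psi((a_1a_2)^n,\xi)$, and splitting $\frac1n\Psi((a_1a_2)^n,\xi) = \frac1n\Psi(a_1^n,a_2^n\xi) + \frac1n\Psi(a_2^n,\xi)$, the second term converges to $\overline\Psi(a_2,\xi)$ while the first converges, by the same compact-recurrence argument as in the proof of \cref{lentilvomit} (a.e.\ $\xi$ has $a_2^n\xi$ returning to a compact set along a subsequence, so $\Psi(a_1^n,a_2^n\xi)$ and $\Psi(a_1^n,\xi)$ differ by a bounded amount along that subsequence), to $\overline\Psi(a_1,\xi)$. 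Hence $\overline\Psi(a_1a_2,\xi) = \overline\Psi(a_1,\xi) + \overline\Psi(a_2,\xi)$ for $\eta$-a.e.\ $\xi$, and integrating gives the claim.

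Combining homogeneity and additivity: any $a\in A$ can be written $a=\exp(Y)$, and the map $Y\mapsto \int\overline\Psi(\exp Y,\cdot)\,d\eta$ is additive on commuting elements and positively homogeneous, hence (being defined on the vector group $\liea$, after passing to a cover if $A$ has torsion — but $A$ is a split Cartan, so $\liea\to A$ is surjective) extends to a linear functional on $\liea$. Thus $a\mapsto\int\overline\Psi(a,\cdot)\,d\eta$ is linear, completing the proof.

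\textbf{Main obstacle.} The delicate point is the last limit interchange in the additivity step: $\frac1n\Psi(a_i^n,\xi)$ is not pointwise uniformly bounded in $n$ (only $\overline\Psi$ is bounded), so one cannot naively apply dominated convergence to pass $\lim_n$ through $\int d\eta$. The fix — reorganizing the telescoping so the limit is taken pointwise first and using Poincaré recurrence to a compact set as in \cref{lentilvomit} to control the "$a_2^n\xi$ vs $\xi$" discrepancy — is the crux; everything else is routine cocycle bookkeeping and invariance of $\eta$.
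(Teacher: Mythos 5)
Your overall strategy---positive homogeneity together with additivity over commuting one-parameter subgroups, using the cocycle identity and \cref{lentilvomit}---is the paper's strategy, and your homogeneity step is fine (in fact it follows directly from $\overline\Psi(a^t,\xi)=t\,\overline\Psi(a,\xi)$ for $t>0$, without the telescoping you invoke). The gap is in the additivity step. You split $\Psi((a_1a_2)^n,\xi)=\Psi(a_1^n,a_2^n\cdot\xi)+\Psi(a_2^n,\xi)$ and claim that, since a.e.\ $\xi$ has $a_2^n\cdot\xi$ recurring to a compact set along a subsequence, $\Psi(a_1^{n},a_2^{n}\cdot\xi)$ and $\Psi(a_1^{n},\xi)$ differ by a bounded amount along that subsequence, ``by the same compact-recurrence argument as in \cref{lentilvomit}.'' This is not the same argument. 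By the cocycle relation,
\[\Psi(a_1^n,a_2^n\cdot\xi)-\Psi(a_1^n,\xi)=\Psi(a_2^n,a_1^n\cdot\xi)-\Psi(a_2^n,\xi).\]
In \cref{lentilvomit} the analogous right-hand side is $\Psi(b,a^{T_n}\cdot\xi)-\Psi(b,\xi)$ with $b$ \emph{fixed}, and boundedness follows because $\Psi(b,\cdot)$ is bounded on a compact set containing the base points. In your setting the role of $b$ is played by $a_2^n$, which grows with $n$: $\Psi(a_2^n,\cdot)$ is of order $n$, and recurrence of $a_2^n\cdot\xi$ to a compact set gives no control on the difference of its values at $a_1^n\cdot\xi$ and at $\xi$. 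So the assertion is unjustified, and I do not see how to make it true in general; what you would actually need is that $a_1^n\cdot\xi$ recurs, along a common subsequence, to the set where the time average $\tfrac1n\Psi(a_2^n,\cdot)$ has already equilibrated to within $\epsilon$ of $\overline\Psi(a_2,\cdot)$, which is precisely the extra ingredient missing from your write-up.

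The paper closes this gap by avoiding the pointwise limit and working at a single, carefully chosen time scale $T$. Setting $\mathcal G_{a,\epsilon,T_0}:=\{\xi : |\overline\Psi(a,\xi)-\tfrac1t\Psi(a^t,\xi)|<\epsilon\ \text{for all}\ t\ge T_0\}$, \cref{sameinvmeas} gives, for a.e.\ $\xi$, a $T_0$ with $\xi\in\mathcal G_{a,\epsilon,T_0}\cap\mathcal G_{b,\epsilon,T_0}\cap\mathcal G_{ab,\epsilon,T_0}$. The key additional step is to exploit $A$-invariance of $\eta$ and a Fubini argument applied to the a.e.\ equality $\overline\Psi(b^T,a^T\cdot\xi)=\overline\Psi(b^T,\xi)$ from \cref{lentilvomit}, so as to choose, for a.e.\ $\xi$, a particular $T\ge T_0$ for which $a^T\cdot\xi\in\mathcal G_{b,\epsilon,T_0}$ \emph{and} that equality holds at $T$. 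Then in the cocycle identity $\Psi(a^T,\xi)+\Psi(b^T,a^T\cdot\xi)-\Psi((ab)^T,\xi)=0$ each term is within $\epsilon T$ of $T\overline\Psi(a,\xi)$, $T\overline\Psi(b,\xi)$, and $T\overline\Psi(ab,\xi)$ respectively, giving $|\overline\Psi(a,\xi)+\overline\Psi(b,\xi)-\overline\Psi(ab,\xi)|<3\epsilon$ for a.e.\ $\xi$, hence additivity after integrating. Your proof should be revised along these lines.
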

Above, we implicitly identify $A$ with its Lie algebra $\liea$ in the definition of linearity.
\begin{proof}
For any one-parameter subgroup~$\{a^t\}$ of~$A$ we have $\overline\Psi( a^t, \cdot) = t \, \overline\Psi( a^1, \cdot)$ for all $t > 0$.
It remains to show additivity.

For each $T>0$, \cref{lentilvomit} gives the equality \begin{equation}\label{unicornboobs}\overline\Psi(b^T, a^T \cdot \xi)= \overline\Psi(b^T, \xi)\end{equation} for almost every $\xi$. By Fubini's thoerem, for $\eta$-\ae $\xi$ equality \eqref{unicornboobs} holds for $m_\R$-a.e.\ $T>0$.

For each $a \in A$ and $\epsilon>0$, let
	$$ \mathcal{G}_{a,\epsilon,T_0} :=\left\{ \xi\in \P \calE : \text{$\big| \overline\Psi(a,\xi) - \tfrac 1 t \Psi(a^t, \xi) \big| <\epsilon$
		for all $t\ge T_0$} \right\} .$$
Fix $a,b \in A_H$ and $\epsilon>0$. For almost every $\xi$, \cref{sameinvmeas}
implies there is some~$T_0$ such that $\xi\in \mathcal{G}_{a,\epsilon,T_0} \cap \mathcal{G}_{ab,\epsilon,T_0} \cap \mathcal{G}_{b,\epsilon,T_0}$; invariance of $\eta$ implies $\xi$ may be chosen so that for some $T\ge T_0$,
$a^{T} \cdot \xi\in \mathcal{G}_{b,\epsilon,T_0}$ and \eqref{unicornboobs} holds.

 Using the cocycle relation
	$$ \Psi(a^T, \xi) +\Psi(b^T, a^T\cdot \xi) - \Psi ( (ab)^T, \xi ) = 0 ,$$
we conclude that
\begin{align*}
\bigl| \overline\Psi(a,\xi) &+ \overline\Psi(b,\xi) - \overline\Psi(ab,\xi) \bigr|\\
 &\le \left |\overline\Psi(a,\xi) - \tfrac 1 T \Psi(a^T, \xi) \right |
+ \left |\overline\Psi(b,a^T\cdot \xi) - \tfrac 1 T \Psi(b^T, a^T\cdot \xi) \right |\\
&\hskip5em+ \left |\overline\Psi ( (ab)^T, \xi) - \tfrac 1 T \Psi ( (ab)^T, \xi ) \right |\\
&< 3 \epsilon
.\end{align*}
It follows that $\overline\Psi(a,\xi) + \overline\Psi(b,\xi) = \overline\Psi(ab,\xi) $ for almost every~$\xi$; additivity of the integrals then follows.
\end{proof}

\subsection{Proof of \texorpdfstring{\cref{thm:mainQR1}}{Proposition 10.1}} \label{ProofOfmainQR1}
Recall we fix $H$ to be a standard $\Q$-rank-1 subgroup of~$G$, $N$ is a unipotent $\Q$-subgroup normalized by~$H$, and $\wtd H = H\ltimes N$. We assume that the
restriction of~$\alpha$ to~$\Gamma\!_H$ fails to have $\len_\Gamma$-subexponential growth.

To prove \cref{thm:mainQR1}, in \cref{sec:buildmeas} we first construct in \cref{TimeAvgExp>0}  a measure~$\eta$ on~$ \P \calE_{\wtd H}$ that has a positive time-averaged Lyapunov exponent.   We will then average this measure so that it projects to the Haar measure on $G/\Gamma$ while maintaining a non-zero Lyapunov exponent for the limiting measure.
The averaging step breaks up into two cases: \cref{AvgQRank=1} considers the case where $\rank_\Q(\Gamma) = 1$; \cref{AvgQRank>1} considers the case where $\rank_\Q(\Gamma) \ge 2$.

\subsubsection{Construction of a measure with positive time-averaged Lyapunov exponent}\label{sec:buildmeas}
We harvest the results of our prior efforts to construct an $(A_HN)$-invariant Borel probability
measure with positive time-averaged Lyapunov exponent.

Let $\liea_H$ be the Lie algebra of a split Cartan subgroup~$A_H$ of~$H$.
Assume as in \cref{thm:mainQR1} that the restriction
	$$\restrict{\calA_0}{\Gamma\!_H}\colon \Gamma\!_H \times \calE _0\to \calE_0$$
does not have uniform $\len_\Gamma$-subexponential growth.
 From \cref{HavePosExp}, we have $\chi(\liea_H, X_{\wtd H}) >0$. From \cref{basisvector}, there is $Y\in \liea_H$ with
 $\|Y\|=1$ and $\chi(Y, X_{\wtd H}) >0$. Write $a^t = \exp (tY)$ and set $ a = a^1.$

From \Cref{GetXnAndTn}, there exist points $x_n\in X_{\wtd H}$ with $\tempfun(x_n) \le \ell_0$ and $t_n \to \infty$ such that for every $n$, $\tempfun(a^{t_n}\cdot x_n) \le \ell_0$ and
$$ \lim_{n\to \infty } \frac 1 { t_n }\log \|\calA(a^{t_n}, x_n)\| = \chi(Y, X_{\wtd H}).$$
For each $n$, take $\xi_n\in \P \calE_{\wtd H}$ with $\pi_\calE(\xi_n) =x_n$ such that
	\begin{align} \label{LimPhi=chi}
	\lim_{n\to \infty } \frac 1 { t_n }\Psi(a^{t_n} ,\xi_n)
	= \chi(Y, X_{\wtd H} ).
	\end{align}

There is a \Folner sequence in $\{a^t\}\cdot N $ of the form
	$$ F_\kappa(t_n) = \bigl\{a^s \mid 0\le s\le t_n \bigr\} \cdot N_\kappa(t_n) ,$$
where $N_\kappa(t) := \exp_\lien\bigl(\{\, Z\in \lien \mid \|Z\|\le e^{\kappa t}\,\}\bigr) $ and $\kappa>0$ is chosen sufficiently large as in \cref{sec:not}.
For $n\in \N$, set
	$$\eta_n:= F_\kappa(t_n)\ast \delta_{\xi_n}.$$

\begin{lemma} \label{LimEta=chi}
We have
$$ \lim_{n\to \infty} \int \psi \,d \eta_n = \chi(Y, X_{\wtd H} ) .$$
\end{lemma}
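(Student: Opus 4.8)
\textbf{Proof plan for \cref{LimEta=chi}.}
The goal is to show that the average of $\psi$ against the empirical measures $\eta_n = F_\kappa(t_n)\ast\delta_{\xi_n}$ converges to $\chi(Y,X_{\wtd H})$. First I would unwind the definition of $\eta_n$ together with \eqref{Psi=Intpsi} and the fundamental theorem of calculus for $\Psi'$ from \cref{s4.1}: since $F_\kappa(t_n) = \{a^s: 0\le s\le t_n\}\cdot N_\kappa(t_n)$, integrating $\psi = \Psi'(Y,\cdot)$ along the $\{a^s\}$-direction turns the inner integral into $\Psi(a^{t_n},\cdot)$ evaluated at $N$-translates of $\xi_n$. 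Concretely,
\[
\int \psi \, d\eta_n = \frac{1}{t_n}\int_{N_\kappa(t_n)}\int_0^{t_n}\psi\bigl(a^s\cdot u\cdot \xi_n\bigr)\,ds\,dm_{N_\kappa(t_n)}(u) = \frac{1}{t_n}\int_{N_\kappa(t_n)}\Psi\bigl(a^{t_n},u\cdot\xi_n\bigr)\,dm_{N_\kappa(t_n)}(u).
\]
So it suffices to show the right-hand side converges to $\chi(Y,X_{\wtd H})$, given \eqref{LimPhi=chi}, which already tells us $\tfrac1{t_n}\Psi(a^{t_n},\xi_n)\to \chi(Y,X_{\wtd H})$ (the $u=\1$ term).

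The main work is to control the difference $\Psi(a^{t_n},u\cdot\xi_n) - \Psi(a^{t_n},\xi_n)$ uniformly for $u\in N_\kappa(t_n)$, after dividing by $t_n$. Using the cocycle relation \eqref{cocycle} twice, and that $a^{t_n}$ normalizes $N$ so $a^{t_n}u a^{-t_n} = u' \in N_{2\kappa}(t_n)$ (having chosen $\kappa$ large enough as in \cref{sec:not}), I get
\[
\Psi(a^{t_n}, u\cdot\xi_n) = \Psi(a^{t_n},\xi_n) + \Psi(u', a^{t_n}\cdot\xi_n) - \Psi(u,\xi_n).
\]
Now I invoke the hypothesis reductions in force: by \cref{UnipSubgrp} (applied to $N$) and \cref{prop:cuspgroup}, I may assume $\restrict{\calA_0}{\Gamma\!_N}$ has $\len_\Gamma$-subexponential growth, and then by \cref{SiegelImpliesSubexp} (i.e.\ \cref{Ngrowthcorrect}) the restriction of $\calA$ to $N\times\calE_{\wtd H}$ has $\tempfun$-tempered subexponential growth. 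Thus for any $\epsilon>0$ there is $C_\epsilon$ with $|\Psi(u,\zeta)| \le \heightgrowth\,\tempfun(\zeta) + \epsilon\, d(u,\1) + C_\epsilon$ for all $u\in N$, $\zeta\in\P\calE_{\wtd H}$. Since $\tempfun(\xi_n)\le\ell_0$ and $\tempfun(a^{t_n}\cdot\xi_n)\le\ell_0$ (by construction of $x_n$ via \cref{GetXnAndTn}) and $\tempfun$ is $N$-invariant so $\tempfun(u'\cdot a^{t_n}\cdot\xi_n) = \tempfun(a^{t_n}\cdot\xi_n)\le\ell_0$, the height terms are bounded by $\heightgrowth\ell_0$; and since $\restrict{\exp}{\lien}$ is polynomial there is $C_\lien$ with $d(\1,u)\le C_\lien t_n$ for $u\in N_{2\kappa}(t_n)$. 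Hence $|\Psi(u',a^{t_n}\cdot\xi_n)| + |\Psi(u,\xi_n)| \le 2\heightgrowth\ell_0 + 2\epsilon C_\lien t_n + 2C_\epsilon$, uniformly over $u\in N_\kappa(t_n)$.

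Putting these estimates together and dividing by $t_n$:
\[
\left|\frac1{t_n}\int_{N_\kappa(t_n)}\Psi(a^{t_n},u\cdot\xi_n)\,dm_{N_\kappa(t_n)}(u) - \frac1{t_n}\Psi(a^{t_n},\xi_n)\right| \le \frac{2\heightgrowth\ell_0 + 2C_\epsilon}{t_n} + 2\epsilon C_\lien.
\]
Taking $n\to\infty$ gives a $\limsup$ bounded by $2\epsilon C_\lien$, and since $\epsilon>0$ was arbitrary the left side tends to $0$; combined with \eqref{LimPhi=chi} this yields $\int\psi\,d\eta_n\to\chi(Y,X_{\wtd H})$. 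The only mild obstacle is bookkeeping the two applications of the cocycle relation and confirming the $\tempfun$-tempered bound applies on the bundle over $\wtd H/\Gamma\!_{\wtd H}$ at exactly the points $u\cdot\xi_n$ and $a^{t_n}\cdot u\cdot\xi_n$ — which works precisely because $\tempfun$ is constant on $N$-orbits and the endpoints $x_n$, $a^{t_n}\cdot x_n$ were chosen in $X_{\wtd H,\le\ell_0}$.
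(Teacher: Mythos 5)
Your proof is correct and follows the same route as the paper's: unwind $\eta_n$ and the infinitesimal-generator identity to reduce to $\frac1{t_n}\int_{N_\kappa(t_n)}\Psi(a^{t_n},u\cdot\xi_n)\,dm(u)$, use the cocycle relation with $a^{t_n}u = u'a^{t_n}$, and control the two $N$-terms via the $\tempfun$-tempered subexponential growth of $\calA|_{N\times\calE_{\wtd H}}$ from \cref{Ngrowthcorrect} together with the endpoint bounds $\tempfun(\xi_n),\tempfun(a^{t_n}\cdot\xi_n)\le\ell_0$ and the polynomial growth of $\exp_\lien$. The bookkeeping matches the paper's computation.
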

\begin{proof}
From \pref{LimPhi=chi} it suffices to show
$$\displaystyle \lim_{n\to \infty} \int \psi \,d \eta_n = \lim _{ n\to \infty} \frac 1 { t_n }\Psi(a^{t_n}, \xi_n)  .$$
For all $g \in N_\kappa(t_n)$ we have
\begin{equation}\label{thatdontmakeyouaho}
(a^{t_n}g ) \cdot \xi_n = (g' a^{t_n})\cdot \xi_n	\end{equation}
where $g' \in N_{2\kappa}(t_n)$.

Recall that $\tempfun(x_n)\le \ell_0$ and $\tempfun(a^{t_n}\cdot x_n)\le \ell_0$. Recall also from {\cref{eggface}}  that the action of~$N$ on $X_{\wtd H}$ has $\tempfun$-tempered subexponential growth:  for any $\epsilon > 0$, there is a constant~$C_\epsilon$ such that
	$$ \text{$\Psi(g'',\xi) \le C_\epsilon + \heightgrowth  \tempfun(\xi) + \epsilon  d(g'', \1) $ for all $g'' \in N$ and $\xi \in \calE_{\wtd H}$} .$$
Since $N$ is unipotent, identifying $N\subset G$ with its image in $F$ or in $\Ad(G)$, the exponential map $\exp_{\lien}\colon \lien\to N$ from $\lien$ to $N$ is polynomial and there is a constant $C_\lien > 0$ such that for $t_n \ge 1$,
	$$ \text{$d \bigl( \1, \exp_\lien (Z) \bigr) \le C_\lien t_n$ \ for all $Z \in N_{2\kappa}(t_n)$} .$$

We have the cocycle relation,
	$$ \Psi(a^{t_n}, g \cdot \xi_n) - \Psi(a^{t_n}, \xi_n) = \Psi(g', a^{t_n} \cdot \xi_n) - \Psi(g, \xi_n).$$
Hence for $g\in N_\kappa(t_n)$,
	\begin{align*}
		\left| \int _0^{t_n} \psi (a^s g \cdot \xi_n) \,d s- \int _0^{t_n} \psi (a^s \cdot \xi_n) \,d s\right |
		&= \bigl| \Psi(a^{t_n}, g \cdot \xi_n) - \Psi(a^{t_n}, \xi_n) \bigr| \\
		&= \bigl| \Psi(g', a^{t_n} \cdot \xi_n) - \Psi(g, \xi_n) \bigr| \\
		&\le 2C_\epsilon+ \heightgrowth \ell_0 + \epsilon  d(g', \1)  +\heightgrowth   \ell_0 + \epsilon d(g, \1)  \\
		&\le 2C_\epsilon + 2 \heightgrowth  \ell_0 + 2 \epsilon C_\lien t_n \\
		&\le \hat C_\epsilon + \hat \epsilon \, t_n
	\end{align*}
 where $g'$ is as in \eqref{thatdontmakeyouaho} and $\hat \epsilon>0$ can be made arbitrarily small by choosing $\epsilon>0$ sufficiently small.

We then have
\begin{align*}
		\biggl|\int \psi \,d \eta_n - & \frac 1 { t_n } \Psi(a^{t_n} , \xi_n) \biggr|
		\\&= \biggl|\int \psi \,d \eta_n - \frac 1 { t_n }\int_0^{t_n} \psi(a^{s}\cdot \xi_n) \,d s \biggr|
		\\&\le
		 \int_{N_\kappa(t_n)} \frac 1{ t_n} \left| \int _0^{t_n}
			\psi (a^s g\cdot \xi_n) - \psi(a^{s}\cdot \xi_n) \,d s \right| \,d m_{N_\kappa(t_n)}(g)
		\\&\le \int_{N_\kappa(t_n)} \frac 1{ t_n}
		\bigl( \hat C_\epsilon + \hat \epsilon \, t_n \bigr) \, d m_{N_\kappa(t_n)}(g) \\
		&=\frac 1{ t_n} (\hat C_\epsilon + \hat \epsilon \, t_n )
	 \end{align*}	
which can be made arbitrarily small by taking $t_n$ sufficiently large and $\hat \epsilon $ sufficiently small.
\end{proof}

\begin{claim} \label{etanTight}
The family $\{ \eta_n\}$ is a uniformly tight family of measures.
\end{claim}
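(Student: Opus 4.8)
\textbf{Proof plan for \cref{etanTight}.}
The plan is to deduce tightness of $\{\eta_n\}$ from \cref{lem:maxpathstight} after translating between the empirical measures $\eta_n$ on $\P\calE_{\wtd H}$ and the corresponding empirical measures on the base space $X_{\wtd H}$. First I would observe that tightness of a family of measures on $\P\calE_{\wtd H}$ is equivalent to tightness of the pushforward family under the proper projection $\pi_\calE\colon \P\calE_{\wtd H}\to X_{\wtd H}$, as recalled in \cref{sec:nondiv}; so it suffices to show that the pushforwards $(\pi_\calE)_*\eta_n$ form a uniformly tight family on $X_{\wtd H}$. Since $\pi_\calE(\xi_n)=x_n$ and $\pi_\calE$ intertwines the $G$-actions, the pushforward of $\eta_n = F_\kappa(t_n)\ast\delta_{\xi_n}$ is $F_\kappa(t_n)\ast\delta_{x_n}$, an average over the \folner set $F_\kappa(t_n)=\{a^s\mid 0\le s\le t_n\}\cdot N_\kappa(t_n)$.

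Next I would split this average into the unipotent part and the one-parameter part. Writing $\nu_n := \frac{1}{t_n}\int_0^{t_n}(a^s)_*\delta_{x_n}\,ds$ for the empirical measure along the $\{a^t\}$-orbit segment, we have $(\pi_\calE)_*\eta_n = \int_{N_\kappa(t_n)} g_*\nu_n'\,dm_{N_\kappa(t_n)}(g)$ up to the harmless reordering coming from the \folner structure (more precisely, $F_\kappa(t_n)\ast\delta_{x_n}$ is, after interchanging the order of integration, an $N_\kappa(t_n)$-average of $\{a^t\}$-orbit averages of translates of $\delta_{x_n}$, all of which stay within a bounded $N$-neighborhood of the orbit). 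The key point is that \cref{lem:maxpathstight} applies directly to the sequence $\{\nu_n\}$: the hypotheses are exactly that $\tempfun(x_n)\le\ell_0$, $\tempfun(a^{t_n}\cdot x_n)\le\ell_0$, and $\frac{1}{t_n}\log\|\calA(a^{t_n},x_n)\|\to\chi(Y,X_{\wtd H})$, all of which hold by the choice of $x_n$ and $t_n$ via \cref{GetXnAndTn}. Hence $\{\nu_n\}$ is uniformly tight on $X_{\wtd H}$.

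Finally I would upgrade tightness of $\{\nu_n\}$ to tightness of $\{(\pi_\calE)_*\eta_n\}$ by absorbing the $N$-averaging. Since $\wtd H/\Gamma\!_{\wtd H}$ is a fiber bundle over $H/\Gamma\!_H$ with compact fibers and $N$-orbits of compact sets are compact (as noted in \cref{SubexpForRank1Subgroups-prelims}), once $\{\nu_n\}$ concentrates on a compact set $\calK\subset X_{\wtd H}$ up to $\delta$, the $N_\kappa(t_n)$-translates land in a set whose image in $H/\Gamma\!_H$ is still relatively compact; combined with Ratner-type nondivergence of the unipotent averages—or, more simply here, since each translate $g\cdot(\text{orbit segment})$ for $g\in N_\kappa(t_n)$ stays within a fiber-bundle neighborhood of $\calK$—one gets a single compact set carrying at least $1-2\delta$ of each $(\pi_\calE)_*\eta_n$. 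Alternatively, this last step can be handled directly by the same excursion-counting argument as in the proof of \cref{lem:maxpathstight}, noting that $\tempfun$ is $N$-invariant so the fraction of time the translated orbit spends above any height $\ell$ is unchanged by the $N_\kappa(t_n)$-translation. I expect the main obstacle to be bookkeeping the reordering of the \folner average and confirming that the $N$-translation genuinely does not move mass toward the cusp; the cleanest route is to invoke $N$-invariance of $\tempfun$ so that the tightness estimate from \cref{lem:maxpathstight} passes verbatim to the translated measures.
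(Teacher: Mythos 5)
Your proposal is correct and follows essentially the same route as the paper: apply \cref{lem:maxpathstight} to the empirical measures along the $\{a^t\}$-orbit segment $\{a^s\cdot x_n\mid 0\le s\le t_n\}$ to control the fraction of time spent above height $\ell_\delta$, then use the $N$-invariance of $\phi$ (together with the fact that $\{a^t\}$ normalizes $N$, so the $N$-average can be pushed through) to conclude that the same bound holds for $\eta_n$. The paper's proof is simply the crisp version of the ``cleanest route'' you identify at the end; the alternatives you float (quantitative unipotent nondivergence, re-running the excursion count) are unnecessary once you notice that $N$-invariance of $\phi$ makes the tightness estimate pass verbatim.
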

\begin{proof}
Recall that $\wtd H/\Gamma\!_{\wtd H}$ is a bundle over $H/\Gamma\!_H$ with compact fibers.
 In particular, for any compact subset $E\subset \wtd H/\Gamma\!_{\wtd H}$, its $N$-orbit $N\cdot E\subset \wtd H/\Gamma\!_{\wtd H}$ is compact.
From, \cref{lem:maxpathstight}, given $\delta>0$ there is a $\ell_\delta>0$ such that for
every $n$, $$m_\R\{ 0\le t\le t_n : \tempfun(a^t \cdot x_n)> \ell_\delta\} < \delta t_n.$$
Recall that $\phi$ is $N$-invariant.  Then
It follows that $$\eta_n (\{\xi\in \P\calE_{\wtd H} : \tempfun(\xi)>   \ell_\delta\}) < \delta$$ for every $n$.
\end{proof}

Recall the choice of $x_n, t_n$ from which the measures $\eta_n$ are constructed. Since
$\tempfun(x_n)\le \ell_0$ and $\tempfun(a^{t_n}\cdot x_n)\le \ell_0$,  we have
$$\text{$h(g \cdot x_n)\le \hcutoff$ and $h\bigl((a^{t_n}g)\cdot x_n\bigr)\le \hcutoff$}$$
for all $g\in N$ where $\hcutoff$ is as in \eqref{eq:cutoff}. Recall $\psi_\ell$ as defined in~\pref{psiellDefn}. Taking $\ell\ge \hcutoff$ sufficiently large, it follows from the definition of $\eta_n$ that for every $n$,
$$ \int \psi_\ell \,d \eta_n = \int \psi \,d \eta_n.$$

With the above observations, we obtain an $A_H$-invariant, Borel probability measure on $\P\calE_{\wtd H}$ with non-zero time-averaged Lyapunov exponent.
\begin{proposition} \label{TimeAvgExp>0}
There is an $(A_H N)$-invariant, Borel probability measure~$\eta$ on $\P \calE_{\wtd H}$ such that
	$ \int \overline \Psi(a,\cdot) \,d \eta = \chi(Y,X_{\wtd H}) > 0 $.
\end{proposition}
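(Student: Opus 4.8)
The strategy is to take a weak-$*$ subsequential limit of the measures $\eta_n$ constructed above, then symmetrize over the compact group $N$ and the Cartan direction to upgrade $\{a^t\}$-invariance to $(A_HN)$-invariance, all while keeping track of the time-averaged exponent. First I would note that by \cref{etanTight} the family $\{\eta_n\}$ is uniformly tight, so passing to a subsequence we may assume $\eta_n \to \eta_0$ weak-$*$ for some Borel probability measure $\eta_0$ on $\P\calE_{\wtd H}$. The measures $\eta_n$ are averages of $\delta_{\xi_n}$ over the \Folner sets $F_\kappa(t_n) = \{a^s : 0\le s\le t_n\}\cdot N_\kappa(t_n)$ in the amenable group $\{a^t\}\ltimes N$; since $\{t_n\}\to\infty$ and $\kappa$ is chosen large enough (as in \cref{sec:not}) so that $\{F_\kappa(t_n)\}$ is genuinely a (left) \Folner sequence, any weak-$*$ limit $\eta_0$ is invariant under $\{a^t\}\ltimes N$, hence in particular $\{a^t\}$-invariant and $N$-invariant.

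Next I would identify the value of $\int \overline\Psi(a,\cdot)\,d\eta_0$. By \cref{LimEta=chi} we have $\lim_n \int \psi\,d\eta_n = \chi(Y,X_{\wtd H})$. The issue is that $\psi$ is continuous but unbounded, so this does not immediately pass to the limit; this is exactly what the cut-off cocycle $\psi_\ell$ was built to handle. As recorded just before the statement, for all sufficiently large $\ell\ge\hcutoff$ we have $\int\psi_\ell\,d\eta_n = \int\psi\,d\eta_n$ for every $n$ (because the orbit segments defining $\eta_n$ start and end below height $\hcutoff$). Each $\eta_n$ is supported on $\P\calE_{\wtd H}$ and is $\{a^t\}$-invariant, so hypothesis \pref{baboonfeces3} of \cref{average limit} applies (using \cref{discontinuities}\pref{discontinuities-TZ}, which says that for $\ell\ge\hcutoff$ the discontinuity set of $\restrict{\psi_\ell}{\P\calE_{\wtd H}}$ lies in $h\inv(\ell)\cup T_\ell$, a set of $m_\R$-a.e.\ zero $\eta_0$-measure by \cref{Measure(T)=0}). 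Thus \cref{average limit}\pref{baboonfeces3} gives
\[
\int \overline\Psi(a,\cdot)\,d\eta_0 = \lim_{n\to\infty}\int\psi\,d\eta_n = \chi(Y,X_{\wtd H}) > 0.
\]

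Finally I would promote $\eta_0$ to an $(A_HN)$-invariant measure. Let $\{F^H_j\}$ be a \Folner sequence of precompact sets in $A_H$, and set $\eta = $ a weak-$*$ subsequential limit of $\{F^H_j \ast \eta_0\}$; such a limit exists since $A_HN/\Gamma\!_{A_HN}$-type considerations (or rather, the observation that $N\cdot(\text{compact})$ is compact in $\wtd H/\Gamma\!_{\wtd H}$ together with uniform tightness via \cref{cor:tight}) keep the family tight. Because $A_H$ centralizes $\{a^t\}$ and normalizes $N$, the limit $\eta$ is invariant under $A_H$, under $N$, and (since $\{a^t\}\subset A_H$) under $\{a^t\}$; hence $\eta$ is $(A_HN)$-invariant. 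To see the exponent survives, use \cref{claim:linear}: for each $j$, $\int\overline\Psi(a,\cdot)\,d(F^H_j\ast\eta_0)$ is an average over $b\in F^H_j$ of $\int\overline\Psi(a,\cdot)\,d(b_*\eta_0)$, and by \cref{lentilvomit} (with $b\in A_H$ centralizing $\{a^t\}$) each such integral equals $\int\overline\Psi(a,\cdot)\,d\eta_0 = \chi(Y,X_{\wtd H})$, so $\int\overline\Psi(a,\cdot)\,d(F^H_j\ast\eta_0) = \chi(Y,X_{\wtd H})$ for all $j$. Passing to the limit requires a little care since $\overline\Psi(a,\cdot)$ itself is only bounded measurable, not continuous; but applying \cref{average limit}\pref{baboonfeces2} to the sequence $\eta_n' := F^H_j\ast\eta_0$ (each supported on $\P\calE_{\wtd H}$, each $\{a^t\}$-invariant) yields $\int\overline\Psi(a,\cdot)\,d\eta = \lim_j \int\overline\Psi(a,\cdot)\,d(F^H_j\ast\eta_0) = \chi(Y,X_{\wtd H})$. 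This establishes $\int\overline\Psi(a,\cdot)\,d\eta = \chi(Y,X_{\wtd H}) > 0$, as desired.

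\textbf{Main obstacle.} The delicate point is the interchange of limits and integration against the \emph{unbounded} cocycle $\psi$ and its (merely bounded measurable, discontinuous) time-average $\overline\Psi(a,\cdot)$. Everything hinges on invoking \cref{average limit} with the correct hypothesis — \pref{baboonfeces3} for the first limit (using the identity $\int\psi_\ell\,d\eta_n=\int\psi\,d\eta_n$ forced by the geometry of the constructed orbit segments) and \pref{baboonfeces2} for the $A_H$-averaging limit — and on checking that the relevant discontinuity sets ($h\inv(\ell)\cup T_\ell$ on $\P\calE_{\wtd H}$) are $\eta$-null for a.e.\ $\ell$, which is precisely \cref{Measure(T)=0} combined with \cref{discontinuities}\pref{discontinuities-TZ}. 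Verifying uniform tightness of $\{F^H_j\ast\eta_0\}$ so that a weak-$*$ limit exists is the other technical checkpoint; here \cref{cor:tight} applies because each $F^H_j\ast\eta_0$ is $\{a^t\}$-invariant with $\int\overline\psi\,d(F^H_j\ast\eta_0)=\chi(Y,X_{\wtd H})$.
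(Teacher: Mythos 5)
Your proposal is correct and follows essentially the same route as the paper: uniform tightness from \cref{etanTight}, a weak-$*$ limit that is $(\{a^t\}\ltimes N)$-invariant, the identity $\int\psi_\ell\,d\eta_n=\int\psi\,d\eta_n$ plus \cref{average limit}\pref{baboonfeces3} and \cref{LimEta=chi} to obtain $\int\overline\Psi(a,\cdot)\,d\eta_0=\chi(Y,X_{\wtd H})$, then F{\o}lner averaging over $A_H$ using \cref{lentilvomit}, \cref{cor:tight}, and \cref{average limit}\pref{baboonfeces2}. The only inessential divergence is the citation of \cref{claim:linear} in the $A_H$-averaging step, which is not needed (the relevant identity comes straight from Fubini plus \cref{lentilvomit}, as you also note).
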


\begin{proof}
From \cref{etanTight}, the family $\{\eta_n\}$ is uniformly tight. Let $\eta_\infty$ be any weak-$*$ limit of the family $\{\eta_n\}$.
Then $\eta_\infty$ is invariant under both $\{a^t\}$ and~$N$.

Take for almost every $\ell>\hcutoff$ we have that $h\inv(\ell)$ is a codimension-one hypersurface, $\eta \bigl( h\inv (\ell) \bigr) = \eta(T_\ell) = 0$, and $$ \int \psi_\ell \,d \eta_n = \int \psi \,d \eta_n$$ for every $n$.
From \cref{average limit}\pref{baboonfeces3} and \cref{LimEta=chi}, we have
$$\int \overline\Psi(a,\cdot) \,d \eta_\infty = \chi(Y, X_{\wtd H}).$$

Let $\{F'_j\}$ be a \Folner sequence in~$A_H$, let $\bar\eta_j:= F'_j \ast \eta_\infty$. Since $A_H$ is abelian and $\eta_\infty$ is $\{a^t\}$-invariant, by \cref{lentilvomit} we have for every $j$ that $$ \int \overline \Psi(a,\cdot) \,d \bar\eta_j = \int \overline\Psi(a,\cdot) \,d \eta_\infty = \chi(Y, X_{\wtd H}) .$$
By \cref{cor:tight}, the family $\{\bar\eta_j\}$ is uniformly tight. Let $\eta$ be a limit point of $\{\bar\eta_j\}$.

We have that $\eta$ is $A_H$-invariant and, since $A_H$ normalizes $N$, $\eta$ is $N$-invariant.
From \cref{average limit}\pref{baboonfeces2}
we have
\begin{align*} \int \overline \Psi(a,\cdot) \,d \eta = \lim_{j\to \infty} \int \overline \Psi(a,\cdot) \,d \bar \eta_j = \chi(Y, X_{\wtd H}). &
 \qedhere\end{align*}
 \end{proof}

\subsubsection{Proof of \texorpdfstring{\cref{thm:mainQR1}}{the proposition} when \texorpdfstring{$\rank_\Q(\Gamma) = 1$}{the Q-rank is 1}} \label{AvgQRank=1}
We   use the $(A_HN)$-invariant measure just constructed to prove Proposition \ref{thm:mainQR1}.
We assume for the time being that $\rank_\Q(\Gamma) = 1$; in this case we have $H= G= \wtd H$ so $A_H=A$ and $N$~is trivial.
We adapt the averaging arguments from \cite[\S6.3 and \S6.4]{BFH}.

Let $A$~be a split Cartan subgroup of~$G$
and let $\beta_1,\ldots, \beta_r$ be the simple $\R$-roots of $\Phi(A,G)$ relative to some ordering to be specified later.
Let $\eta=\eta_0$ denote the $A$-invariant Borel probability measure guaranteed by \cref{TimeAvgExp>0}. From \cref{claim:linear}, the map $A \to \R$ given by $ a \mapsto \int \overline \Psi \bigl( a,\cdot \bigr) \,d \eta $ is linear and from \cref{TimeAvgExp>0} is not identically zero.

The action of the real Weyl group of a simple group on the Lie algebra of a split Cartan subgroup is known to be irreducible.
Thus, after permuting the simple factors of~$G$ and conjugating by an element of the Weyl group of a simple factor, we may assume the ordering of the roots is such that the map $ a \mapsto \int \overline \Psi \bigl( a,\cdot \bigr) \,d \eta $ is not proportional to any
$\beta_i$ for $i > 1$.
In particular, we may choose a one-parameter subgroup~$\{a^t\}$ in~$A$ such that
	$$ \int \overline \Psi(a^1,\cdot) \,d \eta > 0$$
and $\beta_i(a^t)= 0$ for all $i > 1$.
Let $U$ denote the unipotent subgroup generated by the coarse root groups~$U^{[\beta_i]}$ for $i > 1$; then $U$ centralizes the one-parameter subgroup~$\{a^t\}$.

Let $\{\wtd F_j\}$ be a \Folner sequence of centered intervals in $U$ as in \cref{def:UnipInterval}.
By \cref{lem:unipotenttight}, the family $\{\wtd F_j \ast \eta_0\}$ is a uniformly tight. Let $ \eta_1$ be any weak-$*$ limit of $\{\wtd F_j \ast \eta_0\}$.
Since $\{\wtd F_j\}$ is F{\o}lner, the measure $ \eta_1$ is $U$-invariant and, as $U$ centralizes~$\{a^t\}$, $ \eta_1$ is $\{a^t\}$-invariant. By \cref{lentilvomit} and \cref{average limit}\pref{baboonfeces2},
	$$\int \overline\Psi(a^1,\cdot) \,d \eta_1
	= \lim_{j \to \infty} \int \overline\Psi(a^1,\cdot) \,d \wtd F_j \ast \eta_0
	= \int \overline \Psi(a^1,\cdot) \,d \eta_0 .$$
Let $\hat \eta_1$ denote the image of $\eta_1$ under the projection $\pi_\calE\colon \calE\to G/\Gamma$. Since $\eta_0$ is $A$-invariant, it follows from \cref{thm:averaginghomo2} that the image $\hat \eta_1$ is also $A$-invariant.

Let $\{\hat F_j\}$ be a \Folner sequence in~$A$.
As $\hat \eta_1$ is $A$-invariant, each $\hat F_j \ast \eta_1$ projects to the same measure in $G/\Gamma$; in particular, the family $\{\hat F_j \ast \eta_1\}$ is uniformly tight. Let $ \eta_2$ be any weak-$*$ limit of $\{\hat F_j \ast \eta_1\}$.
From \cref{lentilvomit} and \cref{average limit}\pref{baboonfeces2},
we have
	$$\int \overline\Psi(a^1,\cdot) \,d \eta_2
	= \int \overline\Psi(a^1,\cdot) \,d (\hat F_j \ast \eta_1)
	= \int \overline\Psi(a^1,\cdot) \,d \eta_1
	= \int \overline\Psi(a^1,\cdot) \,d \eta
	> 0. $$

We have that $\eta_2$ is $A$-invariant. Additionally, $\eta_2$ is $U$-invariant since $ \eta_1$ is $U$-invariant and $A$ normalizes~$U$. In particular, $\eta_2$ and its image $\hat \eta_2$ in $G/\Gamma$ are invariant under the coarse root groups $U^{[\beta_i]}$ for each $i > 1$. From \cref{thm:opproot}, the projection $\hat \eta_2$ of $\eta_2$ to $G/\Gamma$ is also invariant under the coarse root groups $U^{[-\beta_i]}$ for all $i > 1$.

Suppose first that $G$ is not simple. Then $\langle\, U^{[\beta_i]}, U^{[-\beta_i]} \mid i > 1 \,\rangle$ contains a (non-compact) simple factor~$G_1$ of~$G$. From the preceding paragraph, we know that the projection $\hat \eta_2$ is $G_1$-invariant. The lattice $\Gamma$ is assumed irreducible in $G$ and thus $\Gamma$ projects densely to the group $G/G_1$. This implies that every $G_1$-orbit in $G/\Gamma$ is dense.
Since $G_1$ is generated by unipotent elements, Ratner's measure rigidity theorem (see for example \cite[Theorem 9]{MR1262705}) implies that the Haar measure on $G/\Gamma$ is the only $G_1$-invariant, Borel probability measure on $G/\Gamma$.
In particular, the projection $ \hat \eta_2$ of $\eta_2$ to $G/\Gamma$ is Haar. Taking $\mu$ to be the projection of $\eta_2$ to $X$, we have that $\mu$ is $A$-invariant and projects to the Haar measure on $G/\Gamma$. Since the Haar measure has exponentially small mass at $\infty$ we have $ \Psi(a, \cdot) \in L^1(\eta_2)$ whence
$$\int \Psi(a, \cdot) \,d \eta_2 = \int \overline\Psi(a, \cdot) \,d \eta_2>0$$
and the conclusion of \cref{thm:mainQR1} holds by \cref{Lyap>Phi}\pref{baboonfeces}.

Now consider the case that $G$ is simple. Following \cite[Section~5.3]{BFH}, we assume that the roots $\beta_1,\ldots,\beta_r$ are enumerated as in \cite[Appendix A]{BFH} and we let
	$$\hat \beta := \begin{cases}
	\text{$\delta =$ highest root in $\Phi(A,G)$} & \text{if $G$ is of type $A_\ell$, $B_\ell$, $D_\ell$, $E_6,$ or $E_7$;} \\
	\text{$\delta' =$ second highest root} & \text{if $G$ is of type $C_\ell$, $(BC)_\ell$, $E_8$, $F_4$, or $G_2$}
	. \end{cases} $$
As $ \eta_2$ is $A$-invariant, from \cref{claim:linear} the function $a\mapsto \int \overline\Psi \bigl( a,\cdot \bigr) \,d \eta_2$ is linear on $A$.
Moreover, it is not proportional to at least one $\wtd\beta \in \{\beta_1, \hat \beta\}$.
Thus, there is a one-parameter subgroup $\{\wtd a^t\}$ of $A$ such that $\int \overline\Psi ( \wtd a^1, \cdot ) \,d \hat \eta_\infty > 0$ and $U^{\wtd\beta}(\wtd a^t) = 0$.
Let $\{ F_k\}$ be a \Folner sequence of centered intervals in~$U^{\wtd\beta}$.
From \cref{lem:unipotenttight}, the family $\{\wtd F_k \ast \eta_2 \}$ is uniformly tight. Let $ \eta_3$ be a weak-$*$ limit point of this family.
The proof of \cite[Proposition~6.4]{BFH} establishes that the projection of $ \eta_3$ to $G/\Gamma$ is Haar.
Moreover, \cref{lentilvomit} and \cref{average limit}\pref{baboonfeces2},
imply
	$$\int \overline\Psi(\wtd a^1,\cdot) \,d \eta_3
	= \int \overline\Psi(\wtd a^1,\cdot) \,d ( \wtd F_k \ast \eta_2)
	= \int \overline\Psi(\wtd a^1,\cdot) \,d \eta_2
	> 0. $$
If $\{\hat F_k\}$ is a \Folner sequence in $A$ then we have that $ \{\hat F_k\ast \eta_3\}$ is uniformly tight and from \cref{lentilvomit} and
\cref{average limit}\pref{baboonfeces2}
$$\int \overline\Psi(\wtd a^1,\cdot) \,d \eta_4 = 	 \int \overline\Psi(\wtd a^1,\cdot) \,d (\hat F_k \ast \eta_3)= \int \overline\Psi(\wtd a^1,\cdot) \,d \eta_3>0.$$
Then $\eta_4$ is $A$-invariant and  projects to the Haar measure on $G/\Gamma$.
Since the Haar measure on $G/\Gamma$ has exponentially small mass at $\infty$, $ \Psi(a, \cdot) $ is $L^1(\eta_4)$ and the pointwise ergodic theorem implies
$$\int \Psi(a, \cdot) \,d \eta_4>0. $$
The conclusion of \cref{thm:mainQR1} holds by \cref{Lyap>Phi}\pref{baboonfeces}. \qed

\subsubsection{Proof of \texorpdfstring{\cref{thm:mainQR1}}{the proposition} when \texorpdfstring{$\rank_\Q(\Gamma) \ge 2$}{the Q-rank is large}} \label{AvgQRank>1}

Assume now that $\rank_\Q(\Gamma) \ge 2$. This implies that the standard $\Q$-rank-1 subgroup~$H$ is a proper subgroup of~$G$.
In this case, the group $N$ is non-trivial; we have that $N$ is the unipotent radical of a standard parabolic $\Q$-subgroup which is normalized by~$H$ as in \cref{SubexpForRank1Subgroups-prelims}.
Let $A\subset G$ be a split Cartan subgroup containing $A_H$ such that $N$ is the expanding horospherical subgroup for some one-parameter subgroup $\{b^s\}\subset A$.
Let $\eta$ be the $A_H N$-invariant Borel probability measure on $\calE_{\wtd H}$ guaranteed by \cref{TimeAvgExp>0}.
For $n \in \Z^+$ set
	$$\eta_n := \frac 1 n \int _0 ^n (b^s)_*\eta \, ds .$$

\begin{claim}
 The family $\{ \eta_n \}$ is uniformly tight.
\end{claim}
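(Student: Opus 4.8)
The goal is to show that the family $\{\eta_n\}$, where $\eta_n = \frac{1}{n}\int_0^n (b^s)_* \eta\, ds$ and $\eta$ is the $(A_H N)$-invariant measure supplied by \cref{TimeAvgExp>0}, is uniformly tight on $\P\calE_{\wtd H}$. Since $\P\calE_{\wtd H}\to \wtd H/\Gamma\!_{\wtd H}$ is a compact extension, it suffices to prove uniform tightness of the projected measures $(\pi_{\wtd H})_*\eta_n$ on $\wtd H/\Gamma\!_{\wtd H}$; and since $\wtd H/\Gamma\!_{\wtd H}\to H/\Gamma\!_H$ is also a compact bundle, it is enough to control the further projections to $H/\Gamma\!_H$. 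The key structural point is that $N$ is the expanding horospherical subgroup of $\{b^s\}$, so translating $\eta$ (which is $N$-invariant and has controlled mass near infinity) by $b^s$ should behave like the equidistribution/non-escape phenomenon captured by \cref{MixingSmallCuspsCom} (or, in the form suited to $G/\Gamma$ via Siegel sets, by the non-divergence estimates of \cref{sec:nondiv}).

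\textbf{Key steps.} First I would record that $\eta$ has exponentially small (indeed: uniformly bounded-support-in-the-fiber, hence) tight mass distribution, by combining the uniform tightness of the $\eta_n$ built from the paths $x_n$ (\cref{etanTight}, together with \cref{lem:maxpathstight}) and the fact that $A_H$-averaging via \cref{cor:tight} preserves uniform tightness; this is already implicit in the proof of \cref{TimeAvgExp>0}. Thus the projection $\bar\eta := (\pi_{H})_*\eta$ to $H/\Gamma\!_H$ is a single tight (in fact, supported in $X_{H,\le \ell_0}$-type region up to the fiber) measure. Second, I would push everything into $G/\Gamma$: under the inclusion $\wtd H/\Gamma\!_{\wtd H}\hookrightarrow G/\Gamma$, the measure $(\pi_{\wtd H})_*\eta$ projects to a compactly supported (or exponentially-small-mass-at-$\infty$) $N$-invariant Borel probability measure $\nu$ on $G/\Gamma$. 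Third, apply \cref{MixingSmallCuspsCom} (or \cref{MixingSmallCusps}): since $N = \mathcal U^+(b^s)$ and $\nu$ is $N$-invariant with exponentially small mass at $\infty$, the family $\{(b^s)_*\nu\}_{s\ge 0}$ has \emph{uniformly} exponentially small mass at $\infty$. Averaging over $s\in[0,n]$ preserves this uniform bound (the average of measures each giving mass $<Ce^{-\tau\ell}$ to $\{d(x_0,x)\ge \ell\}$ again gives mass $<Ce^{-\tau\ell}$ to that set). Hence $\{(\pi_{\wtd H})_*\eta_n\}$, viewed in $G/\Gamma$, has uniformly exponentially small mass at $\infty$, which in particular gives uniform tightness. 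Pulling back through the two compact bundle projections $\P\calE_{\wtd H}\to \wtd H/\Gamma\!_{\wtd H}\to H/\Gamma\!_H$ and using properness as in \cref{sec:nondiv}, this yields uniform tightness of $\{\eta_n\}$ on $\P\calE_{\wtd H}$.

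\textbf{Main obstacle.} The one subtlety is that $\nu$ lives a priori only on the subvariety $\wtd H/\Gamma\!_{\wtd H}$ inside $G/\Gamma$, so I must be careful that ``exponentially small mass at $\infty$'' for $\nu$ as a measure on $G/\Gamma$ genuinely holds — i.e.\ that excursions of $\wtd H/\Gamma\!_{\wtd H}$ into the cusps of $G/\Gamma$ are controlled. This is where the $\Q$-structure matters: $\wtd H = H\ltimes N$ is a $\Q$-subgroup with $\Gamma\!_N$ cocompact in $N$, so the fibers of $\wtd H/\Gamma\!_{\wtd H}\to H/\Gamma\!_H$ are compact, and the Siegel-set comparison of heights (the inequality $h(g\Gamma) \le C\,d(g\Gamma,\1\Gamma)+B$ valid on Siegel fundamental sets, together with the construction of Siegel sets for $H$ inside Siegel sets for $G$ in \cref{SiegelForSubgroup}) lets one bound the $G/\Gamma$-height of points of $\wtd H/\Gamma\!_{\wtd H}$ by the $H/\Gamma\!_H$-height $\phi$ plus a constant. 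Since $\bar\eta$ is tight, $\phi$ is $\bar\eta$-a.e.\ finite and in fact essentially bounded, so $\nu$ has exponentially small (here: even bounded-support) mass at $\infty$ in $G/\Gamma$, and the argument closes. I expect this height-comparison bookkeeping to be the only nontrivial point; everything else is a direct invocation of \cref{MixingSmallCuspsCom} and stability of uniform tightness under averaging.
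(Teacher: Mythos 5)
Your high‑level plan — push everything to $G/\Gamma$, exploit that $N=\mathcal U^+(b^s)$ together with $N$‑invariance of $\eta$, invoke the horospherical non‑escape machinery of \cref{sec:trans}, and then average — is exactly the structure of the paper's proof. However, there is a genuine gap in how you feed the measure into the mixing lemma. You assert that $\eta$ (equivalently its projection $\nu=\mu$ to $G/\Gamma$) has exponentially small mass at $\infty$, "implicit in the proof of \cref{TimeAvgExp>0}," and later that $\phi$ is $\bar\eta$‑essentially bounded. Neither of these is available. The tightness coming from \cref{lem:maxpathstight} gives a bound on the fraction of time spent above height $L$ that is of order $1/L$, i.e.\ polynomial rather than exponential decay, so \cref{cor:tight} produces only \emph{uniform tightness}, not exponentially small mass at $\infty$; and automatic tightness of a single Borel probability measure says nothing about essential boundedness of the height — the orbit segments $\{a^t\cdot x_n\}$ entering the construction make long excursions into the cusp, so there is no reason for $\eta$ to be compactly supported. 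Without one of these hypotheses, you cannot directly invoke \cref{MixingSmallCusps} (needs exp.\ small mass) nor \cref{MixingSmallCuspsCom} (needs compact support) for the pushforward $\mu$.

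The paper's fix is a restriction step you are missing. Since $\Gamma_N$ is cocompact in $N$, the $N$‑orbits in $\wtd H/\Gamma_{\wtd H}$ are compact (equivalently, the fibers of $\wtd H/\Gamma_{\wtd H}\to H/\Gamma_H$ are compact and $N$‑invariant). Hence given $\delta>0$ one may choose a compact $N$‑invariant set $C\subset \wtd H/\Gamma_{\wtd H}$ with $\mu(C)>1-\delta$, restrict $\mu$ to $C$ to get a compactly supported $N$‑invariant $\widehat\mu$, and apply \cref{MixingSmallCuspsCom} to $\widehat\mu$. This yields a height $\ell_1$, independent of $s$, with $(b^s)_*\widehat\mu(\{h>\ell_1\})<\delta$, hence $(b^s)_*\mu(\{h>\ell_1\})<2\delta$ for all $s$; averaging over $s\in[0,n]$ preserves this, and properness of $\P\calE\to G/\Gamma$ lifts the uniform tightness back to $\{\eta_n\}$. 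Note that once you have this restriction step, the Siegel‑set height comparison you discuss as the "main obstacle" is not needed at all (nor is its claimed additive bound correct: the $G/\Gamma$‑height of $hn\Gamma$ and the $\phi$‑height $h(h\Gamma)$ need not differ by a bounded amount); what matters is simply the compactness of $N$‑orbits in $\wtd H/\Gamma_{\wtd H}$.
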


\begin{proof}
Let $ \mu$ denote the image of $\eta$ under the projection $\P\calE_{\wtd H}\to G/\Gamma$.
Recall that $N$-orbits in $\wtd H/\Gamma\!_{\wtd H}$ are compact. In particular, given $\delta>0$ there is a $N$-invariant subset $C\subset
 \wtd H/\Gamma\!_{\wtd H}$ with $\mu(C)>1-\delta$. Let $\widehat\mu$ denote the restriction of $\mu$ to $C$.

As $N$ is the expanding horospherical subgroup for~$b^s$, by \cref{MixingSmallCuspsCom} the family $\bigl\{ (b^s)_* \widehat \mu : s\in [0,\infty) \bigr\}$ has uniformly exponentially small mass at $\infty$. Therefore, there is some~$\ell_1$ such that for all $s>0$,
	$$(b^s)_* \widehat \mu \bigl( \{\, x\in G/\Gamma \mid h(x) > \ell_1 \,\} \bigr) < \delta$$
whence
	$$ (b^s)_* \mu \bigl( \{\, x\in G/\Gamma \mid h(x) > \ell_1 \,\} \bigr) < 2 \delta .$$
Since $\ell_1$ is independent of~$s$, the sequence $\{\frac 1 n \int_0^n(b^s)_* \mu \, ds\}$ is uniformly tight. As $\P\calE \to G/\Gamma$ is a proper map, the family $\{\eta_n\}$ is uniformly tight.
\end{proof}

Let $\eta_\infty = \lim_{k \to \infty} \eta_{n_k}$ for some weak-$*$ convergent subsequence $\{ \eta_{n_k} \}$.

It is clear from construction that $\eta_\infty$ is $b^s$-invariant, $\{a^t\}$-invariant, and also $N$-invariant because $\eta$ is $N$-invariant and $\{b^s\}$ normalizes~$N$. Since $N$ is the expanding horospherical subgroup associated to~$b^s$, this implies that $\eta_\infty$ projects to the Haar measure on $G/\Gamma$ by \cref{MixingSmallCuspsCom}. Applying \cref{average limit}\pref{baboonfeces1} we have that $$\int \overline\Psi(a, \cdot) \,d \eta_\infty>0.$$
As $\eta_\infty$ is $\{a^t\}$-invariant, we may average over a \folner sequence in $A$, apply \cref{average limit}\pref{baboonfeces1} to obtain an $A$-invariant Borel probability measure $\eta'$ on $\P\calE$ projecting to the Haar measure on $G/\Gamma$ with
$$\int \overline\Psi(a, \cdot) \,d \eta'>0.$$
Since the Haar measure on $G/\Gamma$ has exponentially small mass at $\infty$, $ \Psi(a, \cdot) $ is $L^1(\eta')$ and the pointwise ergodic theorem implies
$$\int \Psi(a, \cdot) \,d \eta'>0$$
and the proposition follows from \cref{Lyap>Phi}\pref{baboonfeces}.   \qed

\section{Sketch of alternate proof in higher $\Q$-rank} \label{AlternateHighRank}
This section provides an alternate approach that provides a shorter proof of \cref{thm:mainQR1} in the special case where $\rank_\Q G \ge 2$ and $G$ has finite center. Mainly we avoid the technology of cut-off functions and time averages that make up the bulk of the previous section of the paper. The methods do not seem to apply in $\Q$-rank~one, and therefore do not yield \cref{thm:mainQR1} in full generality. Since this result is subsumed by \cref{thm:mainQR1}, and is therefore not required for a complete proof of our main theorems, we do not provide details. The proof given here is much more similar to the proof in \cite{BFH-SLnZ}

\begin{proposition}
Assume $\rank_\Q G \ge 2$ and let $\alpha\colon \Gamma\to \Diff^1(M)$ be an action.
If there is a standard  $\Q$-rank-1 subgroup~$H$ of~$G$, such that the restriction
	$$\restrict{\alpha}{\Gamma\!_H}\colon \Gamma\!_H \to \Diff^1(M)$$
does not have {uniform subexponential growth of derivatives}
there is a split Cartan subgroup~$A$ of~$G$ and an $A$-invariant Borel probability measure~$\mu$ on~$X$ projecting to the Haar measure on $G/\Gamma$ such that
	 $\lambda_{\top,a,\mu, \calA} > 0$ for some $a \in A$.
\end{proposition}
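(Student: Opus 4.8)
The plan is to follow the same strategy used in \cite{BFH-SLnZ} for $\Sl(n,\Z)$, which in the higher $\Q$-rank case is available to us because we can find a non-trivial horospherical subgroup $N$ of $G$ normalized by the standard $\Q$-rank-1 subgroup $H$. As in \cref{SubexpForRank1Subgroups-prelims}, by \cref{Standard=Levi} we fix a parabolic $\Q$-subgroup $Q$ of $G$ with rational Langlands decomposition $Q^\circ = (L\times S^\circ)\ltimes N$ so that $H$ is the unique $\Q$-isotropic almost-simple factor of $L$; since $\rank_\Q G \ge 2$ we have $N \neq \{e\}$. Working with finite center means the induced space $X_{\wtd H} = (\wtd H\times M)/\Gamma\!_{\wtd H}$ carries the usual cocycle $\calA$ and, more importantly, that the cocycle over $\Gamma$ is bounded, so we do not need to confront the non-integrability issues that motivated the cut-off and time-averaging machinery of \cref{TimeAveraged}. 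By \cref{CentralSubExp}, \cref{UnipSubgrp}, and \cref{prop:cuspgroup}, we may assume the restriction of $\alpha$ to $\calZ$, to $\Gamma\!_N$, and to $\Gamma\!_{P'}$ for every minimal parabolic $\Q$-subgroup $P'$ has uniform $\len_\Gamma$-subexponential growth.

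The first main step is to produce an $\{a^t\}$-invariant Borel probability measure with positive top Lyapunov exponent. We fix a split Cartan subgroup $A_H$ of $H$ contained in a split Cartan $A$ of $G$, and use the $KAK$-type decomposition (\cref{kakaReductive}, \cref{GetExpForAOnCpct}) together with the assumed failure of subexponential growth for $\restrict{\calA_0}{\Gamma\!_H}$ to find, as in \cref{HavePosExp}, a unit vector $Y\in \liea_H$ (after reducing to a single one-parameter subgroup as in \cref{basisvector}) and a sequence of orbit segments for $a^t = \exp(tY)$ starting and ending in a fixed thick compact part of $X_{\wtd H}$ along which the fiberwise derivative grows exponentially. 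The key estimates \cref{SiegelImpliesSubexp}/\cref{Ngrowthcorrect} and \cref{SubexpInCusps}/\cref{horseforlunch} give $\phi$-tempered subexponential growth in the cusp, and \cref{GetXnAndTn} lets us assume both endpoints lie in $X_{\wtd H,\le \ell_0}$. The tightness lemma \cref{lem:maxpathstight} shows that the empirical measures along these segments form a uniformly tight family, so any weak-$*$ limit $\eta$ is an $a^t$-invariant probability measure on $X_{\wtd H}$ with $\lambda_{\top,a,\eta,\calA} > 0$ (here boundedness of $\calA$ makes the semicontinuity of the top exponent automatic, via \cite[Lemma 4.2]{BFH}). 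Averaging $\eta$ over a \Folner sequence in $A_H N$ and using that $A_H$ normalizes $N$ and that the cocycle over $N$ has subexponential growth (exactly as in the proof of \cref{MinParabPosExp}, applying \cref{SubexpForCompact} to $N\times\calE_{\wtd H}$ since $\Gamma\!_N$ is cocompact in $N$), we obtain an $(A_H N)$-invariant measure $\mu_0$ on $X_{\wtd H}$ with $\lambda_{\top,a,\mu_0,\calA} > 0$.

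The second main step is to improve $\mu_0$ to a measure projecting to Haar measure on $G/\Gamma$. Since $N$ is the expanding horospherical subgroup of some one-parameter subgroup $\{b^s\}$ of $A$ (\cref{UIsHoro}), and $\{b^s\}$ centralizes $\{a^t\}$ after choosing $b^s$ appropriately, we form $\mu_n := \frac1n\int_0^n (b^s)_*\mu_0\,ds$. Because $N$-orbits in $\wtd H/\Gamma\!_{\wtd H}$ are compact, $\mu_0$ restricted to an $N$-invariant set of large measure projects to a compactly supported measure on $G/\Gamma$, so \cref{MixingSmallCuspsCom} applies to give uniform exponential smallness of mass at $\infty$ for the translates, hence uniform tightness of $\{\mu_n\}$. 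Any weak-$*$ limit $\mu_\infty$ is $\{a^t\}$-, $\{b^s\}$-, and $N$-invariant, and by \cref{MixingSmallCuspsCom} its projection to $G/\Gamma$ is the Haar measure. By \cref{lemma:averaging} the top exponent does not decrease in this averaging (the translates have uniformly exponentially small mass at $\infty$), so $\lambda_{\top,a,\mu_\infty,\calA} > 0$. Finally applying \cref{lem:endgameA} (averaging over a \Folner sequence in a split Cartan $A$ containing $\{a^t\}$) produces an $A$-invariant Borel probability measure $\mu$ on $X$ projecting to Haar measure with $\lambda_{\top,a,\mu,\calA} > 0$, which is the desired conclusion.

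The main obstacle, compared with the $\Q$-rank-1 case, is comparatively mild: one still has to carefully track the growth of the cocycle in the cusp of $H/\Gamma\!_H$, i.e.\ verify that orbit segments leaving the thick part through a cusp contribute subexponentially, which is exactly the content of \cref{SubexpInCusps} and relies crucially on \cref{SiegelForSubgroup} (choosing $\theta$ and $P$ so that Siegel sets of $H$ sit inside Siegel sets of $G$) together with the already-established subexponential growth on $\Gamma\!_N$ and on $\Gamma\!_{P'}$ for minimal parabolic $\Q$-subgroups $P'$. Once these inputs are in place, the argument is a routine adaptation of \cite{BFH-SLnZ}, avoiding the time-averaged and cut-off cocycles entirely since boundedness of $\calA$ (finite center, cocompactness of $\Gamma\!_N$ in $N$) restores upper semicontinuity of the top exponent and integrability of all relevant functions. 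This is why we only sketch the proof here: every step has a direct precedent either in \cref{SubexpForParabolic} or in \cite{BFH,BFH-SLnZ}.
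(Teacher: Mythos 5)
Your proposal rests on a central misconception that creates a real gap: you assert several times that the cocycle $\calA$ is bounded on $X_{\wtd H}$ (``Working with finite center means $\ldots$ that the cocycle over $\Gamma$ is bounded''; ``boundedness of $\calA$ (finite center, cocompactness of $\Gamma\!_N$ in $N$) restores upper semicontinuity of the top exponent''). This is false. The cocycle $\calA_0(\gamma,\cdot)$ is bounded for each fixed $\gamma\in\Gamma$ because $M$ is compact, but the induced cocycle $\calA$ on $\calE\to X$ (or $\calE_{\wtd H}\to X_{\wtd H}$) is only \emph{tempered}: by \cref{tempered}, $\|\calA(g,x)\|$ grows exponentially in the height $h(x)$, and $h$ is unbounded because $H/\Gamma\!_H$ is non-compact — that is the whole point of $\Gamma$ being non-uniform. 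Neither finite center nor cocompactness of $\Gamma\!_N$ in $N$ helps here, since the non-compactness is already present in the base $H/\Gamma\!_H$. The reason higher $\Q$-rank is easier is not that the cocycle becomes bounded; it is that a non-trivial horospherical $N$ normalized by $H$ exists, which gives access to exponential mixing.

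This misconception is fatal to your Step~1. You propose to take weak-$*$ limits of the empirical measures $\eta_n$ from \cref{lem:maxpathstight} and conclude $\lambda_{\top,a,\eta,\calA}>0$ ``via \cite[Lemma 4.2]{BFH}.'' But that lemma needs a bounded continuous cocycle; for a tempered cocycle you would instead need the family $\{\eta_n\}$ to have \emph{uniformly exponentially small mass at $\infty$} (as in \cref{lemma:averaging}), and \cref{lem:maxpathstight} only delivers tightness — the decay of mass above height $L$ in its proof is of order $1/L$, not exponential. So the top exponent could drop to zero in the limit, precisely the ``escape of Lyapunov exponent'' phenomenon the paper describes. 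Your second gap is the appeal to \cref{SubexpForCompact} with ``$N\times\calE_{\wtd H}$'': that lemma requires $\Gamma\!_{\wtd H}$ to be cocompact in $\wtd H=HN$, which fails here; the applicable estimate is the $\phi$-tempered one in \cref{Ngrowthcorrect}, which carries a height-dependent error term that cannot simply be dropped.

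The paper's actual alternate proof sidesteps all of this by \emph{not} constructing an $(A_HN)$-invariant measure on $X_{\wtd H}$ first. Instead it translates the empirical average $F'_n*\delta_{\xi_n}$ by $b^{s_n}$ \emph{before} taking any limit, for $s_n$ chosen with $h_n/s_n\to 0$ and $s_n/t_n\to 0$ (e.g.\ $s_n=\sqrt{h_n t_n}$), where $h_n$ is the maximal height along the orbit segment. The first condition ensures, via exponential mixing (\cref{ExpMixing}) and unipotent non-divergence, that the translated measures converge to Haar and have \emph{uniformly} exponentially small mass at $\infty$; the second condition, combined with subexponential growth of $\calA$ along $\Gamma\!_N$ and the sublinear height bound $h_n=o(t_n)$, ensures the translation costs only $o(t_n)$ in the exponent. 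Semicontinuity then applies because the limit family has uniform exponential smallness of mass at $\infty$, not because the cocycle is bounded. If you restructure Step~1 to translate before passing to the limit, and replace \cref{SubexpForCompact} by \cref{Ngrowthcorrect}, your sketch becomes the paper's argument; as written, the two steps do not hold together.
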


\begin{proof}[Sketch of proof]
We assume all results attained prior to \cref{SubexpForRank1Subgroups}, and will also assume some results proved in the early parts of that \lcnamecref{SubexpForRank1Subgroups}.
Since $\restrict{\alpha}{\Gamma\!_H}$ does not have uniform subexponential growth of derivatives, there is an $\R$-diagonalizable one-parameter subgroup $\{a^t\}$ of~$A_H$, a sequence $\{\xi_n\}$ of points in~$\calE$, and $t_n \to \infty$ such that $\{\xi_n\}$ and $\{a^{t_n} \cdot \xi_n\}$ are bounded, and
	$$ \lim_{n \to \infty} \frac{1}{t_n} \calA(a^{t_n} , \xi_n) > 0. $$
By assuming this limit is as large as possible, we may assume that
	\begin{align} \label{AlternatePfSublinearHeight}
	 \lim_{n \to \infty} \frac{1}{t_n} \max_{t \in [0,t_n]} h( a^t \cdot \xi_n ) = 0
	. \end{align}
This follows from the proof of \cref{lem:maxpathstight} and is more explicit in the proof of \cite[Lem.~5.3]{BFH-SLnZ}.

Let $\{b^s\}$ be a nontrivial $\Q$-diagonalizable one-parameter subgroup of~$G$ that centralizes~$H$. The existence of $\{b_s\}$ is where we use the assumption that $\rank_\Q G \ge 2$.
Let $N$ be the expanding horospherical subgroup of~$\{b^s\}$, so $N$ is a unipotent $\Q$-subgroup. It is normalized by~$H$, because $H$ centralizes~$\{b^s\}$. Choose a \folner sequence $\{F_n\}$ in~$N$, such that
	$$ \limsup_{n \to \infty} \frac{1}{t_n} \max_{u \in F_n} \log \| u \| < \infty ,$$
and such that $\{a^{t'_n} F_n a^{-t'_n}\}$ is also a \folner sequence in $N$, for every sequence $\{t'_n\}$ with $|t'_n| \le t_n$.

Note that letting $F'_n := \{a^t\}_{0 \le t \le t_n} F_n$ yields a \Folner sequence in $\{a^t\} N$.
From \pref{AlternatePfSublinearHeight}, we see that if we let $h_n = \max_{t \in [0,t_n]} h( a^t \cdot \xi_n )$, then there is a sequence $\{s_n\}$, such that $h_n/s_n \to 0$ and $s_n/t_n \to 0$; for example, let $s_n = \sqrt{h_n t_n}$. Therefore, it follows from exponential mixing that if we let $\mu_n := b^{s_n} F'_n * \delta_{\xi_n}$, then the projection of $\{\mu_n\}$ to $G/\Gamma$ converges to Haar by applying \cref{ExpMixing}. There is a technical issue that \cref{ExpMixing} requires the support of~$f$ to be in the unit ball of $N$, which can be addressed by covering $\{a^{t'_n} F_n a^{-t'_n}\}$ by small cubes contained in translates of the unit ball of $N$ and by considering a partition of unity subordinate to this cover.

The sequence of measures $\{\mu_n\}$ has also exponentially small mass at $\infty$, this follows because for a fixed $t$ in $[0,t_n]$, the set $b^{s_n}a_t F_n$ is a $U$-orbit intersecting the thick part of $G/\Gamma$ nontrivially and so the non-divergence results for unipotent subgroups apply and one can use \cite[Lem.~3.3]{BFH-SLnZ}. Formally, \cite[Lem.~3.3]{BFH-SLnZ} is proven for one-dimensional unipotent subgroups, but for higher dimensional subgroups it follows by partition $U$ into one-dimensional unipotent orbits.

In addition, any subsequential limit~$\mu_\infty$ of $\{\mu_n\}$ is $\{a^t\}$-invariant, because $F'_n$ is a \Folner set and $b^{s_n}$ centralizes~$\{a^t\}$.
Finally, note by an easy computation that:
\begin{equation}\label{desi1}
\liminf_{n \to \infty} \lambda_{\top,a^1,\mu_n, \calA} \geq \frac{1}{t_n} \int_{F_n} \calA(a^{t_n}, b^{s_n} u \cdot \xi_n) \, dm_{F_n}(u),
\end{equation}
and we also have
\begin{equation}\label{desi2}	
\liminf_{n \to \infty}  \frac{1}{t_n} \int_{F_n} \calA(a^{t_n}, b^{s_n} u \cdot \xi_n) \, dm_{F_n}(u) \geq \lim_{n \to \infty} \frac{1}{t_n} \calA(a^{t_n} , \xi_n) >0
\end{equation}	
because $s_n/t_n \to 0$ and by using the fact that $\restrict{\alpha}{\Gamma\!_N}$ has {uniform subexponential growth of derivatives} by Proposition \ref{UnipSubgrp} and making use of \ref{AlternatePfSublinearHeight}. Therefore $\lambda_{\top,a^1,\mu_\infty, \calA} > 0$ because the top Lyapunov exponent of a bounded continuous linear cocycle is an upper-semicontinuous function of the measure.
\end{proof}

\appendix

\section{Numerology tables associated with Zimmer's conjecture}\label{sec:table}\label{appendix}
We compute the numbers $n(G), d(G), v(G)$ and $r(G)$ for all simple real Lie groups.  We note that such numbers depend only on the Lie algebra of $G$.  We primarily follow the naming conventions in \cite{MR1920389}.

\setlength\tabcolsep{0pt}
\newcommand{\LLalt}{\LL\addlinespace[.2em]} 

\newcolumntype{x}[1]{>{\centering\arraybackslash}m{ #1}}
\newcommand{\ftform}[1]{$(#1)$}

{\scriptsize
\ctable[topcap, notespar,
caption =  {Numerology appearing in Zimmer's conjecture for  classical $\R$-split simple Lie algebras.}\label{T1},
mincapwidth = \textwidth,
footerwidth,
maxwidth=\textwidth,
pos=H
]
{ x{.12\textwidth}   x{.13\textwidth}    x{.08\textwidth}   x{.12\textwidth}   x{.15\textwidth}  x{.13\textwidth}  x{.08\textwidth} }
{
\tnote[\ftform{a}]{$\liesl(4,\R)=\so(3,3)$}
\tnote[\ftform{b}]{$\so(1,2)=\liesl(2,\R)$ and $\so(2,3)=\liesp(4,\R)$}
\tnote[\ftform{c}]{$\so(2,2)$ is not simple and $\so(3,3)=\liesl(4,\R)$}
}
{
   $\lieg$ & \makecell {restricted  \\ root  system} &\makecell{real\\ rank}& $n(G)$& $d(G)$& $v(G)$ & $r(G)$\\ \FL
 \makecell{$\liesl(n,\R)$ \\  $n\ge 2$}&$A_{n-1}$ & $n-1$ & $n$&   \makecell{ $2n-2$, $n\neq 4$ \\ $5$,  $n= 4$\tmark[\ftform{a}]}   & $n-1$&$n-1$ \ML

\makecell{$\liesp(2n,\R)$\\$n\ge 2$}&$C_n$ & $n$ & $2n$& $ 4n-4 $& $2n-1$ &$2n-1$ \ML

 \makecell{$\so(n,n+1)$\\  $n\ge 3$\tmark[\ftform{b}]} &$B_n$& $n$ & $2n+1$& $ 2n $& $2n-1$ &$2n-1$\ML

\makecell{$\so(n,n)$\\  $n\ge 4 $\tmark[\ftform{c}]}&$D_n$& $n$ & $2n$& $ 2n-1 $& $2n-2$ &$2n-2$
\LLalt
}}

{\scriptsize
\ctable[notespar,
caption =  {Numerology appearing in Zimmer's conjecture for  complex simple Lie   algebras} \label{T2},
mincapwidth = \textwidth,
footerwidth,
maxwidth=\textwidth,
pos=H
]
{ x{.13\textwidth}   x{.15\textwidth}    x{.08\textwidth}   x{.12\textwidth}   x{.15\textwidth}  x{.14\textwidth}  x{.08\textwidth} }
{
\tnote[\ftform{d}]{$\liesl(4,\C)=\so(6,\C)$}
\tnote[\ftform{e}]{$\so(5,\C) =\liesp(4,\C) $  and $\so(3,\C)=\liesl(2,\C)$}
\tnote[\ftform{f}]{$\so(6,\C)=\liesl(4,\C) $  and $\so(4,\C)$ is not simple}
}
{
   $\lieg$ & \makecell {restricted  \\ root  system} &\makecell{real\\ rank}& $n(G)$& $d(G)$& $v(G)$ & $r(G)$\\ \FL

    \makecell{$\liesl(n,\C) $\\  $n\ge 2$} 
        &$A_{n-1}$ & $n-1$ & $2n$& \makecell{ $2n-2$,  $ n\neq 4 $\\$ 5$, { $  n=4$}\tmark[\ftform{d}]}
        & $2n-2$&$n-1$ \ML
                    \makecell{  $ \liesp(2n,\C)$\\  $n\ge 2$}
                &$C_{n}$ & $n$ & $4n$& $4n-4 $& $4n-2$&$2n-1$ \ML
        \makecell{  $\so(2n+1,\C)$ \\ $ n\ge  3$\tmark[\ftform{e}]}  &$B_{n}$ & $n$ & $4n+2$& $2n $& $4n-2$&$2n-1$ \ML
        \makecell{      $\so(2n,\C)$ \\  $ n\ge 4$\tmark[\ftform{f}]}  &$D_{n}$ & $n$ & $4n$& $2n -1$& $4n-4$&$2n-2$ \ML
        $\mathfrak {e}_6(\C)$ &$E_6$& $6$ & $54$ & $26$ & $32$ & $16$  \ML
    $\mathfrak {e}_7(\C)$ &$E_7$& $7$ & $112$ & $54$ & $54$ & $27$  \ML
    $\mathfrak {e}_8(\C)$ &$E_8$& $8$ & $496$ & $112$ & $114$ & $57$  \ML
    $\mathfrak {f}_4(\C)$ &$F_4$& $4$ & $52$ & $16$ & $30$ & $15$  \ML
    $\mathfrak {g}_2(\C)$ &$G_2$& $2$ & $14$ & $6$ & $10$ & $5$  \LLalt
   }}

{\scriptsize
\ctable[notespar,
caption =  {Numerology appearing in Zimmer's conjecture for classical non-$\R$-split simple real forms} \label{T3},
mincapwidth = \textwidth,
footerwidth,
maxwidth=\textwidth,
pos=H
]
{ x{.2\textwidth}   x{.18\textwidth}    x{.08\textwidth}   x{.12\textwidth}   x{.15\textwidth}  x{.14\textwidth}  x{.08\textwidth} }
{\tnote[\ftform{g}]{$\liesl(2,\mathbb{H})=\so(1,5)$}
\tnote[\ftform{h}]{
$\so(1,3)=\liesl(2,\C)$}
\tnote[\ftform{i}]{$\su(2,2)=\so(4,2)$}
\tnote[\ftform{j}]{$\liesp(2,2)=\so(1,4)$}
\tnote[\ftform{k}]{$\so^*(4)$ is not simple}
\tnote[\ftform{\ell}]{$\so^*(6)=\su(1,3)$}
}
{
   $\lieg$ & \makecell {restricted  \\ root  system} &\makecell{real\\ rank}& $n(G)$& $d(G)$& $v(G)$ & $r(G)$\\ \FL
	\makecell{$\liesl(n,\mathbb{H})$\\  $n\ge2$}& $A_{n-1}$ & $ n-1 $ &   \makecell{$4n  $,   $n\neq2$ \\$6$,  $n= 2$\tmark[\ftform{g}]}    &  \makecell{$4n - 2 $, $n\neq2$ \\   $5$, $n= 2$}
	& $ 4n - 4 $ & $ n-1  $\ML\makecell{$\so(n,m)$\\    $2 \le n\le n+2 \le m$\\    $n=1$, $m\ge 6$\tmark[\ftform{g, h, j}]} 
&
$B_n$ 
& $n$ & $n +m $& $ n+m -1$& $n+m-2$ &$2n-1$\ML 	\makecell{$\su(n,m)$\\ $1\le n\le m$\\   $(n,m)\neq (2,2)$\tmark[\ftform{i}]}  
	& \makecell{$(BC)_n$, { $n<m$}\\$C_n$, {  $n=m$}} & $n$ &$2n +2m $ &  $2n +2m -2$&$ 2n+2m-3 $&$ 2n-1 $ \ML

$\su(2,2)\tmark[\ftform{i}] $
	& $C_2$ & $2$ &$6$ &  $5$&$ 4 $&$ 3 $ \ML
	\makecell{ $\liesp(2n,2m)$ \\  $1 \le n\le m$\\   $(n,m)\neq (1,1)$\tmark[\ftform{j}]}  & \makecell{$(BC)_n$, {  $n<m$}\\$C_n$, {  $n=m$} }& $ n $&  $4n +4m$& $4n+4m-4 $ & $ 4n+4m-5$ & $ 2n-1$\ML

$\liesp(2,2)\tmark[\ftform{j}]$   &  $A_1$& $ 1 $&  $5$& $4 $ & $ 3$ & $ 1$\ML
   \makecell{   $\so^*(2n)$\\  $n\ge 4$ even\tmark[\ftform{k}] }&  $C_{\frac 1 2 n}$ & $   \dfrac n 2   $ & $  4n $ & $  2n-1 $ & $  4n-7 $ & $  n-1 $ \ML
     \makecell{ $\so^\ast(2n)$ \\ $n\ge 5$ odd\tmark[\ftform{\ell}]}  & $(BC)_{\frac 1 2 (n-1)}$ & $  \dfrac{n-1} 2   $ & $ 4n  $ & $ 2n-1  $ & $  4n-7 $ & $ n-2  $ \LLalt
   }}

\vfill

{\scriptsize
\ctable[notespar,
caption = {Numerology   appearing in Zimmer's conjecture for  real forms of exceptional  Lie algebras\tmark[\ftform{m}]}\label{T4},
mincapwidth = 1\textwidth,
footerwidth,
pos=H
]
{ x{.07\textwidth}   x{.13\textwidth}    x{.07\textwidth}   x{.07\textwidth}   x{.07\textwidth}  x{.07\textwidth}  x{.07\textwidth} }
{\tnote[\ftform{m}]{Naming conventions follow \cite[Appendix C.4]{MR1920389}}}
{
   $\lieg$ & \makecell {restricted  \\ root  system} &\makecell{real\\ rank}& $n(G)$& $d(G)$& $v(G)$ & $r(G)$\\ \FL
          $E_{I}$ &$E_6$& $6$ & $27$ & $26$ & $16$ & $16$  \ML
          $E_{II}$ &$F4$& $4$ & $27$ & $26$ & $21$ & $15$  \ML
          $E_{III}$ &$(BC)_2$& $2$ & $27$ & $26$ & $21$ & $3$  \ML
          $E_{IV}$ &$A_2$& $2$ & $27$ & $26$ & $16$ & $3$  \ML
    $E_{V}$ &$E_7$& $7$ & $56$ & $54$ & $27$ & $27$  \ML
          $E_{VI}$ &$F_4$& $4$ & $56$ & $54$ & $33$ & $15$  \ML
          $E_{VII}$ &$C_3$& $3$ & $56$ & $54$ & $27$ & $5$  \ML
    $E_{VIII}$ &$E_8$& $8$ & $248$ & $112$ & $57$ & $57$  \ML
              $E_{IX}$ &$F_4$& $4$ & $248$ & $112$ & $57$ & $15$  \ML
    $F_{I}$ &$F_4$& $4$ & $26$ & $16$ & $15$ & $15$  \ML
     $F_{II}$ &$(BC)_1 $& $1 $ & $26 $ & $16 $ & $15 $ & $1 $  \ML
    $G$ &$G_2$& $2$ & $7$ & $6$ & $5$ & $5$  \LLalt
     }}

\begin{bibdiv}
\begin{biblist}

\bib{MR2590897}{book}{
      author={Ash, Avner},
      author={Mumford, David},
      author={Rapoport, Michael},
      author={Tai, Yung-Sheng},
       title={Smooth compactifications of locally symmetric varieties},
     edition={Second},
      series={Cambridge Mathematical Library},
   publisher={Cambridge University Press, Cambridge},
        date={2010},
        ISBN={978-0-521-73955-9},
  url={https://doi-org.turing.library.northwestern.edu/10.1017/CBO9780511674693},
        note={With the collaboration of Peter Scholze},
}

\bib{MR1047327}{article}{
      author={Azad, H.},
      author={Barry, M.},
      author={Seitz, G.},
       title={On the structure of parabolic subgroups},
        date={1990},
        ISSN={0092-7872},
     journal={Comm. Algebra},
      volume={18},
      number={2},
       pages={551\ndash 562},
         url={https://doi.org/10.1080/00927879008823931},
}

\bib{MR2415834}{book}{
      author={Bekka, Bachir},
      author={de~la Harpe, Pierre},
      author={Valette, Alain},
       title={Kazhdan's property ({T})},
      series={New Mathematical Monographs},
   publisher={Cambridge University Press, Cambridge},
        date={2008},
      volume={11},
        ISBN={978-0-521-88720-5},
  url={https://doi-org.turing.library.northwestern.edu/10.1017/CBO9780511542749},
}

\bib{MR1700749}{book}{
      author={Billingsley, Patrick},
       title={Convergence of probability measures},
     edition={Second},
      series={Wiley Series in Probability and Statistics: Probability and
  Statistics},
   publisher={John Wiley \& Sons, Inc., New York},
        date={1999},
        ISBN={0-471-19745-9},
  url={https://doi-org.turing.library.northwestern.edu/10.1002/9780470316962},
        note={A Wiley-Interscience Publication},
}

\bib{MR3837546}{book}{
      author={Bogachev, Vladimir~I.},
       title={Weak convergence of measures},
      series={Mathematical Surveys and Monographs},
   publisher={American Mathematical Society, Providence, RI},
        date={2018},
      volume={234},
        ISBN={978-1-4704-4738-0},
  url={https://doi-org.turing.library.northwestern.edu/10.1090/surv/234},
}

\bib{MR0294349}{article}{
      author={Borel, A.},
      author={Tits, J.},
       title={\'{E}l\'{e}ments unipotents et sous-groupes paraboliques de
  groupes r\'{e}ductifs. {I}},
        date={1971},
        ISSN={0020-9910},
     journal={Invent. Math.},
      volume={12},
       pages={95\ndash 104},
         url={https://doi.org/10.1007/BF01404653},
}

\bib{MR0244260}{book}{
      author={Borel, Armand},
       title={Introduction aux groupes arithm\'{e}tiques},
      series={Publications de l'Institut de Math\'{e}matique de
  l'Universit\'{e} de Strasbourg, XV. Actualit\'{e}s Scientifiques et
  Industrielles, No. 1341},
   publisher={Hermann, Paris},
        date={1969},
}

\bib{MR338456}{article}{
      author={Borel, Armand},
       title={Some metric properties of arithmetic quotients of symmetric
  spaces and an extension theorem},
        date={1972},
        ISSN={0022-040X},
     journal={J. Differential Geometry},
      volume={6},
       pages={543\ndash 560},
  url={http://projecteuclid.org.turing.library.northwestern.edu/euclid.jdg/1214430642},
}

\bib{MR1102012}{book}{
      author={Borel, Armand},
       title={Linear algebraic groups},
     edition={Second},
      series={Graduate Texts in Mathematics},
   publisher={Springer-Verlag, New York},
        date={1991},
      volume={126},
        ISBN={0-387-97370-2},
         url={http://dx.doi.org/10.1007/978-1-4612-0941-6},
}

\bib{MR2189882}{book}{
      author={Borel, Armand},
      author={Ji, Lizhen},
       title={Compactifications of symmetric and locally symmetric spaces},
      series={Mathematics: Theory \& Applications},
   publisher={Birkh\"auser Boston, Inc., Boston, MA},
        date={2006},
        ISBN={978-0-8176-3247-2; 0-8176-3247-6},
}

\bib{MR0207712}{article}{
      author={Borel, Armand},
      author={Tits, Jacques},
       title={Groupes r\'eductifs},
        date={1965},
        ISSN={0073-8301},
     journal={Inst. Hautes \'Etudes Sci. Publ. Math.},
      number={27},
       pages={55\ndash 150},
}

\bib{MR0315007}{article}{
      author={Borel, Armand},
      author={Tits, Jacques},
       title={Compl\'{e}ments \`a l'article: ``{G}roupes r\'{e}ductifs''},
        date={1972},
        ISSN={0073-8301},
     journal={Inst. Hautes \'{E}tudes Sci. Publ. Math.},
      number={41},
       pages={253\ndash 276},
         url={http://www.numdam.org/item?id=PMIHES_1972__41__253_0},
}

\bib{MR0618325}{article}{
      author={Brezin, Jonathan},
      author={Moore, Calvin~C.},
       title={Flows on homogeneous spaces: a new look},
        date={1981},
        ISSN={0002-9327},
     journal={Amer. J. Math.},
      volume={103},
      number={3},
       pages={571\ndash 613},
         url={https://doi.org/10.2307/2374105},
}

\bib{Brown}{book}{
      author={Brown, A.},
      editor={Triestino, M.},
       title={Entropy, {L}yapunov exponents, and rigidity of group actions},
      series={Ensaios matem{\'a}ticos},
   publisher={Sociedade Brasileira de Matem{\'a}tica},
        date={2019},
        ISBN={9788583371595},
         url={https://books.google.com/books?id=O82yzQEACAAJ},
}

\bib{BrownDamjanovicZhang}{unpublished}{
      author={Brown, Aaron},
      author={Damjanovic, Danijela},
      author={Zhang, Zhiyuan},
       title={{$C^1$} actions on manifolds by lattices in {L}ie groups},
        date={2019},
        note={Preprint},
}

\bib{BFH}{article}{
      author={Brown, Aaron},
      author={Fisher, David},
      author={Hurtado, Sebastian},
       title={Zimmer's conjecture: Subexponential growth, measure rigidity, and
  strong property {(T)}},
        date={2016},
     journal={Preprint},
        note={arXiv:1608.04995},
}

\bib{BFH-SLnZ}{article}{
      author={Brown, Aaron},
      author={Fisher, David},
      author={Hurtado, Sebastian},
       title={Zimmer's conjecture for actions of {${\rm SL}(m, \mathbb Z)$}},
        date={2020},
        ISSN={0020-9910},
     journal={Invent. Math.},
      volume={221},
      number={3},
       pages={1001\ndash 1060},
  url={https://doi-org.turing.library.northwestern.edu/10.1007/s00222-020-00962-x},
}

\bib{AWBFRHZW-latticemeasure}{unpublished}{
      author={Brown, Aaron},
      author={Rodriguez~Hertz, Federico},
      author={Wang, Zhiren},
       title={Invariant measures and measurable projective factors for actions
  of higher-rank lattices on manifolds},
        date={2016},
}

\bib{MR1911660}{article}{
      author={Burger, M.},
      author={Monod, N.},
       title={Continuous bounded cohomology and applications to rigidity
  theory},
        date={2002},
        ISSN={1016-443X},
     journal={Geom. Funct. Anal.},
      volume={12},
      number={2},
       pages={219\ndash 280},
         url={http://dx.doi.org/10.1007/s00039-002-8245-9},
}

\bib{MR4093195}{article}{
      author={Cantat, Serge},
       title={Progr\`es r\'{e}cents concernant le programme de {Z}immer
  [d'apr\`es {A}. {B}rown, {D}. {F}isher et {S}. {H}urtado]},
        date={2019},
   note={S\'{e}minaire {B}ourbaki, Vol.~2017/2018, expos\'{e}s 1136--1150},
   journal={Ast\'{e}risque},
number={414},
       pages={Exp.~No.~1136, 1\ndash 48},
  url={https://doi-org.turing.library.northwestern.edu/10.24033/ast.1080},
}

\bib{MR1953260}{article}{,
    AUTHOR = {Clozel, Laurent},
     TITLE = {D\'{e}monstration de la conjecture {$\tau$}},
   JOURNAL = {Invent. Math.},
  FJOURNAL = {Inventiones Mathematicae},
    VOLUME = {151},
      YEAR = {2003},
    NUMBER = {2},
     PAGES = {297--328},
      ISSN = {0020-9910},
   MRCLASS = {11F70 (05C25 22E40 22E50)},
  MRNUMBER = {1953260},
MRREVIEWER = {Volker J. Heiermann},
}

\bib{MR530631}{article}{
      author={Dani, S.~G.},
       title={On invariant measures, minimal sets and a lemma of {M}argulis},
        date={1979},
        ISSN={0020-9910},
     journal={Invent. Math.},
      volume={51},
      number={3},
       pages={239\ndash 260},
         url={https://doi.org/10.1007/BF01389917},
}

\bib{MR4018265}{article}{
      author={de~la Salle, Mikael},
       title={Strong property {$(T)$} for higher-rank lattices},
        date={2019},
        ISSN={0001-5962},
     journal={Acta Math.},
      volume={223},
      number={1},
       pages={151\ndash 193},
  url={https://doi-org.turing.library.northwestern.edu/10.4310/ACTA.2019.v223.n1.a3},
}

\bib{MR3407190}{article}{
      author={de~Laat, Tim},
      author={de~la Salle, Mikael},
       title={Strong property ({T}) for higher-rank simple {L}ie groups},
        date={2015},
        ISSN={0024-6115},
     journal={Proc. Lond. Math. Soc. (3)},
      volume={111},
      number={4},
       pages={936\ndash 966},
         url={http://dx.doi.org/10.1112/plms/pdv040},
}

\bib{MR1666834}{article}{
      author={Farb, Benson},
      author={Shalen, Peter},
       title={Real-analytic actions of lattices},
        date={1999},
        ISSN={0020-9910},
     journal={Invent. Math.},
      volume={135},
      number={2},
       pages={273\ndash 296},
         url={http://dx.doi.org/10.1007/s002220050286},
}

\bib{F11}{incollection}{
      author={Fisher, David},
       title={Groups acting on manifolds: around the {Z}immer program},
        date={2011},
   booktitle={Geometry, rigidity, and group actions},
      series={Chicago Lectures in Math.},
   publisher={Univ. Chicago Press, Chicago, IL},
       pages={72\ndash 157},
         url={http://dx.doi.org/10.7208/chicago/9780226237909.001.0001},
}

\bib{F18}{unpublished}{
      author={Fisher, David},
       title={Recent progress in the Zimmer program},
        date={2017},
}

\bib{MR2039990}{incollection}{
      author={Fisher, David},
      author={Margulis, G.~A.},
       title={Local rigidity for cocycles},
        date={2003},
   booktitle={Surveys in differential geometry, {V}ol.\ {VIII} ({B}oston, {MA},
  2002)},
      series={Surv. Differ. Geom.},
      volume={8},
   publisher={Int. Press, Somerville, MA},
       pages={191\ndash 234},
         url={http://dx.doi.org/10.4310/SDG.2003.v8.n1.a7},
}

\bib{MR1254981}{article}{
      author={Ghys, \'{E}tienne},
       title={Sur les groupes engendr\'{e}s par des diff\'{e}omorphismes
  proches de l'identit\'{e}},
        date={1993},
        ISSN={0100-3569},
     journal={Bol. Soc. Brasil. Mat. (N.S.)},
      volume={24},
      number={2},
       pages={137\ndash 178},
         url={https://doi.org/10.1007/BF01237675},
}

\bib{MR1703323}{article}{
      author={Ghys, {\'E}tienne},
       title={Actions de r\'eseaux sur le cercle},
        date={1999},
        ISSN={0020-9910},
     journal={Invent. Math.},
      volume={137},
      number={1},
       pages={199\ndash 231},
         url={http://dx.doi.org/10.1007/s002220050329},
}

\bib{MR620024}{book}{
      author={Hochschild, Gerhard~P.},
       title={Basic theory of algebraic groups and {L}ie algebras},
      series={Graduate Texts in Mathematics},
   publisher={Springer-Verlag, New York-Berlin},
        date={1981},
      volume={75},
        ISBN={0-387-90541-3},
}

\bib{MR0323842}{book}{
      author={Humphreys, James~E.},
       title={Introduction to {L}ie algebras and representation theory},
   publisher={Springer-Verlag, New York-Berlin},
        date={1972},
        note={Graduate Texts in Mathematics, Vol. 9},
}

\bib{MR1906482}{article}{
      author={Ji, L.},
      author={MacPherson, R.},
       title={Geometry of compactifications of locally symmetric spaces},
        date={2002},
        ISSN={0373-0956},
     journal={Ann. Inst. Fourier (Grenoble)},
      volume={52},
      number={2},
       pages={457\ndash 559},
  url={http://aif.cedram.org.turing.library.northwestern.edu/item?id=AIF_2002__52_2_457_0},
}

\bib{MR0209390}{article}{
      author={Kazhdan, D.~A.},
       title={On the connection of the dual space of a group with the structure
  of its closed subgroups},
        date={1967},
        ISSN={0374-1990},
     journal={Funkcional. Anal. i Prilozen.},
      volume={1},
       pages={71\ndash 74},
}

\bib{MR1359098}{incollection}{
      author={Kleinbock, D.~Y.},
      author={Margulis, G.~A.},
       title={Bounded orbits of nonquasiunipotent flows on homogeneous spaces},
        date={1996},
   booktitle={Sina\u{\i}'s {M}oscow {S}eminar on {D}ynamical {S}ystems},
      series={Amer. Math. Soc. Transl. Ser. 2},
      volume={171},
   publisher={Amer. Math. Soc., Providence, RI},
       pages={141\ndash 172},
         url={https://doi.org/10.1090/trans2/171/11},
}

\bib{MR1652916}{article}{
      author={Kleinbock, D.~Y.},
      author={Margulis, G.~A.},
       title={Flows on homogeneous spaces and {D}iophantine approximation on
  manifolds},
        date={1998},
        ISSN={0003-486X},
     journal={Ann. of Math. (2)},
      volume={148},
      number={1},
       pages={339\ndash 360},
         url={http://dx.doi.org/10.2307/120997},
}

\bib{MR1719827}{article}{
	author = {Kleinbock, D. Y.}
    author = {Margulis, G. A.},
	journal = {Invent. Math.},
	number = {3},
	pages = {451--494},
	title = {Logarithm laws for flows on homogeneous spaces},
	volume = {138},
	year = {1999},
	}

\bib{MR2867926}{incollection}{
      author={Kleinbock, D.~Y.},
      author={Margulis, G.~A.},
       title={On effective equidistribution of expanding translates of certain
  orbits in the space of lattices},
        date={2012},
   booktitle={Number theory, analysis and geometry},
   publisher={Springer, New York},
       pages={385\ndash 396},
         url={https://doi.org/10.1007/978-1-4614-1260-1_18},
}

\bib{MR2434296}{article}{
      author={Kleinbock, Dmitry},
       title={An extension of quantitative nondivergence and applications to
  {D}iophantine exponents},
        date={2008},
        ISSN={0002-9947},
     journal={Trans. Amer. Math. Soc.},
      volume={360},
      number={12},
       pages={6497\ndash 6523},
         url={http://dx.doi.org/10.1090/S0002-9947-08-04592-3},
}

\bib{MR4125693}{article}{
      author={Kleinbock, Dmitry},
      author={Mirzadeh, Shahriar},
       title={Dimension estimates for the set of points with non-dense orbit in
  homogeneous spaces},
        date={2020},
        ISSN={0025-5874},
     journal={Math. Z.},
      volume={295},
      number={3-4},
       pages={1355\ndash 1383},
  url={https://doi-org.turing.library.northwestern.edu/10.1007/s00209-019-02386-7},
}

\bib{MR1920389}{book}{
      author={Knapp, Anthony~W.},
       title={Lie groups beyond an introduction},
     edition={Second},
      series={Progress in Mathematics},
   publisher={Birkh\"auser Boston, Inc., Boston, MA},
        date={2002},
      volume={140},
        ISBN={0-8176-4259-5},
}

\bib{MR1648087}{inproceedings}{
      author={Labourie, Fran\c{c}ois},
       title={Large groups actions on manifolds},
        date={1998},
   booktitle={Proceedings of the {I}nternational {C}ongress of
  {M}athematicians, {V}ol. {II} ({B}erlin, 1998)},
       pages={371\ndash 380},
}

\bib{MR2423763}{article}{
      author={Lafforgue, Vincent},
       title={Un renforcement de la propri\'et\'e ({T})},
        date={2008},
        ISSN={0012-7094},
     journal={Duke Math. J.},
      volume={143},
      number={3},
       pages={559\ndash 602},
         url={http://dx.doi.org/10.1215/00127094-2008-029},
}

\bib{MR2066859}{article}{
      author={Leuzinger, Enrico},
       title={Tits geometry, arithmetic groups, and the proof of a conjecture
  of {S}iegel},
        date={2004},
        ISSN={0949-5932},
     journal={J. Lie Theory},
      volume={14},
      number={2},
       pages={317\ndash 338},
}

\bib{MR1244421}{article}{
      author={Lubotzky, Alexander},
      author={Mozes, Shahar},
      author={Raghunathan, M.~S.},
       title={Cyclic subgroups of exponential growth and metrics on discrete
  groups},
        date={1993},
        ISSN={0764-4442},
     journal={C. R. Acad. Sci. Paris S\'er. I Math.},
      volume={317},
      number={8},
       pages={735\ndash 740},
}

\bib{MR1828742}{article}{
      author={Lubotzky, Alexander},
      author={Mozes, Shahar},
      author={Raghunathan, M.~S.},
       title={The word and {R}iemannian metrics on lattices of semisimple
  groups},
        date={2000},
        ISSN={0073-8301},
     journal={Inst. Hautes \'Etudes Sci. Publ. Math.},
      number={91},
       pages={5\ndash 53 (2001)},
         url={http://www.numdam.org/item?id=PMIHES_2000__91__5_0},
}

\bib{MR0470140}{article}{
      author={Margulis, G.~A.},
       title={On the action of unipotent groups in the space of lattices},
        date={1975},
       pages={365\ndash 370},
}

\bib{MR1090825}{book}{
      author={Margulis, G.~A.},
       title={Discrete subgroups of semisimple {L}ie groups},
      series={Ergebnisse der Mathematik und ihrer Grenzgebiete (3)}, 
   publisher={Springer-Verlag, Berlin},
        date={1991},
      volume={17},
        ISBN={3-540-12179-X},
         url={http://dx.doi.org/10.1007/978-3-642-51445-6},
}

\bib{MR3307755}{book}{
      author={Morris, Dave~Witte},
       title={Introduction to arithmetic groups},
   publisher={Deductive Press}, 
        date={2015},
        ISBN={978-0-9865716-0-2; 978-0-9865716-1-9},
}

\bib{MR4074402}{article}{
      author={Morris, Dave~Witte},
       title={Quasi-isometric bounded generation by {$\Bbb{Q}$}-rank-one
  subgroups},
        date={2020},
     journal={SIGMA Symmetry Integrability Geom. Methods Appl.},
      volume={16},
       pages={Paper No. 012, 17},
  url={https://doi-org.turing.library.northwestern.edu/10.3842/SIGMA.2020.012},
}

\bib{MR69830}{article}{
      author={Mostow, G.~D.},
       title={Self-adjoint groups},
        date={1955},
        ISSN={0003-486X},
     journal={Ann. of Math. (2)},
      volume={62},
       pages={44\ndash 55},
         url={https://doi-org.turing.library.northwestern.edu/10.2307/2007099},
}

\bib{MR0092928}{article}{
      author={Mostow, G.~D.},
       title={Fully reducible subgroups of algebraic groups},
        date={1956},
        ISSN={0002-9327},
     journal={Amer. J. Math.},
      volume={78},
       pages={200\ndash 221},
         url={https://doi.org/10.2307/2372490},
}

\bib{MR0492074}{article}{
      author={Mostow, G.~D.},
       title={Intersections of discrete subgroups with {C}artan subgroups},
        date={1970},
     journal={J. Indian Math. Soc.},
      volume={34},
      number={3-4},
       pages={203\ndash 214 (1971)},
}

\bib{MR0385004}{book}{
      author={Mostow, G.~D.},
       title={Strong rigidity of locally symmetric spaces},
   publisher={Princeton University Press, Princeton, N.J.; University of Tokyo
  Press, Tokyo},
        date={1973},
        note={Annals of Mathematics Studies, No. 78},
}

\bib{MR1933077}{article}{
      author={Nevo, Amos},
      author={Zimmer, Robert~J.},
       title={A structure theorem for actions of semisimple {L}ie groups},
        date={2002},
        ISSN={0003-486X},
     journal={Ann. of Math. (2)},
      volume={156},
      number={2},
       pages={565\ndash 594},
         url={http://dx.doi.org/10.2307/3597198},
}

\bib{MR3803710}{article}{
      author={Orr, Martin},
       title={Height bounds and the {S}iegel property},
        date={2018},
        ISSN={1937-0652},
     journal={Algebra Number Theory},
      volume={12},
      number={2},
       pages={455\ndash 478},
         url={https://doi.org/10.2140/ant.2018.12.455},
}

\bib{MR1278263}{book}{
      author={Platonov, Vladimir},
      author={Rapinchuk, Andrei},
       title={Algebraic groups and number theory},
      series={Pure and Applied Mathematics},
   publisher={Academic Press, Inc., Boston, MA},
        date={1994},
      volume={139},
        ISBN={0-12-558180-7},
        note={Translated from the 1991 Russian original by Rachel Rowen},
}

\bib{MR0302822}{article}{
      author={Prasad, Gopal},
      author={Raghunathan, M.~S.},
       title={Cartan subgroups and lattices in semi-simple groups},
        date={1972},
        ISSN={0003-486X},
     journal={Ann. of Math. (2)},
      volume={96},
       pages={296\ndash 317},
}

\bib{MR0507234}{book}{
      author={Raghunathan, M.~S.},
       title={Discrete subgroups of {L}ie groups},
   publisher={Springer-Verlag, New York-Heidelberg},
        date={1972},
        note={Ergebnisse der Mathematik und ihrer Grenzgebiete, Band 68},
}

\bib{MR1262705}{article}{
      author={Ratner, M.},
       title={Invariant measures and orbit closures for unipotent actions on
  homogeneous spaces},
        date={1994},
        ISSN={1016-443X},
     journal={Geom. Funct. Anal.},
      volume={4},
      number={2},
       pages={236\ndash 257},
         url={http://dx.doi.org/10.1007/BF01895839},
}

\bib{MR1135878}{article}{
      author={Ratner, Marina},
       title={On {R}aghunathan's measure conjecture},
        date={1991},
        ISSN={0003-486X},
     journal={Ann. of Math. (2)},
      volume={134},
      number={3},
       pages={545\ndash 607},
         url={http://dx.doi.org/10.2307/2944357},
}

\bib{MR1106945}{article}{
      author={Ratner, Marina},
       title={Raghunathan's topological conjecture and distributions of
  unipotent flows},
        date={1991},
        ISSN={0012-7094},
     journal={Duke Math. J.},
      volume={63},
      number={1},
       pages={235\ndash 280},
         url={http://dx.doi.org/10.1215/S0012-7094-91-06311-8},
}

\bib{MR1403920}{inproceedings}{
      author={Ratner, Marina},
       title={Interactions between ergodic theory, {L}ie groups, and number
  theory},
        date={1995},
   booktitle={Proceedings of the {I}nternational {C}ongress of
  {M}athematicians, {V}ol.\ 1, 2 ({Z}\"urich, 1994)},
   publisher={Birkh\"auser, Basel},
       pages={157\ndash 182},
}

\bib{MR1213978}{article}{
      author={R\"{o}hrle, Gerhard},
       title={On certain stabilizers in algebraic groups},
        date={1993},
        ISSN={0092-7872},
     journal={Comm. Algebra},
      volume={21},
      number={5},
       pages={1631\ndash 1644},
  url={https://doi-org.turing.library.northwestern.edu/10.1080/00927879308824642},
}

\bib{MR1219660}{article}{
      author={R\"{o}hrle, Gerhard~E.},
       title={On the structure of parabolic subgroups in algebraic groups},
        date={1993},
        ISSN={0021-8693},
     journal={J. Algebra},
      volume={157},
      number={1},
       pages={80\ndash 115},
  url={https://doi-org.turing.library.northwestern.edu/10.1006/jabr.1993.1092},
}

\bib{MR1291701}{article}{
      author={Shah, Nimish~A.},
       title={Limit distributions of polynomial trajectories on homogeneous
  spaces},
        date={1994},
        ISSN={0012-7094},
     journal={Duke Math. J.},
      volume={75},
      number={3},
       pages={711\ndash 732},
         url={http://dx.doi.org/10.1215/S0012-7094-94-07521-2},
}

\bib{MR2458469}{book}{
      author={Springer, T.~A.},
       title={Linear algebraic groups},
     edition={second},
      series={Modern Birkh\"{a}user Classics},
   publisher={Birkh\"{a}user Boston, Inc., Boston, MA},
        date={2009},
        ISBN={978-0-8176-4839-8},
}

\bib{MR0224710}{incollection}{
      author={Tits, J.},
       title={Classification of algebraic semisimple groups},
        date={1966},
   booktitle={Algebraic {G}roups and {D}iscontinuous {S}ubgroups ({P}roc.
  {S}ympos. {P}ure {M}ath., {B}oulder, {C}olo., 1965)},
   publisher={Amer. Math. Soc., Providence, R.I., 1966},
       pages={33\ndash 62},
}

\bib{viana2014lectures}{book}{
      author={Viana, Marcelo},
       title={Lectures on {L}yapunov exponents},
   publisher={Cambridge University Press},
        date={2014},
      volume={145},
}

\bib{MR1198459}{article}{
      author={Witte, Dave},
       title={Arithmetic groups of higher {${\bf Q}$}-rank cannot act on
  {$1$}-manifolds},
        date={1994},
        ISSN={0002-9939},
     journal={Proc. Amer. Math. Soc.},
      volume={122},
      number={2},
       pages={333\ndash 340},
         url={http://dx.doi.org/10.2307/2161021},
}

\bib{MR0776417}{book}{
      author={Zimmer, Robert~J.},
       title={Ergodic theory and semisimple groups},
      series={Monographs in Mathematics},
   publisher={Birkh\"auser Verlag, Basel},
        date={1984},
      volume={81},
        ISBN={3-7643-3184-4},
         url={http://dx.doi.org/10.1007/978-1-4684-9488-4},
}

\end{biblist}
\end{bibdiv}

\end{document}